\crefname{assumption}{Assumption}{Assumptions}
\newcommand{\defeq}{=}
\DeclareMathOperator*{\argmin}{argmin}
\newcommand{\one}{\mathds{1}}
\DeclareMathOperator{\Lip}{Lip}
\renewcommand{\O}{{\mathcal O}}
\renewcommand{\bar}[1]{{\overline{#1}}}
\newcommand{\E}{\mathcal{E}}
\newcommand{\bE}{\mathbb{E}}
\renewcommand{\P}{{\mathbb P}}
\newcommand{\X}{{\mathcal X}}
\renewcommand{\H}{{\mathcal H}}
\newcommand{\R}{\mathbb{R}}
\newcommand{\N}{\mathbb{N}}
\DeclareMathOperator{\supp}{supp}
\newcommand{\eps}{\varepsilon}
\renewcommand{\tilde}[1]{\widetilde{#1}}
\renewcommand{\phi}{\varphi}
\renewcommand{\epsilon}{\varepsilon}
\newcommand{\interior}[1]{%
  {\kern0pt#1}^{\mathrm{o}}%
}
\renewcommand{\l}[1]{{\ell^{#1}(\mathcal{X}_{n})}}
\newcommand{\lo}[1]{{\ell^{#1}_0(\mathcal{X}_{n})}}
\newcommand{\lx}[1]{{\ell^{#1}(\mathcal{X}^2_{n})}}
\newcommand{\sprod}[2]{\langle #1\rangle_{#2}}
\newcommand{\ipg}[1]{\langle #1\rangle_{\l2}}
\newcommand{\ips}[1]{\langle #1\rangle_{H^1(\X_n)}}
\newcommand{\ipv}[1]{\langle #1\rangle_{\lx2}}
\newcommand{\ip}[1]{\langle #1\rangle_{\ell^2(\X_n)}}
\renewcommand{\ng}[2]{\left\| #2\right\|_{\l{#1}}}
\newcommand{\nv}[1]{\left\| #1\right\|_{\lx2}}
\newcommand{\ene}{\E_{n,\varepsilon}}
\newcommand{\Ene}[1]{\E_{n,\varepsilon}^{(#1)}}
\newcommand{\ted}{\theta_{\varepsilon,\delta}}
\newcommand{\ied}{I_{\varepsilon,\delta}}
\newcommand{\ie}{I_{\varepsilon}}
\newcommand{\Ied}[1]{I_{\varepsilon,\delta}^{(#1)}}
\newcommand{\Ie}[1]{I_{\varepsilon}^{(#1)}}
\newcommand{\I}[1]{I^{(#1)}}
\let\deg\relax
\DeclareMathOperator{\deg}{deg}
\DeclareMathOperator{\sign}{sign}
\DeclareMathOperator{\diam}{diam}
\renewcommand{\hat}[1]{\widehat{#1}}
\renewcommand{\L}{\mathcal{L}}
\newcommand{\Lr}{\mathcal{L}_{rw}}
\renewcommand{\O}{{\mathcal O}}
\newcommand{\set}[1]{\left\{#1\right\}}
\DeclareMathOperator{\dist}{dist}
\DeclareMathOperator{\osc}{osc}
\DeclareMathOperator{\loc}{loc}
\newcommand{\cp}[1]{\tau(#1)}
\newcommand{\sa}{A}
\newcommand{\cdk}{\Theta_{d,k}}
\newcommand{\cdj}{\Theta_{d,j}}
\def\XXint#1#2#3{{\setbox0=\hbox{$#1{#2#3}{\int}$ }
\vcenter{\hbox{$#2#3$ }}\kern-.6\wd0}}
\newtheorem{theorem}{Theorem}
\newtheorem{lemma}[theorem]{Lemma}
\newtheorem{corollary}[theorem]{Corollary}
\newtheorem{proposition}[theorem]{Proposition}
\theoremstyle{definition}
\newtheorem{remark}[theorem]{Remark}
\newtheorem{definition}[theorem]{Definition}
\newtheorem{assumption}[theorem]{Assumption}
\numberwithin{equation}{section}
\numberwithin{theorem}{section}
\newcommand{\norm}[1]{\left\|#1\right\|}
\newcommand{\abs}[1]{\left\lvert #1 \right\rvert}
\DeclareMathOperator{\divtemp}{div}
\renewcommand{\div}{\divtemp}
\renewcommand{\d}{\,\mathrm{d}}
\newcommand{\dx}{\d x}
\newcommand{\dy}{\d y}
\newcommand{\M}{\mathcal{M}}
\newcommand{\st}{\,:\,}
\newcommand{\Ri}{r}
\newlist{assenum}{enumerate}{1} 
\setlist[assenum]{label=(\alph*),ref=\theassumption\,(\alph*)}
\crefname{assenumi}{Assumption}{Assumptions}
\newlist{thmenum}{enumerate}{1} 
\setlist[thmenum]{label=(\roman*),ref=\thetheorem\,(\roman*)}
\crefname{thmenumi}{Theorem}{Theorem}
\renewcommand{\eqref}{\labelcref}
\title{Convergence rates for Poisson learning to a Poisson equation with measure data\thanks{{\bf Funding:} Calder was supported by NSF grants DMS:1944925 and MoDL+ CCF:2212318, the Alfred P. Sloan foundation, the McKnight foundation, and an Albert and Dorothy Marden Professorship. Mihailescu was supported by DFG SFB 1060. Houssou was supported by an internal University of Minnesota CSE InterS\&Ections Seed Grant. }}
\newcommand*\samethanks[1][\value{footnote}]{\footnotemark[#1]}
\author{Leon Bungert\thanks{Institute of Mathematics, Center for Artificial Intelligence and Data Science (CAIDAS), University of Würzburg. \texttt{leon.bungert@uni-wuerzburg.de}},
Jeff Calder\thanks{School of Mathematics, University of Minnesota. \{\tt jwcalder, houss001, yuanx290\}@umn.edu}, 
Max Mihailescu\thanks{Institute for Applied Mathematics \& Hausdorff Center for Mathematics,  University of Bonn. \texttt{mihailescu@iam.uni-bonn.de}}, 
Kodjo Houssou\samethanks[2], 
Amber Yuan\samethanks[2]}
\begin{document}

\maketitle

\begin{abstract}
In this paper we prove discrete to continuum convergence rates for Poisson Learning, a graph-based semi-supervised learning algorithm that is based on solving the graph Poisson equation with a source term consisting of a linear combination of Dirac deltas located at labeled points and carrying label information. The corresponding continuum equation is a Poisson equation with measure data in a Euclidean domain $\Omega \subset \R^d$. The singular nature of these equations is challenging and requires an approach with several distinct parts: (1) We prove quantitative error estimates when convolving the measure data of a Poisson equation with (approximately) radial function supported on balls. (2) We use quantitative variational techniques to prove discrete to continuum convergence rates on random geometric graphs with bandwidth $\eps>0$ for bounded source terms. (3) We show how to regularize the graph Poisson equation via mollification with the graph heat kernel, and we study fine asymptotics of the heat kernel on random geometric graphs.  
Combining these three pillars we obtain $L^1$ convergence rates that scale, up to logarithmic factors, like $\O(\eps^{\frac{1}{d+2}})$ for general data distributions, and $\O(\eps^{\frac{2-\sigma}{d+4}})$ for uniformly distributed data, for all $\sigma>0$.
These rates are valid with high probability if $\eps\gg\left({\log n}/{n}\right)^q$ where $n$ denotes the number of vertices of the graph and $q \approx \frac{1}{3d}$.
\end{abstract}

\tableofcontents


\section{Introduction}
\subsection{Motivation}

Machine learning methods for fully supervised learning (e.g., image classification via convolutional neural networks) and generative tasks (e.g., large language models powered by transformers) have experienced tremendous success in recent years, due the availability of massive data sets and computational resources \cite{goodfellow2016deep}. However, for many real world problems (e.g., medical image classification), large training sets are not available or would be costly to create. Thus, there has been significant interest recently in machine learning methods that can learn from limited amounts of labeled training data, such as transfer learning \cite{zhuang2020comprehensive}, few-shot learning \cite{wang2020generalizing}, and semi-supervised learning \cite{van2020survey}.

Semi-supervised learning algorithms learn from both labeled and \emph{unlabeled} data, the latter typically being widely available for many tasks (e.g., large databases of natural images). In order to utilize unlabeled data, it is common to construct a \emph{similarity graph} over a data set, which gives a convenient representation for high dimensional data, while in other problems, such as in network science, the data has an intrinsic graph structure (e.g., links between papers in a citation data set). This leads to a field called \emph{graph-based semi-supervised learning}, which utilizes graph structures to train classifiers with fewer labeled examples than are required with fully supervised learning. See \cite{calder2024consistency} for a survey of graph Laplacian based learning algorithms, and \cite{song2022graph} for graph-neural network approaches.  

The field of graph-based learning has recently seen an infusion of theoretically well-founded machine learning problems by identifying graph-based learning algorithms with partial differential equation (PDE) or variational problems in the continuum limit. Here, we specifically discuss semi-supervised learning problems, where one is given data points $\X_n=\{x_1,\dots,x_n\}$ in $\R^d$ and a subset of labeled data points $\Gamma_n \subset \X_n$ together with labels $g : \Gamma_n \to\R$. The task is to extend these labels to the whole data set $\X_n$ in a reasonable way.  A simplistic approach for this problem would be nearest neighbor classification, i.e., a point in $\X_n$ gets the label that the closest point in $\Gamma_n$ is carrying.  Notably, this method completely neglects the presence of the remaining (unlabeled) data points and typically leads to inferior results.  In contrast, graph-based semi-supervised learning builds on the manifold hypothesis which assumes that the data points in $\X_n$ are samples from a probability distribution supported on a manifold or domain in $\R^d$.  In order to extract this inherent geometry from the data, geometric graphs have proven to be a useful tool. For this, the data is converted into a weighted graph $G_n=(\X_n,w)$, where $w:\X_n\times\X_n\to[0,\infty)$ is a (symmetric) weight function that assigns high weight to similar data points and low weight to dissimilar ones. 

\begin{figure}[!t]
\centering
\subfloat[Laplace Learning]{\includegraphics[width=0.32\textwidth]{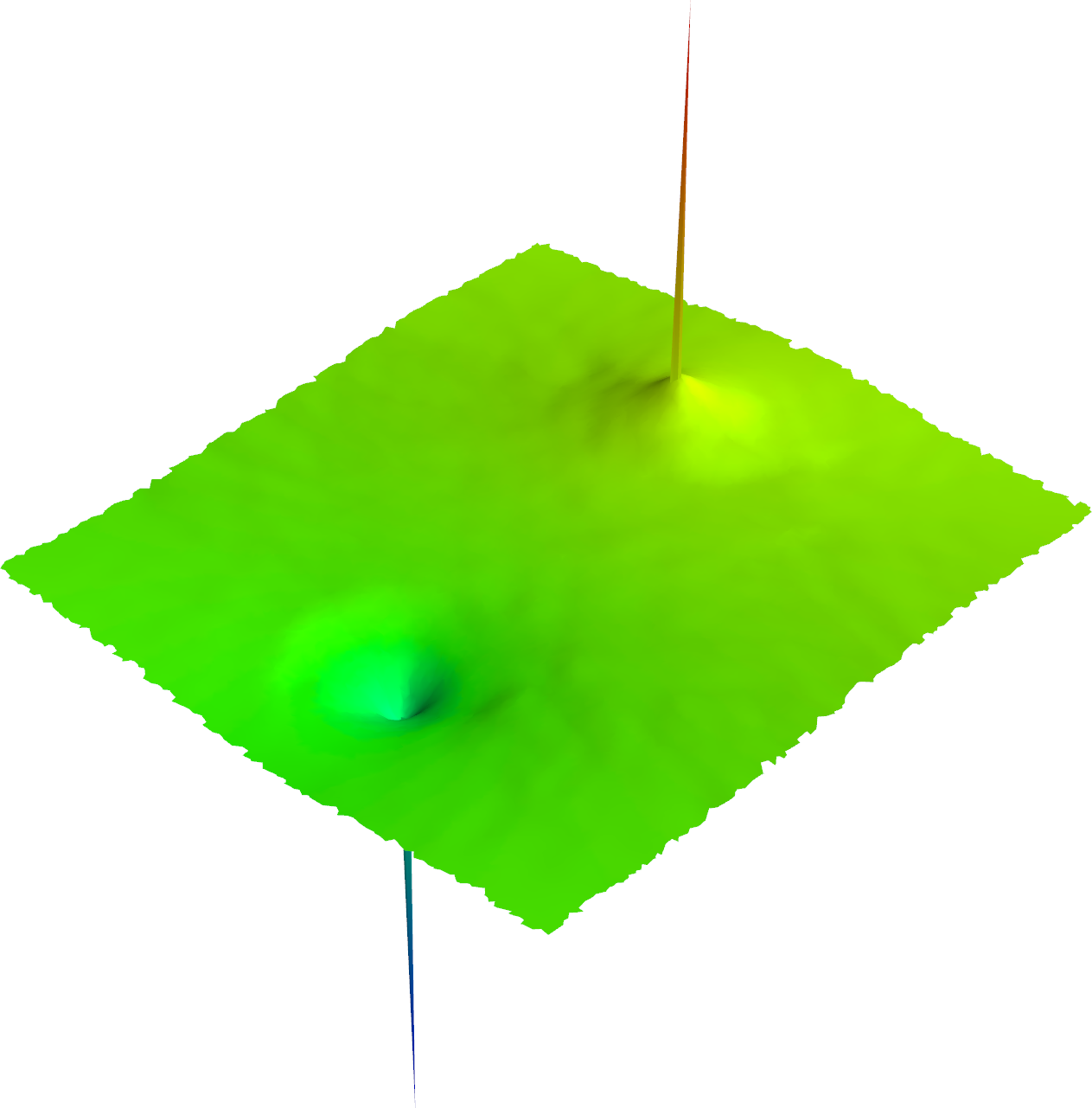}\label{fig:laplace}}
\subfloat[Poisson Learning]{\includegraphics[width=0.32\textwidth]{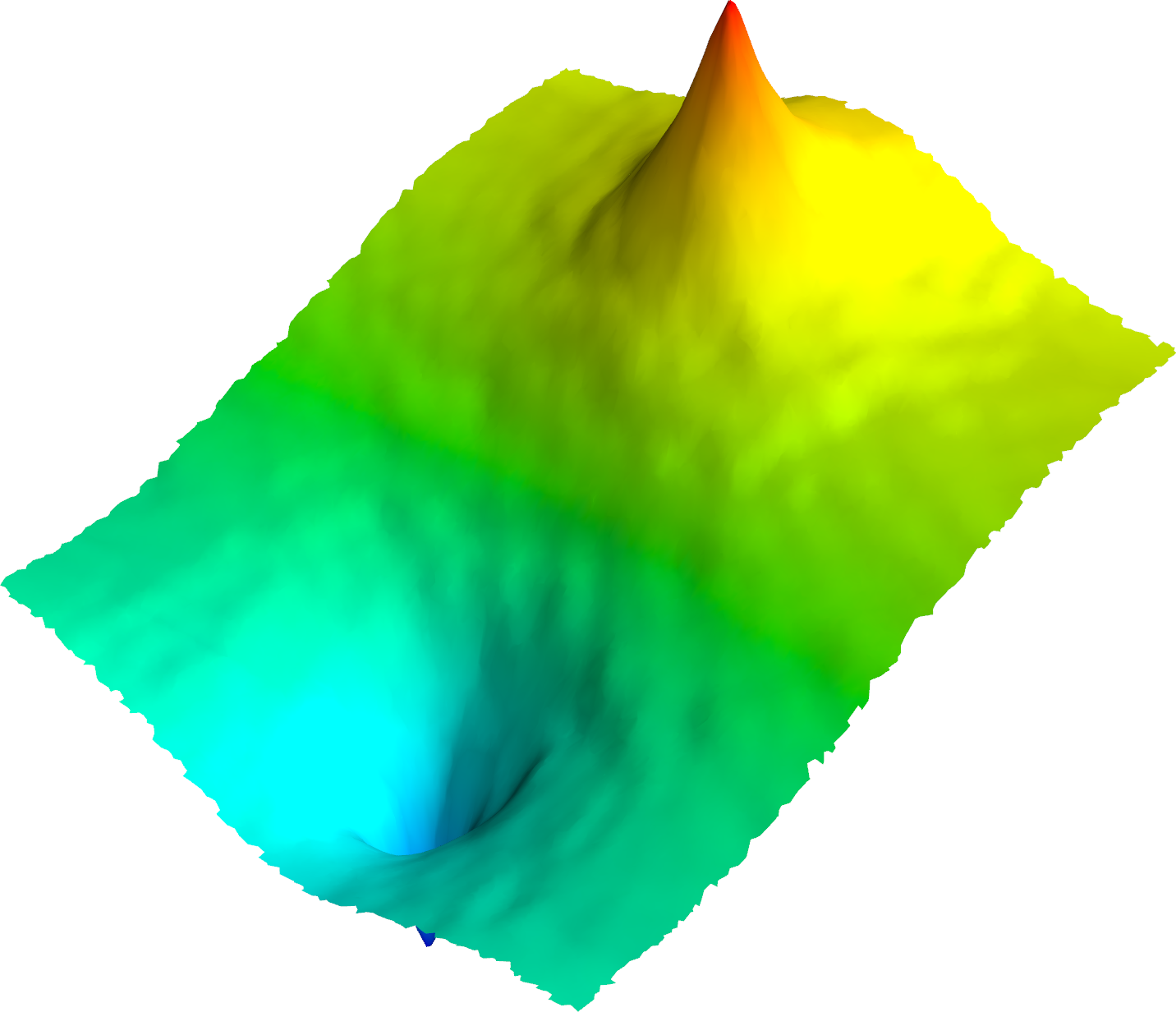}\label{fig:poisson}}
\subfloat[PWLL]{\includegraphics[width=0.32\textwidth]{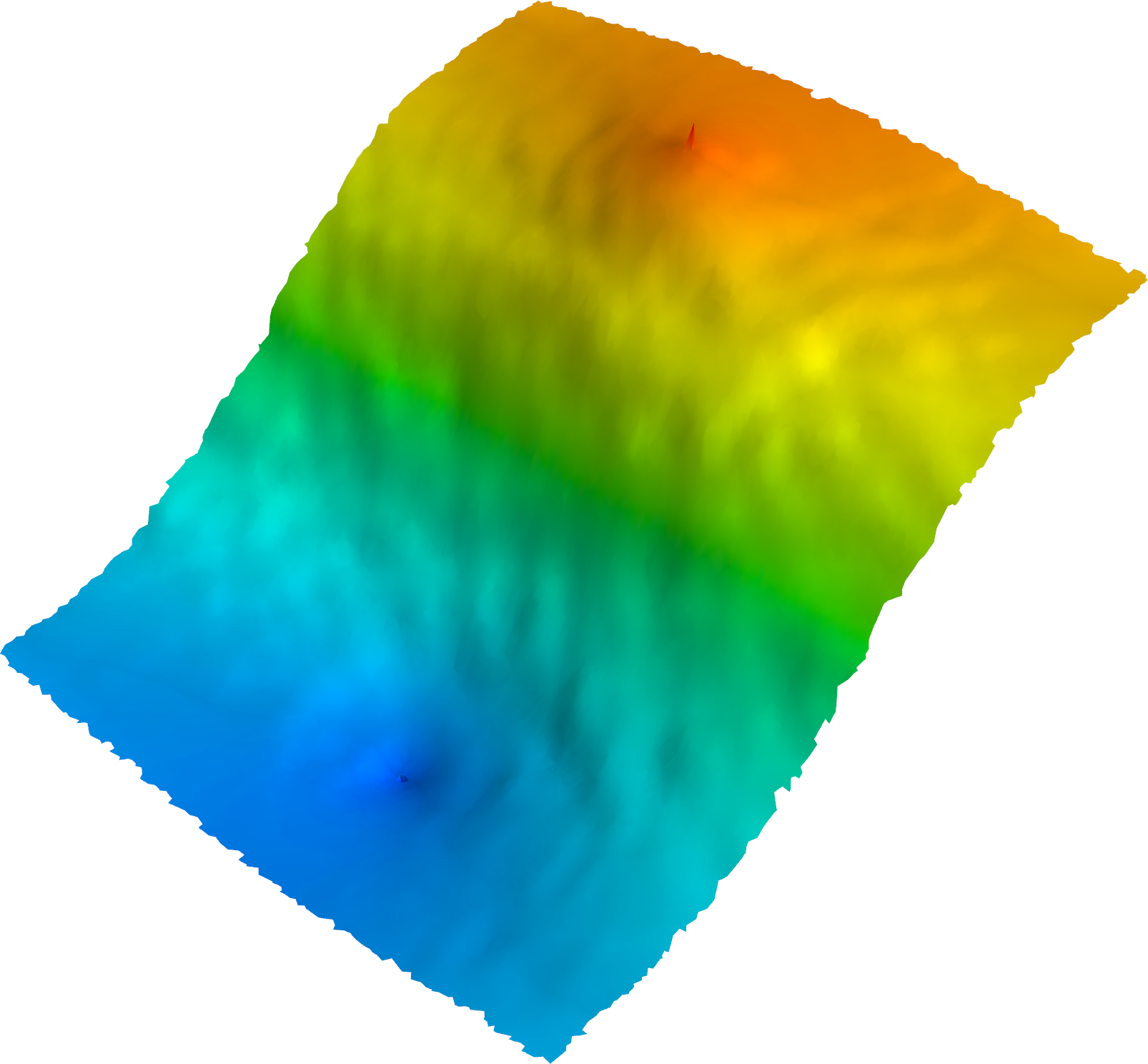}\label{fig:pwll}}
\caption{Comparisons of (a) Laplace Learning (b) Poisson Learning and (c) Poisson Weighted Laplace Learning (PWLL) for a problem with two points in opposite classes, with labels of $+1$ and $-1$. Laplace learning develops spikes, while Poisson Learning approximates the fundamental solution of Laplace's equation, and PWLL smoothly interpolates the labels. }
\label{fig:laplace_demo}
\end{figure}

One of the earliest and most popular graph-based methods for semi-supervised learning is based on solving the graph Laplace equation with ``boundary conditions'' on the labeled data points  \cite{zhu2003semi}.
This amounts to solving the following linear system of equations
\begin{equation}\label{eq:Laplace_Learning}
\left\{
\begin{aligned}
\sum_{y\in\X_n} w_{xy}(u(x)-u(y)) &= 0,&& \text{if } x\in \X_n\setminus \Gamma_n \\
u(x) &= g(x),&& \text{if } x\in \Gamma_n
\end{aligned}
\right.
\end{equation}
for the vector $(u(x_1),u(x_2),\dots,u(x_n))\in\R^n$ which describes the final labeling of the whole data set, and we interpret as a function $u:\X_n\to \R$.\footnote{This is the setting of binary classification, but the method can be easily extended to the multi-class setting by taking $u(x)\in \R^k$ if we have $k$ classes.}    Note that the resulting labeling can be equivalently characterized through having the mean value property 
\begin{align}\label{eq:mean_value_property}
    u(x) = \frac{\sum_{y\in\X_n}w_{xy}u(y)}{\sum_{y\in\X_n}w_{xy}}    
\end{align}
for all $x\in\X_n\setminus\Gamma_n$. While this approach, later termed \emph{Laplace learning}, gives satisfactory results if the set of labeled points $\Gamma_n$ is sufficiently large \cite{calder2023rates}, Laplace learning dramatically fails for only few labels, which was first pointed out in \cite{nadler2009semi}. In the latter case, as we can see in \cref{fig:laplace} the solution forms spikes at the labeled data points and is close to being constant otherwise. 
An intuitive explanation for this is offered by the mean value property \labelcref{eq:mean_value_property}: If the number of labeled points in the sum is very small compared to the total number of summands, a function which is constant everywhere outside the labeled points will approximately satisfy \labelcref{eq:mean_value_property}.

For a rigorous analysis one can study the \emph{continuum limit} of \labelcref{eq:Laplace_Learning} as the number of data points in $\X_n$ grows to infinity; an analysis of this type was carried out in \cite{calder2023rates}.
If $\X_n$ is an independent and identically distributed (\textit{i.i.d.}) sample from some probability distribution $\rho$ on a Euclidean domain $\Omega \subset\R^d$, if the weights $w_{xy}$ are of the form $w_{xy}=\eta(\abs{x-y}/\eps)$ for some 
non-increasing and non-negative function $\eta$ and some scaling parameter $\eps>0$, and if $\eps$ goes to zero sufficiently slow depending on $n$, then solutions of \labelcref{eq:Laplace_Learning} converge to solutions of the weighted Laplace equation
\begin{equation}\label{eq:weigted_elliptic}
\div(\rho^2\nabla u) = 0.
\end{equation}
However, the constraint that $u=g$ on the set of labeled points $\Gamma_n$ only carries through to the limiting partial differential equation if the sets $\Gamma_n$ approximate a set $\Gamma\subset\overline\Omega$ as $n\to\infty$, where $\Gamma$ has positive capacity, see \cite{calder2023rates} for a few cases. Otherwise the constraints are ignored and the solutions of \labelcref{eq:Laplace_Learning} converge to the trivial constant solution of \labelcref{eq:weigted_elliptic} \cite{calder2023rates}.

As a consequence, many different streams of work suggested alternatives for \labelcref{eq:Laplace_Learning}, most of which are based on the idea of enforcing higher regularity of solutions in the continuum limit. 
For instance, replacing the Laplacian by the \emph{variational} $p$-Laplacian \cite{el2016asymptotic}, it was proved in \cite{slepcev2019analysis} that the label constraints are preserved if $p>d$ (essentially because $W^{1,p}$-functions are H\"older continuous in this regime). Similar results were obtained for the game-theoretic $p$-Laplacian in \cite{calder2018game}, where it was also shown that graph $p$-harmonic functions are approximately H\"older continuous when $p>d$.  In the limit case as $p\to\infty$, one obtains the infinity Laplace operator and the corresponding problem is called \emph{Lipschitz Learning} since solutions are globally Lipschitz continuous. The method was introduced in \cite{kyng2015algorithms}, qualitative discrete to continuum limits were proved in \cite{calder2019consistency,roith2023continuum} and convergence rates were recently established in \cite{bungert2023uniform,bungert2024ratio}. Despite strong theoretical results and the fact that Lipschitz Learning has a well-posed continuum limit in any dimension $d$, it suffers from the drawback that---while it captures the \emph{geometry} of the underlying space very well---it does not at all capture the distribution of the data points (though see \cite{calder2019consistency} for reweighting techniques that can partially address this).  Other approaches enforce sufficient regularity by using higher-order differential operators like powers of graph Laplacian \cite{zhou2011semi}, the poly-Laplacian \cite{GTMT2022ratespoly}, or eikonal-type equations \cite{dunbar2023models,calder2022hamilton} but have similar drawbacks.

In contrast, the \emph{Poisson learning} algorithm, proposed in \cite{calder2020poisson}, builds on the simple but powerful idea of replacing ``boundary value problems'' for certain differential operators on graphs with  Poisson equations.  To achieve this, the information about the labels is transferred from a pointwise constraint of the form $u(x)=g(x)$ for $x\in\Gamma_n$ to the source term of a graph Poisson equation of the form
\begin{align}\label{eq:Poisson_Learning}
\sum_{y\in\X_n} w_{xy}(u(x)-u(y)) = \sum_{z\in\Gamma_n} (g(z) - \overline g)\delta_{z}(x),\qquad& \text{for all }  x\in\X_n,
\end{align}
subject to a constraint on the mean value of $u$ to ensure uniqueness.
Here we let $\delta_z:\X_n\to\R$ be defined as $\delta_z(z)=n$ and $\delta_z(x)=0$ for $x\in\X_n\setminus\{z\}$.
Centering by the constant $\overline g \defeq   \frac{1}{\#\Gamma_n}\sum_{z\in\Gamma_n}g(z)$ ensures that the source term sums to zero, which is the necessary compatibility condition for the Poisson equation.
Poisson learning was shown in \cite{calder2020poisson} to significantly outperform other semi-supervised learning methods, in particular, at low labeling rates.

The authors of \cite{calder2020poisson} partially attribute the success of Poisson learning to the fact that it possesses a well-posed continuum limit without any assumptions on the labeled set $\Gamma_n$.
The continuum limit was conjectured to be the Poisson equation
\begin{align}\label{eq:Poisson_Learning_CL}
-\div\left(\rho(x)^2\nabla u\right) = \sum_{z\in\Gamma} (g(z) - \overline g)\delta_{z}, \ \ \text{in} \ \  \Omega,
\end{align}
where $\Gamma$ is a set of continuum labels, which could even coincide with $\Gamma_n$ for every $n\in\N$, and $\delta_z$ is the Dirac measure concentrated at $z\in\Gamma$. Note we commit a slight abuse of notation by using the same symbol $\delta_z$ for both the Dirac measure and its graph approximation; the intended choice will be clear by context.
\cref{eq:Poisson_Learning_CL} has measure-valued data and its solutions are therefore to be understood in the distributional sense.
Furthermore, the equation is complemented with homogeneous Neumann boundary conditions on $\partial\Omega$ and a constraint on the mean value of $u$ to ensure uniqueness.  
\cref{fig:poisson} shows how Poisson learning resolves the spike problem in a simple toy example. 

Another related approach to the low-label rate problem is to reweight the graph more heavily near labeled data points \cite{shi2017weighted,calder2020properly}. That is, we replace the graph weights $w_{xy}$ in \labelcref{eq:Laplace_Learning} with $\tilde w_{xy} = \gamma(x)\gamma(y) w_{xy}$, where $\gamma(x)>0$ and increases rapidly in a neighborhood of the labeled set $\Gamma_n$,  so as to penalize large gradients (i.e., spikes). A localized reweighting idea that considered only edges adjacent to labeled nodes was originally proposed in \cite{shi2017weighted}, while in \cite{calder2020properly} the authors identified that a singular non-local weighting with $\gamma(x)^2 \sim \dist(x,\Gamma)^{-\alpha}$, where $\alpha>d-2$, was required to ensure well-posedness in the continuum limit, and proposed the \emph{properly weighted} graph Laplacian based on this scaling. In a forthcoming paper \cite{calder2024poisson}, as well as in \cite{miller2023active}, a method called \emph{Poisson Reweighted Laplace Learning} (PWLL) was proposed that selects the reweighting function $\gamma$ in the properly weighted graph Laplacian by solving a graph Poisson equation of the form
\[\sum_{y\in\X_n} w_{xy}(\gamma(x)-\gamma(y)) = \sum_{z\in\Gamma_n} \left(\delta_{z}(x) - \tfrac{1}{n}\right) \ \  \text{for all }  x\in\X_n.\]
As before, the idea is that the reweighting function $\gamma$ should converge to the solution of a continuum Poisson equation like \labelcref{eq:Poisson_Learning_CL} with measure-valued data, and will hence have the correct rate of blow-up at the labeled set to be utilized in the properly weighted Laplacian.  \cref{fig:pwll} shows how the PWLL method interpolates between two labeled data points. 

The goal of this paper is to prove that Poisson learning is a well-posed and stable method for propagating labels on graphs at arbitrarily low label rates.  We do this by establishing that Poisson learning has a well-posed continuum limit, given by a Poisson equation with measure-valued data, in the setting where the number of unlabeled data points tends to infinity while the number of labeled data points is fixed and finite.  To the best of our knowledge, there are no results in the literature that rigorously prove convergence of solutions to graph Poisson equations to solutions of the respective continuum Poisson equation for measure data. 
The main difficulties with tackling this problem are twofold:
First, the limit equation \labelcref{eq:Poisson_Learning_CL} does not admit a variational interpretation in the sense that its solutions are not characterized as minimizers to a variational problem. 
This is in stark contrast to the case of the Poisson equation $-\Delta u = f$ with more regular data, e.g., $f\in L^2(\Omega)$.
Here, solutions are minimizers of the convex energy $u\mapsto\frac12\int_\Omega\abs{\nabla u}^2\dx - \int_\Omega f u \d x$ over $H^1(\Omega)$.
Second, solutions of \labelcref{eq:Poisson_Learning_CL} are not regular.
This can be seen from the case $\rho\equiv const$ where solutions to \labelcref{eq:Poisson_Learning_CL} are linear combinations of fundamental solutions of the Laplace equation of the form $u_z(x)=\abs{x-z}^{2-d}$ for $d\geq 3$ and smooth correctors.
In particular, the maximal regularity of solutions to \labelcref{eq:Poisson_Learning_CL} is $W^{1,p}(\Omega)$ for $1\leq p < \frac{d}{d-1}$ and solutions do not have a continuous representative.
These difficulties render standard approaches for proving discrete to continuum convergence of graph PDEs inapplicable, e.g., those based on qualitative variational tool like Gamma-convergence \cite{garcia2016continuum,slepcev2019analysis,roith2023continuum}, or on quantitative consistency of the graph operators with the limiting differential operator for sufficiently regular functions \cite{calder2018game,calder2019consistency,calder2023rates,yuan2022continuum}.

In the present work we leverage a combination of variational and PDE tools to prove convergence with quantitative high probability rates of solutions to Poisson learning \labelcref{eq:Poisson_Learning} to its continuum limit \labelcref{eq:Poisson_Learning_CL}.
We adopt the following \textbf{proof strategy}: 
\begin{enumerate}
    \item We consider continuum Poisson equations with measure data of the form \labelcref{eq:Poisson_Learning_CL} and prove error estimates for replacing the measure data by a convolution with functions that are compactly supported on a ball of radius $\Ri>0$. 
    Using Green's functions representations, we show that weak solutions of the corresponding Poisson equations converge to the distributional solution of \labelcref{eq:Poisson_Learning_CL} at a rate of approximately $O(\Ri^2)$ in the $L^1$-norm and, as a consequence, also in the graph $\ell^1$-norm with high probability. 
    This is the content of \cref{sec:smoothed_poisson}.
    \item We prove discrete to continuum rates of convergence for Poisson equations with bounded source terms, which have a variational interpretation as minimizers of an energy functional, as explained above. 
    For this we build on quantitative variational techniques that go back to \cite{burago2015graph} and were further developed in \cite{garcia2020error,trillos2018variational,calder2022improved,CalculusofVariationsLN,garcia2022graph}.
    See also \cite{cueto2023variational} for a similar approach for fractional problems.
    The main idea is use strong convexity of the energy functional or some other sort of quantitative stability around minimizers, to prove rates of convergence based on consistency of the energy functionals instead of the associated differential operators. 
    This requires modifying the solution of the discrete problem to be feasible for the continuum one, and vice versa.
    While the latter is typically easy to achieve, the former requires the usage of tailored mollification procedures which grant a precise control of the continuum energy.
    This is the content of \cref{sec:continuum_limits}.
    \item We perform an analysis, similar to \cref{sec:smoothed_poisson}, albeit for Poisson equations on graphs, where we replace the right hand side in \labelcref{eq:Poisson_Learning} by its mollification through $k\in\N$ steps of the heat equation on a random geometric graph with bandwidth $\eps>0$. 
    Analogously, we obtain a convergence rate of approximately $O(\eps_k^2)$ in the $\ell^1$-norm on the graph, where $\eps_k \defeq   \eps\sqrt{k}$ is the effective support radius of the graph heat kernel.
    For this we derive asymptotics of the graph heat kernel in terms of a nonlocal averaging operator, and an easier-to-deal-with $k$-fold convolution operator.
    This is the content of \cref{sec:heat_kernel}
    \item  Finally, in \cref{sec:combination} we combine all results to derive discrete to continuum convergence rates of the Poisson learning problem \labelcref{eq:Poisson_Learning} to \labelcref{eq:Poisson_Learning_CL}, by passing through regularized equations on the graph and the continuum and using the results of \cref{sec:smoothed_poisson,sec:continuum_limits,sec:heat_kernel}. 
    We also go on to prove discrete to continuum results for Poisson equations with source terms that are signed Radon measures, utilizing our main results for atomic measures and $L^1$ stability results for Poisson equations. 
\end{enumerate}

\subsection{Setting and main results}\label{sec:main_results}

Our results hold in the setting of a random geometric graph. Let $x_1,x_2,\dots,x_n$ be an \emph{i.i.d.} sequence of random variables on a bounded domain $\Omega \subset \R^{d}$, distributed according to a probability density $\rho$.  The set of points $\mathcal{X}_{n}\defeq   \left\{ x_1,x_2,\dots,x_n\right\}$ form the vertices of the graph. To endow the points with a graph structure, we let $\eta:[0,\infty)\to[0,\infty)$,  $\eta_\eps(t)\defeq  \eps^{-d}\eta(t/\eps)$ and define edge weights of the form 
\[w_{xy}\defeq  \eta_\eps(\abs{x-y}).\]
The vertices $\X_n$ equipped with edge weights $w_{xy}$ between all pairs of vertices $x,y\in \X_n$ form a \emph{random geometric graph} with bandwidth $\eps$, which controls the distance at which we connect points in the graph. 

We place the following assumptions on $\eta$. 
\begin{assumption}\label{ass:eta}
    The function $\eta:[0,\infty)\to[0,\infty)$ satisfies the following:
    \begin{enumerate}
    \item $\eta$ is continuous at $0$ and $\eta(0)>0$.
    \item $\eta$ is non-increasing and $\supp \eta \subset [0,1]$.
    \item $\eta$ has unit mass $\int_{B(0,1)}\eta(\abs{z})\d z=1$.
    \end{enumerate}
\end{assumption}

Associated with $\eta$ we define the constant
\[\sigma_{\eta}\defeq   \int_{\R^{d}}|z_{1}|^{2}\eta \left(|z|\right)\d z.\]
By 3.~in \cref{ass:eta} we have $\sigma_\eta<\infty$. Since $\eta(|x|)$ is rotationally invariant, the constant $\sigma_\eta$ is also given by
\[\sigma_{\eta} = \int_{\R^{d}}|z\cdot v|^{2}\eta \left(|z|\right)\d z\]
for any unit vector $v$. In addition, using the assumptions on $\eta$ we can also write
\begin{equation}\label{eq:sigma_eta_identity}
\int_{B(0,\epsilon)}|z\cdot w|^2\eta_\eps(|z|)  \d z = \epsilon^2|w|^2\int_{\R^d}\left|y\cdot \frac{w}{|w|}\right|^2\eta(|y|) \d y = \epsilon^2|w|^2\sigma_\eta
\end{equation}
for any $w\in \R^d$. At various points in the paper we will identify $\eta$ with the function $z\mapsto \eta(|z|)$ by writing $\eta(z)$ in place of $\eta(|z|)$, for notational simplicity. 

We introduce the following assumptions on the domain $\Omega$ and density $\rho$. To ensure our intermediate results are as general as possible, we specify various levels of regularity. 
\begin{assumption}\label{ass:omega}
    Let $\Omega \subset \R^d$ be open and bounded
    \begin{assenum}
        \item with Lipschitz boundary.\label{ass:omega_lipschitz}
        \item with $C^{1,\alpha}$ boundary for some $\alpha \in (0,1)$.\label{ass:omega_hoelder}
        \item with $C^{1,1}$ boundary.\label{ass:omega_c11} 
    \end{assenum}
\end{assumption}

\begin{assumption}\label{ass:rho}
    Let $\rho \in L^\infty(\Omega)$ such that $0 < \rho_{\min} \leq \rho \leq \rho_{\max} < \infty$,
    \begin{assenum}
        \item without further restriction.\label{ass:rho_bounded}
        \item such that $\rho \in C^{0,\alpha}(\Omega)$ for some $\alpha \in (0,1)$.\label{ass:rho_hoelder}
        \item such that $\rho \in \Lip(\Omega)$.\label{ass:rho_lipschitz}
        \item such that $\rho \in C^{1,\alpha}(\Omega)$ for some $\alpha \in [0,1]$, where we identify $C^{1,0}(\Omega)$ with $C^{0,1}(\Omega)$.\label{ass:rho_c1a}
    \end{assenum}
\end{assumption}

For the entire paper, we also make the following standing assumption on $n$ and $\eps$.
\begin{assumption}\label{ass:neps}
We assume $n\geq 2$ and $0 < \eps \leq 1$ such that $n\eps^d \geq 1$.
\end{assumption}
\cref{ass:neps} stipulates that the average number of neighbors of each node, which scales with $n\eps^d$, is at least a constant. In all of our results in this paper, we will usually require far stricter conditions, such as $n\eps^d \geq C \log(n)$ or $n\eps^d \sim \eps^{-q}$ for some $q>0$, so this standing assumption is not restrictive in any way, and it allows us to make simplifications to some error terms, such as the estimates $\frac{1}{n}\leq \eps^d$ and $n-1\geq \frac{1}{2}n$. 

Our main results require the strongest assumptions, \cref{ass:eta,ass:omega_c11,ass:rho_c1a} with $\alpha=1$. In this case, we have two main results, which we state informally here; the reader may skip to  \cref{sec:combination} to see the rigorous versions of each result. We first show in \cref{cor:main_nonconstant} that there exists a constant $C>0$ such that the following graph $\ell^1$ convergence rate holds with high probability:
\begin{equation}\label{eq:informal_nonconstant}
\frac{1}{n}\sum_{i=1}^n |u_{n,\epsilon}(x_i) - u(x_i)| \leq C \log(\eps^{-1})^{\frac{d}{2}}\eps^{\frac{1}{d+2}}.
\end{equation}
In \labelcref{eq:informal_nonconstant}, $u_{n,\epsilon}:\X_n\to\R$ is the solution to the graph Poisson equation \labelcref{eq:Poisson_Learning} with $n$ vertices and graph bandwidth $\eps$, properly normalized, and $u\in W^{1,p}(\Omega)$ for $1 \leq p < \frac{d}{d-1}$ is the solution of the continuum Poisson equation \labelcref{eq:Poisson_Learning_CL} with measure data (we refer the reader to  \cref{cor:main_nonconstant} for precise details).  Furthermore, we show in  \cref{cor:main_constant} that in the special case that $\rho\equiv |\Omega|^{-1}$ is constant, we can improve the rate to read
\begin{equation}\label{eq:informal_constant}
\frac{1}{n}\sum_{i=1}^n |u_{n,\epsilon}(x_i) - u(x_i)| \leq C \log(\eps^{-1})^{\frac{d}{2}+1}\eps^{\frac{2-\sigma}{d+4}},
\end{equation}
for any $\sigma>0$. Our theory is also able to establish graph $\ell^p$ convergence rates for $p>1$ very close to one; we leave the discussion of this to   \cref{rem:lprates} in \cref{sec:combination}.

We mention that the ``high probability'' condition in both results \labelcref{eq:informal_nonconstant} and \labelcref{eq:informal_constant} requires that $\eps>0$ is not too small, compared to $n$. In particular, for \labelcref{eq:informal_nonconstant} to hold with high probability we require that $\eps$ and $n$ satisfy
\[\eps \geq C\left( \frac{\log n}{n}\right)^{q},\]
where $q > 0$ and $C>0$ is a constant. The value of $q$ varies with dimension $d$ and depends on whether $\rho$ is constant or not, but in all cases is close to $q = \frac{1}{3d}$; see  \cref{rem:eps_constant,rem:eps_nonconstant} for precise details. These length scale restrictions on $\eps$ are much larger than the graph connectivity length scale which corresponds to $q=\frac{1}{d}$, and arise from our treatment of the heat kernel asymptotics in  \cref{sec:heat_kernel}. An interesting and challenging problem for future work is to establish convergence rates under less restrictive assumptions on the graph bandwidth~$\eps$. 

We can also prove results for general measures, using a stability estimate for distributional solutions of the Poisson equation with measure data. 
In this case, the rates \labelcref{eq:informal_nonconstant,eq:informal_constant} have an additive error term which measures the Wasserstein-1 distance of the measure data and the empirical measure used for the graph problem, as in \labelcref{eq:Poisson_Learning}.
For precise statements we refer to \cref{thm:rate_general_measure,cor:general_measure,rem:general_measure}.

\subsection{Calculus on general graphs}
\label{sec:graph_calculus}

Let $\X_n$ be a graph with $n \in \N$ vertices, together with symmetric edge weights $w_{xy} \geq 0$, for $x,y \in \X_n$. This section introduces graph norms, inner products, and calculus on general abstract graphs. In  \cref{sec:graph_calculus_rg} we specialize some of these notions for random geometric graphs. 

We let $\l2$ denote the Hilbert space of functions $u:\X_n\to \R$, equipped with the inner product
\begin{equation}\label{eq:graph_inner}
\ipg{u,v} = \frac{1}{n}\sum_{x\in \X_n}^n u(x)v(x),
\end{equation}
and norm  $\ng2{u}^2 = \ipg{u,u}$. 
For $p\geq 1$ we also define $p$-norms
\begin{equation}\label{eq:graph_pnorm}
\ng{p}{u}^p = \frac{1}{n}\sum_{i=1}^n |u(x_i)|^p.
\end{equation}
The \emph{degree} is a function $\deg \in \l2$ defined by
\[\deg(x) = \sum_{y \in \X_n} w_{xy}.\]
For a function $u\in \l2$ we also define the weighted mean value
\begin{equation}\label{eq:weighted_graph_mean}
(u)_{\deg} = \frac{\sum_{x \in \X_n} \deg(x) u(x)}{\sum_{x \in \X_n} \deg(x)},
\end{equation}
and the space of weighted mean zero graph functions
\begin{equation}
\label{eq:def_ell20}
    \lo2 = \left\{ u\in \l2 \st (u)_{\deg}=0\right\}.
\end{equation}
We let $\lx2$ denote the space of functions $V:\X_n^2\to \R$, which we view as \emph{vector fields} over the graph. The gradient $\nabla_{n} u\in \lx2$ of $u\in \l2$ is defined by
\begin{equation}\label{eq:graph_gradient}
\nabla_{n} u(x,y) = \sqrt{w_{xy}} \left(u(x) - u(y)\right).
\end{equation}
For two vector fields $U,V\in \lx2$ we define an inner product 
\begin{equation}\label{eq:vector_inner}
    \ipv{U,V} =\frac{1}{n(n-1)} \sum_{x,y \in \X_n} U(x,y)V(x,y),
\end{equation}
together with a norm $\nv{U}^2 = \ipv{U,U}$. 
Moreover, for a function $u \in \ell^2(\mathcal X_n)$, we define two graph Laplacians. The unnormalized graph Laplacian is given by
\begin{equation}\label{eq:def_unnormalized_graph_laplacian}
\L u(x) = \sum_{y \in \X_n} w_{xy} \left(u(x)-u(y)\right),
\end{equation}
while the random walk Laplacian is defined as
\begin{equation}\label{eq:def_rw_graph_laplacian}
\Lr u(x) = \frac{1}{\deg(x)}\sum_{y \in \X_n} w_{xy}\left(u(x)-u(y)\right) = u(x) - \sum_{y \in \X_n} \frac{w_{xy}}{\deg(x)} u(y).
\end{equation}
Both are connected via the identity $\Lr u = \deg^{-1}\L u$.  The adjoint $\Lr^T$ of $\Lr$ is defined as
\begin{equation}
\label{eq:def_rw_graph_laplacian_adjoint}
\Lr^T u(x) \defeq   u(x) - \sum_{y \in \X_n} \frac{w_{xy}}{\deg(y)} u(y).
\end{equation}
The random walk Laplacian and its adjoint are operators $\Lr,\Lr^T:\ell^2(\X_n) \to \ell^2(\X_n)$ and satisfy 
\[\ipg{\Lr u,v} = \ipg{u,\Lr^T v}.\]
Another important identity relating $\Lr$ and $\Lr^T$ that is readily verified is
\begin{equation}\label{eq:Lr}
\Lr^T u = \deg \Lr (\deg^{-1}u).
\end{equation}

\subsection{Calculus on random geometric graphs}
\label{sec:graph_calculus_rg}

In the setting of a random geometric graphs, we make slightly different definitions of gradients and Laplacians, so that all objects are consistent with the analogous objects in the continuum limit. Recall from the start of \cref{sec:main_results} that a random geometric graph has node set $\X_n=\{x_1,\dots,x_n\}$, where $x_1,\dots,x_n$ are $(i.i.d.)$ random variables on a domain $\Omega \subset \R^d$ with probability density function $\rho$ and edge weights $w_{xy} =  \eta_\eps\left(\abs{x-y}\right)$ for all $x,y \in \X_{n}$ where $\eta_\eps(t) =\frac{1}{\eps^d}\eta(t/\eps)$.  
As in the case of an abstract graph, the node degree $\deg_{n,\eps}$ at $x \in \X_{n}$ is given by 
\begin{equation}\label{eq:def_deg_ne}
\deg_{n,\eps}(x) = \sum_{y \in \X_{n}} \eta_\eps\left(\abs{x-y}\right),
\end{equation}
but when $n$ and $\eps$ are fixed, we sometimes drop the subscript and write $\deg \equiv \deg_{n,\eps}$.  It is important to note that the definition of degree \labelcref{eq:def_deg_ne} also makes sense for general $x\in\R^d$. Notice that this is the same definition as in  \cref{sec:graph_calculus} with $w_{xy} = \eta_\eps(|x-y|)$. The definition of $\l2$ inner product \labelcref{eq:graph_inner}, $p$-norms \labelcref{eq:graph_pnorm}, weighted mean value \labelcref{eq:weighted_graph_mean}, space of mean zero graph functions \labelcref{eq:def_ell20}, and inner product between vector fields \labelcref{eq:vector_inner}, as well as the induced norm, are all the same as in  \cref{sec:graph_calculus}. 

However, to obtain the correct continuum limits, it is necessary to consider a different scaling for the gradient and Laplacians. In particular, we will scale the discrete gradient in the following way. For $u \in \ell^2(\X_{n})$ and $x,y \in \X_{n}$ we set
\begin{equation*}
\nabla_{n,\eps} u(x,y) 
\defeq   \sqrt{\frac{w_{xy}}{\sigma_\eta \eps^2}} \left(u(x) - u(y)\right)
= \sqrt{\frac{\eta_\eps\left(\abs{x-y}\right)}{\sigma_\eta \eps^2}} \left(u(x) - u(y)\right),
\end{equation*}
where $\sigma_\eta$ is the constant from \cref{ass:eta}. With this definition of the gradient, its squared norm is given by 
\[\|\nabla_{n,\eps}u\|_{\lx2}^2 = \ipv{\nabla u_{n,\epsilon},\nabla u_{n,\epsilon}} =  \frac{1}{\sigma_\eta \eps^2n(n-1)}\sum_{x,y\in \X_n}\eta_\eps(|x-y|)(u(x) - u(y))^2, \]
which is the \emph{graph Dirichlet energy}, scaled in a way so that it is consistent with the continuum Dirichlet energy $\int_\Omega \rho^2 |\nabla u|^2 \d x$ as $n\to \infty$ and $\eps\to 0$.   We define the graph Laplacian by
\begin{equation}\label{eq:graph_laplacian_rg}
\L_{n,\eps} u(x) = \frac{1}{\sigma_\eta \varepsilon^2 (n-1)} \L u(x) = \frac{1}{\sigma_\eta \varepsilon^2 (n-1)} \sum_{y \in \X_n} \eta_\eps(|x-y|) \left(u(x)-u(y)\right),  
\end{equation}
which satisfies
\[\ip{u,\L_{n,\epsilon} v} = \ipv{\nabla_{n,\epsilon} u, \nabla_{n,\epsilon}v}\]
for all $u,v\in \l2$. 

In the random geometric setting, we also introduce the $H^1(\X_{n})$ graph inner product for $u,v \in \ell^2(\X_{n})$, by setting
\begin{equation*}
\ips{u,v} = \ip{u,v} + \ipv{\nabla_{n,\epsilon}u,\nabla_{n,\eps}}
\end{equation*}
and the $H^1(\X_{n})$ norm $\norm{u}_{H^1(\X_{n})}^2 \defeq   \ips{u,u}$. This is, again, consistent in the continuum limit with an $H^1(\Omega)$ inner product weighted by the density $\rho$.

\subsection{Notation}

We denote the Lipschitz constant of a function $f:\Omega\to \R$ by 
\[\Lip(f;\Omega) \defeq   \sup_{\substack{x,y\in\Omega\\x\neq y}}\frac{\abs{f(x)-f(y)}}{\abs{x-y}},\]
and we say a function is Lipschitz continuous on $\Omega$ if $\Lip(f;\Omega)<\infty$.  We define the H\"older semi-norm of a function $f:\Omega\to \R^m$ by 
\[ [f]_\alpha = \sup_{\substack{x,y\in \Omega \\ x\neq y}} \frac{|f(x) -f(y)|}{|x-y|}, \]
where $0 < \alpha \leq 1$. Note that $[f]_1 = \Lip(f;\Omega)$. 

We let $\omega_d$ denote the volume of the unit ball in $\R^{d}$. $\mathcal B(E)$ denotes the Borel $\sigma$-algebra of a set $E \subset \R^d$. For $\tau\geq 0$ we define the inner parallel set and the strip of width $\tau$ around the boundary as
\[\Omega_\tau \defeq   \left\lbrace x\in\Omega\st\dist(x,\partial\Omega)>\tau\right\rbrace,
\qquad
\partial_\tau \Omega = \Omega \setminus \Omega_\tau.
\]
for $f \in L^\infty(\Omega)$ and $D \subset \Omega$ we define the \emph{oscillation} of $f$ on $D$ by
\begin{equation*}
\osc_D f \defeq   \sup_{x,y \in D} \abs{f(x)-f(y)}.
\end{equation*}
We also define the weighted mean zero Sobolev space by 
\begin{equation}\label{eq:H1rho}
H^1_\rho(\Omega) \defeq   \set{v \in H^1_\rho(\Omega) \colon (v)_\rho \defeq   \frac{\int_\Omega v \rho^2 \dx}{\int_\Omega \rho^2 \dx} = 0}.
\end{equation}

\begin{remark}[The symbol $\lesssim$]
Throughout this paper, we make heavy use of the notation $f \lesssim g$, which is standard in the PDE literature and, in our case, means that $f \leq C g$ for a constant $C>0$ that just depends on various quantities from \cref{ass:eta,ass:omega,ass:rho}, i.e., the domain $\Omega$, the probability density $\rho$ with its Lipschitz constant and bounds, as well as the kernel function $\eta$ with the associated quantities $\eta(0)$, $\sigma_\eta$, etc. We will specify in each section which quantities the constants depend upon.
\end{remark}

\begin{remark}[The big-$\O$ notation]
We mention that our use of big-$\O$ notation is non-asymptotic so that $f(x) = \O(g(x))$ means there exists $C>0$ such that $|f(x)|\leq Cg(x)$ for all $x$.  Also, the notation $a \ll 1$ means there exists a constant $0 < c <1$ such that $a\leq c$, and $A \gtrsim B$ means there exists $C>1$ such that $A \geq C B$. The constants in the big-$\O$ notation depend on the same quantities as the constants in the $\lesssim$ symbol. 
\end{remark}

\section{Poisson equations with measure data and their approximation}
\label{sec:smoothed_poisson}

In this section we shall introduce the limiting equation of Poisson learning rigorously. 
For this we will first study the well-posedness of weighted Poisson equations with measure data and Neumann boundary conditions, then turn to Green's functions, investigate refined regularity of solutions with regular data, and finally prove stability both for measure and for regular data. 
While many of these results are widely known in the PDE community, we have to reprove most of them. 
This is necessary because of the lack of references for Neumann boundary conditions, and because we shall require explicit constants in what follows later.

The ultimate goal of this section is to prove $L^p$-rates of convergence of solutions with mollified right hand side to the distributional solution with measure data, where typically $1\leq p < \frac{d}{d-1}$. 
For this we will require the regularity statements for the Green's functions mentioned above.
It turns out that these convergence rates can be significantly improved for unweighted Poisson equations, i.e., where the differential operator is the Laplacian.

The Poisson equation which we study in this section is defined in the following.
\begin{definition}\label{def:distr_sol_Poisson_eq}
Let $\Omega$ satisfy \cref{ass:omega_lipschitz} and $\varrho$ satisfy \cref{ass:rho_bounded}, and let $f\in\M\left(\overline\Omega\right)$ be a  finite, real-valued Radon measure which satisfies the compatibility condition $f(\overline{\Omega}) = 0$.
We say that $u\in W^{1,p}(\Omega)$, $p>1$ is a \emph{distributional solution} to
\begin{equation} \label{eq:continuum_pde}
    \begin{dcases}
        -\div \varrho \nabla u = f & \text{in } \Omega, \\
        \frac{\partial u}{\partial \nu} = 0 & \text{on } \partial \Omega, \\
        \int_\Omega u \varrho \d x = 0,
    \end{dcases}
\end{equation}
if for all $\psi \in C^\infty(\overline\Omega)$ it holds
\begin{equation*}
    \int_\Omega \varrho \, \nabla u \cdot \nabla \psi \d x 
    = 
    \int_{\overline\Omega} \psi \d f,
    \qquad 
    \int_\Omega u \varrho \d x = 0\,.
\end{equation*}
\end{definition}



Elliptic equations with measure data like \labelcref{eq:continuum_pde} have been a very active field of research in the last decades.
In particular, there are a couple of different solutions concepts which account for the fact that gradients of solutions are not square integrable, in general. 
One of the early approaches is due to Stampacchia \cite{stampacchia1965probleme} who proved existence and uniqueness of so-called duality solutions, which works best for linear problems.
The drawback of this approach is that it just asserts the existence of a solution $u \in L^p(\Omega)$ for $p<\frac{d}{d-2}$ but does not allow any statements regarding the regularity of its gradient. 
Taking into account that the right hand side of the weak formulation in \cref{def:distr_sol_Poisson_eq} is meaningful for functions $\psi \in L^q(\Omega)$ for $q>d$ (which consequently possess a Hölder-continuous representative) indicates that one should expect $\nabla u \in L^p(\Omega)$ for $1\leq p < \frac{d}{d-1}$.
Indeed, existence of distributional solutions $u\in W^{1,p}(\Omega)$ for $1\leq p<\frac{d}{d-1}$ can be proved by mollification and goes back to \cite{boccardo1989non}, where the authors used this approach for nonlinear equations.
However, uniqueness cannot be proved with this approach but was established for linear equations by showing that such distributional solutions are also duality solutions \cite{dal1999renormalized}. 
For further reading on elliptic equations with measure data and a discussion of the literature we also refer to \cite{mingione2007calderon}, where the author extends Calder\'on--Zygmund techniques to equations with measure data and proves that the gradient of distributional solutions possesses a fractional derivative.
Since most references, including the ones mentioned above, deal with the case of Dirichlet boundary data, we keep this section self-contained and provide all proofs.

In the context of Poisson learning, we are  interested in measure data of the form
\begin{align}\label{eq:Poisson_data}
    f \defeq   \sum_{i=1}^m a_i \delta_{x_i}
    \quad 
    \text{with the compatibility condition}
   \quad
    \sum_{i=1}^m a_i = 0.
\end{align}
We will show that we can express a solution $u$ of the Poisson learning problem in terms of Green's functions $G^x \in W^{1,p}(\Omega)$, which we introduce in \cref{sec:Greensfunctions}, meaning that $u = \sum_{i=1}^m a_i G^{x_i} \in W^{1,p}(\Omega)$.
Furthermore, we shall prove convergence rates for solutions of the Poisson equation with mollified data to this special solution with data of the form \labelcref{eq:Poisson_data} in terms of the mollification parameter.

\subsection{Existence of solutions}

We begin with a general existence result for solutions $u\in W^{1,p}(\Omega)$ with $p \in \left[1, d/(d-1)\right)$ of \labelcref{eq:continuum_pde} for general measure data.

\begin{proposition}\label{prop:existence_solution_continuum_pde}
Let $\Omega$ satisfy \cref{ass:omega_lipschitz} and $\varrho$ satisfy \cref{ass:rho_bounded}. Moreover, let $f\in\M\left(\overline\Omega\right)$ be a finite, real-valued Radon measure with $f({\overline\Omega})=0$. 

Assume there exists a family $\set{f_n} \subset W^{-1,2}(\Omega)\cap L^1(\Omega)$ 
such that%
\begin{enumerate}[label=(\roman*)]
    \item $\sup_{n \in \N} \norm{f_n}_{L^1(\Omega)} < \infty$,
    \item $\int_\Omega f_n \d x=0$,
    \item for all $\psi \in C^\infty(\overline \Omega)$ it holds $\int_\Omega \psi \, f_n \d x \to \int_{\overline\Omega} \psi \d f$ as $n \to \infty$.
\end{enumerate}
Then, there exist unique functions $u_n \in W^{1,2}(\Omega)$ which are weak solutions to \labelcref{eq:continuum_pde} with data $f_n$ such that for all $p \in \left[1, d/(d-1)\right)$,
\begin{equation}\label{eq:bound:W1p_norm_un}
    \sup_{n \in \N} \norm{u_n}_{W^{1,p}(\Omega)} \leq C,
\end{equation}
where $C=C(\varrho, p, d, \Omega, \sup_{n \in \N} \norm{f_n}_{L^1(\Omega)})$ is a constant.

Moreover, there exists a function $u \in W^{1,p}\left(\Omega\right)$, which is a distributional solution to \labelcref{eq:continuum_pde}, such that $u_{n_k} \rightharpoonup u$ in $W^{1,p}\left(\Omega\right)$ as $k \to \infty$ for a subsequence $\set{u_{n_k}} \subset \set{u_n}$.
\end{proposition}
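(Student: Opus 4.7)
The argument proceeds in three stages: existence and uniqueness of the $W^{1,2}$-approximants $u_n$, a uniform $W^{1,p}$-estimate via the Boccardo--Gallouët truncation technique, and passage to the distributional limit.

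First, I would establish existence of $u_n\in W^{1,2}(\Omega)$ with weighted mean zero. Since $f_n\in W^{-1,2}(\Omega)$ has zero mean, the bilinear form $B(u,v)=\int_\Omega \varrho\,\nabla u\cdot\nabla v\dx$ is continuous and coercive on the Hilbert space $H^1_\rho(\Omega)$ of \labelcref{eq:H1rho} (coercivity follows from $\varrho\geq\rho_{\min}>0$ combined with the Poincaré--Wirtinger inequality for Lipschitz domains), so Lax--Milgram yields a unique $u_n\in H^1_\rho(\Omega)$ solving the weak form of \labelcref{eq:continuum_pde} with data $f_n$.

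The main obstacle, and the heart of the proof, is the uniform $W^{1,p}$-bound \labelcref{eq:bound:W1p_norm_un}. I would follow the classical Boccardo--Gallouët strategy. Let $T_k(s)=\max(-k,\min(k,s))$ for $k>0$. Testing the equation with $T_k(u_n)\in H^1(\Omega)$ and using $\varrho\geq\rho_{\min}$ yields
\begin{equation*}
\rho_{\min}\int_\Omega |\nabla T_k(u_n)|^2\dx \;\leq\; \int_\Omega \varrho\,|\nabla T_k(u_n)|^2\dx \;=\;\int_\Omega T_k(u_n)\,f_n\dx \;\leq\; k\sup_n\|f_n\|_{L^1(\Omega)}.
\end{equation*}
Combining this with the Sobolev--Poincaré inequality (after subtracting the mean, which is controlled using the zero-mean condition $(u_n)_\rho=0$ and Chebyshev) gives the Marcinkiewicz estimates
\begin{equation*}
\bigl|\{|u_n|>k\}\bigr|\lesssim k^{-d/(d-2)},\qquad \bigl|\{|\nabla u_n|>\lambda\}\bigr|\lesssim \lambda^{-d/(d-1)},
\end{equation*}
by the standard level-set decomposition: split $\{|\nabla u_n|>\lambda\}=(\{|\nabla u_n|>\lambda\}\cap\{|u_n|\leq k\})\cup\{|u_n|>k\}$ and optimize in $k=k(\lambda)$. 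Integrating $|\nabla u_n|^p$ against its distribution function then yields $\|\nabla u_n\|_{L^p(\Omega)}\leq C$ for every $p<d/(d-1)$, with $C$ depending only on $\varrho,p,d,\Omega$ and $\sup_n\|f_n\|_{L^1}$; combined with the zero-mean condition and the Poincaré inequality this produces \labelcref{eq:bound:W1p_norm_un}.

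Finally, for the convergence, reflexivity of $W^{1,p}(\Omega)$ and Rellich--Kondrachov supply a subsequence $u_{n_k}\rightharpoonup u$ in $W^{1,p}(\Omega)$ and $u_{n_k}\to u$ in $L^1(\Omega)$. To pass to the limit in the weak formulation, fix $\psi\in C^\infty(\overline\Omega)$: the gradient term $\int_\Omega \varrho\,\nabla u_{n_k}\cdot\nabla\psi\dx$ converges by weak $L^p$-convergence against $\varrho\nabla\psi\in L^\infty$, while the right-hand side $\int_\Omega \psi f_{n_k}\dx\to\int_{\overline\Omega}\psi\d f$ by hypothesis (iii). The constraint $\int_\Omega u\,\varrho\dx=0$ passes to the limit by $L^1$-convergence, so $u$ is a distributional solution in the sense of \cref{def:distr_sol_Poisson_eq}, concluding the proof.
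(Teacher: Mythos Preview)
Your plan is correct and follows the same Boccardo--Gallou\"et strategy as the paper. The paper uses a slightly different variant: instead of testing with $T_k(u_n)$ and extracting Marcinkiewicz (weak-$L^q$) estimates, it tests with the ``shell'' functions $\psi_m(u_n)$ (piecewise linear, equal to $0$ on $\{|s|<m\}$ and $\pm 1$ on $\{|s|>m+1\}$), obtaining $\int_{\{m\le|u_n|\le m+1\}}|\nabla u_n|^2\le C$ uniformly in $m$ and $n$; it then bounds $\int_\Omega|\nabla u_n|^p$ by summing these shell contributions via H\"older and closes the loop by applying Sobolev--Poincar\'e directly to $u_n$, which has zero $\varrho$-weighted mean by construction. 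This last point is exactly where your sketch is thinnest: your appeal to Sobolev--Poincar\'e for $T_k(u_n)$ in the Neumann setting requires controlling the mean of the \emph{truncation}, and the phrase ``controlled using the zero-mean condition $(u_n)_\rho=0$ and Chebyshev'' hides a genuine step, since $T_k(u_n)$ does not inherit zero mean from $u_n$ and no a~priori $L^1$ bound on $u_n$ is yet available. The paper's choice of test function avoids this issue entirely. Two further small points: your bound $|\{|u_n|>k\}|\lesssim k^{-d/(d-2)}$ is only meaningful for $d\ge 3$; the paper works throughout with the Sobolev exponent $p^*=dp/(d-p)$ (valid for all $p<d$), and dispatches $d=1$ separately via $W^{1,2}\hookrightarrow C^0$.
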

\begin{remark}[Related results]
    The arguments presented in this proof follow very closely the paper \cite{boccardo1989non}, where this result was shown for a large class of (possibly non-linear) operators in divergence form with homogeneous Dirichlet boundary conditions. 
    We also refer to \cite{gruter1982green}, where a Green's function for uniformly elliptic coefficients and Dirichlet boundary conditions is constructed, corresponding to the case when $f$ is a Dirac measure. 
    See also \cite{hofmann2007green} for a Green's matrix for systems.
\end{remark}
\begin{remark}[Uniqueness]
    At this point we refrain from proving uniqueness as in \cite{dal1999renormalized} since this would require introducing the concept of Stampacchia's duality solutions for \labelcref{eq:continuum_pde} and some regularity properties.
    Later in this section we shall prove uniqueness for Green's functions and then later for general measure data, under some regularity assumptions on $\partial\Omega$ and $\varrho$, cf. \cref{rem:uniqueness_distributional}.
\end{remark}
\begin{remark}[Larger class of test functions]\label{rem:more_test_functions}
    Since \cref{prop:existence_solution_continuum_pde} asserts the existence of a distributional solution $u \in W^{1,p}(\Omega)$ for $1\leq p < \frac{d}{d-1}$, arising as weak limits of variational solutions, it is obvious to see that in fact one can enlarge the class of test functions in \cref{def:distr_sol_Poisson_eq} and obtain that
    \begin{align*}
        \int_\Omega \varrho \, \nabla u \cdot \nabla \psi \d x 
        = 
        \int_{\overline\Omega} \psi \d f
    \end{align*}
    holds even for all test functions $\psi \in W^{1,q}(\Omega)$ with $q>d$. 
\end{remark}
\begin{remark}[Approximating sequences]
    One my ask under which conditions on the measure $f$ a suitable approximating sequence $f_n$ exists.
    To construct such a sequence, one can convolve $f$ with a mollifier and subtract the mass, i.e., $f_n \defeq   f\star\phi_n - (f\star\phi_n)(\Omega)$. 
    Under the condition that $\abs{f}(\{x\in\Omega\st\dist(x,\partial\Omega)<\frac{1}{n}\}) \to 0$ as $n\to \infty$ (i.e., not too much mass concentrates near or on the boundary) one can prove that $f_n$ satisfies the properties above.
    In particular, the condition is satisfied if the support of $f$ is compactly contained in $\Omega$ which is an assumption that we will have to make for many other statements as well.
\end{remark}
\begin{proof}[Proof of \cref{prop:existence_solution_continuum_pde}]
    Since the proof from the original reference \cite{boccardo1989non} can be adapted easily, we postpone the proof of this proposition to \cref{app:smoothed_poisson}.
\end{proof}

\subsection{Green's functions}\label{sec:Greensfunctions}

For the study of the Poisson learning problem in the continuum, we will make frequent use of the Green's function $G^y$ with pole $y \in \Omega$, which is defined as a solution to \labelcref{eq:continuum_pde} with right hand side given by $\delta_y - \varrho / \int_\Omega \varrho \dx$.

\begin{definition}\label{def:green_function}
    Let $\Omega$ satisfy \cref{ass:omega_lipschitz} and $\varrho$ satisfy \cref{ass:rho_bounded}, and let $y \in \Omega$.
    Define $G^y \in W^{1,p}(\Omega)$, $p \in [1, d/(d-1))$, to be a distributional solution to \labelcref{eq:continuum_pde} with right hand side $f\defeq  \delta_y - \varrho / \int_\Omega \varrho \dx$. Then, we say that $G^y$ is a \emph{Green's function with pole in $y$.}
\end{definition}
By \cref{prop:existence_solution_continuum_pde} these function to indeed exist and can be constructed by the following approximation scheme.
For a fixed $y\in\Omega$, consider 
\begin{equation*}
        f_n \defeq   \phi_n^y - \frac{\varrho}{\int_\Omega \varrho \dx},
\end{equation*}
where $\supp(\phi)=B(0,1)$, $\phi \in L^\infty(B(0,1))$ with $\phi \geq 0$ such that $\norm \phi _{L^1(B(0,1))}=1$ and $\phi_n^y(x) \defeq   n^{d}\phi\left(n\left(x-y\right)\right)$, defined for $n>\frac{1}{\dist(y,\partial\Omega)}$. This choice $\{f_n\}$ as right hand side for \labelcref{eq:continuum_pde} satisfies the necessary conditions, and so we obtain weak solutions  $G_n^y \in W^{1,2}(\Omega)$ that converge (up to a subsequence) to the desired function $G^y \in W^{1,p}\left(\Omega\right)$. 

In this section we will show that these function $G^y$ are indeed Green's functions as well as collect several useful regularity results. 
We begin with the Green's function property for bounded right hand side.
\begin{lemma}\label{lem:v_as_Greens_convolution}
    Let $\Omega$ satisfy \cref{ass:omega_lipschitz} and $\varrho$ satisfy \cref{ass:rho_bounded}. 
    For $y \in \Omega$ let $G^y \in W^{1,p}$, $p \in [1, d/(d-1))$ be as in \cref{def:green_function}. Moreover, set $p^* = dp/(d-p)$ and $q^* = p^*/(p^*-1)$.
    
    Let $f \in L^{q^*}(\Omega)$
    with $\int_\Omega f \d x = 0$ 
    Then $v \in W^{1,2}(\Omega)$ is a weak solution to \labelcref{eq:continuum_pde} with right hand side $f$ if and only if    \begin{equation}\label{eq:v_as_Greens_convolution}
        v(x) = \int_\Omega G^x(y) f(y) \d y. 
    \end{equation}
\end{lemma}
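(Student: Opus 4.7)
The plan is to prove the equivalence by combining a Green's function duality argument (for the $\Rightarrow$ direction) with a uniqueness argument (for the $\Leftarrow$ direction).

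First I would establish existence of a unique weak $W^{1,2}$ solution at the stronger regularity level permitted by the integrability of $f$. Since $q^{*} = p^{*}/(p^{*}-1)$ with $p^{*} \in [d/(d-1),\, d/(d-2))$, the Sobolev embedding $H^{1}(\Omega) \hookrightarrow L^{p^{*}}(\Omega)$ (which holds because $p^{*} < d/(d-2) \le 2^{*}$) gives $L^{q^{*}}(\Omega) \hookrightarrow H^{1}(\Omega)^{*}$. Consequently, Lax--Milgram on the mean-zero subspace of $H^{1}(\Omega)$, where a Poincar\'e inequality is available due to the Lipschitz boundary assumption, produces a unique weak solution $v \in W^{1,2}(\Omega)$ of \labelcref{eq:continuum_pde} with datum $f$.

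For the $\Rightarrow$ direction I would fix $x \in \Omega$ and exploit the approximating Green's functions $G^{x}_{n} \in W^{1,2}(\Omega)$ from the construction following \cref{def:green_function}, which solve \labelcref{eq:continuum_pde} with smoothed data $f_n = \phi_n^{x} - \varrho/\int_{\Omega}\varrho\dy$. Using $G^{x}_{n}$ as a test function in the equation for $v$ and $v$ as a test function in the equation for $G^{x}_{n}$, symmetry of the bilinear form $(u,w) \mapsto \int_{\Omega}\varrho\,\nabla u \cdot \nabla w \dy$ yields
\[ \int_{\Omega} v(y)\,\phi_{n}^{x}(y) \dy \;=\; \int_{\Omega} \varrho\,\nabla v \cdot \nabla G^{x}_{n} \dy \;=\; \int_{\Omega} G^{x}_{n}(y)\,f(y) \dy, \]
where the $\varrho/\int_{\Omega}\varrho\dy$ contribution in $f_n$ drops out thanks to the normalization $\int_{\Omega} v\,\varrho \dy = 0$. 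I would then pass $n \to \infty$: the left-hand side tends to $v(x)$ at every Lebesgue point of $v$ (i.e.\ almost every $x$) because $\phi_{n}^{x}$ is a standard mollifier; for the right-hand side, the uniform $W^{1,p}$ bound on $\{G^{x}_{n}\}$ lifts via the Sobolev embedding to a uniform $L^{p^{*}}$ bound, and combining reflexivity of $L^{p^{*}}$ with a.e.\ convergence (obtained from Rellich--Kondrachov strong convergence in $L^{r}$ for $r < p^{*}$) upgrades the weak $W^{1,p}$ convergence $G^{x}_{n} \rightharpoonup G^{x}$ to weak convergence in $L^{p^{*}}$, so that the pairing against $f \in L^{q^{*}} = (L^{p^{*}})^{*}$ converges to $\int_{\Omega} G^{x}(y)\,f(y) \dy$. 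This establishes \labelcref{eq:v_as_Greens_convolution} almost everywhere.

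The $\Leftarrow$ direction is then immediate from uniqueness: if $v \in W^{1,2}(\Omega)$ is given by the integral formula, the $\Rightarrow$ direction forces it to agree almost everywhere with the unique weak solution produced in the first step, so $v$ is itself a weak solution. The main obstacle is the limit passage in the Green's-function integral, since $G^{x}_{n} \to G^{x}$ is only available weakly in $W^{1,p}$ for subcritical $p < d/(d-1)$ and the pairing against $f \in L^{q^{*}}$ sits precisely at the endpoint of the duality $(p^{*}, q^{*})$; the argument closes only because of the Sobolev-exponent matching $p^{*} = (q^{*})'$ together with reflexivity of $L^{p^{*}}$, which makes bounded sequences weakly precompact and lets the a.e.\ limit identify the weak $L^{p^{*}}$ limit.
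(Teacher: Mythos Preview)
Your proposal is correct and follows essentially the same route as the paper: test the weak equation for $v$ against the approximating Green's functions $G^{x}_{n}\in W^{1,2}(\Omega)$, use the symmetry of the bilinear form and the mean-zero normalization to drop the $\varrho$ term, and pass to the limit using the weak $W^{1,p}$ (hence weak $L^{p^{*}}$) convergence $G^{x}_{n}\rightharpoonup G^{x}$ paired against $f\in L^{q^{*}}$; the reverse implication follows from uniqueness. The only minor difference is that the paper invokes interior elliptic regularity to make $v$ continuous and obtain $\int_{\Omega} v\,\phi^{x}_{n}\to v(x)$ for every $x\in\Omega$, whereas you use Lebesgue points to get the formula almost everywhere---both are adequate for the statement.
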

\begin{proof}
    We follow the proof given in \cite[Theorem 3.1]{hofmann2007green}.
    Let $v \in W^{1,2}(\Omega)$ be a weak solution to \labelcref{eq:continuum_pde}. Hence it holds for all $\psi \in W^{1,2}(\Omega)$
    \begin{equation*}
        \int_\Omega \varrho \nabla v \cdot \nabla \psi \d x = \int_\Omega f \psi \d x. 
    \end{equation*}
    Choosing $\psi = G_n^x$ thus yields
    \begin{equation*}
        \int_\Omega \varrho \nabla v \cdot \nabla G_n^x \d y 
        = \int_\Omega f\, G_n^x \d y.
    \end{equation*}
    Letting $n \to \infty$, the right hand side converges to 
    \begin{equation*}
        \int_\Omega f(y) G^x(y) \d y.
    \end{equation*}
    By elliptic regularity, it follows that $v$ is continuous in the interior of the domain, in particular at $x \in \Omega$. Hence,
    \begin{equation*}
        \lim_{n \to \infty} \int_\Omega \varrho \nabla v \cdot \nabla G_n^x \d y 
        = \lim_{n \to \infty} \int_\Omega v \left(\phi_n^x - \varrho\right) \d y = v(x),
    \end{equation*}
    due to the continuity and zero mean condition of $v$.
    The reverse implication follows trivially from the uniqueness of solutions to \labelcref{eq:continuum_pde}:
    Letting $v$ and $w$ be weak solutions, one obtains by linearity for all $\psi \in W^{1,2}(\Omega)$:
    \begin{align*}
        \int_\Omega\varrho\nabla (v-w)\cdot\nabla\psi\d x = 0.
    \end{align*}
    Choosing $\psi\defeq  v-w \in W^{1,2}(\Omega)$ we obtain $\nabla (v-w) = 0$ almost everywhere on $\Omega$ and hence $v-w=c$ for some constant $c\in\R$. 
    The zero mean condition then implies
    \begin{align*}
        c = \int_\Omega \varrho (v-w)\d x = \int_\Omega \varrho v\d x- \int_\Omega \varrho w\d x = 0
    \end{align*}
    and hence $v=w$ almost everywhere in $\Omega$.
\end{proof}
Next, we show a bound in $W^{1,p}(\Omega)$, uniform in the pole $y \in \Omega$.
\begin{lemma}\label{lemma:Greens_functions_W1p_bound}
Let $\Omega$ satisfy \cref{ass:omega_lipschitz} and $\varrho$ satisfy \cref{ass:rho_bounded}. For $y \in \Omega$ let $G^y \in W^{1,p}$, $p \in [1, d/(d-1))$, be as in \cref{def:green_function}. 
Then there exists $C=C(\varrho, p, d, \Omega) > 0$ such that
    \begin{equation*}
        \sup_{y \in \Omega} \norm{G^y}_{W^{1,p}(\Omega)} \leq C.
    \end{equation*}
\end{lemma}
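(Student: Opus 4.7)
The plan is to derive the uniform bound directly from the quantitative $W^{1,p}$ estimate already established in \cref{prop:existence_solution_continuum_pde}, by checking that the $L^1$-norm of the approximating data for $G^y$ is uniformly bounded in the pole $y \in \Omega$.

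First, I would recall the explicit approximation scheme used to construct $G^y$: for $y \in \Omega$ and $n > 1/\dist(y,\partial\Omega)$, set
\[
f_n^y \defeq \phi_n^y - \frac{\varrho}{\int_\Omega \varrho \d x},
\]
where $\phi_n^y(x) = n^d \phi(n(x-y))$ is a standard mollifier concentrated at $y$. By construction $\phi \geq 0$, $\|\phi\|_{L^1} = 1$, so $\|\phi_n^y\|_{L^1(\Omega)} = 1$ for $n$ large enough that $\supp \phi_n^y \subset \Omega$. Similarly, $\|\varrho/\int_\Omega \varrho \d x\|_{L^1(\Omega)} = 1$. The triangle inequality therefore gives
\[
\sup_{n} \sup_{y \in \Omega} \|f_n^y\|_{L^1(\Omega)} \leq 2,
\]
a bound that crucially does \emph{not} depend on the pole $y$.

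Next, I would invoke \cref{prop:existence_solution_continuum_pde} to obtain the associated weak solutions $G_n^y \in W^{1,2}(\Omega)$ and the quantitative estimate
\[
\sup_n \|G_n^y\|_{W^{1,p}(\Omega)} \leq C\bigl(\varrho, p, d, \Omega, \sup_n \|f_n^y\|_{L^1(\Omega)}\bigr) \leq C(\varrho, p, d, \Omega),
\]
where in the last step I use that the constant in \cref{prop:existence_solution_continuum_pde} depends on the $L^1$ data only through the uniform bound just obtained, hence can be taken independent of $y$. Passing to a weakly convergent subsequence $G_{n_k}^y \rightharpoonup G^y$ in $W^{1,p}(\Omega)$ (whose existence is part of \cref{prop:existence_solution_continuum_pde}), weak lower semicontinuity of the norm yields
\[
\|G^y\|_{W^{1,p}(\Omega)} \leq \liminf_{k \to \infty} \|G_{n_k}^y\|_{W^{1,p}(\Omega)} \leq C(\varrho, p, d, \Omega),
\]
and taking the supremum over $y \in \Omega$ completes the proof.

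There is essentially no obstacle here: the lemma is a direct corollary of \cref{prop:existence_solution_continuum_pde} once one observes that the $L^1$-norm of the mollified data at any pole $y$ is bounded by $2$, uniformly in $y$ and $n$. The only point requiring mild care is ensuring that the restriction $n > 1/\dist(y,\partial\Omega)$ does not spoil the uniformity; but since the estimate above is valid for all sufficiently large $n$ depending on $y$, and the bound on $G_n^y$ is $n$-independent, the uniform-in-$y$ conclusion follows without issue.
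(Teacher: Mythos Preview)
Your proposal is correct and follows essentially the same approach as the paper: bound the $L^1$-norm of the approximating data $f_n^y$ uniformly in $y$ and $n$, then invoke \cref{prop:existence_solution_continuum_pde}. Your argument is in fact slightly more detailed, spelling out the weak lower semicontinuity step to pass from the bound on $G_n^y$ to $G^y$, whereas the paper absorbs this into the phrase ``follows immediately from the construction.''
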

\begin{proof}
    Note that we have the uniform bound
    \begin{equation*}
        \norm{\phi_n^y - \rho}_{L^1(\Omega)} \leq 1+ \abs{\Omega} \rho_{\max}.
    \end{equation*}
    Then the proof follows immediately from the construction and \cref{prop:existence_solution_continuum_pde}.
\end{proof}


Finally, we have a regularity statement of Green's functions away from their poles.

\begin{proposition}[Regularity of Green's functions]\label{prop:Greens-regularity}
    Let $\Omega$ satisfy \cref{ass:omega_lipschitz} and $\varrho$ satisfy \cref{ass:rho_hoelder}. For $y \in \Omega$ let $G^y \in W^{1,p}$, $p \in [1, d/(d-1))$ be as in \cref{def:green_function}. Moreover, let $\beta \in (0, \alpha)$.  
    \begin{enumerate}[label=(\roman*)]
        \item Let $x_0 \in \Omega$ and $R > 0$ such that $\dist \left(x_0, \partial \Omega\right) > 5R$ and $\abs{x_0 - y} > 5R$. Then it holds that
        \begin{equation*}   
            \norm{G^y}_{C^{1,\beta}\left(B(x_0,R)\right)} \leq C,
        \end{equation*}
        where $C = C(d, \varrho, R, \Omega, \alpha, \beta)>0$ is a constant and does not depend on $x_0$ and $y$. In particular, $G^y$ is continuous for any $x \in \Omega$, $x\neq y$.
        \item Let further $\Omega$ satisfy \cref{ass:omega_hoelder}, and $0 < R < \dist(y, \partial\Omega)$. Then,
        \begin{equation*}
            \norm{G^y}_{C^{1,\beta}\left(\Omega \setminus B(y,R)\right)} \leq C,
        \end{equation*}
        where $C = C(d, \varrho, R, \Omega, \alpha, \beta)>0$ is a constant which does not depend on $y$.
    \end{enumerate}
\end{proposition}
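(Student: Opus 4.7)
My plan is to exploit the fact that away from its pole $y$, the Green's function $G^y$ solves a Neumann problem with $C^{0,\alpha}$ coefficients and bounded right-hand side, so standard elliptic regularity (De Giorgi–Nash–Moser for boundedness, Schauder for $C^{1,\beta}$) applies. The main task is then a covering argument together with a careful tracking of constants to confirm they do not depend on the pole $y$.

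For part (i), fix $x_0$ with $\dist(x_0,\partial\Omega)>5R$ and $|x_0-y|>5R$, so $B(x_0,4R)\Subset\Omega\setminus\{y\}$. Passing to the limit in the construction from \cref{prop:existence_solution_continuum_pde} with test functions $\psi\in C_c^\infty(B(x_0,4R))$, one checks that $G^y\in W^{1,p}(B(x_0,4R))$ is a weak solution of the interior equation
\begin{equation*}
    -\div(\varrho\nabla G^y) = -\frac{\varrho}{\int_\Omega \varrho\dx} \quad\text{in } B(x_0,4R),
\end{equation*}
since the approximating mollifier $\phi_n^y$ vanishes on $B(x_0,4R)$ for $n$ large. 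The right-hand side lies in $C^{0,\alpha}(B(x_0,4R))$ with norm depending only on $\varrho$ and $\Omega$. First apply the interior local boundedness estimate (Moser iteration, e.g.\ Gilbarg–Trudinger Theorem 8.17) to get
\begin{equation*}
    \|G^y\|_{L^\infty(B(x_0,3R))} \lesssim \|G^y\|_{L^1(B(x_0,4R))} + 1,
\end{equation*}
and then invoke \cref{lemma:Greens_functions_W1p_bound} together with Sobolev embedding to bound the $L^1$ term uniformly in $y$. Finally, apply interior Schauder estimates for equations in divergence form with $C^{0,\alpha}$ coefficients (Gilbarg–Trudinger Theorem 8.32) on the nested balls $B(x_0,R)\subset B(x_0,3R)$ to obtain
\begin{equation*}
    \|G^y\|_{C^{1,\beta}(B(x_0,R))} \lesssim \|G^y\|_{L^\infty(B(x_0,3R))} + 1,
\end{equation*}
with a constant depending on $d,\varrho,R,\beta,\alpha$ but neither on $x_0$ nor on $y$.

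For part (ii), I would cover $\Omega\setminus B(y,R)$ by finitely many balls of two types: interior balls sitting well inside $\Omega$ and away from $y$, handled by part (i), together with small balls centered at boundary points and chosen so that their doubles still avoid $y$. On each boundary ball the $C^{1,\alpha}$ assumption on $\partial\Omega$ allows us to flatten the boundary via a $C^{1,\alpha}$ change of coordinates; the transformed equation still has $C^{0,\alpha}$ coefficients, bounded right-hand side, and homogeneous Neumann data. Then a boundary Schauder estimate for the Neumann problem (see Lieberman, \emph{Oblique Derivative Problems for Elliptic Equations}, or Gilbarg–Trudinger Section 6.7 adapted to the Neumann setting) gives the local $C^{1,\beta}$ bound, again in terms of a local $L^\infty$ bound obtained via Moser iteration up to the boundary. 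Gluing these local bounds produces the global estimate on $\Omega\setminus B(y,R)$ with a constant independent of $y$, since the geometric data entering all of the local estimates (ball radii, coefficient norms, boundary parametrization norms, and the uniform $L^\infty$ control coming from \cref{lemma:Greens_functions_W1p_bound}) do not involve $y$.

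The main obstacle I anticipate is the $y$-uniformity: the $L^\infty$ bound on $G^y$ has to be propagated from the uniform $W^{1,p}$ bound of \cref{lemma:Greens_functions_W1p_bound} via Moser iteration, and one must check that the number and size of balls needed in the boundary covering of $\Omega\setminus B(y,R)$ can be chosen independently of $y$ (they can, by compactness of $\partial\Omega$ and the fact that we only need to avoid a single point $y$ lying at distance $\geq R$). A secondary technical point is verifying that the distributional solution $G^y$ is genuinely a weak $W^{1,2}$ solution on any subdomain compactly avoiding $y$, which follows by passing to the limit in the approximating problems from \cref{prop:existence_solution_continuum_pde} using test functions supported in such subdomains.
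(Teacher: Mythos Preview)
Your approach is essentially correct but differs from the paper's in two structural ways.

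First, the paper works throughout with the approximating sequence $G_n^y\in W^{1,2}(\Omega)$ rather than with $G^y$ itself. It applies Moser iteration (Gilbarg--Trudinger Theorem~8.17) and interior Schauder (Theorem~8.32) to the $G_n^y$, obtains a uniform $C^{1,\alpha}$ bound, and then passes to the limit via the compact embedding $C^{1,\alpha}\hookrightarrow C^{1,\beta}$ for $\beta<\alpha$; this is precisely why $\beta$ must be strictly smaller than $\alpha$. Your argument applies Moser iteration and Schauder directly to $G^y$, which a priori lies only in $W^{1,p}$ with $p<d/(d-1)$. The ``secondary technical point'' you flag---that $G^y$ is in fact $W^{1,2}_{\mathrm{loc}}$ away from the pole---is therefore not secondary at all: it is exactly the step the paper circumvents by staying at the level of the approximants. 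Your route can be made to work (e.g.\ via a Caccioppoli inequality on the $G_n^y$ to get uniform local $W^{1,2}$ bounds), but the paper's approach is cleaner because the approximants are genuine $W^{1,2}$ solutions from the start.

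Second, for part~(ii) the paper does not use a boundary covering. Instead it multiplies by a smooth cutoff $\chi$ that is $1$ on $\Omega\setminus\Omega'$ and $0$ on $\Omega''$, computes the equation for $w=\chi G_n^y$ (which now has bounded right-hand side on all of $\Omega$, including lower-order terms involving $\nabla\chi$), and applies Lieberman's global $C^{1,\alpha}$ estimate for the Neumann problem in one shot. Your covering-and-flattening argument is a perfectly standard alternative and yields the same conclusion; the cutoff approach simply avoids keeping track of the number and placement of boundary balls.
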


\begin{proof}
    This result follows immediately by combining \cref{thm:C1alpha_approximation_Gy,thm:Gy_boundary_regularity}.
\end{proof}

\subsection{Regularity for regular data}
\label{sec:regularity_estimates}

It will be necessary to estimate the Lipschitz constant of weak solution to \labelcref{eq:continuum_pde} on the whole domain and close to the boundary in terms of the data. We do so in the present section, and start with a Lipschitz estimate for the whole domain.

It will be convenient later to drop the mean zero condition on $f$. We will consider the weak solution $u\in H^1(\Omega)$ of the variational problem
\begin{equation}\label{eq:variational_problem}
\min\left\{\frac12\int_\Omega |\nabla u|^2\varrho\d x - \int_\Omega fu \d x \st u\in H^1(\Omega) \text{ and } \int_\Omega u\varrho \d x = 0  \right\},
\end{equation}
where $f\in L^\infty(\Omega)$. The minimizer $u\in H^1(\Omega)$ is a weak solution of the PDE
\[ -\div(\varrho \nabla u) = f - c_f \varrho,\]
where $c_f = \frac{1}{|\Omega|}\int_\Omega f\d x$. By \cref{lem:v_as_Greens_convolution} we can write the solution $u$ as
\begin{equation}\label{eq:green_rep_formula}
u(x) = \int_\Omega G^x(y) (f(y) - c_f\varrho) \d y =  \int_\Omega G^x(y) f(y)\d y,
\end{equation}
due to the fact that $\int_\Omega G^y(x)\varrho \d x = 0$. 

\begin{proposition}[Global regularity]\label{prop:global_regularity}
Let $\Omega$ satisfy \cref{ass:omega_lipschitz} and $\varrho$ satisfy \cref{ass:rho_bounded}. Let $p \in [1,d/(d-1))$ and set $p^* = dp/(d-p)$ and $q^* = p^*/(p^*-1)$.

Let $f\in L^{q^*}(\Omega)$ and let $u\in H^1(\Omega)$ be the minimizer of \labelcref{eq:variational_problem}. Then the following hold:
\begin{enumerate}[label=(\roman*)]
\item Then $u\in L^\infty(\Omega)$ and we have
\begin{equation}\label{eq:Linfty_bound}
\|u\|_{L^\infty(\Omega)} \leq C\|f\|_{L^{q^*}(\Omega)},
\end{equation}
where $C = C(\varrho, p, d, \Omega) > 0$. 
\item If we have moreover that $\Omega$ satisfies \cref{ass:omega_hoelder} and $\varrho$ satisfies \cref{ass:rho_hoelder} and $f \in L^\infty(\Omega)$, then $u\in C^{1,\alpha}(\Omega)$ and 
\begin{equation}\label{eq:C1a_bound}
\|u\|_{C^{1,\alpha}(\Omega)} \leq C\|f\|_{L^{\infty}(\Omega)},
\end{equation}
where $C = C(\varrho, d, \Omega,\alpha) > 0$.
\end{enumerate}
\end{proposition}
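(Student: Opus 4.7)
The plan for part~(i) is to combine the Green's function representation~\eqref{eq:green_rep_formula} with Hölder's inequality and the uniform $W^{1,p}$ bound from \cref{lemma:Greens_functions_W1p_bound}. Starting from $u(x)=\int_\Omega G^x(y)\,f(y)\dy$, Hölder's inequality with conjugate exponents $p^*$ and $q^*$ yields $|u(x)|\leq \|G^x\|_{L^{p^*}(\Omega)}\|f\|_{L^{q^*}(\Omega)}$. Since $p<d/(d-1)<d$, the Sobolev embedding $W^{1,p}(\Omega)\hookrightarrow L^{p^*}(\Omega)$ is available (the Lipschitz regularity of $\partial\Omega$ enters here via the extension theorem), and \cref{lemma:Greens_functions_W1p_bound} supplies a bound $\|G^y\|_{W^{1,p}(\Omega)}\leq C$ uniform in $y$. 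The symmetry $G^x(y)=G^y(x)$, which follows from the self-adjointness of $-\div(\varrho\nabla\cdot)$ under homogeneous Neumann conditions together with the mean-zero normalization, then delivers~\eqref{eq:Linfty_bound} upon taking the supremum over $x\in\Omega$.

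For part~(ii) I would first invoke part~(i) (using that $L^\infty(\Omega)\subset L^{q^*}(\Omega)$ on a bounded domain) to obtain $\|u\|_{L^\infty(\Omega)}\lesssim\|f\|_{L^\infty(\Omega)}$, and then promote this bound to a $C^{1,\alpha}$ estimate. The function $u$ is a weak solution of the Neumann problem
\begin{equation*}
-\div(\varrho\nabla u)=g\quad\text{in }\Omega,\qquad \frac{\partial u}{\partial\nu}=0\quad\text{on }\partial\Omega,
\end{equation*}
with $g=f-c_f\varrho\in L^\infty(\Omega)$. Interior $C^{1,\alpha}$ regularity, with a constant depending only on $\|u\|_{L^\infty}$, $\|g\|_{L^\infty}$, and the Hölder norm of $\varrho$, follows from the same Campanato-type arguments that underlie the proof of \cref{prop:Greens-regularity}. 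For the boundary contribution I would locally flatten $\partial\Omega$ by a $C^{1,\alpha}$ diffeomorphism (available by \cref{ass:omega_hoelder}), extend the resulting flattened problem by even reflection across $\{x_d=0\}$ to convert the homogeneous Neumann condition into an interior problem whose coefficients remain in $C^{0,\alpha}$ by \cref{ass:rho_hoelder}, and then apply the same interior $C^{1,\alpha}$ estimate. Adding the interior and boundary bounds produces~\eqref{eq:C1a_bound}.

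The main obstacle is the boundary $C^{1,\alpha}$ estimate: most textbook Schauder results are stated for Dirichlet data or require the right-hand side to be Hölder continuous rather than merely bounded. The even-reflection trick sketched above works cleanly provided the flattening preserves the divergence structure and converts the homogeneous Neumann condition into a transmission condition that the reflected coefficients automatically satisfy; verifying this in detail is routine but requires some care in tracking how the transformed matrix inherits $C^{0,\alpha}$ regularity. An alternative is to invoke boundary regularity theory for oblique derivative problems (e.g.\ Lieberman) or to argue via the Morrey--Campanato characterization of $C^{1,\alpha}$ directly up to $\partial\Omega$, which delivers the estimate in terms of $\|g\|_{L^p(\Omega)}$ for any $p>d$ and hence in terms of $\|f\|_{L^\infty(\Omega)}$, as required.
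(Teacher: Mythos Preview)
Your proof of part~(i) is essentially identical to the paper's: Green's representation, Hölder with exponents $(p^*,q^*)$, Sobolev embedding, and the uniform $W^{1,p}$ bound from \cref{lemma:Greens_functions_W1p_bound}. One small remark: the symmetry $G^x(y)=G^y(x)$ is not actually needed here, since \cref{lemma:Greens_functions_W1p_bound} already gives a bound uniform over the \emph{pole}, and $\|G^x\|_{L^{p^*}(\Omega)}$ is the $L^{p^*}$ norm in the second variable of the Green's function with pole at $x$.

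For part~(ii), the paper takes precisely the shortcut you list as an alternative: it invokes the global $C^{1,\alpha}$ estimate for oblique derivative problems from Lieberman's book, namely \cite[Theorem~5.54]{lieberman2003oblique}, which gives
\[
\|u\|_{C^{1,\alpha}(\Omega)} \leq C\bigl(\|u\|_{L^\infty(\Omega)} + \|f\|_{L^\infty(\Omega)}\bigr),
\]
and then combines this with part~(i). Your primary route via flattening and even reflection is correct and is essentially how such boundary estimates are proved, but citing Lieberman avoids the bookkeeping you flag (tracking $C^{0,\alpha}$ regularity of the transformed coefficient matrix and verifying the reflected problem is well-posed). Either approach works; the paper's is simply shorter.
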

\begin{remark}
In fact, under \cref{ass:omega_lipschitz,ass:rho_bounded} the function $u$ is not just in $L^\infty(\Omega)$ but even Hölder continuous and one could replace (i) by asserting that $u\in C^{0,\gamma}(\Omega)$ for some $\gamma>0$ and it holds
\begin{align*}
    \|u\|_{C^{0,\gamma}(\Omega)} \leq C\|f\|_{L^{q^*}(\Omega)},
\end{align*}
where $C = C(\varrho, p, \Omega, \gamma) > 0$. 
This was proved in \cite[Theorem 3.14]{nittka2011regularity}, where we note that $p<\frac{d}{d-1}$ implies that $q^*>\frac{d}{2}$ which makes this result applicable.
\end{remark}
\begin{proof}
Using \labelcref{eq:green_rep_formula} and the Hölder and Sobolev inequalities we have
\begin{equation*}
|u(x)| \leq \|G^x\|_{L^{p^*}(\Omega)}\|f\|_{L^{q^*}(\Omega)} \leq C\|G^x\|_{W^{1,p}(\Omega)}\|f\|_{L^{q^*}(\Omega)} \leq C\|f\|_{L^{q^*}(\Omega)},
\end{equation*}
where we use that $\|G^x\|_{W^{1,p}(\Omega)}$ is bounded, thanks to \cref{lemma:Greens_functions_W1p_bound}. This proves (i).

To prove (ii), we use the $C^{1,\alpha}$ estimate \cite[Theorem 5.54]{lieberman2003oblique} to obtain
\[\|u\|_{C^{1,\alpha}(\Omega)} \leq C \left(\|u\|_{L^\infty(\Omega)} + \|f\|_{L^\infty(\Omega)}\right),\]
where $C=C(\varrho,d,\Omega,\alpha)$. Combining this with (i) completes the proof of (ii).
\end{proof}
\begin{remark}\label{rem:specific_bounds}
We remark that in \cref{prop:global_regularity}, we can make a specific choice of $p \in (1,d/(d-1))$ such that $q^* = \frac{d+1}{2}$ (by noting that $p=1$ yields $q^*=d$ and the invalid choice $p=d/(d-1)$ yields $q^*=d/2$). Therefore it follows from \labelcref{eq:Linfty_bound} that
\begin{equation}\label{eq:specific_Linfty_bound}
\|u\|_{L^\infty(\Omega)} \leq C\|f\|_{L^{\frac{d+1}{2}}(\Omega)}.
\end{equation}
\end{remark}

We have improved boundary regularity results. Recall the notation
\begin{equation*}
    \Omega_\tau \defeq   \set{x \in \Omega \colon \dist (x, \partial \Omega) \geq \tau}
\end{equation*}
and
\begin{equation*}
    \partial_\tau \Omega \defeq   \Omega \setminus \Omega_\tau.
\end{equation*}
introduced above. 
We can now give improved Lipschitz regularity statements close to the boundary which just depend on the $L^1$-norm of the right hand side.
\begin{proposition}[Boundary regularity]\label{prop:boundary_c01}
Let $\Omega$ satisfy \cref{ass:omega_hoelder} and $\varrho$ satisfy \cref{ass:rho_hoelder}.
Let $f\in L^\infty(\Omega)$ and let $u\in H^1(\Omega)$ be the minimizer of \labelcref{eq:variational_problem}. Then for any $\tau > 0$ it holds that
\begin{equation}\label{eq:boundary_c01}
\|u\|_{W^{1,\infty}(\partial_\tau \Omega)}\leq C\left(\|f\|_{L^\infty(\partial_{2\tau}\Omega)}+ \|f\|_{L^1(\Omega)}\right),
\end{equation}
where $C = C(\varrho, d, \Omega,\alpha,\tau) > 0$. 
\end{proposition}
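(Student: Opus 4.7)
The plan is to establish the estimate by splitting the source $f$ into its near-boundary and away-from-boundary parts and treating each separately via the Green's function representation \labelcref{eq:green_rep_formula}. Set $\tilde f \defeq f\,\chi_{\partial_{2\tau}\Omega}$ and write $u = u_1 + u_2$ where
\begin{equation*}
    u_1(x) \defeq \int_\Omega G^x(y)\tilde f(y)\dy,\qquad u_2(x) \defeq \int_{\Omega_{2\tau}} G^x(y) f(y)\dy.
\end{equation*}
This is meaningful since for each fixed $x$, the pointwise values of $G^x$ are well-defined as those of its continuous representative off the pole (\cref{prop:Greens-regularity}).

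For $u_1$, the idea is to recognize it as a variational solution and apply the global $C^{1,\alpha}$ estimate \labelcref{eq:C1a_bound} from \cref{prop:global_regularity}. The function $\tilde f - \lambda \varrho$, with $\lambda \defeq \int_\Omega \tilde f\dx/\int_\Omega \varrho\dx$, has unweighted mean zero, so \cref{lem:v_as_Greens_convolution} together with the fact that $\int_\Omega G^x(y)\varrho(y)\dy = 0$ identifies $u_1$ as the unique weak solution to \labelcref{eq:continuum_pde} with right-hand side $\tilde f$ (i.e., effective source $\tilde f-\lambda\varrho$). Since $\lVert \tilde f-\lambda\varrho\rVert_{L^\infty(\Omega)} \lesssim \lVert f\rVert_{L^\infty(\partial_{2\tau}\Omega)}$, \cref{prop:global_regularity}\,(ii) yields
\begin{equation*}
    \lVert u_1\rVert_{W^{1,\infty}(\partial_\tau\Omega)} \leq \lVert u_1\rVert_{C^{1,\alpha}(\Omega)} \lesssim \lVert f\rVert_{L^\infty(\partial_{2\tau}\Omega)}.
\end{equation*}

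For $u_2$, the plan is to exploit the symmetry $G^x(y)=G^y(x)$ (valid for $x\neq y$ in $\Omega$) and exchange the roles of the pole and evaluation point, rewriting
\begin{equation*}
    u_2(x) = \int_{\Omega_{2\tau}} G^y(x) f(y)\dy.
\end{equation*}
Symmetry follows from testing the weak formulations for the approximating $G^x_n$ and $G^y_n$ against one another, using self-adjointness of the bilinear form and passing to the limit $n\to\infty$ via the interior continuity of $G^x$ and $G^y$ established in \cref{prop:Greens-regularity}\,(i). Now for $y\in\Omega_{2\tau}$ we have $\dist(y,\partial\Omega)\geq 2\tau$, so \cref{prop:Greens-regularity}\,(ii) with $R=\tau$ gives $\lVert G^y\rVert_{C^{1,\beta}(\Omega\setminus B(y,\tau))}\lesssim 1$ with a constant uniform in $y$. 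Since $x\in\partial_\tau\Omega$ and $y\in\Omega_{2\tau}$ force $|x-y|>\tau$, the evaluation points lie in the good region. Differentiating under the integral sign therefore yields
\begin{equation*}
    \lvert u_2(x)\rvert + \lvert\nabla u_2(x)\rvert \lesssim \int_{\Omega_{2\tau}} \lvert f(y)\rvert\dy \leq \lVert f\rVert_{L^1(\Omega)},
\end{equation*}
uniformly for $x\in\partial_\tau\Omega$. Summing the two bounds gives \labelcref{eq:boundary_c01}.

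The main obstacle is the justification of the symmetry of the Green's function under the present Neumann/zero-mean normalization, since this is not explicitly established earlier in the paper and the Green's functions were only constructed as weak limits of mollified problems. A careful limiting argument based on the representation $\int\varrho\nabla G^x_n\cdot\nabla G^y_m\dx = \int \phi^x_n G^y_m\dx - 0$ (using $\int G^y_m\varrho\dx = 0$) and interior regularity of $G^x,G^y$ from \cref{prop:Greens-regularity} is required. A secondary but easier point is the bookkeeping with the compatibility condition $\int\tilde f\dx\neq 0$, which is handled by absorbing a multiple of $\varrho$ into the source so that \cref{lem:v_as_Greens_convolution,prop:global_regularity} apply directly. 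No interior/boundary Schauder estimate beyond those already collected in \cref{sec:regularity_estimates,sec:Greensfunctions} is needed.
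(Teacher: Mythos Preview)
Your proposal is correct and follows essentially the same route as the paper: split $u$ via the Green's representation into a piece with source supported in $\partial_{2\tau}\Omega$ (handled by \cref{prop:global_regularity}\,(ii)) and a piece with source supported in $\Omega_{2\tau}$ (handled by the uniform $C^{1,\beta}$ bound on $G^y$ away from its pole, \cref{prop:Greens-regularity}\,(ii), after swapping the roles of $x$ and $y$). The only difference is that the symmetry $G^x(y)=G^y(x)$ you flag as the main obstacle is in fact proved in the paper as \cref{thm:Gy_symmetric} in the appendix, via exactly the limiting argument you sketch; the paper's proof of \cref{prop:boundary_c01} simply cites it.
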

\begin{proof}
We use the Green's function representation formula \labelcref{eq:v_as_Greens_convolution} and its symmetry from \cref{thm:Gy_symmetric} to get $u = w +v$, where
\begin{equation*}
w(x) = \int_{\partial_{2\tau}\Omega} G^x(y) f(y) \d y \quad\text{and}\quad v(x) = \int_{\Omega_{2\tau}} G^y(x) f(y) \d y.
\end{equation*}
By \cref{lem:v_as_Greens_convolution} we see that $w$ is a weak solution of $-\div(\varrho w)=f\chi_{\partial_{2\tau}\Omega}$ and hence by \cref{prop:global_regularity} (ii) we have
\[\|w\|_{C^{1,\alpha}(\Omega)} \leq C\|f\|_{L^\infty(\partial_{2\tau}\Omega)}.\]
To bound $v$, for $x \in \partial_\tau\Omega$ we have
\begin{align*}
\abs{v(x)} &\leq \norm{f}_{L^1(\Omega)} \sup_{y \in \Omega_{2\tau}} \abs{G^y(x)} \leq  \norm{f}_{L^1(\Omega)} \sup_{y \in \Omega_{2\tau}} \norm{G^y}_{L^\infty(\partial_\tau \Omega)}
\end{align*}
and similarly that
\begin{equation*}
\abs{\nabla v(x)} 
\leq \norm{f}_{L^1(\Omega)} \sup_{y \in \Omega_{2\tau}} \abs{\nabla_x G^y(x)}
\leq \norm{f}_{L^1(\Omega)} \sup_{y \in \Omega_{2\tau}} \norm{\nabla G^y}_{L^\infty(\partial_\tau \Omega)}.
\end{equation*}
Combining both estimates we thus see that
\begin{equation*}
\norm{v}_{W^{1,\infty}(\partial_\tau \Omega)} 
\leq \norm{f}_{L^1(\Omega)} \sup_{y \in \Omega_{2\tau}} \norm{G^y}_{W^{1,\infty}(\partial_\tau \Omega)} 
\leq C \norm{f}_{L^1(\Omega)},
\end{equation*}
where we used \cref{prop:Greens-regularity} (ii) to bound $\norm{G^y}_{W^{1,\infty}(\partial_\tau \Omega)} \leq C(d, \varrho, \tau, \Omega, \alpha)$ uniformly in~$y$, since $\dist(y, \partial\Omega) > 2\tau$.
\end{proof}

\subsection{Stability}

The global regularity results in \cref{prop:global_regularity} allow us to prove $L^p$ stability of distributional solutions of Poisson equations. 
The key result for deriving convergence rates for solutions of the Poisson equation with mollified data to the one with measure data is the following stability result.
It requires some regularity of the boundary $\partial\Omega$ and the density $\varrho$, however, it has very strong implications, in particular, uniqueness of the distributional solutions and Green's functions, cf. \cref{rem:uniqueness_distributional} further down.
\begin{theorem}[$L^p$-stability for measure data]\label{thm:Lp_stability}
Let $\Omega$ satisfy \cref{ass:omega_hoelder} and $\varrho$ satisfy \cref{ass:rho_hoelder}.
Let $f\in\M\left(\overline\Omega\right)$ be a finite real-valued Radon measure that satisfies the compatibility condition $f(\overline{\Omega})=0$, and let $u\in W^{1,p}(\Omega)$, $p \in [1,d/(d-1))$, be a \emph{distributional solution} to \labelcref{eq:continuum_pde}. Moreover, let $\beta \defeq   \min\set{\alpha, 1-d(p-1)/p}$. 
Then there exists $C=C(\varrho,d,p,\Omega,\alpha)$ such that
\begin{equation}\label{eq:Lp_stability}
\|u\|_{L^p(\Omega)} \leq C\sup\left\{ \int_{\overline \Omega} \psi \d f \, : \, \psi\in C^{1,\beta}(\Omega), \|\psi\|_{C^{1,\beta}(\Omega)} \leq 1\right\}.
\end{equation}
\end{theorem}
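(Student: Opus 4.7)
The plan is to proceed by duality, using as test functions the solutions of the adjoint Poisson equation with $L^{p'}$ right-hand side, where $p' \defeq p/(p-1)$. Since $p<d/(d-1)$, we have $p'>d$, which puts us in the regime where Morrey-type embeddings deliver $C^{1,\cdot}$ regularity. First I will reduce the $L^p$-norm of $u$ to a supremum over \emph{mean-zero} $L^{p'}$ test functions. For $g\in L^{p'}(\Omega)$ with $\|g\|_{L^{p'}}\leq 1$, setting $c_g \defeq \int_\Omega g\,\dx\,/\,\int_\Omega\varrho\,\dx$ and $h \defeq g-c_g\varrho$ gives $\int_\Omega h\,\dx=0$ and $\|h\|_{L^{p'}}\leq C$ depending only on $\varrho,\Omega$. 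Crucially, the weighted zero-mean condition $\int u\varrho\,\dx=0$ gives
\begin{equation*}
    \int_\Omega u g\,\dx = \int_\Omega u h\,\dx + c_g\int_\Omega u\varrho\,\dx = \int_\Omega u h\,\dx,
\end{equation*}
so that taking sup over $g$ yields
\begin{equation*}
    \|u\|_{L^p(\Omega)} \leq C\sup\Bigl\{\int_\Omega u h\,\dx \st h\in L^{p'}(\Omega),\ \int_\Omega h\,\dx=0,\ \|h\|_{L^{p'}}\leq 1\Bigr\}.
\end{equation*}

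Next, for each such $h$, I take $\psi\in H^1(\Omega)$ to be the unique minimizer of the variational problem \labelcref{eq:variational_problem} with right-hand side $h$, i.e., the weak solution of $-\div(\varrho\nabla\psi)=h$ with homogeneous Neumann data and $\int\psi\varrho\,\dx=0$. Under \cref{ass:omega_hoelder,ass:rho_hoelder}, Schauder / $W^{2,p'}$ elliptic theory for divergence-form operators with Hölder-continuous coefficients, combined with the Morrey embedding $W^{2,p'}\hookrightarrow C^{1,1-d/p'}$ (valid since $p'>d$), yields
\begin{equation*}
    \|\psi\|_{C^{1,\beta}(\Omega)}\leq C\|h\|_{L^{p'}(\Omega)},
\end{equation*}
with $\beta=\min\{\alpha,1-d/p'\}=\min\{\alpha,1-d(p-1)/p\}$, precisely as in the statement. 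This is the technical core of the argument, and a mild extension of \cref{prop:global_regularity}(ii) from $L^\infty$ to $L^{p'}$ data.

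Finally, since $\psi\in C^{1,\beta}(\Omega)\subset W^{1,q}(\Omega)$ for every $q>d$, the enlarged test-function class from \cref{rem:more_test_functions} applies. Using $\psi$ as a test function in the distributional formulation of the equation for $u$ and integrating by parts on $\psi$ (legal because $\varrho\nabla\psi$ is Hölder continuous, $\div(\varrho\nabla\psi)\in L^{p'}$, $\partial_\nu\psi=0$, and $u\in W^{1,p}$) gives
\begin{equation*}
    \int_\Omega u h\,\dx = \int_\Omega \varrho\nabla u\cdot\nabla\psi\,\dx = \int_{\overline\Omega}\psi\,\d f.
\end{equation*}
Bounding the right-hand side by $\|\psi\|_{C^{1,\beta}}$ times the $C^{1,\beta}$ test-function supremum on $f$, and chaining with the $C^{1,\beta}$ estimate, yields the claim upon combining with the reduction in the first step. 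The main obstacle is establishing the $C^{1,\beta}$ regularity of the adjoint problem with $L^{p'}$ data rather than $L^\infty$ data; the weighted-versus-unweighted mean-zero bookkeeping in the duality step is a secondary nuisance handled cleanly by subtracting $c_g\varrho$ rather than a scalar constant.
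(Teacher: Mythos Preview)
Your approach is essentially the paper's: solve the adjoint Poisson problem $-\div(\varrho\nabla\psi)=h$, establish $\|\psi\|_{C^{1,\beta}}\lesssim\|h\|_{L^{p'}}$, and then test the distributional equation for $u$ with $\psi$. The paper shortcuts your supremum over $h$ by directly taking the extremizer $h=\sign(u)|u|^{p-1}-\varrho\int_\Omega\sign(u)|u|^{p-1}\,\dx$, which collapses the duality into the single identity $\|u\|_{L^p}^p=\int_{\overline\Omega}\psi\,\d f$; otherwise the skeleton is identical.

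The one genuine soft spot is your regularity route. With $\varrho\in C^{0,\alpha}$ only, $W^{2,p'}$ estimates for $-\div(\varrho\nabla\psi)=h$ are \emph{not} available: you cannot rewrite the operator in non-divergence form (no $\nabla\varrho$), and divergence-form Calder\'on--Zygmund theory delivers $\nabla\psi\in L^q$ rather than second derivatives. So the Morrey embedding $W^{2,p'}\hookrightarrow C^{1,1-d/p'}$ is not the tool that closes the argument. The paper instead invokes Lieberman's Schauder-type estimate \cite[Theorem~5.54]{lieberman2003oblique}, which bounds $\|\psi\|_{C^{1,\beta}}$ by $\|\psi\|_{L^\infty}+\|h\|_{M^{2,d-2+2\beta}}$ in the Morrey space $M^{2,d-2+2\beta}$, and then uses the elementary embedding $L^{p'}\hookrightarrow M^{2,d-2+2\beta}$, valid precisely when $\beta\le 1-d/p'=1-d(p-1)/p$. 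Combined with the $L^\infty$ bound from \cref{prop:global_regularity}(i), this yields $\|\psi\|_{C^{1,\beta}}\le C\|h\|_{L^{p'}}$ and explains the exact value of $\beta$ in the statement. Your ``mild extension of \cref{prop:global_regularity}(ii)'' is indeed the right target, but the path goes through Morrey spaces rather than $W^{2,p'}$.
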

\begin{proof}
Let $\psi\in W^{1,2}(\Omega)$ be the weak solution of 
\[-\div(\varrho \nabla \psi) = \sign(u)\abs{u}^{p-1} - \varrho\int_\Omega \sign(u)\abs{u}^{p-1}\d x,\]
with homogeneous Neumann boundary condition $\frac{\partial \psi}{\partial \nu} = 0$ on $\partial \Omega$ and zero mean $\int_\Omega \varrho \psi \d x = 0$. 
By \cite[Theorem 5.54]{lieberman2003oblique}, we have that for $\beta \in (0, \alpha]$ and $C=C(\rho, d, \Omega) > 0$\
\begin{equation*}
    \norm{\psi}_{C^{1, \beta}(\Omega)} 
    \leq C \left(\norm{\psi}_{L^\infty(\Omega)} + \norm{\sign(u)\abs{u}^{p-1} - \varrho\int_\Omega \sign(u)\abs{u}^{p-1}\d x}_{M^{2,d-2+2\beta}(\Omega)}\right).
\end{equation*}
Here, $M^{2,d-2+2\beta}(\Omega)$ is the Morrey space, defined by all functions $w \in L^2(\Omega)$ such that the norm
\begin{equation*}
    \norm{w}_{M^{2,d-2+2\beta}(\Omega)}^2 \defeq   \sup_{x \in \Omega} \sup_{0 < r < \diam(\Omega)} \frac1{r^{d-2+2\beta}}\int_{B(x,r) \cap \Omega} \abs{w}^2 \d x
\end{equation*}
is finite. An application of Hölder's inequality shows that $L^r(\Omega) \hookrightarrow M^{2,d-2+2\beta}(\Omega)$ for all $r \geq d/(1-\beta)$.
We wish to choose $r = p/(p-1)$, which requires $\beta \leq 1-d(p-1)/p$. Then,
\begin{equation*}
\begin{aligned}
    \norm{\psi}_{C^{1, \beta}(\Omega)} 
    &\leq C \left(\norm{\psi}_{L^\infty(\Omega)} + C(d, p, \beta, \Omega)\norm{\sign(u)\abs{u}^{p-1} - \varrho\int_\Omega \sign(u)\abs{u}^{p-1}\d x}_{L^{p/(p-1)}(\Omega)}\right) \\
    &\leq C \left(\norm{\psi}_{L^\infty(\Omega)} + C(d, p, \beta, \varrho, \Omega)\norm{u}_{L^{p}(\Omega)}^{p-1}\right).
\end{aligned}
\end{equation*}
Using \cref{prop:global_regularity} (i), we see that
\begin{equation*}
    \norm{\psi}_{L^\infty(\Omega)} 
    \leq C\norm{\sign(u)\abs{u}^{p-1} - \varrho\int_\Omega \sign(u)\abs{u}^{p-1}\d x}_{L^{p/(p-1)}(\Omega)} 
    \leq C\norm{u}_{L^{p}(\Omega)}^{p-1},
\end{equation*}
where $C>0$ depends only on $\Omega$, $d$, $p$, $\varrho$. Therefore, for $0 < \beta \leq \min\set{\alpha, 1-\frac{d(p-1)}{p}}$ we have
\begin{equation*}
    \norm{\psi}_{C^{1, \beta}(\Omega)} \leq C \norm{u}_{L^{p}(\Omega)}^{p-1}.
\end{equation*}
By the definition of weak solution we have 
\begin{equation}\label{eq:weak_psi}
\int_\Omega \varrho \nabla \psi \cdot \nabla w \d x = \int_\Omega \sign(u)\abs{u}^{p-1} w \d x - \int_\Omega \varrho  w\d x \int_\Omega \sign(u)\abs{u}^{p-1}\d x,
\end{equation}
for all $w \in W^{1,2}(\Omega)$. Since $\psi \in C^{1,\beta}(\Omega) \subset W^{1,\infty}(\Omega)$, we may, by approximation, take $w\in W^{1,1}(\Omega)$ in \labelcref{eq:weak_psi}. In particular, we can set $w=u$ to obtain
\begin{equation}\label{eq:psi_weak}
\int_\Omega \varrho \nabla \psi \cdot \nabla u \d x = \int_\Omega \abs{u}^p \d x  = \|u\|_{L^p(\Omega)}^p,
\end{equation}
due to the fact that $\int_\Omega \varrho u \d x= 0$. Since $u\in W^{1,p}(\Omega)$, we can use $\psi\in C^{1,\beta}(\Omega)$ as a test function in the definition of distributional solution of \labelcref{eq:continuum_pde} given in \cref{def:distr_sol_Poisson_eq}, yielding
\begin{equation*}
\|u\|_{L^p(\Omega)}^p=\int_\Omega \varrho \, \nabla u \cdot \nabla \psi \d x = \int_{\overline\Omega} \psi \d f = \|\psi\|_{C^{1,\beta}(\Omega)} \int_{\overline\Omega }\phi \d f,
\end{equation*}
where $\phi = \frac{\psi}{\|\psi\|_{C^{1,\beta}(\Omega)}}$. Since $\|\phi\|_{C^{1,\beta}(\Omega)}\leq 1$ and $\|\psi\|_{C^{1,\beta}(\Omega)}\leq C \norm{u}_{L^p}^{p-1}$, we obtain \labelcref{eq:Lp_stability}, which completes the proof.
\end{proof}
\begin{remark}[Uniqueness]\label{rem:uniqueness_distributional}
An immediate application of \cref{thm:Lp_stability} is the uniqueness of distributional solutions of \labelcref{eq:continuum_pde} in the space $W^{1,p}(\Omega)$ for $p \in [1, d/(d-1))$. In particular, this shows that the Green's functions constructed in \cref{sec:Greensfunctions} are unique.
\end{remark}
\begin{remark}[Wasserstein stability]
    Another immediate consequence is the stability of solutions with respect to the Wasserstein-1 distance of the positive and negativ parts of $f$, i.e., $\norm{u}_{L^p(\Omega)}\leq C W_1(f^+,f^-)$.
\end{remark}

We also have the following stability results for regular data.
\begin{proposition}[Stability for $L^1$ data]\label{prop:stability_L1}
Let $\Omega$ satisfy \cref{ass:omega_hoelder} and $\varrho$ satisfy \cref{ass:rho_hoelder}. Let  $f_1,f_2\in L^1(\Omega)$ satisfy the compatibility condition $\int_\Omega f_i\d x = 0$. Let $p \in [1,d/(d-1))$ and let $u_1,u_2\in W^{1,p}(\Omega)$ be the corresponding \emph{distributional solutions} to \labelcref{eq:continuum_pde}. There exists exists $C=C(\varrho,d,p,\Omega,\alpha)$ such that
\begin{equation}\label{eq:L1_stability}
\|u_1-u_2\|_{L^p(\Omega)} \leq C\|f_1 - f_2\|_{L^1(\Omega)}.
\end{equation}
\end{proposition}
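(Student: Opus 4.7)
The plan is to reduce this $L^1$-stability statement directly to the $L^p$-stability for measure data established in \cref{thm:Lp_stability}. By linearity of the distributional formulation, the difference $u \defeq u_1 - u_2 \in W^{1,p}(\Omega)$ is a distributional solution of \labelcref{eq:continuum_pde} with right-hand side given by the signed measure $\mathrm{d} f \defeq (f_1 - f_2)\d x$, which satisfies the compatibility condition $\int_\Omega (f_1-f_2)\d x = 0$ and hence $f(\overline\Omega) = 0$.

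Applying \cref{thm:Lp_stability} to $u$ and this measure data, we obtain
\begin{equation*}
    \|u_1-u_2\|_{L^p(\Omega)} \leq C \sup\left\{\int_\Omega \psi (f_1-f_2) \d x \st \psi \in C^{1,\beta}(\Omega),\ \|\psi\|_{C^{1,\beta}(\Omega)}\leq 1\right\}.
\end{equation*}
For any admissible $\psi$, Hölder's inequality and the embedding $C^{1,\beta}(\Omega) \hookrightarrow L^\infty(\Omega)$ (which holds since $\Omega$ is bounded) give
\begin{equation*}
    \int_\Omega \psi(f_1-f_2) \d x \leq \|\psi\|_{L^\infty(\Omega)}\|f_1-f_2\|_{L^1(\Omega)} \leq \|\psi\|_{C^{1,\beta}(\Omega)} \|f_1-f_2\|_{L^1(\Omega)} \leq \|f_1-f_2\|_{L^1(\Omega)},
\end{equation*}
and taking the supremum over $\psi$ yields the desired inequality \labelcref{eq:L1_stability}.

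There is essentially no obstacle here beyond verifying that $L^1$ data really falls within the scope of \cref{thm:Lp_stability} (which it does, via the identification $f_i \d x \in \M(\overline\Omega)$) and recording the trivial $C^{1,\beta}$-to-$L^\infty$ bound. The regularity hypotheses \cref{ass:omega_hoelder} on $\Omega$ and \cref{ass:rho_hoelder} on $\varrho$ that are assumed here match exactly those required by \cref{thm:Lp_stability}, so no additional work is needed. The constant $C$ inherited from \cref{thm:Lp_stability} depends only on $\varrho, d, p, \Omega, \alpha$, matching the claimed dependence.
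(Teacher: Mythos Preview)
Your proof is correct and follows exactly the same approach as the paper: reduce by linearity to $u = u_1 - u_2$ solving \labelcref{eq:continuum_pde} with data $(f_1 - f_2)\d x$, apply \cref{thm:Lp_stability}, and bound the supremum via $\|\psi\|_{L^\infty(\Omega)} \leq \|\psi\|_{C^{1,\beta}(\Omega)} \leq 1$.
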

\begin{proof}
The difference $u=u_1 - u_2$ is the distributional solution of \labelcref{eq:continuum_pde} with $f=f_1-f_2$. We now apply  \cref{thm:Lp_stability} to the Radon measure $f\d x$, noting that
\[\int_{\overline\Omega} \psi f \d x \leq \|\psi\|_{L^\infty(\Omega)}\|f\|_{L^1(\Omega)} \leq \|f\|_{L^1(\Omega)},\]
since $\|\psi\|_{L^\infty(\Omega)}\leq \|\psi\|_{C^{1,\beta}(\Omega)}\leq 1$. This yields
\[\|u_1-u_1\|_{L^p(\Omega)} = \|u\|_{L^p(\Omega)} \leq C\|f\|_{L^1(\Omega)} = \|f_1-f_2\|_{L^1(\Omega)}.\qedhere\]
\end{proof}

\begin{proposition}[Uniform stability]\label{prop:stability_Linfty}
Let $\Omega$ satisfy \cref{ass:omega_lipschitz} and $\varrho$ satisfy \cref{ass:rho_bounded}. Let $p \in [1,d/(d-1))$ and set $p^* = dp/(d-p)$ and $q^* = p^*/(p^*-1)$.

Let $f_1,f_2\in L^{q^*}(\Omega)$ and let $u_1,u_2\in H^1(\Omega)$ be the corresponding minimizers of \labelcref{eq:variational_problem}.  Then it holds that
\begin{equation}\label{eq:stability}
\|u_1-u_2\|_{L^\infty(\Omega)} \leq C\|f_1-f_2\|_{L^{q^*}(\Omega)},
\end{equation}
\end{proposition}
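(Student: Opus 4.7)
The plan is to exploit linearity of the variational problem and then invoke the $L^\infty$ bound already established in \cref{prop:global_regularity}(i). The key observation is that the map $f \mapsto u$, where $u$ is the minimizer of \labelcref{eq:variational_problem}, is linear.

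First I would verify linearity carefully. The minimizer $u_i$ of \labelcref{eq:variational_problem} with data $f_i$ satisfies the Euler-Lagrange equation
\begin{equation*}
-\div(\varrho \nabla u_i) = f_i - c_{f_i} \varrho, \qquad \int_\Omega u_i \varrho \d x = 0,
\end{equation*}
where $c_{f_i} = \frac{1}{|\Omega|}\int_\Omega f_i \d x$. Subtracting these equations and using $c_{f_1} - c_{f_2} = c_{f_1 - f_2}$, we see that $u \defeq u_1 - u_2$ solves
\begin{equation*}
-\div(\varrho \nabla u) = (f_1 - f_2) - c_{f_1-f_2}\varrho, \qquad \int_\Omega u\, \varrho \d x = 0,
\end{equation*}
which is precisely the Euler-Lagrange equation characterizing $u$ as the minimizer of \labelcref{eq:variational_problem} with data $f \defeq f_1 - f_2 \in L^{q^*}(\Omega)$.

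Then I would simply apply \cref{prop:global_regularity}(i) to $u$, which under the present hypotheses \cref{ass:omega_lipschitz} and \cref{ass:rho_bounded} gives
\begin{equation*}
\|u_1 - u_2\|_{L^\infty(\Omega)} = \|u\|_{L^\infty(\Omega)} \leq C \|f\|_{L^{q^*}(\Omega)} = C \|f_1 - f_2\|_{L^{q^*}(\Omega)},
\end{equation*}
with $C = C(\varrho, p, d, \Omega)$. This is exactly \labelcref{eq:stability}.

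There is no real obstacle: all the analytic work has already been done in proving the Green's function bound of \cref{lemma:Greens_functions_W1p_bound} and the resulting $L^\infty$ estimate of \cref{prop:global_regularity}(i). The stability statement is a one-line consequence of linearity plus that estimate, and no additional regularity of $\Omega$ or $\varrho$ beyond \cref{ass:omega_lipschitz} and \cref{ass:rho_bounded} is needed (consistent with the statement's hypotheses being weaker than those of \cref{prop:stability_L1}).
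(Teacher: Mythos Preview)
Your proposal is correct and matches the paper's proof essentially verbatim: subtract the Euler--Lagrange equations to see that $u_1-u_2$ is the minimizer of \labelcref{eq:variational_problem} with data $f_1-f_2$, then apply \cref{prop:global_regularity}(i). Your verification of linearity (checking $c_{f_1}-c_{f_2}=c_{f_1-f_2}$ and the mean-zero condition) is in fact slightly more explicit than the paper's.
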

\begin{proof}
We note that each $u_i$ is a weak solution of 
\[ -\div(\varrho \nabla u_i) = f_i - c_{f_i} \varrho,\]
and so $u\defeq  u_1-u_2$ is a weak solution of 
\[ -\div(\varrho \nabla u) = f - c_f \varrho, \  \ \text{where } f\defeq  f_1-f_2. \]
Thus, $u$ is the minimizer of \labelcref{eq:variational_problem} with data $f$. Applying \cref{prop:global_regularity} (i) completes the proof. 
\end{proof}

\subsection{Convergence rates for mollified data}

Now we are ready to prove the main convergence statement of this section, proving global convergence rates for solutions of the PDE \labelcref{eq:continuum_pde} with mollified right hand side to the solution of the continuum Poisson learning problem with data \labelcref{eq:Poisson_data}.

We first prove a result about approximation of Green's functions which will then translate to an approximation result for general solutions.
\begin{lemma}\label{lem:approx_greens}
Let $\Omega$ satisfy \cref{ass:omega_hoelder} and $\varrho$ satisfy \cref{ass:rho_hoelder}.
Let $\phi\in L^{\infty}(\Omega)$ be a nonnegative function such that $b = \int_\Omega \phi \d x > 0$, and let $v \in W^{1,2}(\Omega)$ be the minimizer of \labelcref{eq:variational_problem} with source term  $\phi$. Let $\beta \defeq   \min\set{\alpha, 1- d(p-1)/p}$. For $p \in [1, d/(d-1))$, there exists $C=C(\varrho,d,p,\Omega,\alpha)$ such that for any $x\in \Omega$ 
\[\|G^{x} - v\|_{L^p(\Omega)} \leq C\left( |b-1| + \sup\left\{ b\psi(x) - \int_\Omega \psi \phi \d y \, : \, \psi\in C^{1,\beta}(\Omega), \|\psi\|_{C^{1,\beta}(\Omega)} \leq 1\right\}\right).\]
\end{lemma}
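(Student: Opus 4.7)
The strategy is to express $u \defeq G^x - v$ as the distributional solution of a weighted Poisson equation with measure-valued data satisfying the compatibility condition, and then invoke the $L^p$-stability estimate \cref{thm:Lp_stability}. The statement itself suggests this: the right-hand side is a supremum over $C^{1,\beta}$ test functions normalized to have norm $\leq 1$, which is exactly the shape of the bound in \cref{thm:Lp_stability}.

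First I would identify the PDE solved by $v$. As the minimizer of \labelcref{eq:variational_problem} with source $\phi$ subject to the weighted mean-zero constraint, the Euler--Lagrange equation (with Lagrange multiplier determined by testing with the constant function) reads
\[-\div(\varrho \nabla v) = \phi - \frac{b}{\int_\Omega \varrho \d y}\varrho\]
weakly, with homogeneous Neumann boundary conditions. Combined with the defining equation $-\div(\varrho \nabla G^x) = \delta_x - \varrho/\int_\Omega \varrho \d y$ for the Green's function, this yields that $u \in W^{1,p}(\Omega)$ is a distributional solution to \labelcref{eq:continuum_pde} with measure data
\[\mu \defeq \delta_x - \phi \d y + \frac{b-1}{\int_\Omega \varrho \d y}\,\varrho \d y,\]
and a one-line check gives $\mu(\overline\Omega) = 1 - b + (b-1) = 0$, confirming compatibility. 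Both $G^x$ and $v$ have weighted mean zero, so does $u$.

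Applying \cref{thm:Lp_stability} with $u$ and $\mu$ then gives
\[\|G^x - v\|_{L^p(\Omega)} \leq C \sup_{\|\psi\|_{C^{1,\beta}(\Omega)} \leq 1} \int_{\overline\Omega} \psi \d\mu\]
with $C = C(\varrho,d,p,\Omega,\alpha)$. The only remaining step is an algebraic manipulation of the dual pairing: adding and subtracting $b\,\psi(x)$,
\[\int_{\overline\Omega} \psi \d\mu = \Bigl(b\,\psi(x) - \int_\Omega \psi \phi \d y\Bigr) + (b-1)\Bigl(\frac{\int_\Omega \psi \varrho \d y}{\int_\Omega \varrho \d y} - \psi(x)\Bigr).\]
Since $\|\psi\|_{L^\infty(\Omega)} \leq \|\psi\|_{C^{1,\beta}(\Omega)} \leq 1$ and $\varrho > 0$, the weighted average of $\psi$ against $\varrho$ lies in $[-1,1]$, so the second parenthesis is bounded by $2$ in absolute value, contributing at most $2|b-1|$. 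Taking the supremum and absorbing constants yields the claim.

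I do not anticipate a genuine obstacle here; the lemma is essentially a reorganization of \cref{thm:Lp_stability}. The one subtle point is choosing the correct additive split around $b\,\psi(x)$ rather than $\psi(x)$, so that the contribution from the mass defect $b-1$ cleanly decouples from the dual pairing $b\,\psi(x) - \int \psi \phi$ that appears in the conclusion.
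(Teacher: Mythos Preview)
Your proof is correct and uses the same core idea as the paper: identify the measure data for the difference and invoke \cref{thm:Lp_stability}. The organizational detail differs slightly. The paper rescales first, setting $w = G^x - b^{-1}v$, so that $w$ solves $-\div(\varrho\nabla w) = \delta_x - b^{-1}\phi$ with a clean two-term measure; after applying \cref{thm:Lp_stability} and multiplying by $b$, the $|b-1|$ term is recovered via the triangle inequality $\|bG^x - v\|_{L^p} \geq \|G^x - v\|_{L^p} - |b-1|\|G^x\|_{L^p}$ together with the uniform bound on $\|G^x\|_{L^p}$ from \cref{lemma:Greens_functions_W1p_bound}. You instead keep $v$ unscaled, carry the extra $\varrho$-term in the measure, and extract the $|b-1|$ contribution by an algebraic split of the dual pairing. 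Both routes are equally short; yours avoids invoking the Green's function $W^{1,p}$ bound at the cost of a slightly busier measure.
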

\begin{proof}
We first note that $v$ is the weak solution of 
\[-\div(\varrho \nabla v) = \phi - b\varrho \ \ \text{in } \Omega\]
with homogenous Neumann boundary condition, and mean-zero condition $\int_\Omega \varrho v \d x =0$. Let us set $w = G^{x}-b^{-1}v\in W^{1,p}(\Omega)$. Then $w$ is the distributional solution of
\[-\div(\varrho \nabla w) = \delta_{x}-b^{-1}\phi\ \ \text{in } \Omega,\]
and hence by \cref{thm:Lp_stability} we have
\[\|G^{x}-b^{-1}v\|_{L^p(\Omega)} \leq C\sup\left\{ \psi(x) - b^{-1}\int_\Omega \psi \phi \d y \, : \, \psi\in C^{1,\beta}(\Omega), \|\psi\|_{C^{1,\beta}(\Omega)} \leq 1\right\}.\]
Multiplying by $b$ on both sides and using that
\[\|bG^{x}-v\|_{L^p(\Omega)} \geq \|G^{x} - v\|_{L^p(\Omega)} - |b-1|\|G^{x}\|_{L^1(\Omega)} \geq \|G^{x} - v\|_{L^p(\Omega)} - C|b-1|\]
completes the proof.
\end{proof}

We can now show $L^p$ convergence with an explicit rate.
For convenience we will prove a slightly stronger result than the one outlined above. 
We shall let $u$ and $v$ be the solutions  
\begin{align*}
    -\div(\varrho\nabla v) = f - c_f\rho
    \qquad
    \text{and}
    \qquad
    -\div(\varrho\nabla u) = \sum_{i=1}^m a_i\delta_{x_i}    
\end{align*}
and will bound the $L^p$-difference of $u$ and $v$ by the difference of $f$ and the mollified data $g\defeq  \sum_{i=1}^m a_i\varphi_i$, as well as the parameters of the mollifiers $\varphi_i$. The reason we do not take $f=g$ is that the source terms $f$ that we encounter will not be compactly supported in $\Omega$, so we need to allow for an approximation error between $f$ and $g$ so that we may truncate their support. We also note that the functions $\varphi_i$ that we shall use later on will not be classical mollifiers, but merely compactly supported bounded functions whose integral is close to one, in some cases satisfying approximate symmetry conditions. Thus, we state our theorems, the first of which is below, in as general a setting as possible.    

\begin{theorem}[$L^p$-convergence rates]\label{thm:convergence_rate_varrho}
Let $\Omega$ satisfy \cref{ass:omega_hoelder} and $\varrho$ satisfy \cref{ass:rho_hoelder}. Let $\{x_1,\dots,x_m\}\subset\Omega$ and $\set{a_1,\dots,a_m}\subset\R$ satisfy $\sum_{i=1}^m a_i = 0$.  Let $\Ri_1,\dots,\Ri_m>0$ such that $B(x_i,\Ri_i)\subset \Omega$, and let $\phi_i\in L^\infty(\R^d)$ be nonzero and nonnegative functions such that $\phi_i(x)=0$ for $|x|\geq \Ri_i$. Define $u \in W^{1,p}(\Omega)$, $g\in L^\infty(\Omega)$ and $b_i>0$ by 
\begin{equation}\label{eq:b_g_u}
u = \sum_{i=1}^m a_i G^{x_i}, \ \   g = \sum_{i=1}^m a_i \phi_i(\cdot - x_i),\ \ \text{and} \ \ b_i = \int_{\R^d}\phi_i \d x>0.
\end{equation}
Let $f\in L^\infty(\Omega)$ and let $v \in W^{1,2}(\Omega)$ be the minimizer of \labelcref{eq:variational_problem} with source term $f$.  Then for all $p \in \left[1,\frac{d}{d-1}\right)$ it holds that
\begin{equation}\label{eq:convergence_rate_smoothing_L1}
\|u-v\|_{L^p(\Omega)} \leq C\left(\sum_{i=1}^m |a_i| \left(b_i\Ri_i^{1+\beta} + |\xi_i| + |b_i-1|\right)  + \|f-g\|_{L^1(\Omega)}\right).
\end{equation}
where $\xi_i=\int_{B(0,\Ri_i)}z\phi_i(z)\d z$ and $\beta = \min\set{\alpha, 1-d(p-1)/p)}$.
\end{theorem}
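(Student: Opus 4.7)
The plan is to compare $u$ and $v$ through an intermediate function $w$ defined as the minimizer of \cref{eq:variational_problem} with source term $g$. First, for each $i \in \{1,\dots,m\}$, introduce the minimizer $v_i \in W^{1,2}(\Omega)$ of \cref{eq:variational_problem} with source $\phi_i(\cdot - x_i)$. By linearity of the PDE together with uniqueness of weighted mean-zero weak solutions, one has $w = \sum_{i=1}^m a_i v_i$. The triangle inequality then gives $\|u-v\|_{L^p(\Omega)} \leq \|u-w\|_{L^p(\Omega)} + \|w-v\|_{L^p(\Omega)}$, and the second summand is immediately controlled by $C\|f-g\|_{L^1(\Omega)}$ via \cref{prop:stability_L1}, accounting for the last term in \cref{eq:convergence_rate_smoothing_L1}.

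For the first summand I would estimate, again by linearity, $\|u-w\|_{L^p(\Omega)} \leq \sum_{i=1}^m |a_i|\|G^{x_i} - v_i\|_{L^p(\Omega)}$, and apply \cref{lem:approx_greens} to each term with $\phi = \phi_i(\cdot - x_i)$, yielding
\begin{equation*}
\|G^{x_i} - v_i\|_{L^p(\Omega)} \leq C\bigl(|b_i-1| + S_i\bigr), \qquad S_i \defeq \sup_{\|\psi\|_{C^{1,\beta}(\Omega)} \leq 1} \left(b_i\,\psi(x_i) - \int_\Omega \psi(y)\,\phi_i(y-x_i)\d y\right).
\end{equation*}
The main work is then to show $S_i \lesssim |\xi_i| + b_i R_i^{1+\beta}$. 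Using $b_i = \int_{\R^d}\phi_i(y-x_i)\d y$ together with the compact-support hypothesis $B(x_i, R_i) \subset \Omega$, rewrite the bracketed quantity as $\int_\Omega (\psi(x_i) - \psi(y))\,\phi_i(y-x_i)\d y$. Since the segment $[x_i, y]$ stays inside $\Omega$ for every $y$ in the support, a first-order Taylor expansion with integral remainder gives
\begin{equation*}
\psi(y) - \psi(x_i) - \nabla\psi(x_i)\cdot(y-x_i) = E(y), \qquad |E(y)| \leq [\nabla\psi]_{\beta}\,|y-x_i|^{1+\beta} \leq R_i^{1+\beta}.
\end{equation*}
Substituting this and changing variables $z = y - x_i$ in the linear term leaves $-\nabla\psi(x_i)\cdot \xi_i - \int E(y)\phi_i(y-x_i)\d y$, whose absolute value is at most $|\xi_i| + b_i R_i^{1+\beta}$, where we used $|\nabla\psi(x_i)| \leq \|\psi\|_{C^{1,\beta}(\Omega)} \leq 1$. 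This yields the required bound on $S_i$.

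Collecting the three ingredients (the $\|f-g\|_{L^1}$ contribution from stability, the $|b_i-1|$ term from \cref{lem:approx_greens}, and the $|\xi_i| + b_i R_i^{1+\beta}$ bound on $S_i$) and summing over $i$ weighted by $|a_i|$ produces \cref{eq:convergence_rate_smoothing_L1}. The main obstacle is the Taylor-remainder estimate for $S_i$: it is the place where the compact-support hypothesis $B(x_i, R_i) \subset \Omega$ is genuinely used (to keep line segments from $x_i$ inside $\Omega$) and where the regularity threshold $\beta \leq \min\{\alpha, 1 - d(p-1)/p\}$ enters, inherited from \cref{lem:approx_greens}. All remaining steps amount to linearity of the Poisson equation, uniqueness of mean-zero solutions, and the previously established stability and Green's-function approximation results.
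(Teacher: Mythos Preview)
Your proposal is correct and follows essentially the same route as the paper: introduce the intermediate minimizer $w$ with source $g$, bound $\|v-w\|_{L^p}$ by $C\|f-g\|_{L^1}$ via \cref{prop:stability_L1}, decompose $w=\sum_i a_i v_i$ by linearity and uniqueness, apply \cref{lem:approx_greens} to each $\|G^{x_i}-v_i\|_{L^p}$, and bound the supremum $S_i$ by a first-order Taylor expansion with $C^{1,\beta}$ remainder. The only cosmetic difference is that the paper carries the explicit factor $\tfrac{1}{2}$ in the Taylor remainder, which is of course absorbed into the constant.
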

\begin{proof}
Let $w$ be the minimizer of \labelcref{eq:variational_problem} with source term $g$. Then $v$ and $w$ are weak solutions of 
\[ -\div(\varrho \nabla v) = f - c_{f} \varrho \ \ \text{and} \ \ -\div(\varrho \nabla w) = g - c_{g}\varrho,\]
and hence also distributional solutions of the same equations. By \cref{prop:stability_L1} we have
\[\|v - w\|_{L^p(\Omega)}\leq C\|f-g + (c_g - c_f)\varrho \|_{L^1(\Omega)} \leq C\|f-g\|_{L^1(\Omega)}.\]
The remainder of the proof will focus on bounding $u-w$. Let $w_i\in W^{1,2}(\Omega)$ be the minimizer of \labelcref{eq:variational_problem} with source term  $\phi_i(\cdot-x_i)$. Then we can write
\begin{equation}\label{eq:lp_estimate_wu}
w = \sum_{i=1}^m a_i w_i, \ \ \text{and so} \ \ \|u - w\|_{L^p(\Omega)} \leq  \sum_{i=1}^m\abs{a_i}\|G^{x_i}- w_i\|_{L^p(\Omega)}.
\end{equation}
We now use \cref{lem:approx_greens} to bound $\|G^{x_i} - w_i\|_{L^p(\Omega)}$. Let $\psi\in C^{1,\beta}(\overline\Omega)$ with $\|\psi\|_{C^{1,\beta}(\Omega)}\leq 1$. Then we have the Taylor expansion 
\[\left| \psi(x_i+z) - \psi(x_i) - \nabla \psi(x_i)\cdot z \right| \leq \frac{1}{2}|z|^{1+\beta},\]
which holds for all $z\in B(0,\Ri_i)$ since $\|\psi\|_{C^{1,\beta}(\Omega)}\leq 1$. Using this and the definition of $\xi_i$ we compute
\begin{align*}
b_i&\psi(x_i) - \int_{B(x_i,\Ri_i)} \psi(y) \phi_i(y-x_i) \d y \\
&= \int_{B(0,\Ri_i)}\phi_i(z)(\psi(x_i) - \psi(x_i+z))\d y\\
&=\int_{B(0,\Ri_i)}\phi_i(z)(\psi(x_i) - \psi(x_i+z) + \nabla \psi(x_i)\cdot z )\d z -\nabla \psi(x_i)\cdot\int_{B(0,\Ri_i)}z\phi_i(z)\d z  \\
&\leq\int_{B(0,\Ri_i)}\phi_i(z)|\psi(x_i) - \psi(x_i+z) + \nabla \psi(x_i)\cdot z |\d y + |\nabla \psi(x_i)| \left|\int_{B(0,\Ri_i)}z\phi_i(z)\d z\right|  \\
&\leq\frac{1}{2} \int_{B(0,\Ri_i)}\phi_i(z)|z|^{1+\beta}\, dy + |\nabla \psi(x_i)| |\xi_i| \leq \frac{b_i}{2}\Ri_i^{1+\beta} + |\xi_i|.
\end{align*}
Therefore
\[\|G^{x_i} - w_i\|_{L^p(\Omega)} \leq C(b_i\Ri_i^{1+\beta} + |\xi_i| +  |b_i-1|),\]
which, upon inserting into \labelcref{eq:lp_estimate_wu}, completes the proof. 
\end{proof}
\begin{remark}\label{rem:opt_rate}
In  \cref{thm:convergence_rate_varrho}, we have $\xi_i=\int_{\R^d}z\phi_i(z)\d z=0$ for radial kernels $\phi_i(x)=\phi_i(|x|e_1)$, as well as kernels with other symmetries, such as $\phi(x)=\phi(-x)$. Since $|\xi_i|$ appears as an error term, the result can also handle kernels with approximate symmetries. 

Also note that if we assume $p>1$ and $\alpha \geq 1-d(p-1)/p$, then we obtain an $O( \Ri_i^{2 - \frac{d}{p}(p-1)})$ convergence rate, neglecting the error terms $|b_i-1|$, $|\xi_i|$, and $\|f-g\|_{L^1(\Omega)}$, which in applications will be far smaller. Since we cannot take $\alpha=1$, we obtain an $O(\Ri_i^{1 + \alpha})$ rate for any $\alpha < 1$ when $p=1$. In  \cref{thm:convergence_rate_smoothing} below, we prove an $O(\Ri_i^2)$ rate when $p=1$, $\varrho$ is constant, and the $\phi_i$ are radial.  
\end{remark}

\subsection{Improved rates for constant density}

It turns out that we can improve the convergence rates from \cref{thm:convergence_rate_varrho} if we assume that the density $\varrho$ is constant and that the kernels $\phi_i$ are radial functions.
Assuming that they are supported on a ball of radius $R$ we can improve the $L^1$-convergence rate to quadratic.
Furthermore, the following theorem requires much less regularity on the domain boundary than \cref{thm:convergence_rate_varrho} did and extends the validity of the $L^p$-convergence rate from $p<\frac{d}{d-1}$ to the maximal exponent $p<\frac{d}{d-2}$ which can be expected from the Sobolev embedding.

\begin{theorem}[Improved $L^p$-convergence rate]\label{thm:convergence_rate_smoothing}
Let $\Omega$ satisfy \cref{ass:omega_lipschitz} and assume that $\varrho\equiv \beta \defeq   \abs{\Omega}^{-1}>0$ is a constant. Let $\{x_1,\dots,x_m\}\subset\Omega$ and $\set{a_1,\dots,a_m}\subset\R$ satisfy $\sum_{i=1}^m a_i = 0$.  Let $\Ri_1,\dots,\Ri_m>0$ such that $B(x_i,\Ri_i)\subset \Omega$, and let $\phi_i\in L^\infty(\R^d)$ be nonzero and nonnegative radial functions such that $\phi_i(x)=0$ for $|x|\geq \Ri_i$. Define $u \in W^{1,p}(\Omega)$, $g\in L^\infty(\Omega)$ and $b_i>0$ by \labelcref{eq:b_g_u}. Let $f\in L^\infty(\Omega)$ and let $v \in W^{1,2}(\Omega)$ be the minimizer of \labelcref{eq:variational_problem} with source term $f$. Then there exists $C=C(p, d, \varrho, \Omega)$ such that for all $p \in \left[1,\frac{d}{d-2}\right)$ for $d\geq3$ and for all $p\in[1,\infty)$ if $d\in\{1,2\}$ it holds
\begin{align*}
\norm{u-v}_{L^p(\Omega)} \leq C\sum_{i=1}^m |a_i| \left(\|\phi_i\|_{L^p(\R^d)}\Ri_i^2 + b_i \Ri_i^2 + |b_i-1|\right) + C\|f-g\|_{L^\infty(\Omega)}.
\end{align*}
\end{theorem}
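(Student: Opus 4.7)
My plan is to exploit the constant density assumption and the radial symmetry of the $\phi_i$ to construct an essentially explicit comparison function based on the fundamental solution $\Gamma$ of $-\Delta$ on $\R^d$, in place of the indirect Green's-function stability argument used in \cref{thm:convergence_rate_varrho}. As in that proof, let $w_i \in W^{1,2}(\Omega)$ denote the minimizer of \labelcref{eq:variational_problem} with source term $\phi_i(\cdot - x_i)$, so that $w \defeq \sum_i a_i w_i$ solves the variational problem with data $g$. The uniform stability estimate of \cref{prop:stability_Linfty}, together with Hölder's inequality, gives $\|v - w\|_{L^p(\Omega)} \lesssim \|f - g\|_{L^\infty(\Omega)}$, so it suffices to control $\|G^{x_i} - w_i\|_{L^p(\Omega)}$. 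Since $\varrho \equiv \beta = 1/|\Omega|$ and $\int \varrho = 1$, the algebraic identity
\begin{equation*}
    G^{x_i} - w_i = (1-b_i)\, G^{x_i} + b_i\, h_i, \qquad h_i \defeq G^{x_i} - b_i^{-1} w_i,
\end{equation*}
isolates the $|1-b_i|$ error via the uniform bound $\|G^{x_i}\|_{L^p(\Omega)} \lesssim 1$ obtained from \cref{lemma:Greens_functions_W1p_bound} plus Sobolev embedding (valid in the stated range of $p$), and reduces the task to bounding $b_i\|h_i\|_{L^p(\Omega)}$.

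The key observation is that $h_i$ satisfies $-\Delta h_i = |\Omega|(\delta_{x_i} - b_i^{-1}\phi_i(\cdot - x_i))$ on $\Omega$ with Neumann boundary data and zero mean, which is precisely the equation satisfied on all of $\R^d$ by the whole-space comparison function
\begin{equation*}
    h_i^*(x) \defeq |\Omega|\bigl(\Gamma(x - x_i) - b_i^{-1}(\Gamma \ast \phi_i(\cdot - x_i))(x)\bigr).
\end{equation*}
By Newton's shell theorem---valid because $\phi_i$ is radial, nonnegative, supported in $B(0, R_i)$, and $\Gamma(x-\cdot)$ is harmonic on $B(0, R_i)$ whenever $|x - x_i| > R_i$---the mean value property yields $(\Gamma \ast \phi_i(\cdot - x_i))(x) = b_i\,\Gamma(x - x_i)$ for every $x$ with $|x - x_i| > R_i$. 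Consequently $h_i^* \equiv 0$ on $\R^d \setminus B(x_i, R_i)$, and since $B(x_i, R_i) \Subset \Omega$ by hypothesis, $h_i^*$ vanishes in a neighborhood of $\partial \Omega$. Therefore the residual $r_i \defeq h_i - h_i^*$ is harmonic in $\Omega$ with zero Neumann boundary data, hence equals a constant $c_i$, which is determined from the zero-mean condition on $h_i$ via $c_i = -|\Omega|^{-1}\int_\Omega h_i^*\dx$.

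The $L^p$ norm of $h_i^*$ is controlled by a rescaling argument. Writing $\phi_i(y) = R_i^{-d}\phi(y/R_i)$ for some radial $\phi$ supported in $B(0,1)$ with $\int \phi = b_i$, the scaling identities $\Gamma(R_i y) = R_i^{2-d}\Gamma(y)$ in $d \geq 3$ (and their analogues in $d \in \{1,2\}$, where the additive logarithm arising in $d=2$ cancels between $\Gamma$ and $b_i^{-1}\Gamma\ast\phi$) produce $h_i^*(x) = R_i^{2-d}\tilde h\bigl((x-x_i)/R_i\bigr)$ with $\tilde h(y) \defeq |\Omega|(\Gamma(y) - b_i^{-1}(\Gamma\ast\phi)(y))$ independent of $R_i$. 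A change of variables then yields
\begin{equation*}
    \|h_i^*\|_{L^p(B(x_i, R_i))} = R_i^{2 - d/q}\,\|\tilde h\|_{L^p(B(0,1))}, \qquad q = p/(p-1),
\end{equation*}
with the convention $d/q = 0$ when $p = 1$. Since $\Gamma \in L^p_{\loc}(\R^d)$ throughout the admissible range of $p$, and Young's convolution inequality applied to $\Gamma \cdot \mathbf{1}_{B(0,2)} \in L^1(\R^d)$ yields $\|\Gamma\ast\phi\|_{L^p(B(0,1))} \lesssim \|\phi\|_{L^p}$, we obtain $\|\tilde h\|_{L^p(B(0,1))} \lesssim 1 + b_i^{-1}\|\phi\|_{L^p}$. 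Combined with the scaling relation $\|\phi\|_{L^p} = R_i^{d/q}\|\phi_i\|_{L^p}$, this gives $b_i\|h_i^*\|_{L^p(\Omega)} \lesssim b_i R_i^{2 - d/q} + R_i^2\|\phi_i\|_{L^p}$.

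The $b_i R_i^{2-d/q}$ term is then absorbed via Hölder's inequality on the support of $\phi_i$: $b_i = \int \phi_i \leq \omega_d^{1/q} R_i^{d/q}\|\phi_i\|_{L^p}$, so $b_i R_i^{2-d/q} \lesssim R_i^2\|\phi_i\|_{L^p}$. Specializing the $L^p$ estimate to $p = 1$ also gives $|c_i| \lesssim R_i^2$, hence $b_i\|r_i\|_{L^p(\Omega)} = b_i |c_i| |\Omega|^{1/p} \lesssim b_i R_i^2$. Summing the resulting estimates for $\|G^{x_i} - w_i\|_{L^p(\Omega)}$ over $i$, weighted by $|a_i|$, and adding the bound on $\|v - w\|_{L^p(\Omega)}$ closes the argument. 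The main technical obstacles I anticipate are (i) justifying Newton's shell theorem in the appropriate distributional sense for merely bounded radial $\phi_i$, handled by a standard mollification/approximation argument, and (ii) uniformly verifying the scaling identity $h_i^*(x) = R_i^{2-d}\tilde h((x-x_i)/R_i)$ across dimensions---in particular tracking how the additive logarithm in $d=2$ and the affine structure in $d=1$ produce the correct cancellation between the singular and mollified terms.
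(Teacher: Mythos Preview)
Your approach is correct and genuinely different from the paper's. Both arguments begin identically---reducing via \cref{prop:stability_Linfty} to bounding $h_i = G^{x_i} - b_i^{-1}w_i$---but diverge at the core estimate. The paper never leaves $\Omega$: it writes $G^{x_i}(x) - b_i^{-1}w_i(x)$ as a line integral of $\nabla_z G^x$ along segments from $x_i$ to $y$, introduces the radial potential $\Psi_i$ with $-\nabla\Psi_i(z) = z\phi_i(z)$, rescales to $\Psi_i^t$, and then tests the distributional equation for $G^x$ against $\Psi_i^t(\cdot - x_i)\in W^{1,\infty}(\Omega)$ to obtain $h_i(x) = b_i^{-1}\int_0^1 t\bigl(\Psi_i^t(x - x_i) - \beta\int_\Omega \Psi_i^t\bigr)\,\d t$. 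The bound then follows from $\|\Psi_i\|_{L^p}\lesssim R_i^2\|\phi_i\|_{L^p}$, and the restriction $p < d/(d-2)$ emerges from integrability of $t^{1-d+d/p}$ on $(0,1)$. Your route instead compares $h_i$ to a whole-space Newton potential and invokes the shell theorem to obtain a compactly supported, scale-explicit comparison function; the same $p$-restriction appears as $\Gamma \in L^p_{\loc}$. Your argument is more geometric and makes the role of radiality (via Newton's theorem) fully transparent, while the paper's stays entirely within the Green's-function machinery on $\Omega$ already developed in \cref{sec:Greensfunctions}.

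One point deserves care: the assertion that $r_i = h_i - h_i^*$ is constant amounts to uniqueness of $W^{1,p}$ distributional solutions of the homogeneous Neumann problem, which the paper establishes only under a $C^{1,\alpha}$ boundary (\cref{thm:Lp_stability} and \cref{rem:uniqueness_distributional}), whereas \cref{thm:convergence_rate_smoothing} assumes merely Lipschitz $\partial\Omega$. The paper's $\Psi_i$-argument sidesteps this entirely, never invoking uniqueness. Your fix is routine but should be explicit: run the argument at the level of the mollified approximants $G_n^{x_i}\in W^{1,2}(\Omega)$ from the construction of $G^{x_i}$, for which the corresponding $r_{i,n}\in W^{1,2}(\Omega)$ (since $\Gamma*\phi_n^{x_i}$ has no singularity) and testing against itself gives $r_{i,n}$ constant; then pass to the limit in $W^{1,p}$. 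Alternatively, a Caccioppoli estimate on $G_n^{x_i}$ shows $G^{x_i}\in W^{1,2}(\Omega\setminus B(x_i,\rho))$, while near $x_i$ the singularity of $G^{x_i}$ cancels against $|\Omega|\Gamma(\cdot - x_i)$ by interior elliptic regularity, so $r_i\in W^{1,2}(\Omega)$ after all.
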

\begin{proof}
Let $w$ be the minimizer of \labelcref{eq:variational_problem} with source term $g$. Then by \cref{prop:stability_Linfty} we have
\begin{equation}\label{eq:vw_bound}
\|v - w\|_{L^\infty(\Omega)}\leq C\|f-g\|_{L^\infty(\Omega)}.
\end{equation}
The remainder of the proof will focus on bounding $u-w$.

Since $\phi_i \in L^\infty(\R^d)$, by \cref{lem:v_as_Greens_convolution} we know that $w(x) = \sum_{i=1}^m a_i \int_\Omega G^y(x)\phi_i(y-x_i)\d y$ and we define the functions $w_i \defeq   \int_\Omega G^y(x)\phi_i(y-x_i)\d y$. Subtracting $u$ and $w$ and using the symmetry of the Greens' function from \cref{thm:Gy_symmetric} we get that for almost every $x\in\Omega$ it holds
\begin{align}\label{eq:estimate_1}
G^{x_i}(x) - b_i^{-1}&w_i(x) \notag \\
&=  \int_\Omega \left(G^x(x_i)-G^x(y)\right)b_i^{-1}\phi_i(y-x_i)\d y\notag\\
&=-b_i^{-1}\int_{B(x_i,\Ri_i)} \int_0^1\frac{\d}{\d t}G^x(x_i+t(y-x_i))\d t\phi_i(y-x_i)\d y \notag\\
&=-b_i^{-1}\int_0^1 \int_{B(x_i,\Ri_i)} \nabla G^x(x_i+t(y-x_i))\cdot (y-x_i)\phi_i(y-x_i)\d y\d t.
\end{align}
We now construct $\Psi_i\in W^{1,\infty}(\R^d)$ so that $-\nabla \Psi_i (z) = z\phi_i(z)$ for all $z\in \R^d$ and $\Psi_i(z)=0$ for $|z|\geq \Ri_i$. Since $\phi_i$ is radial, the construction is simply
\[\Psi_i(z) = \int_{|z|}^\infty s\,\phi_i(se_1) \d s.\]
We also define the rescaled kernels $\Psi^t_i(z) = \frac{1}{t^d}\Psi_i(\frac{z}{t})$.  
A simple computation shows that 
\begin{equation}\label{eq:poincare_psi}
\|\Psi_i\|_{L^p(\R^d)} = \|\Psi_i\|_{L^p(B(0,\Ri_i))} \leq C \Ri_i^2\|\phi_i\|_{L^p(\R^d)}
\end{equation}
for any $p\geq 1$, where $C\defeq  d^{-\frac{1}{p}}$. 
Inserting the definition of $\Psi_i$ in \labelcref{eq:estimate_1} we have
\begin{align}\label{eq:estimate_3}
G^{x_i}(x) - b_i^{-1}w_i(x) &=  b_i^{-1}\int_0^1\int_{B(x_i, \Ri_i)}\nabla_z G^x(x_i+t(y-x_i)) \cdot \nabla \Psi_i\left(y-x_i\right) \d y \d t \notag \\
&=  b_i^{-1}\int_0^1t^{-d}\int_{B(x_i, t \Ri_i)}\nabla_z G^x(z) \cdot \nabla \Psi_i\left(\frac{z - x_i}{t}\right) \d z \d t \notag \\
&=b_i^{-1} \int_0^1 t\int_{\Omega} \nabla_z G^x(z) \cdot \nabla_z \Psi_i^t(z-x_i) \d z \d t.
\end{align}
Using that $G^x$ is a distributional solution of $-\Delta G^x = \delta_x - \beta$ with $\beta>0$ on $\Omega$ with homogeneous Neumann boundary conditions, and taking into account that according to \cref{rem:more_test_functions} the function $\Psi^t_i\in W^{1,\infty}(\Omega)$ is a valid test function we obtain
\begin{align}\label{eq:estimate_4}
\int_{\Omega}\nabla_z G^x(z) & \cdot \nabla_z \Psi_i^t(z-x_i) \d z=\Psi^t_i(x-x_i)-\beta\int_{\Omega}\Psi^t_i(z-x_i)\d z.
\end{align}
Combining \labelcref{eq:estimate_1,eq:estimate_3,eq:estimate_4} and using the triangle inequality yields
\begin{align*}
\|G^{x_i}-b_i^{-1}w_i\|_{L^p(\Omega)}
&\leq b_i^{-1}\int_0^{1}t^{1}\left[\norm{\Psi^t_i(\cdot-x_i)}_{L^p(\R^d)}+\beta|\Omega|^{\frac{1}{p}} \norm{\Psi_i^t(\cdot-x_i)}_{L^1(\R^d)}\d z\right]\d t.
\end{align*}
We now use that $\|\Psi_i^t\|_{L^p(R^d)} \leq t^{-d + \frac{d}{p}}\|\Psi_i\|_{L^p(\R^d)}$ 
and \labelcref{eq:poincare_psi} to obtain
\begin{align*}
\|G^{x_i}-b_i^{-1}w_i\|_{L^p(\Omega)} &\leq b_i^{-1}\int_0^1 t\left(t^{-d+\frac{d}{p}}\|\Psi_i\|_{L^p(\Omega)} + \beta|\Omega|^{\frac{1}{p}}\|\Psi_i\|_{L^1(\Omega)} \right) \d t\\
&\leq Cb_i^{-1}\Ri_i^2 \int_0^1 t^{1-d+\frac{d}{p}}\|\phi_i\|_{L^p(\Omega)} + t\beta|\Omega|^{\frac{1}{p}}\|\phi_i\|_{L^1(\Omega)} \d t\\
&= Cb_i^{-1}\Ri_i^2\left(\frac{p}{d(1-p) + 2p}\|\phi_i\|_{L^p(\R^d)}  + \frac{1}{2}\beta|\Omega|^{\frac{1}{p}} \|\phi_i\|_{L^1(\R^d)}\right)\\
&\leq Cb_i^{-1}\Ri_i^2\left(\|\phi_i\|_{L^p(\R^d)} + b_i\right),
\end{align*}
where, if $d>2$, we used that $p < d/(d-2)$ to integrate the first term, and $C$ changes from line to line. The proof is completed by the computation
\begin{align*}
\norm{u - w}_{L^p(\Omega)} &\leq \sum_{i=1}^m |a_i| \norm{G^{x_i} - w_i}_{L^p(\Omega)}\\
&\leq\sum_{i=1}^m |a_i| \left(b_i\norm{G^{x_i} - b_i^{-1}w_i}_{L^p(\Omega)} + |b_i-1|\norm{G^{x_i}}_{L^p(\Omega)}\right)\\
&\leq C\sum_{i=1}^m |a_i| \left(\Ri_i^2\left(\|\phi_i\|_{L^p(\R^d)} + b_i\right) + |b_i-1|\right),
\end{align*}
and combining this with \labelcref{eq:vw_bound}.
\end{proof}
\begin{remark}\label{rem:actual_rate}
When $p=1$,  \cref{thm:convergence_rate_smoothing} gives an $O(\Ri_i^2)$ convergence rate, since we expect $\|\phi_i\|_{L^1(\Omega)} = b_i$ to be uniformly bounded; indeed, the functions $\phi_i$ will be chosen as approximations to the measures $\delta_{x_i}$. 

When $1 < p < \frac{d}{d-2}$, then the terms $\|\phi_i\|_{L^p(\Omega)}$ will be larger and produce a worse rate. For example, if $\phi_i = \Ri_i^{-d}\phi\left( \frac{z}{\Ri_i}\right)$ is a rescaled unit kernel  $\phi$, then 
\[\|\phi_i\|_{L^p(\R^d)} = \Ri_i^{-d + \frac{d}{p}} \|\phi\|_{L^p(\R^d)}.\]
Thus, the rate becomes $O(\Ri_i^{2 - d + \frac{d}{p}})$, which matches the rate in  \cref{thm:convergence_rate_varrho}. 
\end{remark}

\section{Convergence rates of Poisson learning for smooth data}
\label{sec:continuum_limits}

This section aims to establish discrete to continuum convergence rates for solutions of graph Poisson equations with regular data as the sample size tends to infinity. In this setting, both the continuum PDE and discrete graph problem have variational interpretations and we use the quantitative stability of the variational problems to establish convergence rates. The proofs are similar to recent work on spectral convergence rates \cite{calder2022improved,garcia2020error,burago2015graph}, with some modifications to handle the boundary of the domain and simplify parts of the proof. Some of the main ideas in this section appeared previously in lecture notes by the second author \cite{CalculusofVariationsLN}.

In this section, we take the setting of a random geometric graph, introduced in  \cref{sec:graph_calculus_rg}. If not stated differently, we assume throughout this section that $\eta$ satisfies \cref{ass:eta}, $\Omega$ satisfies \cref{ass:omega_lipschitz} and $\rho$ satisfies \cref{ass:rho_lipschitz}. In this section, the constants in the $\lesssim$ symbol depend on all quantities from the assumptions. In some cases we will denote the dependence of constants more explicitly, e.g., $C=C(\rho_{min})$. For simplicity we will write $C(\rho) = C(\rho_{min},\rho_{max},\Lip(\rho))$.

We now introduce the graph and continuum energies used in this section. For $f_n\in \ell^2(\X_{n})$, we define the discrete graph energy $\ene(\cdot; f_n) \colon \ell^2(\X_{n}) \to \R$ via
\begin{equation}\label{eq:def_ene}
    \ene(u_n; f_n) \defeq   \ene^{(1)}(u_n) + \ene^{(2)}(u_n; f_n),
\end{equation}
where
\begin{gather*}
    \ene^{(1)}(u_n) \defeq   \frac12 \norm{\nabla_{n, \eps} u_n}_{\ell^2(\X_{n}^2)}^2 = \frac{1}{2 n(n-1)\sigma_\eta \eps^2} \sum_{x,y \in \X_{n}} \eta_\eps\left(\abs{x-y}\right) \abs{u_n(x) - u_n(y)}^2; \\
    \qquad \ene^{(2)}(u_n; f_n) \defeq   -\sprod{u_n, f_n}{\ell^2(\X_{n})} = -\frac1n \sum_{x \in \X_{n}} u(x)f(x).
\end{gather*}
For $f, \rho \in L^\infty(\Omega)$, we define the continuum counterpart $I(\cdot; f, \rho) \colon H^1(\Omega) \to \R$ by
\begin{equation}\label{eq:def_I}
    I(u; f, \rho) \defeq   I^{(1)}(u; \rho) + I^{(2)}(u; f, \rho),
\end{equation}
where
\begin{equation*}
    I^{(1)}(u; \rho) \defeq   \frac12 \int_\Omega \abs{\nabla u}^2 \rho^2 \dx; \qquad I^{(2)} \defeq   -\int_\Omega f u \rho \dx.
\end{equation*}
Frequently, we will consider minimizers of \labelcref{eq:def_I} in the space $H^1_\rho(\Omega)$ defined in \labelcref{eq:H1rho}.  The associated Euler--Lagrange equation for a minimizer $u \defeq   \argmin_{v \in H^1_\rho(\Omega)} I(v;f, \rho)$ reads
\begin{equation}\label{eq:continuum_pde_EL}
    \forall v \in H^1_\rho(\Omega) \colon \int_\Omega \rho^2 ~\nabla u \cdot \nabla v \dx = \int_\Omega \rho f v \dx.
\end{equation}
Moreover, when $f \in L^\infty$ satisfies the compatibility condition $\int_\Omega f\rho \dx = 0$, one can extend the space of test functions to all of $H^1(\Omega)$, which implies that $u$ is a weak solution to
\begin{equation}\label{eq:def_continuum_pde_limit_formulation}
\left\{
\begin{aligned}
-\frac1\rho (\div \rho^2 \nabla u) &=f && \text{in $\Omega$},\\
\frac{\partial u}{\partial \nu} &= 0 && \text{on $\partial \Omega$}.
\end{aligned}
\right.
\end{equation}
In particular, since $\rho \geq \rho_{\min} > 0$, we have an elliptic problem and can apply the tools of \cref{sec:smoothed_poisson}, with $\varrho \equiv \rho^2$ and right hand side $f\rho$, to study $u$. We consider this slightly different continuum equation compared to the one in the previous section, because it is the natural limit arising from \labelcref{eq:def_ene}, but it is not difficult to transform estimates from one to the other.

Similarly, we will frequently minimize $\ene$ in the space $\lo2$ (cf. \cref{eq:def_ell20}). Let $u_{n, \eps} \defeq   \argmin_{v \in \lo2} \ene(v, f_n)$ be the unique minimizer, then it solves an Euler--Lagrange equation (see e.g. \cite[Theorem 2.3]{calder2020poisson}) given by
\begin{equation}\label{eq:discrete_pde_EL}
    \forall v\in \ell^2_0(\X_{n}) \colon \sprod{\nabla_{n,\eps}u_{n,\eps},\nabla_{n,\eps}v}{\ell^2(\X_{n}^2)} = \sprod{f_n, v}{\ell^2(\X_{n})}.
\end{equation}
If $f_n$ satisfies the compatibility condition $\sprod{f_n, \one}{\ell^2(\X_{n})}=0$, then one can extend the space of test functions to the whole $\ell^2(\X_{n})$, which implies that $u_{n,\eps}$ solves the graph PDE 
\begin{equation}\label{eq:graph-laplacian-geometric}
\L_{n,\eps} u_{n,\eps} =  f_n,
\end{equation}
where $\L_{n,\epsilon}$ is the random geometric graph Laplacian defined in \labelcref{eq:graph_laplacian_rg}.

Our main result of this section will be the following estimate regarding convergence to the continuum.

\begin{theorem}[Continuum limit for bounded data]\label{thm:main_smooth}
Let $f \colon \Omega \to \R$ be Borel-measurable and bounded, let $u \in H^1_\rho(\Omega)$ be the unique minimizer of $I(\cdot; f, \rho)$ over $H^1_\rho(\Omega)$, and let $q > \frac d2$.
    There exist positive constants $C_1(\Omega, \eta, \rho_{\min}, \rho_{\max})$, $C_2(\Omega)$, $C_3(\Omega, \eta, \rho_{\min}, \rho_{\max})$, $C_4(\Omega, \eta, \rho)$, $R(\Omega)$, $K(q)$, $\eps_1(\Omega, \rho, \eta)$, $\hat\lambda_1(\Omega, \rho, \eta)$ and $\hat\lambda_2(\Omega, \rho)$, such that for any $n \in \N$, $0 < \eps \leq \eps_1$, $n^{-\frac1d} < \delta \leq \frac{\rho_{\min}}{8 \Lip(\rho)}$, $\delta/\eps \leq C_4$, $0 < \lambda_1 \leq \hat\lambda_1$, and $0 < \lambda_2 \leq \hat\lambda_2$ the event that
    \begin{align*}
	\|u - u_{n,\epsilon}\|_{H^1(\X_{n})}^2 
        &\lesssim \left(\norm{u}_{L^\infty(\Omega)}  + K(q) \norm{f_n}_{\ell^q(\X_{n})}\right) \norm{f - f_n}_{\ell^1(\X_{n})} \\
        & + \left(\Lip(u)^2 + \varepsilon^{\frac d2} \norm{fu}_{L^\infty(\Omega)}  + \eps^d \lambda_1 \norm{u}_{L^\infty(\Omega)}^2\right) \lambda_1 +             
            \Lip(u;\partial_{2\epsilon}\Omega)^2\eps \\
        & + \left(\frac{\delta}{\varepsilon} + \varepsilon + \lambda_1^2 + \lambda_2\right) \norm{f}_{L^2(\Omega)} ^2+ \norm{f_n}_{\ell^2(\X_{n})} \norm{f_n}_{\ell^2(\X_{n} \cap \partial_{2R} \Omega)} \\
		& + (\eps + \lambda_2) \norm{f_n}_{\ell^2(\X_{n})}^2 + K(q) \norm{\osc_{B(\delta;\cdot)}f}_{L^1(\Omega)} \norm{f_n}_{\ell^q(\X_{n})}
    \end{align*}
   holds for all $f_n \in \ell^2(\X_{n})$ satisfying $\sprod{f_n,\one}{\ell^2(\X_{n})}=0$ has probability at least $1-4n \exp(-C_1 n \eps^d \lambda_1^2) - C_2n\exp(-C_3 n\delta^d \lambda_2^2)$, where $u_{n,\eps} \in \ell^2_0(\X_{n})$ is the unique minimizer of $\ene(\cdot;f_n)$.
    %
\end{theorem}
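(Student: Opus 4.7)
The plan is to exploit the variational structure on both sides: both $I(\cdot; f, \rho)$ (over $H^1_\rho(\Omega)$) and $\ene(\cdot; f_n)$ (over $\lo2$) are quadratic plus linear, hence strongly convex around their unique minimizers. Writing $v - u_{n,\eps}$ for any competitor $v \in \lo2$, the identity
\begin{equation*}
    \tfrac12 \|\nabla_{n,\eps}(v - u_{n,\eps})\|_{\lx2}^2 = \ene(v; f_n) - \ene(u_{n,\eps}; f_n)
\end{equation*}
converts our task of bounding the graph $H^1$ error into the task of (i) producing a good competitor $v$ built from the continuum minimizer $u$ such that $\ene(v;f_n)$ is close to $I(u;f,\rho)$ from above, and (ii) producing a lower bound $\ene(u_{n,\eps}; f_n) \geq I(u;f,\rho) - \mathrm{(error)}$. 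The $\ell^2$ portion of the $H^1(\X_n)$ norm would then be recovered from a graph Poincaré inequality, which is the origin of the $\lambda_1$ and $\lambda_2$ parameters.

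First I would construct the competitor by convolving $u$ with a kernel at scale $\delta$ (using $\eta_\delta$ normalized appropriately) to obtain a regularization $u_\delta$ which is Lipschitz, then take $v_n = u_\delta\rvert_{\X_n}$ suitably recentered to lie in $\lo2$. The upper bound on $\ene(v_n; f_n)$ splits into an energy part $\ene^{(1)}(v_n)$ and a data part $\ene^{(2)}(v_n; f_n)$. For $\ene^{(1)}$ one uses the pointwise consistency $\ene^{(1)}(v_n) \approx \tfrac12 \int_\Omega |\nabla u_\delta|^2 \rho^2 \dx$, which follows from the definition of $\sigma_\eta$ after rewriting the graph sum as a $U$-statistic and applying a Bernstein/Hoeffding estimate uniformly over a discretization of $\Omega$; this concentration step produces the failure probabilities $n\exp(-C_1 n\eps^d \lambda_1^2)$ and $n\exp(-C_3 n\delta^d \lambda_2^2)$ (the latter coming from approximating $u$ by averages of $u$ on $\delta$-balls). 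The data part is handled by comparing $\langle v_n, f_n\rangle_{\ell^2}$ to $\int_\Omega u f\rho \dx$, which is where $\|f - f_n\|_{\ell^1}$ and $\|\osc_{B(\delta; \cdot)} f\|_{L^1}$ arise.

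Next, for the \emph{lower bound} on $\ene(u_{n,\eps}; f_n)$, I would extend $u_{n,\eps}$ to a function $\tilde u_{n,\eps} \in H^1(\Omega)$ by a nonlocal convolution procedure (e.g., a weighted average against $\eta_\delta$ centered at graph vertices) and show $I(\tilde u_{n,\eps};f,\rho) \leq \ene(u_{n,\eps};f_n) + \mathrm{(error)}$. The key technical identity is a pointwise formula expressing $\int_\Omega |\nabla \tilde u_{n,\eps}|^2 \rho^2\dx$ in terms of the graph Dirichlet energy up to an error controlled by $\delta/\eps$ and $\eps$, plus a boundary layer penalty on $\partial_{2R}\Omega$. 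Combining with the minimizing property of $u$ (so $I(u;f,\rho) \leq I(\tilde u_{n,\eps};f,\rho)$) closes the loop, since
\begin{equation*}
    \ene(v_n;f_n) - \ene(u_{n,\eps};f_n) \leq [\ene(v_n;f_n) - I(u;f,\rho)] + [I(u;f,\rho) - I(\tilde u_{n,\eps};f,\rho)] + \mathrm{(error)},
\end{equation*}
and the middle term is non-positive.

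Finally, several delicate points remain. The boundary term $\Lip(u;\partial_{2\eps}\Omega)^2 \eps$ arises because the mollification $u_\delta$ cannot be compared to $u$ uniformly up to the boundary, so one restricts the comparison to $\Omega_{2\eps}$ and uses Lipschitz regularity from \cref{prop:boundary_c01} to control the boundary strip. The factor $\norm{f_n}_{\ell^q}$ multiplying $\norm{f-f_n}_{\ell^1}$ and $\norm{\osc_{B(\delta;\cdot)}f}_{L^1}$ comes from pairing $f_n$ against the $L^\infty$ bound \labelcref{eq:specific_Linfty_bound} of the continuum solution associated with a perturbed source. The probabilistic statement is obtained by a union bound over the events needed for the graph-to-continuum consistency (two independent scales, $\eps$ and $\delta$). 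The main obstacle, in my view, will be the bookkeeping of the lower bound: constructing the continuum extension $\tilde u_{n,\eps}$ with a sharp enough energy comparison to justify the $\delta/\eps + \eps + \lambda_1^2 + \lambda_2$ weight on $\|f\|_{L^2}^2$, while simultaneously keeping the boundary contribution of order $\eps$ rather than $\sqrt{\eps}$.
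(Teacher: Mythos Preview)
Your strategy is the paper's: the quadratic identity $\tfrac12\|\nabla_{n,\eps}(u-u_{n,\eps})\|_{\lx2}^2 = \ene(u;f_n)-\ene(u_{n,\eps};f_n)$, the two one-sided energy comparisons (\cref{prop:discrete_continuum_energy_continuum_function} and \cref{prop:continuum_energy_continuum_solution_discrete_energy_discrete_solution}), and a discrete Poincar\'e inequality plus a mean-value estimate for the $\ell^2$ part.

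Two misattributions are worth flagging. First, you mollify $u$ at scale $\delta$ to obtain a Lipschitz competitor; this is unnecessary, since $u$ is already in $C^{1,\alpha}(\Omega)$ by \cref{prop:global_regularity}(ii), and the paper simply restricts $u$ to $\X_n$ (the quadratic identity holds for any $v\in\ell^2(\X_n)$ once $\langle f_n,\one\rangle=0$, so no recentering is needed either). The upper-bound step therefore involves only the scale $\eps$ and contributes only the $\exp(-Cn\eps^d\lambda_1^2)$ failure. Second, the scale $\delta$ enters exclusively in the \emph{lower-bound} direction, and not via a generic nonlocal convolution: the paper uses a transportation map $T_\delta:\Omega\to\X_n$ (\cref{thm:transportation}) that pushes a perturbed density $\rho_\delta\,\dx$ onto the empirical measure with $|T_\delta-\mathrm{id}|\le\delta$, then applies a specifically engineered smoothing kernel $\psi_{\eps,\delta}$ whose gradient satisfies $\nabla\psi_{\eps,\delta}(x)=-\tfrac{x}{\sigma_{\eta,\delta/\eps}\eps^2}\eta_\eps(|x|+2\delta)$, followed by a boundary stretching $S_\eps$. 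That gradient identity is precisely what makes $\I1(\Lambda_{\eps,\delta}E_\delta u_{n,\eps}\circ S_\eps;\rho)\le (1+C(\delta/\eps+\eps))\Ied1(E_\delta u_{n,\eps};\rho)$ hold with the sharp $(\delta/\eps+\eps)$ prefactor (\cref{thm:non-local_to_Local}, \cref{cor:nonlocal_to_local}); a generic mollifier would lose a constant factor here and the argument would not close. This is the step you correctly identify as the main obstacle, but the resolution is the specific choice of $\psi_{\eps,\delta}$ rather than bookkeeping.
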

We remark that the Borel measurability assumption on $f$ ensures that $f(x_i)$ is a well-defined random variable, where $x_i$ is a node in the random geometric graph, which allow us to \emph{restrict} $f$ to the graph. 

To show \cref{thm:main_smooth}, we will need the following two results, which will be proven below.

\begin{proposition}
	\label{prop:discrete_continuum_energy_continuum_function}
	Let $f \colon \Omega \to \R$ be Borel-measurable and bounded. Let $u \in H_\rho^1(\Omega)$ be the unique minimizer of $I(\cdot; f, \rho)$ in $H_\rho^1(\Omega)$. There exists a positive constant $C_1(\eta(0), \rho_{\max}, \sigma_\eta)$, such that for any $0 < \lambda \leq 1$ the event
    \begin{align*}
	{\ene(u;f_n) - I(u;f,\rho)} 
	&\lesssim 
        \norm{u}_{L^\infty(\Omega)} \norm{f - f_n}_{\ell^1(\X_{n})} + \left(\Lip(u)^2 + \norm{fu}_{L^\infty(\Omega)} \varepsilon^{\frac d2}\right) \lambda 
        \\
        &\qquad+ \left(\norm{f}_{L^2(\Omega)}^2 + 
            \Lip(u;\partial_{2\epsilon}\Omega)^2
        \right)\varepsilon
    \end{align*}
    for all $f_n \in \ell^2(\X_{n})$ holds with probability at least $1-4\exp\left(-C_1n\varepsilon^d \lambda^2\right)$.
\end{proposition}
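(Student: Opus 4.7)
The plan is to split the energy difference into its gradient and data pieces and control each by separating a deterministic consistency estimate from a random concentration estimate. Namely,
\[
\ene(u;f_n) - I(u;f,\rho) = \bigl(\Ene{1}(u) - \I{1}(u;\rho)\bigr) + \bigl(\Ene{2}(u;f_n) - \I{2}(u;f,\rho)\bigr),
\]
and the two terms will be handled independently. The randomness enters only through the vertex set $\X_n$, so all estimates will be uniform in $f_n$.

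For the data term I write
\[
\Ene{2}(u;f_n) - \I{2}(u;f,\rho) = -\frac{1}{n}\sum_{x\in\X_n} u(x)\bigl(f_n(x)-f(x)\bigr) + \biggl(\int_\Omega u f\rho\d x - \frac{1}{n}\sum_{x\in\X_n} u(x)f(x)\biggr).
\]
The first sum is bounded by $\norm{u}_{L^\infty(\Omega)}\norm{f-f_n}_{\ell^1(\X_n)}$. The second is a centered i.i.d.\ sum of increments bounded by $\norm{fu}_{L^\infty(\Omega)}$, so Bernstein's inequality with the choice $t = c\norm{fu}_{L^\infty}\eps^{d/2}\lambda$ gives the bound $\lesssim \norm{fu}_{L^\infty}\eps^{d/2}\lambda$ on an event of probability at least $1-2\exp(-Cn\eps^d\lambda^2)$; here I use $\lambda\leq 1$ and $n\eps^d\geq 1$ to absorb constants.

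For the gradient term let $\Psi(x,y) \defeq \frac{\eta_\eps(|x-y|)}{2\sigma_\eta\eps^2}\bigl(u(x)-u(y)\bigr)^2$ so that $\Ene{1}(u) = \frac{1}{n(n-1)}\sum_{x,y\in\X_n}\Psi(x,y)$. I first compute $\bE[\Psi(X_1,X_2)]$ as a double integral; changing variables $y=x+\eps z$ and using the identity $\int(\nabla u(x)\cdot z)^2\eta(|z|)\d z = \sigma_\eta|\nabla u(x)|^2$ produces $\I{1}(u;\rho)$ modulo two error contributions. The first is the interior error arising from the Taylor remainder $u(x+\eps z) - u(x) - \eps\nabla u(x)\cdot z$ (which is controlled via the elliptic regularity of $u$, the bounded data, and the Lipschitz continuity of $\rho$); combining with the energy bound $\norm{\nabla u}_{L^2}\lesssim\norm{f}_{L^2}$ produces the term $\eps\norm{f}_{L^2}^2$. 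The second is the boundary layer: for $x\in\partial_{2\eps}\Omega$ the ball $B(x,\eps)$ is not contained in $\Omega$, so the moment identity above loses cancellation; I fall back on the crude estimate $(u(x)-u(y))^2\leq \Lip(u;\partial_{2\eps}\Omega)^2|x-y|^2$ on a strip of volume $O(\eps)$, contributing $\Lip(u;\partial_{2\eps}\Omega)^2\eps$.

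It then remains to concentrate the $U$-statistic $\frac{1}{n(n-1)}\sum\Psi(X_i,X_j)$ around $\bE[\Psi]$. The naive bound $\Psi\lesssim \Lip(u)^2\eps^{-d}$ is too singular for Hoeffding applied to the $U$-statistic. Instead I invoke Hoeffding's decomposition to isolate the leading i.i.d.\ part $\frac{2}{n}\sum_i\xi_1(X_i)$ with $\xi_1(x)=\bE[\Psi(x,X_2)]-\bE\Psi$; the crucial point is that $\int\eta_\eps(|x-y|)\rho(y)\d y\leq\rho_{\max}$ yields the $\eps$-independent pointwise bound $|\xi_1(x)|\lesssim\Lip(u)^2$. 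Hoeffding's inequality applied to this sum gives deviation $\lesssim\Lip(u)^2\lambda$ with probability $\geq 1-2\exp(-Cn\lambda^2)\geq 1-2\exp(-Cn\eps^d\lambda^2)$ since $\eps\leq 1$. The degenerate second-order remainder is absorbed into the same probability budget via Bernstein's inequality using the variance bound $\Var(\Psi)\lesssim\Lip(u)^4\eps^{-d}$ together with $\lambda\leq 1$. A final union bound over the two concentration events delivers the probability $1-4\exp(-Cn\eps^d\lambda^2)$. The main obstacle is precisely this gradient concentration: the factor $\eps^{-d}$ rules out a dimension-free rate directly, and extracting the projection $\xi_1$ (which is $\eps$-independent) is essential to obtain the claimed $\Lip(u)^2\lambda$ scaling; a secondary subtlety is that one must localize the Lipschitz estimate to the boundary strip to obtain $\Lip(u;\partial_{2\eps}\Omega)^2\eps$ rather than the much larger global constant.
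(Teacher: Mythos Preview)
Your decomposition and the treatment of the linear data term match the paper exactly. The gradient term is where you diverge, and the paper's route is both simpler and cleaner.

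For the concentration $U_n-\bE U_n$, the paper does \emph{not} pass through Hoeffding's decomposition. It applies Bernstein's inequality for $U$-statistics directly to the kernel $g(x,y)=\tfrac{\eta_\eps(|x-y|)}{2\sigma_\eta\eps^2}(u(x)-u(y))^2$, with the bounds $b\lesssim\Lip(u)^2\eps^{-d}$ and, crucially, $\sigma^2\lesssim\Lip(u)^4\eps^{-d}$. With $t=\Lip(u)^2\lambda$ and $\lambda\le 1$ the Bernstein denominator is $\sigma^2+\tfrac13 bt\lesssim\Lip(u)^4\eps^{-d}$, so the exponent is $-Cn\eps^d\lambda^2$ in one step. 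Your remark that the $\eps^{-d}$ bound is ``too singular'' applies only to Hoeffding's inequality for $U$-statistics (which would indeed give $n\eps^{2d}\lambda^2$), not to Bernstein. Your detour through the Hájek projection $\xi_1$ works, but the handling of the degenerate remainder is left vague; the paper's direct Bernstein avoids this entirely.

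For the deterministic comparison $\bE[\Psi]-\I1(u;\rho)$, the paper does \emph{not} use a second-order Taylor remainder. Instead it writes $(u(x)-u(y))^2=\bigl(\int_0^1\nabla u(x+t(y-x))\cdot(y-x)\,dt\bigr)^2$, applies Jensen, and after a change of variables obtains $\Ie1(u;\rho)\le(1+C\eps)\I1(u;\rho)$ on the interior, requiring only $u\in H^1$ and Lipschitz $\rho$. Combined with the Euler--Lagrange identity $\I1(u;\rho)\lesssim\|f\|_{L^2}^2$, this gives the $\eps\|f\|_{L^2}^2$ term directly. Your Taylor-remainder route relies on $C^{1,1}$ regularity of $u$ and, as sketched, produces cross terms of order $\eps\|u\|_{C^{1,1}}\|\nabla u\|_{L^2}\lesssim\eps\|f\|_{L^\infty}\|f\|_{L^2}$ rather than $\eps\|f\|_{L^2}^2$; the step ``combining with $\|\nabla u\|_{L^2}\lesssim\|f\|_{L^2}$ produces $\eps\|f\|_{L^2}^2$'' does not follow as written.
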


\begin{proposition} 
	\label{prop:continuum_energy_continuum_solution_discrete_energy_discrete_solution}
     Let $f \in L^\infty(\Omega)$ and $u \in H^1_\rho(\Omega)$ the unique minimizer of $I(\cdot; f, \rho)$ in $H^1_\rho(\Omega)$, $q > \frac d2$.
	There exist positive constants $C_1(\Omega)$, $C_2(\Omega, \rho_{\min})$, $C_3(\Omega, \eta, \rho)$, $R(\Omega)$, $K(q)$, $\eps_1(\Omega, \rho)$ and $\lambda_1(\Omega, \rho)$, such that for any $n \in \N$, $0 < \eps \leq \eps_1$, $n^{-\frac1d} < \delta \leq \frac{\rho_{\min}}{8 \Lip(\rho)}$, $\delta/\epsilon \leq C_3$, and $0 < \lambda \leq \lambda_1$ the event that
	\begin{align*}
		&\phantom{{}={}}I(u; f, \rho) - \ene(u_{n,\varepsilon}; f_n) \\
		&\lesssim \norm{u_{n,\eps}}_{\ell^2(\X_{n} \cap \partial_{4R} \Omega)} \norm{f_n}_{\ell^2(\X_{n} \cap \partial_{2R} \Omega)} + (\lambda + \eps)\norm{f_n}_{\ell^2(\X_{n})}^2  + \left(\frac{\delta}{\varepsilon} + \varepsilon + \lambda\right) \norm{f}_{L^2(\Omega)}^2\\
		&\qquad\qquad + K(q) \left(\norm{f_n -f}_{\ell^1(\X_{n})} + \norm{\osc_{B(\delta, \cdot)} f}_{L^1(\Omega)}\right) \norm{f_n}_{\ell^q(\X_{n})} 
	\end{align*}
    for all $f_n \in \ell^2(\X_{n})$ satisfying $\sprod{f_n,\one}{\ell^2(\X_{n})}=0$, has probability at least $1-C_1 n\exp(-C_2 n\delta^d \lambda^2)$. Here, $u_{n,\eps} \in \ell^2_0(\X_{n})$ is the unique minimizer of $\ene(\cdot;f_n)$.
\end{proposition}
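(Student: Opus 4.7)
The strategy exploits minimality: since $u$ minimizes $I(\cdot;f,\rho)$ over $H^1_\rho(\Omega)$, it suffices to construct a feasible test function $\tilde u \in H^1_\rho(\Omega)$ from the discrete minimizer $u_{n,\eps}$ and bound $I(u;f,\rho) - \ene(u_{n,\eps};f_n) \leq I(\tilde u;f,\rho) - \ene(u_{n,\eps};f_n)$. I would construct $\tilde u$ in two stages. First, truncate $u_{n,\eps}$ by multiplying with the indicator of $\Omega_{4R}$ for a buffer radius $R$, producing a graph function $\hat u$ supported away from $\partial\Omega$; the discrepancy $\sprod{u_{n,\eps},f_n}{\ell^2(\X_n)} - \sprod{\hat u,f_n}{\ell^2(\X_n)}$ yields the boundary product $\|u_{n,\eps}\|_{\ell^2(\X_n\cap\partial_{4R}\Omega)}\|f_n\|_{\ell^2(\X_n\cap\partial_{2R}\Omega)}$ by Cauchy--Schwarz. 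Second, mollify $\hat u$ at scale $\delta$ via a smooth, compactly supported, radial kernel $\zeta_\delta$ together with an empirical normalization tied to $\rho$, and subtract a small constant to enforce $(\tilde u)_\rho = 0$; the condition $\delta \geq n^{-\frac1d}$ guarantees that $\delta$-balls contain enough nodes so that empirical averages concentrate on their expectations.

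Next, I would split
\[
I(\tilde u;f,\rho) - \ene(u_{n,\eps};f_n)
= \tfrac12 \left( \int_\Omega |\nabla \tilde u|^2 \rho^2 \d x - \|\nabla_{n,\eps} u_{n,\eps}\|_{\lx2}^2 \right)
+ \left( \sprod{u_{n,\eps},f_n}{\ell^2(\X_n)} - \int_\Omega f \tilde u \rho \d x\right)
\]
and bound each piece. For the Dirichlet-energy consistency, I would follow the discrete-to-continuum transport argument going back to \cite{burago2015graph}: express $\nabla \tilde u(x)$ as a kernel-weighted average of pairwise differences $u_{n,\eps}(x_i) - u_{n,\eps}(x_j)$ on the $\delta$-scale, apply convexity of $|\cdot|^2$ and a chain of $\lfloor \eps/\delta \rfloor$ triangle inequalities to dominate $\delta$-scale differences by $\eps$-scale graph differences at a multiplicative cost $1 + O(\delta/\eps)$, and then invoke Bernstein concentration uniformly over a $\delta$-cover of $\Omega$ to replace the resulting nonlocal continuum energy by the graph Dirichlet norm. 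The Lipschitz control on $\rho$ together with $\delta \leq \rho_{\min}/(8\Lip \rho)$ permits freezing $\rho^2$ on the $\delta$-scale at the cost of an additive $O(\eps)\|f\|_{L^2(\Omega)}^2$ error.

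For the source-term comparison I would decompose
\[
\sprod{u_{n,\eps},f_n}{\ell^2(\X_n)} - \int_\Omega f \tilde u \rho \d x
= \sprod{u_{n,\eps}, f_n - f}{\ell^2(\X_n)}
+ \sprod{u_{n,\eps} - \tilde u, f}{\ell^2(\X_n)}
+ \left(\sprod{\tilde u, f}{\ell^2(\X_n)} - \int_\Omega f \tilde u \rho \d x\right),
\]
where $f$ and $\tilde u$ are interpreted as their restrictions to $\X_n$ in the inner products. The first term is bounded by $\|u_{n,\eps}\|_{\ell^\infty}\|f_n - f\|_{\ell^1(\X_n)}$, and a discrete analogue of the Morrey embedding from \cref{prop:global_regularity}~(i), applied to the graph PDE \labelcref{eq:graph-laplacian-geometric}, yields $\|u_{n,\eps}\|_{\ell^\infty} \lesssim K(q) \|f_n\|_{\ell^q(\X_n)}$ for $q > d/2$. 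The second term is controlled by $K(q)\|f_n\|_{\ell^q(\X_n)}\|\osc_{B(\delta,\cdot)}f\|_{L^1(\Omega)}$, since $\tilde u(x_i) - u_{n,\eps}(x_i)$ is a local average of $u_{n,\eps}$ on the $\delta$-scale that can be traded against oscillations of $f$ after rearranging the double sum. The third piece is a Glivenko--Cantelli step using Bernstein concentration for $\frac{1}{n}\sum_i \tilde u(x_i) f(x_i)$ against $\int_\Omega \tilde u f \rho \d x$, contributing the $\lambda$-dependent error $\lambda(\|f_n\|_{\ell^2(\X_n)}^2 + \|f\|_{L^2(\Omega)}^2)$ after an application of Young's inequality.

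The main obstacle is the Dirichlet-energy consistency, because any additive error there is multiplied by $\|f\|_{L^2(\Omega)}^2$ and must decay as $\delta/\eps + \eps + \lambda$; the two-scale $(\delta,\eps)$ transport construction has to be executed carefully near $\partial\Omega$, where balls of radius $\eps$ may protrude outside $\Omega$, and this forces both the boundary buffer $\partial_{4R}\Omega$ and the restriction $\delta/\eps \leq C_3$. Taking a union bound over a $\delta$-net of $\Omega$ for the pointwise concentration events produces the factor $n$ in the failure probability and the exponent $\exp(-C_2 n \delta^d \lambda^2)$, completing the high-probability estimate.
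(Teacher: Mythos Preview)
Your overall strategy---build a feasible $H^1_\rho$ competitor from $u_{n,\eps}$ and exploit minimality of $u$---is correct, and coincides with the paper's. But two of the three building blocks you propose would not go through as described.

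\textbf{Mollification scale.} Mollifying $u_{n,\eps}$ at scale $\delta$ with a generic kernel $\zeta_\delta$ cannot deliver the Dirichlet energy bound
\[
\int_\Omega |\nabla \tilde u|^2 \rho^2 \d x \;\leq\; \bigl(1 + O(\tfrac{\delta}{\eps}+\eps+\lambda)\bigr)\,\|\nabla_{n,\eps} u_{n,\eps}\|_{\lx2}^2.
\]
The gradient of a $\delta$-mollified function is governed by $\delta$-scale differences, and a ``chain of $\lfloor \eps/\delta\rfloor$ triangle inequalities'' only lets you bound $\eps$-scale differences by $\delta$-scale ones, not the reverse direction you need here. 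Concretely, the $\delta$-scale nonlocal energy carries a prefactor $\delta^{-d-2}$ against $\eps^{-d-2}$ for the graph energy, an $(\eps/\delta)^{d+2}$ loss that is fatal. The paper instead extends $u_{n,\eps}$ to the continuum via the transport map $T_\delta$ (giving $E_\delta u_{n,\eps}$) and then mollifies at scale $\eps$ with the specific kernel $\psi_{\eps,\delta}(x) = \frac{1}{\sigma_{\eta,\delta/\eps}\eps^2}\int_{|x|}^\infty \eta_\eps(s+2\delta)s\,\d s$, whose defining property is $\nabla \psi_{\eps,\delta}(x) = -\tfrac{x}{\sigma_{\eta,\delta/\eps}\eps^2}\eta_\eps(|x|+2\delta)$; differentiating the mollification literally reproduces the kernel $\eta_\eps$, so a Cauchy--Schwarz step yields the nonlocal energy $I^{(1)}_{\eps,\delta}(E_\delta u_{n,\eps};\rho_\delta)$ with the right constant, and the $\delta$-shift inside $\eta_\eps(\cdot+2\delta)$ is absorbed by the transport map via $|T_\delta(x)-x|\leq\delta$. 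The $\delta/\eps$ error then comes from $\sigma_{\eta,\delta/\eps}$ versus $\sigma_\eta$, not from any chaining. Boundary effects are handled not by truncating $u_{n,\eps}$ (which would inject uncontrolled jump energy near $\partial\Omega_{4R}$) but by post-composing with a stretching diffeomorphism $S_\eps:\Omega\to\Omega_\eps$; the boundary product $\|u_{n,\eps}\|_{\ell^2(\partial_{4R}\Omega)}\|f_n\|_{\ell^2(\partial_{2R}\Omega)}$ arises only from the source-term comparison on the strip where $S_\eps\neq\mathrm{id}$.

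\textbf{The $\ell^\infty$ bound.} The step ``a discrete analogue of the Morrey embedding \ldots\ yields $\|u_{n,\eps}\|_{\ell^\infty}\lesssim K(q)\|f_n\|_{\ell^q(\X_n)}$'' is the other gap: no such uniform (in $n,\eps$) bound for the graph Poisson problem with $\ell^q$ data, $q>d/2$, is available in the paper or in the cited literature, and proving one would essentially require graph Green's function estimates comparable in difficulty to the whole result. The paper avoids this entirely: it introduces the continuum auxiliary minimizer $w\in H^1_{\rho_\delta}(\Omega)$ of $I(\cdot;E_\delta f_n,\rho_\delta)$, uses the \emph{continuum} $L^\infty$ bound $\|w\|_{L^\infty}\lesssim K(q)\|E_\delta f_n\|_{L^q}\lesssim K(q)\|f_n\|_{\ell^q(\X_n)}$ from \cref{prop:global_regularity}, and writes the source discrepancy as $\langle E_\delta f_n - f, w\rho\rangle_{L^2(\Omega)}$ plus a density-perturbation term (\cref{lem:density_perturbation,lem:perturbation_of_density_rhs_extension}). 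The $\|f_n-f\|_{\ell^1}$ and $\|\osc_{B(\delta,\cdot)}f\|_{L^1}$ terms then come from $\|E_\delta f_n - f\|_{L^1(\Omega)}$ via $|E_\delta f(x)-f(x)|\leq \osc_{B(x,\delta)}f$, with the uniform factor supplied by $\|w\|_{L^\infty}$ rather than $\|u_{n,\eps}\|_{\ell^\infty}$.
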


The general idea in the proof of \cref{thm:main_smooth} is that the $H^1(\X_{n})$ norm of $u-u_{n,\varepsilon}$ can be controlled by the difference of the energies $\ene(u;f_n) - \ene(u_{n,\eps};f_n)$, by using the quadratic nature of this energy, a discrete Poincaré inequality, and some estimates on the discrete mean value $(u)_{\deg}$ of $u$. 
However, as the proof requires several prerequisites, we postpone it to a later section and begin with the proofs of the two propositions above.

In \cref{sec:continuum_limits} we derived regularity estimates for minimizers of the energy $I(\cdot; f, \rho)$ in $H^1_\rho(\Omega)$ (setting $\varrho \equiv \rho^2$). Thus we obtain the following
%
\begin{corollary}[Simplified continuum limit for bounded data]\label{thm:main_smooth_simplified}
    Under the assumptions of \cref{thm:main_smooth}, we have
	\begin{align*}
		\|u - u_{n,\epsilon}\|_{H^1(\X_{n})}^2 
        &\lesssim K(q) \left(\norm{f}_{L^q(\Omega)}  +  \norm{f_n}_{\ell^q(\X_{n})}\right) \norm{f - f_n}_{\ell^1(\X_{n})} + \norm{f}_{L^\infty(\Omega)}^2 \lambda_1\\
        &  + \norm{f}_{L^\infty(\partial_{4\eps} \Omega)}^2\eps + \left(\frac{\delta}{\varepsilon} + \varepsilon + \lambda_1^2 + \lambda_2\right) \norm{f}_{L^2(\Omega)} ^2 + (\eps + \lambda_2) \norm{f_n}_{\ell^2(\X_{n})}^2\\
        & + \norm{f_n}_{\ell^2(\X_{n})} \norm{f_n}_{\ell^2(\X_{n} \cap \partial_{2R} \Omega)} + K(q) \norm{\osc_{B(\delta;\cdot)}f}_{L^1(\Omega)} \norm{f_n}_{\ell^q(\X_{n})}
	\end{align*}
    for all $f_n \in \ell^2(\X_{n})$ satisfying $\sprod{f_n,\one}{\ell^2(\X_{n})}=0$ has probability at least $1-4n \exp(-C_1 n \eps^d \lambda_1^2) - C_2n\exp(-C_3 n\delta^d \lambda_2^2)$. Here, $u_{n,\eps} \in \ell^2_0(\X_{n})$ is the unique minimizer of $\ene(\cdot;f_n)$.
\end{corollary}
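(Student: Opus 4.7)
The plan is to combine \cref{thm:main_smooth} with the continuum regularity estimates of \cref{sec:smoothed_poisson}, in order to convert every $u$-dependent quantity on the right-hand side of \cref{thm:main_smooth} into a quantity depending only on $f$. The minimizer $u$ of $I(\cdot;f,\rho)$ satisfies the Neumann problem \cref{eq:def_continuum_pde_limit_formulation}, which in divergence form reads $-\div(\rho^2\nabla u)=f\rho-c\,\rho^2$ for some constant $c$. This places $u$ in the setting of \cref{prop:global_regularity,prop:boundary_c01} with density $\varrho\defeq\rho^2$ (Lipschitz since $\rho$ is) and bounded right-hand side $f\rho$, so that the three regularity ingredients needed below are all immediately available.

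For a given $q>\tfrac{d}{2}$ I would first choose $p\in[1,d/(d-1))$ so that the conjugate Sobolev exponent $q^*=p^*/(p^*-1)$ arising in \cref{prop:global_regularity}(i) satisfies $q^*\leq q$; this yields, after Hölder's inequality on the bounded domain, $\norm{u}_{L^\infty(\Omega)}\lesssim K(q)\,\norm{f}_{L^q(\Omega)}$. Next, \cref{prop:global_regularity}(ii) provides $\Lip(u;\Omega)\leq\norm{u}_{C^{1,\alpha}(\Omega)}\lesssim\norm{f}_{L^\infty(\Omega)}$, and \cref{prop:boundary_c01} applied with $\tau=2\eps$ gives $\Lip(u;\partial_{2\eps}\Omega)\lesssim\norm{f}_{L^\infty(\partial_{4\eps}\Omega)}+\norm{f}_{L^1(\Omega)}$. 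Substituting these three estimates into \cref{thm:main_smooth} produces the claim: the term $\norm{u}_{L^\infty(\Omega)}\norm{f-f_n}_{\ell^1(\X_n)}$ merges with the existing $K(q)\norm{f_n}_{\ell^q(\X_n)}\norm{f-f_n}_{\ell^1(\X_n)}$; the three contributions $\Lip(u)^2\lambda_1$, $\eps^{d/2}\norm{fu}_{L^\infty(\Omega)}\lambda_1$, and $\eps^d\lambda_1\norm{u}_{L^\infty(\Omega)}^2$ are each dominated by $\norm{f}_{L^\infty(\Omega)}^2\lambda_1$, using $\eps\leq 1$ together with $\norm{f}_{L^q(\Omega)}\lesssim\norm{f}_{L^\infty(\Omega)}$ on the bounded domain; and the boundary Lipschitz square yields $\norm{f}_{L^\infty(\partial_{4\eps}\Omega)}^2\,\eps+\norm{f}_{L^1(\Omega)}^2\,\eps$, of which the second summand is absorbed into the existing $\eps\norm{f}_{L^2(\Omega)}^2$ term by Hölder's inequality. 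All remaining summands in the bound of \cref{thm:main_smooth} already depend only on $f$ and $f_n$, and the probability statement carries through verbatim.

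There is no genuine obstacle here; the corollary is essentially a bookkeeping exercise on top of \cref{thm:main_smooth}. The only mild subtlety is that \cref{prop:global_regularity}(ii) and \cref{prop:boundary_c01} require \cref{ass:omega_hoelder} and \cref{ass:rho_hoelder}, whereas the standing assumptions of this section are only \cref{ass:omega_lipschitz} and \cref{ass:rho_lipschitz}. Since $\rho$ Lipschitz implies Hölder, only the boundary hypothesis must be upgraded; this may either be added explicitly to the statement, or handled by noting that in the eventual use of \cref{thm:main_smooth_simplified} in \cref{sec:combination} the stronger \cref{ass:omega_c11} will be in force.
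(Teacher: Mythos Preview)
Your proposal is correct and follows essentially the same approach as the paper: invoke \cref{prop:global_regularity} and \cref{prop:boundary_c01} (with $\varrho=\rho^2$ and right-hand side $\rho f$) to bound $\norm{u}_{L^\infty}$, $\Lip(u)$, and $\Lip(u;\partial_{2\eps}\Omega)$ in terms of $f$, then substitute into \cref{thm:main_smooth} and absorb $\rho_{\max}$ into the implicit constants. Your observation about the need for \cref{ass:omega_hoelder} is valid and not explicitly addressed in the paper's proof, though as you note it is harmless given the stronger assumptions used downstream.
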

\begin{proof}
    \cref{sec:smoothed_poisson} allows us to bound the $u$ dependent quantities from \cref{thm:main_smooth}. By \cref{prop:global_regularity} we have $\norm{u}_{L^\infty(\Omega)} \lesssim K(q) \norm{\rho f}_{L^q(\Omega)}$, while $\Lip(u) \lesssim \norm{\rho f}_{L^\infty(\Omega)}$. Moreover, by \cref{prop:boundary_c01} we have that 
    \begin{equation*}
        \Lip(u;\partial_{2\epsilon}\Omega) 
        \lesssim \norm{\rho f}_{L^\infty(\partial_{4\eps} \Omega)} + \norm{\rho f}_{L^1(\Omega)}.
    \end{equation*}
    Pulling out $\rho_{\max}$ from all these bounds and plugging it into the constant hidden in $\lesssim$ concludes the proof.
\end{proof}
	
\subsection{Discrete to local convergence rate\texorpdfstring{, proof of \cref{prop:discrete_continuum_energy_continuum_function}}{}}

A central object, which is related to the expectation of the discrete energy $\ene(u;f)$ and depends on two parameters $\eps, \delta>0$, is the following non-local energy functional $\ied \colon L^2(\Omega) \to \R$, given by
\begin{equation}\label{eq:non-local_functional_eps}
	\ied(u;f,\rho)= \Ied1(u;\rho) + \I2(u;f,\rho),
\end{equation}
where $f \in L^2(\Omega)$ and $\rho \in L^\infty(\Omega)$ are fixed. The \emph{non-local Dirichlet energy} $\Ied1$ is given by 
\begin{equation}\label{eq:non-local_dirichlet_eps}
	\Ied1(u;\rho) = \frac{1}{2\sigma_{\eta} \varepsilon^{2}} \int_{\Omega} \int_{\Omega} \eta_{\varepsilon} (|x -y| + 2\delta)\abs{u(x)-u(y)}^{2} \rho(x)\rho(y)\d x \d y.
\end{equation}
For $\delta=0$ we write $\ie=I_{\eps,0}$ as well as $\Ie1=I^{(1)}_{\eps,0}$. Since we will have occasion to use different choices for $f$ and $\rho$ we make the notational dependence explicit.

First, we estimate the difference of the non-local and local energies. 

\begin{lemma}[Local to non-local]\label{lem:Local-to-non-local-Convergence}
	Let $u \in H^1(\Omega)$ be Lipschitz, $f \in L^\infty(\Omega)$ and $\epsilon > 0$. Then,
	\begin{equation}
			\ie(u;f,\rho) - I(u;f,\rho) \lesssim \left( \I1(u;\rho)  + 
			\Lip(u;\partial_{2\epsilon})^2
            \right) \eps.
		\end{equation} 
\end{lemma}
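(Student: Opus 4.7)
Since the linear term $I^{(2)}$ appears identically in $\ie$ and $I$, the difference reduces to $\Ie1(u;\rho) - I^{(1)}(u;\rho)$, and it suffices to prove an upper bound on the non-local Dirichlet energy. After the substitution $z = y - x$ in \eqref{eq:non-local_dirichlet_eps}, the plan is to apply Jensen's inequality to the fundamental theorem identity $u(x+z) - u(x) = \int_0^1 \nabla u(x + tz) \cdot z \,\mathrm{d}t$ (valid for Lipschitz $u$), obtaining
\[
|u(x+z) - u(x)|^2 \leq \int_0^1 (\nabla u(x+tz)\cdot z)^2\,\mathrm{d}t.
\]

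Next, by Fubini and the change of variables $w = x + tz$ for each fixed $(t,z)$, the gradient gets moved to a new point $w$, and the two density factors become $\rho(w - tz)$ and $\rho(w+(1-t)z)$. I would split the resulting $w$-integral into the interior part $w \in \Omega_{2\eps}$ and the boundary strip $w \in \partial_{2\eps}\Omega$. On the interior the shifts land in $\Omega_\eps$, so Lipschitz continuity of $\rho$ gives $\rho(w - tz)\rho(w+(1-t)z) \leq \rho(w)^2 + C \eps$; the key identity \eqref{eq:sigma_eta_identity} then turns the leading $\rho(w)^2$ contribution, after averaging in $z$, into exactly $I^{(1)}(u;\rho)$, while the $O(\eps)$ perturbation (together with $\rho \geq \rho_{\min}$) is absorbed into an error of order $\eps\, I^{(1)}(u;\rho)$.

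On the boundary strip I would use the Rademacher pointwise bound $|\nabla u(w)| \leq \Lip(u; \partial_{2\eps}\Omega)$ for a.e.\ $w$, together with $|\partial_{2\eps}\Omega| \lesssim \eps$ coming from the Lipschitz boundary of $\Omega$; the second-moment weight $|z|^2$ combined with $\int \eta_\eps(|z|)|z|^2\,\mathrm{d}z = d\sigma_\eta \eps^2$ then yields a boundary contribution of order $\eps\, \Lip(u; \partial_{2\eps}\Omega)^2$. Summing the two pieces and using that $\int_0^1\mathrm{d}t = 1$ since the $t$-dependence has been dominated away delivers the claimed bound.

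The main obstacle is the bookkeeping associated with the change of variables: after the substitution, the natural $w$-domain is $(\Omega+tz)\cap(\Omega + tz - z)$ rather than $\Omega$ itself, and one must verify that $\Omega_{2\eps}$ is contained in this image for every $t\in[0,1]$ and $|z|\leq\eps$, while the piece outside $\Omega_{2\eps}$ is contained in a set of measure $\lesssim \eps$ on which $\rho$ and $\nabla u$ can only be bounded crudely. Once this geometric point is settled, the argument is essentially a clean application of Jensen, the second-moment identity for $\eta_\eps$, and Lipschitz regularity of $\rho$ and the boundary trace of $u$.
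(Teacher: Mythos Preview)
Your approach uses the same ingredients as the paper's—Jensen on the fundamental theorem identity, the change of variables $w=x+tz$, Lipschitz continuity of $\rho$, and the second-moment identity \eqref{eq:sigma_eta_identity}—but you perform the interior/boundary split \emph{after} the global FTC/Jensen step, whereas the paper splits the outer $x$-integral into $\Omega_\eps$ and $\partial_\eps\Omega$ \emph{before} ever invoking FTC. This ordering matters and leaves a gap in your argument.

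The issue is that the identity $u(x+z)-u(x)=\int_0^1\nabla u(x+tz)\cdot z\,\mathrm{d}t$ requires the segment $\{x+tz:t\in[0,1]\}$ to lie in $\Omega$, and for $x\in\partial_\eps\Omega$ with non-convex $\Omega$ this can fail even when both endpoints $x,\,x+z$ are in $\Omega$. You acknowledge the domain bookkeeping as the ``main obstacle'' and propose to bound $\nabla u$ ``crudely'' on the piece outside $\Omega_{2\eps}$, but $\nabla u$ is simply undefined off $\Omega$. Patching this with a Lipschitz extension $\tilde u$ to $\R^d$ makes FTC valid, but then the only available pointwise bound on the exterior piece is $|\nabla\tilde u|\le\Lip(u;\Omega)$, not the sharper $\Lip(u;\partial_{2\eps}\Omega)$ that the lemma demands—and this distinction is used downstream (via \cref{prop:boundary_c01}) to get $\|f\|_{L^\infty(\partial_{4\eps}\Omega)}$ rather than $\|f\|_{L^\infty(\Omega)}$ in \cref{thm:main_smooth_simplified}.

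The paper sidesteps this cleanly: on $x\in\Omega_\eps$ the ball $B(x,\eps)$ lies in $\Omega$, so Jensen/FTC and the subsequent change of variables are legitimate and give $A\le(1+C\eps)\,\I1(u;\rho)$; on $x\in\partial_\eps\Omega$ no FTC is used at all—one directly invokes $|u(x)-u(y)|\le\eps\,\Lip(u;\partial_{2\eps}\Omega)$ since both $x$ and $y\in B(x,\eps)\cap\Omega\subset\partial_{2\eps}\Omega$, together with $|\partial_\eps\Omega|\lesssim\eps$. Reordering your argument to split before Jensen recovers exactly this.
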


\begin{proof}
	We first write the non-local Dirichlet energy as
	\begin{equation}
			\Ie1(u;\rho) =  A +B,
		\end{equation}
	where
	\[A = \frac{1}{2\sigma_{\eta} \varepsilon^{2}} \int_{\Omega_\eps} \int_{B(x,\epsilon)} \eta_{\varepsilon} (|x -y|)\abs{u(x)-u(y)}^{2} \rho(x)\rho(y)\d y \d x,\]
	and
	\[B = \frac{1}{2\sigma_{\eta} \varepsilon^{2}} \int_{\partial_\eps\Omega} \int_{B(x,\epsilon)\cap \Omega} \eta_{\varepsilon} (|x -y|)\abs{u(x)-u(y)}^{2} \rho(x)\rho(y)\d y \d x,\]
	and we bound $A$ and $B$ separately. We first focus on estimating $A$. Let $x\in \Omega_\eps$ and $y\in B(x,\epsilon)$. Since $B(x,\epsilon)\subset \Omega$, the line segment between $x$ and $y$ belongs to $\Omega$. Therefore we can use Jensen's inequality to obtain
	\begin{align*}
			\abs{u(x)-u(y)}^{2} &= \left(\int^{1}_{0} \frac{\d}{\d t}u(x+t(y-x))\d t \right)^{2} \\
			&= \left( \int^{1}_{0} \nabla u(x+t(y-x))\cdot (y-x)\d t \right)^{2} 
			\\
			&\leq 
			\int^{1}_{0} \abs{\nabla u(x+t(y-x))\cdot (y-x)}^{2}\d t.
		\end{align*}
	Since $|x-y|\leq \epsilon$ we also have
	\begin{equation}\label{eq:rhoxy}
			\rho(y) \leq \rho(x) + \Lip(\rho)\eps \leq \rho(x)\left( 1 + \frac{\Lip(\rho)}{\rho_{\min}}\eps\right) = \rho(x)(1+C\epsilon).
		\end{equation}
	Therefore we can bound $A$ as 
	\begin{align*}
			A &\leq \frac{1+C\epsilon}{2\sigma_{\eta} \varepsilon^{2}} \int_{\Omega_\eps} \int_{B(x,\epsilon)} \eta_{\varepsilon} (|x -y|)\int^{1}_{0} \abs{\nabla u(x+t(y-x))\cdot (y-x)}^{2} \d t\, \rho(x)^2\d y \d x\\
			&=\frac{1+C\epsilon}{2\sigma_{\eta} \varepsilon^{2}}\int_0^1 \int_{\Omega_\eps} \int_{B(x,\epsilon)} \eta_{\varepsilon} (|x -y|) \abs{\nabla u(x+t(y-x))\cdot (y-x)}^{2} \d y\, \rho(x)^2\d x\d t\\
			&=\frac{1+C\epsilon}{2\sigma_{\eta} \varepsilon^{2}}\int_0^1 \int_{\Omega_\eps} \int_{B(0,\epsilon)} \eta_{\varepsilon} (|z|) \abs{\nabla u(x+tz)\cdot z}^{2} \d z\, \rho(x)^2\d x\d t\\
			&=\frac{1+C\epsilon}{2\sigma_{\eta} \varepsilon^{2}}\int_0^1 \int_{B(0,\epsilon)} \eta_{\varepsilon} (|z|)\int_{\Omega_\eps}  \abs{\nabla u(x+tz)\cdot z}^{2}\rho(x)^2 \d x \d z\d t\\
			&=\frac{1+C\epsilon}{2\sigma_{\eta} \varepsilon^{2}}\int_0^1 \int_{B(0,\epsilon)} \eta_{\varepsilon} (|z|)\int_{\Omega_\eps+tz}  \abs{\nabla u(y)\cdot z}^{2}\rho(y-tz)^2 \d y \d z\d t\\
			&\leq\frac{(1+C\epsilon)^2}{2\sigma_{\eta} \varepsilon^{2}}\int_{B(0,\epsilon)} \eta_{\varepsilon} (|z|)\int_{\Omega}  \abs{\nabla u(y)\cdot z}^{2}\rho(y)^2 \d y \d z\\
			&\leq\frac{1+C\epsilon}{2\sigma_{\eta} \varepsilon^{2}}\int_{\Omega}\int_{B(0,\epsilon)} \eta_{\varepsilon} (|z|)  \abs{\nabla u(y)\cdot z}^{2} \d z \, \rho(y)^2 \d y\\
			&=\frac{1+C\epsilon}{2}\int_{\Omega}\abs{\nabla u(y)}^2 \rho(y)^2 \d y = (1+C\epsilon)\I1(u;\rho),
		\end{align*}
	where we used \labelcref{eq:sigma_eta_identity} in the last line, and the constant $C$ was increased between lines.
	Furthermore, we have 
	\begin{align*}
			B &=
			\frac{1}{2\sigma_{\eta} \varepsilon^{2}} \int_{\partial_\eps\Omega} \int_{B(x,\epsilon)\cap \Omega} \eta_{\varepsilon} (|x -y|)\abs{u(x)-u(y)}^{2} \rho(x)\rho(y)\d y \d x
			\\
			&\leq \frac{\rho_{\max}^2}{2\sigma_{\eta}}  \int_{\partial_\eps\Omega} \int_{B(x,\epsilon)\cap \Omega} \eta_{\varepsilon} (|x -y|) \d y \Lip(u;B(x,\eps)\cap\Omega)^2 \d x\\
			&\lesssim 
			\Lip(u;\partial_{2\epsilon}\Omega)^2
			\int_{\partial_\eps\Omega} \int_{\R^d} \eta(|z|) \d z \d x\\
			&\lesssim 
			\Lip(u;\partial_{2\epsilon}\Omega)^2
			|\partial_{2\epsilon}\Omega|
			\\
            &\lesssim \Lip(u;\partial_{2\epsilon})^2\epsilon,
		\end{align*}
	where we utilized that 
    $\abs{\partial_\eps\Omega}\lesssim \epsilon$.
    Combining the inequalities for $A$ and $B$ yields
	\[\ie(u;f,\rho) - I(u;f,\rho) = \Ie1(u;\rho) - \I1(u;\rho) \lesssim \left(\I1(u;\rho) + 
	\Lip(u;\partial_{2\epsilon}\Omega)^2
	\right)\epsilon,\]
	which completes the proof.
\end{proof}

We now estimate the difference between the discrete and non-local energies
\begin{lemma} [non-local to discrete]\label{lem:non-local-to-discrete-energy}
    Let $u \in H^1(\Omega)$ be Lipschitz and $f:\Omega \to \R$ be Borel-measurable and bounded. There exists a positive constant $C_1(\eta(0), \rho_{\max}, \sigma_\eta)$, such that for any $0 < \lambda \leq 1$
	\begin{equation}
			\left|\ene(u;f) - \ie(u;f,\rho)\right| \leq \left(\Lip(u)^{2} + \|fu\|_{L^\infty(\Omega)}\eps^{\frac{d}{2}}\right)\lambda
		\end{equation}
	hold with probability at least $1-4\exp\left(-C_1n\epsilon^{d}\lambda^{2}\right)$.
\end{lemma}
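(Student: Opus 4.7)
I would split the difference into two independent pieces,
\[
\ene(u;f)-\ie(u;f,\rho) = \bigl(\ene^{(1)}(u)-\Ied{1}(u;\rho)\bigr) + \bigl(\ene^{(2)}(u;f)-\I{2}(u;f,\rho)\bigr),
\]
setting $\delta=0$ in $\Ied{1}$, and control each term by a separate concentration inequality, then close with a union bound. The probability target $1-4\exp(-C_1 n\eps^d\lambda^2)$ already suggests two two-sided deviation estimates.

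The linear part is easy. One has
\[
\ene^{(2)}(u;f)-\I{2}(u;f,\rho) = \int_\Omega u f \rho \dx - \frac{1}{n}\sum_{x\in\X_n} u(x)f(x),
\]
which, since the $x_i$ are \iid\ with density $\rho$, is a centered sum of i.i.d.\ random variables bounded by $\|fu\|_{L^\infty(\Omega)}$. Hoeffding's inequality with deviation $t=\|fu\|_{L^\infty(\Omega)}\,\eps^{d/2}\lambda$ gives the desired bound with probability at least $1-2\exp(-2n\eps^d\lambda^2)$, matching the second term on the right hand side.

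The Dirichlet part is the substantive step. Writing
\[
K(x,y) \defeq  \frac{\eta_\eps(|x-y|)}{2\sigma_\eta \eps^2}\bigl(u(x)-u(y)\bigr)^2,
\]
one has $\ene^{(1)}(u)=\frac{1}{n(n-1)}\sum_{i\ne j}K(x_i,x_j)$ (diagonal terms vanish), and a straightforward computation using that the $x_i$ are \iid\ gives $\mathbb{E}[K(x_1,x_2)] = \Ie{1}(u;\rho)$. This identifies $\ene^{(1)}(u) - \Ie{1}(u;\rho)$ as a centered U-statistic of order two. Using $\eta_\eps\le \eta(0)\eps^{-d}$ and the Lipschitz bound $|u(x)-u(y)|\le \Lip(u)\eps$ on $\mathrm{supp}\,\eta_\eps(|\cdot-\cdot|)$, the kernel is pointwise bounded by $M\lesssim \Lip(u)^2 \eps^{-d}$, and the variance satisfies
\[
\Var(K(x_1,x_2)) \le \mathbb{E}[K^2] \le M\cdot \Ie{1}(u;\rho) \lesssim \Lip(u)^4\,\eps^{-d},
\]
where the last bound uses $\Ie{1}(u;\rho)\lesssim \Lip(u)^2$ via the same Lipschitz estimate and $\int \eta_\eps(|x-y|)\dy = 1$. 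Inserting these into a Bernstein-type inequality for U-statistics yields
\[
\P\!\left(\bigl|\ene^{(1)}(u) - \Ie{1}(u;\rho)\bigr|>\Lip(u)^2\lambda\right)
\le 2\exp\!\left(-c\,\frac{n\,\Lip(u)^4\lambda^2}{\sigma^2+M\Lip(u)^2\lambda}\right)
\le 2\exp(-c\,n\eps^d\lambda^2),
\]
for $\lambda\le1$, with $c$ depending only on $\eta(0), \rho_{\max}, \sigma_\eta$ (absorbing the $|\Omega|$ factor from $\Ie{1}\lesssim\Lip(u)^2$). A final union bound with the linear-part estimate proves the lemma.

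The main obstacle is the concentration of the U-statistic: one needs the Bernstein scaling (not just Hoeffding) to obtain exponent $\eps^d$ instead of $\eps^{2d}$. I would either cite a standard Bernstein inequality for U-statistics, or prove it by the classical "freeze one variable" trick: conditioning on $x_2,\dots,x_n$, apply Bernstein to the conditional sum $\frac{1}{n-1}\sum_{j\ge 2}K(x_1,x_j)$ with variance $\lesssim M\Ie1(u;\rho)$ and range $\lesssim M$; then iterate over the remaining coordinate. This two-step martingale-style argument keeps all constants explicit in $\eta(0), \rho_{\max}, \sigma_\eta$.
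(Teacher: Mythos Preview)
Your proposal is correct and matches the paper's proof essentially line for line: the paper splits into the same two pieces, applies a Bernstein inequality for U-statistics to the Dirichlet term with the same kernel bound $b\lesssim \Lip(u)^2\eps^{-d}$ and variance bound $\sigma^2\lesssim \Lip(u)^4\eps^{-d}$, applies Hoeffding to the linear term, and then sets $\lambda_2=\eps^{d/2}\lambda$ to align the two probability exponents before union bounding. The only difference is that the paper simply cites a ready-made Bernstein inequality for U-statistics rather than sketching a proof via conditioning.
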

\begin{proof}
	We bound $|\Ie1(u;\rho)-\Ene1(u)|$ by using Bernstein's inequality for $U$-statistics and the term $|\I2(u;f,\rho)-\Ene2(u;f)|$ by applying Hoeffding's inequality. 
	The final bound for $\abs{\ene(u;f) - \ie(u;f,\rho)}$ then follows from the triangle inequality.
	
	\textbf{Step 1:} 
	We begin by defining the $U$-statistic
	\begin{equation}
			U_{n} = \frac{1}{n(n-1)}\sum_{i\ne j} g(x_{i}, x_{j})
		\end{equation}
	where
	\begin{equation}
			g(x,y) = \frac{\eta_{\epsilon}\left(|x-y|\right)}{2\sigma_\eta \epsilon^2}(u(x)-u(y))^2,
		\end{equation}
	and we observe that $\Ene1(u) = U_{n}$. We also note that the expectation of $g(x,y)$ is $\Ie1(u;\rho)$; indeed
	\[\mathbb{E}[g(x,y)] = \frac{1}{2\sigma_\eta}\int_{\Omega}\int_{\Omega}\eta_{\epsilon}\left(|x-y|\right)\abs{u(x)-u(y)}^{2}\rho(x)\rho(y)\d x\d y =\Ie1(u;\rho).\]
	Since $u$ is  Lipschitz and $\eta$ is nonincreasing, we have $|\eta_{\epsilon}| \leq \eta(0)\epsilon^{-d}$, and $\eta_{\epsilon}(|x-y|) = 0$ for $|x - y| > \epsilon$. Therefore we obtain
	\[b \defeq   \sup_{x,y \in \Omega}|g(x,y)|=\sup_{x,y\in \Omega}\frac{\eta_{\epsilon}\left(|x-y|\right)}{2\sigma_\eta \epsilon^2}(u(x)-u(y))^2\leq \frac{\eta(0)}{2\sigma_\eta}\Lip(u)^2\eps^{-d}.\]
	Furthermore, the variance is bounded by
	\begin{flalign*}
			\sigma^{2} \defeq   \mathbb V\left(g(x,y)\right)&\leq  \mathbb E[g(x,y)^2]\\
			&\leq \frac{1}{4\sigma_\eta^2\epsilon^4}\int_{\Omega}\int_{\Omega}\eta_{\epsilon}\left(|x-y|\right)^{2} (u(x)-u(y))^4\rho(x)\rho(y)\d x\d y \\
			&\leq \frac{\eta(0)\rho_{\max}}{4\sigma_\eta^2}\Lip(u)^{4}\epsilon^{-d}\int_{\Omega}\int_{B(y,\epsilon)}\eta_\eps(|x-y|)\d x \rho(y)\d y \\
			&= \frac{\eta(0)\rho_{\max}}{4\sigma_\eta^2}\Lip(u)^{4}\epsilon^{-d},
		\end{flalign*}
	since $\rho$ is a probability density and $\eta_\eps$ has unit mass. Bernstein's inequality for $U$-statistics reads \cite[Theorem 5.15]{CalculusofVariationsLN}:
	\begin{align*}
			\mathbb{P}\left(\abs{U_{n}  - \Ie1(u;\rho)} \geq t\right) \leq 2\exp\left(-\frac{nt^2}{6\left(\sigma^2+\tfrac{1}{3}bt\right)}\right)
			\quad 
			\text{for all } t>0.
		\end{align*}
	Choosing $t\defeq  \Lip(u)^2\lambda_1$ with $0<\lambda_1\leq 1$ and using the bounds for $b$ and $\sigma$ we get
	\begin{equation}
			\mathbb{P}\left(\left|  U_{n}  - \Ie1(u;\rho) \right|\geq \Lip(u)^{2}\lambda_1\right) \leq 2\exp\left(-Cn\epsilon^{d}\lambda_1^{2}\right),
		\end{equation}
	where $C>0$ depends on $\eta(0)$, $\rho_{\max}$ and $\sigma_\eta$.  
	
	\textbf{Step 2:} 
	Now, we define the random variable $Y_{i} = -f(X_{i})u(X_{i})$---which is well-defined since $u$ is continuous and $f$ is Borel measurable---and observe that
	\[\Ene2(u;f) = -\frac{1}{n}\sum_{i=1}^{n}f(X_{i})u(X_i) = \frac{1}{n}\sum_{i=1}^{n}Y_{i}.\]
	The mean of $Y_i$ is given by
	\[\mu \defeq   \mathbb E(Y_i) = -\int_\Omega f u \rho \d x = \I2(u;f,\rho),\]
	and we have
	\[|Y_i - \mu| = \left|f(X_i)u(X_i) - \int_\Omega f u \rho \d x \right| \leq 2\|fu\|_{L^\infty(\Omega)} =:b. \]
	Therefore, the Hoeffding inequality reads \cite[Theorem 5.9]{CalculusofVariationsLN}:
	\begin{equation}
			\mathbb{P}\left(\abs{\Ene2(u;f) - \I2(u;f,\rho)} \geq t \right) \leq 2\exp\left(-\frac{nt^{2}}{2b^2} \right)\qquad\forall t>0.
		\end{equation}
	We set $t=\frac{1}{2}\lambda_2b = \|fu\|_{L^\infty(\Omega)}\lambda_2$, where $\lambda_2>0$, to obtain that
	\[\abs{\Ene2(u;f) - \I2(u;f,\rho)} \leq \|fu\|_{L^\infty(\Omega)}\lambda_2\]
	with probability at least $1-2\exp\left( -\frac{1}{8}n\lambda_2^2\right)$. 
	
	\textbf{Step 3:} 
	Using the results of Steps 1 and 2, and a union bound, we obtain
	\begin{align*}
			\left|\ene(u;f) - \ie(u;f,\rho)\right| &\leq \left|\Ene1(u)-\Ie1(u;\rho)\right| + \left|\Ene2(u;f) -\I1(u;f,\rho)\right| \\
			&\leq \Lip(u)^{2}\lambda_1 + \|fu\|_{L^\infty(\Omega)}\lambda_2
		\end{align*}
	with probability at least $1-2\exp\left( -\frac{1}{8}n\lambda_2^2\right)-2\exp\left(-Cn\epsilon^{d}\lambda_1^{2}\right)$. We now set $\lambda_1=\lambda$ and $\lambda_2 = \eps^{\frac{d}{2}}\lambda$, to match the probabilities and complete the proof.
\end{proof}

\begin{proof}[Proof of \cref{prop:discrete_continuum_energy_continuum_function}]
    If $u$ is not Lipschitz, then the right hand side of \cref{prop:discrete_continuum_energy_continuum_function} is defined to be infinite, so the result trivially holds. Assume thus that $u$ is Lipschitz (as follows from some boundary regularity assumptions, cf. \cref{prop:global_regularity}).
    
    We consider a realization of the graph $\X_{n}$ such that the results of \cref{lem:non-local-to-discrete-energy} hold. This has probability at least $1-4\exp(-Cn\eps^d \lambda^2)$.

    Note that for any $f_n \in \ell^2(\Omega)$ we have that
    \begin{equation*}
        \abs{\ene(u;f_n) - \ene(u;f)} 
        \leq \norm{u}_{\ell^\infty(\X_{n})} \norm{f - f_n}_{\ell^1(\X_{n})} 
        \leq \norm{u}_{L^\infty(\Omega)} \norm{f - f_n}_{\ell^1(\X_{n})}. 
    \end{equation*}
    We can combine \cref{lem:Local-to-non-local-Convergence,lem:non-local-to-discrete-energy} to obtain
	\begin{equation*}
		{\ene(u;f) - I(u;f,\rho)} 
		\lesssim \left(\Lip(u)^2 + \norm{fu}_{L^\infty(\Omega)} \varepsilon^{\frac d2}\right) \lambda 
        + \left(I^{(1)}(u;\rho) + 
            \Lip(u;\partial_{2\epsilon}\Omega)^2
        \right)\varepsilon
	\end{equation*}
	Moreover, the Euler--Lagrange equation \labelcref{eq:continuum_pde_EL} gives
	\begin{equation*}
		\forall v \in H^1_\rho(\Omega) \colon \quad \int_\Omega \rho^2 ~\nabla u \cdot \nabla v \dx = \int_\Omega f v \rho \dx.
	\end{equation*}
	Choosing $v \equiv u$ and applying the Poincaré inequality gives the estimate
     \begin{equation*}
         I^{(1)}(u; \rho) 
         = \frac12 \int_\Omega \abs{\nabla u}^2 \rho^2 \dx
         \lesssim \norm{f}_{L^2(\Omega} \norm{\nabla u}_{L^2(\Omega)} 
         \lesssim \norm{f}_{L^2(\Omega)} \sqrt{I^{(1)}(u; \rho)}
     \end{equation*}
     concluding the proof.
\end{proof}

\subsection{Local to discrete convergence rate\texorpdfstring{, proof of \cref{prop:continuum_energy_continuum_solution_discrete_energy_discrete_solution}}{}}

\subsubsection{Transportation maps}

In order to extend discrete functions on the graph to continuum functions while controlling the graph Dirichlet energies, we use an approach similar to the transportation map approach originally developed in \cite{garcia2016continuum}. The original idea in \cite{garcia2016continuum} is to use a \emph{transportation map} $T:\Omega \to \X_{n}$ that pushes forward the data distribution measure $\mu\defeq  \rho \d x$ onto the empirical data measure $\mu_n\defeq   \frac{1}{n}\sum_{i=1}^n \delta_{x_i}$. That is, $T_\# \mu = \mu_n$, which simply means that $\mu(T^{-1}(x_i)) = \frac{1}{n}$ for all $i$, or rather
\[\int_{T^{-1}(x_i)}\rho \d x = \frac{1}{n}  \ \ \text{for all } i=1,\dots,n.\] 
Given such a transportation map $T$, we can easily convert discrete summations into continuous integrals, since the definition of the push forward implies that
\begin{equation}\label{eq:transportation_ident}
	\frac{1}{n}\sum_{i=1}^n u(x_i) = \int_{\Omega} u(T(x_i)) \rho(x) \d x \ \ \text{for all } u\in \l2.
\end{equation}
This appears similar to the kinds of estimates one obtains from concentration of measure. However, there are important differences: (1) there are no error terms in \labelcref{eq:transportation_ident}, (2) the identity holds uniformly over all $u\in \l2$, once one constructs the transportation map $T$, and (3) the right hand side of \labelcref{eq:transportation_ident} is \emph{not} the expectation of the left hand side (since it involves $u\circ T$), as it would be in an application of the Bernstein or Hoeffding bounds.

In order to make sure that $u\approx u\circ T$, so the right hand side of \labelcref{eq:transportation_ident} is close to the expectation of the left hand side, we require that the transportation map $T$ does not move points too far (and that $f$ has some type of monotonicity or continuity). This naturally leads to the optimal transportation problem 
\begin{equation}\label{eq:optimal_transportation}
	\inf_{T_\# \mu = \mu_n} \sup_{x\in \Omega}|T(x)-x|.
\end{equation}
Thus, we seek the transportation map $T$ that moves points in the worst case by the smallest possible distance. This is called an $L^\infty$-optimal transportation problem. Let $\delta_n>0$ denote the infimal distance in \labelcref{eq:optimal_transportation}. It was shown in \cite{trillos2015rate} that optimal transportation maps exist in dimension $d\geq 3$ with 
\begin{equation}\label{eq:optimal_scaling}
	\delta_n \sim \left( \log n/n\right)^{\frac{1}{d}}.
\end{equation}
Up to constants this is optimal, since this agrees with the worst case distance from a point to its nearest neighbor in an \emph{i.i.d.}~point cloud (which is a natural lower bound for $\delta_n$). However, in dimension $d=2$, there are some topological obstructions to the arguments used in \cite{trillos2015rate} and it is only possible to show the existence of transportation maps with $\delta_n \sim (\log n)^{\frac{1}{4}}\left( \log n/n\right)^{\frac{1}{2}}$, which is suboptimal by a logarithmic factor. 

In \cite{calder2022improved} a simpler transportation map approach was developed, which yields the optimal scaling \labelcref{eq:optimal_scaling} in all dimensions $d\geq 1$, and does not require solving an $L^\infty$-optimal transportation problem, which greatly simplifies the proofs. The key idea is to relax the condition that $T_\#\mu = \mu_n$ slightly, and instead ask that $T_\#\tilde\mu=\mu_n$, where $\tilde\mu$ is some measure that is ``close'' to $\mu$ in a sense that will be made clear below. This approach was developed in \cite{calder2022improved} for data sampled from a closed manifold without boundary. Here, we are working on a Euclidean domain with boundary, and there are some additional details to verify. We state the main results in this section and include the proofs in the appendix for completeness. We recall that $x_1,\dots,x_n$ is an \emph{i.i.d.} sequence with density $\rho$.
\begin{theorem}\label{thm:transportation} 
    There exists constants $C=C(\Omega)>0$ and $c=c(\Omega,\rho_{\min})>0$ such that for any $n^{-1/d} < \delta \leq \frac{\rho_{\min}}{8\Lip(\rho)}$ there exists a probability density $\rho_\delta\in L^\infty(\Omega)$ and a measurable map $T_\delta:\Omega\to \X_{n}$ such that for any $0 \leq \lambda \leq\frac{\rho_{\min}}{8\rho_{\max}}$ the following hold with probability at least  $1-Cn\exp(-cn\delta^{d}\lambda^{2})$: 
	\begin{thmenum}
		\item ${T_\delta}_\#(\rho_\delta \d x) = \mu_n$, \label{thm:transportation_is_pushforward}
		\item $T_\delta(x_i)=x_i$ for all $i=1,\dots,n$, \label{thm:transportation_point_to_point}
		\item $|T_\delta(x) - x| \leq \delta$ for all $x\in \Omega$, and \label{thm:transportation_distance_T}
		\item $|\rho(x) - \rho_{\delta}(x)| \leq \Lip(\rho)\delta + \rho_{\max}\lambda$ for all $x\in \Omega$. \label{thm:transportation_distance_rho}
	\end{thmenum}
\end{theorem}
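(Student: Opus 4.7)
The plan is to construct $T_\delta$ and $\rho_\delta$ from a partition $\{U_i\}_{i=1}^n$ of $\Omega$ with $x_i \in U_i$ and $\diam(U_i) \leq \delta$, such that each cell has $\rho$-mass close to $1/n$. Given such a partition, we would set $T_\delta(x) \defeq x_i$ for $x \in U_i$ and define
\begin{equation*}
    \rho_\delta(x) \defeq \frac{\rho(x)}{n\,\mu(U_i)}, \qquad x \in U_i,
\end{equation*}
where $\mu \defeq \rho\d x$. Then \cref{thm:transportation_is_pushforward} follows since $\int_{U_i}\rho_\delta\d x = 1/n$; property \cref{thm:transportation_point_to_point} is immediate from $x_i \in U_i$; and \cref{thm:transportation_distance_T} follows from the diameter bound. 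For \cref{thm:transportation_distance_rho} we would use that
\begin{equation*}
    |\rho_\delta(x) - \rho(x)| = \rho(x)\left|\frac{1}{n\mu(U_i)}-1\right| \leq \rho_{\max}\,\bigl|n\mu(U_i) - 1\bigr|,
\end{equation*}
so the whole question reduces to showing $|n\mu(U_i)-1| \lesssim \Lip(\rho)\delta + \lambda$ with high probability.

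To build the partition, I would first overlay $\Omega$ with a grid of axis-aligned cubes $\{Q_j\}_{j=1}^M$ of side length $\delta/(2\sqrt d)$, so $\diam(Q_j)\leq \delta/2$ and $M \lesssim \delta^{-d}$. Letting $N_j$ be the number of sample points in $Q_j \cap \Omega$, Bernstein's inequality combined with a union bound over the $M$ cubes gives
\begin{equation*}
    |N_j - n\mu(Q_j \cap \Omega)| \lesssim n\mu(Q_j \cap \Omega)\,\lambda,\qquad j=1,\dots,M,
\end{equation*}
with probability $1 - Cn\exp(-cn\delta^d\lambda^2)$, where the assumption $\delta > n^{-1/d}$ ensures $n\mu(Q_j\cap\Omega)\gtrsim n\delta^d > 1$ is large enough for the concentration to be meaningful. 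Cubes with $N_j=0$ are then merged with a neighboring non-empty cube (feasible with high probability when $n\delta^d \gg \log n$). Inside each non-empty merged cell $Q_j$ I would sub-partition by a coordinate-sweep: order the sample points in $Q_j$ by one coordinate and slice perpendicular to that coordinate into $N_j$ slabs, each carrying exactly $\rho$-mass $\mu(Q_j)/N_j$ and containing exactly one sample point. This yields the desired $\{U_i\}$ with $\diam(U_i)\leq \delta$, $x_i\in U_i$, and $\mu(U_i) = \mu(Q_{j(i)})/N_{j(i)}$.

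To verify the key estimate $|n\mu(U_i)-1|\lesssim \Lip(\rho)\delta + \lambda$, I would compute
\begin{equation*}
    n\mu(U_i) = \frac{n\mu(Q_{j(i)})}{N_{j(i)}} = \frac{1}{1 + (N_{j(i)} - n\mu(Q_{j(i)}))/(n\mu(Q_{j(i)}))},
\end{equation*}
and use the Bernstein estimate above to bound the relative error by $C\lambda$. The $\Lip(\rho)\delta$ term enters when we compare $\rho$ on different slabs of the same $Q_j$: within a cube of diameter $\delta/2$, $\rho$ oscillates by at most $\Lip(\rho)\delta$, so even though the slabs have equal $\rho$-mass, the pointwise ratio $\rho_\delta/\rho$ picks up an additional $\Lip(\rho)\delta/\rho_{\min}$ error. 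The restriction $\delta \leq \rho_{\min}/(8\Lip(\rho))$ guarantees this is a small perturbation and $\rho_\delta$ remains bounded below.

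The main obstacle is the treatment of the boundary: cubes that intersect $\partial\Omega$ are only partially contained in $\Omega$, so their effective volume can be much smaller than $\delta^d$, weakening the concentration and potentially producing empty or too-small cells. I would handle this by merging each boundary-adjacent small cube with an interior neighbor before applying Bernstein, using the Lipschitz regularity of $\partial\Omega$ (\cref{ass:omega_lipschitz}) to ensure the number of such merges and the size of the merged cells remain controlled. A secondary subtlety is the merging rule for empty cubes, which must be made measurable and deterministic (e.g., merge with the lexicographically first non-empty neighbor) in order to produce a well-defined map $T_\delta$; the probability that this procedure fails is absorbed into the high-probability bound through the same union bound.
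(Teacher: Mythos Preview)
Your strategy is essentially the paper's: build a coarse partition of $\Omega$ into cells of diameter $\leq\delta$, use concentration to control the sample count in each cell, then sub-partition each cell into as many equal-measure pieces as it contains sample points and send each piece to its point. The paper executes the first step via an $h$-net and its associated Voronoi tessellation (their Proposition on partitions), which automatically gives cells of volume $\gtrsim h^d$ all the way up to the Lipschitz boundary; this replaces your grid-plus-merging workaround. For $\rho_\delta$ the paper uses the piecewise-constant histogram $\rho_\delta = n_j/(n|B_j|)$ rather than your rescaling $\rho/(n\mu(U_i))$; both choices work, and in fact yours would give $|\rho-\rho_\delta|\le\rho_{\max}|1/(n\mu(U_i))-1|\lesssim\rho_{\max}\lambda$ without any $\Lip(\rho)\delta$ contribution, so your explanation of where that term comes from (``slab-to-slab variation'') is off --- with equal-$\rho$-mass slabs the ratio $n\mu(U_i)$ is constant across slabs of the same cube.

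There is, however, a genuine gap in your sub-partition step. Ordering the points in $Q_j$ by one coordinate and slicing into slabs of equal $\rho$-mass does \emph{not} guarantee one point per slab: take $Q_j=[0,1]$, uniform $\rho$, and points at $0.1,0.2,0.9$; the first equal-mass slab $[0,1/3]$ contains two points. You need an equitable-partition argument: given $N$ points in a cell and an absolutely continuous measure, one can always partition the cell into $N$ (not necessarily convex or connected) pieces of equal measure, each containing exactly one point. The paper also glosses over this (``We can also arrange that $x_{k_{i,j}}\in B_{i,j}$''), but it does not claim the false coordinate-sweep mechanism. A quick fix is an induction: at each stage find a hyperplane direction and position along which both the measure fraction and the point count split in the same ratio $k:(N-k)$ for some $1\le k<N$ (an intermediate-value/ham-sandwich style argument), then recurse on the two sides.
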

\begin{remark}\label{rem:Lipschitz}
	We remark that for the probability $1-Cn\exp(-cn\delta^{d}\lambda^{2})$ to be close to $1$, we need to choose $\delta$ so that $n\delta^d \gtrsim \log(n)$, which is equivalent to $\delta \sim \left( \log n/n\right)^{\frac{1}{d}}$. This is the same as the optimal scaling obtained by the transportation map approach used in \cite{garcia2016continuum} for $d\geq 3$, but sharper when $d=2$.  
\end{remark}
To simplify notation, we introduce the extension operator $E_{\delta} : \ell^{2}(\mathcal{X}_{n}) \longrightarrow L^{2} (\Omega)$ by
	\begin{equation}\label{eq:def_extension_operator}
		E_{\delta}u(x) = (u\circ T_{\delta})(x).
	\end{equation}
The extended function $E_\delta u$ is piecewise constant taking the value $u(x_i)$ on the set $T_\delta^{-1}(\{x_i\})$. \cref{thm:transportation_is_pushforward} and the definition of the extension operator allow us to write
\begin{equation}\label{eq:Td_ext}
	\frac{1}{n}\sum_{i=1}^n u(x_i) = \int_{\Omega} E_\delta u \,\rho_\delta \d x \ \ \text{for all } u\in \l2.
\end{equation}

\begin{lemma}[Discrete to non-local]\label{lem:Discrete-to-non-local-1}
    Fix $n^{-1/d} < \delta \leq \frac{\rho_{\min}}{8\Lip(\rho)}$. Let $\rho_\delta \in L^\infty(\Omega)$ be the probability density from \cref{thm:transportation}.
    Consider a realization of the random graph $\X_{n}$, such that for the transport map $T_\delta \colon \Omega \to \X_{n}$ \cref{thm:transportation_is_pushforward,thm:transportation_point_to_point,thm:transportation_distance_T} hold. 
 
    Then, for all $u,f\in \ell^{2}(\mathcal{X}_{n})$, we have
	\begin{equation}\label{eq:discrete-to-non-local}
		\Ied1(E_\delta u;\rho_\delta) \leq \Ene1(u)  \ \ \text{and} \ \ \I2(E_\delta u;E_\delta f,\rho_\delta) = \Ene2(u;f).
	\end{equation} 
\end{lemma}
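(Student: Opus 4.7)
My plan is to prove both statements by directly invoking the pushforward identity from \cref{thm:transportation_is_pushforward} via iterated integration (Fubini), with the $2\delta$ shift in the definition of $\Ied1$ doing the crucial work to turn the resulting inequality in the right direction.

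For the second equality, I would note that $E_\delta u \cdot E_\delta f = (uf)\circ T_\delta = E_\delta(uf)$, and then apply the identity \labelcref{eq:Td_ext} from just above the lemma statement: that is, since $(T_\delta)_\#(\rho_\delta\d x)=\mu_n$, we get
\begin{equation*}
\I{2}(E_\delta u;E_\delta f,\rho_\delta) \;=\; -\int_\Omega E_\delta(uf)\,\rho_\delta\d x \;=\; -\frac1n\sum_{x\in \X_n} u(x)f(x) \;=\; \Ene2(u;f).
\end{equation*}

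For the inequality on the Dirichlet energies, the main step is to apply the pushforward identity twice (once in each variable, via Fubini). For a symmetric measurable $g:\X_n\times\X_n\to\R$, setting $h(x,y)\defeq g(T_\delta(x),T_\delta(y))$ and freezing $y$ first gives $\int_\Omega h(x,y)\rho_\delta(x)\d x=\tfrac1n\sum_i g(x_i,T_\delta(y))$ by \cref{thm:transportation_is_pushforward}; then integrating over $y$ yields
\begin{equation*}
\int_\Omega\int_\Omega g(T_\delta(x),T_\delta(y))\,\rho_\delta(x)\rho_\delta(y)\d x\d y \;=\; \frac{1}{n^2}\sum_{i,j}g(x_i,x_j).
\end{equation*}
I would apply this with $g(a,b)\defeq \eta_\eps(|a-b|)|u(a)-u(b)|^2$. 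The key estimate, and the only place the $2\delta$ shift matters, is that by \cref{thm:transportation_distance_T} we have $|T_\delta(x)-T_\delta(y)|\leq |x-y|+2\delta$, and since $\eta_\eps$ is non-increasing by \cref{ass:eta},
\begin{equation*}
\eta_\eps(|x-y|+2\delta)\;\leq\;\eta_\eps(|T_\delta(x)-T_\delta(y)|).
\end{equation*}
Combining this bound with the pushforward identity yields
\begin{equation*}
\Ied1(E_\delta u;\rho_\delta) \;\leq\; \frac{1}{2\sigma_\eta\eps^2 n^2}\sum_{i,j}\eta_\eps(|x_i-x_j|)|u(x_i)-u(x_j)|^2.
\end{equation*}
Finally, since the diagonal terms vanish, $\tfrac1{n^2}\sum_{i,j}=\tfrac1{n^2}\sum_{i\ne j}\leq \tfrac1{n(n-1)}\sum_{i\ne j}=\tfrac1{n(n-1)}\sum_{i,j}$, yielding $\Ied1(E_\delta u;\rho_\delta)\leq \Ene1(u)$.

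There is no real obstacle; the only subtle point is the direction of the inequality, which is precisely why the definition of $\Ied1$ carries the shifted argument $|x-y|+2\delta$ rather than just $|x-y|$. Measurability of $E_\delta u = u\circ T_\delta$ is immediate from the measurability of $T_\delta$ in \cref{thm:transportation}, so Fubini applies without issue.
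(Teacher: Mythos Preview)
Your proposal is correct and follows essentially the same approach as the paper: both apply the pushforward identity \cref{thm:transportation_is_pushforward} (once for $\I2$, twice via Fubini for $\Ied1$), use the triangle inequality $|T_\delta(x)-T_\delta(y)|\leq |x-y|+2\delta$ together with the monotonicity of $\eta_\eps$, and absorb the normalization discrepancy via $\tfrac1{n^2}\leq\tfrac1{n(n-1)}$ (the paper writes this as the factor $\tfrac{n}{n-1}\geq 1$). The only cosmetic difference is that the paper starts from $\Ene1(u)$ and bounds it below by $\Ied1(E_\delta u;\rho_\delta)$, whereas you go the other direction.
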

\begin{proof}

    To simplify the notation, we will write $u_\delta \defeq   E_\delta u$ and $f_\delta \defeq   E_\delta f$.
    %
    Under the assumptions of $T_\delta$, we have
	\[|T_\delta(x) - T_\delta(y)| \leq 2\delta \ \ \text{for all } x,y\in \Omega.\]
	Since $\eta$ is non-increasing, this implies
	\begin{align*}
		\Ene1(u) &= \frac{1}{2\sigma_{\eta}n(n-1)\epsilon^{2}}\sum_{i,j=1}^n\eta_{\varepsilon}\left(|x_i-x_j|\right)\abs{u(x_i)-u(x_j)}^{2}\\
		&=\left( \frac{n}{n-1}\right)\frac{1}{2\sigma_{\eta} \varepsilon^{2}} \int_{\Omega} \int_{\Omega} \eta_{\varepsilon} (|T_{\delta}(x)-T_{\delta}(y)|)\abs{u_{\delta}(x)-u_{\delta}(y)}^{2} \rho_{\delta}(x)\rho_{\delta}(y)\d x \d y \\
		&\geq \frac{1}{2\sigma_{\eta} \varepsilon^{2}} \int_{\Omega} \int_{\Omega} \eta_{\varepsilon} (|x -y| + 2\delta)\abs{u_{\delta}(x)-u_{\delta}(y)}^{2} \rho_{\delta}(x)\rho_{\delta}(y)\d x \d y
        = \Ied1(u_\delta;\rho_\delta).
	\end{align*}
	We also observe that 
	\[\Ene2(u;f) =  -\frac{1}{n} \sum_{i=1}^n f(x_{i})u(x_{i}) =  -\int_{\Omega} u_{\delta} f_{\delta} \rho_\delta\d x = \I2(u_\delta;f_\delta,\rho_\delta), \]
	which completes the proof.
\end{proof}

\subsubsection{Smoothing and Stretching}

We now need to estimate the local energy $I$ in terms of the non-local energy $\ied$. The main difficulty here is that $u_\delta$ is a piecewise continuous function, and hence it is discontinuous and does not belong to $H^1(\Omega)$ (hence $I(u_\delta) = \infty$). To handle this, we mollify $u_\delta$ slightly so that the mollified version belongs to $W^{1,\infty}(\Omega)$ while at the same time preserving control between the non-local and local Dirichlet energies. This requires a very careful choice of the mollification kernel, given below. The techniques in this section are similar to those used in \cite{garcia2020error,calder2022improved}, however, there are some modifications made to simplify the proofs, and to handle the boundary of the domain.  

For $\epsilon,\delta>0$ we define the mollification kernel
\begin{equation*}
	\psi_{\epsilon, \delta}(x) = \frac{1}{\sigma_{\eta, \delta/\epsilon}\varepsilon^{2}} \int^{\infty}_{|x|}\eta_{\epsilon}(s+2\delta)s\d s,
\end{equation*}
where for $t>0$ the constant $\sigma_{\eta,t}$ is given by
\begin{equation}\label{eq:sigma-definition}
	\sigma_{\eta, t}= \int_{\R^{d}}|z_{1}|^{2}\eta \left(|z|+2t\right)\d z.
\end{equation}
The kernel $\psi_{\epsilon,\delta}$ is clearly nonnegative, radially symmetric, and is supported in $B(0,\eps-2\delta)$. It also has a scaling identity $\psi_{\epsilon,\delta}(x) = \frac{1}{\epsilon^d}\psi_{1,\delta/\epsilon}\left(x/\epsilon\right)$, which is readily verified. Regarding the constant $\sigma_{\eta,t}$, we first note that $\sigma_{\eta,0}=\sigma_\eta$. It is also important to note that we can replace $z_1$ by $z\cdot \xi$ for any unit vector $\xi$, as well as scale by $\eps$ to obtain 
\begin{equation}\label{eq:scaling_sigma_eta}
	\sigma_{\eta, t}=\int_{\R^{d}}|z\cdot \xi|^{2}\eta \left(|z|+2t\right)\d z = \frac{1}{\epsilon^2}\int_{\R^{d}}|x\cdot \xi|^{2}\eta_\eps \left(|x|+2t\eps\right)\d x.
\end{equation}
In particular, by averaging both sides over $\xi=e_i$, $i=1,\dots,d$, so that $\xi\cdot z=z_i$, we also obtain
\begin{equation}\label{eq:sigma_alt}
	\sigma_{\eta, t}= \frac{1}{d}\int_{\R^{d}}|z|^{2}\eta \left(|z|+2t\right)\d z.
\end{equation}
We first estimate how how the constants $\sigma_{\eta,t}$ depend on $t$. 
\begin{lemma}\label{lem:sigma_eta_bound}
	For any $t\geq 0$ we have
	\begin{equation}\label{eq:lip}
		\sigma_{\eta} - 4Ct\leq \sigma_{\eta,t} \leq \sigma_{\eta}, \ \ \text{where} \ C=\int_{\R^d}\eta(|z|)|z|\d z.
	\end{equation}
\end{lemma}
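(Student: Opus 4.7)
The upper bound $\sigma_{\eta,t} \leq \sigma_\eta$ is immediate: since $\eta$ is non-increasing and $|z|+2t \geq |z|$, the pointwise inequality $\eta(|z|+2t) \leq \eta(|z|)$ holds, and integration against $|z_1|^2 \geq 0$ preserves this.

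For the lower bound, my plan is to reduce everything to a one-dimensional radial integral and make the translation $|z| \mapsto |z|+2t$ explicit. Using the rotational-invariance identity \labelcref{eq:sigma_alt},
\begin{equation*}
\sigma_\eta - \sigma_{\eta,t} = \frac{1}{d}\int_{\R^d}|z|^2\bigl[\eta(|z|) - \eta(|z|+2t)\bigr]\dx = \frac{|S^{d-1}|}{d}\int_0^\infty r^{d+1}\bigl[\eta(r) - \eta(r+2t)\bigr]\d r.
\end{equation*}
In the second integrand, I will substitute $s = r+2t$ to line both pieces up against the common weight $\eta(s)$:
\begin{equation*}
\int_0^\infty r^{d+1}\bigl[\eta(r) - \eta(r+2t)\bigr]\d r = \int_0^{2t}s^{d+1}\eta(s)\d s + \int_{2t}^\infty\bigl[s^{d+1} - (s-2t)^{d+1}\bigr]\eta(s)\d s.
\end{equation*}

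The main (and only delicate) step is to bound the two bracketed expressions by a multiple of $2t\cdot s^d$. On $[0,2t]$ this is trivial, since $s \leq 2t$ gives $s^{d+1} \leq 2t\cdot s^d$. On $[2t,\infty)$ I will apply the mean value theorem to $u \mapsto u^{d+1}$ to get $s^{d+1} - (s-2t)^{d+1} \leq (d+1)\cdot 2t\cdot s^d$. Combining yields
\begin{equation*}
\int_0^\infty r^{d+1}\bigl[\eta(r) - \eta(r+2t)\bigr]\d r \leq 2t(d+1)\int_0^\infty s^d\eta(s)\d s.
\end{equation*}
Converting the radial integral back to $\R^d$ via $\int_{\R^d}\eta(|z|)|z|\dx = |S^{d-1}|\int_0^\infty s^d\eta(s)\d s$ gives $\int_0^\infty s^d\eta(s)\d s = C/|S^{d-1}|$, so
\begin{equation*}
\sigma_\eta - \sigma_{\eta,t} \leq \frac{|S^{d-1}|}{d}\cdot 2t(d+1)\cdot \frac{C}{|S^{d-1}|} = \frac{2(d+1)}{d}\,tC \leq 4tC
\end{equation*}
using $2(d+1)/d = 2 + 2/d \leq 4$ for $d \geq 1$.

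I do not expect any real obstacle; the subtlety is only that $\eta$ is only assumed non-increasing (not differentiable or even continuous), which is why I avoid writing $\eta(r)-\eta(r+2t)$ as $-\int_r^{r+2t}\eta'$ and instead perform the substitution $s=r+2t$ to shift the increment onto the polynomial factor $r^{d+1}$, where I do have a clean Lipschitz-type estimate.
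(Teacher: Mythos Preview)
Your proof is correct and follows essentially the same approach as the paper: both reduce to the radial integral, perform the substitution $s=r+2t$, split off the interval $[0,2t]$, and bound $s^{d+1}-(s-2t)^{d+1}\leq 2t(d+1)s^d$ on $[2t,\infty)$ (the paper phrases this via Bernoulli's inequality $(1-x)^{d+1}\geq 1-(d+1)x$, which is equivalent). The final constant $2(d+1)/d\leq 4$ is identical in both arguments.
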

The proof of \cref{lem:sigma_eta_bound} is given in \cref{app:mollification}. We also verify that $\psi_{\eps,\delta}$ is indeed a mollification kernel on the interior of the domain, in that it integrates to unity. 
Define
\begin{equation}\label{eq:ted}
	\ted(x) = \int_{\Omega} \psi_{\epsilon, \delta}(x-y) \d y.
\end{equation}
\begin{lemma}\label{lem:psi_moll}
	Let $\epsilon,\delta>0$ with $\delta/\epsilon \leq C(\eta)$. Then the following hold:
	\begin{enumerate}[label=(\roman*)]
		\item There exists $0 < C \leq 1$, depending only on $\Omega$ and $\eta$, such that  $C \leq \ted(x)\leq 1$ for all $x\in \Omega$.
		\item $\ted(x)=1$ for all $x\in \Omega_{\epsilon-2\delta}$.
	\end{enumerate}
\end{lemma}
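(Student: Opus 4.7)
My strategy is to first verify that $\psi_{\eps,\delta}$ integrates to $1$ over $\R^{d}$, which immediately gives the upper bound in (i) and the equality in (ii); the substantive content of the lemma is then the uniform lower bound, which rests on the Lipschitz regularity of $\partial\Omega$.

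\emph{Step 1: normalization and support.} Passing to polar coordinates and swapping the order of integration in $r$ and $s$ in
\[
\int_{\R^{d}}\psi_{\eps,\delta}(x)\dx
=\frac{1}{\sigma_{\eta,\delta/\eps}\,\eps^{2}}\int_{\R^{d}}\int_{|x|}^{\infty}\eta_{\eps}(s+2\delta)\,s\,\d s\,\dx
\]
reduces the right-hand side, after the rescaling $s=\eps t$, to
\[
\frac{|S^{d-1}|}{d\,\sigma_{\eta,\delta/\eps}}\int_{0}^{\infty}t^{d+1}\eta(t+2\delta/\eps)\,\d t,
\]
which equals $1$ by \labelcref{eq:sigma_alt} written in polar form. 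The same computation makes the support transparent: $\eta_{\eps}(s+2\delta)$ vanishes once $s>\eps-2\delta$, so $\supp\psi_{\eps,\delta}\subset \overline{B(0,\eps-2\delta)}$, and I implicitly ask $C(\eta)<\tfrac12$ so that this ball is nondegenerate.

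\emph{Step 2: upper bound and part (ii).} Since $\psi_{\eps,\delta}\ge 0$ and has unit mass on $\R^{d}$,
\[
\theta_{\eps,\delta}(x)=\int_{\Omega}\psi_{\eps,\delta}(x-y)\dy\le \int_{\R^{d}}\psi_{\eps,\delta}(x-y)\dy=1,
\]
giving the upper bound. If $x\in\Omega_{\eps-2\delta}$, then $x+\overline{B(0,\eps-2\delta)}\subset\Omega$, so the support of $y\mapsto\psi_{\eps,\delta}(x-y)$ lies in $\Omega$ and the above inequality becomes equality, which is (ii).

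\emph{Step 3: uniform lower bound.} This is the main point. Rescaling by $\eps$ turns $\psi_{\eps,\delta}$ into a fixed radial profile depending only on $\tau\defeq \delta/\eps$:
\[
\theta_{\eps,\delta}(x)=\int_{(\Omega-x)/\eps}\tilde\psi_{\tau}(z)\dz,\qquad \tilde\psi_{\tau}(z)\defeq \frac{1}{\sigma_{\eta,\tau}}\int_{|z|}^{\infty}\eta(t+2\tau)\,t\,\d t.
\]
Since $\eta$ is continuous at $0$ with $\eta(0)>0$, there exists $a\in(0,1)$ depending only on $\eta$ with $\eta\ge\eta(0)/2$ on $[0,a]$; for $\tau$ below a threshold $C(\eta)$ this yields $\tilde\psi_{\tau}(z)\ge c_{1}(\eta)>0$ for every $|z|\le a/4$, hence $\psi_{\eps,\delta}(x-y)\gtrsim \eps^{-d}$ whenever $|x-y|\le a\eps/4$. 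The Lipschitz hypothesis on $\partial\Omega$ furnishes a uniform interior cone condition: constants $c_{0},r_{0}>0$ depending only on $\Omega$ with $|B(x,r)\cap\Omega|\ge c_{0}r^{d}$ for every $x\in\Omega$ and $0<r\le r_{0}$. Choosing $r=\min(a\eps/4,r_{0})$, and using the standing bound $\eps\le 1$ to guarantee $r$ remains $\gtrsim \eps$ with an implicit constant depending only on $\Omega$ and $\eta$, gives
\[
\theta_{\eps,\delta}(x)\ge c_{1}(\eta)\,\eps^{-d}\cdot c_{0}r^{d}\gtrsim 1,
\]
which is the desired lower bound.

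\emph{Main obstacle.} The one nontrivial step is the lower bound for $x$ near $\partial\Omega$, where half or more of the mass of $\psi_{\eps,\delta}(x-\cdot)$ can lie outside $\Omega$. It is rescued by combining the uniform interior cone condition from Lipschitz regularity with the pointwise lower bound on the kernel near zero that is allowed by $\eta(0)>0$, which together ensure that a definite portion of the mass of $\psi_{\eps,\delta}(x-\cdot)$ always stays inside $\Omega$.
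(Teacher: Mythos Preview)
Your proof is correct and follows essentially the same approach as the paper: both compute the unit mass of $\psi_{\eps,\delta}$ via polar coordinates and \labelcref{eq:sigma_alt} to obtain the upper bound and (ii), and both establish the lower bound by combining a pointwise lower bound on the kernel near the origin (from $\eta(0)>0$ and continuity) with the uniform interior cone condition from the Lipschitz boundary. The only cosmetic difference is that you rescale to a fixed profile $\tilde\psi_\tau$ while the paper works directly with $\psi_{1,\delta/\eps}$.
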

The proof of \cref{lem:psi_moll} is a tedious but straightforward computation, and is also given in \cref{app:mollification}. 

We now define the smoothing operator $\Lambda_{\epsilon, \delta}$, which amounts to convolution with $\psi_{\epsilon, \delta}$ with normalization by $\ted$.
\begin{definition}
	The operator $\Lambda_{\epsilon, \delta}: L^{2}(\Omega) \longrightarrow L^2(\Omega)$ is defined by
	\begin{equation}\label{def:smoothing-operator}
		\Lambda_{\epsilon, \delta}u(x) 
		=\frac{1}{\ted(x)}\int_{\Omega}\psi_{\eps,\delta}(x-y)u(y)\d y.
	\end{equation}
\end{definition}

We now show that the smoothing operator indeed increases the regularity of $u$. 
\begin{proposition}\label{prop:grad_psi}
	Let $u\in L^2(\Omega)$ and $w = \Lambda_{\eps,\delta}u$. Then $w\in  W^{1,\infty}(\Omega)$  and
	\begin{equation}\label{eq:grad_psi}
		\nabla w(x) = \frac{1}{\sigma_{\eta, \delta/\epsilon}\ted(x)\epsilon^{2}} \int_{\Omega} \eta_{\epsilon} (|x-y|+2\delta)(y-x)(u(y)-w(x))\d y.
	\end{equation}
\end{proposition}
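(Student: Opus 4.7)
The key is to observe that $\psi_{\eps,\delta}$, though defined as a radial antiderivative, is itself Lipschitz on $\R^d$. Writing $\psi_{\eps,\delta}(x) = \Phi(|x|)$ with $\Phi(r) \defeq (\sigma_{\eta,\delta/\eps}\eps^2)^{-1}\int_r^\infty \eta_\eps(s+2\delta)s\,ds$, the fundamental theorem of calculus yields $\Phi'(r) = -(\sigma_{\eta,\delta/\eps}\eps^2)^{-1}\eta_\eps(r+2\delta)r$, which is bounded since $\eta$ is bounded with compact support. Consequently $\psi_{\eps,\delta} \in W^{1,\infty}(\R^d)$ with weak gradient
\[
\nabla\psi_{\eps,\delta}(x) = -\frac{1}{\sigma_{\eta,\delta/\eps}\eps^2}\eta_\eps(|x|+2\delta)\,x,
\]
and in particular $\psi_{\eps,\delta}$ is compactly supported in $B(0,\eps-2\delta)$.

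Next I would differentiate under the integral sign in the definitions of the numerator $v(x)\defeq\int_\Omega \psi_{\eps,\delta}(x-y)u(y)\dy$ and of $\ted(x)=\int_\Omega \psi_{\eps,\delta}(x-y)\dy$. Since $u\in L^2(\Omega)$ and $\Omega$ is bounded, $u\in L^1(\Omega)$, so Young's convolution inequality applied to the difference quotients (together with the uniform Lipschitz bound on $\psi_{\eps,\delta}$) shows both $v$ and $\ted$ are Lipschitz on $\Omega$ with weak gradients
\[
\nabla v(x) = -\frac{1}{\sigma_{\eta,\delta/\eps}\eps^2}\int_\Omega \eta_\eps(|x-y|+2\delta)(x-y)u(y)\dy,
\]
and the analogous formula for $\nabla\ted$ with $u\equiv 1$.

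By \cref{lem:psi_moll}~(i), $\ted$ is bounded away from zero by a positive constant on $\Omega$. Hence the quotient $w = v/\ted$ is Lipschitz on $\Omega$, i.e.\ $w\in W^{1,\infty}(\Omega)$, and the Leibniz rule for Sobolev functions gives
\[
\nabla w = \frac{\nabla v}{\ted} - \frac{v}{\ted^{\,2}}\nabla\ted = \frac{1}{\ted}\bigl(\nabla v - w\,\nabla\ted\bigr).
\]
Inserting the explicit expressions for $\nabla v$ and $\nabla\ted$ collapses the two terms into
\[
\nabla w(x) = \frac{1}{\sigma_{\eta,\delta/\eps}\ted(x)\eps^2}\int_\Omega \eta_\eps(|x-y|+2\delta)(x-y)\bigl(w(x)-u(y)\bigr)\dy,
\]
which is \labelcref{eq:grad_psi} after flipping the sign via $(x-y)(w(x)-u(y)) = (y-x)(u(y)-w(x))$.

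No step presents a genuine obstacle: the only subtlety is that $\psi_{\eps,\delta}$ fails to be $C^1$ at the origin, but this is harmless because we only need it to be Lipschitz (equivalently, in $W^{1,\infty}$) in order to differentiate the convolution. The boundedness of $\ted$ from below is essential to divide through and is supplied by \cref{lem:psi_moll}.
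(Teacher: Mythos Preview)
Your proposal is correct and follows essentially the same route as the paper: compute $\nabla\psi_{\eps,\delta}$ from its definition as a radial antiderivative, differentiate $v=\int_\Omega\psi_{\eps,\delta}(\cdot-y)u(y)\dy$ and $\ted$ under the integral, then apply the quotient rule to $w=v/\ted$. If anything you are slightly more careful than the paper in flagging that $\psi_{\eps,\delta}$ is only Lipschitz (not $C^1$) and in explicitly invoking \cref{lem:psi_moll}~(i) for the lower bound on $\ted$ needed to divide.
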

\begin{proof}
	Thanks to the standing \cref{ass:eta,ass:rho} the kernel $\psi_{\eps,\delta}$ is bounded, and by assumption $u\in L^2(\Omega)\subset L^1(\Omega)$. Hence $w\defeq  \Lambda_{\epsilon,\delta}u\in L^\infty(\Omega_\eps)$. We now write
	\[v(x) = \int_{\Omega}\psi_{\eps,\delta}(x-y)u(y)\d y\]
	so that $w(x) = v(x)/\ted(x)$. For any $x\in \R^d$ we have
	\begin{equation}\label{eq:key_ident}
		\nabla \psi_{\eps,\delta}(x) = -\frac{x}{\sigma_{\eta, \delta/\eps}\eps^2} \eta_\eps(|x|+2\delta).
	\end{equation}
	Therefore we have
	\begin{equation}\label{eq:key_grad}
		\nabla v(x) = \int_{\Omega}\nabla \psi_{\eps, \delta}(x-y)u(y)\d y=\frac{1}{\sigma_{\eta,\delta/\eps}\epsilon^2}\int_{\Omega}\eta_\eps(|x-y| + 2\delta)(y-x)u(y)\d y.
	\end{equation}
	and
	\[\nabla \ted(x) = \frac{1}{\sigma_{\eta,\delta/\eps}\epsilon^2}\int_{\Omega}\eta_\eps(|x-y| + 2\delta)(y-x)\d y.\]
	Therefore we have
	\begin{align*}
		\nabla w(x) &= \frac{1}{\ted(x)}\left(\nabla v(x) - w(x)\nabla \ted(x)\right)\\
		&= \frac{1}{\sigma_{\eta,\delta/\eps}\ted(x)\epsilon^2}\int_{\Omega}\eta_\eps(|x-y| + 2\delta)(y-x)(u(y) - w(x))\d y.
	\end{align*}
	Using again that $\psi_{\eps,\delta}$ is bounded and $u\in L^1(\Omega)$ shows that $\nabla v \in L^\infty(\Omega_\eps)$ and so $v\in W^{1,\infty}(\Omega_\eps)$, which completes the proof.
\end{proof}
\begin{remark}\label{rem:key_psi}
	\cref{prop:grad_psi} shows that $\Lambda_{\eps,\delta}:L^2(\Omega) \to W^{1,\infty}(\Omega)$. It furthermore sheds light on the definition of $\psi_{\eps,\delta}$, which was made precisely so that \labelcref{eq:key_ident,eq:key_grad} hold. We will see below that this will ensure that convolution with $\psi_{\eps,\delta}$ will allow us to pass from the non-local to local Dirichlet energies. 
\end{remark}

It will be important later to control the $L^p$ norm of the smoothing operator over subsets of the domain.
\begin{lemma}\label{prop:smoothing_l1}
	Let $1 \leq p \leq \infty$. For any $u\in L^p(\Omega)$ and $\Omega'\subset \Omega$ we have
	\[\|\Lambda_{\epsilon, \delta}u\|_{L^p(\Omega')} \leq C\|u\|_{L^p((\Omega'+B_\eps)\cap \Omega)},\]
	where $C$ depends only on $p,\Omega$ and $\eta$. 
\end{lemma}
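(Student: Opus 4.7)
The plan is to show that $\Lambda_{\eps,\delta}$ acts as a generalized Young-type convolution bounded on $L^p$, with the localization coming from the finite support of $\psi_{\eps,\delta}$. The key inputs are (i) the lower bound $\ted(x) \geq c > 0$ from \cref{lem:psi_moll}, where $c$ depends only on $\Omega$ and $\eta$, (ii) the fact that $\psi_{\eps,\delta}$ is supported in $B(0,\eps-2\delta) \subset B(0,\eps)$, and (iii) the normalization $\int_{\R^d}\psi_{\eps,\delta}(z)\d z = 1$, which follows from a short polar-coordinates computation using the definition of $\sigma_{\eta,\delta/\eps}$ in \labelcref{eq:sigma_alt}.

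First I would handle a general exponent $1 \leq p < \infty$ by applying H\"older's inequality to the \emph{measure} $\psi_{\eps,\delta}(x-y)\d y$, whose total mass equals $\ted(x) \leq 1$. For each fixed $x \in \Omega$ this yields
\begin{equation*}
    \abs{\Lambda_{\eps,\delta}u(x)}^p
    \leq \frac{1}{\ted(x)^p}\left(\int_\Omega \psi_{\eps,\delta}(x-y)\d y\right)^{p-1}\int_\Omega \psi_{\eps,\delta}(x-y)\abs{u(y)}^p\d y
    = \frac{1}{\ted(x)}\int_\Omega \psi_{\eps,\delta}(x-y)\abs{u(y)}^p\d y.
\end{equation*}
Using $\ted(x)^{-1} \leq C(\Omega,\eta)$, this reduces the estimate to a pointwise convolution bound.

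Next I would integrate over $\Omega'$ and apply Fubini. Because $\psi_{\eps,\delta}(x-y)$ vanishes unless $\abs{x-y}\leq \eps$, the condition $x \in \Omega'$ forces $y \in (\Omega'+B_\eps)\cap \Omega$, giving
\begin{equation*}
    \int_{\Omega'}\abs{\Lambda_{\eps,\delta}u(x)}^p\d x
    \leq C\int_{(\Omega'+B_\eps)\cap\Omega}\abs{u(y)}^p\left(\int_{\Omega'}\psi_{\eps,\delta}(x-y)\d x\right)\d y
    \leq C\,\norm{u}_{L^p((\Omega'+B_\eps)\cap\Omega)}^p,
\end{equation*}
where the inner integral is bounded uniformly in $y$ by $\int_{\R^d}\psi_{\eps,\delta}(z)\d z = 1$. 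Taking $p$-th roots produces the claim. The case $p=\infty$ follows at once by pulling $\norm{u}_{L^\infty((\Omega'+B_\eps)\cap\Omega)}$ outside the integral and using $\ted(x)^{-1}\int_\Omega \psi_{\eps,\delta}(x-y)\d y \leq 1$.

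There is no real obstacle here; the only points requiring some care are verifying that the kernel $\psi_{\eps,\delta}$ has unit mass over $\R^d$ (so that the constant absorbed into $C$ does not depend on $\eps$ or $\delta$) and invoking \cref{lem:psi_moll} to handle the $1/\ted$ factor uniformly. Everything else is Fubini and H\"older.
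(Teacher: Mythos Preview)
Your proposal is correct and follows essentially the same route as the paper: both use Jensen/H\"older with respect to the probability measure $\ted(x)^{-1}\psi_{\eps,\delta}(x-y)\d y$ to get the pointwise bound, invoke \cref{lem:psi_moll} for the uniform lower bound on $\ted$, then apply Fubini and the unit mass of $\psi_{\eps,\delta}$ to finish. The only cosmetic difference is that the paper phrases the key inequality as ``Jensen'' while you phrase it as ``H\"older applied to the measure $\psi_{\eps,\delta}(x-y)\d y$''; these are the same step.
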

\begin{proof}By computation, given in \cref{app:mollification}.
\end{proof}
We also need $L^2$ control the difference $\Lambda_{\eps,\delta}u - u$. 
\begin{lemma}\label{lem:conv_diff}
	Let $\epsilon,\delta>0$ with $\delta/\epsilon \leq C(\eta)$. There exists constants $C=C(\Omega,\eta)>0$ such that for all $u\in L^2(\Omega)$ we have
	\begin{equation}
		\|\Lambda_{\epsilon, \delta}u - u\|^{2}_{L^2(\Omega)} \leq C \Ied1(u;\one)\epsilon^{2}.
	\end{equation}
\end{lemma}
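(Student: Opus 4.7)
The plan is to exploit the fact that $\ted(x) = \int_\Omega \psi_{\eps,\delta}(x-y)\,dy$ is bounded below by a positive constant on $\Omega$ (by \cref{lem:psi_moll}), so that the quantity
\[
\Lambda_{\epsilon,\delta}u(x) - u(x) = \frac{1}{\ted(x)}\int_\Omega \psi_{\eps,\delta}(x-y)\bigl(u(y)-u(x)\bigr)\d y
\]
can be treated as a weighted average of first differences of $u$. The first step is to apply the Cauchy--Schwarz inequality to this identity, which yields the pointwise bound
\[
|\Lambda_{\eps,\delta}u(x)-u(x)|^2 \leq \frac{1}{\ted(x)}\int_\Omega \psi_{\eps,\delta}(x-y)|u(y)-u(x)|^2\d y \lesssim \int_\Omega \psi_{\eps,\delta}(x-y)|u(y)-u(x)|^2 \d y,
\]
using the lower bound on $\ted$ from \cref{lem:psi_moll}.

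The key technical step is then to bound the mollification kernel pointwise in terms of the non-local kernel appearing in $\Ied1(u;\one)$, i.e. to show $\psi_{\eps,\delta}(z) \lesssim \eta_\eps(|z|+2\delta)$. This is done by noting that since $\supp \eta \subset [0,1]$ we have $\eta_\eps(s+2\delta) = 0$ for $s > \eps-2\delta$, so the defining integral of $\psi_{\eps,\delta}$ truncates at $\eps$, and by monotonicity of $\eta$,
\[
\psi_{\eps,\delta}(z) = \frac{1}{\sigma_{\eta,\delta/\eps}\eps^2}\int_{|z|}^{\eps} s\,\eta_\eps(s+2\delta)\d s \leq \frac{\eta_\eps(|z|+2\delta)}{\sigma_{\eta,\delta/\eps}\eps^2}\cdot\frac{\eps^2-|z|^2}{2} \leq \frac{\eta_\eps(|z|+2\delta)}{2\sigma_{\eta,\delta/\eps}}.
\]
Under the smallness assumption $\delta/\eps \leq C(\eta)$, \cref{lem:sigma_eta_bound} guarantees $\sigma_{\eta,\delta/\eps} \geq \tfrac12 \sigma_\eta$, so the constant depends only on $\Omega$ and $\eta$.

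Finally, integrating the pointwise bound over $\Omega$, applying Fubini, and recognizing the resulting double integral as $2\sigma_\eta\eps^2 \Ied1(u;\one)$ gives the claimed estimate. The only real subtlety is the pointwise kernel comparison $\psi_{\eps,\delta}(z) \lesssim \eta_\eps(|z|+2\delta)$ together with the control of $\sigma_{\eta,\delta/\eps}$; everything else is Cauchy--Schwarz, Fubini, and the already-established bound $\ted \gtrsim 1$.
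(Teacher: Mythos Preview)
Your proposal is correct and follows essentially the same route as the paper's own proof: Jensen/Cauchy--Schwarz on the weighted average representation, the pointwise kernel comparison $\psi_{\eps,\delta}(z)\lesssim \eta_\eps(|z|+2\delta)$ obtained from monotonicity of $\eta$ together with $\sigma_{\eta,\delta/\eps}\geq\tfrac12\sigma_\eta$, and then integration plus the lower bound on $\ted$ from \cref{lem:psi_moll}. The only cosmetic differences are that the paper bounds $s\leq\eps$ in the defining integral rather than computing $\int_{|z|}^{\eps}s\,\mathrm{d}s=(\eps^2-|z|^2)/2$, and it multiplies through by $\ted(x)$ before integrating rather than dividing first; neither affects the argument.
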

\begin{proof}
	We assume that
	\begin{equation}\label{eq:eps_delta}
		\eps \geq \frac{8 \delta}{\sigma_\eta}\int_{\R^d}\eta(|z|)|z|\d z.
	\end{equation}
	By \cref{lem:sigma_eta_bound,eq:eps_delta} we have $\sigma_{\eta, \delta/\epsilon} \geq \frac{\sigma_\eta}{2}$. By the monotonicity of $\eta$, for $|x|\leq \epsilon-2\delta$ we have
	\[\psi_{\eps,\delta}(x) = \frac{1}{\sigma_{\eta,\delta/\eps}\epsilon^2}\int_{|x|}^{\epsilon-2\delta} \eta_{\eps}(s + 2\delta)s\d s \leq \frac{\eta_{\eps}(|x| + 2\delta)}{\sigma_{\eta,\delta/\eps}\epsilon}\int_{|x|}^{\epsilon-2\delta} \d s \leq \frac{2}{\sigma_{\eta}}\eta_\eps(|x| + 2\delta),\]
	while for $|x|\geq \epsilon-2\delta$ we have $\psi_{\eps,\delta}(x)=0$. Hence, by Jensen's inequality  and we have
	\begin{align*}
		\abs{\Lambda_{\eps,\delta}u(x)  - u(x)}^{2}  &=\left(\frac{1}{\ted(x)}\int_\Omega \psi_{\eps,\delta}(x-y)(u(y)-u(x)) \right)^2\\
		&\leq\frac{1}{\ted(x)} \int_\Omega \psi_{\eps,\delta}(x-y)\abs{u(x)-u(y)}^2 \d y\\
		&\leq \frac{2}{\sigma_\eta\ted(x)}\int_\Omega \eta_\eps(|x-y|+2\delta)\abs{u(x)-u(y)}^2 \d y.
	\end{align*}
	Therefore we have
	\begin{align}\label{eq:psibound}
		\int_\Omega \ted(x) \abs{\Lambda_{\epsilon,\delta}u(x) - u(x)}^2 \d x &\leq \frac{2}{\sigma_\eta}\int_\Omega\int_\Omega \eta_\eps(|x-y|+2\delta)\abs{u(x)-u(y)}^2 \d x\d y\notag\\
		&= 4 \Ied1(u;\one)\eps^2.
	\end{align}
	The proof is completed by applying \cref{lem:psi_moll} (i) to the left hand side above.
\end{proof}

We now turn to our main result in this section, which compares the local and non-local Dirichlet energies, where one function is smoothed by $\Lambda_{\epsilon,\delta}$.  
\begin{theorem}[non-local to local]\label{thm:non-local_to_Local}
	Let $\epsilon,\delta>0$ with $\delta/\epsilon \leq C(\eta)$. There exists $C=C(\eta)>0$ such that for all $u\in L^2(\Omega)$ and $\Omega'\subset \Omega$ we have
	\begin{align*}
		\int_{\Omega'}|\nabla \Lambda_{\epsilon,\delta} u|^2\rho^2& \ted^2\d x \\
		&\leq \frac{1}{\sigma_{\eta, \delta/\epsilon}\epsilon^{2}}\int_{\Omega'}\int_{\Omega}(1 + \one_{\partial_\epsilon\Omega}(x)) \eta_{\epsilon} (|x-y|+2\delta) \abs{u(y) -u(x)}^{2}\rho(x)^2\d y \d x\notag \\
		&\qquad + \frac{C}{\epsilon^2}\|\rho^2(\Lambda_{\epsilon,\delta}u - u)\|_{L^2(\Omega'\cap\partial_\epsilon\Omega)}^2.
	\end{align*}
\end{theorem}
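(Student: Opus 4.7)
My plan is to derive the bound pointwise from the explicit formula for $\nabla w$ in \cref{prop:grad_psi} (where $w \defeq \Lambda_{\eps,\delta}u$), then integrate and split the domain into an interior and a boundary part to pick up the indicator $\one_{\partial_\eps\Omega}$.

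First I would apply a directional Cauchy--Schwarz to avoid a spurious factor of $d$. For any unit vector $\xi$, the formula in \cref{prop:grad_psi} gives
\begin{equation*}
\nabla w(x)\cdot\xi = \frac{1}{\sigma_{\eta,\delta/\eps}\ted(x)\eps^{2}}\int_\Omega \eta_\eps(|x-y|+2\delta)\bigl((y-x)\cdot\xi\bigr)\bigl(u(y)-w(x)\bigr)\,\d y,
\end{equation*}
and Cauchy--Schwarz combined with \labelcref{eq:scaling_sigma_eta} gives $\int_\Omega \eta_\eps(|x-y|+2\delta)((y-x)\cdot\xi)^2\,\d y\leq\sigma_{\eta,\delta/\eps}\eps^{2}$ uniformly in $\xi$. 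Optimizing over $\xi$ then yields the pointwise bound
\begin{equation*}
\ted(x)^{2}|\nabla w(x)|^{2} \leq \frac{1}{\sigma_{\eta,\delta/\eps}\eps^{2}}\int_\Omega \eta_\eps(|x-y|+2\delta)\bigl(u(y)-w(x)\bigr)^{2}\,\d y.
\end{equation*}

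Second I would exploit antisymmetry on the interior. When $x\in\Omega_{\eps-2\delta}$, the ball of radius $\eps-2\delta$ (the effective support of $\eta_\eps(\cdot+2\delta)$) is contained in $\Omega$, so the integral $\int_\Omega \eta_\eps(|x-y|+2\delta)(y-x)\,\d y$ vanishes by symmetry. Hence the constant $w(x)$ can be replaced by $u(x)$ in the original formula for $\nabla w(x)$, and combined with \cref{lem:psi_moll}(ii) (which gives $\ted(x)=1$ on $\Omega_{\eps-2\delta}$) the Cauchy--Schwarz step now produces
\begin{equation*}
|\nabla w(x)|^{2} \leq \frac{1}{\sigma_{\eta,\delta/\eps}\eps^{2}}\int_\Omega \eta_\eps(|x-y|+2\delta)\bigl(u(y)-u(x)\bigr)^{2}\,\d y, \qquad x\in\Omega_{\eps-2\delta}.
\end{equation*}
For the boundary strip $x\in\partial_{\eps-2\delta}\Omega$ I would start from the pointwise bound in the first step and split $(u(y)-w(x))^{2}\leq 2(u(y)-u(x))^{2}+2(u(x)-w(x))^{2}$; the second piece, after integrating $\eta_\eps(|x-y|+2\delta)$ in $y$ (which is bounded by $1$), produces the boundary remainder $\tfrac{2}{\sigma_{\eta,\delta/\eps}\eps^{2}}(u(x)-w(x))^{2}$.

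Finally I would multiply both pointwise bounds by $\rho(x)^{2}$, integrate over $\Omega'$, use the inclusion $\partial_{\eps-2\delta}\Omega\subset\partial_\eps\Omega$ to enlarge the support of the indicator to $\partial_\eps\Omega$ as stated, and absorb the constant $\sigma_{\eta,\delta/\eps}^{-1}$ (bounded above via \cref{lem:sigma_eta_bound} under the hypothesis $\delta/\eps\leq C(\eta)$) into $C(\eta)$. The main obstacle is keeping sharp track of the boundary contribution: the factor $(1+\one_{\partial_\eps\Omega})$ on the right-hand side comes precisely from the dichotomy interior/boundary, and making sure the antisymmetry cancellation is valid requires the kernel $\eta_\eps(\cdot+2\delta)$ to be supported in $B(0,\eps-2\delta)$, which is the reason one must work with $\Omega_{\eps-2\delta}$ rather than $\Omega_\eps$; the enlargement to $\partial_\eps\Omega$ at the end only weakens the inequality and is harmless.
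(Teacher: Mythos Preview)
Your proof is correct and uses the same key ingredients as the paper: the directional Cauchy--Schwarz with the identity \labelcref{eq:scaling_sigma_eta}, and the antisymmetry of $y\mapsto(y-x)\eta_\eps(|x-y|+2\delta)$ on the interior. The only difference is organizational---the paper writes $\ted(x)\nabla w(x)\cdot\xi = A + (u(x)-w(x))B$ with $B$ vanishing on $\Omega_\eps$ by the same antisymmetry, then expands the square and applies $2ab\le a^2+b^2$ to the cross term, arriving at exactly the factor $(1+\one_{\partial_\eps\Omega})$ and the same boundary remainder.
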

\begin{proof}
	Let $w = \Lambda_{\epsilon,\delta}u\in W^{1,\infty}(\Omega_\eps)$. 
	Fix $x\in \Omega$ and let $\xi \in \R^{d}$ with $|\xi| = 1$ so that $\nabla w(x)\cdot \xi = |\nabla w(x)|$. Then by \cref{prop:grad_psi} we have
	\begin{align*}
		|\nabla w(x)|\ted(x) &= \frac{1}{\sigma_{\eta, \delta/\epsilon}\epsilon^{2}} \Bigg[\int_{\Omega} \eta_{\epsilon} (|x-y|+2\delta)\left((y-x) \cdot \xi\right)\left(u(y) -u(x)\right)\d y \\
		&\hspace{1.5in} + (u(x) - w(x))\int_{\Omega} \eta_{\epsilon} (|x-y|+2\delta)(y-x) \cdot \xi\d y\Bigg]\\
		&=:A+(u(x)-w(x))B.
	\end{align*}
	By the Cauchy--Schwarz inequality we have
	\begin{align*}
		A^2&\leq \frac{1}{\sigma_{\eta, \delta/\epsilon}^2\epsilon^{4}}\int_{\R^d} \eta_{\epsilon} (|x-y|+2\delta)|(y-x) \cdot \xi|^{2}\d y\int_{\Omega} \eta_{\epsilon} (|x-y|+2\delta) \abs{u(y) -u(x)}^{2}\d y\\
		&= \frac{1}{\sigma_{\eta, \delta/\epsilon}\epsilon^{2}}\int_{\Omega} \eta_{\epsilon} (|x-y|+2\delta) \abs{u(y) -u(x)}^{2}\d y,
	\end{align*}
	where we used \labelcref{eq:scaling_sigma_eta} in the second line above. Since
	\[\int_{\Omega} \eta_{\epsilon} (|x-y|+2\delta)(y-x)\d y = 0\]
	for any $x\in \Omega_\eps$, we have
	\[|B| \leq \frac{\one_{\partial_\epsilon\Omega}(x)}{\sigma_{\eta, \delta/\epsilon}\epsilon^{2}}\int_{\Omega} \eta_{\epsilon} (|x-y|+2\delta)|y-x|\d y \lesssim \frac{1}{\epsilon}\one_{\partial_\epsilon\Omega}(x),\]
	where we used that $\sigma_{\eta,\delta/\eps}\geq \frac{\sigma_\eta}{2}$ by invoking \cref{lem:sigma_eta_bound}. It follows that
	\[|\nabla w(x)|^2\ted(x)^2 \leq A^2 + 2(u(x)-w(x))AB + \abs{u(x)-w(x)}^2B^2.\]
	Since $AB = AB\one_{\partial_\epsilon\Omega}(x)$, we can apply Cauchy's inequality $2ab\leq a^2+b^2$ to the middle term to obtain
	\begin{align*}
		|\nabla w(x)|^2\ted(x)^2 &\leq (1 + \one_{\partial_\epsilon\Omega}(x))A^2 + 2\abs{u(x)-w(x)}^2B^2\\
		&\leq \frac{1 + \one_{\partial_\epsilon\Omega}(x)}{\sigma_{\eta, \delta/\epsilon}\epsilon^{2}}\int_{\Omega} \eta_{\epsilon} (|x-y|+2\delta) \abs{u(y) -u(x)}^{2}\d y \\
		&\qquad+ \frac{C}{\epsilon^2}\abs{u(x)-w(x)}^2\one_{\partial_\epsilon\Omega}(x).
	\end{align*}
	Multiplying by $\rho(x)^2$ and integrating over $\Omega'\subset \Omega$ completes the proof.
\end{proof}

We have two important corollaries of \cref{thm:non-local_to_Local}.
\begin{corollary}\label{cor:nonlocal_to_local}
	Let $\epsilon,\delta>0$ with $\delta/\epsilon \leq C(\eta)$. Then for all $u\in L^2(\Omega)$ we have
	\[\frac{1}{2}\int_{\Omega_\eps}|\nabla \Lambda_{\epsilon,\delta} u|^2\rho^2\d x -\Ied1(u;\rho)  \lesssim \left(\tfrac{\delta}{\epsilon} + \epsilon \right)\Ied1(u;\rho).\]
\end{corollary}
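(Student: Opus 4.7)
The plan is to apply Theorem \ref{thm:non-local_to_Local} with the specific choice $\Omega' = \Omega_\eps$, which is precisely the set on which the boundary error term and the boundary-correction factor $(1 + \one_{\partial_\eps\Omega})$ both disappear. Indeed, for $x \in \Omega_\eps$ we have $\one_{\partial_\eps\Omega}(x) = 0$ and $\Omega_\eps \cap \partial_\eps \Omega = \emptyset$, so the entire $\|\rho^2(\Lambda_{\eps,\delta}u-u)\|_{L^2}^2$ remainder vanishes. Moreover, since $\Omega_\eps \subset \Omega_{\eps-2\delta}$, Lemma \ref{lem:psi_moll}(ii) gives $\ted \equiv 1$ on $\Omega_\eps$, so the weight $\ted^2$ on the left-hand side also drops. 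This reduces the estimate to
\begin{equation*}
\int_{\Omega_\eps} |\nabla \Lambda_{\eps,\delta} u|^2 \rho^2 \d x \leq \frac{1}{\sigma_{\eta,\delta/\eps}\eps^{2}} \int_{\Omega_\eps}\int_\Omega \eta_\eps(|x-y|+2\delta) |u(y)-u(x)|^2 \rho(x)^2 \d y \d x.
\end{equation*}

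Next I would turn the right-hand side into $2\Ied{1}(u;\rho)$ up to multiplicative errors of order $\delta/\eps$ and $\eps$. First, enlarge the outer integral from $\Omega_\eps$ to $\Omega$, which only increases the integrand since it is nonnegative. Second, on the support of $\eta_\eps(|x-y|+2\delta)$ we have $|x-y| \leq \eps - 2\delta \leq \eps$, so by the Lipschitz continuity of $\rho$ and the lower bound $\rho \geq \rho_{\min}$,
\begin{equation*}
\rho(x)^2 \leq \rho(x)\rho(y)\left(1 + \tfrac{\Lip(\rho)}{\rho_{\min}}\eps\right) = \rho(x)\rho(y)(1 + C\eps).
\end{equation*}
Third, Lemma \ref{lem:sigma_eta_bound} gives $\sigma_{\eta,\delta/\eps} \geq \sigma_\eta - 4C\delta/\eps$, and for $\delta/\eps \leq C(\eta)$ sufficiently small this yields $\sigma_{\eta,\delta/\eps}^{-1} \leq \sigma_\eta^{-1}(1 + C\delta/\eps)$. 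Combining these three observations,
\begin{equation*}
\int_{\Omega_\eps}|\nabla \Lambda_{\eps,\delta} u|^2\rho^2 \d x \leq (1 + C\delta/\eps)(1 + C\eps) \cdot \frac{1}{\sigma_\eta \eps^2}\int_\Omega\int_\Omega \eta_\eps(|x-y|+2\delta)|u(x)-u(y)|^2 \rho(x)\rho(y) \d x\d y,
\end{equation*}
and the double integral on the right is exactly $2\sigma_\eta \eps^2 \Ied{1}(u;\rho) / \sigma_\eta \eps^2 = 2\Ied{1}(u;\rho)$ after accounting for the factor in front.

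Expanding $(1 + C\delta/\eps)(1 + C\eps) = 1 + C(\delta/\eps + \eps) + C^2 \delta \leq 1 + C'(\delta/\eps + \eps)$, I obtain
\begin{equation*}
\int_{\Omega_\eps}|\nabla \Lambda_{\eps,\delta} u|^2 \rho^2 \d x \leq 2\Ied{1}(u;\rho) + C(\delta/\eps + \eps)\cdot 2\Ied{1}(u;\rho),
\end{equation*}
and dividing by $2$ and subtracting $\Ied{1}(u;\rho)$ gives exactly the stated bound. There is no real obstacle in this argument beyond careful bookkeeping: the key ideas—killing the boundary terms by the choice of $\Omega'$, controlling the discrepancy between $\rho(x)^2$ and $\rho(x)\rho(y)$ on short scales, and the $O(\delta/\eps)$ perturbation of $\sigma_{\eta,\delta/\eps}$—are all direct consequences of results already established.
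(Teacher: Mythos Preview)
Your proposal is correct and follows essentially the same approach as the paper: apply Theorem~\ref{thm:non-local_to_Local} with $\Omega' = \Omega_\eps$ so that the boundary terms vanish and $\ted \equiv 1$, then replace $\rho(x)^2$ by $\rho(x)\rho(y)(1+C\eps)$ via Lipschitz continuity and handle $\sigma_{\eta,\delta/\eps}^{-1}$ via Lemma~\ref{lem:sigma_eta_bound}. The paper's argument is identical in structure and detail.
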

\begin{proof}
	We use \cref{thm:non-local_to_Local} with $\Omega'=\Omega_\eps$. By \cref{lem:psi_moll} (ii) we have $\ted(x)=1$ for $x\in \Omega_\epsilon$, and therefore
	\begin{align*}
		\frac{1}{2}\int_{\Omega_\eps}|\nabla \Lambda_{\epsilon,\delta} u|^2\rho^2\d x 
		&\leq \frac{1}{2\sigma_{\eta, \delta/\epsilon}\epsilon^{2}}\int_{\Omega_\eps}\int_{\Omega}\eta_{\epsilon} (|x-y|+2\delta) \abs{u(y) -u(x)}^{2}\rho(x)^2\d y \d x\\
		&\leq \frac{1+C\eps}{2\sigma_{\eta, \delta/\epsilon}\epsilon^{2}}\int_{\Omega_\eps}\int_{\Omega}\eta_{\epsilon} (|x-y|+2\delta) \abs{u(y) -u(x)}^{2}\rho(x)\rho(y)\d y \d x\\
		&=\frac{\sigma_\eta}{\sigma_{\eta,\delta/\epsilon}}(1+C\eps)\Ied1(u;\rho),
	\end{align*}
	where we used \labelcref{eq:rhoxy} in the penultimate line.  By \cref{lem:sigma_eta_bound} we have
	\[\sigma_\eta \leq \sigma_{\eta,\delta/\epsilon} + \frac{4\delta}{\epsilon}\int_{\R^d}\eta(|z|)|z| \d z.\]
	Employing this and using the restriction \labelcref{eq:eps_delta} to ensure that $\sigma_{\eta,\delta/\epsilon}\geq \frac{\sigma_\eta}{2}>0$ completes the proof.
\end{proof}

As second corollary, we bound the Dirichlet energy on the whole domain, but without sharp constants. This cannot be used to prove convergence of the Poisson equation, but is used to prove a discrete Poincaré inequality later (see \cref{prop:discrete_poincare}).
\begin{corollary}\label{cor:poincare_bound}
	Let $\epsilon,\delta>0$ with $\delta/\epsilon \leq C(\eta)$. For all $u\in L^2(\Omega)$ 
	\[\I1(\Lambda_{\epsilon,\delta} u;\rho)\lesssim \Ied1(u;\rho).\]
\end{corollary}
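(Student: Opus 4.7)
The plan is to split $\Omega$ into the bulk $\Omega_\eps$ and the boundary strip $\partial_\eps\Omega = \Omega\setminus\Omega_\eps$, and handle each contribution to $\I1(\Lambda_{\eps,\delta}u;\rho) = \tfrac{1}{2}\int_\Omega |\nabla \Lambda_{\eps,\delta}u|^2 \rho^2 \d x$ separately. The bulk piece is already controlled: \cref{cor:nonlocal_to_local} yields
\[\frac{1}{2}\int_{\Omega_\eps} |\nabla \Lambda_{\eps,\delta}u|^2 \rho^2 \d x \leq (1 + C(\delta/\eps + \eps)) \Ied1(u;\rho) \lesssim \Ied1(u;\rho),\]
since $\delta/\eps$ and $\eps$ are bounded under the hypotheses. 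So it remains to prove an analogous bound on the boundary strip.

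For the boundary strip, I would apply \cref{thm:non-local_to_Local} with $\Omega' = \partial_\eps\Omega$. On this set the indicator $\one_{\partial_\eps\Omega}(x)$ equals $1$, and by \cref{lem:psi_moll}~(i) we have $\ted(x) \geq c > 0$ uniformly on $\Omega$; hence $\ted^2$ can be absorbed into the implicit constant on the left. This gives
\begin{align*}
\int_{\partial_\eps\Omega} |\nabla \Lambda_{\eps,\delta}u|^2 \rho^2 \d x &\lesssim \frac{1}{\eps^2} \int_{\partial_\eps\Omega}\int_\Omega \eta_\eps(|x-y|+2\delta)\abs{u(y)-u(x)}^2 \rho(x)^2 \d y\d x \\
&\qquad+ \frac{1}{\eps^2} \norm{\rho^2 (\Lambda_{\eps,\delta}u - u)}_{L^2(\partial_\eps\Omega)}^2.
\end{align*}

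The first term can be converted into something comparable to $\Ied1(u;\rho)$ by using $\rho(x)^2 \leq (\rho_{\max}/\rho_{\min})\rho(x)\rho(y)$ together with the fact that $\sigma_{\eta,\delta/\eps} \geq \sigma_\eta/2$ (from \cref{lem:sigma_eta_bound} under the assumption $\delta/\eps \leq C(\eta)$); this shows the first term is $\lesssim \Ied1(u;\rho)$. For the second term, since $\rho \leq \rho_{\max}$ we bound it by $\eps^{-2}\norm{\Lambda_{\eps,\delta}u - u}_{L^2(\Omega)}^2$, and then \cref{lem:conv_diff} gives
\[\frac{1}{\eps^2}\norm{\Lambda_{\eps,\delta}u - u}_{L^2(\Omega)}^2 \lesssim \Ied1(u;\one) \leq \frac{1}{\rho_{\min}^2}\Ied1(u;\rho) \lesssim \Ied1(u;\rho),\]
since $\eta(|x-y|+2\delta)\leq \eta(|x-y|+2\delta)\rho(x)\rho(y)/\rho_{\min}^2$. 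Summing the contributions from $\Omega_\eps$ and $\partial_\eps\Omega$ and dividing by $2$ yields the claim. The proof involves no real obstacle beyond ensuring the boundary strip estimate is compatible with the bulk estimate; all needed ingredients—\cref{thm:non-local_to_Local}, the lower bound on $\ted$, and the $L^2$ control on $\Lambda_{\eps,\delta}u-u$—are already in place.
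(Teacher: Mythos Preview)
Your proof is correct and uses essentially the same ingredients as the paper (\cref{thm:non-local_to_Local}, \cref{lem:psi_moll}~(i), \cref{lem:conv_diff}, and the comparison $\Ied1(u;\one)\lesssim \Ied1(u;\rho)$). The only difference is that you split $\Omega=\Omega_\eps\cup\partial_\eps\Omega$ and invoke \cref{cor:nonlocal_to_local} on the bulk, whereas the paper simply applies \cref{thm:non-local_to_Local} once with $\Omega'=\Omega$, bounds $(1+\one_{\partial_\eps\Omega}(x))\leq 2$ and $\ted\geq c$, and proceeds directly; your split is harmless but unnecessary since sharp constants are not needed here.
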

\begin{proof}
	We use \cref{thm:non-local_to_Local} with $\Omega'=\Omega$. Using \cref{lem:psi_moll} (i) and the restriction \labelcref{eq:eps_delta} to ensure that $\sigma_{\eta,\delta/\epsilon}\geq \frac{\sigma_\eta}{2}>0$ we have
	\begin{align*}
		\int_{\Omega}|\nabla \Lambda_{\epsilon,\delta} u|^2\rho^2\d x 
		&\lesssim \frac{1}{2\sigma_{\eta}\epsilon^{2}}\int_{\Omega}\int_{\Omega} \eta_{\epsilon} (|x-y|+2\delta) \abs{u(y) -u(x)}^{2}\rho(x)^2\d y \d x \\
		&\qquad + \frac{1}{\epsilon^2}\|\rho^2(\Lambda_{\epsilon,\delta}u - u)\|_{L^2(\Omega\cap\partial_\epsilon\Omega)}^2\\
		&\lesssim \Ied1(u;\rho) +\frac{1}{\epsilon^2}\|\Lambda_{\epsilon,\delta}u - u\|_{L^2(\Omega)}^2,
	\end{align*}
	where we used \labelcref{eq:rhoxy} in the final line.  The proof is completed by invoking \cref{lem:conv_diff}, and using that $\Ied1(u;\one) \lesssim \Ied1(u;\rho)$. 
\end{proof}

To handle the boundary of the domain, where \cref{cor:nonlocal_to_local} does not give us any information, we introduce a stretching operator to be applied \emph{after} the smoothing operator $\Lambda_{\epsilon,\delta}$.
This will ensure that the convolution takes place \emph{interior} to the domain. 
For this we utilize the concept of transversal vector fields \cite{hofmann2007geometric}, as done, e.g., in \cite{ern2016mollification,katzourakis2022generalised}.
For strongly Lipschitz domains, i.e., domains with a uniform cone property, there exists a smooth vector field $j\in C^\infty(\R^d;\R^d)$ and $\kappa=\kappa(\Omega)>0$ such that for $\mathcal{H}^{d-1}$-almost $x\in\partial\Omega$ it holds
\begin{align*}
	\langle j(x),\nu_\Omega(x)\rangle \geq \kappa,
	\qquad
	\abs{j(x)} = 1,
\end{align*}
where $\nu_\Omega$ denotes the unit normal vector to $\partial\Omega$.
Note that for smooth domains one can choose $j$ as a smooth extension of the outer unit normal field $\nu_{\Omega}$ in which case $\kappa=1$.
It was shown in \cite[Lemma 2.1]{ern2016mollification} (see also \cite[page 18]{katzourakis2022generalised}) that there exists $\eps_0=\eps_0(\Omega,j)>0$ and $\ell=\ell(\Omega,j)>0$ such that the mapping
\begin{align}\label{eq:Seps}
	S_\eps(x) \defeq   x - \eps \ell j(x)
\end{align}
satisfies
\begin{align}\label{eq:inclusion_stretching}
	S_\eps(\Omega) \subset \Omega_\eps 
	\qquad
	\forall 0 < \eps < \eps_0.
\end{align}
Note that, as observed in \cite{katzourakis2022generalised}, by multiplying with a smooth cut-off function one can restrict $j$ to be supported in a small collar neighborhood $\{x\in\R^d\st\dist(x,\partial\Omega)\leq R\}$ where $R=R(\Omega)>0$ is a domain-dependent constant.
Consequently, it holds 
\begin{align}\label{eq:support_j}
	S_\eps(x)=x,\qquad\forall x\in\Omega_R.  
\end{align}
Furthermore, since $j$ is smooth and just depends on $\Omega$, it holds
\begin{align*}
	D S_\eps(x) = \operatorname{id} - \eps \ell Dj(x)
\end{align*}
and, using this and a Taylor expansion of the determinant, we obtain
\begin{align}\label{eq:bounds_jacobian}
	\|D S_\eps(x)\| = 1 + \O(\epsilon) \ \ \text{and} \ \ \det D S_\eps(x) = 1 + \O(\eps)
\end{align}
for $\eps \ll 1$.
In particular, for such $\eps\ll 1$ the Jacobian $DS_\eps(x)$ is invertible and $S_\eps$ is a diffeomorphism.
We will apply the stretching map $S_\eps$ \emph{after} the smoothing operator, so we will consider the smoothing operation $(\Lambda_{\epsilon,\delta}u)\circ S_\eps$ which follows the approach of \cite{ern2016mollification,katzourakis2022generalised}.

We first prove some preliminary results about stretching $L^p$ norms of functions and their gradients.
\begin{proposition}\label{prop:lp_bounds}
	Let $1 \leq p < \infty$. There exist positive constants $C(\Omega), \eps_0(\Omega)$, such that for $0 < \epsilon \leq \eps_0$ and $u\in L^p(\Omega)$ we have
	\[\int_\Omega |u\circ S_\eps |^p\d x \leq (1+C_2\epsilon)\int_{\Omega_\eps}|u|^p \d x.\]
\end{proposition}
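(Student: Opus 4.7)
The plan is to obtain the estimate by a direct change of variables $y = S_\eps(x)$, exploiting the Jacobian expansion already recorded in \labelcref{eq:bounds_jacobian} together with the inclusion \labelcref{eq:inclusion_stretching}.

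First I would fix $\eps_0>0$ small enough that, by \labelcref{eq:bounds_jacobian}, both $\|DS_\eps(x)-\operatorname{id}\|$ and $|\det DS_\eps(x)-1|$ are bounded by, say, $\frac{1}{2}$ for all $x\in\Omega$ and all $0<\eps\leq\eps_0$. For such $\eps$, $S_\eps$ is a $C^\infty$-diffeomorphism from $\Omega$ onto its image $S_\eps(\Omega)$: injectivity follows from the explicit form $S_\eps(x)=x-\eps\ell\, j(x)$ and the Lipschitz bound on $j$ (for $\eps$ small the map is a contraction perturbation of the identity), and the Jacobian bound gives invertibility of $DS_\eps(x)$ so that the inverse function theorem applies globally under injectivity.

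Second, I would substitute $y=S_\eps(x)$ in the integral on the left-hand side. By the change-of-variables formula
\[
\int_\Omega \abs{u\circ S_\eps(x)}^p \d x
= \int_{S_\eps(\Omega)} \abs{u(y)}^p \,\abs{\det DS_\eps^{-1}(y)}\d y,
\]
and $|\det DS_\eps^{-1}(y)| = |\det DS_\eps(S_\eps^{-1}(y))|^{-1}\leq 1+C\eps$ thanks to \labelcref{eq:bounds_jacobian} (after shrinking $\eps_0$ further if necessary so that $1/(1-C\eps)\leq 1+C'\eps$).

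Third, invoking \labelcref{eq:inclusion_stretching} to bound the domain of integration by $\Omega_\eps$ yields
\[
\int_{S_\eps(\Omega)} \abs{u(y)}^p \d y \leq \int_{\Omega_\eps}\abs{u(y)}^p\d y,
\]
and combining the last two inequalities gives the claim with $C_2=C'$ depending only on the vector field $j$ and hence only on $\Omega$.

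There is no real obstacle here: all that is needed is the Jacobian expansion already established for $S_\eps$, the inclusion $S_\eps(\Omega)\subset\Omega_\eps$, and injectivity of $S_\eps$ for small $\eps$, which is immediate from $S_\eps$ being an $O(\eps)$-perturbation of the identity with smooth $j$.
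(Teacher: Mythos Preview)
Your proposal is correct and follows essentially the same approach as the paper: change of variables $y=S_\eps(x)$, bound the inverse Jacobian via \labelcref{eq:bounds_jacobian}, then use the inclusion $S_\eps(\Omega)\subset\Omega_\eps$ from \labelcref{eq:inclusion_stretching}. The paper's proof is terser (one displayed line) but the content is identical; your added remarks on injectivity and the diffeomorphism property simply make explicit what the paper takes for granted.
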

\begin{proof}
	Performing a change of variables, and using \labelcref{eq:inclusion_stretching} we have
	\[\int_\Omega |u\circ S_\eps|^p \d x =\int_{\Omega_\eps}|u(y)|^p |\det(D S_\eps)(S_\eps^{-1}(y))|^{-1}\d y\leq (1+C\epsilon)\int_{\Omega_\eps}|u|^p \d y,\]
	where we used \labelcref{eq:bounds_jacobian} in the last line.
\end{proof}
\begin{proposition}\label{prop:Se_bounds}
	There exist positive constants $C(\Omega)$, $\eps_0(\Omega)$ such that for $0 < \epsilon \leq \eps_0$ and $u\in H^1(\Omega_\eps)$ we have
	\[\int_\Omega |\nabla (u\circ S_\eps)|^2\rho^2 \d x \leq (1+C\epsilon)\int_{\Omega_\eps}|\nabla u|^2\rho^2 \d x.\]
\end{proposition}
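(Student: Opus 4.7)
The plan is to combine the chain rule for $\nabla(u \circ S_\eps)$ with the pointwise Jacobian estimates \labelcref{eq:bounds_jacobian}, the Lipschitz regularity of $\rho$ (cf. \cref{ass:rho_lipschitz}), and a change of variables using the inclusion \labelcref{eq:inclusion_stretching}.

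First, for $\eps$ small enough (smaller than the $\eps_0$ from \labelcref{eq:inclusion_stretching}), $S_\eps$ is a $C^1$-diffeomorphism onto $S_\eps(\Omega) \subset \Omega_\eps$. By the chain rule,
\[\nabla(u\circ S_\eps)(x) = DS_\eps(x)^T \nabla u(S_\eps(x)) \quad \text{for a.e.\ } x \in \Omega,\]
and combining with the operator-norm bound in \labelcref{eq:bounds_jacobian} gives
\[|\nabla(u\circ S_\eps)(x)|^2 \leq (1+C\eps)^2 |\nabla u(S_\eps(x))|^2.\]

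Second, I would absorb the density weight into the change of variables. Since $|S_\eps(x)-x| = \eps \ell |j(x)| \leq C\eps$ uniformly in $x$ and $\rho$ is Lipschitz with $\rho \geq \rho_{\min} > 0$, we obtain
\[\rho(x) \leq \rho(S_\eps(x)) + C\Lip(\rho)\eps \leq \rho(S_\eps(x))\left(1 + \tfrac{C\Lip(\rho)}{\rho_{\min}}\eps\right),\]
so $\rho(x)^2 \leq (1+C\eps)\rho(S_\eps(x))^2$, where the constant $C$ depends on $\Omega$ and $\rho$.

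Third, apply the substitution $y = S_\eps(x)$ and use \labelcref{eq:bounds_jacobian,eq:inclusion_stretching}:
\begin{align*}
\int_\Omega |\nabla(u\circ S_\eps)|^2 \rho^2 \d x
&\leq (1+C\eps)\int_\Omega |\nabla u(S_\eps(x))|^2 \rho(S_\eps(x))^2 \d x \\
&= (1+C\eps)\int_{S_\eps(\Omega)} |\nabla u(y)|^2 \rho(y)^2 \bigl|\det DS_\eps(S_\eps^{-1}(y))\bigr|^{-1} \d y \\
&\leq (1+C\eps)\int_{\Omega_\eps} |\nabla u(y)|^2 \rho(y)^2 \d y,
\end{align*}
where the constant $C$ has been enlarged line by line but continues to depend only on $\Omega$ and $\rho$. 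This yields the claim.

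There is no real obstacle here; the proof is essentially a bookkeeping exercise combining the three ingredients already isolated in \labelcref{eq:Seps,eq:inclusion_stretching,eq:bounds_jacobian}, the only mild subtlety being the use of the Lipschitz bound on $\rho$ (together with $\rho \geq \rho_{\min}$) to absorb the mismatch between $\rho(x)$ and $\rho(S_\eps(x))$ into the multiplicative factor $(1+C\eps)$. Note that the companion result \cref{prop:lp_bounds} follows the same scheme but without the chain rule step, which is why it holds for general $L^p$ without requiring the weight $\rho$.
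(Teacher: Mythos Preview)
Your proof is correct and follows essentially the same approach as the paper: chain rule plus \labelcref{eq:bounds_jacobian}, then change of variables via \labelcref{eq:inclusion_stretching}, with the Lipschitz regularity and lower bound on $\rho$ absorbing the density mismatch into the $(1+C\eps)$ factor. The only cosmetic difference is that you convert $\rho(x)$ to $\rho(S_\eps(x))$ \emph{before} the change of variables, whereas the paper carries $\rho(x)$ through as $(\rho\circ S_\eps^{-1})(y)$ and bounds it by $\rho(y)(1+C\eps)$ afterwards; these are equivalent.
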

\begin{proof}
	Let $w = u\circ S_\epsilon \in H^1(\Omega)$. Then we have $\nabla w(x) = DS_\epsilon(x)^T\nabla u(S_\epsilon(x))$ and so by \labelcref{eq:bounds_jacobian} we have
	\[|\nabla w(x)| \leq (1+C\epsilon)\nabla u(S_\epsilon(x)).\]
	Squaring, integrating over $x\in \Omega$, performing a change of variables, and using \labelcref{eq:inclusion_stretching} we have
	\begin{align*}
		\int_\Omega |\nabla w|^2\rho^2 \d x 
		&\leq (1+C\epsilon)\int_{\Omega}|\nabla u(S_\epsilon(x))|^2\rho(x)^2\d x\\
		&=(1+C\epsilon)\int_{\Omega_\eps}|\nabla u|^2(\rho \circ S_\eps^{-1})^2 |DS_\eps(S_\eps^{-1}(y))|^{-1}\d y\\
		&\leq (1+C\epsilon)\int_{\Omega_\eps}|\nabla u|^2\rho^2 \d y,
	\end{align*}
	where we used that $\rho$ is positive and Lipschitz in the last line, along with \labelcref{eq:bounds_jacobian}.
\end{proof}

The stretching operator $S_\eps$ allows us to improve our non-local to local convergence result.
\begin{lemma}\label{lem:non-local_to_local}
    Fix $n^{-\frac1d} < \delta \leq \frac{\rho_{\min}}{8\Lip(\rho)}$, $\delta/\epsilon \leq C(\eta)$ and $0 \leq \lambda \leq\frac{\rho_{\min}}{8\rho_{\max}}$, and let $\rho_\delta \in L^\infty(\Omega)$ be the probability density given by \cref{thm:transportation}.

    Consider a realization of the random graph $\X_{n}$ such that \cref{thm:transportation_is_pushforward,thm:transportation_point_to_point,thm:transportation_distance_T} hold for the transport map $T_\delta \colon \Omega \to \X_{n}$, while \cref{thm:transportation_distance_rho} holds for $\rho_\delta$ and $\lambda$.
    
    Then, for any $u\in L^2(\Omega)$ and $f\in L^2(\Omega)$, we have
	\begin{align}\label{eq:non-local_to_local}
		I(\Lambda_{\epsilon,\delta}u \circ S_\eps;f,\rho_\delta)&-\ied(u;f,\rho_\delta)\notag\\
		\lesssim& \left(\frac{\delta}{\eps}+\epsilon + \lambda\right)\Ied1(u;\rho_\delta) +  \|f\|_{L^2(\Omega)}^2\eps + \|u\|_{L^2(\partial_{3R}\Omega)}\|f\|_{L^2(\partial_R\Omega)},
	\end{align}
    provided $0 < \eps \leq \eps_1$ for some positive constant $\eps_1(\Omega)$.
\end{lemma}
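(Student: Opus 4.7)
Set $w \defeq \Lambda_{\epsilon,\delta}u \circ S_\eps$ and split the left-hand side as
\begin{align*}
I(w;f,\rho_\delta) - \ied(u;f,\rho_\delta)
&= \bigl(I^{(1)}(w;\rho_\delta) - \ied^{(1)}(u;\rho_\delta)\bigr) \\
&\quad + \bigl(I^{(2)}(w;f,\rho_\delta) - I^{(2)}(u;f,\rho_\delta)\bigr),
\end{align*}
and treat the Dirichlet and source contributions separately.

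\textbf{Dirichlet part.} The only obstacle here is that Proposition \ref{prop:Se_bounds} and Corollary \ref{cor:nonlocal_to_local} are stated for the Lipschitz density $\rho$, whereas we carry the merely $L^\infty$ density $\rho_\delta$. By \cref{thm:transportation_distance_rho} we have $|\rho_\delta - \rho| \leq \Lip(\rho)\delta + \rho_{\max}\lambda$ on $\Omega$, so together with $\rho \geq \rho_{\min}>0$ one gets $\rho_\delta^2 \leq (1+C(\delta + \lambda))\rho^2$ and $\rho^2 \leq (1+C(\delta+\lambda))\rho_\delta^2$ pointwise. Substitute $\rho \mapsto \rho_\delta$ into the integrand of $I^{(1)}(w;\rho_\delta)$ at cost $1+C(\delta+\lambda)$, apply Proposition \ref{prop:Se_bounds} to pull the stretching off at cost $1+C\eps$, then apply Corollary \ref{cor:nonlocal_to_local} at cost $1+C(\delta/\eps+\eps)$, and finally convert $\rho$ back to $\rho_\delta$ inside $\ied^{(1)}$ at a further cost $1+C(\delta+\lambda)$. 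Multiplying these factors yields
\[
I^{(1)}(w;\rho_\delta) - \ied^{(1)}(u;\rho_\delta) \lesssim \bigl(\tfrac{\delta}{\eps}+\eps+\lambda\bigr)\,\ied^{(1)}(u;\rho_\delta).
\]

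\textbf{Source part.} Since $j$ is supported in a collar of width $R$, \labelcref{eq:support_j} gives $S_\eps = \mathrm{id}$ on $\Omega_R$, so $w = \Lambda_{\eps,\delta}u$ there. Split
\[
I^{(2)}(w;f,\rho_\delta) - I^{(2)}(u;f,\rho_\delta) = -\int_{\Omega_R} f\,(\Lambda_{\eps,\delta}u - u)\,\rho_\delta\,\d x - \int_{\partial_R\Omega} f\,(w-u)\,\rho_\delta\,\d x.
\]
For the interior integral, use Cauchy--Schwarz together with Lemma \ref{lem:conv_diff} (noting $\ied^{(1)}(u;\one) \lesssim \ied^{(1)}(u;\rho_\delta)$ since $\rho_\delta \geq \rho_{\min}/2$ for $\lambda$ small):
\[
\Bigl|\int_{\Omega_R} f\,(\Lambda_{\eps,\delta}u - u)\,\rho_\delta\Bigr| \lesssim \|f\|_{L^2(\Omega)}\,\eps\,\sqrt{\ied^{(1)}(u;\rho_\delta)} \lesssim \|f\|_{L^2(\Omega)}^2\,\eps + \eps\,\ied^{(1)}(u;\rho_\delta)
\]
by Young's inequality; the second term is absorbed into the Dirichlet estimate.

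For the boundary integral, estimate $\|u\|_{L^2(\partial_R\Omega)} \leq \|u\|_{L^2(\partial_{3R}\Omega)}$ directly, while for $w$ observe that $|S_\eps(x) - x| = \eps\ell$, so for $\eps$ small enough $S_\eps(\partial_R\Omega) \subset \partial_{2R}\Omega$; applying the change of variables from the proof of Proposition \ref{prop:lp_bounds} over this subregion and then Proposition \ref{prop:smoothing_l1} gives
\[
\|w\|_{L^2(\partial_R\Omega)} \lesssim \|\Lambda_{\eps,\delta}u\|_{L^2(\partial_{2R}\Omega)} \lesssim \|u\|_{L^2(\partial_{3R}\Omega)}.
\]
Another Cauchy--Schwarz application on the boundary integral therefore yields the $\|f\|_{L^2(\partial_R\Omega)}\|u\|_{L^2(\partial_{3R}\Omega)}$ term. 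Combining all of the above produces \labelcref{eq:non-local_to_local}.

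The main bookkeeping obstacle is the Dirichlet part: one must chain together four small multiplicative errors (coming from the $\rho \leftrightarrow \rho_\delta$ swaps, the Jacobian of $S_\eps$, and the $\sigma_{\eta,\delta/\eps} \leftrightarrow \sigma_\eta$ swap) without accumulating them into an additive error on the wrong side of $\ied^{(1)}(u;\rho_\delta)$; the key point is that they all multiply the same quantity and so can be consolidated into a single $1+C(\delta/\eps+\eps+\lambda)$ factor.
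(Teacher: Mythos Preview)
Your proof is correct and follows essentially the same approach as the paper: the same Dirichlet/source split, the same chain of multiplicative errors (Proposition~\ref{prop:Se_bounds}, Corollary~\ref{cor:nonlocal_to_local}, and the $\rho\leftrightarrow\rho_\delta$ swaps) for the Dirichlet part, and the same $\Omega_R$ versus $\partial_R\Omega$ decomposition for the source part handled via Lemma~\ref{lem:conv_diff}, Proposition~\ref{prop:lp_bounds}, and Lemma~\ref{prop:smoothing_l1}. One tiny imprecision: you write $|S_\eps(x)-x| = \eps\ell$, but since $j$ is multiplied by a cutoff it need not have unit norm everywhere; the correct statement is $|S_\eps(x)-x|\leq C\eps \leq R$ for $\eps\leq \eps_1$ small enough, which is all that is needed for the inclusion $S_\eps(\partial_R\Omega)\subset\partial_{2R}\Omega$.
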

\begin{proof}
    We chose $\eps_1(\Omega) \leq \min(R,\eps_0)$ small enough, such that $|S_\eps(x)-x|\leq R$ for all $x \in \Omega$.
	
    By \cref{prop:Se_bounds,cor:nonlocal_to_local} and using \labelcref{eq:inclusion_stretching} we have
	\begin{align}\label{eq:non-local_to_local0}
		\I1(\Lambda_{\epsilon,\delta}u \circ S_\eps;\rho) 
        &= \frac{1}{2}\int_\Omega |\nabla \Lambda_{\epsilon,\delta}u \circ S_\eps|^2 \rho^2 \d x\notag\\
		&\leq \frac{1+C\epsilon}{2}\int_{\Omega_\eps}|\nabla \Lambda_{\epsilon,\delta} u|^2\rho^2\d x \notag\\
		&\leq \left(1 + C\left(\tfrac{\delta}{\epsilon} + \epsilon \right)\right) \Ied1(u;\rho).
	\end{align}
	Note that the restrictions on $\lambda$ and $\delta$ along with \cref{thm:transportation_distance_rho} ensure that $\rho_\delta \geq \frac{3}{4}\rho_{\min}>0$, and in particular, that
	\begin{equation}\label{eq:rhodelta_to_rho}
		\rho_\delta(x) \leq \rho(x)\left( 1 + C(\delta + \lambda)\right) \ \ \text{and} \ \ \rho(x) \leq \rho_\delta(x)\left( 1 + C(\delta + \lambda)\right)
	\end{equation}
	hold for all $x\in \Omega$. It follows that
	\[\I1(\Lambda_{\epsilon,\delta}u \circ S_\eps;\rho_\delta) \leq (1 + C(\delta + \lambda))\I1(\Lambda_{\epsilon,\delta}u \circ S_\eps;\rho),\]
	and
	\[\Ied1(u;\rho) \leq (1 + C(\delta + \lambda))\Ied1(u;\rho_\delta).\]
	Combining these observations with \labelcref{eq:non-local_to_local0} we obtain
	\[\I1(\Lambda_{\epsilon,\delta}u \circ S_\eps;\rho_\delta) \leq \left(1 + C\left(\tfrac{\delta}{\epsilon} + \epsilon + \lambda \right)\right) \Ied1(u;\rho_\delta),\]
	which can be rearranged to 
	\begin{equation}\label{eq:dirichlet_estimate}
		\I1(\Lambda_{\epsilon,\delta}u \circ S_\eps;\rho_\delta)  - \Ied1(u;\rho_\delta) \lesssim \left(\tfrac{\delta}{\epsilon} + \epsilon + \lambda \right)\Ied1(u;\rho_\delta).
	\end{equation}
	Since $S_\eps(x)=x$ for $x\in \Omega_{R}$ according to \labelcref{eq:support_j} we have 
	\begin{align}\label{eq:source1}
		\int_{\Omega_{R}} uf\rho_\delta\d x - \int_{\Omega_{R}} (\Lambda_{\epsilon,\delta}u\circ S_\eps) f \rho_\delta \d x&=\int_{\Omega_{R}} uf\rho_\delta\d x - \int_{\Omega_{R}} \Lambda_{\epsilon,\delta}u  f \rho_\delta  \d x\notag\\
		&=\int_{\Omega_{R}} (u-\Lambda_{\epsilon,\delta}u)f \rho_\delta\d x \notag \\
		&\lesssim\|u-\Lambda_{\epsilon,\delta}u\|_{L^2\left(\Omega_{R}\right)}\|f\|_{L^2(\Omega)}\notag \notag \\
		&\lesssim \sqrt{\Ied1(u;\one)}\|f\|_{L^2(\Omega)}\eps\notag \\
		&\lesssim \Ied1(u;\one)\eps + \|f\|^2_{L^2(\Omega)}\eps,
	\end{align}
	where we used \cref{lem:conv_diff} in the penultimate line. We also have
	\begin{align*}
		\int_{\partial_R\Omega} uf\rho_\delta\d x - \int_{\partial_R\Omega} (\Lambda_{\epsilon,\delta}u\circ S_\eps) f \rho_\delta \d x&\lesssim \left(\|u\|_{L^2(\partial_R\Omega)} + \|\Lambda_{\epsilon,\delta}u \circ S_\eps\|_{L^2(\partial_R\Omega)}\right)\|f\|_{L^2(\partial_R\Omega)}.
	\end{align*}
	We choose By \cref{prop:lp_bounds} and the fact that $|S_\eps(x)-x|\leq R$, we have using the restriction $0 < \eps \leq \eps_1$
	\begin{align*}
		\|\Lambda_{\epsilon,\delta}u \circ S_\eps\|^2_{L^2(\partial_R\Omega)}
        &= \int_\Omega |\Lambda_{\epsilon,\delta}u \circ S_\eps|^2\chi_{\partial_{R}\Omega}\d x\\
		&\lesssim \int_{\Omega_\eps}|\Lambda_{\epsilon,\delta}u|^2(\chi_{\partial_{R}\Omega} \circ S_\eps^{-1})\d x\\
		&\lesssim \int_{\Omega}|\Lambda_{\epsilon,\delta}u|^2\chi_{\partial_{2R}\Omega}\d x\\
		&\lesssim \|\Lambda_{\epsilon,\delta}u\|^2_{L^2(\partial_{2R}\Omega)}.
	\end{align*}
	By \cref{prop:smoothing_l1} we have
	\[\|\Lambda_{\epsilon,\delta}u\|_{L^2(\partial_{2R}\Omega)} \lesssim \|u\|_{L^2(\partial_{2R+\eps}\Omega)},\]
	and therefore
	\[\int_{\partial_R\Omega} uf\rho_\delta\d x - \int_{\partial_R\Omega} (\Lambda_{\epsilon,\delta}u\circ S_\eps) f \rho_\delta \d x \lesssim \|u\|_{L^2(\partial_{3R}\Omega)}\|f\|_{L^2(\partial_R\Omega)}.\]
	where we used $\eps \leq R$.  Combining this with \labelcref{eq:source1} we have
	\[\I2(\Lambda_{\epsilon,\delta}u \circ S_\eps;f,\rho_\delta) - \I2(u;f,\rho_\delta) \lesssim \Ied1(u;\one)\eps + \|f\|^2_{L^2(\Omega)}\eps + \|u\|_{L^2(\partial_{3R}\Omega)}\|f\|_{L^2(\partial_R\Omega)}.\]
	Combining this with \labelcref{eq:dirichlet_estimate} and using that $\Ied1(u;\one)\lesssim \Ied1(u;\rho_\delta)$ completes the proof.
\end{proof}

\subsubsection{Continuum perturbations}

The following lemma gives bounds of perturbations for the continuum Poisson equation.
\begin{lemma}\label{lem:density_perturbation}
	Let $\rho_1,\rho_2\in L^\infty(\Omega)$ be positive densities and let $f_1,f_2\in L^1(\Omega)$. Let us write $I_i(u) = I(u;f_i,\rho_i)$ and $u_i = \argmin_{u\in H^1_{\rho_i}(\Omega)}I_i(u)$ for $i=1,2$. Then there exists $C>0$ depending (in an increasing manner) on $\sup_\Omega \rho_i$ and $\left(\inf_\Omega \rho_i\right)^{-1}$ for $i=1,2$, such that 
	\[I_2(u_2) - I_1(u_1)\leq C \left( \|f_2\|_{L^1(\Omega)}\|u_1\|_{L^1(\Omega)} + \|f_1u_1\|_{L^1(\Omega)}\right)\|\rho_1-\rho_2\|_{L^\infty(\Omega)}+ \langle f_1-f_2, u_1 \rho_2\rangle_{L^2(\Omega)}.\]
\end{lemma}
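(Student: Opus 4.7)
The plan is to exploit the minimality of $u_2$ in $H^1_{\rho_2}(\Omega)$ by constructing a competitor from $u_1$. Since $u_1$ is not automatically $\rho_2$-mean zero, I first shift it by a constant $c \defeq \frac{\int_\Omega u_1 \rho_2 \d x}{\int_\Omega \rho_2 \d x}$ so that $v \defeq u_1 - c \in H^1_{\rho_2}(\Omega)$. The key observation is that the mean-zero condition $\int_\Omega u_1 \rho_1 \d x = 0$ gives $c \int_\Omega \rho_2 \d x = \int_\Omega u_1 (\rho_2 - \rho_1) \d x$, yielding the a priori estimate $|c| \lesssim \|u_1\|_{L^1(\Omega)} \|\rho_1 - \rho_2\|_{L^\infty(\Omega)}$, where the constant depends on $\rho_{2,\min}^{-1}$.

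By minimality, $I_2(u_2) \leq I_2(v)$, so it suffices to estimate $I_2(v) - I_1(u_1)$. I split this into the Dirichlet part and the source part. For the Dirichlet part, constants cancel under $\nabla$, so
\[
\tfrac12 \int_\Omega |\nabla u_1|^2(\rho_2^2 - \rho_1^2) \d x \leq \tfrac12 \|\rho_1 + \rho_2\|_{L^\infty(\Omega)} \|\rho_1 - \rho_2\|_{L^\infty(\Omega)} \rho_{1,\min}^{-2} \int_\Omega |\nabla u_1|^2 \rho_1^2 \d x.
\]
For the source part, I expand
\[
-\!\int_\Omega f_2(u_1 - c)\rho_2 \d x + \!\int_\Omega f_1 u_1 \rho_1 \d x = \int_\Omega f_1 u_1(\rho_1 - \rho_2)\d x + \langle f_1 - f_2, u_1 \rho_2\rangle_{L^2(\Omega)} + c\!\int_\Omega f_2 \rho_2 \d x,
\]
which accounts for the three types of terms that appear in the statement.

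The last ingredient is the standard energy identity: testing the Euler--Lagrange equation for $u_1$ against $u_1$ itself (which is admissible since $u_1 \in H^1_{\rho_1}(\Omega)$) gives
\[
\int_\Omega |\nabla u_1|^2 \rho_1^2 \d x = \int_\Omega f_1 u_1 \rho_1 \d x \leq \rho_{1,\max}\|f_1 u_1\|_{L^1(\Omega)}.
\]
Combining this with the Dirichlet estimate yields a bound of order $\|\rho_1 - \rho_2\|_{L^\infty(\Omega)} \|f_1 u_1\|_{L^1(\Omega)}$. The remaining term $\int_\Omega f_1 u_1(\rho_1 - \rho_2)\d x$ is bounded trivially by $\|f_1 u_1\|_{L^1(\Omega)} \|\rho_1 - \rho_2\|_{L^\infty(\Omega)}$, while $|c \int f_2 \rho_2| \leq \rho_{2,\max}\|f_2\|_{L^1(\Omega)}|c|$, which combined with the a priori bound on $|c|$ yields the $\|f_2\|_{L^1(\Omega)}\|u_1\|_{L^1(\Omega)}\|\rho_1 - \rho_2\|_{L^\infty(\Omega)}$ contribution. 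Summing all estimates produces the claimed inequality.

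The proof is largely a careful bookkeeping exercise; the only mildly delicate step is the introduction of the shift $c$ to make $u_1$ admissible for the $\rho_2$-problem, together with recognizing that its size is already controlled in a way compatible with the desired right-hand side by combining the $\rho_1$-mean zero condition of $u_1$ with the $L^\infty$ bound on $\rho_1 - \rho_2$.
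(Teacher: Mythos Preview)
Your proof is correct and follows essentially the same approach as the paper's: shift $u_1$ by its $\rho_2$-mean to obtain an admissible competitor for $I_2$, use minimality of $u_2$, then split $I_2(v)-I_1(u_1)$ into Dirichlet and source parts and close with the Euler--Lagrange energy identity $\int |\nabla u_1|^2\rho_1^2 = \int f_1 u_1 \rho_1$. One minor slip: in this paper $H^1_{\rho}(\Omega)$ is defined via the $\rho^2$-weighted mean $(v)_\rho = \int_\Omega v\rho^2\,\d x/\int_\Omega \rho^2\,\d x$, so your shift $c$ and the mean-zero identity should use $\rho_i^2$ in place of $\rho_i$; with this correction the argument goes through verbatim.
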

\begin{proof}
	Throughout the proof we write $\|\cdot\|_p = \|\cdot\|_{L^p(\Omega)}$ for $p\in[1,\infty]$. 
	For $u\in H^1(\Omega)$ we have
	\begin{align*}
		\int_\Omega f_1 u \rho_1 \d x - \int_\Omega f_2 u \rho_2\d x &=\int_\Omega f_1 u (\rho_1-\rho_2) \d x + \int_\Omega (f_1-f_2) u \rho_2 \d x  \\
		&\leq \|f_1 u\|_1\|\rho_1 - \rho_2\|_\infty + \langle f_1-f_2, u_1 \rho_2\rangle_{L^2(\Omega)}
	\end{align*}
	Note also that
	\[\|\rho_1^2 - \rho_2^2\|_\infty\leq \|\rho_1 + \rho_2\|_\infty\|\rho_1 - \rho_2\|_\infty =C\|\rho_1 - \rho_2\|_\infty.\]
	Thus for $u\in H^1(\Omega)$ we have
	\[\left|\int_\Omega |\nabla u|^2 \rho_1^2 \d x - \int_\Omega |\nabla u|^2\rho_2^2\d x\right| \leq C\|\rho_1 - \rho_2\|_\infty\int_\Omega |\nabla u|^2\d x.\]
	For $i=1,2$, the weak form of the Euler--Lagrange equation yields
	\[\int_\Omega |\nabla u_i|^2\rho_i^2 \d x = \int_\Omega f_i u_i \rho_i \d x,\]
	and so 
	\[\int_\Omega |\nabla u_i|^2 \d x \leq C\|f_iu_i\|_1.\]
	Therefore
	\[I_2(u_1) - I_1(u_1) \leq C\|f_1 u_1\|_1\|\rho_1 - \rho_2\|_\infty +\langle f_1-f_2, u_1 \rho_2\rangle_{L^2(\Omega)}.\]
	We also have
	\[|(u_1)_{\rho_2}| = |(u_1)_{\rho_2} - (u_1)_{\rho_1}|   \leq \frac{\int_\Omega u_1 |\rho_2^2 - \rho_1^2| \d x}{\int_\Omega \rho_2^2\, dx}  \leq C\|u_1\|_1\|\rho_1-\rho_2\|_\infty.\]
	Noting that $(u_1 - (u_1)_{\rho_2})_{\rho_2} = 0$, 
	we can compute
	\begin{align*}
		I_2(u_2) - I_1(u_1) &=I_2(u_2) - I_2(u_1) + I_2(u_1) - I_1(u_1)\\
		&\leq I_2(u_1 - (u_1)_{\rho_2}) - I_2(u_1) + I_2(u_1) - I_1(u_1)\\
		&= (u_1)_{\rho_2}\int_\Omega f_2\rho_2 \d x+ I_2(u_1) - I_1(u_1)\\
		&\leq C\left( \|f_2\|_1\|u_1\|_1 + \|f_1u_1\|_1\right)\|\rho_1-\rho_2\|_\infty+\langle f_1-f_2, u_1 \rho_2\rangle_{L^2(\Omega)}.
	\end{align*}
	which completes the proof.
\end{proof}
As a consequence we obtain the following.
\begin{lemma}\label{lem:perturbation_of_density_rhs_extension}
    Let $f \in L^\infty(\Omega)$, let $u \in H^1_\rho(\Omega)$ be the unique minimizer of $I(\cdot; f, \rho)$ in $H^1_\rho(\Omega)$, and let $v \in H^1(\Omega)$.
    Further, fix $n^{-\frac1d} < \delta \leq \frac{\rho_{\min}}{8 \Lip(\rho)}$ and $0\leq \lambda \leq \frac{\rho_{\min}}{8\rho_{\max}}$, and let $\rho_\delta \in L^\infty(\Omega)$ be the probability density given by \cref{thm:transportation}.

    Consider a realization of the random graph $\X_{n}$, such that \cref{thm:transportation_is_pushforward,thm:transportation_point_to_point,thm:transportation_distance_T} hold for the transport map $T_\delta \colon \Omega \to \X_{n}$, while \cref{thm:transportation_distance_rho} holds for $\rho_\delta$ and $\lambda$. 
    
    Then, for any $f_n \in \ell^2(\X_{n})$ satisfying the compatibility condition $\sprod{f_n,\one}{\ell^2(\X_{n})} = 0$, we have
    \begin{align*}
        &\phantom{{}={}}I(u; f, \rho) - I(v; E_\delta f_n, \rho_\delta) \\
        &\lesssim \left(\norm{f_n}_{\ell^2(\X_{n})}^2 + \norm{f}_{L^1(\Omega)}^2\right) \left(\delta + \lambda\right) + K(q)\norm{E_\delta f_n - f}_{L^1(\Omega)}\norm{f_n}_{\ell^q(\X_{n})}.
    \end{align*}
\end{lemma}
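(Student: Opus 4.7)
The plan is to apply the abstract perturbation estimate \cref{lem:density_perturbation} with the roles of the data swapped, namely $(f_1,\rho_1) = (E_\delta f_n, \rho_\delta)$ and $(f_2,\rho_2) = (f, \rho)$. Interpreting $v$ as the unique minimizer of $I(\cdot;E_\delta f_n,\rho_\delta)$ over $H^1_{\rho_\delta}(\Omega)$ (a general $v\in H^1(\Omega)$ would then be handled by the minimizing property, since the minimizer gives the smallest value of $I(\cdot;E_\delta f_n,\rho_\delta)$), the choice $\lambda\leq\rho_{\min}/(8\rho_{\max})$ combined with \cref{thm:transportation_distance_rho} ensures $\rho_\delta \geq \tfrac34\rho_{\min}>0$, and \eqref{eq:Td_ext} guarantees the compatibility condition $\int_\Omega (E_\delta f_n)\rho_\delta\dx=\frac{1}{n}\sprod{f_n,\one}{\ell^2(\X_n)}=0$, so that $v$ is well-defined and \cref{lem:density_perturbation} applies. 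This yields
\begin{equation*}
I(u;f,\rho) - I(v;E_\delta f_n,\rho_\delta) \lesssim \bigl(\|f\|_{L^1}\|v\|_{L^1} + \|(E_\delta f_n)v\|_{L^1}\bigr)\|\rho-\rho_\delta\|_{L^\infty} + \sprod{E_\delta f_n - f,\,v\rho}{L^2(\Omega)},
\end{equation*}
and \cref{thm:transportation_distance_rho} bounds $\|\rho-\rho_\delta\|_{L^\infty}\lesssim\delta+\lambda$.

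The next step is to produce a priori estimates on $v$. Testing the Euler--Lagrange equation \eqref{eq:continuum_pde_EL} for $v$ with $v$ itself and using the $\rho_\delta$-weighted Poincar\'e inequality (valid since $\rho_\delta$ is bounded and positive), together with the push-forward identity $\int_\Omega |E_\delta f_n|^p\rho_\delta\dx = \|f_n\|_{\ell^p(\X_n)}^p$ from \eqref{eq:Td_ext}, gives $\|\nabla v\|_{L^2(\Omega)} + \|v\|_{L^2(\Omega)} \lesssim \|f_n\|_{\ell^2(\X_n)}$ and also $\|E_\delta f_n\|_{L^2(\Omega)}\lesssim \|f_n\|_{\ell^2(\X_n)}$ after using the lower bound on $\rho_\delta$. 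For the $L^\infty$ bound, I rewrite the PDE for $v$ as $-\div(\rho_\delta^2\nabla v) = (E_\delta f_n)\rho_\delta$ and apply \cref{prop:global_regularity}(i) with weight $\varrho=\rho_\delta^2$ and source $(E_\delta f_n)\rho_\delta$ to obtain $\|v\|_{L^\infty(\Omega)} \lesssim \|E_\delta f_n\|_{L^{q^*}(\Omega)}$ for a suitable Sobolev-dual exponent $q^*$; converting this via \eqref{eq:Td_ext} produces $\|v\|_{L^\infty(\Omega)} \lesssim K(q)\|f_n\|_{\ell^q(\X_n)}$, where $q$ corresponds to the exponent in the lemma.

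Combining these estimates is then purely mechanical. By AM--GM, $\|f\|_{L^1}\|v\|_{L^1} \lesssim \|f\|_{L^1}\|f_n\|_{\ell^2} \lesssim \|f\|_{L^1}^2 + \|f_n\|_{\ell^2}^2$, while Cauchy--Schwarz gives $\|(E_\delta f_n)v\|_{L^1} \leq \|E_\delta f_n\|_{L^2}\|v\|_{L^2}\lesssim\|f_n\|_{\ell^2(\X_n)}^2$. Multiplied by $\delta+\lambda$, this reproduces the first term of the claimed bound. For the source-term inner product, H\"older's inequality gives $\sprod{E_\delta f_n - f,\,v\rho}{L^2(\Omega)} \leq \rho_{\max}\|E_\delta f_n - f\|_{L^1(\Omega)}\|v\|_{L^\infty(\Omega)} \lesssim K(q)\|E_\delta f_n - f\|_{L^1(\Omega)}\|f_n\|_{\ell^q(\X_n)}$, which is exactly the second term.

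The only mildly delicate point is the systematic conversion between continuum $L^p(\Omega)$-norms of the piecewise-constant extension $E_\delta f_n$ and the graph $\ell^p(\X_n)$-norms of $f_n$ via the push-forward identity \eqref{eq:Td_ext} (which requires $\rho_\delta$ to be uniformly bounded below), coupled with matching \cref{prop:global_regularity} to the correctly weighted operator $-\div(\rho_\delta^2\nabla\cdot)$ rather than to the unperturbed one. Apart from this bookkeeping, the argument is a direct application of the quadratic continuum stability encoded in \cref{lem:density_perturbation}.
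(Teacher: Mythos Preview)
Your approach is correct and essentially identical to the paper's: both apply \cref{lem:density_perturbation} with $(f_1,\rho_1)=(E_\delta f_n,\rho_\delta)$ and $(f_2,\rho_2)=(f,\rho)$, derive the $L^2$ bound on the minimizer from the Euler--Lagrange equation plus Poincar\'e, the $L^\infty$ bound from \cref{prop:global_regularity}(i) applied with weight $\rho_\delta^2$, and convert continuum norms of $E_\delta f_n$ to graph norms via the push-forward identity. The only presentational difference is that the paper introduces the minimizer explicitly as a separate function $w$ and then handles the arbitrary $v\in H^1(\Omega)$ at the end via $I(v;E_\delta f_n,\rho_\delta)=I(v-(v)_{\rho_\delta};E_\delta f_n,\rho_\delta)\geq I(w;E_\delta f_n,\rho_\delta)$, which makes the mean-subtraction step (needed because $v$ need not lie in $H^1_{\rho_\delta}(\Omega)$) explicit, whereas you folded this into a parenthetical remark.
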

\begin{proof}
    %
    Let $w \in H_{\rho_\delta}^1(\Omega)$ be the unique minimizer of \labelcref{eq:def_I} with density $\rho_\delta$ and function $E_\delta f$  in $H^1_{\rho_\delta}(\Omega)$. Using \cref{lem:density_perturbation,thm:transportation_is_pushforward,thm:transportation_distance_rho}, we obtain
    \begin{align*}
        &\phantom{{}={}}I(u; f, \rho) - I(w; E_\delta f_n, \rho_\delta) \\
        &\leq C \left( \|f\|_{L^1(\Omega)}\|w\|_{L^1(\Omega)} + \|E_\delta f_n w\|_{L^1(\Omega)}\right)\|\rho-\rho_\delta\|_{L^\infty(\Omega)}+ \langle E_\delta f_n - f, w \rho \rangle_{L^2(\Omega)} \\
        &\lesssim  \left( \|f\|_{L^1(\Omega)} + \norm{E_\delta f_n}_{L^2(\Omega)}\right)\norm{w}_{L^2(\Omega)}\left(\lambda + \delta\right)+ \norm{E_\delta f_n - f}_{L^1(\Omega)} \norm{w}_{L^\infty(\Omega)} \\
        &\lesssim  \left( \|f\|_{L^1(\Omega)}^2 + \norm{E_\delta f_n}_{L^2(\Omega)}^2 + \norm{w}_{L^2(\Omega)}^2\right)\left(\lambda + \delta\right)+ \norm{E_\delta f_n - f}_{L^1(\Omega)} \norm{w}_{L^\infty(\Omega)},
    \end{align*}
    where we used Young's inequality in the last line.
    Since $w$ is a minimizer, we can test the corresponding Euler--Lagrange equation with itself and obtain the usual energy estimate $\norm{w}_{L^2(\Omega)} \lesssim \norm{E_\delta f_n}_{L^2(\Omega)} \lesssim \norm{f_n}_{\ell^2(\X_{n})}$. 
    
    Because $w$ is a minimizer of $I(\cdot; E_\delta f_n, \rho_\delta)$ in $H^1_\rho(\Omega)$, we can apply \cref{prop:global_regularity,rem:specific_bounds} to estimate, for any $q>d/2$, $\norm{w}_{L^\infty(\Omega)} \lesssim K(q) \norm{E_\delta f_n}_{L^q(\Omega)} \lesssim K(q) \norm{f_n}_{\ell^q(\X_{n})}$. 
    
    Finally, since $\int_\Omega E_\delta f_n \rho_\delta \dx = 0$,
    \begin{equation*}
        I(v; E_\delta f_n, \rho_\delta) 
        = I(v - (v)_{\rho_\delta}; E_\delta f_n, \rho_\delta) 
        \geq I(w; E_\delta f_n, \rho_\delta)
    \end{equation*}
    which concludes the proof.
\end{proof}

\subsubsection{Combination}

\begin{proof}[Proof of \cref{prop:continuum_energy_continuum_solution_discrete_energy_discrete_solution}]
First, we know by \cref{prop:global_regularity} that $u$ is indeed Lipschitz. Since $f$ is assumed to be Borel-measurable, the right hand side is well defined as a random variable.

Let $\rho_\delta \in L^\infty(\Omega)$ be the probability density given by \cref{thm:transportation}. Choose a realization of the random graph $\X_{n}$ such that \cref{thm:transportation_distance_rho} holds for $\rho_\delta$ and $0 <\lambda \leq \tfrac{\rho_{\min}}{8\rho_{\max}}$, and \cref{thm:transportation_is_pushforward,thm:transportation_point_to_point,thm:transportation_distance_T} holds for the transport map $T_\delta \colon \Omega \to \X_{n}$. This has probability at least $1-C\exp(-cn\delta^d \lambda^2)$.

We have
\begin{align*}
    I(u; f, \rho) - \ene(u_{n,\eps};f_n) &= 
    I(u; f, \rho) - I(\Lambda_{\epsilon,\delta} E_\delta u_{n,\epsilon} \circ S_\epsilon; E_\delta f_n, \rho_\delta) \\
    &\qquad+ I(\Lambda_{\epsilon,\delta} E_\delta u_{n,\epsilon} \circ S_\epsilon; E_\delta f_n, \rho_\delta) - I_{\epsilon, \delta}(E_\delta u_{n, \epsilon}; E_\delta f_n, \rho_\delta) \\
    &\qquad+ I_{\epsilon, \delta}(E_\delta u_{n, \epsilon}; E_\delta f_n, \rho_\delta) - \ene(u_{n,\eps};f_n)
\end{align*}
where the operators $E_\delta$, $\Lambda_{\epsilon,\delta}$ and $S_\epsilon$ are defined in \labelcref{eq:def_extension_operator,def:smoothing-operator,eq:Seps} respectively.

The first difference is estimated using \cref{lem:perturbation_of_density_rhs_extension}, the second using \cref{lem:non-local_to_local} (provided $\eps$ is chosen small enough) and the third using \cref{lem:Discrete-to-non-local-1}, which yields
\begin{align*}
    I(u; f, \rho) - \ene(u_{n,\eps};f_n) 
    &\lesssim \left(\norm{f_n}_{\ell^2(\X_{n})}^2 + \norm{f}_{L^1(\Omega)}^2\right) \left(\delta + \lambda\right) + K(q)\norm{E_\delta f_n - f}_{L^1(\Omega)}\norm{f_n}_{\ell^q(\X_{n})}  \\
    &+\left(\frac{\delta}{\varepsilon} + \varepsilon + \lambda\right) I_{\varepsilon, \delta}^{(1)}(E_\delta u_{n,\varepsilon}; \rho_\delta) + \norm{E_\delta f_n}_{L^2(\Omega)}^2\varepsilon + \norm{E_\delta f_n}_{L^2(\partial_{R} \Omega)} \norm{E_\delta u_{n, \varepsilon}}_{L^2(\partial_{3R} \Omega)}
\end{align*}
Note that by \cref{thm:transportation_distance_rho}, we have $\norm{E_\delta f_n}_{L^2(\Omega)} \lesssim \norm{f_n}_{\ell^2(\X_{n})}$, $\norm{E_\delta f_n}_{L^2(\partial_R \Omega)} \lesssim \norm{f_n}_{\ell^2(\X_{n} \cap \partial_{2R} \Omega)}$ and $\norm{E_\delta u_{n, \varepsilon}}_{L^2(\partial_{3R} \Omega)} \lesssim \norm{u_{n, \varepsilon}}_{\ell^2(\X_{n} \cap \partial_{4R} \Omega)}$. 

Moreover, by \cref{lem:Discrete-to-non-local-1} we have $I_{\varepsilon, \delta}^{(1)}(E_\delta u_{n,\varepsilon}; \rho_\delta) \leq \ene^{(1)}(u_{n, \eps})$. Thus, because $\delta \lesssim \eps$,
\begin{equation*}
    I(u; f, \rho) - \ene(u_{n,\eps};f_n) 
    \lesssim B_n + \left(\frac{\delta}{\varepsilon} + \varepsilon + \lambda\right) \ene^{(1)}(u_{n, \eps}),
\end{equation*}
where
\begin{align*}
    B_n  &\defeq   \left(\norm{f_n}_{\ell^2(\X_{n})}^2 + \norm{f}_{L^1(\Omega)}^2\right) \left(\lambda + \eps\right)  + \norm{f_n}_{\ell^2(\X_{n} \cap \partial_{2R} \Omega)} \norm{u_{n, \eps}}_{\ell^2(\X_{n} \cap \partial_{4R} \Omega)} \\
    &\qquad\qquad  + K(q)\left(\norm{f_n - f}_{\ell^1(\X_{n})} + \norm{\osc_{B(\delta;\cdot) \cap \Omega} f}_{L^1(\Omega)}\right)\norm{f_n}_{\ell^q(\X_{n})},
\end{align*}
because $\norm{E_\delta f_n - E_\delta f}_{L^1(\Omega)} \lesssim \norm{f_n - f}_{\ell^1(\X_{n})}$ and $\abs{E_\delta f (x) - f(x)} \leq \osc_{B(\delta;x) \cap \Omega}f$.

By the Euler--Lagrange equation \labelcref{eq:discrete_pde_EL}, we have that $\ene^{(1)}(u_{n,\eps}) = -2 \ene(u_{n,\eps};f_n)$. Thus, with 
\begin{equation*}
    A\defeq  1-C(\Omega,\rho)\left(\frac{\delta}{\varepsilon} + \varepsilon + \lambda\right),
\end{equation*}
we have
\begin{align*}
    I(u; f, \rho) - A\cdot \ene(u_{n,\eps};f_n) 
    &\lesssim B_n.
\end{align*}
Let $C_3$, $\eps_1$ and $\lambda_1$ be small enough, such that for all $\eps \leq \eps_1$, $\lambda \leq \lambda_1$ it holds $C(\Omega,\rho)\left(\frac{\delta}{\varepsilon} + \varepsilon + \lambda\right) \leq \frac12$. Then
\begin{align*}
    I(u; f, \rho) -  \ene(u_{n,\eps};f_n) 
    &\lesssim A^{-1} B_n - (A^{-1} -1) I(u; f, \rho) \\
    &\lesssim B_n + \left(\frac{\delta}{\varepsilon} + \varepsilon + \lambda\right) I^{(1)}(u;\rho),
\end{align*}
where we used the assumption that $u$ is a minimizer to obtain $I(u;f,\rho) = -2I^{(1)}(u;\rho)$ and the fact that $A^{-1} -1 \lesssim  \left(\frac{\delta}{\varepsilon} + \varepsilon + \lambda\right)$. Moreover, testing the weak formulation of the Euler--Lagrange equation \labelcref{eq:continuum_pde_EL} with $u$, we obtain $I^{(1)}(u;\rho) \lesssim \norm{f}_{L^2(\Omega)}^2$. Absorbing the term involving $\norm{f}_{L^1(\Omega)}$ from the definition of $B_n$ into the error term involving $\norm{f}_{L^2(\Omega)}$ finishes the proof.
\end{proof}

\subsection{Proof of the main \texorpdfstring{\cref{thm:main_smooth}}{theorem}}

We will need to prove convergence results of the degree function $\deg_{n,\epsilon}$ defined in \labelcref{eq:def_deg_ne}. Its non-local counterpart is given by
\begin{equation}\label{eq:rhoe}
\hat\rho_\eps(x) \defeq   \int_\Omega \eta_\eps(|x-y|)\rho(y)\d y.
\end{equation}
We note that since $\Omega$ has a Lipschitz boundary, there exists $C>0$ such that
\begin{equation}\label{eq:rhoe_bounds}
C\rho_{\min}\leq \hat\rho_\eps(x) \leq \rho_{\max}.
\end{equation}
The adjustment for the bound below results from the fact that when $x$ is close to the boundary $\partial \Omega$, then part of the support of $\eta_\eps$ may lie outside of the domain $\Omega$.
We have the following relation of $\deg_{n,\epsilon}$ and $\hat\rho_\eps$ which is a consequence of Bernstein's inequality which we recall here for convenience.
The version stated here is taken from \cite[Theorem 5.12]{CalculusofVariationsLN}, and we also refer to \cite{boucheronconcentration}.
\begin{theorem}[Bernstein's Inequality]\label{thm:bernstein}
Let $Y_1,Y_2\dots,Y_n$ be a sequence of \emph{i.i.d.}~real-valued random variables with finite expectation $\mu=\mathbb E[Y_i]$ and variance $\sigma^2=\mathbb V(Y_i)$, and write $S_n=\frac{1}{n}\sum_{i=1}^n Y_i$. Assume there exists $b>0$ such that $|Y_i-\mu|\leq b$ almost surely. Then for any $\lambda>0$ we have
\begin{equation}\label{eq:bernstein}
\P(S_n-\mu \geq \lambda)\leq \exp\left( -\frac{n\lambda^2}{2(\sigma^2 + \tfrac13 b\lambda)} \right).
\end{equation}
\end{theorem}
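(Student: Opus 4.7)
The plan is to use the standard Chernoff (exponential Markov) approach. First I would reduce to the centered case by setting $X_i \defeq Y_i - \mu$, so that $\bE[X_i]=0$, $\Var(X_i)=\sigma^2$, and $|X_i|\leq b$ almost surely; it then suffices to bound $\P\!\left(\sum_{i=1}^n X_i \geq n\lambda\right)$. For any $t>0$, applying Markov's inequality to $\exp(t\sum_i X_i)$ together with the independence of the $X_i$ gives
\[
\P\!\left(\sum_{i=1}^n X_i \geq n\lambda\right) \leq e^{-tn\lambda}\,\bE[e^{tX_1}]^n,
\]
so the whole problem reduces to controlling the moment generating function of a single centered variable.

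The key step is the MGF estimate. Expanding $e^{tX_1}$ as a power series and using $\bE[X_1]=0$, $\bE[X_1^2]=\sigma^2$, and the bound $|X_1^k|\leq b^{k-2}X_1^2$ for $k\geq 2$, I would obtain
\[
\bE[e^{tX_1}] \;\leq\; 1 + \sigma^2\sum_{k=2}^\infty \frac{t^k b^{k-2}}{k!}.
\]
Invoking the elementary inequality $k!\geq 2\cdot 3^{k-2}$ valid for all $k\geq 2$, the tail sum can be resummed as a geometric series, giving
\[
\bE[e^{tX_1}] \;\leq\; 1 + \frac{\sigma^2 t^2}{2(1-tb/3)} \;\leq\; \exp\!\left(\frac{\sigma^2 t^2}{2(1-tb/3)}\right),
\]
provided $0<t<3/b$. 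Plugging this into the Chernoff bound yields
\[
\P\!\left(\sum_{i=1}^n X_i \geq n\lambda\right) \;\leq\; \exp\!\left(-tn\lambda + \frac{n\sigma^2 t^2}{2(1-tb/3)}\right).
\]

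Finally, I would optimize the exponent in $t$ with the choice $t \defeq \lambda/(\sigma^2 + b\lambda/3)$, for which $1-tb/3 = \sigma^2/(\sigma^2+b\lambda/3)$, so the quadratic term equals $\tfrac{1}{2}tn\lambda$ and the exponent collapses to $-n\lambda^2/[2(\sigma^2+b\lambda/3)]$, matching \eqref{eq:bernstein}. The only non-routine piece is the factorial bound $k!\geq 2\cdot 3^{k-2}$ used to sum the MGF tail into closed form and produce the characteristic Bernstein denominator $\sigma^2 + b\lambda/3$; everything else is bookkeeping. Since the paper itself attributes the statement to its lecture-note reference, the proof in \cite{CalculusofVariationsLN} likely follows exactly this template, and the body of the paper may simply cite it rather than reproduce the argument in full.
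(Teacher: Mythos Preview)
Your proof is correct and follows the standard Chernoff--MGF template; the factorial bound $k!\geq 2\cdot 3^{k-2}$ and the choice $t=\lambda/(\sigma^2+b\lambda/3)$ are exactly what produce the Bernstein denominator. As you anticipated, the paper does not reproduce the argument: the theorem is simply stated and attributed to \cite[Theorem~5.12]{CalculusofVariationsLN} (with an additional pointer to \cite{boucheronconcentration}), so there is no in-paper proof to compare against.
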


\begin{proposition}\label{prop:degree}
There exists a positive constant $C_1(\eta(0), \rho_{\min}, \rho_{\max})$, such that for any $0 < \lambda \leq 1$ and $x\in \Omega$ we have
\begin{equation}\label{eq:degree}
\left|\frac{\deg_{n,\epsilon}(x)}{n} - \hat\rho_\eps(x)\right| \leq \lambda \hat\rho_\eps(x)
\end{equation}
holds with probability at least $1-2\exp\left( -C_1 n \epsilon^d \lambda^2\right)$.
\end{proposition}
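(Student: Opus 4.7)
The plan is to apply Bernstein's inequality (Theorem~\ref{thm:bernstein}) to the i.i.d.\ random variables $Y_i \defeq \eta_\eps(|x - x_i|)$ for fixed $x \in \Omega$. With this choice, $S_n = \frac{1}{n}\sum_{i=1}^n Y_i = \deg_{n,\eps}(x)/n$, and the mean is exactly $\mu = \mathbb{E}[Y_i] = \int_\Omega \eta_\eps(|x-y|)\rho(y)\dy = \hat\rho_\eps(x)$ by definition of $\hat\rho_\eps$ in \eqref{eq:rhoe}. So the concentration statement we want is just $|S_n - \mu| \leq \lambda\mu$.

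The key quantitative inputs to Bernstein are the almost-sure bound $b$ and the variance $\sigma^2$. Since $\eta$ is nonincreasing with $\eta(0) > 0$, we have $0 \leq Y_i \leq \eta(0)\eps^{-d}$, which together with $\mu \leq \rho_{\max}$ and $\eps \leq 1$ yields
\[ |Y_i - \mu| \leq \eta(0)\eps^{-d} + \rho_{\max} \lesssim \eps^{-d} =: b, \]
where the implicit constant depends on $\eta(0)$ and $\rho_{\max}$. For the variance I would use the pointwise bound $Y_i \leq \eta(0)\eps^{-d}$ inside the second moment:
\[ \sigma^2 \leq \mathbb{E}[Y_i^2] \leq \eta(0)\eps^{-d} \int_\Omega \eta_\eps(|x-y|)\rho(y)\dy = \eta(0)\eps^{-d}\hat\rho_\eps(x). \]

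Now I apply Bernstein's inequality with deviation threshold $t = \lambda\hat\rho_\eps(x)$, together with the symmetric two-sided union bound. Using $\lambda \leq 1$ and the lower bound $\hat\rho_\eps(x) \leq \rho_{\max}$ in the $\frac{1}{3}bt$ term, and factoring out $\eps^{-d}\hat\rho_\eps(x)$, we get
\[ \sigma^2 + \tfrac{1}{3}b t \lesssim \eps^{-d}\hat\rho_\eps(x), \]
with constants depending only on $\eta(0)$ and $\rho_{\max}$. Substituting into \eqref{eq:bernstein} yields
\[ \mathbb{P}\bigl(|S_n - \mu| \geq \lambda \hat\rho_\eps(x)\bigr) \leq 2\exp\!\left(-C\, n \eps^d \lambda^2 \hat\rho_\eps(x)\right). \]
Finally I invoke the uniform lower bound $\hat\rho_\eps(x) \geq C\rho_{\min}$ from \eqref{eq:rhoe_bounds}, which absorbs into a constant $C_1$ depending on $\eta(0), \rho_{\min}, \rho_{\max}$, to conclude $\mathbb{P}(\cdots) \leq 2\exp(-C_1 n\eps^d \lambda^2)$, matching the stated dependence.

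There is no real obstacle here; the only subtle point is recognizing that one should set the Bernstein deviation parameter to $\lambda\hat\rho_\eps(x)$ (rather than $\lambda$), which converts the absolute bound into the desired relative bound and, combined with the trick $\mathbb{E}[Y_i^2] \leq \|Y_i\|_{L^\infty}\cdot\mathbb{E}[Y_i]$, produces exactly one factor of $\hat\rho_\eps(x)$ in the exponent that can be bounded below by $C\rho_{\min}$. The factor $\eps^d$ in the rate comes directly from the ratio $\hat\rho_\eps(x)/(\sigma^2/\hat\rho_\eps + b)$.
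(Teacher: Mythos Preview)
Your proposal is correct and essentially identical to the paper's proof: both apply Bernstein's inequality to $Y_i=\eta_\eps(|x-x_i|)$ with $b\lesssim\eps^{-d}$ and $\sigma^2\leq \eta(0)\eps^{-d}\hat\rho_\eps(x)$, set the deviation threshold to $\lambda\hat\rho_\eps(x)$, and then use the lower bound \eqref{eq:rhoe_bounds} to absorb $\hat\rho_\eps(x)$ into the constant.
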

\begin{proof}
Fix $x\in \Omega$. Then
\begin{equation*}
    \deg_{n,\epsilon}(x) = \sum_{i=1}^n \eta_\eps\left(\abs{x-x_i}\right)
\end{equation*}
with probability one.
We can use Bernstein's inequality from \cref{thm:bernstein} with $Y_i\defeq  \eta_\eps(|x-x_i|)$. 
Then the mean is given by $\mu=\hat\rho_\eps(x)$ and we have $b = C \epsilon^{-d}$ and $\sigma^2 \leq C\hat\rho_\eps(x)\epsilon^{-d}$. 
It follows that \labelcref{eq:degree} holds without absolute value with probability at least $1-\exp\left(-\tfrac{\hat\rho_\eps(x)}{C} n \epsilon^d \lambda^2\right)$ for $0 < \lambda \leq 1$, where the value of $C$ changed by an absolute constant.
We use \labelcref{eq:rhoe_bounds}, redefine $C$, and apply the same argument to $-Y_i$ to complete the proof.
\end{proof}


An immediate consequence is
\begin{lemma}\label{lem:estimate_deg_rho_average}
   There exists a positive constant $C_1(\eta(0), \rho_{\min}, \rho_{\max})$, such that for any $0 < \lambda \leq 1$ we have
    \begin{equation*}
         \abs{\int_\Omega \rho^2 \dx - \frac1{n^2}\sum_{x \in \X_n} \deg_{n,\eps}(x)} \lesssim \eps + \lambda
    \end{equation*}
    holds with probability at least $1-4 n \exp(-C_1 n\eps^d \lambda^2)$.
\end{lemma}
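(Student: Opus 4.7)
The plan is to telescope the difference through the non-local density $\hat\rho_\eps$ from \labelcref{eq:rhoe}, decomposing
\begin{equation*}
	\frac{1}{n^2}\sum_{x\in\X_n}\deg_{n,\eps}(x) - \int_\Omega \rho^2\dx = T_1 + T_2 + T_3,
\end{equation*}
where
\begin{align*}
	T_1 &\defeq \frac{1}{n}\sum_{x\in\X_n}\left(\frac{\deg_{n,\eps}(x)}{n} - \hat\rho_\eps(x)\right),\\
	T_2 &\defeq \frac{1}{n}\sum_{x\in\X_n}\hat\rho_\eps(x) - \int_\Omega \hat\rho_\eps\,\rho\dx,\\
	T_3 &\defeq \int_\Omega\bigl(\hat\rho_\eps-\rho\bigr)\rho\dx,
\end{align*}
and bounding each piece by $O(\eps+\lambda)$ separately.

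The quantity $T_3$ is purely deterministic. On the interior $\Omega_\eps$ the kernel $\eta_\eps$ has unit mass, so $\hat\rho_\eps(x) - \rho(x) = \int_{B(x,\eps)}\eta_\eps(|x-y|)(\rho(y)-\rho(x))\dy$, which is bounded by $\Lip(\rho)\eps$; on the boundary strip $\partial_\eps\Omega$, whose measure is $\lesssim \eps$ since $\partial\Omega$ is Lipschitz, one uses the pointwise bounds from \labelcref{eq:rhoe_bounds}. Hence $|T_3|\lesssim \eps$. For $T_1$, I would apply \cref{prop:degree} at each vertex $x_j\in\X_n$ (by conditioning on $x_j$ and applying Bernstein's inequality to the i.i.d.\ sum $\sum_{i\neq j}\eta_\eps(|x_j-x_i|)$), then union-bound over $j=1,\dots,n$; this gives $|T_1|\lesssim \lambda$ with probability at least $1-2n\exp(-Cn\eps^d\lambda^2)$. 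For $T_2$, Hoeffding's inequality applied to the i.i.d.\ random variables $\hat\rho_\eps(x_j)$, which are uniformly bounded by $\rho_{\max}$ via \labelcref{eq:rhoe_bounds}, yields $|T_2|\lesssim \lambda$ with probability at least $1-2\exp(-cn\lambda^2)$. Combining by a union bound and absorbing $\exp(-cn\lambda^2)\leq\exp(-cn\eps^d\lambda^2)$ (which holds since $\eps\leq 1$) into the constant yields the claimed probability $1-4n\exp(-C_1 n\eps^d\lambda^2)$.

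The subtle point is that \cref{prop:degree} is stated for a deterministic $x\in\Omega$, whereas in $T_1$ it is applied at a random vertex $x_j$, so the self-loop $\eta_\eps(0)=\eta(0)/\eps^d$ inside $\deg_{n,\eps}(x_j)$ contributes an additional error of order $\eta(0)/(n\eps^d)$ that is not controlled by Bernstein's inequality. This is absorbed into the $O(\lambda)$ term because the claim is only informative in the regime $4n\exp(-C_1 n\eps^d\lambda^2)<1$, which forces $n\eps^d\lambda^2 \gtrsim \log n$ and hence $1/(n\eps^d)\lesssim \lambda^2 \leq \lambda$; in the complementary regime the stated probability lower bound is vacuous.
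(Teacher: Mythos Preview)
Your proof is correct and follows essentially the same approach as the paper: the same three-term telescope through $\hat\rho_\eps$, with $T_1$ handled by \cref{prop:degree} plus a union bound, $T_2$ by Hoeffding, and $T_3$ by the Lipschitz bound on $\Omega_\eps$ together with $|\partial_\eps\Omega|\lesssim\eps$. Your treatment of the self-loop term is also the same as the paper's, which simply observes that one may assume $n\eps^d\lambda^2\geq 1$ (since otherwise the probability bound is vacuous) so that $\eta_\eps(0)/n = \eta(0)/(n\eps^d)\lesssim\lambda$.
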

\begin{proof}
First, note that we can always assume $n\eps^d \lambda^2 \geq 1$ so that $\frac{1}{n\epsilon^d}\leq \lambda^2\leq \lambda$. Indeed, we can adjust the constant $C_1$ so that the probability lower bound in the lemma is negative when $n\eps^d \lambda^2 <1$, and so the statement is then trivially true.  Now, the first step in the proof is to apply  \cref{prop:degree} to the setting where $x=x_i$ for some $i$. Conditioning on $x_i=x\in \Omega$ we have
\[\deg_{n,\eps}(x_i) = \eta_\eps(0) + \sum_{j\neq i}\eta_{\eps}(|x - x_j|).\]
Applying  \cref{prop:degree} to the second sum, which is over $n-1$ \emph{i.i.d.} random variables we find that
\begin{equation}
\left|\frac{\deg_{n,\epsilon}(x_i) - \eta_\eps(0)}{n-1} - \hat\rho_\eps(x)\right| \leq \lambda \hat\rho_\eps(x)
\end{equation}
holds with conditional probability at least $1-2 \exp(-C_1 n\eps^d \lambda^2)$, where we used \cref{ass:neps} to bound $n-1\geq \frac{1}{2}n$. Multiplying by $(n-1)/n$ on both sides and using the law of conditional probability yields that
\begin{equation}\label{eq:deg_i}
\left|\frac{\deg_{n,\epsilon}(x_i)}{n} - \hat\rho_\eps(x_i)\right| \leq \left(\lambda + \tfrac{1}{n}\right)\hat\rho_\eps(x_i) + \tfrac{1}{n}\eta_\eps(0) \lesssim \lambda + \frac{1}{n\eps^d} \lesssim \lambda,
\end{equation}
holds with probability at least $1-2 \exp(-C_1 n\eps^d \lambda^2)$. We now union bound over $i=1,\dots,n$ to find that \labelcref{eq:deg_i} holds for all $i=1,\dots,n$ with probability at least $1-2 n\exp(-C_1 n\eps^d \lambda^2)$

Now, we have, using the triangle inequality, that
    \begin{align*}
         &\phantom{{}={}}\abs{\int_\Omega \rho^2 \dx - \frac1{n^2}\sum_{x \in \X_n} \deg_{n,\eps}(x)} \\
         &\leq \abs{\int_\Omega \rho^2 \dx - \frac1n \sum_{x \in \X_n} \hat\rho_\eps(x)} + \frac1{n}\sum_{x \in \X_n} \abs{\frac{\deg_{n,\eps}(x)}{n} - \hat\rho_\eps(x)} \\
         &\lesssim \int_\Omega \rho(x) \abs{\rho(x)-\hat\rho_\eps(x)}\dx + \abs{\int_\Omega \rho(x)\hat\rho_\eps(x) \dx - \frac1n \sum_{x \in \X_n} \hat\rho_\eps(x)} + \lambda.
    \end{align*}
    For all $x \in \Omega_{\eps}$ the rescaled kernel integrates to one and we have
    \begin{equation*}
        \abs{\rho(x) - \hat\rho_\eps(x)}
        \leq \int_\Omega \eta_\eps\left(\abs{x - y}\right) \abs{\rho(x) - \rho(y)} \dy
        \leq \Lip(\rho)\eps. 
    \end{equation*}
    On the other hand,
    \begin{equation*}
        \int_{\partial_{2\eps}\Omega} \abs{\rho(x) - \hat\rho_\eps(x)}\dx 
        \leq 2 \rho_{\max} \abs{\partial_{2\eps}\Omega}
        \lesssim \eps.
    \end{equation*}
    Therefore, the first term is bounded by
    \begin{equation*}
        \int_\Omega \rho(x) \abs{\rho(x)-\hat\rho_\eps(x)}\dx \lesssim \eps.
    \end{equation*}
    The second term is bounded using Hoeffding's inequality, similar to the proof of \cref{lem:non-local-to-discrete-energy}. Namely, by setting $Y_i \defeq   \hat\rho_\eps(X_i)$, we obtain that
    \begin{equation*}
        {\abs{\int_\Omega \rho(x)\hat\rho_\eps(x) \dx - \frac1n \sum_{x \in \X_n} \hat\rho_\eps(x)}} \leq \lambda \rho_{\max}
    \end{equation*}
    with probability at least $1-2\exp\left(-\frac14 n \lambda^2\right)$.
Therefore, the claim holds upon redefining $C_1$ appropriately.
\end{proof}

We can now show that the discrete mean is a good approximation of the continuum mean.

\begin{lemma}\label{lem:estimate_mean_values_Edeltau}
    There exist positive constants $C_1(\Omega)$, $C_2(\Omega, \eta(0), \rho_{\min}, \rho_{\max}$, $C_3(\rho)$, such that for every $n^{-\frac1d} < \delta \leq \frac{\rho_{\min}}{8 \Lip(\rho)}$, $\delta \leq \eps$, and $0 < \lambda \leq \min\left(1, \tfrac{\rho_{\min}}{8\Lip(\rho)}\right)$, $\lambda + \eps \leq C_3$, the event that
    \begin{equation*}
         \forall u \in \ell^2(\X_{n}) \colon \abs{(E_\delta u)_\rho - (u)_{\deg_{n,\eps}}} \lesssim (\sqrt\eps + \lambda)  \norm{u}_{\ell^2(\X_{n})}
    \end{equation*}
    has probability at least $1-C_1 n \exp(-C_2 n \delta^d \lambda^2)$.
\end{lemma}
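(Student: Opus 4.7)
The plan is to prove this by reducing to the quantities already controlled by \cref{thm:transportation,prop:degree,lem:estimate_deg_rho_average}. Writing out the two means as ratios, set
\begin{equation*}
    A \defeq \frac{1}{n^2}\sum_{x\in\X_n}\deg_{n,\eps}(x)\,u(x), \quad B \defeq \frac{1}{n^2}\sum_{x\in\X_n}\deg_{n,\eps}(x), \quad A' \defeq \int_\Omega E_\delta u\,\rho^2\dx, \quad B' \defeq \int_\Omega \rho^2\dx,
\end{equation*}
so that $(u)_{\deg_{n,\eps}} = A/B$ and $(E_\delta u)_\rho = A'/B'$. I will work on the intersection of the events that \cref{thm:transportation}, a pointwise version of \cref{prop:degree} uniform in $x_i \in \X_n$ (obtained by the same union-bound trick as in the proof of \cref{lem:estimate_deg_rho_average}), and \cref{lem:estimate_deg_rho_average} itself all hold; since $\delta \leq \eps$ and the hypotheses on $\lambda$ are met, a union bound gives probability at least $1-C_1 n\exp(-C_2 n\delta^d \lambda^2)$. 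Then $B, B' \gtrsim \rho_{\min}^2$ for $\eps + \lambda \leq C_3$ small enough, and the standard identity
\begin{equation*}
    \frac{A}{B} - \frac{A'}{B'} = \frac{A - A'}{B} + \frac{A'(B'-B)}{BB'}
\end{equation*}
reduces the problem to bounding $|A - A'|$ and $|B - B'|$.

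For $|B - B'|$, \cref{lem:estimate_deg_rho_average} (applicable since $\delta \leq \eps$) gives $|B - B'| \lesssim \eps + \lambda$. A Cauchy--Schwarz argument combined with the push-forward identity ${T_\delta}_\#(\rho_\delta \dx) = \mu_n$ from \cref{thm:transportation_is_pushforward} yields
\begin{equation*}
    |A'| \leq \norm{E_\delta u}_{L^2(\Omega)} \norm{\rho^2}_{L^2(\Omega)} \lesssim \norm{E_\delta u}_{L^2(\Omega)} \lesssim \norm{u}_{\ell^2(\X_n)},
\end{equation*}
where the last inequality uses $\rho_\delta \gtrsim \rho_{\min}$ together with $\int_\Omega |E_\delta u|^2 \rho_\delta \dx = \frac{1}{n}\sum_i |u(x_i)|^2$. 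Thus the second term contributes at most $(\eps+\lambda)\norm{u}_{\ell^2(\X_n)}$, which is absorbed into the desired bound.

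The main work is $|A - A'|$. Using the push-forward again,
\begin{equation*}
    A = \int_\Omega \frac{\deg_{n,\eps}(T_\delta(y))}{n}\, E_\delta u(y)\, \rho_\delta(y) \dy,
\end{equation*}
so $A - A' = \int_\Omega E_\delta u \cdot \bigl[(\deg_{n,\eps}\circ T_\delta)/n \cdot \rho_\delta - \rho^2\bigr]\dy$. By Cauchy--Schwarz and the bound $\norm{E_\delta u}_{L^2(\Omega)} \lesssim \norm{u}_{\ell^2(\X_n)}$, it suffices to show that $\norm{(\deg_{n,\eps}\circ T_\delta)/n \cdot \rho_\delta - \rho^2}_{L^2(\Omega)} \lesssim \sqrt\eps + \lambda$. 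I will split this through the intermediate object $\hat\rho_\eps\circ T_\delta$:
\begin{equation*}
    \frac{\deg_{n,\eps}\circ T_\delta}{n}\,\rho_\delta - \rho^2 = \left(\frac{\deg_{n,\eps}\circ T_\delta}{n} - \hat\rho_\eps\circ T_\delta\right)\rho_\delta + (\hat\rho_\eps\circ T_\delta - \rho)\rho_\delta + \rho(\rho_\delta - \rho).
\end{equation*}
The first term is $L^\infty$-bounded by $\lambda$ thanks to the uniform version of \cref{prop:degree}, and the third by $\delta+\lambda$ thanks to \cref{thm:transportation_distance_rho}.

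The hard part is the middle term. Using $|T_\delta(y) - y| \leq \delta$ and writing $\hat\rho_\eps(T_\delta(y)) - \rho(y) = (\hat\rho_\eps - \rho)(T_\delta(y)) + (\rho(T_\delta(y)) - \rho(y))$, the second piece is Lipschitz-controlled by $\delta$ everywhere. For the first piece, on the interior region $\{y : \dist(y,\partial\Omega) \geq \eps + \delta\}$ we have $T_\delta(y) \in \Omega_\eps$ so the kernel $\eta_\eps$ integrates to one and $|\hat\rho_\eps - \rho|(T_\delta(y)) \lesssim \eps$; on the complementary boundary strip, which has measure $\lesssim \eps + \delta \lesssim \eps$, we only have the crude $L^\infty$-bound $\lesssim 1$. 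Squaring and integrating therefore gives $\norm{\hat\rho_\eps\circ T_\delta - \rho}_{L^2(\Omega)}^2 \lesssim \eps^2 + \eps \lesssim \eps$, which is the origin of the $\sqrt\eps$ rate. Combining all three pieces yields $|A-A'| \lesssim (\sqrt\eps + \lambda)\norm{u}_{\ell^2(\X_n)}$, and plugging into the ratio decomposition completes the proof. The main obstacle is precisely this loss of a square root at the boundary: pointwise $|\hat\rho_\eps - \rho|$ fails to be small in the $\eps$-thin boundary strip, and the $L^2$ norm pays half of this in the exponent.
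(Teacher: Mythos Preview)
Your proposal is correct and follows essentially the same approach as the paper: the same ratio decomposition, the same reduction to \cref{thm:transportation}, a uniform-in-$x_i$ version of \cref{prop:degree}, and \cref{lem:estimate_deg_rho_average}, the same intermediate object $\hat\rho_\eps\circ T_\delta$, and the same interior/boundary split identifying the $\sqrt\eps$ as coming from the $\eps$-thin boundary strip. The only cosmetic difference is that the paper applies Cauchy--Schwarz just on the boundary strip (pairing $|E_\delta u|$ in $L^2$ with the strip's $\sqrt\eps$ measure), whereas you apply it globally and instead compute the $L^2$ norm of the weight; both yield the same bound.
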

\begin{proof}
    Consider a realization of the random graph $\X_n$ such that \labelcref{eq:degree} holds for all $x_i$, $i=1,\dots,n$, and such that the assertions of \cref{lem:estimate_deg_rho_average,thm:transportation} hold. Because $\delta \leq \eps$ this has probability at least $1 - C_1n \exp(-C_2 n \delta^d \lambda^2)$.

    We have
    \begin{align*}
        \abs{(E_\delta u)_\rho - (u)_{\deg}}
        &\leq \abs{{\int_\Omega \rho^2 \dx}}^{-1}\abs{\int_\Omega \rho^2 E_\delta u \dx - \frac{1}{n^2} \sum_{x \in \X_{n}} u(x) \deg_{n,\eps}(x)} \\
        &\qquad +
        \frac{1}{n^2} \sum_{x \in \X_{n}} \abs{u(x)} \deg_{n,\eps}(x) \abs{\left(\int_\Omega \rho^2 \dx\right)^{-1} - \left(\frac1{n^2} \sum_{x \in \X_{n}}\deg_{n,\eps}(x)\right)^{-1}}
    \end{align*}
    The first term is estimated by
    \begin{align*}
        &\phantom{{}={}}\abs{{\int_\Omega \rho^2 \dx}}^{-1}\abs{\int_\Omega \rho^2 E_\delta u \dx - \frac{1}{n^2} \sum_{x \in \X_{n}} u(x) \deg_{n,\eps}(x)} \\
        &\lesssim \abs{\int_\Omega \rho^2 E_\delta u \dx - \frac{1}{n} \sum_{x \in \X_{n}} u(x) \hat\rho_\eps(x)} + 
        \abs{\frac{1}{n} \sum_{x \in \X_{n}} u(x) \left(\hat\rho_\eps(x) - \frac{\deg_{n,\eps}(x)}{n}\right)} \\
        &\lesssim \abs{\int_\Omega \left(\rho^2 - \rho_\delta E_\delta \hat\rho_\eps\right) E_\delta u \dx}
        + \frac{\lambda}{n} \sum_{x \in \X_{n}} \abs{u(x)} \hat\rho_\eps(x)\\
        &\lesssim \int_\Omega \abs{\rho^2 - \rho_\delta E_\delta \hat\rho_\eps} \abs{E_\delta u} \dx
        + \lambda \rho_{\max} \norm{u}_{\ell^1(\X_{n})},
    \end{align*}
    where we used \labelcref{eq:rhoe_bounds} in the last line. 
    For $x \in \Omega$ we have that
    \begin{align*}
        &\phantom{{}={}} \abs{\rho(x)^2 - \rho_\delta(x) E_\delta \hat\rho_\eps(x)} \\
        &\leq \rho_{\max}\abs{\rho(x) - \hat\rho_\eps(T_n(x))} + \abs{\hat\rho_\eps(T_n(x))}\abs{\rho - \rho_\delta} \\
        &\leq \rho_{\max} \left(\Lip \rho \,\delta + \abs{\rho(T_n(x)) - \hat\rho_\eps(T_n(x))} + \lambda + \delta\right).
    \end{align*}
    Moreover, for $x \in \Omega_{\eps + \delta}$ we have that
    \begin{equation*}
        \abs{\rho(T_n(x)) - \hat\rho_\eps(T_n(x))}
        \leq \int_\Omega \eta_\eps\left(\abs{T_n(x) - y}\right)\abs{\rho(T_n(x)) - \rho(y)} \dy
        \leq \Lip\rho \left(\delta + \eps\right).
    \end{equation*}
    Finally, for $x \in \partial_{2\eps} \Omega$ we use the bound $\abs{\rho(T_n(x)) - \hat\rho_\eps(T_n(x))} \leq 2 \rho_{\max}$ to obtain
    \begin{align*}
        \int_\Omega \abs{\rho^2 - \rho_\delta E_\delta \hat\rho_\eps} \abs{E_\delta u} \dx
        &\lesssim (\eps + \lambda) \norm{E_\delta u}_{L^1(\Omega_{2\eps})} + \norm{E_\delta u}_{L^1(\partial_{2\eps} \Omega)} \\
        &\lesssim (\eps + \lambda) \norm{E_\delta u}_{L^1(\Omega_{2\eps})} + \sqrt\eps \norm{E_\delta u}_{L^2(\partial_{2\eps} \Omega)} \\
        &\lesssim (\sqrt\eps + \lambda)  \norm{u}_{\ell^2(\X_{n})},
    \end{align*}
    where we used $\abs{\partial_{2\eps} \Omega} \lesssim \eps$, $\delta \leq \eps$ and that by \cref{thm:transportation_distance_rho} we have $\rho_\delta \geq \frac34 \rho_{\min}$ and therefore $\norm{E_\delta u}_{L^p(\Omega)} \lesssim \norm{u}_{\ell^p(\X_{n})}$. 

    Note that, by \cref{lem:estimate_deg_rho_average}, we have for $\eps + \lambda \leq \frac12 \int_\Omega \rho^2 \dx =: C_3$, 
    \begin{equation*}
        \abs{\left(\int_\Omega \rho^2 \dx\right)^{-1} - \left(\frac1{n^2} \sum_{x \in \X_{n}}\deg_{n,\eps}(x)\right)^{-1}}
        \leq \frac{\abs{\int_\Omega \rho^2 \dx - \frac1{n^2} \sum_{x \in \X_{n}}\deg_{n,\eps}(x)}}{2 \int_\Omega \rho^2 \dx}
        \lesssim \eps + \lambda.
    \end{equation*}
    Moreover, using \labelcref{eq:degree} for every $x \in \X_n$, we obtain
    \begin{equation*}
        \frac{1}{n^2} \sum_{x \in \X_n} \abs{u(x)} \deg_{n,\eps}(x) 
        \leq \left(\lambda + 1\right) \frac1n \sum_{x \in \X_n} \abs{u(x)} \hat\rho_\eps(x) \lesssim \norm{u}_{\ell^1(\X_n)}.
    \end{equation*}
    Since $\norm{u}_{\ell^1(\X_n)} \leq \norm{u}_{\ell^2(\X_n)}$, this finishes the proof.
\end{proof}

\begin{lemma}\label{lem:estimate_mean_values_u}

    Let $u: \Omega \to \R$ be bounded and Borel-measurable.
    There exist positive constants $C_1$, $C_2(\eta(0), \rho_{\min}, \rho_{\max})$, $C_3(\rho)$, such that for any $0 < \eps \leq 1$, $0 < \lambda \leq 1$, $\lambda + \eps \leq C_3$, the event that
    \begin{equation*}
          \abs{(u)_\rho - (u)_{\deg_{n,\eps}}} \lesssim \left(\lambda + \sqrt\eps\right) \norm{u}_{L^2(\Omega)} + \eps^{\frac d2}\lambda \norm{u}_{L^\infty(\Omega)}
    \end{equation*}
    has probability at least $1-C_1 n \exp(-C_2 n \eps^d \lambda^2)$.
\end{lemma}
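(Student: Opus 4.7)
The strategy parallels that of \cref{lem:estimate_mean_values_Edeltau}, but since $u$ is defined on $\Omega$ directly we do not rely on the transportation map and must instead prove everything through concentration of measure. I begin by writing
\[
(u)_\rho - (u)_{\deg_{n,\eps}} = \frac{A_1 - A_2}{B_1} + A_2\left(\frac{1}{B_1} - \frac{1}{B_2}\right),
\]
with $A_1 \defeq \int_\Omega u\rho^2 \dx$, $B_1 \defeq \int_\Omega \rho^2 \dx$, $A_2 \defeq n^{-2}\sum_{x\in\X_n} u(x)\deg_{n,\eps}(x)$, and $B_2 \defeq n^{-2}\sum_{x\in\X_n}\deg_{n,\eps}(x)$. \cref{lem:estimate_deg_rho_average} gives $|B_1 - B_2| \lesssim \eps + \lambda$ with probability $\geq 1-Cn\exp(-cn\eps^d\lambda^2)$. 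Choosing $C_3$ small enough that $\eps + \lambda \leq C_3$ forces $B_2 \geq B_1/2$, we obtain $|1/B_1 - 1/B_2| \lesssim \eps + \lambda$. Since $|A_1| \lesssim \norm{u}_{L^2(\Omega)}$ by H\"older and $|A_2| \leq |A_1| + |A_1 - A_2|$, the ratio term contributes at most $(\eps + \lambda)\norm{u}_{L^2(\Omega)}$ plus absorbable corrections, so everything reduces to estimating $|A_1 - A_2|$.

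I decompose $A_1 - A_2$ through the intermediates $\int u\hat\rho_\eps\rho \dx$ and $n^{-1}\sum u(x_i)\hat\rho_\eps(x_i)$, where $\hat\rho_\eps$ is defined in \labelcref{eq:rhoe}. The deterministic error $|\int_\Omega u\rho(\rho - \hat\rho_\eps)\dx|$ is handled using $|\rho - \hat\rho_\eps| \leq \Lip(\rho)\eps$ on $\Omega_\eps$ together with $|\partial_\eps\Omega| \lesssim \eps$ and Cauchy--Schwarz on the boundary strip, producing a contribution $\lesssim \sqrt\eps\, \norm{u}_{L^2(\Omega)}$. The first stochastic piece is controlled by Bernstein's inequality (\cref{thm:bernstein}) applied to $Y_i \defeq u(x_i)\hat\rho_\eps(x_i)$, with $\sigma^2 \lesssim \norm{u}_{L^2(\Omega)}^2$ and $|Y_i| \lesssim \norm{u}_{L^\infty(\Omega)}$, yielding a correction of order $\eps^{d/2}\lambda\norm{u}_{L^2(\Omega)} + \eps^d\lambda^2\norm{u}_{L^\infty(\Omega)}$ with probability $\geq 1 - 2\exp(-Cn\eps^d\lambda^2)$; both contributions are already dominated by the target.

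The critical piece is $T_2 \defeq n^{-1}\sum_i u(x_i)\bigl(\hat\rho_\eps(x_i) - \deg_{n,\eps}(x_i)/n\bigr)$. Conditioning on each $x_i$ and applying \cref{prop:degree} to the remaining $n-1$ i.i.d.\ variables as in the proof of \cref{lem:estimate_deg_rho_average}, then taking a union bound over the vertices, gives $\max_i |\hat\rho_\eps(x_i) - \deg_{n,\eps}(x_i)/n| \lesssim \lambda$ with probability $\geq 1 - 2n\exp(-Cn\eps^d\lambda^2)$. The naive bound $T_2 \lesssim \lambda\norm{u}_{L^\infty(\Omega)}$ is too weak. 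Instead I use Cauchy--Schwarz to obtain $T_2 \leq \lambda\bigl(n^{-1}\sum_i u(x_i)^2\bigr)^{1/2}$, and apply a separate Bernstein estimate to $u(x_i)^2$ with $\sigma^2 \lesssim \norm{u}_{L^\infty(\Omega)}^2\norm{u}_{L^2(\Omega)}^2$ and sup-norm bound $\norm{u}_{L^\infty(\Omega)}^2$, yielding $n^{-1}\sum_i u(x_i)^2 \lesssim \norm{u}_{L^2(\Omega)}^2 + \eps^{d/2}\lambda\norm{u}_{L^\infty(\Omega)}\norm{u}_{L^2(\Omega)} + \eps^d\lambda^2\norm{u}_{L^\infty(\Omega)}^2$. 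Taking square roots and applying $\sqrt{ab}\leq(a+b)/2$ to the cross term gives $\bigl(n^{-1}\sum_i u(x_i)^2\bigr)^{1/2} \lesssim \norm{u}_{L^2(\Omega)} + \eps^{d/2}\lambda\norm{u}_{L^\infty(\Omega)}$, so $T_2 \lesssim \lambda\norm{u}_{L^2(\Omega)} + \eps^{d/2}\lambda^2\norm{u}_{L^\infty(\Omega)}$. Assembling all three pieces with a union bound over the $O(n)$ events produces the claimed inequality with probability $\geq 1 - C_1 n\exp(-C_2 n\eps^d\lambda^2)$. The main obstacle is the sharper $\eps^{d/2}\lambda$ scaling in front of $\norm{u}_{L^\infty(\Omega)}$ (rather than the naive $\lambda$), which is precisely what forces the Cauchy--Schwarz step together with a variance-sensitive Bernstein estimate for $\sum_i u(x_i)^2$ instead of a crude sup-norm bound.
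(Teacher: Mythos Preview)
Your proof is correct and follows the same overall decomposition as the paper: split $(u)_\rho - (u)_{\deg}$ into the numerator difference $A_1-A_2$ and the ratio correction, then route $A_1-A_2$ through $\int_\Omega u\hat\rho_\eps\rho\,\dx$ and $n^{-1}\sum_i u(x_i)\hat\rho_\eps(x_i)$. The difference lies in the concentration tools. For the stochastic pieces the paper simply applies Hoeffding's inequality (as in \cref{lem:non-local-to-discrete-energy}) to $u\hat\rho_\eps$ and to $|u|$, obtaining directly $\bigl|\tfrac1n\sum u(x_i)\hat\rho_\eps(x_i)-\int u\hat\rho_\eps\rho\bigr|\le \rho_{\max}\|u\|_{L^\infty}\eps^{d/2}\lambda$ and $\|u\|_{\ell^1(\X_n)}\le \|u\|_{L^1(\Omega)}+\|u\|_{L^\infty}\eps^{d/2}\lambda$; the term you call $T_2$ is then controlled by the crude bound $|T_2|\lesssim\lambda\|u\|_{\ell^1(\X_n)}$, which after the Hoeffding estimate on $\|u\|_{\ell^1(\X_n)}$ already produces $\lambda\|u\|_{L^1}+\eps^{d/2}\lambda^2\|u\|_{L^\infty}$. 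This avoids both the Cauchy--Schwarz step and the separate Bernstein estimate on $\sum_i u(x_i)^2$. Your Bernstein-based route is sharper in intermediate places (you get $\eps^{d/2}\lambda\|u\|_{L^2}$ rather than $\eps^{d/2}\lambda\|u\|_{L^\infty}$ for the first stochastic piece), but the extra precision is not needed for the stated bound, so the paper's argument is shorter.
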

\begin{proof}
    Consider a realization of the random graph $\X_n$ such that \labelcref{eq:degree} holds for all $x_i$,  $i=1,\dots,n$, and such that the assertion of \cref{lem:estimate_deg_rho_average} holds. This has probability at least $1 - 4 n \exp(-C_1 n \eps^d \lambda^2)$.

    We have
    \begin{align*}
        \abs{(u)_\rho - (u)_{\deg_{n,\eps}}}
        &\leq \abs{{\int_\Omega \rho^2 \dx}}^{-1}\abs{\int_\Omega \rho^2  u \dx - \frac{1}{n^2} \sum_{x \in \X_{n}} u(x) \deg_{n,\eps}(x)} \\
        &\qquad +
        \frac{1}{n^2} \sum_{x \in \X_{n}} \abs{u(x)} \deg_{n,\eps}(x) \abs{\left(\int_\Omega \rho^2 \dx\right)^{-1} - \left(\frac1{n^2} \sum_{x \in \X_{n}}\deg_{n,\eps}(x)\right)^{-1}}
    \end{align*}
    As in the proof of \cref{lem:estimate_mean_values_Edeltau} we obtain for $\lambda + \eps \leq \frac12 \int_\Omega \rho^2 \dx =: C_3$, that
    \begin{equation*}
        \frac{1}{n^2} \sum_{x \in \X_{n}} \abs{u(x)} \deg_{n,\eps}(x) \abs{\left(\int_\Omega \rho^2 \dx\right)^{-1} - \left(\frac1{n^2} \sum_{x \in \X_{n}}\deg_{n,\eps}(x)\right)^{-1}} 
        \lesssim \left(\lambda + \eps\right) \norm{u}_{\ell^1(\X_n)}.
    \end{equation*}
    Further,
    \begin{align*}
        &\phantom{{}={}}\abs{\int_\Omega \rho^2 \dx}^{-1} \abs{\int_\Omega u \rho^2 \dx - \frac{1}{n^2} \sum_{x \in \X_{n}} u(x) \deg(x)} \\
        &\lesssim \abs{\int_\Omega u \rho^2 \dx - \frac{1}{n} \sum_{x \in \X_{n}} u(x) \hat\rho_\eps(x)} + 
        \abs{\frac{1}{n} \sum_{x \in \X_{n}} u(x) \left(\hat\rho_\eps(x) - \frac{\deg(x)}{n}\right)} \\
        &\lesssim \int_\Omega u \rho \abs{\rho - \hat\rho_\eps} \dx + \abs{\int_\Omega u \hat\rho_\eps \rho \dx - \frac{1}{n} \sum_{x \in \X_{n}} u(x) \hat\rho_\eps(x)}
        + \lambda \rho_{\max} \norm{u}_{\ell^1(\X_{n})},
    \end{align*}
    where we used \labelcref{eq:degree,eq:rhoe_bounds} in the last line.

    As in the proof of \cref{lem:non-local-to-discrete-energy} we apply Hoeffding's inequality (to $f \equiv 1$ and $f \equiv \hat\rho_\eps$) to obtain with probability at least $1-4\exp(-\frac18 n\eps^d\lambda^2)$ that
    \begin{equation*}
        \abs{\norm{u}_{\ell^1(\X_{n})} - \int_\Omega \abs{u} \rho \dx} 
        \leq \norm{u}_{L^\infty(\Omega)} \eps^{\frac d2} \lambda
    \end{equation*}
    and
    \begin{equation*}
        \abs{\frac{1}{n} \sum_{x \in \X_{n}} u(x) \hat\rho_\eps(x) - \int_\Omega u \hat\rho_\eps \rho \dx} 
        \leq \rho_{\max} \norm{u}_{L^\infty(\Omega)} \eps^{\frac d2} \lambda.
    \end{equation*}
    Combining all estimates we obtain with probability at least $1-Cn \exp(-cn\eps^d \lambda^2)$ that
    \begin{align*}
        \abs{(u)_\rho - (u)_{\deg}}
        &\lesssim \abs{\int_\Omega u \rho \left(\rho - \hat\rho_\eps\right) \dx} + \eps^{\frac d2}\lambda \norm{u}_{L^\infty(\Omega)} + (\lambda + \eps) \norm{u}_{L^1(\Omega)} \\
        &\lesssim \eps^{\frac d2}\lambda \norm{u}_{L^\infty(\Omega)} + (\lambda + \eps) \norm{u}_{L^1(\Omega)} + \norm{\rho - \hat\rho_\eps}_{L^\infty(\Omega_\eps)} \norm{u}_{L^1(\Omega_\eps)} + \norm{u}_{L^1(\partial_\eps\Omega)} \\
        &\lesssim \eps^{\frac d2}\lambda \norm{u}_{L^\infty(\Omega)} + \left(\lambda + \sqrt \eps\right) \norm{u}_{L^2(\Omega)},
    \end{align*}
    where we used that $\abs{\rho - \hat\rho_\eps} \lesssim \eps$ in $\Omega_\eps$, and $\abs{\partial_\eps \Omega} \lesssim \eps$ in the last line, and that $0 < \eps \leq 1$, implies $\eps \leq \sqrt \eps$.
\end{proof}
We can now prove a discrete Poincaré inequality, which is uniform in $n$ and $\eps$. 
\begin{proposition}\label{prop:discrete_poincare}
    There exist positive constants $C_1(\Omega)$, $C_2(\Omega, \eta(0), \rho_{\min}, \rho_{\max})$ and $\eps_1(\Omega, \rho, \eta)$, such that for any $n \in \N$, $0 < \eps \leq \eps_1$, $n^{-\frac1d} < \delta \leq \frac{\rho_{\min}}{8 \Lip(\rho)}$ and $\delta \leq \eps$ the event that
    \begin{equation*}
         \norm{u - (u)_{\deg}}_{\ell^2(\X_{n})} \lesssim  \norm{\nabla_{n,\eps} u}_{\ell^2(\X_{n}^2)}
    \end{equation*}
    holds for all $u \in \ell^2(\X_{n})$ holds with probability at least $1 - C_1n \exp(-C_2 n \delta^d)$.
\end{proposition}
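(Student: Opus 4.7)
The plan is to lift a graph function $u$ to the continuum via the extension $E_\delta u$, mollify it to $v\defeq \Lambda_{\eps,\delta}E_\delta u \in W^{1,\infty}(\Omega)$, apply the standard weighted Poincar\'e inequality in $H^1_\rho(\Omega)$, and transport the resulting bound back to the graph. A fixed constant $\lambda=\lambda^\ast$ (chosen at the end, depending only on $\Omega,\rho,\eta$) controls the transport/mean-value probabilities; then $\sqrt\varepsilon+\lambda^\ast$ will be made small enough to absorb an error term.

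First I would choose $\lambda^\ast\leq\rho_{\min}/(8\rho_{\max})$ (with further constraints specified below) and condition on the event of probability at least $1-C_1n\exp(-C_2n\delta^d)$ on which \cref{thm:transportation}\,(i)--(iv) and \cref{lem:estimate_mean_values_Edeltau} hold with this $\lambda^\ast$; here I use $\delta\leq\varepsilon$ to apply \cref{lem:estimate_mean_values_Edeltau}. Writing $\tilde u\defeq u-(u)_{\deg}$, the triangle inequality (noting that constants have $\ell^2(\X_n)$-norm equal to their absolute value) gives
\begin{equation*}
\|\tilde u\|_{\ell^2(\X_n)}\leq \|u-(E_\delta u)_\rho\|_{\ell^2(\X_n)}+|(E_\delta u)_\rho-(u)_{\deg}|.
\end{equation*}

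Second, I would control the first term by going to the continuum. Using \cref{thm:transportation_is_pushforward} and $\rho_\delta\in[\tfrac34\rho_{\min},\tfrac54\rho_{\max}]$ from \cref{thm:transportation_distance_rho},
\begin{equation*}
\|u-(E_\delta u)_\rho\|_{\ell^2(\X_n)}^2=\int_\Omega|E_\delta u-(E_\delta u)_\rho|^2\rho_\delta\,\mathrm{d}x\lesssim\|E_\delta u-(E_\delta u)_\rho\|_{L^2(\Omega)}^2.
\end{equation*}
Inserting $v$ via a triangle inequality and using $|(v-E_\delta u)_\rho|\lesssim\|v-E_\delta u\|_{L^2(\Omega)}$ yields
\begin{equation*}
\|E_\delta u-(E_\delta u)_\rho\|_{L^2(\Omega)}\lesssim \|v-E_\delta u\|_{L^2(\Omega)}+\|v-(v)_\rho\|_{L^2(\Omega)}.
\end{equation*}
I would apply \cref{lem:conv_diff} to bound the first summand by $C\sqrt{I_{\varepsilon,\delta}^{(1)}(E_\delta u;\one)}\,\varepsilon$, the classical weighted Poincar\'e inequality on $H^1_\rho(\Omega)$ to bound $\|v-(v)_\rho\|_{L^2(\Omega)}^2\lesssim I^{(1)}(v;\rho)$, then \cref{cor:poincare_bound} to get $I^{(1)}(v;\rho)\lesssim I_{\varepsilon,\delta}^{(1)}(E_\delta u;\rho)$. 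Since $\rho\sim\rho_\delta\sim 1$ under the chosen $\lambda^\ast$ and $\delta$, \cref{lem:Discrete-to-non-local-1} produces
\begin{equation*}
I_{\varepsilon,\delta}^{(1)}(E_\delta u;\one)+I_{\varepsilon,\delta}^{(1)}(E_\delta u;\rho)\lesssim \E_{n,\varepsilon}^{(1)}(u)=\tfrac12\|\nabla_{n,\varepsilon}u\|_{\ell^2(\X_n^2)}^2,
\end{equation*}
so that $\|u-(E_\delta u)_\rho\|_{\ell^2(\X_n)}\lesssim\|\nabla_{n,\varepsilon}u\|_{\ell^2(\X_n^2)}$.

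Third, I would handle the second term by applying \cref{lem:estimate_mean_values_Edeltau} to $\tilde u$: since $(\tilde u)_{\deg}=0$ and $(E_\delta\tilde u)_\rho=(E_\delta u)_\rho-(u)_{\deg}$,
\begin{equation*}
|(E_\delta u)_\rho-(u)_{\deg}|=\bigl|(E_\delta\tilde u)_\rho-(\tilde u)_{\deg}\bigr|\lesssim(\sqrt\varepsilon+\lambda^\ast)\|\tilde u\|_{\ell^2(\X_n)}.
\end{equation*}
Combining the two estimates gives
\begin{equation*}
\|\tilde u\|_{\ell^2(\X_n)}\leq C_\star\|\nabla_{n,\varepsilon}u\|_{\ell^2(\X_n^2)}+C_\star(\sqrt\varepsilon+\lambda^\ast)\|\tilde u\|_{\ell^2(\X_n)}.
\end{equation*}
Finally, choosing $\lambda^\ast$ small enough and $\varepsilon_1$ small enough (both depending only on $\Omega,\rho,\eta$) so that $C_\star(\sqrt\varepsilon+\lambda^\ast)\leq\tfrac12$ for all $\varepsilon\leq\varepsilon_1$, I absorb the last term into the left-hand side and obtain the claim.

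The main obstacle I foresee is the self-referential character of the absorption step: the mean-value comparison in \cref{lem:estimate_mean_values_Edeltau} is expressed in terms of $\|u\|_{\ell^2(\X_n)}$ rather than $\|\nabla u\|$, so a naive use would be circular. The trick is to apply the lemma to $\tilde u=u-(u)_{\deg}$ and exploit that $(\tilde u)_{\deg}=0$. Technically one also has to ensure that the constant $\lambda^\ast$ can be fixed independently of $n,\varepsilon,\delta$ while still guaranteeing the probability bound $1-C_1n\exp(-C_2n\delta^d)$ (this uses that $\lambda^\ast$ is a constant so that $n\delta^d(\lambda^\ast)^2\sim n\delta^d$), and that $\delta\leq\varepsilon$ plus $\delta/\varepsilon\leq C(\eta)$ suffice for \cref{cor:poincare_bound} and \cref{lem:conv_diff}, which is already built into the hypotheses.
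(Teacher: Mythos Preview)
Your proposal is correct and follows essentially the same route as the paper's proof: extend to the continuum via $E_\delta$, mollify with $\Lambda_{\eps,\delta}$, apply the continuum Poincar\'e inequality together with \cref{cor:poincare_bound} and \cref{lem:conv_diff}, bound the nonlocal Dirichlet energy by the graph one via \cref{lem:Discrete-to-non-local-1}, control the mean-value discrepancy using \cref{lem:estimate_mean_values_Edeltau} applied to the centered function, and absorb. The only cosmetic difference is that the paper works with $w=u-(u)_{\deg}$ throughout (applying all operators to $w$) rather than splitting off $(E_\delta u)_\rho$ first as you do, but since $E_\delta$ preserves constants and the Dirichlet energies are translation-invariant, the two arrangements are equivalent.
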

\begin{proof}
    Let $0 < \lambda \leq \min\left(1, \tfrac{\rho_{\min}}{8 \rho_{\max}}\right)$.
    We fix a realization $\X_{n}$ of the graph, such that \cref{lem:estimate_mean_values_Edeltau} and such that the assertions of \cref{thm:transportation} hold. For this we have to take $\varepsilon$ small enough by \cref{lem:estimate_mean_values_Edeltau}.
    This has probability at least $1-C_1 n \exp(-C_2 n \delta^d \lambda^2)$, where we used that $\delta \leq \eps$ and possibly chose other constants $C_1$ and $C_2$.
    
    Let $w \defeq   u - (u)_{\deg}$. By \cref{thm:transportation_distance_rho}, we have that $|\rho_\delta(x) - \rho(x)| \leq \frac{\rho(x)}{4}$ holds for all $x\in \Omega$. Therefore $\rho \lesssim \rho_\delta$ and $\rho_\delta \lesssim \rho$.  Therefore, by \cref{lem:conv_diff} 
	\begin{align*}
		\norm{w}_{\ell^2(\X_{n})} \lesssim\|E_\delta w\|_{L^2(\Omega)}\lesssim \|\Lambda_{\epsilon,\delta}E_\delta w\|_{L^2(\Omega)} + \eps \left(\Ied1(E_\delta w;\one)\right)^{\frac12}.
	\end{align*}
	Now, we use the continuum Poincaré inequality, \cref{lem:Discrete-to-non-local-1,cor:poincare_bound} and $\Ene1(w) = \frac12 \norm{\nabla_{n,\eps} w}_{\ell^2(\X_{n}^2)}^2$ to obtain
	\begin{align*}\label{eq:half_poincare}
		\norm{w}_{\ell^2(\X_{n})} 
        &\lesssim \norm{\nabla \Lambda_{\epsilon,\delta}E_\delta w}_{L^2(\Omega)} + (\Lambda_{\epsilon,\delta}E_\delta w)_\rho + \eps \left(\Ied1(E_\delta w;\one)\right)^{\frac12} \\
        &\lesssim \left(\I1(\Lambda_{\epsilon,\delta}E_\delta w;\rho)\right)^{\frac12} + (\Lambda_{\epsilon,\delta}E_\delta w)_\rho + \eps \left(\Ied1(E_\delta w;\rho_\delta)\right)^{\frac12} \\
        &\lesssim (1+\eps) \norm{\nabla_{n,\eps} w}_{\ell^2(\X_{n}^2)} + (\Lambda_{\epsilon,\delta}E_\delta w)_\rho
	\end{align*}
	Further, we have
	\begin{align*}
		\int_\Omega \rho^2\d x\left|(\Lambda_{\epsilon,\delta}E_\delta w)_\rho - (E_\delta w)_\rho\right| &\leq
		\int_\Omega \left|\Lambda_{\epsilon,\delta}E_\delta w - E_\delta w\right| \rho^2\d x\\
		&\leq \|\rho^2\|_{L^2(\Omega)} \|\Lambda_{\epsilon,\delta}E_\delta w - E_\delta w\|_{L^2(\Omega)}.
	\end{align*}
	Applying again \cref{lem:conv_diff} we obtain
	\begin{equation}\label{eq:smoothing_mean}
		\abs{(\Lambda_{\epsilon,\delta}E_\delta w)_\rho - (E_\delta w)_\rho} 
        \lesssim \eps \left(\Ied1(E_\delta w;\one)\right)^{\frac12} 
        \lesssim \eps \norm{\nabla_{n,\eps} w}_{\ell^2(\X_{n}^2)}.
	\end{equation}
	By \cref{lem:estimate_mean_values_Edeltau}, using that $(w)_{\deg} = 0$, we have
    \begin{equation*}
        \abs{(E_\delta w)_\rho} \lesssim \left(\sqrt \eps + \lambda\right) \norm{w}_{\ell^2(\X_{n})}.
    \end{equation*}
	Therefore,
    \begin{equation*}
        \norm{w}_{\ell^2(\X_{n})} \lesssim (1+2\eps) \norm{\nabla_{n,\eps} w}_{\ell^2(\X_{n}^2)} + \left(\sqrt \eps + \lambda\right) \norm{w}_{\ell^2(\X_{n})}.
    \end{equation*}
    Choosing $\eps_1$ and $\lambda$ small enough, such that $\left(\sqrt \eps + \lambda\right) \norm{w}_{\ell^2(\X_{n})}$ can be absorbed into the left hand side finishes the proof.
\end{proof}
We finally give the proof of the main theorem of this section. 
\begin{proof}[Proof of \cref{thm:main_smooth}]
    Let $\eps_1$ and $\hat\lambda_1$ be chosen small enough, such that the assertions of \cref{prop:discrete_poincare,lem:estimate_mean_values_u} hold for $\eps$ and $\lambda_1$. Fix a realization $\X_{n}$ of the random graph, such that all estimates of \cref{prop:discrete_continuum_energy_continuum_function,,prop:discrete_poincare,lem:estimate_mean_values_u} hold with $\lambda_1$ and \cref{prop:continuum_energy_continuum_solution_discrete_energy_discrete_solution} hold with $\lambda_2$. This has probability at least $1-4n\exp(-C_1 n \eps^d \lambda_1^2) - C_2n\exp(-C_3 n \delta^d \lambda_2^2)$.
	
	By the discrete Poincaré inequality \cref{prop:discrete_poincare} we have
	\begin{equation*}
		\|u - u_{n,\epsilon}\|_{H^1(\X_{n})}^2 
		\lesssim \|\nabla_{n,\eps} \left(u - u_{n,\epsilon}\right)\|_{\ell^2(\X_{n}^2)}^2 + (u-u_{n,\epsilon})_{\deg}^2.
	\end{equation*}
	Since $(u_{n,\epsilon})_{\deg} = 0$ and $(u)_\rho=0$, we have by \cref{lem:estimate_mean_values_u}
	\begin{equation*}
		\abs{(u-u_{n,\epsilon})_{\deg}}^2
        = \abs{(u)_{\deg} - (u)_\rho}^2
		\lesssim \left(\lambda_1^2 + \eps\right) \norm{u}_{L^2(\Omega)}^2 + \eps^d \lambda_1^2 \norm{u}_{L^\infty(\Omega)}^2.
	\end{equation*}	
	Because $\sprod{f_n,\one}{\l2} = 0$, $u_{n,\epsilon}$ satisfies \labelcref{eq:discrete_pde_EL} for all $v \in \ell^2(\X_{n})$. Hence, testing with $u$, we obtain
	\begin{align*}
		&\phantom{{}={}}\frac12\|\nabla_n \left(u - u_{n,\epsilon}\right)\|_{\ell^2(\X_{n}^2)}^2 \\
        &= \frac12\norm{\nabla_{n,\eps} u_{n,\eps}}_{\ell^2(\X_{n}^2)}^2  + \frac12\norm{\nabla_{n,\eps} u}_{\ell^2(\X_{n}^2)}^2 - \sprod{\nabla_{n,\eps} u_{n,\eps}, \nabla_{n,\eps} u}{\ell^2(\X_{n}^2)} \\
        &= \frac12\norm{\nabla_{n,\eps} u_{n,\eps}}_{\ell^2(\X_{n}^2)}^2 - \sprod{f_n,u}{\ell(\X_{n})} - \frac12\norm{\nabla_{n,\eps} u}_{\ell^2(\X_{n}^2)}^2 + \sprod{\nabla_{n,\eps} u_{n,\eps}, \nabla_{n,\eps} u_{n,\eps}}{\ell^2(\X_{n}^2)}\\
		&= \ene(u;f_n) - \ene(u_{n,\varepsilon}, f_n) \\
		&= \left(\ene(u;f_n) - I(u;f,\rho)\right) + \left(I(u;f,\rho) - \ene(u_{n,\varepsilon}, f_n)\right).
	\end{align*}
	The first difference is estimated using \cref{prop:discrete_continuum_energy_continuum_function}. 
	%
%
	The second difference is estimated using \cref{prop:continuum_energy_continuum_solution_discrete_energy_discrete_solution}. Combining all estimates yields
	\begin{align*}
		\|u - u_{n,\epsilon}\|_{H^1(\X_{n})}^2 
        &\lesssim \left(\lambda_1^2 + \eps\right) \norm{u}_{L^2(\Omega)}^2 + \eps^d \lambda_1^2 \norm{u}_{L^\infty(\Omega)}^2 \\
        & + \left(\norm{u}_{L^\infty(\Omega)}  + K(q) \norm{f_n}_{\ell^q(\X_{n})}\right) \norm{f - f_n}_{\ell^1(\X_{n})} \\
        & + \left(\Lip(u)^2 + \norm{fu}_{L^\infty(\Omega)} \varepsilon^{\frac d2}\right) \lambda_1 +             
            \Lip(u;\partial_{2\epsilon}\Omega)^2\eps \\
        & + \left(\frac{\delta}{\varepsilon} + \varepsilon + \lambda_2\right) \norm{f}_{L^2(\Omega)} ^2+ \norm{u_{n,\eps}}_{\ell^2(\X_{n} \cap \partial_{4R} \Omega)} \norm{f_n}_{\ell^2(\X_{n} \cap \partial_{2R} \Omega)} \\
		& + (\eps + \lambda_2) \norm{f_n}_{\ell^2(\X_{n})}^2 + K(q) \norm{\osc_{B(\delta;\cdot)}f}_{L^1(\Omega)} \norm{f_n}_{\ell^q(\X_{n})}.
	\end{align*}
    Now, by Poincaré and the fact that $u$ is a minimizer,
    \begin{equation*}
        \norm{u}_{L^2(\Omega)}^2 
        \lesssim \norm{\nabla u}_{L^2(\Omega)}^2 
        \lesssim \norm{f}_{L^2(\Omega)}^2.
    \end{equation*}
    Moreover, by the discrete Poincaré inequality and the fact that $u_{n,\eps}$ solves the weak Euler--Lagrange equation \labelcref{eq:discrete_pde_EL}
    \begin{equation*}
        \norm{u_{n,\eps}}_{\ell^2(\X_{n} \cap \partial_{4R} \Omega)}^2
        \lesssim \norm{\nabla_{n, \eps} u_{n,\eps}}_{\ell^2(\X_{n}^2)}^2
        \lesssim \norm{f_n}_{\ell^2(\X_{n})}^2.
    \end{equation*}
    Substituting those estimates back gives the inequality of the statement.
\end{proof}

\section{Fine asymptotics of the graph heat kernel}
\label{sec:heat_kernel}

Here we study the heat kernel on a graph and prove the estimates needed to use the heat kernel to mollify the solutions of graph Poisson equations. 

\subsection{General heat kernel properties}

We start by working with a general graph with vertices $\X_n$ and symmetric edge weights $w_{xy}\geq 0$ for $x,y\in \X_n$, as introduced in  \cref{sec:graph_calculus}. To introduce the heat kernel in this setting, we define the function $\delta_x\in \ell^2(\X_n)$ by $\delta_x(y)=n$ if $x=y$, and $\delta_x(y)=0$ otherwise. The graph function $\delta_x$ is a discrete approximation of the Dirac delta distribution centered at $x$, since it satisfies $\ip{u,\delta_x} = u(x)$ for any $u\in \ell^2(\X_n)$. 
%
\begin{definition}\label{def:heatkernel}
For $x\in \X_n$ and $k\geq 0$, the \emph{heat kernel}  $\H^x_k\in \ell^2(\X_n)$ is the solution of the graph heat equation
\begin{equation}\label{eq:heatkernel}
\left\{
\begin{aligned}
\H^x_{k+1} &= \H^x_k - \Lr^T \H^x_k \ ,&& \text{for } k\geq 0 \\
\H^x_0 &= \delta_x,&&
\end{aligned}
\right.
\end{equation}
\end{definition}
We remind the reader of the definition of the adjoint random walk graph Laplacian $\Lr^T$ in \labelcref{eq:def_rw_graph_laplacian_adjoint}. In particular, using this definition, the propagation equation \labelcref{eq:heatkernel} for the heat kernel $\H^x_k$ centered at $x\in \X_n$ is simply
 \begin{equation}\label{eq:heat_kernel_prop}
 \H^x_{k+1}(y) =  \sum_{z \in \X_n} \frac{w_{yz}}{\deg(z)} \H^x_k(z) \ \ \text{for all} \ \ y \in \X_n,
 \end{equation}
which is simply a diffusion equation on the graph. In fact, the heat kernel $\H^x_k$ is exactly $n$ times the probability distribution for a random walk on the graph after $k$ steps, starting from node $x$ at time $k=0$ (though we do not explicitly use this property). The random walk has probability $w_{xy}/\deg(x)$ of stepping from $x$ to $y$ for $x\neq y$.  We note that we can also write the heat kernel as 
\begin{equation}\label{eq:heatkernelpowers}
\H_k^x = (I-\Lr^T)^k\delta_x.
\end{equation}

By taking inner products with the constant $\one$ function on both sides of \labelcref{eq:heatkernel} and using that $\Lr \one = 0$, we have
\[\ip{\H^x_{k+1},\one} = \ip{\H^x_{k} - \Lr^T\H^x_k,\one}   =\ip{\H^x_{k},\one}  - \ip{\H^x_k,\Lr\one} =  \ip{\H^x_k,\one}.\]
Since $\H^x_0 = \delta_x$ has unit mass, i.e., $\ip{\delta_x,\one}=1$, all heat kernels $\H^x_k$ also have unit mass, that is
\begin{equation}\label{eq:unitmass_heatkernel}
\ip{\H^x_k,\one}=1 \ \ \text{for all } k \geq 1.
\end{equation}
Furthermore, $\H_k^x$ is non-negative for all $k\geq 0$ which can be seen from \labelcref{eq:heat_kernel_prop}.

We denote by $\H_k:\X_n\times \X_n\to \R$ the function $(x,y) \mapsto \H^x_k(y)$. 
\begin{definition}\label{def:convolution_heatkernel}
For a function $u\in\ell^2(\X_n)$, we define the convolution $\H_k * u$ as the $\ell^2(\X_n)$ function
\begin{equation}\label{eq:convolution}
(\H_k * u)(x) = \ip{\H^x_k,u}.
\end{equation}
\end{definition}
Note that we have $\H_0 * u = u$ and
\[(\H_k * u)(x) = \frac{1}{n}\sum_{y\in \X_n} \H^x_k(y) u(y).\]
It turns out that, as one may expect, convolution with the heat kernel is equivalent to solving the heat equation.
\begin{proposition}\label{prop:heat}
The sequence of functions $u_k = \H_k * u$ satisfies
\begin{equation}\label{eq:heateq}
\left\{
\begin{aligned}
u_{k+1} &= u_k - \Lr u_k \ ,&& \text{for } k\geq 0 \\
u_0 &= u.&&
\end{aligned}
\right.
\end{equation}
\end{proposition}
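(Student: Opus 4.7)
The plan is to unfold the definition of the convolution and then transfer the propagation operator from the kernel side to the function side via the adjoint relation between $\Lr$ and $\Lr^T$. The key observation is that by iterating \labelcref{eq:heatkernel}, the heat kernel admits the closed form $\H^x_k = (I - \Lr^T)^k \delta_x$, as already noted in \labelcref{eq:heatkernelpowers}. Also, from the definition of $\delta_x \in \ell^2(\X_n)$ and the inner product \labelcref{eq:graph_inner}, one has the reproducing identity $\ipg{\delta_x, v} = v(x)$ for any $v \in \ell^2(\X_n)$.

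Combining these, I would compute
\begin{equation*}
u_k(x) = (\H_k * u)(x) = \ipg{\H^x_k, u} = \ipg{(I - \Lr^T)^k \delta_x, u} = \ipg{\delta_x, (I - \Lr)^k u} = ((I - \Lr)^k u)(x),
\end{equation*}
where in the third equality I use that $\Lr^T$ is the adjoint of $\Lr$ with respect to $\ipg{\cdot, \cdot}$ (as stated in the paragraph around \labelcref{eq:def_rw_graph_laplacian_adjoint}), and therefore $(I - \Lr^T)^k$ is the adjoint of $(I - \Lr)^k$. This identifies $u_k = (I - \Lr)^k u$ as a functional identity on $\X_n$.

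From here the recursion is immediate: $u_0 = u$ by construction since $\H_0 * u = u$ (equivalently $(I-\Lr)^0 u = u$), and for $k \geq 0$
\begin{equation*}
u_{k+1} = (I - \Lr)^{k+1} u = (I - \Lr)(I - \Lr)^k u = (I - \Lr) u_k = u_k - \Lr u_k,
\end{equation*}
which is exactly \labelcref{eq:heateq}.

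I do not anticipate a substantive obstacle; the only point requiring care is the bookkeeping of the adjoint, namely that even though the kernel itself evolves under $\Lr^T$, the convolved function evolves under $\Lr$. An equivalent, slightly more pedestrian route if one wants to avoid invoking \labelcref{eq:heatkernelpowers} directly is to verify the recursion for $u_k$ one step at a time: write $u_{k+1}(x) = \ipg{\H^x_{k+1}, u} = \ipg{\H^x_k - \Lr^T \H^x_k, u} = \ipg{\H^x_k, u} - \ipg{\H^x_k, \Lr u} = u_k(x) - (\H_k * \Lr u)(x)$, and then observe that $\Lr$ commutes with convolution by $\H_k$ (again by the adjoint relation) so that $\H_k * \Lr u = \Lr (\H_k * u) = \Lr u_k$. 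Either route gives the result.
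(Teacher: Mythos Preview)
Your proof is correct and is essentially the paper's own argument: both establish the identity $u_k = (I-\Lr)^k u$ via the adjoint relation and the closed form \labelcref{eq:heatkernelpowers}, after which the recursion is immediate. The only cosmetic difference is direction---the paper starts from $(I-\Lr)^k u$ and identifies it with $\H_k*u$, whereas you start from $\H_k*u$ and arrive at $(I-\Lr)^k u$.
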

\begin{proof}
We note that $u_k = (I-\Lr)^ku$ and so we have
\[u_k(x) =\ip{\delta_x,(I-\Lr)^ku} = \ip{(I-\Lr^T)^k \delta_x,u} = \ip{\H^x_k,u} = (\H_k * u)(x),\]
which completes the proof.
\end{proof}

\begin{remark}\label{rem:semigroup}
It follows immediately from \cref{prop:heat} that convolution with $\H_k$ satsifies a semigroup property, that is, for any $k,\ell \geq 0$ we have $\H_k * (\H_\ell * u) = \H_{k+\ell}*u$. In addition, \cref{prop:heat} gives the alternative form for the convolution with the heat kernel
\begin{equation}\label{eq:alt_convolution}
\H_k * u = (I - \Lr)^k u.
\end{equation}
\end{remark}

We caution the reader that in general $\H_k * \delta_x \neq \H_k^x$, but the two quantities are closely related.
\begin{proposition}\label{prop:Hkdelta}
For all $k\geq 1$ and $x\in \X_n$ it holds that
\begin{equation}\label{eq:Hkdelta}
\H_k*\delta_x = \deg(x)\deg^{-1}\H_k^x.
\end{equation}
\end{proposition}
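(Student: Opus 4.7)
The plan is to show both $\H_k*\delta_x$ and $g_k\defeq\deg(x)\deg^{-1}\H_k^x$ satisfy the same graph heat equation with the same initial condition, and then invoke uniqueness of the linear recurrence.

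First I would compute the left-hand side at $k=0$: by definition of the convolution and $\delta_x$, we have
\begin{equation*}
(\H_0*\delta_x)(y)=\ipg{\H_0^y,\delta_x}=\ipg{\delta_y,\delta_x}=\frac{1}{n}\sum_{z\in\X_n}\delta_y(z)\delta_x(z)=n\,\one_{\{y=x\}}=\delta_x(y),
\end{equation*}
so $\H_0*\delta_x=\delta_x$. By \cref{prop:heat} applied with $u=\delta_x$, the sequence $u_k\defeq\H_k*\delta_x$ satisfies $u_{k+1}=u_k-\Lr u_k$ with $u_0=\delta_x$.

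Next I would verify $g_k$ satisfies the same recurrence. Since $\delta_x$ is supported only at $x$, one has $\deg^{-1}\delta_x=\deg(x)^{-1}\delta_x$, and hence $g_0=\deg(x)\cdot\deg(x)^{-1}\delta_x=\delta_x$. For the update, I use the identity \labelcref{eq:Lr}, namely $\Lr^Tu=\deg\,\Lr(\deg^{-1}u)$. Applying it with $u=\H_k^x$ gives $\deg^{-1}\Lr^T\H_k^x=\Lr(\deg^{-1}\H_k^x)$, so
\begin{equation*}
g_{k+1}=\deg(x)\deg^{-1}\bigl(\H_k^x-\Lr^T\H_k^x\bigr)=\deg(x)\deg^{-1}\H_k^x-\deg(x)\Lr(\deg^{-1}\H_k^x)=g_k-\Lr g_k.
\end{equation*}

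Finally, since the recursion $u_{k+1}=u_k-\Lr u_k$ with prescribed $u_0$ has a unique solution (it is an explicit linear iteration), we conclude $\H_k*\delta_x=g_k=\deg(x)\deg^{-1}\H_k^x$ for every $k\geq 0$, which is stronger than the stated claim. No serious obstacle is anticipated; the only point requiring care is the correct evaluation $\deg^{-1}\delta_x=\deg(x)^{-1}\delta_x$ (which uses the pointwise support of $\delta_x$) and the application of the algebraic identity \labelcref{eq:Lr}, which is precisely the bridge between the forward iteration satisfied by $\H_k^x$ and the backward/adjoint iteration satisfied by $\H_k*u$.
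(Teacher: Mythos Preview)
Your proof is correct and is essentially the same as the paper's. Both rely on the identity $\Lr^T u=\deg\,\Lr(\deg^{-1}u)$ from \labelcref{eq:Lr}, the evaluation $\deg^{-1}\delta_x=\deg(x)^{-1}\delta_x$, and the relation $\H_k*u=(I-\Lr)^ku$; the paper simply compresses your recursion-matching argument into the iterated operator identity $(I-\Lr^T)^k=\deg\,(I-\Lr)^k\deg^{-1}$ applied directly to $\delta_x$.
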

\begin{proof}
It follows from \labelcref{eq:Lr} that $(I - \Lr^T)u = \deg\, (I - \Lr)(\deg^{-1}u)$ for any $u\in \ell^2(\X_n)$. By iterating this and taking $u = \delta_x$ we obtain
\[\H^x_k = (I - \Lr^T)^k\delta_x = \deg\, (I - \Lr)^k(\deg^{-1}\delta_x) = \deg(x)^{-1}\deg\, (I - \Lr)^k\delta_x.\]
The proof is completed by noting that \labelcref{eq:alt_convolution} implies that $\H_k*\delta_x = (I - \Lr)^k\delta_x$. 
\end{proof}
\begin{remark}\label{rem:symmetry}
We note that \cref{prop:Hkdelta} shows that
\[\H_k^y(x) = (\H_k*\delta_x)(y) = \deg(x)\deg(y)^{-1}\H_k^x(y).\]
In particular, the heat kernel is symmetric, i.e., $\H_k^x(y) = \H_k^y(x)$, whenever $\deg(x)=\deg(y)$. On a so-called regular graph with constant degree the heat kernel is symmetric for all $x,y$. 
\end{remark}

We can now commute the Laplacian $\Lr$ with convolution by the heat kernel $\H_k$. 
\begin{lemma}\label{lem:commute}
For all $k\geq 0$ we have $\H_k * \Lr u = \Lr (\H_k * u)$.
\end{lemma}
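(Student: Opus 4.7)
The plan is to use the alternative representation of heat kernel convolution given in \cref{rem:semigroup}, namely \labelcref{eq:alt_convolution}, which states that $\H_k * v = (I-\Lr)^k v$ for any $v \in \ell^2(\X_n)$. Once we have this identity, the lemma reduces to the trivial algebraic fact that a linear operator commutes with any polynomial in itself.

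Concretely, I would apply \labelcref{eq:alt_convolution} to both sides. On one side we have
\begin{equation*}
    \H_k * \Lr u = (I-\Lr)^k \Lr u,
\end{equation*}
while on the other side
\begin{equation*}
    \Lr(\H_k * u) = \Lr (I-\Lr)^k u.
\end{equation*}
Since $\Lr$ obviously commutes with $(I-\Lr)^k$ (as the latter is a polynomial in $\Lr$), the two expressions are equal, which gives the claim.

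There is no real obstacle here; the only thing to be careful about is that \labelcref{eq:alt_convolution} is an identity valid for arbitrary $u \in \ell^2(\X_n)$, which was established via \cref{prop:heat}, so applying it to $\Lr u \in \ell^2(\X_n)$ is legitimate. The result is essentially a restatement of the fact that heat propagation on the graph is governed by functional calculus of $\Lr$.
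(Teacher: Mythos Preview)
Your proof is correct and is essentially the same argument as the paper's, just packaged differently: the paper carries out an explicit induction on $k$ using the recursion $u_{k+1}=u_k-\Lr u_k$ from \cref{prop:heat}, whereas you invoke the closed-form identity \labelcref{eq:alt_convolution} and the commutativity of $\Lr$ with polynomials in itself. Both amount to the same one-line algebra.
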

\begin{proof}
Write $f = \Lr u$, and let $u_k = \H_k * u$ and $f_k = \H_k * f$. Then we need to show that $\Lr u_k = f_k$.  We have $\Lr u_0 = \Lr u = f = f_0$. Now suppose that $\Lr u_j = f_j$ for all $j\leq k$. Then 
\[\Lr u_{k+1} = \Lr (u_k - \Lr u_k) = \Lr u_k - \Lr \Lr u_k = f_k - \Lr f_k = f_{k+1},\]
which completes the proof.
\end{proof}

Combining \cref{prop:Hkdelta,lem:commute} allows us to convolve solutions of Poisson equations with the heat kernel.
\begin{theorem}\label{thm:smooth_graph_poisson}
Suppose that $u\in \ell^2(\X_n)$ satisfies the Poisson equation
\begin{equation}\label{eq:graph_poisson}
\L u = \sum_{y\in \Gamma}a_y\delta_y,
\end{equation}
where $\Gamma \subset \X_n$ and $a_y\in \R$ for $y\in \Gamma$. Then $u_k \defeq   \H_k * u$ satisfies the Poisson equation
\begin{equation}\label{eq:graph_poisson_smoothed}
\L u_k = \sum_{y\in \Gamma}a_y\H^y_k.
\end{equation}
\end{theorem}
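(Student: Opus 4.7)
The proof is short and should essentially be a direct chase through the identities already proven in this subsection. My plan is to convert the statement about the unnormalized Laplacian $\L$ into a statement about the random walk Laplacian $\Lr$, push the convolution through using \cref{lem:commute}, identify the right-hand side using \cref{prop:Hkdelta}, and convert back to $\L$.

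In more detail, I would first divide the hypothesis $\L u = \sum_{y\in\Gamma} a_y \delta_y$ by $\deg$ (using $\Lr = \deg^{-1}\L$) to obtain
\begin{equation*}
    \Lr u = \sum_{y\in\Gamma} a_y\, \deg^{-1}\delta_y = \sum_{y\in\Gamma} \frac{a_y}{\deg(y)}\,\delta_y,
\end{equation*}
where the second equality uses that pointwise multiplication of $\delta_y$ by $\deg^{-1}$ only evaluates $\deg^{-1}$ at $y$. Next I would convolve both sides with $\H_k$ and apply \cref{lem:commute} to get $\H_k * \Lr u = \Lr(\H_k * u) = \Lr u_k$, and on the right-hand side use linearity of convolution together with \cref{prop:Hkdelta} to write
\begin{equation*}
    \H_k * \delta_y = \deg(y)\,\deg^{-1}\H_k^y.
\end{equation*}
Plugging this in cancels the $\deg(y)$ factors and yields $\Lr u_k = \deg^{-1}\sum_{y\in\Gamma} a_y \H_k^y$. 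Finally, multiplying through by $\deg$ recovers $\L u_k = \sum_{y\in\Gamma} a_y \H_k^y$, which is the claim.

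Since every ingredient (\cref{prop:Hkdelta}, \cref{lem:commute}, and the identity $\L = \deg\,\Lr$) has been proved earlier in the subsection, there is no real obstacle: the only point requiring a moment of care is the bookkeeping with $\deg$, specifically that $\H_k*(\deg^{-1}\delta_y) = \deg^{-1}\H_k^y$ (rather than $\H_k^y/\deg(y)$), which is exactly what makes the $\deg(y)$ factors in \cref{prop:Hkdelta} cancel. Once this is observed the proof is a three-line computation.
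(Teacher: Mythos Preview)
Your proposal is correct and follows exactly the same route as the paper: convert $\L$ to $\Lr$ via $\L=\deg\,\Lr$, apply \cref{lem:commute} to push the convolution through, invoke \cref{prop:Hkdelta} to rewrite $\H_k*\delta_y$, observe the $\deg(y)$ cancellation, and multiply back by $\deg$. The paper's proof is line-for-line the same computation.
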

\begin{proof}
By the identity $\L u = \deg \Lr u$ we find that $u$ satisfies
\[\Lr u = \deg^{-1}\sum_{y\in \Gamma}a_y\delta_y = \sum_{y\in \Gamma}a_y\deg(y)^{-1}\delta_y.\]
Convolving with the heat kernel $\H_k$ on both sides and using \cref{lem:commute} yields
\[\Lr u_k =\Lr (\H_k * u)= \H_k * \Lr u = \sum_{y\in \Gamma}a_y\deg(y)^{-1}\H_k*\delta_y.\]
We now use \cref{prop:Hkdelta} to find that
\[\Lr u_k = \sum_{y\in \Gamma}a_y\deg(y)^{-1}\deg(y)\deg^{-1}\H_k^y = \deg^{-1}\sum_{y\in \Gamma}a_y\H_k^y.\]
The result follows by multiplying by $\deg$ on both sides and using that $\deg \Lr u_k = \L u_k$. 
\end{proof}
\cref{thm:smooth_graph_poisson} suggests that we can replace the singular Poisson equation \labelcref{eq:graph_poisson} with the Poisson equation \labelcref{eq:graph_poisson_smoothed} with smoothed source terms, i.e., we replace the delta functions with the heat kernel for some, possibly large, number of steps $k$. This gives us an effective way to smooth the solution of a graph Poisson equation. The final ingredient we need is some control on the difference $u-\H_k*u$ between $u$ and its convolution with the heat kernel. For this, we use the mean value property.
\begin{lemma}[Mean Value Property]\label{lem:mvp_quant}
Suppose that $\Lr u =f$. Then for any $k\geq 1$ we have
\begin{equation}\label{eq:mvp_quant}
u = \H_k * u + \sum_{j=0}^{k-1}\H_j * f.
\end{equation}
\end{lemma}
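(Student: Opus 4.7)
The plan is to reduce the statement to a telescoping identity built from the heat propagation equation in \cref{prop:heat} together with the commutation property in \cref{lem:commute}.

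First I would define the iterates $u_j \defeq \H_j * u$ for $j \geq 0$, so that $u_0 = u$. By \cref{prop:heat}, these satisfy the discrete heat recursion
\begin{equation*}
u_{j+1} = u_j - \Lr u_j, \qquad j \geq 0.
\end{equation*}
The key observation is that, by \cref{lem:commute} and the hypothesis $\Lr u = f$, we have
\begin{equation*}
\Lr u_j = \Lr(\H_j * u) = \H_j * \Lr u = \H_j * f.
\end{equation*}
Substituting this into the recursion gives the one-step identity
\begin{equation*}
u_j - u_{j+1} = \H_j * f.
\end{equation*}

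Next I would telescope this identity. Summing from $j = 0$ to $j = k-1$ yields
\begin{equation*}
u_0 - u_k = \sum_{j=0}^{k-1} \H_j * f,
\end{equation*}
and since $u_0 = u$ and $u_k = \H_k * u$, rearranging gives exactly \labelcref{eq:mvp_quant}. Because every step is either a direct substitution or a finite telescoping sum, there is no real obstacle here; the only subtlety worth a line of justification is invoking \cref{lem:commute} with $\Lr$ acting on $\H_j * u$, which is what converts the statement into a clean telescoping identity in the variable $j$.
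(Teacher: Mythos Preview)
Your proposal is correct and follows essentially the same approach as the paper: define $u_j = \H_j * u$, use \cref{prop:heat} and \cref{lem:commute} to obtain $u_j - u_{j+1} = \H_j * f$, and telescope. The paper's proof is identical in structure and detail.
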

\begin{proof}
Let us write $u_j= \H_j *u$ and $f_j = \H_j * f$.  By \cref{prop:heat,lem:commute}  we have
\[u_{j+1} = u_j - \Lr u_j = u_j - \Lr (\H_j * u) = u_j - \H_j * \Lr u = u_j - f_j.\]
Therefore
\[u_{j} - u_{j+1} =f_j.\]
We can sum this from $j=0$ to $j=k-1$ and use $u_0=u$ to obtain
\[u - u_k =\sum_{j=0}^{k-1}f_j.\]
which completes the proof.
\end{proof}

An important consequence of these results is that we have precise control over the solutions of the graph Poisson equation for regular data, and data which is convolved with the graph heat kernel. We recall for the reader that the degree-weighted average $(u)_{\deg}$ was defined in  \cref{sec:graph_calculus}. 

\begin{theorem}\label{thm:difference_graph_smoothed}
Suppose that $u\in \ell^2(\X_n)$ satisfies the Poisson equation \labelcref{eq:graph_poisson}, and assume that the compatibility condition $\sum_{y\in \Gamma}a_y = 0$  holds. Then $u_k \defeq   \H_k * u$ satisfies $(u_k)_{\deg} = (u)_{\deg}$ and
\begin{equation}\label{eq:difference_graph_smoothed}
u - u_k = \deg^{-1}\sum_{y\in \Gamma}a_y\sum_{j=0}^{k-1}\H^y_j.
\end{equation}
\end{theorem}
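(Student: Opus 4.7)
The plan is that both claims follow almost immediately from the machinery already assembled in this section, so the proof reduces to careful bookkeeping between $\L$, $\Lr$, and convolution with $\H_k$.

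For the preservation of the weighted mean, I would first observe the identity $(\Lr v)_{\deg}=0$ for \emph{every} $v\in\ell^2(\X_n)$. This is a direct computation:
\begin{equation*}
\sum_{x\in\X_n}\deg(x)\Lr v(x)=\sum_{x\in\X_n}\deg(x)v(x)-\sum_{x,y\in\X_n}w_{xy}v(y)=\sum_{x\in\X_n}\deg(x)v(x)-\sum_{y\in\X_n}\deg(y)v(y)=0,
\end{equation*}
using the symmetry $w_{xy}=w_{yx}$. Then from the heat equation \labelcref{eq:heateq} (cf.\ \cref{prop:heat}), $u_{k+1}=u_k-\Lr u_k$, so $(u_{k+1})_{\deg}=(u_k)_{\deg}$, and induction gives $(u_k)_{\deg}=(u)_{\deg}$.

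For the explicit formula, the idea is to apply the quantitative mean value property \cref{lem:mvp_quant} to $u$. Since $\L u=\sum_{y\in\Gamma}a_y\delta_y$ and $\Lr u=\deg^{-1}\L u$, the right-hand side is $f\defeq \Lr u=\sum_{y\in\Gamma}a_y\deg(y)^{-1}\delta_y$, where I used that $(\deg^{-1}\delta_y)(x)=\deg(y)^{-1}\delta_y(x)$ because $\delta_y$ is supported at $y$. By \cref{lem:mvp_quant},
\begin{equation*}
u-u_k=\sum_{j=0}^{k-1}\H_j*f=\sum_{j=0}^{k-1}\sum_{y\in\Gamma}a_y\deg(y)^{-1}\,(\H_j*\delta_y),
\end{equation*}
using linearity of the convolution in \cref{def:convolution_heatkernel}.

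The last step is to invoke \cref{prop:Hkdelta}, which gives $\H_j*\delta_y=\deg(y)\deg^{-1}\H_j^y$. The $\deg(y)^{-1}$ factors cancel the $\deg(y)$ factors, leaving
\begin{equation*}
u-u_k=\deg^{-1}\sum_{j=0}^{k-1}\sum_{y\in\Gamma}a_y\H_j^y=\deg^{-1}\sum_{y\in\Gamma}a_y\sum_{j=0}^{k-1}\H_j^y,
\end{equation*}
as desired. There is no real obstacle here; the only point that requires attention is making sure not to confuse $\H_k*\delta_y$ with $\H_k^y$ (which in general differ by a degree ratio, as emphasized in \cref{prop:Hkdelta} and \cref{rem:symmetry}) and to track the conversion between $\L$ and $\Lr$ via the degree factor. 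The compatibility condition $\sum_y a_y=0$ is not used in the derivation itself but enters only implicitly, ensuring existence of $u$ solving \labelcref{eq:graph_poisson}.
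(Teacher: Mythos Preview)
Your derivation of the formula \labelcref{eq:difference_graph_smoothed} is correct and essentially identical to the paper's: convert $\L u$ to $\Lr u=f$ with $f=\sum_y a_y\deg(y)^{-1}\delta_y$, apply the mean value property \cref{lem:mvp_quant}, and simplify $\H_j*\delta_y$ via \cref{prop:Hkdelta}.

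The only genuine difference is in how you establish $(u_k)_{\deg}=(u)_{\deg}$. You prove directly that $(\Lr v)_{\deg}=0$ for all $v$ and then induct along the heat iteration $u_{j+1}=u_j-\Lr u_j$; this argument does not use the compatibility condition at all, which is why your closing remark is accurate for your proof. The paper instead first establishes the formula \labelcref{eq:difference_graph_smoothed} and then takes the inner product of both sides with $\deg$, using the unit-mass property $\ip{\H_j^y,\one}=1$ together with $\sum_y a_y=0$ to conclude $\ip{u-u_k,\deg}=0$. So in the paper the compatibility condition is used explicitly for this part. Your route is slightly more general for the mean-preservation claim (it holds for any $u$, not just solutions of \labelcref{eq:graph_poisson} with compatible data), while the paper's route has the virtue of deriving both conclusions from the single formula.
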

\begin{proof}
As in the proof of \cref{thm:smooth_graph_poisson} we have $\Lr u = f$, where
\[f=\sum_{y\in \Gamma}a_y\deg(y)^{-1}\delta_y, \ \ \text{and}  \ \ \H_j * f = \deg^{-1}\sum_{y\in \Gamma}a_y \H_j^y.\] 
Then \labelcref{eq:difference_graph_smoothed} follows directly from the mean value property in \cref{lem:mvp_quant}. Taking the inner product with $\deg$ on both sides of \labelcref{eq:difference_graph_smoothed} and using \labelcref{eq:unitmass_heatkernel} yields
\[\ip{u-u_k,\deg} = \sum_{y\in \Gamma}a_y\sum_{j=0}^{k-1}\ip{\H^y_j,\one} = k\sum_{y\in \Gamma}a_y = 0.\]
It follows that $(u)_{\deg} = (u_k)_{\deg}$, which completes the proof.
\end{proof}

\subsection{Heat kernel asymptotics}\label{sec:hka}


Now we will return to the random geometric graph setting introduced in  \cref{sec:graph_calculus_rg}, and studied previously in \cref{sec:continuum_limits}. Throughout the rest of this section, we assume that  $\eta$ satisfies \cref{ass:eta}, $\Omega$ satisfies \cref{ass:omega_lipschitz} and $\rho$ satisfies \cref{ass:rho_c1a} for some $0 < \alpha \leq 1$. All constants will be implicitly taken to depend on $\Omega$, $d$, $\rho_{min}$, $\rho_{max}$, and $\eta$. When constants depend on $[\nabla \rho]_\alpha$, we will explicitly denote this dependence. 

Our object of study is the heat kernel $\H^x_k$ and its asymptotics as $k\to \infty$ on a random geometric graph.  Towards this end, we first note that the heat kernel propagation equation \labelcref{eq:heat_kernel_prop} can be rewritten as for a random geometric graph as
\begin{equation}\label{eq:heat_prop}
\H^{x}_{k+1}(x_i) = \sum_{j=1}^n\frac{\eta_\eps(|x_i-x_j|)}{\deg_{n,\epsilon}(x_j)}\H^{x}_k(x_j),
\end{equation}
where $x\in \Omega$. We remind the reader of the definition of the degree $\deg_{n,\eps}$ in \labelcref{eq:def_deg_ne}. In particular, since $H^{x}_0(x_j) = \delta_{x}(x_j)$ we have
\begin{equation}\label{eq:H1}
\frac{1}{n}\H^{x}_{1}(x_i) = \frac{\eta_\eps(|x_i-x|)}{\deg_{n,\epsilon}(x)}.
\end{equation}
We remark, in particular, that we allow any $x\in \Omega$ in the heat kernel and not just nodes $x\in \X_n$ in the graph.

In order to study the propagation of the heat kernel from $k$ to $k+1$, we define, for a bounded Borel measurable function $\phi:\Omega\to\R$, the function $\M_{n,\epsilon}\phi:\Omega \to \R$ by
\begin{equation}\label{eq:Mne}
\M_{n,\epsilon}\phi(x) = \sum_{j=1}^n \frac{\eta_\eps(|x-x_j|)}{\deg_{n,\epsilon}(x_j)} \phi(x_j).
\end{equation}
We also define the associated nonlocal averaging operator $\M_{\epsilon}:L^2(\Omega) \to L^2(\Omega)$ by 
\begin{equation}\label{eq:Me}
\M_{\epsilon}\phi(x) = \int_\Omega\eta_\eps(|x-y|)\hat\rho_\eps(y)^{-1}\rho(y)\phi(y)\d y,
\end{equation}
where we remind the reader of the definition
\begin{equation*}
\hat\rho_\eps(x) \defeq   \int_\Omega \eta_\eps(|x-y|)\rho(y)\d y
\end{equation*}
from \labelcref{eq:rhoe}. Recall also the following bounds, given in \labelcref{eq:rhoe_bounds}.
\begin{equation*}
C\rho_{\min}\leq \hat\rho_\eps(x) \leq \rho_{\max},
\end{equation*}
for some $C>0$ depending of the domain $\Omega$.
\begin{remark}\label{rem:rhoe}
Since $\rho$ is Lipschitz, whenever $B(x,\epsilon)\subset \Omega$ we can make a change of variables $z = (y-x)/\epsilon$ and Taylor expand to obtain
\[\hat\rho_\eps(x) = \int_{B(x,\epsilon)} \eta_\eps(|x-y|)\rho(y)\d y = \int_{B(0,1)}\eta(|z|)\rho(x + \epsilon z)\d z = \rho(x) + \O(\epsilon).\]
Thus, in the interior of the domain $\Omega$, $\hat\rho_\eps$ offers an $\O(\epsilon)$ accurate approximation of $\rho$ and we have
\[\M_{\epsilon}\phi(x) = \int_{B(x,\epsilon)}\eta_\eps(|x-y|)\phi(y)\d y + \O(\|\phi\|_{L^\infty(B(x,\epsilon)}\epsilon),\]
provided $B(x,\epsilon) \subset \Omega$. Thus, at least to the first order in $\epsilon$, the averaging operator $\M_\eps$ is independent of $\rho$. Furthermore, if $\rho\in C^{1,\alpha}(\Omega)$ for $\alpha \in (0,1]$, then we can use the improved estimate $\hat\rho_\eps(x) = \rho(x) + \O([\nabla \rho]_\alpha\epsilon^{1+\alpha})$ to obtain
\[\M_{\epsilon}\phi(x) = \int_{B(x,\epsilon)}\eta_\eps(|x-y|)\phi(y)\d y + \O([\nabla \rho]_\alpha\|\phi\|_{L^\infty(B(x,\epsilon)}\epsilon^{1+\alpha}),\]
provided, again, that $B(x,\epsilon) \subset \Omega$. 
\end{remark}
We will require some basic estimates on $\M_\epsilon$. For this, we introduce the notation
\begin{equation}\label{eq:etax}
\eta_\eps^x(y) = \eta_\eps(|x-y|).
\end{equation}
\begin{proposition}\label{prop:Me}
The following hold for all $\phi\in L^\infty(\Omega)$, $k\geq 1$, and $\eps>0$.
\begin{enumerate}[label=(\roman*)]
\item $\displaystyle\int_\Omega \rho \,\M_\epsilon^k \phi \d x = \int_\Omega \rho \,\phi \d x$.
\item $\displaystyle\|\hat\rho_\eps^{-1}\M^k_\epsilon\phi\|_{L^\infty(\Omega)} \leq \|\hat\rho_\eps^{-1}\phi\|_{L^\infty(\Omega)}$.
\item $\displaystyle\|\M^k_\epsilon\phi\|_{L^\infty(\Omega)} \leq \|\hat\rho_\eps\|_{L^\infty(\Omega)}\|\hat\rho_\eps^{-1}\phi\|_{L^\infty(\Omega)}$.
\item For all $x,y\in \Omega$ we have $\M_\eps^k\eta_\eps^x(y) = \M^k_\eps\eta^y_\eps(x)$. 
\end{enumerate}
\end{proposition}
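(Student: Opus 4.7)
My plan for this proposition is to exploit three elementary facts repeatedly: the kernel $\eta_\eps(|\cdot-\cdot|)$ is symmetric in its two arguments, the identity $\int_\Omega \eta_\eps(|x-y|)\rho(x)\d x = \hat\rho_\eps(y)$ holds by the very definition of $\hat\rho_\eps$, and $\M_\eps$ is a Markov-like averaging operator once one reweights by $\hat\rho_\eps$. Items (i)--(iii) each reduce to a short Fubini or pointwise argument plus induction on $k$; (iv) requires a little more bookkeeping but is still essentially a symmetry observation.

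For (i), I would swap the order of integration in $\int_\Omega \rho(x)\M_\eps\phi(x)\d x$; the inner $x$-integral is exactly $\hat\rho_\eps(y)$, which cancels the $\hat\rho_\eps(y)^{-1}$ factor in the definition of $\M_\eps$ and leaves $\int_\Omega \rho\phi\d y$. A short induction on $k$ upgrades this to $\M_\eps^k$. For (ii), setting $M = \|\hat\rho_\eps^{-1}\phi\|_{L^\infty(\Omega)}$ and using $|\phi(y)|\leq M\hat\rho_\eps(y)$ together with the nonnegativity of $\eta_\eps$ gives $|\M_\eps\phi(x)|\leq M\int_\Omega \eta_\eps(|x-y|)\rho(y)\d y = M\hat\rho_\eps(x)$, and induction again handles $k\geq 2$. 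Then (iii) is immediate from (ii) via $|\M_\eps^k\phi(x)| = \hat\rho_\eps(x)\,|\hat\rho_\eps(x)^{-1}\M_\eps^k\phi(x)|$ followed by taking the supremum.

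The only step requiring more than one line is (iv), which I would prove by unrolling the iterated definition of $\M_\eps$: a straightforward induction in $k$ yields the explicit formula
\begin{equation*}
\M_\eps^k\eta_\eps^x(y) = \int_{\Omega^k}\eta_\eps(|y-z_1|)\,\eta_\eps(|z_k-x|)\prod_{i=1}^{k-1}\eta_\eps(|z_i-z_{i+1}|)\prod_{j=1}^{k}\frac{\rho(z_j)}{\hat\rho_\eps(z_j)}\,\d z_1\cdots \d z_k.
\end{equation*}
Applying the measure-preserving reversal $(z_1,\ldots,z_k)\mapsto(z_k,\ldots,z_1)$ together with the symmetry $\eta_\eps(|u-v|)=\eta_\eps(|v-u|)$ converts the right-hand side into the analogous expression for $\M_\eps^k\eta_\eps^y(x)$. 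I do not anticipate any real obstacle here; the main bookkeeping task is verifying that the density weights $\rho(z_j)/\hat\rho_\eps(z_j)$ depend only on the individual variable $z_j$ and are therefore unaffected by the reversal of the chain, which they manifestly are.
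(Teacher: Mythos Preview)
Your proof is correct. Items (i)--(iii) are essentially the same as the paper's argument: the paper phrases (ii) and (iii) via the adjoint operator $\M_\eps^*\phi(x) = \hat\rho_\eps(x)^{-1}\int_\Omega \eta_\eps(|x-y|)\rho(y)\phi(y)\d y$ and the conjugation identity $\M_\eps^k\phi = \hat\rho_\eps(\M_\eps^*)^k(\hat\rho_\eps^{-1}\phi)$, but unwinding that identity gives exactly your pointwise bound and induction.

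For (iv) the approaches genuinely diverge. The paper reuses its adjoint machinery: it writes $\M_\eps\phi(x) = (\hat\rho_\eps^{-1}\eta_\eps^x,\phi)_\rho$ with respect to the weighted inner product $(u,v)_\rho = \int_\Omega uv\rho\,\d x$, checks the $k=1$ symmetry by the trivial observation $(\hat\rho_\eps^{-1}\eta_\eps^x,\eta_\eps^y)_\rho = (\hat\rho_\eps^{-1}\eta_\eps^y,\eta_\eps^x)_\rho$, and then inducts using the conjugation identity to shuttle powers of $\M_\eps^*$ across the inner product. Your approach instead writes out the iterated kernel as an explicit $k$-fold integral and applies the chain-reversal substitution. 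Both are short; the paper's version has the aesthetic advantage of recycling the adjoint framework already set up for (ii)--(iii), while yours is more self-contained and makes the Chapman--Kolmogorov structure of $\M_\eps^k$ (and hence the reversibility) completely transparent.
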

\begin{proof}
For (i), we have
\begin{align*}
\int_{\Omega}\rho(x)\M_{\epsilon}\phi(x)\d x &= \int_\Omega\rho(x) \int_\Omega\eta_\eps(|x-y|)\hat\rho_\eps(y)^{-1}\rho(y)\phi(y)\d y\d x\\
&=\int_\Omega \hat\rho_\eps(y)^{-1}\rho(y)\phi(y)\int_\Omega\rho(x)\eta_\eps(|x-y|)\d x\d y=\int_\Omega \rho(y)\phi(y)\d y.
\end{align*}
The result then follows by induction.

For (ii) and (iii), we define the inner product
\[(u,v)_\rho = \int_\Omega u\!\:v\rho\d x,\]
and the adjoint operator
\[\M^*_\eps \phi(x) =\hat\rho_\eps(x)^{-1}\int_\Omega\eta_\eps(|x-y|)\rho(y)\phi(y)\d y,\]
which satisfies $(\M_\eps u,v)_\rho= (u,M_\eps^* v)_\rho$. We note that $\M_\eps \phi = \hat\rho_\eps \M^*_\eps(\hat\rho_\eps^{\,-1}\phi)$ and hence $\M^k_\eps\phi = \hat\rho_\eps{\M^*_\eps}^k(\hat\rho_\eps^{\,-1}\phi)$. Furthermore, we clearly have $\|\M^*_\eps \phi\|_{L^\infty(\Omega)} \leq \|\phi\|_{L^\infty(\Omega)}$, and so 
\[\|\hat\rho_\eps^{\,-1}\M^k_\epsilon\phi\|_{L^\infty(\Omega)} = \|{\M^*_\eps}^k(\rho^{-1}_\eps \phi)\|_{L^\infty(\Omega)} \leq \|\hat\rho_\eps^{\,-1}\phi\|_{L^\infty(\Omega)},\]
which establishes (ii), and (iii) follows by writing $\|\M^k_\epsilon\phi\|_{L^\infty(\Omega)} = \|\hat\rho_\eps\hat\rho_\eps^{\,-1}\M^k_\eps \phi\|_{L^\infty(\Omega)}$ and applying (ii).

To prove (iv), we first note that $\M_\eps \phi(x) = (\hat\rho_\eps^{\,-1}\eta^x_\eps,\phi)_\rho$ for any $\phi$. Thus, for $k=1$ we have
\[\M_\eps \eta^y_\eps(x) = (\hat\rho_\eps^{\,-1}\eta^x_\eps,\eta^y_\eps)_\rho = (\hat\rho_\eps^{\,-1}\eta^y_\eps,\eta^x_\eps)_\rho = \M_\eps \eta^x(y). \]
Now, for any $k\geq 1$ we use the identity $\M^k_\eps\phi = \hat\rho_\eps{\M^*_\eps}^k(\hat\rho_\eps^{\,-1}\phi)$ with $\phi=\eta_\eps^x$ to obtain
\begin{align*}
\M^{k+1}_\eps \eta^y_\eps(x) &= (\hat\rho_\eps^{\,-1}\eta^x_\eps,\M^k_\eps \eta_\eps^y)_\rho = ({\M^*_\eps}^k(\hat\rho_\eps^{\,-1}\eta^x_\eps),\eta_\eps^y)_\rho \\
&= (\hat\rho_\eps^{\,-1}\M^k_\eps \eta_\eps^x,\eta_\eps^y)_\rho=(\hat\rho_\eps^{\,-1}\eta_\eps^y,\M^k_\eps \eta_\eps^x)_\rho = \M^{k+1}_\eps \eta^x_\eps(y).\qedhere
\end{align*}
\end{proof}

Our main tool in this section is Bernstein's inequality from \cref{thm:bernstein}. Using it, we can prove a concentration result for $\M_{n,\epsilon}$. 
\begin{lemma}\label{lem:iteration}
There exists $C>0$ such that for all Borel measurable and bounded $\phi:\Omega\to\R$ and $0 < \lambda \leq 1$ we have that
\begin{equation}\label{eq:iteration}
\M_{n,\epsilon}(\phi(x_i)+\theta) = \M_\eps \phi(x_i)+ \O\left(\|\phi\|_{L^\infty(\Omega\cap B(x_i,\epsilon))} \lambda + |\theta|\right)
\end{equation}
holds for all $i=1,\dots,n$ and $\theta\in \R$ with probability at least $1 - 4n\exp\left( -C n\epsilon^d \lambda^2\right)$.
\end{lemma}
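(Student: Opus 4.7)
The plan is to fix an index $i$, condition on $x_i$, treat the self-contribution $j=i$ separately, and apply Bernstein's inequality twice---once to replace $\deg_{n,\eps}(x_j)$ by $n\hat\rho_\eps(x_j)$ uniformly, and once to concentrate the resulting sample mean around $\M_\eps \phi(x_i)$---and then union-bound over $i$. The $\theta$ term will be handled by applying the same argument with $\phi \equiv 1$.

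First, since the probability bound $1-4n\exp(-Cn\eps^d\lambda^2)$ is vacuous when $n\eps^d\lambda^2 \lesssim 1$, I may assume $n\eps^d\lambda^2 \geq 1$, which by \cref{ass:neps} yields $\frac{1}{n\eps^d} \leq \lambda^2 \leq \lambda$. Fix $i$ and split
\begin{equation*}
  \M_{n,\eps}\phi(x_i) = T_i + S_i,
  \qquad
  T_i \defeq \frac{\eta_\eps(0)\phi(x_i)}{\deg_{n,\eps}(x_i)},
  \qquad
  S_i \defeq \sum_{j\neq i}\frac{\eta_\eps(|x_i-x_j|)\phi(x_j)}{\deg_{n,\eps}(x_j)}.
\end{equation*}
The self-contribution $T_i$ is trivially bounded: since $\deg_{n,\eps}(x_i) \geq \eta_\eps(0) = \eta(0)\eps^{-d}$, we have $|T_i| \leq |\phi(x_i)|$; but a sharper bound uses \cref{prop:degree}, applied conditionally on $x_i=x\in\Omega$ to the $n-1$ remaining i.i.d.\ points, which gives $\deg_{n,\eps}(x_i) \geq c n \hat\rho_\eps(x_i) \gtrsim n$ on a high-probability event, so $|T_i| \lesssim \|\phi\|_{L^\infty(\Omega\cap B(x_i,\eps))}/(n\eps^d) \lesssim \|\phi\|_{L^\infty(\Omega\cap B(x_i,\eps))}\lambda$.

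The main estimate is on $S_i$. Conditioning on $x_j$ and applying \cref{prop:degree} (exactly as in the proof of \cref{lem:estimate_deg_rho_average}, with the self-contribution $\eta_\eps(0)/n$ absorbed into $\O(\lambda)$) I obtain, with probability at least $1-2n\exp(-Cn\eps^d\lambda^2)$ after a union bound over $j=1,\ldots,n$, the uniform estimate $\deg_{n,\eps}(x_j)/n = \hat\rho_\eps(x_j) + \O(\lambda)$. On this event, using $\hat\rho_\eps \gtrsim 1$ from \labelcref{eq:rhoe_bounds},
\begin{equation*}
  \left|\frac{\hat\rho_\eps(x_j)}{\deg_{n,\eps}(x_j)} - \frac1n\right| \lesssim \frac{\lambda}{n},
\end{equation*}
so with $Y_j \defeq \eta_\eps(|x_i-x_j|)\phi(x_j)/\hat\rho_\eps(x_j)$,
\begin{equation*}
  S_i = \frac1n \sum_{j\neq i} Y_j + \O\!\left(\frac{\lambda}{n}\sum_{j\neq i}|Y_j|\right).
\end{equation*}
Conditionally on $x_i$, the variables $\{Y_j\}_{j\neq i}$ are i.i.d., bounded by $\|\phi\|_{L^\infty(\Omega\cap B(x_i,\eps))}\eps^{-d}$ up to a constant (since $\eta_\eps$ is supported in $B(0,\eps)$), with variance $\lesssim \|\phi\|_{L^\infty(\Omega\cap B(x_i,\eps))}^2 \eps^{-d}$ and mean $\mathbb{E}[Y_j\mid x_i] = \M_\eps\phi(x_i)$ by \labelcref{eq:Me}. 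Bernstein's inequality (\cref{thm:bernstein}) applied with $t = \|\phi\|_{L^\infty(\Omega\cap B(x_i,\eps))}\lambda$, once to $Y_j$ and once to $|Y_j|$ (whose mean is bounded by a constant times $\|\phi\|_{L^\infty(\Omega\cap B(x_i,\eps))}$ by \cref{prop:Me}), yields on a $(1-4\exp(-Cn\eps^d\lambda^2))$-event
\begin{equation*}
  \frac{1}{n-1}\sum_{j\neq i}Y_j = \M_\eps\phi(x_i) + \O(\|\phi\|_{L^\infty(\Omega\cap B(x_i,\eps))}\lambda),
  \qquad
  \frac1{n-1}\sum_{j\neq i}|Y_j| \lesssim \|\phi\|_{L^\infty(\Omega\cap B(x_i,\eps))}.
\end{equation*}
Combining the two displays and using $1/n \leq \lambda$ gives $S_i = \M_\eps\phi(x_i) + \O(\|\phi\|_{L^\infty(\Omega\cap B(x_i,\eps))}\lambda)$.

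For the constant shift, since $\M_{n,\eps}\theta(x_i) = \theta\sum_{j=1}^n \eta_\eps(|x_i-x_j|)/\deg_{n,\eps}(x_j)$, the same argument with $\phi\equiv 1$ yields $\M_{n,\eps}\theta(x_i) = \theta\,\M_\eps 1(x_i) + \O(|\theta|\lambda)$, and $\M_\eps 1(x_i)$ is bounded by \cref{prop:Me}(iii), hence this entire quantity is $\O(|\theta|)$. Taking a union bound over $i=1,\ldots,n$ and consolidating constants completes the proof; the probability aggregates to at most $1-4n\exp(-Cn\eps^d\lambda^2)$ after redefining $C$. The main obstacle is purely bookkeeping: one must carefully propagate the Bernstein error through the random denominator $\deg_{n,\eps}(x_j)$ without blowing up the constant, which is why the auxiliary Bernstein estimate on $\sum_{j\neq i}|Y_j|$ is essential.
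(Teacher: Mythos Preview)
Your proof is correct and follows essentially the same route as the paper: replace $\deg_{n,\eps}(x_j)$ by $n\hat\rho_\eps(x_j)$ uniformly via \cref{prop:degree}, then apply Bernstein conditionally on $x_i$ to the i.i.d.\ sum $\sum_{j\neq i}\eta_\eps(|x_i-x_j|)\hat\rho_\eps(x_j)^{-1}\phi(x_j)$, and handle $\theta$ via the boundedness of $\sum_j \eta_\eps(|x_i-x_j|)/\deg_{n,\eps}(x_j)$. The one unnecessary step is your auxiliary Bernstein bound on $\frac{1}{n-1}\sum_{j\neq i}|Y_j|$: since $|Y_j|\lesssim \eta_\eps(|x_i-x_j|)\|\phi\|_{L^\infty(\Omega\cap B(x_i,\eps))}$, this sum is already $\lesssim \|\phi\|_{L^\infty(\Omega\cap B(x_i,\eps))}\cdot\deg_{n,\eps}(x_i)/(n-1)\lesssim \|\phi\|_{L^\infty(\Omega\cap B(x_i,\eps))}$ deterministically on the degree event, which is how the paper handles it and why the probability count lands exactly at $4n\exp(-Cn\eps^d\lambda^2)$.
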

\begin{proof}
As in the proof of  \cref{lem:estimate_deg_rho_average}, for $0 < \lambda \leq 1$, we can use  \cref{prop:degree} and a union bound to show that 
\begin{equation}\label{eq:degreej}
\left|\frac{1}{n-1}\deg_{n,\epsilon}(x_i) - \hat\rho_\eps(x_i)\right| \leq \frac{\lambda}{4}\hat\rho_\eps(x_i) + \frac{1}{n-1}\eta_\eps(0) \leq \frac{\lambda}{2}\hat \rho_\eps(x_i)
\end{equation}
holds for all $i$ has probability at least $1-2n\exp\left( -C n \epsilon^d \lambda^2\right)$, provided that $\lambda n\eps^d \geq K$ where $K$ depends on $\rho_{min}$, $\Omega$, and $\eta(0)$ and we used \cref{ass:neps}. By adjusting the constant $C$ in the probability lower bound in the lemma, as we did in  \cref{lem:estimate_deg_rho_average}, we can restrict our attention to the case that $\lambda n \eps^d \geq K$. 

Now, assuming this event holds, we have
\begin{equation}\label{eq:dne1}
\left(1-\frac{\lambda}{2}\right)\hat\rho_\eps(x_i) \leq \frac{1}{n-1}\deg_{n,\epsilon}(x_i)  \leq \left(1+\frac{\lambda}{2}\right)\hat\rho_\eps(x_i)
\end{equation}
and so
\[\frac{n-1}{\deg_{n,\epsilon}(x_i)} = \hat\rho_\eps(x_i)^{-1} + \O(\lambda). \]
Therefore
\begin{align*}
\M_{n,\epsilon}\phi(x) &= \frac{1}{n-1}\sum_{j=1}^n \left(\hat\rho_\eps(x_i)^{-1} + \O(\lambda)\right)\eta_\eps(|x-x_j|)\phi(x_j)\\
&=\frac{1}{n-1}\sum_{j=1}^n \eta_\eps(|x_i-x_j|)\hat\rho_\eps(x_j)^{-1} \phi(x_j) + \O\left(\|\phi\|_{L^\infty(\Omega\cap B(x_i,\epsilon))}\lambda \right)\\
&=\frac{1}{n-1}\sum_{i\neq j=1}^n \eta_\eps(|x_i-x_j|)\hat\rho_\eps(x_j)^{-1} \phi(x_j) +  \O\left(\|\phi\|_{L^\infty(\Omega\cap B(x_i,\epsilon))}\lambda \right),
\end{align*}
where in the last line we again used that $\lambda n \eps^d \geq K$ and  \cref{ass:neps} to remove the term $j=i$ from the sum and absorb it into the error term.

We now condition on $x_i$ and apply Bernstein's inequality from \cref{thm:bernstein} to the sum over $j\neq i$ above.  We have $\mu= \M_\eps\varphi(x_i)$,  $b = C \|\phi\|_{L^\infty(\Omega\cap B(x_i,\epsilon))}\epsilon^{-d}$ and 
\[\sigma^2 \leq C \|\phi\|_{L^\infty(\Omega\cap B(x_i,\epsilon))}^2 \|\eta_\eps(|\cdot - x_i|)^2\|_{L^1(B(x_i,\eps))} \leq C \|\phi\|_{L^\infty(\Omega\cap B(x_i,\epsilon))}^2\epsilon^{-d}.\]
Combining this with a union bound over $i=1,\dots,n$ completes the proof when $\theta=0$. 

When $\theta\neq 0$ we have 
\[\M_{n,\epsilon}(\phi(x_i)+\theta) =\M_{n,\epsilon}\phi(x_i) + \O\left(|\theta|\sum_{j=1}^n \frac{\eta_\eps (|x-x_j|)}{\deg_{n,\epsilon}(x_j)} \right) = \M_{n,\epsilon}\phi(x_i) + \O(|\theta|)\]
due to \labelcref{eq:dne1}. 
\end{proof}

We now have our main result in this section which states an asymptotic expansion of the heat kernel in terms of a repeated averaging operator.

\begin{theorem}\label{thm:heatkernel_nonlocal}
There exists $C>0$ such that for all $x_0\in \Omega$ and $0<\lambda \leq 1$
\begin{equation}\label{eq:heatkerne_nonlocal}
\H^{x}_{k}(x_i) = \hat\rho_\eps(x)^{-1}\M_{\eps}^{k-1}\eta_\eps^x(x_i) + \O\left(\lambda\sum_{j=0}^{k-2}\|\M_{\eps}^j\eta_\eps^x\|_{L^\infty(\Omega\cap B(x_i,\epsilon))}\right)
\end{equation}
holds for all $x\in \X_n\cup \{x_0\}$, $i=1,\dots,n$, and $k=2,\dots,m$ with probability at least $1-12mn^2\exp\left( -C n \epsilon^d \lambda^2\right)$.
\end{theorem}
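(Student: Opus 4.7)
The plan is to prove the theorem by induction on $k\geq 2$, with the key tool being \cref{lem:iteration}, which replaces the discrete averaging operator $\M_{n,\eps}$ by its continuum counterpart $\M_\eps$ at the cost of an $\O(\lambda)$ error. Iterating this approximation along the heat kernel semigroup relation $\H^x_{k+1}(x_i) = \M_{n,\eps}\H^x_k(x_i)$ (which comes directly from \labelcref{eq:heat_prop}) produces the desired expansion. For the base case $k=2$, I would use \labelcref{eq:H1} to write $\H^x_1 = (n/\deg_{n,\eps}(x))\eta_\eps^x$; on the event from \cref{prop:degree} applied at the single point $x$, we have $n/\deg_{n,\eps}(x) = \hat\rho_\eps(x)^{-1}+\O(\lambda)$, and a single application of \cref{lem:iteration} with $\phi=\eta_\eps^x$ and $\theta=0$ gives $\H^x_2(x_i)=\hat\rho_\eps(x)^{-1}\M_\eps\eta_\eps^x(x_i)+\O(\lambda\|\eta_\eps^x\|_{L^\infty(B(x_i,\eps))})$, matching the claim for $k=2$.

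For the induction step, assume $\H^x_k(x_j)=\hat\rho_\eps(x)^{-1}\M_\eps^{k-1}\eta_\eps^x(x_j)+R_k(x_j)$ with the stated bound on $R_k$ holds for all $j$. Apply $\M_{n,\eps}$ to both sides and use linearity to get $\H^x_{k+1}(x_i)=\hat\rho_\eps(x)^{-1}\M_{n,\eps}(\M_\eps^{k-1}\eta_\eps^x)(x_i)+\M_{n,\eps}(R_k)(x_i)$. For the main term, \cref{lem:iteration} with $\phi=\M_\eps^{k-1}\eta_\eps^x$ produces $\hat\rho_\eps(x)^{-1}\M_\eps^k\eta_\eps^x(x_i)$ plus the new top-order error $\O(\lambda\|\M_\eps^{k-1}\eta_\eps^x\|_{L^\infty(B(x_i,\eps))})$. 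For the accumulated error, write
\[
|\M_{n,\eps}(R_k)(x_i)| \leq C\lambda\sum_{p=0}^{k-2}\sum_j\frac{\eta_\eps(|x_i-x_j|)}{\deg_{n,\eps}(x_j)}\|\M_\eps^p\eta_\eps^x\|_{L^\infty(B(x_j,\eps))}.
\]
The crude bound on the inner sum is $\sup_{|y-x_i|\leq\eps}\|\M_\eps^p\eta_\eps^x\|_{L^\infty(B(y,\eps))}\cdot \M_{n,\eps}(1)(x_i)$, with $\M_{n,\eps}(1)=\O(1)$ on the good event (by \cref{lem:iteration} with $\phi\equiv\one$ and \cref{prop:Me}(iii)). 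The delicate step is then reabsorbing the resulting sup on $B(x_i,2\eps)$ into $\|\M_\eps^{p+1}\eta_\eps^x\|_{L^\infty(B(x_i,\eps))}$, using that the kernel of $\M_\eps^{p+1}$ is uniformly bounded below on $B(x_i,c\eps)$ (because $\eta$ is continuous at $0$ with $\eta(0)>0$ and $\rho/\hat\rho_\eps$ is bounded below), so $\M_\eps^{p+1}\eta_\eps^x$ dominates a local average of $\M_\eps^p\eta_\eps^x$. This reindexes the sum from $p=1$ to $k-1$, which combined with the new top-order error exactly reconstitutes the induction hypothesis at $k+1$.

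Finally, \cref{lem:iteration} is invoked once for each pair $(k,x)$ with $k=2,\dots,m$ and $x\in\X_n\cup\{x_0\}$ (choosing $\phi=\M_\eps^{k-2}\eta_\eps^x$), and each application is itself a union bound over $n$ nodes. Together with the degree event for $x$ from \cref{prop:degree}, this gives on the order of $mn^2$ Bernstein events, each of probability at least $1-\O(\exp(-Cn\eps^d\lambda^2))$, yielding the stated probability $1-12mn^2\exp(-Cn\eps^d\lambda^2)$ after adjusting $C$.

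\textbf{Main obstacle.} The principal technical difficulty is the ball-size inflation in the error propagation: naively, each application of $\M_{n,\eps}$ to the accumulated error $R_k$ introduces balls of radius $2\eps$ instead of $\eps$, whereas the claim demands the bound stated in terms of balls of radius $\eps$. Establishing the pointwise comparison $\|\M_\eps^p\eta_\eps^x\|_{L^\infty(B(x_i,2\eps))}\lesssim \|\M_\eps^{p+1}\eta_\eps^x\|_{L^\infty(B(x_i,\eps))}$ is the quantitative heart of the argument and relies essentially on the positivity of $\eta$ near the origin and on the two-sided bounds for $\hat\rho_\eps$. A secondary issue is bookkeeping: the induction must be driven by a \emph{single} good event that simultaneously supports all $\O(mn^2)$ invocations of \cref{lem:iteration}, so the choice of test functions $\phi=\M_\eps^{p}\eta_\eps^x$ at every level must be declared up front before any random realization is drawn.
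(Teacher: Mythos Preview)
Your overall strategy---induction on $k$ using \cref{lem:iteration} to replace $\M_{n,\eps}$ by $\M_\eps$ at each step, driven by the recursion $\H^x_{k+1}=\M_{n,\eps}\H^x_k$ from \labelcref{eq:heat_prop}---matches the paper exactly, and your base case and probability bookkeeping are fine.

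The gap is in your handling of the accumulated error. The pointwise comparison you rely on,
\[
\|\M_\eps^p\eta_\eps^x\|_{L^\infty(\Omega\cap B(x_i,2\eps))}\ \lesssim\ \|\M_\eps^{p+1}\eta_\eps^x\|_{L^\infty(\Omega\cap B(x_i,\eps))},
\]
is false in general. Your argument only shows that $\M_\eps^{p+1}\eta_\eps^x(x_i)$ dominates a local \emph{average} of $\M_\eps^p\eta_\eps^x$ over a ball $B(x_i,c\eps)$ with $c<1$; it does not control the supremum over the larger ball $B(x_i,2\eps)$. For a concrete failure take $\eta$ proportional to the indicator of $[0,1)$, $p=0$, and $|x-x_i|=3\eps-\delta$ with $\delta\to 0^+$: then $\|\eta_\eps^x\|_{L^\infty(B(x_i,2\eps))}\sim\eps^{-d}$ stays bounded away from zero, while for every $z\in B(x_i,\eps)$ one has $|z-x|>2\eps-\delta$, so $B(z,\eps)\cap B(x,\eps)$ is a lens of width $O(\delta)$ and $\M_\eps\eta_\eps^x(z)\to 0$. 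Even if the comparison held with some constant $C>1$, your reindexing scheme multiplies the implicit constant by $C$ at every inductive step, producing an $\O$-constant of order $C^k$ and destroying the uniformity in $k$ that all the downstream heat kernel estimates require.

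The paper does not attempt any reindexing. In its inductive step it writes $\hat\rho_\eps(x)\H^x_k(\cdot)=\M_\eps^{k-1}\eta_\eps^x(\cdot)+\O(\lambda\beta_k)$ and feeds the error directly into the $\theta$-slot of \cref{lem:iteration} as a \emph{scalar}, so that it passes through $\M_{n,\eps}$ unchanged (coefficient one, by the $\O(|\theta|)$ term there), and the only new contribution is $\lambda\|\M_\eps^{k-1}\eta_\eps^x\|_{L^\infty(\Omega\cap B(x_i,\eps))}$. This gives $\beta_{k+1}=\beta_k+\|\M_\eps^{k-1}\eta_\eps^x\|_{L^\infty(\Omega\cap B(x_i,\eps))}$ with no multiplicative constant accruing from step to step. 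Thus the ball-inflation issue you flag is handled by absorbing the error as a constant rather than by bounding it pointwise and shifting the index.
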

\begin{proof}
The proof proceeds by induction. We first establish the base case. Let $x\in \Omega$. By \labelcref{eq:heat_prop}, \labelcref{eq:H1} and \cref{lem:iteration} we have
\begin{align*}
\frac{1}{n}\deg_{n,\epsilon}(x)\H^{x}_{2}(x_i) &= \sum_{j=1}^n\frac{\eta_\eps(|x_i-x_j|)}{\deg_{n,\epsilon}(x_j)}\eta_\eps(|x_j-x|)\\
&= \M_{n,\epsilon}\eta_\eps^x(x_i) = \M_{\epsilon}\eta_\eps^x(x_i) + \O(\|\eta_\eps^x\|_{L^\infty(\Omega \cap B(x_i,\epsilon))}\lambda),
\end{align*}
holds for all $i$ with probability at least $1-4n\exp\left( -C n \epsilon^d \lambda^2\right)$. Invoking \cref{prop:degree}  as in \labelcref{eq:degreej} we have
\[\hat\rho_\eps(x)\H^{x}_{2}(x_i) = \M_{\epsilon}\eta_\eps^x(x_i) + \O(\|\eta_\eps^x\|_{L^\infty(\Omega \cap B(x_i,\epsilon))}\lambda),\]
for all $i$ with probability at least $1-6n\exp\left( -C n \epsilon^d \lambda^2\right)$. As before, we union bound over $x\in \X_n\cup \{x_0\}$, and  the probability decreases to $1-12n^2\exp\left( -C n \epsilon^d \lambda^2\right)$.

For the inductive step, let us assume that for some $m\geq 2$ we have that
\[\hat\rho_\eps(x)\H^{x}_{k}(x_i)=\M_{\eps}^{k-1}\eta_\eps^x(x_i) + \O\left(\lambda\beta_k\right), \ \ \beta_k = \sum_{j=0}^{k-2}\|\M_{\eps}^j\eta_\eps^x\|_{L^\infty(\Omega\cap B(x_i,\epsilon))},\]
holds for all $i$ and $k=2,\dots,m$ with probability at least $1-12mn^2\exp\left( -C n \epsilon^d \lambda^2\right)$.  Let $k=m$. By \labelcref{eq:heat_prop}, \cref{prop:Me} (ii) and \cref{lem:iteration} we have
\begin{align*}
\hat\rho_\eps(x)\H^{x}_{k+1}(x_i) &= \sum_{j=1}^n\frac{\eta_\eps(|x_i-x_j|)}{\deg_{n,\epsilon}(x_j)}\left( \M_{\eps}^{k-1}\eta_\eps^x(x_j) + \O(\lambda\beta_k)\right)\\
&=\M_{n,\epsilon}\left(\M_{\eps}^{k-1}\eta_\eps^x + \O(\lambda\beta_k)\right)(x_i)\\
&=\M_{\eps}^{k}\eta_\eps^x(x_i) + \O\left(\lambda\beta_k + \lambda \|\M_{\eps}^{k-1}\eta_\eps^x\|_{L^\infty(\Omega\cap B(x_i,\epsilon))}\right)\\
&=\M_{\eps}^{k}\eta_\eps^x(x_i) + \O\left(\lambda\beta_{k+1}\right)
\end{align*}
for all $i$ and $x\in \X_n\cup\{x_0\}$ with probability at least $1-12(k+1)n^2\exp\left( -C n \epsilon^d \lambda^2\right)$, which completes the proof.
\end{proof}

\subsection{Estimates for repeated averaging}\label{sec:rep_avg}

We now turn to study the repeated averaging operator $\M^k_\epsilon$. In particular, we show that $\M^k_\eps$ is asymptotic in the repeated convolution
\[\psi_{k, \varepsilon} \defeq   \underbrace{\eta_\varepsilon * \cdots * \eta_\varepsilon}_{k\textrm{ times}},\] 
where throughout this section we will identify $\eta$ with the function $x\mapsto \eta(|x|)$. We note that we can write
\begin{equation}\label{eq:psi_rewrite}
\psi_{k, \varepsilon}(x) = (\eta_\varepsilon * \cdots * \eta_\varepsilon)(x) = \eps^{-d}(\eta * \cdots *\eta)\left( \frac{x}{\epsilon}\right) = \epsilon^{-d}\psi_{k}\left( \frac{x}{\epsilon}\right),
\end{equation}
where $\psi_k\defeq  \psi_{k,1}$. Thus, we will often focus our attention on $\psi_k$. Throughout this section we use the notation $\eps_k =  \sqrt{k}\epsilon$, and make the following standing assumption.
\begin{assumption}\label{ass:epsk}
We assume that $0 < \epsilon \leq \frac{1}{2}$ and $k\geq 1$ satisfy $\epsilon_k = \epsilon \sqrt k \leq 1$. 
\end{assumption}
In particular, \cref{ass:epsk} implies that $\eps_k \leq 1$, so $k \leq \eps^{-2}$.  We also define
\begin{equation}\label{eq:Rkdef}
R_k = 5\epsilon + \epsilon_k\sqrt{8d \log(k\epsilon^{-(d+2)})},
\end{equation}
for $k\geq 1$. The radius $R_k$ is sufficiently large to contain the effective support of $\psi_{k,\epsilon}$, up to error terms that are exponentially small. In particular, it is straightforward to check that for $k\geq 1$ we have 
\begin{equation}\label{eq:Rk_prop}
R_k \geq 5\eps, \ \ R_k \leq 6(d+2)\eps_k \log(\eps^{-1})^{\frac{1}{2}}, \ \ \text{and} \ \ \exp\left( -\frac{(R_k - \eps)^2}{8d\eps_k^2}\right)\leq k^{-1}\eps^{d+2}.
\end{equation}
Some of the results in this section hold for smaller values of $R_k$; in particular, the logarithmic terms are not always sharp. For simplicity, we have fixed one value of $R_k$ that works in the majority of the paper. We also mention that some of the proofs in this section are elementary results that we believe are relatively well-known. For completeness we include the proofs of such results in \cref{app:rep_avg}.

Our first result is a standard estimate based on the Hoeffding bounding method, which shows that the bulk of the mass of $\psi_{k,\eps}$ is concentrated in a ball of radius $R_k$. 
Note that $\psi_{k,\eps}$ has unit mass on $\R^d$ as a consequence of \cref{ass:eta} which states that $\eta$ has unit mass.
\begin{lemma}\label{lem:Hoeffding}
For any $k\geq 1$, $\epsilon>0$ and $t>0$ we have
\begin{equation}\label{eq:Hoeffding}
\int_{\{|x| > t\}} \psi_{k,\epsilon}(x) \d x \leq 2d\exp\left(-\frac{t^2}{2d\eps_k^2}\right).
\end{equation}
\end{lemma}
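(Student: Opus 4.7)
The plan is to interpret $\psi_{k,\eps}$ probabilistically. Since $\eta$ is nonnegative with unit mass (\cref{ass:eta}), the function $x \mapsto \eta_\eps(|x|)$ is a probability density on $\R^d$ supported in $\overline{B(0,\eps)}$. Let $X_1,\dots,X_k$ be i.i.d.\ random vectors with this common density, and set $S_k = X_1 + \cdots + X_k$. By definition of convolution, $\psi_{k,\eps}$ is the density of $S_k$, so the left-hand side of \labelcref{eq:Hoeffding} equals $\P(|S_k| > t)$.

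Next, I would reduce the vector-valued tail bound to $d$ scalar tail bounds. By radial symmetry, each coordinate $X_i^{(j)}$ has mean zero, and the support condition gives $|X_i^{(j)}| \leq \eps$ almost surely. Since $\{|S_k| > t\} \subset \bigcup_{j=1}^d \{|S_k^{(j)}| > t/\sqrt d\}$, a union bound yields
\[\P(|S_k|>t) \;\leq\; \sum_{j=1}^d \P\bigl(|S_k^{(j)}| > t/\sqrt d\bigr).\]

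Each $S_k^{(j)}$ is a sum of $k$ i.i.d.\ zero-mean random variables taking values in $[-\eps,\eps]$, so the classical scalar Hoeffding inequality gives
\[\P\bigl(|S_k^{(j)}| > s\bigr) \;\leq\; 2\exp\!\left(-\frac{2 s^2}{k(2\eps)^2}\right) \;=\; 2\exp\!\left(-\frac{s^2}{2\eps_k^2}\right).\]
Setting $s=t/\sqrt d$ and summing over $j=1,\dots,d$ yields the bound $2d \exp(-t^2/(2d\eps_k^2))$, which is exactly \labelcref{eq:Hoeffding}.

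This proof is essentially a direct application of a standard tool, so there is no serious obstacle; the only thing to verify is that the Hoeffding constants line up to give exactly the exponent $-t^2/(2d\eps_k^2)$, which they do because the variables lie in an interval of length $2\eps$ and there are $k$ terms, yielding denominator $4k\eps^2/2 = 2\eps_k^2$ after the factor of $2$ in the Hoeffding bound.
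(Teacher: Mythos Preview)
Your proof is correct and takes essentially the same approach as the paper: a union bound over the $2d$ coordinate half-spaces followed by the Hoeffding bound, yielding exactly the exponent $-t^2/(2d\eps_k^2)$. The only cosmetic difference is that you invoke the classical scalar Hoeffding inequality as a black box in probabilistic language, whereas the paper writes out the Chernoff/moment-generating-function argument analytically from scratch.
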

The proof of \cref{lem:Hoeffding} is given in  \cref{app:rep_avg}.  We now upgrade this tail bound to pointwise Gaussian upper bounds. 
\begin{proposition}\label{prop:psi_gaussian_upper}
For all $x\in \R^d$ and $\eps>0$ the following hold. 
\begin{enumerate}[label=(\roman*)]
\item There exists $C_1>0$, depending only on $\eta$ and $d$, such that for all $k\geq 1$ we have
\begin{equation}\label{eq:psi_pointwise}
|\psi_{k,\epsilon}(x)| \leq C_1\min\left\{ \eps_k^{-d},\eps^{-d}\exp\left( -\frac{|x|^2}{8d\eps_k^2}\right)\right\}.
\end{equation}
\item 
For $k\geq 3$ it holds that $\psi_{k,\epsilon}$ is continuously differentiable and there exists $C_2>0$ depending only on $\eta$ and $d$ such that 
\[|\nabla \psi_{k,\epsilon}(x)|\leq C_2 \epsilon_k^{-(d+1)}.\]
\item Let $T:\Omega\to \Omega$ be Borel measureable and satisfy $|T(x)-x|\leq \epsilon_k$ for all $x\in \Omega$.  There exists $C_3>0$, depending only on $\eta$, $d$, and $|\Omega|$, such that for all $k\geq 1$ and $x_0\in \Omega$ we have
\begin{equation}\label{eq:psi_lp}
\|\psi_{k,\epsilon}(T(\cdot)-x_0)\|_{L^p(\Omega)} \leq C_3\eps_k^{-d + \frac{d}{p}}\log(\eps^{-1})^{\frac{d}{2p}}.
\end{equation}
\item There exists $C_4>0$ such that for every $x_0\in \Omega$, $\eps,\delta > 0$, and $k\geq 3$ we have
\begin{equation}\label{eq:oscillation_bound_M_ke}
\|\osc_{\Omega \cap B(\cdot,\delta)} \psi_{k,\epsilon}(\cdot - x_0)\|_{L^1(\Omega)} \leq  C_4 ( \eps_k^{-1}\delta\log(\eps^{-1})^{\frac{d}{2}} + \eps^2).
\end{equation}
\end{enumerate}
\end{proposition}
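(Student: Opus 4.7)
The plan is to establish the four bounds in order, with (i) being the foundational local central-limit-theorem-type estimate, (ii) its analogue for the gradient, and (iii)--(iv) consequences obtained by splitting integrals at the radius $R_k$.

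For (i), I will treat the two bounds separately. The uniform estimate $|\psi_{k,\eps}(x)| \lesssim \eps_k^{-d}$ reduces by the scaling \labelcref{eq:psi_rewrite} to $\|\psi_k\|_\infty \lesssim k^{-d/2}$, which I obtain via Fourier inversion, bounding $\|\psi_k\|_\infty \leq (2\pi)^{-d}\|\hat\eta^k\|_{L^1}$. Since \cref{ass:eta} gives $\eta$ positive variance $\sigma_\eta$, a Taylor expansion yields $|\hat\eta(\xi)|^2 \leq 1-c|\xi|^2$ near the origin, so $|\hat\eta|^k \leq e^{-ck|\xi|^2}$ there, contributing $O(k^{-d/2})$. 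Riemann--Lebesgue together with continuity and the fact that $\eta$ has an absolutely continuous law give $\mu = \sup_{|\xi|\geq\delta}|\hat\eta(\xi)| < 1$, so the high-frequency piece is dominated by $\mu^{k-2}\|\hat\eta\|_{L^2}^2$, which decays exponentially. For the Gaussian factor $\eps^{-d}\exp(-|x|^2/(8d\eps_k^2))$, I use a Chernoff-type splitting: write $\psi_k = \psi_{\lceil k/2\rceil} * \psi_{\lfloor k/2\rfloor}$, cut the convolution at $|y|=|x|/2$, apply the uniform bound to one factor and the Hoeffding tail estimate \cref{lem:Hoeffding} to the other. The $\eps^{-d}$ prefactor is merely a crude bound covering small $|x|$ where the Gaussian factor is $O(1)$.

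For (ii), I work on the Fourier side with $\widehat{\nabla\psi_k}(\xi) = i\xi\,\hat\eta(\xi)^k$ and bound $\|\nabla\psi_k\|_\infty \leq (2\pi)^{-d}\|\,|\xi|\hat\eta^k\|_{L^1}$. The same low/high-frequency split as in (i) gives a low-frequency contribution of order $k^{-(d+1)/2}$ and a high-frequency contribution bounded using $|\hat\eta|^k \leq \mu^{k-3}|\hat\eta|^3$, where integrability of $|\xi|\,|\hat\eta|^3$ for $k\geq 3$ follows because $\psi_3 \in L^1\cap L^\infty$ has compact support. Rescaling produces the claimed $\eps_k^{-(d+1)}$ bound and yields the continuous differentiability.

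For (iii), I split $\int_\Omega |\psi_{k,\eps}(T(x)-x_0)|^p\,dx$ at the radius $R_k$ from \labelcref{eq:Rkdef}. On $\{x : |T(x)-x_0| \leq R_k\}$, the uniform bound from (i) gives $|\psi_{k,\eps}|^p \lesssim \eps_k^{-dp}$, and the constraint $|T(x)-x|\leq\eps_k$ places this set inside $B(x_0,R_k+\eps_k)\cap\Omega$, of volume $\lesssim R_k^d \lesssim \eps_k^d\log(\eps^{-1})^{d/2}$ by \labelcref{eq:Rk_prop}. Taking $p$-th roots produces the stated estimate. On the complementary set, the Gaussian bound from (i) together with \labelcref{eq:Rk_prop}, which ensures $\exp(-(R_k-\eps)^2/(8d\eps_k^2))\leq k^{-1}\eps^{d+2}$, yields an arbitrarily small contribution that is absorbed into the main term. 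For (iv), I combine the mean-value estimate $\osc_{\Omega\cap B(x,\delta)}\psi_{k,\eps}(\cdot-x_0) \leq \delta\|\nabla\psi_{k,\eps}\|_{L^\infty} \lesssim \eps_k^{-(d+1)}\delta$ from (ii) on the bulk region $B(x_0, R_k+\delta)$, whose measure is $\lesssim R_k^d \lesssim \eps_k^d\log(\eps^{-1})^{d/2}$, giving the dominant term $\eps_k^{-1}\delta\log(\eps^{-1})^{d/2}$; on the tail region, I bound the oscillation crudely by $2\sup|\psi_{k,\eps}|$ and use the Gaussian decay with \labelcref{eq:Rk_prop} to obtain the $\eps^2$ contribution.

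The main obstacle I anticipate lies in (i) and (ii): \cref{ass:eta} provides no smoothness, symmetry beyond radiality, or quantitative decay of $\eta$ other than monotonicity and compact support, so every bit of smoothing must be extracted from the convolution structure itself. In particular, rigorously justifying $\mu < 1$ uniformly on $\{|\xi|\geq\delta\}$ and handling the finite-$k$ regime (where the exponentially small high-frequency contribution in $\mu^{k-2}$ must be absorbed into the polynomial main term) will need care, as will ensuring the constants depend only on $\eta$ and $d$ as stated.
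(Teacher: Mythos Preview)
Your plan matches the paper's proof almost exactly: Fourier inversion with a low/high-frequency split for the uniform part of (i) and for (ii), a convolution-plus-tail argument for the Gaussian part of (i), and the same $R_k$-splitting for (iii)--(iv). The only notable variant is your $\psi_k = \psi_{\lceil k/2\rceil}*\psi_{\lfloor k/2\rfloor}$ decomposition for the Gaussian half of (i); the paper instead writes $\psi_{k,\eps} = \eta_\eps * \psi_{k-1,\eps}$, bounds $\eta_\eps$ by $\eta(0)\eps^{-d}$, and applies the Hoeffding tail \cref{lem:Hoeffding} directly to $\psi_{k-1,\eps}$. Both work (yours even yields the sharper prefactor $\eps_k^{-d}$), but the paper's version avoids any case distinction for small $k$.

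There is one genuine gap in your justification of (ii). The claim that $\int |\xi|\,|\hat\eta(\xi)|^3\,d\xi < \infty$ ``because $\psi_3 \in L^1\cap L^\infty$ has compact support'' is not correct: those properties only give $\hat\eta^3 \in L^\infty \cap L^2$, which does not control $|\xi|\,|\hat\eta|^3$ in $L^1$. What is actually needed---and what the paper proves as a separate lemma (\cref{lem:riemann-lebesgue})---is the quantitative decay $|\hat\eta(\xi)| \lesssim |\xi|^{-1}$, which comes from the \emph{monotonicity} of $\eta$ (hence its BV structure), not merely from $\psi_3$ being bounded with compact support. With this in hand, $|\xi|\,|\hat\eta(\xi)|$ is bounded, so on the high-frequency region one writes $|\xi|\,|\hat\eta|^k \le C\,|\hat\eta|^{k-3}|\hat\eta|^2 \le C\,2^{-(k-3)}|\hat\eta|^2$ and integrates. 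You already flag monotonicity as the key structural assumption in your ``obstacles'' paragraph, so this is the place to deploy it.
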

The proof of \cref{prop:psi_gaussian_upper} uses Fourier techniques and is given in  \cref{app:rep_avg}. 
We now establish related bounds in the graph setting with high probability. 
\begin{corollary}\label{lem:psik_graph_bounds}
There exists $C_1,C_2>0$ such that the following hold.
\begin{enumerate}[label=(\roman*)]
\item  For any $p\geq 1$ there exists $C_3>0$ such that the event that
\[\ng{p}{\psi_{k,\epsilon}(\cdot-x_0)} \leq C_3 \epsilon_k^{-d + \frac{d}{p}}\log(\epsilon^{-1})^{\frac{d}{2p}},\]
holds for all $x_0\in \Omega$ has probability at least $1-C_1 n \exp\left( -C_2 n\epsilon^d\right)$. 
\item There exists $C_4>0$ such that for all $k\geq 3$ and $\eps,\delta>0$ the event that
\[\ng{1}{\psi_{k,\epsilon}(\cdot - x_0) - \psi_{k,\epsilon}(\cdot - y_0)}  \leq C_4 \left(\eps_k^{-1}\delta\log(\eps^{-1})^{\frac{d}{2}} + \eps^2\right),\]
holds for all $x_0,y_0\in \Omega$ with $|x_0-y_0| \leq \delta$ has probability at least $1-C_1 n \exp\left( -C_2 n\delta^d\right)$. 
\end{enumerate}
\end{corollary}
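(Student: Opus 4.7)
The plan is to prove both estimates by combining Bernstein's inequality (\cref{thm:bernstein}) applied pointwise in $x_0$ with a covering argument that upgrades the pointwise concentration to a statement uniform in $x_0$ (and in $y_0$ in part (ii)). The continuum bounds already established in \cref{prop:psi_gaussian_upper} supply both the mean and the $L^\infty$ bounds needed for Bernstein, while the gradient estimate $\abs{\nabla \psi_{k,\epsilon}}\lesssim \epsilon_k^{-(d+1)}$ from part (ii) of that proposition controls the discretization error incurred by the covering.

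For part (i), I would fix $x_0\in \Omega$ and set $Y_i \defeq \abs{\psi_{k,\epsilon}(x_i-x_0)}^p$. Part (iii) of \cref{prop:psi_gaussian_upper} applied with $T=\mathrm{id}$ yields
\[
\mu \defeq \bE[Y_i] \leq \rho_{\max}\|\psi_{k,\epsilon}(\cdot-x_0)\|_{L^p(\Omega)}^p \lesssim \epsilon_k^{-dp+d}\log(\epsilon^{-1})^{d/2},
\]
while part (i) gives $\abs{Y_i}\leq b \defeq C\epsilon_k^{-dp}$ almost surely, and therefore $\Var(Y_i)\leq b\mu$. Applying Bernstein's inequality around the target value $3\mu$ then produces a deviation probability of the form $\exp(-cn\mu/b) \leq \exp(-cn\epsilon_k^d\log(\epsilon^{-1})^{d/2}) \leq \exp(-cn\epsilon^d)$, using $\epsilon_k\geq \epsilon$. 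To achieve uniformity in $x_0$, I would cover $\Omega$ by a $\tau$-net of cardinality $\lesssim \tau^{-d}$ and use the Lipschitz bound on $\psi_{k,\epsilon}$ (for $k\geq 3$) to transfer between nearby $x_0$: choosing $\tau=\epsilon^M$ with $M=M(d,p)$ fixed and sufficiently large makes the discretization error $\lesssim \epsilon_k^{-dp-1}\tau$ negligible compared to $\mu$. The union bound yields a failure probability $\lesssim \tau^{-d}\exp(-cn\epsilon^d)$, which fits the form $C_1n\exp(-C_2n\epsilon^d)$ after absorbing the polynomial factors into the constants. For $k\in\{1,2\}$, where the gradient bound is unavailable, one can treat these finitely many cases separately---using the explicit structure of $\eta_\eps$ and $\eta_\eps\ast\eta_\eps$ together with a VC-type bound on the number of sample points in small balls---or reduce to $k\geq 3$ by convolving once more with $\eta_\eps$ at the cost of harmless constants.

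For part (ii), the crucial observation is that for any $x_0,y_0\in\Omega$ with $\abs{x_0-y_0}\leq\delta$,
\[
\abs{\psi_{k,\epsilon}(x_i-x_0)-\psi_{k,\epsilon}(x_i-y_0)} \leq W_i(x_0) \defeq \osc_{B(x_i,\delta)}\psi_{k,\epsilon}(\cdot-x_0),
\]
because the two arguments differ in norm by at most $\delta$. This decouples $y_0$ entirely, leaving a sum of i.i.d.\ random variables depending only on $x_0$. By \cref{prop:psi_gaussian_upper}(iv), $\bE[W_i(x_0)] \leq \rho_{\max}C_4\bigl(\epsilon_k^{-1}\delta\log(\epsilon^{-1})^{d/2}+\epsilon^2\bigr)$, while $W_i\leq 2\|\psi_{k,\epsilon}\|_{L^\infty(\R^d)}\lesssim \epsilon_k^{-d}$. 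A Bernstein bound analogous to that of part (i) controls $\frac1n\sum_i W_i(x_0)$ around its mean for each fixed $x_0$, and the same covering argument in $x_0$, exploiting the Lipschitz dependence of $W_i(x_0)$ on $x_0$ (inherited from $\psi_{k,\epsilon}$), promotes this to the required uniform bound.

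The main obstacle will be calibrating the covering scale $\tau$ and checking that the factor $\tau^{-d}$ produced by the union bound is absorbed by the target probability $C_1n\exp(-C_2n\epsilon^d)$ without additional restrictions on $\epsilon$. Since the Lipschitz constant of $\psi_{k,\epsilon}$ is at most a fixed negative power of $\epsilon$ (using $\epsilon_k\geq \epsilon$ and $k\leq \epsilon^{-2}$ from \cref{ass:epsk}), a polynomial choice $\tau\sim\epsilon^M$ suffices, and the polynomial prefactors can be safely absorbed into the constants in the regime $n\epsilon^d\gtrsim\log n$ (outside of which the conclusion is trivial since the failure-probability bound exceeds one). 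A secondary technical point is the small-$k$ regime mentioned above, which requires either an extra convolution step or a direct VC-style argument for covering the class $\{\eta_\eps(\cdot-x_0)\st x_0\in\Omega\}$.
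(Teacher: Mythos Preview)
Your strategy is sound in principle, but the paper takes a much shorter route that sidesteps all of the technical issues you flag. Rather than applying Bernstein pointwise in $x_0$ and then union-bounding over a net, the paper invokes the transportation map of \cref{thm:transportation} once, with $\delta=\eps$ (for part (i)) or with the given $\delta$ (for part (ii)) and $\lambda=\frac{\rho_{\min}}{8\rho_{\max}}$. On the single high-probability event where the map $T_\delta$ and density $\rho_\delta$ exist, the pushforward identity $\frac{1}{n}\sum_{i=1}^n u(x_i)=\int_\Omega u(T_\delta(y))\rho_\delta(y)\d y$ holds \emph{for every} $u\in\l2$. Thus, for any $x_0\in\Omega$ one has $\ng{p}{\psi_{k,\epsilon}(\cdot-x_0)}^p=\int_\Omega \psi_{k,\epsilon}(T_\delta(y)-x_0)^p\rho_\delta(y)\d y \lesssim \|\psi_{k,\epsilon}(T_\delta(\cdot)-x_0)\|_{L^p(\Omega)}^p$, and \cref{prop:psi_gaussian_upper}(iii) was stated precisely with the auxiliary map $T$ to absorb the $\delta$-perturbation. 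Part (ii) is handled the same way, bounding the pointwise difference by $\osc_{\Omega\cap B(\cdot,2\delta)}\psi_{k,\epsilon}(\cdot-x_0)$ after transporting and invoking \cref{prop:psi_gaussian_upper}(iv).

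The payoff of the transportation-map approach is that uniformity in $x_0$ (and $y_0$) is automatic, there is no covering scale to calibrate, no polynomial prefactors to absorb, and no separate treatment of $k\in\{1,2\}$ is needed since nothing in the argument differentiates in $k$ (the gradient bound is only used inside \cref{prop:psi_gaussian_upper}(iv), which already restricts to $k\geq 3$). Your Bernstein-plus-net argument would ultimately work, but it reproduces machinery that the transportation map encapsulates in a single event.
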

\begin{proof}
To prove (i), we assume \cref{thm:transportation} holds with $\delta=\eps\leq \epsilon_k$ and $\lambda=\frac{\rho_{\min}}{8\rho_{\max}}$, which has probability at least $1-C_1 n \exp\left( -C_2 n\epsilon^d\right)$. Using \cref{prop:psi_gaussian_upper} (iii) we have
\begin{align*}
\ng{p}{\psi_{k,\epsilon}(\cdot-x_0)} &= \left(\frac{1}{n}\sum_{i=1}^n \psi_{k,\epsilon}(x_i - x_0)^p\right)^{\frac{1}{p}} \\
&= \left(\int_\Omega \psi_{k,\epsilon}(T_\delta(y)-x_0)^p\rho_\delta(y)\, dy\right)^{\frac{1}{p}} \\
&\leq C\|\psi_{k,\epsilon}(T_\delta(\cdot)-x_0)\|_{L^p(\Omega)} \leq C \epsilon_k^{-d + \frac{d}{p}}\log(\epsilon^{-1})^{\frac{d}{2p}}.
\end{align*}

To prove (ii) we assume \cref{thm:transportation} holds with $\delta\geq |x_0-y_0|$ and $\lambda=\frac{\rho_{\min}}{8\rho_{\max}}$, which has probability at least $1-C_1 n \exp\left( -C_2 n\delta^d\right)$. Using \cref{prop:psi_gaussian_upper} (iv) we have
\begin{align*}
\ng{1}{\psi_{k,\epsilon}(\cdot - x_0) - \psi_{k,\epsilon}(\cdot - y_0)} &= \frac{1}{n}\sum_{i=1}^n|\psi_{k,\epsilon}(x_i - x_0) - \psi_{k,\epsilon}(x_i - y_0)|\\
&=\int_\Omega |\psi_{k,\epsilon}(T_\delta(x) - x_0) - \psi_{k,\epsilon}(T_\delta(x) - y_0)| \rho_\delta(x)\d x\\
&\leq C\int_\Omega |\psi_{k,\epsilon}(T_\delta(x) - x_0) - \psi_{k,\epsilon}(T_\delta(x) - y_0)|\d x\\
&\leq C \| \osc_{\Omega\cap B(x,2\delta)}\psi_{k,\epsilon}(\cdot - x_0)\|_{L^1(\Omega)}\\
&\leq C (\eps_k^{-1}\delta\log(\eps^{-1})^{\frac{d}{2}} + \eps^2).\qedhere
\end{align*}
\end{proof}

We now prove our main result in this section, which relates the repeated averaging operator $\M^k_\eps$ to the repeated convolution $\psi_{k,\epsilon}$, centered at a point sufficiently far from the boundary of the domain. 
\begin{theorem}\label{thm:Me}
Let $R\geq R_k$, and let $x_0\in \Omega$ with $B(x_0,R)\subset \Omega$. Then for $[\nabla \rho]_\alpha k\epsilon^{1+\alpha} \ll 1$, $\eps \ll 1$ and all $x\in \Omega$ we have
\begin{equation}\label{eq:Me_psi}
\M^{k}_\eps\eta^{x_0}_\eps(x)  = (1 + \O([\nabla \rho]_\alpha k\epsilon^{1+\alpha}))\psi_{k+1,\epsilon}(x-x_0) + \O\left( \epsilon^{2}\exp\left( -\frac{R^2}{4d\epsilon_k^2}\right) \right).
\end{equation}
\end{theorem}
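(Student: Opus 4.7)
The plan is to prove the theorem by induction on $k \geq 0$. The base case $k = 0$ is immediate since $\M_\eps^0 \eta_\eps^{x_0}(x) = \eta_\eps(|x - x_0|) = \psi_{1,\eps}(x - x_0)$.

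For the inductive step, I would exploit the decomposition $\rho(y)/\hat\rho_\eps(y) = 1 + \kappa_\eps(y)$, where by \cref{rem:rhoe} the residual satisfies $|\kappa_\eps(y)| \lesssim [\nabla\rho]_\alpha \eps^{1+\alpha}$ whenever $B(y, \eps) \subset \Omega$, while $|\kappa_\eps(y)|$ remains uniformly bounded on $\partial_\eps \Omega$. Writing the inductive hypothesis as $\M_\eps^k \eta_\eps^{x_0} = (1+\tau_k)\psi_{k+1,\eps}(\cdot - x_0) + e_k$ and substituting, one obtains
\begin{align*}
\M_\eps^{k+1}\eta_\eps^{x_0}(x) &= (1+\tau_k)\int_{\R^d} \eta_\eps(|x-y|) \psi_{k+1, \eps}(y - x_0)\dy \\
&\quad - (1+\tau_k)\int_{\R^d \setminus \Omega} \eta_\eps(|x - y|)\psi_{k+1, \eps}(y - x_0)\dy \\
&\quad + (1+\tau_k)\int_\Omega \eta_\eps(|x-y|) \kappa_\eps(y) \psi_{k+1, \eps}(y-x_0)\dy \\
&\quad + \int_\Omega \eta_\eps(|x-y|)(1 + \kappa_\eps(y))e_k(y)\dy.
\end{align*}
The first term equals $(1+\tau_k)\psi_{k+2, \eps}(x - x_0)$ by the definition of $\psi_{k+2,\eps}$ as a repeated convolution. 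Splitting the third term along $\Omega = \Omega_\eps \cup \partial_\eps\Omega$, the interior piece is bounded by $C[\nabla\rho]_\alpha \eps^{1+\alpha} \psi_{k+2,\eps}(x-x_0)$, which can be absorbed into the multiplicative correction by setting $\tau_{k+1} = \tau_k + O([\nabla\rho]_\alpha \eps^{1+\alpha})$. The remaining pieces (second term, boundary part of third term, fourth term) constitute the new additive error at step $k+1$.

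To estimate the boundary errors, I would use the crucial observation that $B(x_0, R) \subset \Omega$ forces any $y \in \partial_\eps \Omega$ (or $y \notin \Omega$) appearing in the integrals to satisfy $|y - x_0| \geq R - \eps$. The pointwise Gaussian bound of \cref{prop:psi_gaussian_upper}(i) then gives $\psi_{k+1, \eps}(y - x_0) \lesssim \eps^{-d}\exp(-(R-\eps)^2/(8d\eps_{k+1}^2))$ on the relevant integration set, yielding a per-step boundary contribution of size $\lesssim \eps^{-d}\exp(-(R - \eps)^2/(8d\eps_{k+1}^2))$. The propagated error term is controlled by $C\|e_k\|_\infty$ using $\int \eta_\eps = 1$ and the boundedness of $1 + \kappa_\eps$. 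Summing the $k$ per-step boundary contributions and using the defining property $\exp(-(R_k - \eps)^2/(8d\eps_k^2)) \leq k^{-1}\eps^{d+2}$ from \cref{eq:Rk_prop} produces the total additive error $O(\eps^2\exp(-R^2/(4d\eps_k^2)))$, taking advantage of $R \geq R_k$. The multiplicative errors compound as $\prod_{j=1}^k(1 + O([\nabla\rho]_\alpha \eps^{1+\alpha})) = 1 + O([\nabla\rho]_\alpha k\eps^{1+\alpha})$ under the smallness assumption $[\nabla\rho]_\alpha k\eps^{1+\alpha} \ll 1$.

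The main obstacle I expect is careful bookkeeping of the error budget: the target bound at step $k$ has $\eps_k^2$ in the Gaussian denominator, but the per-step boundary error at step $j+1$ naturally carries $\eps_{j+1}^2$. Making the sum $\sum_{j < k} \eps^{-d}\exp(-(R-\eps)^2/(8d\eps_{j+1}^2))$ match the claimed $\eps^2\exp(-R^2/(4d\eps_k^2))$ requires exploiting that for $R \geq R_k \geq R_j$ (since $R_j$ is monotone in $j$) one may uniformly bound $\exp(-(R-\eps)^2/(8d\eps_{j+1}^2))$ using the $R_k$-estimate from \cref{eq:Rk_prop}, leveraging the factor of $2$ margin between $4d$ and $8d$ in the exponents together with the relation $\eps_{k+1}^2/\eps_k^2 = 1 + 1/k$ being bounded.
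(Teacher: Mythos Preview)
Your inductive structure and the decomposition $\rho/\hat\rho_\eps = 1 + \kappa_\eps$ are essentially equivalent to the paper's argument, which instead truncates the $\M_\eps$-integral to $B(x_0,R)$ first and then uses $\hat\rho_\eps(y)^{-1}\rho(y) = 1 + \O([\nabla\rho]_\alpha\eps^{1+\alpha})$ on that ball; the multiplicative errors compound the same way in both.

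The genuine gap is in your additive error budget. You control the per-step exterior/boundary contribution via the \emph{pointwise} estimate of \cref{prop:psi_gaussian_upper}(i), giving decay $\exp\!\big(-(R-\eps)^2/(8d\eps_{j+1}^2)\big)$. Summing $k$ such terms and invoking \labelcref{eq:Rk_prop} with $R \geq R_k$ yields at best $O(\eps^2)$, not the stated $O\!\big(\eps^2\exp(-R^2/(4d\eps_k^2))\big)$. Your closing paragraph has the inequality backwards: the target denominator $4d$ is \emph{smaller} than your $8d$, so the target is a \emph{faster} decay and there is no slack to exploit. Concretely, matching the two would force $k\eps^{-d-2}\exp\!\big(R^2/(8d\eps_k^2)\big) \lesssim 1$, which is false.

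The paper instead pairs the $L^1$ tail bound of \cref{lem:Hoeffding} with $\|\eta_\eps\|_{L^\infty} \lesssim \eps^{-d}$: for instance your second term obeys
\[
\Big|\int_{\R^d\setminus\Omega}\eta_\eps(|x-y|)\,\psi_{j+1,\eps}(y-x_0)\,dy\Big| \;\leq\; \|\eta_\eps\|_{L^\infty}\int_{\{|z|\geq R\}}\psi_{j+1,\eps}(z)\,dz \;\lesssim\; \eps^{-d}\exp\!\Big(-\frac{R^2}{2d\eps_{j+1}^2}\Big),
\]
and likewise for the $\partial_\eps\Omega$ piece of the $\kappa_\eps$-term. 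Summing over $j\leq k$ and writing $\exp\!\big(-R^2/(2d\eps_k^2)\big) = \big[\exp\!\big(-R^2/(4d\eps_k^2)\big)\big]^2$, one factor is bounded by $k^{-1}\eps^{d+2}$ for $R \geq R_k$ (since $R_k^2/(4d\eps_k^2) \geq (R_k-\eps)^2/(8d\eps_k^2)$, cf.\ \labelcref{eq:Rk_prop}), leaving exactly $\eps^2\exp\!\big(-R^2/(4d\eps_k^2)\big)$. Replace your pointwise bound by this $L^1$ tail estimate and the argument closes.
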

\begin{proof}
By \cref{lem:Hoeffding}, the restriction $R\geq R_k$, and \labelcref{eq:Rk_prop}, for any $j\leq k$ we have
\begin{equation}\label{eq:tail}
\int_{\{|x|> R\}}\psi_{j,\epsilon}(x)\d x \leq 2dk^{-1}\epsilon^{d+2} \exp\left( -\frac{R^2}{4d\epsilon_k^2}\right).
\end{equation}
Also by \labelcref{eq:Rk_prop} we have $B(x_0,2\epsilon)\subset \Omega$ as well. Since $\rho\in C^{1,\alpha}(\Omega)$ and $B(x_0,2\epsilon)\subset \Omega$, we can show, as in \cref{rem:rhoe}, that for $\epsilon \ll 1$ we have $\hat\rho_\eps(y) = \rho(y) + \O([\nabla \rho]_\alpha\epsilon^{1+\alpha})$ for $y\in B(x,\epsilon)$ and so 
\[\M_\eps\eta^{x_0}_\eps(x) = (1+\O([\nabla \rho]_\alpha \eps^{1+\alpha}))\int_\Omega\eta_\eps(x-y)\eta_\eps(x_0-y)\d y.\]
Since $B(x_0,2\epsilon)\subset \Omega$ we have
\begin{align*}
\M_\eps\eta^{x_0}_\eps(x) &= (1+\O([\nabla \rho]_\alpha \eps^{1+\alpha})) \int_{\R^n}\eta_\eps(x-y)\eta_\eps(x_0-y)\d y \\
&= (1+\O([\nabla \rho]_\alpha \eps^{1+\alpha})) \int_{\R^n}\eta_\eps(z)\eta_\eps(x-x_0-z)\d z\\
&= (1+\O([\nabla \rho]_\alpha \eps^{1+\alpha})) (\eta_\eps *\eta_\eps)(x-x_0)\\
&= (1+\O([\nabla \rho]_\alpha \eps^{1+\alpha}))\psi_{2,\epsilon}(x-x_0)=(1+\O([\nabla \rho]_\alpha \eps^{1+\alpha}))\psi^{x_0}_{2,\epsilon}(x)
\end{align*}
for all $x\in \Omega$, where we set $\psi^{x_0}_{k,\epsilon}(x) \defeq  \psi_{k,\epsilon}(x-x_0)$ in the last line for notational simplicity. Now assume by way of induction that
\[\M^{j}_\eps\eta^{x_0}_\eps  = (1+\O(2j[\nabla \rho]_\alpha \eps^{1+\alpha}))\psi^{x_0}_{j+1,\epsilon} + \O\left(2j\epsilon^2C_{k,\epsilon}\right)\ \ \text{on } \Omega\]
for some $1 \leq j \leq k-1$, where $C_{k,\epsilon}=k^{-1}\exp\left( -\frac{R^2}{4dk\epsilon^2}\right)$. Then we have 
\[\M^{j+1}_\eps\eta^{x_0}_\eps  = \M_\eps \M^j_\eps \eta^{x_0}_\eps = (1+\O(2j[\nabla \rho]_\alpha \eps^{1+\alpha}))\M_\eps \psi^{x_0}_{j+1,\epsilon} + \O(2j\epsilon^2C_{k,\epsilon}).\]
By \labelcref{eq:tail} and $\hat\rho_\eps(y) = \rho(y) + \O([\nabla \rho]_\alpha \eps^{1+\alpha})$ for $y\in B(x_0,R)$ we have
\begin{align*}
&\phantom{{}={}}
\M^{j+1}_\eps\eta^{x_0}_\eps(x) 
\\
&= (1+\O(2j[\nabla \rho]_\alpha \eps^{1+\alpha}))\int_\Omega\eta_\eps(x-y) \rho_{\epsilon}(y)^{-1}\rho(y) \psi_{j+1,\epsilon}(y-x_0)\d y+ \O(2j\epsilon^2C_{k,\epsilon})\\
&= (1+\O(2j[\nabla \rho]_\alpha \eps^{1+\alpha}))\left(\int_{B(x_0,R)}\eta_\eps(x-y) \rho_{\epsilon}(y)^{-1}\rho(y) \psi_{j+1,\epsilon}(y-x_0)\d y + \O(\epsilon^2C_{k,\epsilon})\right)\\
&\hspace{4in}+ \O(2j\epsilon^2C_{k,\epsilon})\\
&= (1+\O(2j[\nabla \rho]_\alpha \eps^{1+\alpha}))(1 + \O([\nabla \rho]_\alpha \eps^{1+\alpha}))\int_{B(x_0,R)}\eta_\eps(x-y) \psi_{j+1,\epsilon}(y-x_0)\d y \\
&\hspace{4in}+ \O((2j+1)\epsilon^2C_{k,\epsilon})\\
&= (1+\O((2j+2)[\nabla \rho]_\alpha \eps^{1+\alpha})))\int_{\R^d}\eta_\eps(x-y) \psi_{j+1,\epsilon}(y-x_0)\d y + \O(2(j+1)\epsilon^2C_{k,\epsilon})\\
&= (1+\O(2(j+1)[\nabla \rho]_\alpha \eps^{1+\alpha})))\int_{\R^d}\eta_\eps(z) \psi_{j+1,\epsilon}(x-z-x_0)\d z + \O(2(j+1)\epsilon^2C_{k,\epsilon})\\
&= (1+\O(2(j+1)[\nabla \rho]_\alpha \eps^{1+\alpha})))\psi_{j+2,\epsilon}(x-x_0) + \O(2(j+1)\epsilon^2C_{k,\epsilon})
\end{align*}
for all $x\in \Omega$, which completes the proof. 
\end{proof}

\cref{thm:Me} allows us to establish the analogous results to \cref{lem:psik_graph_bounds} for the repeated averaging operator $\M_{\eps}^{k-1}\eta_\eps^x$, which is the content of \cref{cor:Me_graph_bounds}. 
The main ingredient for this is an estimate for the gradient of the repeated averaging operator. 
\begin{proposition}\label{prop:Me_grad}
Let $k\geq 2$ and $x_0\in \Omega$ such that $B(x_0,R_k)\subset \Omega$. Then $\M^{k}_\eps\eta^{x_0}_\eps$ is differentiable, and there exists $C$ depending on $\rho$ and $\eta, \eta'$ such that for $[\nabla \rho]_\alpha k\epsilon^{1+\alpha} \ll 1$ and all $x\in \Omega$ we have
\begin{equation}\label{eq:Me_grad}
|\nabla \M^{k}_\eps\eta^{x_0}_\eps(x)|  \leq C\epsilon_k^{-d}\eps^{-1}
\end{equation}
\end{proposition}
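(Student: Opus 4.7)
The plan is to differentiate $\M_\eps^k\eta_\eps^{x_0}$ by placing the derivative on the \emph{outermost} application of $\M_\eps$, and to control the remaining $(k-1)$-fold averaged function uniformly using Theorem~\ref{thm:Me} together with the Gaussian pointwise bound on $\psi_{k,\eps}$ from Proposition~\ref{prop:psi_gaussian_upper}(i). The naive estimate, bounding $\M^{k-1}_\eps\eta_\eps^{x_0}$ by $\eps^{-d}$, would only give $\eps^{-(d+1)}$; the gain of a factor $k^{d/2}$ down to $\eps_k^{-d}\eps^{-1}$ comes from the fact that $k{-}1$-fold averaging already produces Gaussian concentration.

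\emph{Step 1 (differentiation under the integral).} Write
\[
\M_\eps^k\eta_\eps^{x_0}(x) \;=\; \int_\Omega \eta_\eps(|x-y|)\,\hat\rho_\eps(y)^{-1}\rho(y)\,\M_\eps^{k-1}\eta_\eps^{x_0}(y)\,\dy,
\]
and differentiate the kernel $\eta_\eps(|x-y|)$ with respect to $x$ (the dependence on $\eta'$ in the constant reflects that $\eta$ is taken $C^1$ here, consistent with the smoothing use of $\eta$ elsewhere). This yields
\[
\nabla_x \M_\eps^k\eta_\eps^{x_0}(x) \;=\; \int_\Omega \nabla_x\eta_\eps(|x-y|)\,\hat\rho_\eps(y)^{-1}\rho(y)\,\M_\eps^{k-1}\eta_\eps^{x_0}(y)\,\dy,
\]
and $|\nabla_x\eta_\eps(|x-y|)|\lesssim \eps^{-(d+1)}$, supported in $B(x,\eps)\cap\Omega$, which has volume $\lesssim \eps^d$. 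By \eqref{eq:rhoe_bounds}, $\hat\rho_\eps(y)^{-1}\rho(y)$ is uniformly bounded, so
\[
|\nabla_x \M_\eps^k\eta_\eps^{x_0}(x)| \;\lesssim\; \eps^{-1}\,\bigl\|\M_\eps^{k-1}\eta_\eps^{x_0}\bigr\|_{L^\infty(\Omega)}.
\]

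\emph{Step 2 (uniform Gaussian bound on $\M_\eps^{k-1}\eta_\eps^{x_0}$).} Applying Theorem~\ref{thm:Me} with $k$ replaced by $k-1$ and $R=R_k\ge R_{k-1}$ gives, for $[\nabla\rho]_\alpha k\eps^{1+\alpha}\ll 1$,
\[
\M_\eps^{k-1}\eta_\eps^{x_0}(y) \;=\; (1+o(1))\,\psi_{k,\eps}(y-x_0) \;+\; O\!\Bigl(\eps^{2}\exp\!\bigl(-R_k^2/(4d\eps_k^2)\bigr)\Bigr)
\]
uniformly in $y\in\Omega$. By Proposition~\ref{prop:psi_gaussian_upper}(i), $\|\psi_{k,\eps}\|_{L^\infty(\R^d)}\lesssim \eps_k^{-d}$, and by the third identity in \eqref{eq:Rk_prop} the error term is bounded by $\eps^{d+4}/k$, which is dominated by $\eps_k^{-d}$ under \cref{ass:epsk}. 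Therefore
\[
\bigl\|\M_\eps^{k-1}\eta_\eps^{x_0}\bigr\|_{L^\infty(\Omega)} \;\lesssim\; \eps_k^{-d}.
\]

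\emph{Step 3 (combination).} Inserting the $L^\infty$ bound of Step~2 into the estimate from Step~1 yields
\[
|\nabla_x \M_\eps^k\eta_\eps^{x_0}(x)| \;\lesssim\; \eps^{-1}\cdot \eps_k^{-d} \;=\; C\,\eps_k^{-d}\eps^{-1},
\]
which is the desired bound. The main subtlety is not in the differentiation itself but in justifying a \emph{global} (rather than local near $x_0$) bound $\|\M_\eps^{k-1}\eta_\eps^{x_0}\|_\infty\lesssim \eps_k^{-d}$. This is the reason for pushing the approximation radius all the way up to $R=R_k$ in Theorem~\ref{thm:Me}: doing so ensures the Gaussian tail error is negligible compared to the bulk $\eps_k^{-d}$ estimate everywhere in $\Omega$, not merely in a neighborhood of $x_0$.
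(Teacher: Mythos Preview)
Your argument is correct and follows the same overall strategy as the paper: differentiate the outermost application of $\M_\eps$ to produce the factor $\eps^{-1}$, then bound $\|\M_\eps^{k-1}\eta_\eps^{x_0}\|_{L^\infty}$ by $C\eps_k^{-d}$ via \cref{thm:Me} and \cref{prop:psi_gaussian_upper}(i).

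The one substantive difference is in Step~1. You differentiate $\eta_\eps(|x-y|)$ classically, which requires $\eta\in C^1$; you justify this by the appearance of $\eta'$ in the statement. The paper, however, works under \cref{ass:eta} only, which does not assume any smoothness of $\eta$ beyond monotonicity and continuity at the origin (so, e.g., $\eta=c\,\one_{[0,1]}$ is allowed). Its proof therefore replaces your pointwise differentiation by a BV argument: since $\eta$ is non-increasing, the radial function $\bar\eta(z)=\eta(|z|)$ lies in $BV(\R^d)$ by the coarea formula, and the change of variables $z\mapsto (x-y)/\eps$ carries this to a bound $|D_x f_\eps(x;\cdot)|(\overline{B(x,\eps)}) \le \eps^{d-1}|D\bar\eta|(\overline{B(0,1)})$. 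This yields exactly your inequality $|\nabla \M_\eps^k\eta_\eps^{x_0}(x)|\lesssim \eps^{-1}\|\M_\eps^{k-1}\eta_\eps^{x_0}\|_{L^\infty(B(x,\eps))}$ with the constant depending on the total variation of $\bar\eta$ rather than on $\|\eta'\|_\infty$. The paper also inserts a short induction showing the integrand $y\mapsto \hat\rho_\eps(y)^{-1}\rho(y)\M_\eps^{k-1}\eta_\eps^{x_0}(y)$ is continuous for $k\ge 2$, so that integrating against the vector measure $D_x f_\eps$ is legitimate. Your version is cleaner but less general; the paper's buys compatibility with the standing assumptions at the cost of the BV machinery.
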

The idea of the proof of  \cref{prop:Me_grad} is to write
\[\M^k_\eps \eta^{x_0}_\eps(x) = \epsilon^{-d}\int_{\Omega}\eta\left(\frac{|x-y|}{\epsilon}\right)\hat\rho_\eps(y)^{-1}\rho(y)\M^{k-1}_\eps\eta^{x_0}_\eps(y)\d y,\]
and differentiate in $x$ under the integral, which produces the extra $\eps^{-1}$ term. The proof is somewhat technical since we do not require Lipschitzness of $\eta$, and instead produce the desired estimates using that $\eta$ is radially decreasing, and thus has bounded variation. We defer the proof to \cref{app:rep_avg}.

\begin{corollary}\label{cor:Me_graph_bounds}
Let $\eps \geq \delta > 0$. There exists $C_1,C_2,C_3,C_4>0$ such that the following hold.
\begin{enumerate}[label=(\roman*)]
\item   For $x_0\in \Omega$ with $B(x_0,R_k)\subset \Omega$ it holds that
\begin{equation}\label{eq:oscillation_bound_psi_ke}
\|\osc_{\Omega \cap B(\cdot,\delta)} \M^{k}_\eps\eta^{x_0}_\eps\|_{L^1(\Omega)} \leq  C_1 (\eps^{-1}\delta\log(\eps^{-1})^{\frac{d}{2}} + \eps^2).
\end{equation}
\item  For $\delta \ll 1$, the event that
\[\ng{1}{\M^{k}_\eps\eta^{x_0}_\eps - \M^{k}_\eps\eta^{y_0}_\eps}  \leq C_2 (\eps^{-1}\delta\log(\eps^{-1})^{\frac{d}{2}} + \eps^2)\]
holds for all $x_0,y_0\in \Omega$ with $|x_0-y_0| \leq \delta$ and $B(x_0,R_k)\cup B(y_0,R_k)\subset \Omega$  has probability at least $1-C_3 n \exp\left( -C_4 n\delta^d\right)$. 
\end{enumerate}
\end{corollary}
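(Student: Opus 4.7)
The plan is to transfer both estimates from the corresponding statements for the repeated convolution $\psi_{k+1,\eps}$, already established in \cref{prop:psi_gaussian_upper}(iv) and \cref{lem:psik_graph_bounds}(ii), by invoking the pointwise approximation of \cref{thm:Me}. Concretely, for any $x_0 \in \Omega$ with $B(x_0, R_k) \subset \Omega$, taking $R = R_k$ in \cref{thm:Me} together with the third bound in \eqref{eq:Rk_prop} yields, for every $x \in \Omega$,
\begin{equation*}
\M_\eps^k \eta_\eps^{x_0}(x) = (1 + \O([\nabla\rho]_\alpha k \eps^{1+\alpha}))\, \psi_{k+1,\eps}(x - x_0) + \O(k^{-1} \eps^{d+4}),
\end{equation*}
with an additive error that is uniform in $x$ and negligible at the scales considered.

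For part~(i), since the multiplicative factor above does not depend on the evaluation point, subtracting the representations at $x$ and at $y$ gives $|\M_\eps^k \eta_\eps^{x_0}(x) - \M_\eps^k \eta_\eps^{x_0}(y)| \lesssim |\psi_{k+1,\eps}(x-x_0) - \psi_{k+1,\eps}(y-x_0)| + k^{-1}\eps^{d+4}$. Taking the oscillation over $y \in B(x,\delta) \cap \Omega$ and integrating in $x$ over $\Omega$ reduces matters to \cref{prop:psi_gaussian_upper}(iv) applied with $k+1$ in place of $k$; using $\eps_{k+1}^{-1} \leq \eps^{-1}$ then produces the claimed $\eps^{-1}\delta \log(\eps^{-1})^{d/2} + \eps^2$, with the residual $\eps^{d+4}$ contribution absorbed into $\eps^2$.

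For part~(ii), apply the same approximation at both $x_0$ and $y_0$ (both are interior under the hypothesis $B(x_0, R_k) \cup B(y_0, R_k) \subset \Omega$), subtract, and take the graph $\ell^1(\X_n)$-norm. Since the multiplicative factors agree, one obtains
\begin{equation*}
\ng{1}{\M_\eps^k \eta_\eps^{x_0} - \M_\eps^k \eta_\eps^{y_0}} \lesssim \ng{1}{\psi_{k+1,\eps}(\cdot - x_0) - \psi_{k+1,\eps}(\cdot - y_0)} + k^{-1}\eps^{d+4},
\end{equation*}
and \cref{lem:psik_graph_bounds}(ii) with $k+1$ in place of $k$ supplies, on an event of probability at least $1 - C_3 n \exp(-C_4 n \delta^d)$, the desired bound.

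The main bookkeeping hurdle is the multiplicative error $\O([\nabla\rho]_\alpha k\eps^{1+\alpha})$ inherited from \cref{thm:Me}: absorbing it into a universal $(1+o(1))$ factor requires $k\eps^{1+\alpha}$ to be bounded, which will hold in the regimes where this corollary is eventually applied (in particular under \cref{ass:epsk} together with the restriction $\eps \geq \delta$ already made in the corollary); otherwise the constants $C_1, C_2$ would need to acquire explicit dependence on $[\nabla\rho]_\alpha$. Beyond this deterministic bookkeeping, the random-graph estimate in~(ii) borrows its probabilistic content directly from \cref{lem:psik_graph_bounds}(ii) via the pointwise comparison with $\psi_{k+1,\eps}$.
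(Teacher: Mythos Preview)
Your approach has a genuine gap: the multiplicative factor $(1+\O([\nabla\rho]_\alpha k\eps^{1+\alpha}))$ in \cref{thm:Me} is \emph{not} independent of the evaluation point. Tracing the induction in the proof of \cref{thm:Me}, the factor arises from replacing $\hat\rho_\eps(y)^{-1}\rho(y)$ by $1+e(y)$ with $|e(y)|\lesssim\eps^{1+\alpha}$ inside the convolution integral, so the resulting prefactor depends on $x$ (and on $x_0$). Consequently, when you subtract $\M_\eps^k\eta_\eps^{x_0}(x)-\M_\eps^k\eta_\eps^{x_0}(y)$, the cross term $(c(x)-c(y))\psi_{k+1,\eps}(y-x_0)$ does not vanish; it contributes, after taking the sup over $B(x,\delta)$ and integrating in $x$, a term of order $k\eps^{1+\alpha}\log(\eps^{-1})^{d/2}$. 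This is \emph{not} dominated by $\eps^{-1}\delta\log(\eps^{-1})^{d/2}+\eps^2$ under the sole hypothesis $\delta\le\eps$ (take e.g.\ $\delta$ very small with $k\sim\eps^{-2}$). The same issue arises in (ii), where the prefactors at the two poles $x_0,y_0$ need not agree.

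The paper proceeds differently and avoids this problem. For (i) it uses the direct gradient bound of \cref{prop:Me_grad}, namely $|\nabla\M_\eps^k\eta_\eps^{x_0}|\le C\eps_k^{-d}\eps^{-1}$, to control the oscillation on $B(x_0,2R_k)$ by $CR_k^d\delta\eps_k^{-d}\eps^{-1}\lesssim\eps^{-1}\delta\log(\eps^{-1})^{d/2}$, and uses \cref{thm:Me} only for the tail bound $\|\M_\eps^k\eta_\eps^{x_0}\|_{L^\infty(\Omega\setminus B(x_0,R_k))}\le\eps^2$ (where the multiplicative factor is harmless). For (ii) it invokes the symmetry $\M_\eps^k\eta_\eps^{x_0}(z)=\M_\eps^k\eta_\eps^{z}(x_0)$ from \cref{prop:Me}\,(iv), together with the transportation map of \cref{thm:transportation}, to convert the graph $\ell^1$-difference in the pole into a pointwise difference in the evaluation variable, and then applies \cref{prop:Me_grad} again. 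In neither part does the argument rely on the pointwise comparison with $\psi_{k+1,\eps}$ for oscillation control.
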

\begin{proof}
We first prove (i). As in the proof of  \cref{prop:psi_gaussian_upper} (iv), we use  \cref{prop:Me_grad} to show that since $R_k\geq \eps \geq \delta$ we have
\begin{align*}
\|\osc_{\Omega \cap B(\cdot,\delta)} \M^k_\eps\eta^{x_0}_\eps\|_{L^1(\Omega)} &\leq CR_k^d \delta\|\nabla \M^k_\eps \eta^{x_0}_\eps\|_{L^\infty(\Omega)} + 2|\Omega| \|\M^k_\eps \eta^{x_0}_\eps\|_{L^\infty(\Omega\setminus B(x_0,R_k))}\\
&\leq CR_k^d \delta\eps^{-1}\eps_k^{-d} + 2|\Omega|\|\M^k_\eps \eta^{x_0}_\eps\|_{L^\infty(\Omega\setminus B(x_0,R_k))}.
\end{align*}
Using \cref{thm:Me} we can follow the proof of  \cref{prop:psi_gaussian_upper} (iv), but here we note that since $R_k$ satisfies \labelcref{eq:Rkdef,eq:Rk_prop} we have
\begin{equation}\label{eq:linf_Me}
\|\M^k_\eps \eta^{x_0}_\eps\|_{L^\infty(\Omega\setminus B(x_0,R_k))} \leq C\eps^{-d}\exp\left( -\frac{R_k^2}{8d\eps_k^2}\right) \leq \eps^{2}
\end{equation}
By \labelcref{eq:Rk_prop} we have  $R_k \leq C\epsilon_k \log(\eps^{-1})^{\frac{1}{2}}$ and so
\[\|\osc_{\Omega \cap B(\cdot,\delta)} \M^k_\eps\eta^{x_0}_\eps\|_{L^1(\Omega)} \leq C(R_k^d \delta \eps^{-1}\eps_k^{-d} + \eps^2) \leq C (\eps^{-1}\delta \log(\eps^{-1})^{\frac{d}{2}} + \eps^2).\]

To prove (ii) we assume \cref{thm:transportation} holds with $\delta\geq |x_0-y_0|$ and $\lambda=\frac{\rho_{\min}}{8\rho_{\max}}$, which has probability at least $1-C_3 n \exp\left( -C_4 n\delta^d\right)$. Using \cref{prop:Me} (iv), \cref{prop:Me_grad}, as well as \labelcref{eq:linf_Me,eq:Rk_prop} we have
\begin{align*}
\ng{1}{\M^{k}_\eps\eta^{x_0}_\eps - \M^{k}_\eps\eta^{y_0}_\eps} &= \frac{1}{n}\sum_{i=1}^n|\M^{k}_\eps\eta^{x_0}_\eps(x_i) - \M^{k}_\eps\eta^{y_0}_\eps(x_i)|\\
&=\int_\Omega |\M^{k}_\eps\eta^{x_0}_\eps(T_\delta(x)) - \M^{k}_\eps\eta^{y_0}_\eps(T_\delta(x))| \rho_\delta(x)\d x\\
&\leq C\int_{B(x_0,2R_k)}|\M^{k}_\eps\eta^{T_\delta(x)}_\eps(x_0) - \M^{k}_\eps\eta^{T_\delta(x)}_\eps(y_0)| \d x\\
&\qquad
+ C\|\M^k_\eps \eta^{x_0}_\eps\|_{L^\infty(\Omega\setminus B(x_0,2R_k-\delta))} + C\|\M^k_\eps \eta^{y_0}_\eps\|_{L^\infty(\Omega\setminus B(x_0,2R_k-\delta))}\\
&\leq C\int_{B(x_0,2R_k)} \|\nabla \M^k_\eps \eta_\eps^{T_\delta(x)}\|_{L^\infty(\Omega)} |x_0-y_0| \d x\\
&\qquad + C\|\M^k_\eps \eta^{x_0}_\eps\|_{L^\infty(\Omega\setminus B(x_0,R_k))} + C\|\M^k_\eps \eta^{y_0}_\eps\|_{L^\infty(\Omega\setminus B(y_0,R_k))}\\
&\leq C(R_k^d \eps_k^{-d}\eps^{-1}\delta + \eps^2) \leq C (\eps^{-1}\delta \log(\eps^{-1})^{\frac{d}{2}} + \eps^2).\qedhere
\end{align*}
\end{proof}

\subsection{Interior asymptotics of the heat kernel}

We now prove our main result in this section, which is an refinement of \cref{thm:heatkernel_nonlocal} and controls the arising error terms in the interior of the domain, where the boundary effects of the heat kernel can be ignored. 
\begin{theorem}\label{thm:heatkernel_final}
Let $x_0\in \Omega$, $m\geq 2$ and suppose $R\geq R_m$. Let us define $\phi$ and $\cdk$ by
\begin{equation}\label{eq:Kconstant}
\phi(z) = \min\left\{\cdk,k\exp\left( -\frac{(|z|-\eps)_+^2}{8d\epsilon_k^2}\right) \right\}, \text{ and } \cdk\defeq  
\begin{cases}
\displaystyle \sqrt{k},& \text{if } d=1\\
\displaystyle \log(k+1),& \text{if } d=2\\
\displaystyle \frac{d}{d-2},& \text{if } d\geq 3.
\end{cases}
\end{equation}
There exists $C>0$ such that for $m\epsilon^{1+\alpha} \ll 1$ and $0<\lambda \leq 1$ the estimate
\begin{equation}\label{eq:heat_kernel_asymptotic_Me}
\H^{x}_{k}(x_i) = \hat\rho_\eps(x)^{-1}\M_{\eps}^{k-1}\eta_\eps^x(x_i) + \O\left(\lambda\epsilon^{-d}\phi(x-x_i) + \lambda\eps_k^2 \exp\left( -\frac{R^2}{4d\epsilon_k^2}\right)\right)
\end{equation}
holds for all $i=1,\dots,n$, $x\in \X_n\cup\{x_0\}$ for which $B(x,R)\subset \Omega$, and $k=2,\dots,m$ with probability at least $1-12mn^2\exp\left( -C n \epsilon^d \lambda^2\right)$.
\end{theorem}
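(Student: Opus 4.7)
The plan is to refine the error term in \cref{thm:heatkernel_nonlocal}, which already gives
\[
\H^x_k(x_i) \;=\; \hat\rho_\eps(x)^{-1}\M_\eps^{k-1}\eta_\eps^x(x_i) \;+\; \O\Big(\lambda \sum_{j=0}^{k-2}\|\M_\eps^j \eta_\eps^x\|_{L^\infty(\Omega \cap B(x_i,\eps))}\Big)
\]
with probability at least $1 - 12mn^2 \exp(-Cn\eps^d \lambda^2)$. It then suffices to bound the right-hand sum by $\eps^{-d}\phi(x-x_i) + \eps_k^2 \exp(-R^2/(4d\eps_k^2))$ deterministically, which we will do by using the identification $\M_\eps^j \eta_\eps^x \approx \psi_{j+1,\eps}(\cdot - x)$ from \cref{thm:Me} together with the pointwise Gaussian estimate in \cref{prop:psi_gaussian_upper} (i).

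Because $B(x,R) \subset \Omega$ with $R \geq R_m \geq R_{j+1}$ for every relevant $j$, and because $m\eps^{1+\alpha} \ll 1$ controls the multiplicative error uniformly, \cref{thm:Me} yields for each $j \in \{0,\dots,k-2\}$ and $y \in \Omega$
\[
\M_\eps^j \eta_\eps^x(y) \;=\; \bigl(1 + \O(m\eps^{1+\alpha})\bigr)\, \psi_{j+1,\eps}(y-x) \;+\; \O\Bigl(\eps^2 \exp\bigl(-\tfrac{R^2}{4d\eps_{j+1}^2}\bigr)\Bigr).
\]
Since $\eps_{j+1} \leq \eps_k$, summing the additive errors across $j = 0,\dots, k-2$ contributes at most $k\eps^2 \exp(-R^2/(4d\eps_k^2)) = \eps_k^2 \exp(-R^2/(4d\eps_k^2))$, which already produces the second error term in the target bound. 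The multiplicative factor $1 + \O(m\eps^{1+\alpha}) = 1 + o(1)$ is harmlessly absorbed into the big-$\O$ constant.

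It remains to estimate $\sum_{j=0}^{k-2}\|\psi_{j+1,\eps}(\cdot - x)\|_{L^\infty(B(x_i,\eps) \cap \Omega)}$. Using the two-sided bound $|\psi_{j+1,\eps}(y)| \leq C \min\bigl(\eps_{j+1}^{-d},\, \eps^{-d}\exp(-|y|^2/(8d\eps_{j+1}^2))\bigr)$ from \cref{prop:psi_gaussian_upper} (i), together with the observation that $|y-x| \geq (|x-x_i|-\eps)_+$ for every $y \in B(x_i,\eps)$, the sum is bounded by
\[
C\eps^{-d}\sum_{j=1}^{k-1}\min\Bigl(j^{-d/2},\;\exp\bigl(-\tfrac{(|x-x_i|-\eps)_+^2}{8d\eps^2 j}\bigr)\Bigr).
\]
Writing $s \defeq (|x-x_i|-\eps)_+^2/(8d\eps^2)$, the polynomial estimate $j^{-d/2}$ gives $\sum_{j=1}^{k-1} j^{-d/2} \lesssim \cdk$ via the standard dichotomy $\sum_{j=1}^{k-1} j^{-d/2} \sim \sqrt{k}$ for $d=1$, $\sim \log(k+1)$ for $d=2$, and bounded by $d/(d-2)$ for $d \geq 3$; simultaneously, since $s/j \geq s/k$ for every $j \leq k$, the exponential estimate gives $\sum_{j=1}^{k-1}\exp(-s/j) \leq k\exp(-s/k) = k\exp(-(|x-x_i|-\eps)_+^2/(8d\eps_k^2))$. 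Taking the minimum of the two envelopes precisely reproduces $\phi(x-x_i)$ as defined in \labelcref{eq:Kconstant}.

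The main technical point is that the two termwise upper bounds (polynomial and Gaussian) dominate in different regimes — namely for $|x-x_i| \lesssim \eps_k$ the polynomial tail gives $\cdk$, while for $|x-x_i| \gg \eps_k$ the exponential decay wins — and both envelopes control the whole sum individually, so their minimum serves as a universal envelope. This is exactly the form encoded by $\phi$. Combining the two contributions yields the bound $\eps^{-d}\phi(x-x_i) + \eps_k^2\exp(-R^2/(4d\eps_k^2))$, which upon multiplication by $\lambda$ completes \labelcref{eq:heat_kernel_asymptotic_Me}; the probability estimate is inherited unchanged from \cref{thm:heatkernel_nonlocal}.
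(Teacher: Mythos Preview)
Your proof is correct and follows essentially the same approach as the paper: start from \cref{thm:heatkernel_nonlocal}, use \cref{thm:Me} to replace each $\M_\eps^j\eta_\eps^x$ by $\psi_{j+1,\eps}(\cdot - x)$ up to an additive error that sums to $\eps_k^2\exp(-R^2/(4d\eps_k^2))$, and then control the remaining sum via the two-sided bound in \cref{prop:psi_gaussian_upper}(i) together with $\sum_j \min(a_j,b_j)\le \min(\sum_j a_j,\sum_j b_j)$ to produce $\phi(x-x_i)$. The paper combines the application of \cref{thm:Me} and \cref{prop:psi_gaussian_upper} into a single pointwise estimate on $\|\M_\eps^j\eta_\eps^x\|_{L^\infty(\Omega\cap B(x_i,\eps))}$ before summing, but this is only a cosmetic difference.
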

\begin{proof}
By \cref{thm:Me,prop:psi_gaussian_upper} we have for any $j= 0,...,k-2$ the estimate
\begin{align*}
\|\M_{\eps}^j\eta_\eps^x\|_{L^\infty(\Omega\cap B(x_i,\epsilon))} &\leq C\Bigg[\eps^{-d}\min\left\{(j+1)^{-\frac{d}{2}},\exp\left( -\frac{(|x-x_i|-\eps)_+^2}{8d(j+1)\epsilon^2}\right) \right\}\\
&\hspace{2.5in}+ \epsilon^2 \exp\left( -\frac{R^2}{4dj\epsilon_j^2}\right)\Bigg]
\end{align*}
Note that
\[\sum_{j=0}^{k-2}(j+1)^{-\frac{d}{2}} \leq 1 + \int_0^{k-2} (x+1)^{-\frac{d}{2}}\d x \leq 2\cdk.\]
Therefore, for any $0 < \lambda \leq 1$ we have
\begin{align*}
\lambda\sum_{j=0}^{k-2}\|\M_{\eps}^j\eta_\eps^x\|_{L^\infty(\Omega\cap B(x_i,\epsilon))} &\leq C\lambda\eps^{-d}\min\left\{\cdk,k\exp\left( -\frac{(|x-x_i|-\eps)_+^2}{8d\epsilon_k^2}\right) \right\}\\
&\hspace{2in} + C\lambda(k-1)\epsilon^2 \exp\left( -\frac{R^2}{4d\epsilon_k^2}\right)\\
&=C\lambda\left(\epsilon^{-d}\phi(x-x_i) + \eps_k^2 \exp\left( -\frac{R^2}{4d\epsilon_k^2}\right)\right).
\end{align*}
The proof is completed by inserting the estimates above into \cref{thm:heatkernel_nonlocal}.
\end{proof}
\begin{remark}\label{rem:heat_kernel_asymptotic}
Note that by \cref{thm:Me}, we can replace \labelcref{eq:heat_kernel_asymptotic_Me} with the estimate
\begin{align}\label{eq:heat_kernel_asymptotic}
\H^{x}_{k}(x_i) &= (1+\O([\nabla \rho]_\alpha k\epsilon^{1+\alpha}))\rho(x)^{-1}\psi_{k,\epsilon}(x_i-x)\notag \\
&\hspace{2in} +\O\left(\lambda \eps^{-d}\phi(x-x_i) +\eps_k^2 \exp\left( -\frac{R^2}{4d\epsilon_k^2}\right) \right).
\end{align}
However, the additional error term $\O([\nabla \rho]_\alpha k\epsilon^{1+\alpha})$ that arises is too large for the discrete to continuum applications later in  \cref{sec:combination} when $\rho$ is nonconstant. The estimate \labelcref{eq:heat_kernel_asymptotic} is is mainly useful when $\rho$ is constant, as well as for proving rough estimates on the decay of the heat kernel and its $\ell^p$ norms, as we shall see below.
\end{remark}
We now turn to a decay estimate on the heat kernel. 
\begin{corollary}\label{cor:heat_kernel_decay}
Let $x_0\in \Omega$ with $B(x_0,R_k)\subset \Omega$ and assume $[\nabla \rho]_\alpha k\epsilon^{1+\alpha} \ll 1$ and $\eps \ll 1$. There exists $C_1,C_2>0$, with $C_1$ additionally depending on $[\nabla \rho]_\alpha$, such that the event that $\H^x_k(x_i) \leq C_1\epsilon$ holds for all $i=1,\dots,n$ and $x\in \X_n\cup\{x_0\}$ for which $B(x,R_k)\subset \Omega$ and $|x-x_i|\geq R_k$ has probability at least $1-12kn^2\exp\left( -C_2 n \epsilon^d \lambda^2\right)$.
\end{corollary}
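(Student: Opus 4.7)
The plan is to apply \cref{thm:heatkernel_final} directly with radius $R = R_k$ and then exploit the hypothesis $|x - x_i| \geq R_k$ to show that each of the three terms on the right-hand side of \eqref{eq:heat_kernel_asymptotic_Me} is at most $O(\epsilon^2)$. The key fact used repeatedly is the tail estimate \eqref{eq:Rk_prop} which, by design of $R_k$ in \eqref{eq:Rkdef}, gives $\exp\left(-(R_k - \epsilon)^2/(8d\epsilon_k^2)\right) \leq k^{-1}\epsilon^{d+2}$, exactly the decay needed to kill the $\epsilon^{-d}$ and $k$ prefactors appearing below.

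First, for the principal term $\hat\rho_\eps(x)^{-1}\M_\eps^{k-1}\eta_\eps^x(x_i)$, I would invoke \cref{thm:Me} (valid since $B(x, R_k) \subset \Omega$) to rewrite it as $(1 + O([\nabla\rho]_\alpha k\epsilon^{1+\alpha}))\rho(x)^{-1}\psi_{k,\epsilon}(x_i - x)$ plus an $O(\epsilon^2\exp(-R_k^2/(4d\epsilon_k^2)))$ contribution. The Gaussian pointwise bound \cref{prop:psi_gaussian_upper}(i) gives $|\psi_{k,\epsilon}(x_i - x)| \leq C\epsilon^{-d}\exp(-|x_i-x|^2/(8d\epsilon_k^2))$, which upon using $|x_i - x| \geq R_k$ and \eqref{eq:Rk_prop} is at most $C\epsilon^{-d}\cdot k^{-1}\epsilon^{d+2} = Ck^{-1}\epsilon^2$. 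Dividing by $\rho_{\min}$ and using the hypothesis $[\nabla\rho]_\alpha k\epsilon^{1+\alpha} \ll 1$ to control the multiplicative factor yields an $O(\epsilon^2)$ bound, with the implicit constant depending on $[\nabla\rho]_\alpha$.

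For the two additive error terms in \eqref{eq:heat_kernel_asymptotic_Me}, similar bookkeeping applies. The term $\lambda\epsilon^{-d}\phi(x - x_i)$ uses the definition of $\phi$ in \eqref{eq:Kconstant}: since $|x - x_i| \geq R_k \geq 5\epsilon$, we have $(|x - x_i| - \epsilon)_+^2 \geq (R_k - \epsilon)^2$, so $\phi(x - x_i) \leq k\cdot k^{-1}\epsilon^{d+2} = \epsilon^{d+2}$, contributing $O(\lambda\epsilon^2)$. The term $\lambda\epsilon_k^2\exp(-R_k^2/(4d\epsilon_k^2)) \leq \lambda k\epsilon^2 \cdot k^{-1}\epsilon^{d+2} = \lambda\epsilon^{d+4}$ is negligible. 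Fixing $\lambda = 1$ (which is allowed in \cref{thm:heatkernel_final}) and combining yields $\H^x_k(x_i) \lesssim \epsilon^2 \leq \epsilon$, while the probability bound is inherited verbatim from \cref{thm:heatkernel_final} with $m = k$. The only obstacle is the purely arithmetic one of tracking that the chosen $R_k$ in \eqref{eq:Rkdef} was precisely calibrated so that the $\epsilon^{-d}$ and $k$ prefactors are absorbed by the exponential decay to leave a polynomial remainder; no further ideas are needed.
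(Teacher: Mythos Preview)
Your proposal is correct and matches the paper's proof essentially line for line: both apply \cref{thm:heatkernel_final} with $R=R_k$ and $\lambda=1$, pass through \cref{thm:Me} (as in \cref{rem:heat_kernel_asymptotic}) to control the principal term via the Gaussian bound on $\psi_{k,\epsilon}$, and then use the tail calibration \eqref{eq:Rk_prop} to kill the $\epsilon^{-d}$ and $k$ prefactors in all three contributions. The only item you omit is the trivial case $k=1$ (needed since \cref{thm:heatkernel_final} requires $m\geq 2$), where $\H^x_1(x_i)=0$ because $|x-x_i|\geq R_1>\epsilon$ forces $\eta_\eps(|x-x_i|)=0$.
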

\begin{proof}
The case $k=1$ is trivial. For $k\geq 2$, we apply  \cref{thm:heatkernel_final} with $\lambda=1$ and $R=R_k$ and  \cref{thm:Me} as outlined in  \cref{rem:heat_kernel_asymptotic}. The restriction $|x-x_i| \geq R_k$ implies that 
\[\psi_{k,\epsilon}(x-x_i) \leq \epsilon, \ \ k\epsilon^{-d}\exp\left( -\frac{(|x-x_i|-\eps)_+^2}{8d\eps_k^2}\right) \leq \eps \ \ \text{and} \ \ \eps_k^2 \exp\left( -\frac{R^2}{4d\eps_k^2}\right) \leq \epsilon,\]
provided $|x-x_i|\geq R_k$. Inserting these estimates into \labelcref{eq:heat_kernel_asymptotic} completes the proof.
\end{proof}

In order use  \cref{thm:heatkernel_final} to obtain $\l p$ bounds, we will need to control the $\l p$ norm of the function $\phi$ defined in \labelcref{eq:Kconstant}. 
\begin{lemma}\label{lem:phi_graph_lp}
Let $\phi$ be defined by \labelcref{eq:Kconstant}. There exists $C_1,C_2,C_3>0$ such that the event that
\begin{equation}\label{eq:phi_lp_bound}
\ng{p}{\phi(\cdot-x_i)} \leq C_3\cdk\epsilon_k^{\frac{d}{p}}\log(\eps^{-1})^{\frac{d}{2p}}
\end{equation}
holds for all $x_0\in \Omega$ has probability at least $1-C_1 n \exp\left( -C_2 n\epsilon^d\right)$. 
\end{lemma}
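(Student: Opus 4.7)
The plan is to mirror the proof of \cref{lem:psik_graph_bounds}\,(i): transport the graph average into a continuum integral and then reduce the estimate to computing the $L^p$ norm of a close relative of $\phi$ on $\R^d$. Specifically, I will invoke \cref{thm:transportation} with $\delta=\eps$ and $\lambda=\rho_{\min}/(8\rho_{\max})$ to obtain, with probability at least $1-C_1 n\exp(-C_2 n\eps^d)$, a measurable map $T_\delta:\Omega\to\X_n$ satisfying $|T_\delta(x)-x|\leq\eps$ together with a density $\rho_\delta$ with $\rho_\delta\lesssim 1$ and ${T_\delta}_\#(\rho_\delta\d x)=\mu_n$. On this event,
\[
\ng{p}{\phi(\cdot-x_0)}^p
= \frac1n\sum_{j=1}^n\phi(x_j-x_0)^p
= \int_\Omega\phi(T_\delta(y)-x_0)^p\,\rho_\delta(y)\,\d y
\lesssim \int_\Omega\phi(T_\delta(y)-x_0)^p\,\d y.
\]

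Next I absorb the $\eps$-shift introduced by $T_\delta$ into the definition of $\phi$. Since $\phi$ is radially nonincreasing and $|T_\delta(y)-x_0|\geq|y-x_0|-\eps$, for $|y-x_0|\geq 2\eps$ we have $(|T_\delta(y)-x_0|-\eps)_+\geq|y-x_0|-2\eps$, so
\[
\phi(T_\delta(y)-x_0)\leq \tilde\phi(y-x_0)\defeq\min\Bigl\{\cdk,\ k\exp\bigl(-(|y-x_0|-2\eps)_+^2/(8d\eps_k^2)\bigr)\Bigr\},
\]
and it suffices to bound $\|\tilde\phi\|_{L^p(\R^d)}^p$, which I can restrict to balls if needed and which is independent of $x_0$.

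To compute $\|\tilde\phi\|_{L^p(\R^d)}^p$ I split at the transition radius $r^\ast=2\eps+\eps_k\sqrt{8d\log(k/\cdk)}$ where the two branches of the min coincide (the degenerate case $k\leq\cdk$, which only arises for bounded $k$ when $d\geq 3$, is trivial since then $\phi\leq k\leq\cdk$ on a small ball). On the plateau $\{|z|\leq r^\ast\}$ we have $\tilde\phi\leq\cdk$, contributing $\lesssim\cdk^p(r^\ast)^d$. On the Gaussian tail $\{|z|\geq r^\ast\}$ I use the elementary inequality
\[
(|z|-2\eps)^2-(r^\ast-2\eps)^2
=(|z|-r^\ast)\bigl(|z|+r^\ast-4\eps\bigr)
\geq (|z|-r^\ast)^2,
\]
which at $|z|=r^\ast$ reads $k^p e^{-p(r^\ast-2\eps)^2/(8d\eps_k^2)}=\cdk^p$ and gives
\[
k^p\exp\!\bigl(-p(|z|-2\eps)^2/(8d\eps_k^2)\bigr)
\leq \cdk^p\exp\!\bigl(-p(|z|-r^\ast)^2/(8d\eps_k^2)\bigr).
\]
Integrating in spherical coordinates with $v=|z|-r^\ast$ and splitting $(v+r^\ast)^{d-1}\lesssim (r^\ast)^{d-1}+v^{d-1}$ yields the tail contribution $\lesssim\cdk^p((r^\ast)^{d-1}\eps_k+\eps_k^d)$.

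Finally, \cref{ass:epsk} gives $k\leq\eps^{-2}$, hence $\log(k/\cdk)\lesssim\log(\eps^{-1})$ and $r^\ast\lesssim\eps_k\sqrt{\log(\eps^{-1})}$ (using $\eps\leq\eps_k$). Combining the plateau and tail estimates,
\[
\|\tilde\phi\|_{L^p(\R^d)}^p
\lesssim \cdk^p\bigl((r^\ast)^d+(r^\ast)^{d-1}\eps_k+\eps_k^d\bigr)
\lesssim \cdk^p\,\eps_k^d\,\log(\eps^{-1})^{d/2},
\]
and taking $p$-th roots gives the desired bound. The only delicate point is the absorption inequality above that rewrites the Gaussian tail with the sharp prefactor $\cdk$; once that is in hand the rest is a standard spherical-coordinates computation and a single invocation of the transportation map.
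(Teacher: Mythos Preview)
Your proof is correct and follows essentially the same approach as the paper: invoke \cref{thm:transportation} with $\delta=\eps$ and $\lambda=\rho_{\min}/(8\rho_{\max})$ to convert the graph $\ell^p$ norm into a continuum $L^p$ integral, then split the integral into a plateau region (where $\phi\leq\cdk$) of radius $\sim\eps_k\sqrt{\log(\eps^{-1})}$ and a tail region where $\phi$ is small. The only difference is cosmetic: the paper splits at the slightly larger radius $R=\eps+\eps_k\sqrt{8d\log(\eps^{-1})}$ and uses a crude uniform bound on the tail, whereas you split at the natural transition radius $r^\ast$ and carry out the Gaussian tail integral explicitly via your absorption inequality, but both routes give the same final estimate.
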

\begin{proof}
The proof is similar to \cref{lem:psik_graph_bounds}, so we give a brief sketch.  We note that $\phi(z) \leq \cdk\epsilon_k^d$ whenever 
\[|z| \geq R \defeq   \epsilon + \epsilon_k\sqrt{8d\log(\epsilon^{-1})}.\]
We use \cref{thm:transportation} with $\delta=\epsilon$ and $\lambda=\frac{\rho_{\min}}{8\rho_{\max}}$, which has probability at least $1-C_1 n \exp\left( -C_2 n\eps^d\right)$. As in  \cref{lem:psik_graph_bounds} we have
\begin{align*}
\ng{p}{\phi(\cdot-x_0)} &\leq C\|\phi(T_\delta(\cdot)-x_0)\|_{L^p(\Omega)}. 
\end{align*}
Since $|T_\delta(x)-x| \leq \epsilon \leq R$, if $|x-x_0|\geq 2R$ then $|T(x)-x_0|\geq R$. Thus $|\phi(T_\delta(x)-x_0)|\leq \cdk\eps_k^d$ for $x\in \Omega\setminus B(x_0,2R)$, while for $x\in B(x_0,2R)$ we have $|\phi(x-x_0)|\leq \cdk$ by \labelcref{eq:Kconstant}. As in  \cref{lem:psik_graph_bounds} we can use these two estimates to obtain
\[\int_{\Omega} |\phi(T_\delta(x)-x_0)|^p \d x \leq C\cdk^{p}\epsilon_k^{d}\log(\eps^{-1})^{\frac{d}{2}}. \]
Taking the $p^{\rm th}$ root and inserting above completes the proof. 
\end{proof}

We now use the interior heat kernel asymptotic in \cref{thm:heatkernel_final} along with  \cref{lem:phi_graph_lp} to obtain $\l1$ versions of  \cref{thm:heatkernel_final}, as well an $\l p$ estimate on the heat kernel $\H^x_k$. 
These results are contained in the following three corollaries, the first two depending on whether $\rho$ is constant or not. 
\begin{corollary}\label{cor:heatkernel_l1_asymptotic_rho}
Let $m\geq 2$ and $x_0\in \Omega$ with $B(x_0,R_m)\subset \Omega$. There exists $C_1,C_2>0$ such that for $0<\lambda \leq 1$ the estimate
\begin{equation}\label{eq:heat_kernel_l1_asymptotic_rho}
\ng{1}{\H^{x}_{k} - \hat\rho_\eps(x)^{-1}\M_{\eps}^{k-1}\eta_\eps^x} \leq C_1\left(\lambda \eps^{-d}\cdk\epsilon_k^{d}\log(\eps^{-1})^{\frac{d}{2}} +\lambda\eps\right)
\end{equation}
holds for all $i=1,\dots,n$, $x\in \X_n\cup\{x_0\}$ for which $B(x,R_m)\subset \Omega$, and $k=2,\dots,m$ with probability at least $1-14mn^2\exp\left( -C_2 n \epsilon^d \lambda^2\right)$.
\end{corollary}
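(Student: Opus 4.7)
The strategy is to take the pointwise asymptotic expansion of $\H^x_k$ from \cref{thm:heatkernel_final}, average the error terms in the $\ell^1(\X_n)$ norm, and apply the $\ell^1$ bound on $\phi(\cdot - x)$ from \cref{lem:phi_graph_lp}. The statement then follows by union-bounding the two high-probability events.

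\textbf{Step 1 (invoke the pointwise expansion).} Fix $R = R_m$. By \cref{thm:heatkernel_final} applied with the upper index $m$, on an event of probability at least $1 - 12 m n^2 \exp(-C n \eps^d \lambda^2)$, we have simultaneously for all $i=1,\dots,n$, all $x \in \X_n \cup \{x_0\}$ with $B(x,R_m) \subset \Omega$, and all $2 \leq k \leq m$,
\[
\bigl|\H^x_k(x_i) - \hat\rho_\eps(x)^{-1}\M^{k-1}_\eps \eta^x_\eps(x_i)\bigr|
\lesssim \lambda \eps^{-d} \phi(x-x_i) + \lambda \eps_k^2 \exp\!\left(-\tfrac{R_m^2}{4d\eps_k^2}\right),
\]
where $\phi$ is the function defined in \labelcref{eq:Kconstant} (with the parameter $k$ equal to the index in the heat kernel).

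\textbf{Step 2 (control the Gaussian-tail error).} Since $\eps_k \leq \eps_m \leq 1$ for $k \leq m$ by \cref{ass:epsk}, and $R_m \geq \eps$, we estimate
\[
\eps_k^2 \exp\!\left(-\tfrac{R_m^2}{4d\eps_k^2}\right)
\leq \exp\!\left(-\tfrac{(R_m-\eps)^2}{8d\eps_m^2}\right)
\leq m^{-1} \eps^{d+2} \leq \eps,
\]
using the third property of $R_m$ from \labelcref{eq:Rk_prop}. This bound is independent of $i$, so its contribution to the $\ell^1$ norm is itself at most $C\lambda\eps$.

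\textbf{Step 3 (average the $\phi$-error).} Taking the normalized sum over $i$ gives
\[
\frac{1}{n}\sum_{i=1}^n \phi(x-x_i) = \ng{1}{\phi(\cdot-x)}.
\]
By \cref{lem:phi_graph_lp}, on an event of probability at least $1 - C_1 n \exp(-C_2 n \eps^d)$, the estimate $\ng{1}{\phi(\cdot-x)} \lesssim \cdk \eps_k^d \log(\eps^{-1})^{d/2}$ holds simultaneously for all $x \in \Omega$ (in particular for all $x \in \X_n \cup \{x_0\}$); note that the proof of \cref{lem:phi_graph_lp} produces a single realization of the transportation map on which the estimate is uniform in $x$. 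Multiplying by $\lambda \eps^{-d}$ yields the main error term
\[
\lambda \eps^{-d} \ng{1}{\phi(\cdot-x)} \lesssim \lambda \eps^{-d} \cdk \eps_k^d \log(\eps^{-1})^{d/2}.
\]

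\textbf{Step 4 (combine and union bound).} Summing the two contributions from Steps~2 and~3 gives the claimed bound. A union bound over the events of \cref{thm:heatkernel_final,lem:phi_graph_lp}, together with $\lambda \leq 1$ so that $\exp(-Cn\eps^d) \leq \exp(-Cn\eps^d\lambda^2)$, yields the stated probability at least $1 - 14 m n^2 \exp(-C_2 n \eps^d \lambda^2)$ after adjusting constants.

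There is no real obstacle here; the proof is a direct bookkeeping exercise that combines \cref{thm:heatkernel_final} and \cref{lem:phi_graph_lp}. The only subtle point is the Gaussian-tail estimate in Step~2, which requires noting that the exponential decay at scale $\eps_k \leq \eps_m$ is at least as fast as at scale $\eps_m$ and then invoking the tail bound \labelcref{eq:Rk_prop} for $R_m$.
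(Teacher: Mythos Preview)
Your proof is correct and follows exactly the approach the paper takes: the paper's proof is the one-line statement that the result follows from \labelcref{eq:heat_kernel_asymptotic_Me}, \cref{thm:heatkernel_final}, and \cref{lem:phi_graph_lp} with $p=1$, and you have accurately filled in the bookkeeping (the $\ell^1$ averaging, the Gaussian-tail bound via \labelcref{eq:Rk_prop}, and the union bound). No differences to note.
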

\begin{proof}
The proof follows from \labelcref{eq:heat_kernel_asymptotic_Me},  \cref{thm:heatkernel_final} and \cref{lem:phi_graph_lp} with~\mbox{$p=1$}. 
\end{proof}
\begin{corollary}\label{cor:heatkernel_l1_asymptotic}
Assume $\rho\equiv |\Omega|^{-1}$ is constant, and let $m\geq 2$ and $x_0\in \Omega$ with $B(x_0,R_m)\subset \Omega$. There exists $C_1,C_2>0$ such that for $0<\lambda \leq 1$ the estimate
\begin{equation}\label{eq:heat_kernel_l1_asymptotic}
\ng{1}{\H^{x}_{k} - \rho^{-1}\psi_{k,\epsilon}(\cdot-x)} \leq C_1\left(\lambda \eps^{-d}\cdk\epsilon_k^{d}\log(\eps^{-1})^{\frac{d}{2}} +\eps\right)
\end{equation}
holds for all $i=1,\dots,n$, $x\in \X_n\cup\{x_0\}$ for which $B(x,R_m)\subset \Omega$, and $k=2,\dots,m$ with probability at least $1-14mn^2\exp\left( -C_2 n \epsilon^d \lambda^2\right)$.
\end{corollary}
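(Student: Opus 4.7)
The statement is essentially a specialization of \cref{cor:heatkernel_l1_asymptotic_rho} to the constant-density case, and the proof will be a short combination of that corollary with \cref{thm:Me}. My plan is as follows.

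\textbf{Step 1: Reduce the density factor.} When $\rho \equiv |\Omega|^{-1}$ is constant, I first observe that whenever $B(x,\eps) \subset \Omega$ we have
\[
\hat\rho_\eps(x) = \rho \int_\Omega \eta_\eps(|x-y|)\d y = \rho,
\]
since $\eta_\eps$ has unit mass on $\R^d$ by \cref{ass:eta}. Because $R_m \geq 5\eps$ (see \labelcref{eq:Rk_prop}), the hypothesis $B(x,R_m)\subset \Omega$ forces $B(x,\eps)\subset\Omega$, so $\hat\rho_\eps(x)^{-1} = \rho^{-1}$ in \cref{cor:heatkernel_l1_asymptotic_rho}.

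\textbf{Step 2: Replace $\M^{k-1}_\eps \eta^x_\eps$ by $\psi_{k,\eps}(\cdot - x)$.} Now I apply \cref{thm:Me} with $k-1$ iterations, base point $x$, and radius $R = R_m \geq R_{k-1}$. Since $\rho$ is constant we have $[\nabla\rho]_\alpha=0$, so the multiplicative error $\O([\nabla \rho]_\alpha k\eps^{1+\alpha})$ vanishes and \cref{thm:Me} simplifies to the pointwise identity
\[
\M^{k-1}_\eps \eta^x_\eps(y) = \psi_{k,\eps}(y-x) + \O\!\left(\eps^2 \exp\!\left(-\tfrac{R_m^2}{4d\eps_{k-1}^2}\right)\right)
\]
for every $y \in \Omega$. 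Using $\eps_{k-1} \leq \eps_m$ and the third property in \labelcref{eq:Rk_prop} (which guarantees $\exp(-(R_m-\eps)^2/(8d\eps_m^2)) \leq m^{-1}\eps^{d+2}$), the exponential factor is bounded by a high power of $\eps$, so the pointwise error is $\O(\eps^{d+4})$ and, in particular, the graph $\ell^1$ norm of the difference satisfies
\[
\ng{1}{\rho^{-1}\M^{k-1}_\eps \eta^x_\eps - \rho^{-1}\psi_{k,\eps}(\cdot - x)} \lesssim \eps^2.
\]
This is a deterministic estimate and requires no probabilistic assumption beyond $B(x,R_m)\subset \Omega$.

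\textbf{Step 3: Combine via triangle inequality.} I apply \cref{cor:heatkernel_l1_asymptotic_rho} on the same event (which has probability at least $1-14mn^2\exp(-C_2 n\eps^d \lambda^2)$) and combine with Step 2 using the triangle inequality. The result is
\[
\ng{1}{\H^x_k - \rho^{-1}\psi_{k,\eps}(\cdot-x)} \lesssim \lambda \eps^{-d}\cdk \eps_k^d \log(\eps^{-1})^{d/2} + \lambda\eps + \eps^2,
\]
and since $\lambda\leq 1$ and $\eps \leq 1$, the tail terms can be absorbed into a single $\O(\eps)$ term, giving the desired bound.

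There is no substantial obstacle; the corollary is a direct specialization once $\rho$ is constant. The only point requiring a little care is verifying that the exponential tail error from \cref{thm:Me} is indeed at most $\O(\eps)$ (and in fact much smaller), which follows from how $R_m$ was engineered in \labelcref{eq:Rkdef,eq:Rk_prop}.
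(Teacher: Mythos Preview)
Your proof is correct and uses essentially the same ingredients as the paper. The only organizational difference is that the paper invokes the pointwise expansion \labelcref{eq:heat_kernel_asymptotic} from \cref{rem:heat_kernel_asymptotic} directly (which already contains the replacement $\M_\eps^{k-1}\eta_\eps^x \to \psi_{k,\eps}$) and then applies \cref{lem:phi_graph_lp} with $p=1$, whereas you route through the already-established \cref{cor:heatkernel_l1_asymptotic_rho} and add the $\M_\eps^{k-1}\eta_\eps^x \to \psi_{k,\eps}$ step afterward via \cref{thm:Me}; both paths use the same machinery in slightly different order.
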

\begin{proof}
The proof follows from \labelcref{eq:heat_kernel_asymptotic}, \cref{rem:heat_kernel_asymptotic} and  \cref{lem:phi_graph_lp} with $p=1$. 
\end{proof}

Of course, it is certainly possible to prove versions of \cref{cor:heatkernel_l1_asymptotic,cor:heatkernel_l1_asymptotic_rho} where the convergence is to a Gaussian density of variance $\eps_k^2$. However, in addition to the $[\nabla \rho]_\alpha k \eps^{1+\alpha}$ error term from  \cref{rem:heat_kernel_asymptotic}, we would incur an additional $\O(k^{-1})$ error term. This would only be sufficiently sharp in the constant density case when $[\nabla \rho]_\alpha  = 0$, but in this case the repeated convolution $\psi_{k,\epsilon}$ is sufficient for our purposes.

Finally, we turn to $\ell^p$ bounds on the heat kernel.
\begin{corollary}\label{cor:heatkernel_lp}
Let $m\geq 2$ and $x_0\in \Omega$ with $B(x_0,R_m)\subset \Omega$.  There exists $C_1,C_2>0$, with $C_1$ additionally depending on $[\nabla \rho]_\alpha$, such that for $[\nabla \rho]_\alpha m\epsilon^{1+\alpha} \ll 1$ and $0<\lambda \leq 1$ the estimate
\begin{equation}\label{eq:heat_kernel_lp}
\ng{p}{\H^x_k} \leq C_1 \epsilon_k^{\frac{d}{p}}\log(\epsilon^{-1})^{\frac{d}{2p}}\left(\epsilon_k^{-d} + \lambda \epsilon^{-d}\cdk\right)
\end{equation}
holds for all $i=1,\dots,n$, $x\in \X_n\cup\{x_0\}$ for which $B(x,R_m)\subset \Omega$, and $k=2,\dots,m$ with probability at least $1-14mn^2\exp\left( -C_2 n \epsilon^d \lambda^2\right)$.
\end{corollary}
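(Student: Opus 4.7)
The plan is to combine the pointwise interior asymptotic for the heat kernel from \cref{thm:heatkernel_final} with the approximation $\M_\eps^{k-1}\eta_\eps^x(\cdot)\approx \psi_{k,\eps}(\cdot-x)$ from \cref{thm:Me} in order to decompose $\H^x_k(x_i)$ as a main term of the form $\rho(x)^{-1}\psi_{k,\eps}(x_i-x)$, a random error controlled by the function $\phi$ from \labelcref{eq:Kconstant}, and a deterministic exponentially small tail, and then bound each piece in the $\ng{p}{\cdot}$-norm using the tools already developed in \cref{sec:rep_avg}. Since $R_k$ is non-decreasing in $k$, the hypothesis $B(x,R_m)\subset\Omega$ gives $B(x,R_k)\subset\Omega$ for all $2\leq k\leq m$, which validates applying \cref{thm:heatkernel_final} uniformly with $R=R_m$ and analogously justifies invoking \cref{thm:Me} with index $k-1$.

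After combining these two ingredients with the local estimate $\hat\rho_\eps(x)=\rho(x)+\O([\nabla \rho]_\alpha\eps^{1+\alpha})$ (as in \cref{rem:rhoe}) and absorbing the multiplicative prefactor $1+\O([\nabla \rho]_\alpha m\eps^{1+\alpha})$ using the hypothesis $[\nabla \rho]_\alpha m\eps^{1+\alpha}\ll 1$, the decomposition takes the form
\begin{equation*}
\H^x_k(x_i) = \rho(x)^{-1}\psi_{k,\eps}(x_i-x) + \O\!\bigl(\lambda\eps^{-d}\phi(x-x_i)\bigr) + \O\!\bigl(\lambda\eps_k^2 e^{-R_m^2/(4d\eps_k^2)}\bigr),
\end{equation*}
with constants now depending on $[\nabla \rho]_\alpha$, which is allowed by the statement. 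Applying the triangle inequality in $\ng{p}{\cdot}$, I would bound the main term by $\rho_{\min}^{-1}\ng{p}{\psi_{k,\eps}(\cdot-x)}\lesssim \eps_k^{-d+d/p}\log(\eps^{-1})^{d/(2p)}$ via \cref{lem:psik_graph_bounds}(i), which yields precisely the first summand $\eps_k^{d/p}\log(\eps^{-1})^{d/(2p)}\,\eps_k^{-d}$ in \labelcref{eq:heat_kernel_lp}. The $\phi$-error is bounded by $\lambda\eps^{-d}\ng{p}{\phi(\cdot-x)}\lesssim \lambda\eps^{-d}\cdk\eps_k^{d/p}\log(\eps^{-1})^{d/(2p)}$ via \cref{lem:phi_graph_lp}, matching the second summand. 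The deterministic tail is constant in $i$, and by the third estimate in \labelcref{eq:Rk_prop} it is bounded by a positive power of $\eps$, so it is easily absorbed.

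Finally, I would union-bound the high-probability events from \cref{thm:heatkernel_final}, \cref{lem:psik_graph_bounds}(i), and \cref{lem:phi_graph_lp}. Each fails with probability at most $\mathrm{poly}(n,m)\exp(-cn\eps^d\lambda^2)$, using $0<\lambda\leq 1$ to dominate a bare $-cn\eps^d$ exponent by $-cn\eps^d\lambda^2$ in the latter two after adjusting constants, so that after renaming constants the simultaneous bound holds with probability at least $1-14mn^2\exp(-C_2 n\eps^d\lambda^2)$. I do not anticipate a genuine obstacle: the whole argument is an assembly of ingredients already proved in \cref{sec:heat_kernel,sec:rep_avg}, with the only bookkeeping subtlety being the uniform validity of \cref{thm:heatkernel_final} and \cref{thm:Me} for all $k\leq m$ from the single geometric assumption $B(x,R_m)\subset\Omega$.
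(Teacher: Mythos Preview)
Your proposal is correct and follows essentially the same approach as the paper. The only difference is that the paper cites \labelcref{eq:heat_kernel_asymptotic} from \cref{rem:heat_kernel_asymptotic} directly, whereas you spell out how that estimate is obtained by combining \cref{thm:heatkernel_final} with \cref{thm:Me} and the expansion of $\hat\rho_\eps(x)$; after that, both proofs bound the $\psi_{k,\eps}$-term via \cref{lem:psik_graph_bounds}, the $\phi$-term via \cref{lem:phi_graph_lp}, absorb the exponential tail, and union bound.
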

\begin{proof} 
Fix $x\in \X_n\cup\{x_0\}$ for which $B(x,R_m)\subset \Omega$. By \labelcref{eq:heat_kernel_asymptotic},  \cref{lem:phi_graph_lp}, and the fact that $\ng{p}{1}=1$ we have
\[\ng{p}{\H^{x}_{k}} \leq C\left( \ng{p}{\psi_{k,\epsilon}(\cdot-x)} + \lambda \eps^{-d}\cdk\epsilon_k^{\frac{d}{p}}\log(\eps^{-1})^{\frac{d}{2p}} +\eps_k^2\right)\]
for all $k=2,\dots,m$ with probability at least $1-13mn^2\exp\left( -C_2 n \epsilon^d \lambda^2\right)$. The proof is completed by invoking  \cref{lem:psik_graph_bounds}. 
\end{proof}

\section{Combination: Convergence rates for Poisson learning with measure data}
\label{sec:combination}

In this section, we combine the results from \cref{sec:smoothed_poisson,sec:continuum_limits,sec:heat_kernel} 
to prove quantitative convergence rates for the continuum limit of Poisson learning. Throughout this section we continue to use the notation $\epsilon_k = \epsilon \sqrt{k}$ from  \cref{sec:heat_kernel}. We also utilize the constant $\cdk$ defined in  \labelcref{eq:Kconstant} and $R_k$ defined in \labelcref{eq:Rkdef}, which we recall satisfies \labelcref{eq:Rk_prop}.  We also make the standing \cref{ass:rho_c1a,ass:omega_c11,ass:neps,ass:epsk} with $\alpha=1$ throughout this section. The constants in this section depend on all of the quantities in these assumptions.

In order to set up the results, let $\Gamma\subset \Omega$ be a finite set of \emph{continuum} labels with corresponding label data $a:\Gamma\to \R$, which we denote by $a_x\defeq  a(x)$ for $x\in\Gamma$. For $x\in \Gamma$, denote by $\tau(x)\in \X_n$ any closest point to $x$ in the point cloud $\X_n$, with ties broken arbitrarily. Let us also define
\[\Gamma_n = \{\tau(x) \, : \, x\in \Gamma\},\]
keeping in mind that $\Gamma_n$ may have fewer points than $\Gamma$ if the closest points collide. We will center labels at the projections $\tau(x)$ of the points $x\in \Gamma$ to the point cloud.
In particular, let $u_{n,\epsilon}\in \l2$ be the solution of the Poisson learning problem with data on $\Gamma$, that is
\begin{equation}\label{eq:une}
u_{n,\epsilon} = \argmin_{u\in \lo2}\ene\left(u;\sum_{x\in \Gamma} a_x \delta_{\tau(x)} \right).
\end{equation}
We assume the compatibility condition 
\begin{equation}\label{eq:comp_ax}
\sum_{x\in \Gamma} a_x = 0
\end{equation}
holds, which ensures that $u_{n,\epsilon}$ also satisfies the Poisson equation
\begin{equation}\label{eq:final_graph_poisson}
\L_{n,\epsilon}u_{n,\epsilon} = \sum_{x\in \Gamma} a_x \delta_{\tau(x)} \ \ \text{on} \ \ \X_n.
\end{equation}
Notice that the compatibility condition \labelcref{eq:comp_ax} ensures that the right hand side of \labelcref{eq:final_graph_poisson} has mean zero, since
\[\sum_{y\in \X_n}\sum_{x\in \Gamma} a_x \delta_{\tau(x)}(y) =\sum_{x\in \Gamma}a_x\sum_{y\in \X_n}  \delta_{\tau(x)}(y) = \sum_{x\in \Gamma} na_x = 0.\]
Above, we used that $\delta_{\tau(x)}(y)=n$ if $\tau(x)=y$ and $\delta_{\tau(x)}(y)=0$ if $\tau(x)\neq y$.

Our main result in this section is a convergence rate to the function $u \in W^{1,p}(\Omega)$ for $p < \frac{d}{d-2}$ defined by
\begin{equation}\label{eq:ucontinuum}
u = \sum_{x\in \Gamma} a_x G^{x},
\end{equation}
where $G^x$ is the Green's function for $\varrho=\rho^2$ defined in \cref{sec:smoothed_poisson}. By the results in  \cref{sec:smoothed_poisson}, the function $u\in W^{1,p}(\Omega)$ defined in \labelcref{eq:ucontinuum} is the unique distributional solution of the continuum Poisson equation
\begin{equation}\label{eq:final_poisson_continuum}
-\div(\rho^2 \nabla u) = \sum_{x\in \Gamma} a_x \delta_{x},
\end{equation}
where $\delta_x$ is the Dirac delta measure centered at $x$, with homogeneous Neumann boundary conditions, and mean zero condition $\int_\Omega \rho^2 u\d x=0$. We recall that commit the minor abuse of notation of using the same symbol $\delta_x$ to denote the Dirac delta measure and its graph approximation.

Our main result in this paper is the following quantitative convergence rate. 
\begin{theorem}[Main theorem]\label{thm:main_general}
We make the \cref{ass:rho_c1a,ass:omega_c11,ass:neps,ass:epsk} throughout this section, and additionally assume that $k\geq 2$, $\eps \ll 1$, $n\eps^{2d} \geq 1$, $\eps\leq \eps_k^d\leq 1$ and $\eps_k \log(\eps^{-1})^{\frac{1}{2}}\leq \frac{\dist(\Gamma,\partial\Omega)}{24(d+2)}$. Let $u_{n,\epsilon}\in \lo2$ be defined by \labelcref{eq:une} and let $u\in W^{1,p}(\Omega)$ for $1\leq p<\frac{d}{d-1}$ be defined by \labelcref{eq:ucontinuum}.  
Then there exist constants $C_1,C_2>0$ such that for 
\begin{equation}\label{eq:q_prob}
q = kn^2\exp\left( -C_2\cdk^{-1}  n \epsilon^{3d+2}\epsilon_k^{-2(d + 1)}\right) + \exp(-C_2n\epsilon^{d+2}\epsilon_k^{2d})+\exp(-C_2n\eps^{2d+2})
\end{equation}
the following hold.
\begin{enumerate}[label=(\roman*)]
\item If $\rho\equiv |\Omega|^{-1}$ is constant, then for any $0 < \sigma < 1$ we have that
\begin{equation}\label{eq:constant_rate}
\ng{1}{u - u_{n,\eps}} \lesssim \sum_{x\in \Gamma}|a_x|\log(\eps^{-1})^{\frac{d}{2}+1}\left(\eps_k^{2-\sigma} + \eps_k^{-\frac{d}{2}}\eps^{\frac{1}{2}}\right)
\end{equation}
holds with probability at least $1 - C_1q - 2^{2+\gamma}n^{-\gamma}$, where $\gamma = \frac{\sigma}{d-\sigma}$. 
\item In general we have that
\begin{equation}\label{eq:nonconstant_rate}
\ng{1}{u - u_{n,\eps}} \lesssim \sum_{x\in \Gamma}|a_x|\log(\eps^{-1})^{\frac{d}{2}}\left(\eps_k + \eps_k^{-\frac{d}{2}}\eps^{\frac{1}{2}}\right)
\end{equation}
holds with probability at least $1 - C_1q -  2^{2+\gamma}n^{-\gamma}$, where 
\begin{equation}\label{eq:gamma_d}
\gamma = 
\begin{cases}
\frac{1}{2(d-1)},& \text{if } d\geq 2\\
1,& \text{if } d=1.
\end{cases}
\end{equation}
\end{enumerate}
\end{theorem}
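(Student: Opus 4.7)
The strategy is to bridge the graph Poisson equation with Dirac data and the continuum Poisson equation with measure data via an intermediate problem whose source is \emph{bounded}, to which the continuum-limit results of \cref{sec:continuum_limits} apply. Fix $k\geq 2$ and set $u_{n,\eps}^k \defeq H_k * u_{n,\eps}$; by \cref{thm:smooth_graph_poisson} the smoothed graph solution satisfies
\[
\L_{n,\eps} u_{n,\eps}^k \;=\; \sum_{x\in\Gamma} a_x \H_k^{\tau(x)} \qquad \text{on } \X_n.
\]
On the continuum side, let $v\in H^1_{\rho}(\Omega)$ be the unique mean-zero weak solution of $-\div(\rho^2\nabla v) = \sum_{x\in\Gamma} a_x \phi_x$, where $\phi_x$ is a bounded mollifier of $\delta_{\tau(x)}$ of support radius $\sim \eps_k$: in the constant-density case we take $\phi_x \defeq \psi_{k,\eps}(\,\cdot-\tau(x))$, and otherwise $\phi_x \defeq \hat\rho_\eps(\tau(x))^{-1}\M_\eps^{k-1}\eta_\eps^{\tau(x)}$. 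The triangle inequality then yields
\[
\ng{1}{u - u_{n,\eps}} \;\leq\; \ng{1}{u-v} \;+\; \ng{1}{v-u_{n,\eps}^k} \;+\; \ng{1}{u_{n,\eps}^k - u_{n,\eps}},
\]
and the three terms are estimated independently.

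The first term is the \emph{continuum smoothing error}: by \cref{thm:convergence_rate_smoothing} in the constant-density case it is $\O(\eps_k^2)$ in $L^1(\Omega)$ (up to logarithmic factors from the $b_i,\xi_i$ error terms), while by \cref{thm:convergence_rate_varrho} in the general case it is $\O(\eps_k^{2-d(p-1)/p})$ in $L^p(\Omega)$ for $p\in (1,d/(d-1))$ close to $1$. Transferring from $L^p(\Omega)$ to $\ng{1}{\cdot}$ via a transportation map (\cref{thm:transportation}) together with Markov's inequality loses a factor $n^{-\gamma}$; optimizing $p$ produces the exponent \labelcref{eq:gamma_d} and, in the constant-density case, the $\sigma$-loss in \labelcref{eq:constant_rate} coming from the analogous choice inside \cref{thm:convergence_rate_smoothing}. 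The third term is the \emph{graph smoothing error}: \cref{thm:difference_graph_smoothed} gives
\[
u_{n,\eps}-u_{n,\eps}^k \;=\; \deg^{-1}\sum_{x\in\Gamma} a_x \sum_{j=0}^{k-1}\H_j^{\tau(x)},
\]
whose $\ng{1}{\cdot}$ norm is controlled by the heat-kernel $\ell^p$ bounds of \cref{cor:heatkernel_lp}. Summing the geometric series $\sum_{j=0}^{k-1} \eps_j^{d/p-d}\log(\eps^{-1})^{d/(2p)}$ at the critical exponent (together with a Cauchy--Schwarz against the degree $\sim n\eps^d$) produces exactly the $\eps_k^{-d/2}\eps^{1/2}$ term of the stated rate.

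The central (and hardest) step is bounding $\ng{1}{v - u_{n,\eps}^k}$. Both functions satisfy Poisson equations whose right-hand sides are bounded in $L^\infty$ by $\sim\eps_k^{-d}$, so we apply \cref{thm:main_smooth_simplified} with continuum data $f \defeq \rho^{-1}\sum_x a_x\phi_x$ and graph data $f_n(x_i)\defeq \sum_x a_x\H_k^{\tau(x)}(x_i)$. The discrepancy $\norm{f-f_n}_{\ell^1(\X_n)}$ is controlled by the $\ell^1$ heat-kernel asymptotics \cref{cor:heatkernel_l1_asymptotic,cor:heatkernel_l1_asymptotic_rho}; the oscillation term $\norm{\osc_{B(\delta,\cdot)} f}_{L^1(\Omega)}$ by \cref{cor:Me_graph_bounds}; and the $\ell^q$ and $L^q$ norms for $q>d/2$ (controlling $\norm{v}_{L^\infty},\norm{u_{n,\eps}^k}_{\ell^\infty}$ through \cref{prop:global_regularity}) by \cref{lem:psik_graph_bounds,cor:heatkernel_lp}. \cref{thm:main_smooth_simplified} yields an $H^1(\X_n)$ bound, which dominates the $\ell^1(\X_n)$ bound, and whose leading contribution is again $\eps_k^{-d/2}\eps^{1/2}$ (up to log factors).

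The main obstacle is \emph{parameter balancing and event intersection}: one must simultaneously optimize $k,\delta,\lambda_1,\lambda_2,p$ to extract the sharp rates from each of the three terms (in particular choosing $\lambda_1^2 n\eps^{3d+2}\eps_k^{-2(d+1)}\sim \cdk^{-1}$ to drive the relevant heat-kernel errors below the leading order, which accounts for the exponent in \labelcref{eq:q_prob}), and then track the union bound of the high-probability events used in \cref{thm:main_smooth_simplified}, \cref{thm:heatkernel_final}, and \cref{cor:heatkernel_l1_asymptotic,cor:heatkernel_l1_asymptotic_rho,cor:heatkernel_lp,cor:Me_graph_bounds,lem:psik_graph_bounds}. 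The interior condition $\eps_k\log(\eps^{-1})^{1/2}\lesssim \dist(\Gamma,\partial\Omega)$ ensures $B(\tau(x),R_k)\subset\Omega$, as required for the sharp heat-kernel asymptotics; the second $\exp(-C_2 n\eps^{2d+2})$ term in \labelcref{eq:q_prob} is a byproduct of the Markov-style transfer of the $L^p$ continuum error to the discrete $\ng{1}{\cdot}$ norm in the non-constant case.
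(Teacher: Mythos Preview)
Your overall architecture is exactly the paper's: the proof is a triangle inequality through the intermediate pair $(u_{n,\eps,k},u_k)$, and the three resulting terms are handled precisely by \cref{lem:main_smooth_graph} (graph smoothing), \cref{lem:discrete_to_continuum} (discrete-to-continuum for bounded data), and \cref{lem:continuum_smoothing} (continuum smoothing), with $\delta=\eps^2$ and, in part (i), $r=\tfrac{d}{d-\sigma}$.

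That said, several attributions in your sketch are off and would cause trouble if you tried to fill in the details as written. First, the graph smoothing error in $\ell^1$ is \emph{not} $\eps_k^{-d/2}\eps^{1/2}$: for $p=1$ one simply uses $\|\H_j^{\tau(x)}\|_{\ell^1}=1$ (mass conservation), so $\ng{1}{u_{n,\eps}-u_{n,\eps,k}}\lesssim \eps^2\sum_{j=0}^{k-1}1=\eps_k^2$, cf.~\cref{lem:main_smooth_graph}~(ii); no Cauchy--Schwarz against the degree is needed. The term $\eps_k^{-d/2}\eps^{1/2}$ arises \emph{only} from the discrete-to-continuum step, as the $H^1(\X_n)$ rate $\eps_k^{-d/2}\sqrt{\delta/\eps+\eps}$ in \cref{lem:discrete_to_continuum} with $\delta=\eps^2$. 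Second, in the non-constant case the continuum smoothing is only $\O(\eps_k\log(\eps^{-1})^{1/2})$, not $\O(\eps_k^{2-d(p-1)/p})$: the bottleneck in \cref{thm:convergence_rate_varrho} is $\|f-g\|_{L^1}\sim R_k$, because the mollifier $\hat\rho_\eps(x)^{-1}\M_\eps^{k-1}\eta_\eps^x$ differs from the radial $\psi_{k,\eps}(\cdot-x)$ by $\O(R_k)$ in $L^1$; see the proof of \cref{lem:continuum_smoothing}~(ii). Third, the probability term $\exp(-C_2 n\eps^{2d+2})$ in \labelcref{eq:q_prob} does not come from the Markov transfer of $L^p$ to $\ell^1$; it is the $\exp(-C n\delta^d\eps^2)$ contribution of \cref{lem:discrete_to_continuum} evaluated at $\delta=\eps^2$. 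The Markov-type transfer (via \cref{prop:lp_graph_control}) is what produces the polynomial $n^{-\gamma}$ term.
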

\begin{proof}
The proof of  \cref{thm:main_general} is a combination of  \cref{lem:main_smooth_graph,lem:discrete_to_continuum,lem:continuum_smoothing}, proved below, taking $\delta=\eps^2$ in  \cref{lem:discrete_to_continuum} and $r = \frac{d}{d-\sigma}$ in part (i). 
\end{proof}
\begin{remark}\label{rem:Rkepsk}
Note that the condition $\eps_k \log(\eps^{-1})^{\frac{1}{2}}\leq \frac{\dist(\Gamma,\partial\Omega)}{24(d+2)}$ in  \cref{thm:main_general} ensures that $R_k\leq \frac{1}{4}\dist(\Gamma,\partial\Omega)$, due to \labelcref{eq:Rk_prop}, which is required in  \cref{lem:discrete_to_continuum}. This is a requirement since our heat kernel estimates are only valid in the interior of the domain. An interesting future problem would be to address the setting where one of the labels $x\in \Gamma$ falls on the boundary $x\in \partial \Omega$. In this case, we would need to analyze the heat kernel asymptotics at the boundary, in which case we expect the analysis to be substantially different.  
\end{remark}
\begin{remark}\label{rem:lprates}
The main results used to prove  \cref{thm:main_general}, namely \cref{lem:main_smooth_graph,lem:discrete_to_continuum,lem:continuum_smoothing}, all provide $\l p$ rates for $p>1$ as well. Thus, it is possible to prove a version of  \cref{thm:main_general} that holds in the graph $\l p$ norm for $p>1$. Since the result is more complicated to state, and there are very strong restrictions on $p$ which essentially require $p\approx 1$, we do not state a formal result in this setting. We do mention, however, that when $p>1$ there is an additional error term arising from the graph mollification result in  \cref{lem:main_smooth_graph} of the form $\O(\eps_k^2 n^{1-\frac{1}{p}}\eps^2)$. In order to, for example,  absorb this into the $\O(\eps_k^2)$ error term in  \cref{thm:main_general} (ii) when $\rho$ is constant, we would require that 
\[n^{1-\frac{1}{p}}\eps^2 \leq 1 \iff \eps \leq n^{-\frac{p-1}{2p}}.\]
This puts an \emph{upper bound} restriction on $\eps$, which is in opposition to the lower bound restrictions required to ensure events hold with high probability. There is precedent for such upper bound restrictions on length scales in discrete to continuum results in graph-based learning with the $p$-Laplacian \cite{slepcev2019analysis} and the properly weighted Laplacian \cite{calder2020properly}. In a sense, upper bound bound restrictions on $\eps$ ensure that the graph problem ``looks'' more like the continuum PDE than the intermediate nonlocal integral equation. 
\end{remark}

We can obtain the sharpest convergence rates in  \cref{thm:main_general} by balancing the two error terms. 
\begin{corollary}\label{cor:main_nonconstant}
In the context of  \cref{thm:main_general} we in general have that
\begin{equation}\label{eq:nonconstant_rate_optimal}
\ng{1}{u - u_{n,\eps}} \lesssim \sum_{x\in \Gamma}|a_x|\log(\eps^{-1})^{\frac{d}{2}}\eps^{\frac{1}{d+2}},
\end{equation}
holds with probability at least 
\[1  -  2^{2+\gamma}n^{-\gamma} - C_1n^3\exp\left( -C_2 \Theta_{d,\eps^{-2}}^{-1}n\eps^{3d + \frac{2}{d+2}}\right).\]
where $\gamma$ is given in \labelcref{eq:gamma_d}. 
\end{corollary}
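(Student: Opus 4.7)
The plan is to invoke \cref{thm:main_general}~(ii) and then minimize the rate estimate \labelcref{eq:nonconstant_rate} over the free parameter $k \in \N$ by balancing the two competing error terms $\eps_k$ and $\eps_k^{-d/2}\eps^{1/2}$. Setting them equal gives $\eps_k^{(d+2)/2} = \eps^{1/2}$, i.e., $\eps_k = \eps^{1/(d+2)}$, and since $\eps_k = \eps\sqrt{k}$ this corresponds to $k \asymp \eps^{-2(d+1)/(d+2)}$. I would take $k \defeq \lceil \eps^{-2(d+1)/(d+2)}\rceil$, so $\eps_k \asymp \eps^{1/(d+2)}$ and (as $(d+1)/(d+2) < 1$) we have $k \leq \eps^{-2}$, hence $\eps_k \leq 1$. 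With this choice both bracketed terms in \labelcref{eq:nonconstant_rate} reduce to $\eps^{1/(d+2)}$, yielding \labelcref{eq:nonconstant_rate_optimal}.

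Next I would verify the standing hypotheses of \cref{thm:main_general} with this $k$. For $\eps \ll 1$ we have $k \geq 2$; the condition $\eps \leq \eps_k^d \leq 1$ becomes $\eps \leq \eps^{d/(d+2)} \leq 1$, which holds since $d/(d+2) \in (0,1)$; and $\eps_k \log(\eps^{-1})^{1/2} = \eps^{1/(d+2)}\log(\eps^{-1})^{1/2} \to 0$, so the distance-to-boundary condition $\eps_k\log(\eps^{-1})^{1/2} \leq \dist(\Gamma,\partial\Omega)/(24(d+2))$ holds for small $\eps$. The remaining standing assumptions are unaffected by the choice of $k$.

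For the probability bound I would substitute $\eps_k = \eps^{1/(d+2)}$ (or equivalently $k = \eps^{-2(d+1)/(d+2)}$) into \labelcref{eq:q_prob}. A short computation for the first exponent yields
\begin{equation*}
3d + 2 - \tfrac{2(d+1)}{d+2} \;=\; \tfrac{3d^2+6d+2}{d+2} \;=\; 3d + \tfrac{2}{d+2},
\end{equation*}
matching the target. For the second term, $d+2 + 2d/(d+2) \geq 3d + 2/(d+2)$ whenever $d \geq 1$, and for the third, $2d+2 \leq 3d + 2/(d+2)$ for $d \geq 2$ (for $d=1$ one absorbs the third term directly by adjusting the constant $C_2$, since $\eps<1$). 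Hence, after shrinking $C_2$ once if necessary, all three terms of $q$ are dominated by $\exp(-C_2 \Theta_{d,k}^{-1}n\eps^{3d+2/(d+2)})$. The prefactor satisfies $kn^2 \leq \eps^{-2}n^2 \leq n^3$ using the standing assumption $n\eps^{2d}\geq 1$ (so $\eps^{-2}\leq \eps^{-2d} \leq n$ for $d\geq 1$). Finally, since $\cdk$ is non-decreasing in $k$ and $k \leq \eps^{-2}$, we have $\cdk^{-1} \geq \Theta_{d,\eps^{-2}}^{-1}$, which absorbs the $\cdk$ factor and gives the stated form of the probability.

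The proof is thus essentially an optimization calculation on top of \cref{thm:main_general}, and I do not foresee any substantive obstacle. The only care required is the bookkeeping that verifies (a) the chosen integer $k$ still satisfies $\eps_k \asymp \eps^{1/(d+2)}$ without disturbing the standing assumptions, and (b) that all three exponents appearing in \labelcref{eq:q_prob} can be simultaneously dominated by the single expression $\exp(-C_2 \Theta_{d,\eps^{-2}}^{-1}n\eps^{3d+2/(d+2)})$; the only mildly delicate case is $d=1$, where the third exponent $2d+2 = 4$ is slightly larger than $3d+2/(d+2) = 11/3$, but this is harmless since the stated bound becomes a (valid) weakening.
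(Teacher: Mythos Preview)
Your approach is exactly the paper's: balance $\eps_k$ against $\eps_k^{-d/2}\eps^{1/2}$ to get $\eps_k=\eps^{1/(d+2)}$ (i.e., $k=\eps^{-2(d+1)/(d+2)}$), and then argue that the first term of $q$ dominates after using $k\le \eps^{-2}\le n$. One bookkeeping slip: the inequality for the second exponent should read $d+2+2d/(d+2)\le 3d+2/(d+2)$ (with equality at $d=1$), not $\ge$; with the correct sign the domination argument goes through as you intend.
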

\begin{proof}
We choose $k$ to balance $\eps_k = \eps_k^{-\frac{d}{2}}\eps^{\frac{1}{2}}$, which amounts to $k = \eps^{-\frac{2(d+1)}{d+2}}$ and $\eps = \eps_k^{d+2}$, so the rate is $\eps_k = \eps^{\frac{1}{d+2}}$.  The probability $q$ defined in \labelcref{eq:q_prob} simplifies using that $k\leq \eps^{-2} \leq n^{\frac{1}{d}}$, and the first term in $q$ again dominates. 
\end{proof}
\begin{remark}\label{rem:eps_nonconstant}
In order to ensure that the probability in  \cref{cor:main_constant} is close to one, we require that
\begin{equation}\label{eq:eps_constant}
\Theta_{d,\eps^{-2}}^{-1}n\eps^{3d + \frac{2}{d+2}} \geq C\log n,
\end{equation}
for a possibly large constant $C>0$. For $d\geq 3$,  $\cdk$ is independent of $k$, and so \labelcref{eq:eps_constant} simplifies to 
\begin{equation}\label{eq:eps_nonconstant_d3}
\eps \geq C\left( \frac{\log n}{n}\right)^{\frac{d+2}{3d^2 + 6d + 2}},
\end{equation}
for a different constant $C$. For $d=2$ we have $\Theta_{d,\eps^{-2}} = \log(1 + \eps^{-2})$ and so we require
\[\eps \geq C\left(\frac{\log n}{n}\right)^{\frac{d+2-\tau}{3d^2 + 6d + 2}}\]
for some fixed small $\tau>0$. For $d=1$ we have $\Theta_{d,\eps^{-2}} = \eps^{-1}$ which yields the lower bound
\[\eps \geq C\left( \frac{\log n}{n}\right)^{\frac{d+2}{3d^2 + 7d + 4}}.\]
\end{remark}

When the density $\rho$ is constant we can obtain an improved convergence rate. 
\begin{corollary}\label{cor:main_constant}
In the context of  \cref{thm:main_general}, assuming that $\rho\equiv |\Omega|^{-1}$ is constant, there exists $C_1,C_2>0$ such that for any $0 < \sigma < 1$ we have that
\begin{equation}\label{eq:constant_rate_optimal}
\ng{1}{u - u_{n,\eps}} \lesssim \sum_{x\in \Gamma}|a_x|\log(\eps^{-1})^{\frac{d}{2}+1}\eps^{\frac{2-\sigma}{d+4}},
\end{equation}
holds with probability at least 
\[1  -  2^{2+\gamma}n^{-\gamma} - C_1n^3\exp\left( -C_2 \Theta_{d,\eps^{-2}}^{-1}n\eps^{3d + \frac{6}{d+4}}\right), \ \ \text{where} \ \ \gamma = \frac{\sigma}{d-\sigma}.\]
\end{corollary}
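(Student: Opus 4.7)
The plan is to apply \cref{thm:main_general}\,(i) and choose $k$ (equivalently $\eps_k = \eps\sqrt{k}$) so as to balance the two error terms $\eps_k^{2-\sigma}$ and $\eps_k^{-d/2}\eps^{1/2}$ appearing in \labelcref{eq:constant_rate}. Equating the two gives $\eps_k^{2-\sigma+d/2} = \eps^{1/2}$, i.e., $\eps_k = \eps^{1/(d+4-2\sigma)}$, so we set
\[ k = \left\lceil \eps^{-\frac{2(d+3-2\sigma)}{d+4-2\sigma}} \right\rceil. \]
With this choice both terms in \labelcref{eq:constant_rate} equal $\eps^{(2-\sigma)/(d+4-2\sigma)}$. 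The mild restrictions of \cref{thm:main_general} are then automatically satisfied for $\eps \ll 1$: since $\sigma < 1$ we have $\eps_k^d = \eps^{d/(d+4-2\sigma)} \in [\eps, 1]$, the condition $\eps_k\log(\eps^{-1})^{1/2} \leq \dist(\Gamma,\partial\Omega)/(24(d+2))$ holds as $\eps \to 0$, and $k \geq 2$ for $\eps$ small. Since $\sigma \in (0,1)$ is arbitrary and the map $\sigma \mapsto \sigma'\defeq   2 - (2-\sigma)(d+4)/(d+4-2\sigma)$ defines a continuous bijection from a neighborhood of $0$ in $(0,1)$ to a neighborhood of $0$, we can rewrite $(2-\sigma)/(d+4-2\sigma) = (2-\sigma')/(d+4)$ and rename $\sigma'$ back to $\sigma$, recovering \labelcref{eq:constant_rate_optimal} and the exponent $\gamma = \sigma/(d-\sigma)$.

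For the probability bound we substitute our choices into the expression \labelcref{eq:q_prob} for $q$ and check that the first exponential dominates. The exponent in the first term becomes
\[ \eps^{3d+2}\eps_k^{-2(d+1)} = \eps^{3d+2 - \frac{2(d+1)}{d+4-2\sigma}} \;\longrightarrow\; \eps^{3d + \frac{6}{d+4}} \qquad (\sigma\to 0), \]
after the computation $(3d+2)(d+4) - 2(d+1) = 3d^2 + 12d + 6 = (d+4)\bigl(3d + 6/(d+4)\bigr)$. Since $k \leq \eps^{-2} \leq n$ under \cref{ass:neps}, the prefactor $kn^2$ absorbs into $n^3$. For the second exponential we have $\eps^{d+2}\eps_k^{2d} = \eps^{(d^2+8d+8)/(d+4)}$ (in the limit), and for the third we have $\eps^{2d+2}$; a direct comparison of exponents shows both are smaller powers of $\eps$ (larger values, hence more negative contributions in the exponential) than $3d + 6/(d+4)$, so both corresponding probabilities are bounded by the first and can be absorbed into $C_1$.

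The main work is bookkeeping rather than new analysis: verifying the two exponent comparisons above and the admissibility of the re-parameterization in $\sigma$. There is no new technical obstacle, since \cref{thm:main_general}\,(i) already encapsulates the heat-kernel mollification, the continuum mollification error, and the discrete-to-continuum variational estimate; here we merely optimize over $k$.
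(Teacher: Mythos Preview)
Your overall strategy is right, but the reparameterization step contains a gap and is in any case unnecessary. When you apply \cref{thm:main_general}\,(i) with parameter $\tilde\sigma$ and then rename $\sigma'\to\sigma$, the probability bound still carries $\gamma=\tilde\sigma/(d-\tilde\sigma)$, not $\sigma/(d-\sigma)$. Solving your relation gives $\tilde\sigma=\sigma(d+4)/(d+2\sigma)$, so $\tilde\sigma\neq\sigma$ in general, and your claim that you ``recover the exponent $\gamma=\sigma/(d-\sigma)$'' is not justified as written. Worse, $\tilde\sigma<1$ forces $\sigma<d/(d+2)$, so the argument does not cover all $\sigma\in(0,1)$ as the corollary requires. (One can patch the $\gamma$ issue by noting $\tilde\sigma>\sigma$ implies a stronger probability bound for $n\geq 2$, but the range restriction remains.)

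The paper avoids all of this by balancing $\eps_k^2$ against $\eps_k^{-d/2}\eps^{1/2}$ \emph{without} the $\sigma$: set $\eps_k=\eps^{1/(d+4)}$ (equivalently $k=\eps^{-2(d+3)/(d+4)}$), independently of $\sigma$. Then $\eps_k^{2-\sigma}=\eps^{(2-\sigma)/(d+4)}$ directly, while $\eps_k^{-d/2}\eps^{1/2}=\eps^{2/(d+4)}\leq\eps^{(2-\sigma)/(d+4)}$, so the sum in \labelcref{eq:constant_rate} is bounded by $2\eps^{(2-\sigma)/(d+4)}$. Since $k$ does not depend on $\sigma$, the $\sigma$ in \cref{thm:main_general}\,(i) is the same $\sigma$ as in the corollary, and $\gamma=\sigma/(d-\sigma)$ falls out with no reparameterization. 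The probability exponent $\eps^{3d+2}\eps_k^{-2(d+1)}=\eps^{3d+6/(d+4)}$ is then exact rather than a $\sigma\to 0$ limit, and your verification that the other two exponentials are dominated goes through unchanged.
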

\begin{proof}
We choose $k$ to balance $\eps_k^2 = \eps_k^{-\frac{d}{2}}\eps^{\frac{1}{2}}$, which amounts to $k = \eps^{-\frac{2(d+3)}{d+4}}$ and $\eps = \eps_k^{d+4}$, so that $\eps_k = \eps^{\frac{1}{d+4}}$. The probability $q$ defined in \labelcref{eq:q_prob} simplifies using that $k\leq \eps^{-2} \leq n^{\frac{1}{d}}$, and the first term in $q$ dominates. 
\end{proof}
\begin{remark}\label{rem:eps_constant}
As in  \cref{rem:eps_nonconstant}, for  \cref{cor:main_constant} to hold with probability close to one we require that 
\[\eps \geq C\left( \frac{\log n}{n}\right)^{q}, \ \ \text{where} \ \ 
q = 
\begin{cases}
\frac{d+4}{3d^2 + 13d + 10},& \text{if } d=1\\
\frac{d+4-\tau}{3d^2 + 12d + 6},& \text{if } d=2\\
\frac{d+4}{3d^2 + 12d + 6},& \text{if } d\geq 3.
\end{cases}
\]
\end{remark}

Our last result in concerned with Poisson equations with general compactly supported Radon measure as source terms.  For a signed Radon measure $f\in\M(\overline\Omega)$ with $f(\overline\Omega)=0$ we let $f=f^+-f^-$ be the decomposition into positive and negative parts, which are measures with the same mass. Furthermore, we let $W_1(\mu,\nu)\defeq  \sup\left\lbrace\int_{\overline\Omega}\phi\d(\mu-\nu)\st \Lip(\phi)\leq 1\right\rbrace$ denote the Wasserstein-1 distance of two measures $\mu$ and $\nu$ with the same mass.
Combing our main result \cref{thm:main_general} for right hand sides that are linear combinations of Dirac deltas with the $L^p$-stability result for measure data from \cref{thm:Lp_stability}, we easily infer the following result which has an additional error term that compares the source terms.
\begin{theorem}[Main theorem for general measures]\label{thm:rate_general_measure}
Let $f\in\M(\overline\Omega)$ be a Radon measure with $f(\overline{\Omega})=0$, let $f_\Gamma=\sum_{x\in\Gamma}a_x\delta_x$ and assume that $f^\pm(\overline\Omega)=f_\Gamma^\pm(\overline\Omega)$.
We make \cref{ass:rho_c1a,ass:omega_c11,ass:neps,ass:epsk} and additionally assume that $k\geq 2$, $\eps \ll 1$, $n\eps^{2d} \geq 1$, $\eps\leq \eps_k^d\leq 1$ and $\eps_k \log(\eps^{-1})^{\frac{1}{2}}\leq \frac{\dist(\Gamma,\partial\Omega)}{24(d+2)}$. Let $u_{n,\epsilon}\in \lo2$ be defined by \labelcref{eq:une} and let $u\in W^{1,p}(\Omega)$ for $1\leq p<\frac{d}{d-1}$ be the distributional solution of $-\div(\rho^2\nabla u)=f$ in the sense of \cref{def:distr_sol_Poisson_eq} with $\varrho=\rho^2$.
Then there exist constants $C_1,C_2>0$ such that for $q$ defined by \labelcref{eq:q_prob} the following hold.
\begin{enumerate}[label=(\roman*)]
\item If $\rho\equiv |\Omega|^{-1}$ is constant, then for any $0 < \sigma < 1$ we have that
\begin{equation*}
\ng{1}{u - u_{n,\eps}} \lesssim \sum_{x\in \Gamma}|a_x|\log(\eps^{-1})^{\frac{d}{2}+1}\left(\eps_k^{2-\sigma} + \eps_k^{-\frac{d}{2}}\eps^{\frac{1}{2}}\right)
+
W_1(f^+,f_\Gamma^+)
+
W_1(f^-,f_\Gamma^-).
\end{equation*}
holds with probability at least $1 - C_1q - 2^{3+\gamma}n^{-\gamma}$, where $\gamma = \frac{\sigma}{d-\sigma}$. 
\item In general we have that
\begin{equation*}
\ng{1}{u - u_{n,\eps}} \lesssim \sum_{x\in \Gamma}|a_x|\log(\eps^{-1})^{\frac{d}{2}}\left(\eps_k + \eps_k^{-\frac{d}{2}}\eps^{\frac{1}{2}}\right)
+
W_1(f^+,f_\Gamma^+)
+
W_1(f^-,f_\Gamma^-)
\end{equation*}
holds with probability at least $1 - C_1q -  2^{3+\gamma}n^{-\gamma}$, where $\gamma$ is given in \labelcref{eq:gamma_d}. 
\end{enumerate}
\end{theorem}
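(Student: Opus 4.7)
The plan is to reduce \cref{thm:rate_general_measure} to \cref{thm:main_general} via the triangle inequality and the $L^p$-stability result \cref{thm:Lp_stability}. Let $u_\Gamma\in W^{1,p}(\Omega)$ denote the unique distributional solution of $-\div(\rho^2\nabla u_\Gamma)=f_\Gamma$ with homogeneous Neumann boundary conditions and zero mean against $\rho^2$. By linearity of \cref{def:distr_sol_Poisson_eq}, the function $w\defeq  u-u_\Gamma\in W^{1,p}(\Omega)$ is the distributional solution with measure data $f-f_\Gamma$. Triangle inequality yields
\begin{equation*}
    \ng{1}{u-u_{n,\eps}}\leq \ng{1}{u-u_\Gamma}+\ng{1}{u_\Gamma-u_{n,\eps}}.
\end{equation*}
The second term is bounded by \cref{thm:main_general} directly, giving the first summand in each estimate together with the probability $1-C_1 q - 2^{2+\gamma}n^{-\gamma}$. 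The remaining task is to bound $\ng{1}{w}$ by the Wasserstein terms, with a small price in probability.

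The first step is to pass from the discrete $\ell^1$-norm to a continuum $L^r$-norm. Choose $r>1$ slightly larger than $1$ (specifically $r=d/(d-\sigma)$ in case (i) so that the exponent $\gamma=\sigma/(d-\sigma)$ matches, and the analogous choice from the proof of \cref{thm:main_general} in case (ii)). Using the bound $\mathbb E[\ng{1}{w}^r]\leq \mathbb E\ng{r}{w}^r =\norm{w}_{L^r(\Omega\,;\,\rho\d x)}^r\lesssim \norm{w}_{L^r(\Omega)}^r$ via Jensen's inequality and the $\l p$-monotonicity $\ng{1}{w}\leq\ng{r}{w}$, Markov's inequality gives
\begin{equation*}
    \mathbb P\left(\ng{1}{w}>2\norm{w}_{L^r(\Omega)}\right)\leq 2^{-r}\,n^{-\gamma}\cdot\text{(minor factors)},
\end{equation*}
so that on an event of probability at least $1-2^{1+\gamma}n^{-\gamma}$ one has $\ng{1}{w}\lesssim \norm{w}_{L^r(\Omega)}$. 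This is the origin of the extra factor $2$ in the probabilities (compare $2^{2+\gamma}$ in \cref{thm:main_general} with $2^{3+\gamma}$ here), coming from a union bound with the event in \cref{thm:main_general}.

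The second step bounds $\norm{w}_{L^r(\Omega)}$ using \cref{thm:Lp_stability} applied to the measure $f-f_\Gamma$, which satisfies the compatibility $(f-f_\Gamma)(\overline\Omega)=0$. With $\beta=\min\{1,1-d(r-1)/r\}\in(0,1]$ the stability estimate yields
\begin{equation*}
    \norm{w}_{L^r(\Omega)}\lesssim \sup\left\{\int_{\overline\Omega}\psi\d(f-f_\Gamma)\,:\,\psi\in C^{1,\beta}(\Omega),\ \norm{\psi}_{C^{1,\beta}(\Omega)}\leq 1\right\}.
\end{equation*}
For any such $\psi$ the bound $\norm{\psi}_{C^{1,\beta}}\leq 1$ implies $\Lip(\psi)\leq \norm{\nabla\psi}_{L^\infty(\Omega)}\leq 1$. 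Decomposing $f-f_\Gamma=(f^+-f_\Gamma^+)-(f^--f_\Gamma^-)$, using that by hypothesis $f^\pm$ and $f_\Gamma^\pm$ have the same total mass, and applying the Kantorovich--Rubinstein duality for $W_1$ on each pair separately, we conclude
\begin{equation*}
    \int_{\overline\Omega}\psi\d(f-f_\Gamma)\leq W_1(f^+,f_\Gamma^+)+W_1(f^-,f_\Gamma^-).
\end{equation*}
Combining these three steps, taking a union bound with \cref{thm:main_general}, and substituting the rates from cases (i) and (ii) gives both assertions.

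The main obstacle is the first step, namely transferring the continuum $L^r$ bound on $w$ to a discrete $\ell^1$ bound, because $w$ is only in $W^{1,p}(\Omega)$ for $p<d/(d-1)$ and has no continuous representative, so pointwise evaluation at the random sample is not directly meaningful. However, since only the integral $\ng{1}{w}=\frac{1}{n}\sum_i |w(x_i)|$ is needed and $w\in L^r(\Omega)$, a Markov/Chebyshev argument on the \emph{total} sum (rather than individual pointwise estimates) circumvents the regularity issue, at the cost of the additional probability factor $n^{-\gamma}$ which exactly matches the one already present in \cref{thm:main_general}.
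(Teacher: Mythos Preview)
Your overall strategy matches the paper's proof exactly: split via the triangle inequality, invoke \cref{thm:main_general} for $\ng{1}{u_\Gamma - u_{n,\eps}}$, apply \cref{thm:Lp_stability} to $w = u - u_\Gamma$ with data $f - f_\Gamma$, bound the resulting dual norm by $W_1(f^+, f_\Gamma^+) + W_1(f^-, f_\Gamma^-)$ via Kantorovich--Rubinstein after splitting into positive and negative parts, and transfer the continuum $L^r$ bound on $w$ to a graph $\ell^1$ bound. The paper does precisely this, citing \cref{prop:lp_graph_control} for the last step.

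There is, however, a genuine gap in your sketch of that transfer step. The chain $\bE\bigl[\ng{1}{w}^r\bigr] \leq \bE\bigl[\ng{r}{w}^r\bigr] = \int_\Omega |w|^r\rho\d x$ is correct, but Markov's inequality applied to $\ng{1}{w}^r$ with this bound only yields
\[
\P\bigl(\ng{1}{w} > 2\norm{w}_{L^r(\Omega)}\bigr) \leq \frac{\bE\bigl[\ng{1}{w}^r\bigr]}{2^r\norm{w}_{L^r(\Omega)}^r} \lesssim 2^{-r},
\]
with \emph{no} decay in $n$; the factor $n^{-\gamma}$ you assert does not come from a raw moment bound on the average. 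What is actually needed is concentration of the \emph{centered} sum $S_n = \sum_i\bigl(|w(x_i)| - \bE|w(x_1)|\bigr)$. The paper's \cref{prop:lp_graph_control} obtains this via the Bahr--Esseen inequality $\bE|S_n|^r \leq 2n\,\bE|Z_1|^r$ for i.i.d.\ mean-zero summands and $1 < r \leq 2$, which gives $\P(|S_n| > nt) \lesssim n^{1-r}t^{-r}\norm{w}_{L^r(\Omega)}^r$; taking $r = 1 + \gamma$ recovers the $n^{-\gamma}$ decay and the constant $2^{2+\gamma}$ that, after the union bound with \cref{thm:main_general}, produces the stated $2^{3+\gamma}n^{-\gamma}$. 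The fix is to replace your Markov sketch by an appeal to \cref{prop:lp_graph_control} (or to reproduce its Bahr--Esseen argument); once you do, your proof is identical to the paper's.
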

\begin{proof}
    Let $u_\Gamma$ be defined as \labelcref{eq:ucontinuum} and hence it is a distributional solution of $-\div(\rho^2\nabla u_\Gamma)=\sum_{x\in\Gamma}a_x\delta_x=f_\Gamma$.
    Applying \cref{thm:Lp_stability} we have for $1\leq p < \frac{d}{d-1}$ that
    \begin{align*}
        \norm{u-u_\Gamma}_{L^p(\Omega)}\leq 
        C\sup
        \left\lbrace 
        \int_{\overline{\Omega}}
        \psi\d(f-f_\Gamma)
        \st 
        \psi \in C^{1,\beta}(\Omega),\,
        \norm{\psi}_{C^{1,\beta}(\Omega)}\leq 1
        \right\rbrace
    \end{align*}
    where $\beta \defeq   1-\frac{d(p-1)}{p}>0$.
    Using that $(f-f_\Gamma)(\overline\Omega)=0$ and that $C^{1,\beta}(\Omega)$ is continuously embedded in $C^{0,1}(\Omega)$, we can estimate the right hand side as follows
    \begin{align*}
        &\phantom{{}={}}
        \sup
        \left\lbrace 
        \int_{\overline{\Omega}}
        \psi\d(f-f_\Gamma)
        \st 
        \psi \in C^{1,\beta}(\Omega),\,
        \norm{\psi}_{C^{1,\beta}(\Omega)}\leq 1
        \right\rbrace
        \\
        &\lesssim
        \sup        
        \left\lbrace 
        \int_{\overline{\Omega}}
        \psi\d(f-f_\Gamma)
        \st 
        \Lip(\psi)\leq 1
        \right\rbrace
        \\
        &=
        \sup        
        \left\lbrace 
        \int_{\overline{\Omega}}
        \psi\d(f^+-f_\Gamma^+)
        +
        \int_{\overline\Omega}
        \phi\d(f_\Gamma^-
         -f^-)
        \st 
        \Lip(\psi)\leq 1
        \right\rbrace
        \\
        &\leq 
        \sup        
        \left\lbrace 
        \int_{\overline{\Omega}}
        \psi\d(f^+-f_\Gamma^+)
        \st 
        \Lip(\psi)\leq 1
        \right\rbrace
        +
        \sup        
        \left\lbrace 
        \int_{\overline\Omega}
        \phi\d(f_\Gamma^-
         -f^-)
        \st 
        \Lip(\psi)\leq 1
        \right\rbrace
        \\
        &=
        W_1(f^+,f_\Gamma^+)
        +
        W_1(f^-,f_\Gamma^-).
    \end{align*}
    An application of a standard Monte Carlo estimate (see \cref{prop:lp_graph_control} below) shows that
    \begin{align*}
        \norm{u-u_\Gamma}_{\ell^1(\X_n)}
        \lesssim
        W_1(f^+,f_\Gamma^+)
        +
        W_1(f^-,f_\Gamma^-)
    \end{align*}
    with probability at least $1-2^{1+p}n^{1-p}$.
    Applying \cref{thm:main_general}, using the triangle inequality, and applying a union bound yields the desired result.
\end{proof}
As before we get effective rates of convergence with high probability by optimizing over the parameters.
\begin{corollary}\label{cor:general_measure}
    In the context of \cref{thm:rate_general_measure} we in general have that
    \begin{equation}\label{eq:nonconstant_rate_optimal_measure}
        \ng{1}{u - u_{n,\eps}} \lesssim \sum_{x\in \Gamma}|a_x|\log(\eps^{-1})^{\frac{d}{2}}\eps^{\frac{1}{d+2}}+
        W_1(f^+,f_\Gamma^+)
        +
        W_1(f^-,f_\Gamma^-),
    \end{equation}
    holds with probability at least 
    \[1  -  2^{3+\gamma}n^{-\gamma} - C_1n^3\exp\left( -C_2 \Theta_{d,\eps^{-2}}^{-1}n\eps^{3d + \frac{2}{d+2}}\right).\]
    where $\gamma$ is given in \labelcref{eq:gamma_d}.
    Assuming that $\rho\equiv |\Omega|^{-1}$ is constant, there exists $C_1,C_2>0$ such that for any $0 < \sigma < 1$ we have that
    \begin{equation}\label{eq:constant_rate_optimal_measure}
        \ng{1}{u - u_{n,\eps}} \lesssim \sum_{x\in \Gamma}|a_x|\log(\eps^{-1})^{\frac{d}{2}+1}\eps^{\frac{2-\sigma}{d+4}}
        +
        W_1(f^+,f_\Gamma^+)
        +
        W_1(f^-,f_\Gamma^-),
    \end{equation}
    holds with probability at least 
    \[1  -  2^{3+\gamma}n^{-\gamma} - C_1n^3\exp\left( -C_2 \Theta_{d,\eps^{-2}}^{-1}n\eps^{3d + \frac{6}{d+4}}\right), \ \ \text{where} \ \ \gamma = \frac{\sigma}{d-\sigma}.\]
\end{corollary}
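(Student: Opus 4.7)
The plan is to follow exactly the same scheme as in the proofs of \cref{cor:main_nonconstant,cor:main_constant}, simply applied to \cref{thm:rate_general_measure} in place of \cref{thm:main_general}. The point is that the Wasserstein-$1$ terms $W_1(f^+,f_\Gamma^+)$ and $W_1(f^-,f_\Gamma^-)$ appearing in \cref{thm:rate_general_measure} are additive and independent of the discretization parameters $k$ and $\epsilon$, so they do not enter the balancing procedure at all.

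For part (i), I would choose $k$ to balance the two $k$-dependent error terms $\epsilon_k$ and $\epsilon_k^{-d/2}\epsilon^{1/2}$ appearing on the right-hand side of \cref{thm:rate_general_measure} (ii). Setting these equal yields $\epsilon_k^{d/2+1}=\epsilon^{1/2}$, equivalently $\epsilon_k=\epsilon^{1/(d+2)}$ and $k=\epsilon^{-2(d+1)/(d+2)}$. Substituting back gives the claimed rate $\eps^{1/(d+2)}$ up to the logarithmic prefactor, plus the unchanged Wasserstein remainders. For part (ii), under the constant-density hypothesis, \cref{thm:rate_general_measure} (i) furnishes the sharper $\eps_k^{2-\sigma}$ term; balancing it against $\epsilon_k^{-d/2}\epsilon^{1/2}$ via $\epsilon_k^{(2-\sigma)+d/2}=\epsilon^{1/2}$ gives (neglecting the $\sigma$-shift) $\epsilon_k=\epsilon^{1/(d+4)}$ and $k=\epsilon^{-2(d+3)/(d+4)}$, yielding the rate $\eps^{(2-\sigma)/(d+4)}$ plus the Wasserstein terms.

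The only remaining bookkeeping is to simplify the probability bound \labelcref{eq:q_prob} under the chosen value of $k$. Since in both cases $k=\epsilon^{-2(d+c)/(d+c+1)}\leq \eps^{-2}$ (with $c=1$ or $c=3$) and by \cref{ass:epsk} we have $\epsilon_k^2=k\epsilon^2\leq 1$, the three exponential terms in $q$ can all be bounded by the dominant one, namely the one involving $kn^2\exp(-C_2\Theta_{d,k}^{-1}n\epsilon^{3d+2}\epsilon_k^{-2(d+1)})$. After substituting $\epsilon_k^{-2(d+1)} = \epsilon^{-2(d+1)} k^{-(d+1)}$ and the explicit choice of $k$, the exponent simplifies to $\Theta_{d,\epsilon^{-2}}^{-1} n \epsilon^{3d+2/(d+2)}$ in case (i) and $\Theta_{d,\epsilon^{-2}}^{-1} n \epsilon^{3d+6/(d+4)}$ in case (ii), matching the stated probabilities. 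The factor $n^2$ is absorbed into $n^3$ using $k\leq n$ (which follows from \cref{ass:neps,ass:epsk}), and the $2^{3+\gamma}n^{-\gamma}$ contribution comes from combining the $2^{2+\gamma}n^{-\gamma}$ term of \cref{thm:rate_general_measure} with the additional $2^{1+p}n^{1-p}$-style Monte Carlo term already folded into \cref{thm:rate_general_measure}.

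No step here is genuinely delicate; the corollary is purely an optimization of the free parameter $k$ in the abstract bound of \cref{thm:rate_general_measure}. The closest thing to an obstacle is verifying that the balancing choice of $k$ is admissible under the standing hypotheses of \cref{thm:rate_general_measure}, i.e.\ that $k\geq 2$, $\epsilon_k\leq 1$, $\epsilon \leq \epsilon_k^d$, and $\epsilon_k\log(\epsilon^{-1})^{1/2}\leq \dist(\Gamma,\partial\Omega)/(24(d+2))$; each of these reduces to an inequality of the form $\epsilon \leq \epsilon_0$ for a sufficiently small $\epsilon_0$, which is absorbed into the implicit $\epsilon\ll 1$ assumption of \cref{thm:main_general}.
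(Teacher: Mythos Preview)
Your proposal is correct and is exactly the approach the paper takes: the corollary is stated without proof, being an immediate consequence of optimizing the free parameter $k$ in \cref{thm:rate_general_measure} in precisely the same way as in \cref{cor:main_nonconstant,cor:main_constant}, with the Wasserstein terms carried along unchanged. Your balancing computations and probability simplifications match those proofs verbatim.
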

\begin{remark}\label{rem:general_measure}
    To make the these results more interpretable we consider the case where $\dist(\supp f,\partial\Omega)>0$ and the measures $f^\pm$ are approximates by empirical measures consisting of $m$ atomic measures. 
    This can be done, for instance, by letting $\Gamma$ be the union of $m/2$ \emph{i.i.d.} samples from $f^\pm$ (independent of the random geometric graph construction!) and letting $a_x=\frac{\pm 1}{m}$ for $x\sim f^\pm$.
    Here we assume w.l.o.g. that $f^\pm$ are probability measures.
    In this case we have that $\sum_{x\in\Gamma}\abs{a_x}=1$ and furthermore $W_1(f^\pm,f_\Gamma^\pm)\sim \left(\frac{\log m}{m}\right)^{1/d}$ with high probability \cite{trillos2015rate}.
\end{remark}

\subsection{Regularizing the graph equation}

We now proceed with the three key lemmas for proving  \cref{thm:main_general}. We start with a discrete smoothing result on the graph that allows us to replace the graph delta functions with the heat kernel.  Let $\H_k$ be the heat kernel on the random geometric graph, as defined in \cref{sec:hka}. We let $k\geq 2$ and define $u_{n,\epsilon,k} = \H_k * u_{n,\epsilon}$. Our first result relates $u_{n,\epsilon,k}$ to $u_{n,\epsilon}$ and shows that $u_{n,\epsilon,k}$ solves a regularized graph problem.

\begin{lemma}\label{lem:main_smooth_graph}
Let $k\geq 2$ and assume $\Gamma_n + B(0,R_k) \subset \Omega$.  Define the smoothed graph function $u_{n,\epsilon,k}=\H_k*u_{n,\epsilon}$. Then the following hold.
\begin{enumerate}[label=(\roman*)]
\item We have
\begin{equation}\label{eq:smooth_graph_problem}
u_{n,\epsilon,k}= \argmin_{u\in \lo2}\ene\left(u;f_{n,\epsilon,k}\right),
\end{equation}
where
\begin{equation}\label{eq:fnek}
f_{n,\epsilon,k} = \sum_{x\in \Gamma} a_x \H^{\tau(x)}_k.
\end{equation}
\item If $p=1$, then 
\begin{equation}\label{eq:main_graph_l1}
\ng1{u_{n,\epsilon} - u_{n,\epsilon,k}} \lesssim \sum_{x\in \Gamma}|a_x|\eps_k^2,
\end{equation}
holds with probability at least $1 - 2n\exp\left( -Cn\epsilon^d\right)$.
\item If $k\epsilon^2 \ll 1$ then for any $p>1$ we have
\begin{equation}\label{eq:main_graph_lp}
\ng{p}{u_{n,\epsilon} - u_{n,\epsilon,k}} \lesssim\sum_{x\in \Gamma}|a_x| \epsilon_k^{2}\left[\log(\epsilon^{-1})^{\frac{d}{2p}}\eps_k^{\frac{d}{p}}\left( \epsilon_k^{-d} + \lambda \cdk \eps^{-d}\right) + n^{1-\frac{1}{p}}\epsilon^2\right]
\end{equation}
holds for $0 < \lambda \leq 1$ with probability at least $1-15kn^2\exp\left( -C n \epsilon^d \lambda^2\right)$.
\end{enumerate}
\end{lemma}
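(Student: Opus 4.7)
The plan is as follows.

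For part (i), I would apply \cref{thm:smooth_graph_poisson} to the rescaled equation $\L u_{n,\eps} = \sigma_\eta\eps^2(n-1)\sum_{x\in\Gamma}a_x\delta_{\tau(x)}$, which is equivalent to the original graph Poisson equation $\L_{n,\eps} u_{n,\eps} = \sum_x a_x \delta_{\tau(x)}$. This yields $\L u_{n,\eps,k} = \sigma_\eta\eps^2(n-1)\sum_x a_x \H^{\tau(x)}_k$, and rescaling back gives $\L_{n,\eps}u_{n,\eps,k} = f_{n,\eps,k}$. The compatibility $\ipg{f_{n,\eps,k},\one} = \sum_x a_x \ipg{\H^{\tau(x)}_k,\one} = \sum_x a_x = 0$ follows from \labelcref{eq:unitmass_heatkernel} and the hypothesis $\sum a_x = 0$. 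Moreover, \cref{thm:difference_graph_smoothed} gives $(u_{n,\eps,k})_{\deg} = (u_{n,\eps})_{\deg} = 0$, so $u_{n,\eps,k}\in\lo2$. Combined with the Euler--Lagrange equation, this identifies $u_{n,\eps,k}$ as the unique minimizer of $\ene(\cdot;f_{n,\eps,k})$ over $\lo2$.

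For parts (ii) and (iii), the key tool is the exact representation from \cref{thm:difference_graph_smoothed}, which after rescaling reads
\[ u_{n,\eps}-u_{n,\eps,k} = \sigma_\eta\eps^2(n-1)\,\deg^{-1}\sum_{x\in\Gamma}a_x\sum_{j=0}^{k-1}\H^{\tau(x)}_j. \]
Applying \cref{prop:degree} with $\lambda=\tfrac12$ yields $\deg(x_i)\gtrsim n\,\hat\rho_\eps(x_i)\gtrsim n$ uniformly with probability at least $1-2n\exp(-C n\eps^d)$, so the prefactor is bounded by $\eps^2$. Minkowski's inequality then reduces everything to estimating $\sum_{j=0}^{k-1}\ng{p}{\H^{\tau(x)}_j}$. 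Part (ii) is immediate: the unit mass property \labelcref{eq:unitmass_heatkernel} gives $\ng{1}{\H^y_j}=1$, so the sum equals $k$ and we obtain $\ng{1}{u_{n,\eps}-u_{n,\eps,k}}\lesssim \eps^2 k\sum|a_x| = \eps_k^2\sum|a_x|$.

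For part (iii) I would split off the $j=0$ term, whose contribution is $\ng{p}{\delta_{\tau(x)}} = n^{1-\frac1p}$, yielding the $n^{1-\frac1p}\eps^2$ error in the bracket. The remaining terms with $j\geq 2$ are controlled via the interior $\ell^p$ bound of \cref{cor:heatkernel_lp}, whose hypothesis $B(\tau(x),R_k)\subset\Omega$ is precisely what the assumption $\Gamma_n+B(0,R_k)\subset\Omega$ guarantees; the single $j=1$ term is handled by a direct concentration estimate of the same form. For $p$ close to $1$ (so that $d(p-1)/p < 2$) the partial sum $\sum_{j=2}^{k-1}\eps_j^{d/p-d}$ grows like $k\eps_k^{d/p-d}$, and analogously for the $\lambda\cdj$ piece one uses the case split in \labelcref{eq:Kconstant} to get $\sum_{j=2}^{k-1}\cdj\eps_j^{d/p}\lesssim k\cdk\eps_k^{d/p}$. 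Multiplying by $\eps^2$ produces the first bracket term $\eps_k^2\log^{d/(2p)}(\eps^{-1})\eps_k^{d/p}(\eps_k^{-d}+\lambda\cdk\eps^{-d})$.

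The main technical obstacle is the sum over $j$ of the $\ell^p$ heat kernel norms: one must verify that the partial sums are dominated, up to the factor of $k$, by the largest term at $j=k-1$, which is exactly where the restriction $k\eps^2\ll 1$ and the decreasing behavior of $\ng{p}{\H^y_j}$ in $j$ (for $p>1$) come into play. A secondary, essentially bookkeeping, step is the union bound: the degree concentration from \cref{prop:degree} combines with the $O(k)$ instances of \cref{cor:heatkernel_lp} needed to cover all $j\leq k$ and all poles $\tau(x)\in\Gamma_n$, which together produce the stated probability $1-15kn^2\exp(-Cn\eps^d\lambda^2)$.
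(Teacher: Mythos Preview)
Your proposal is correct and follows essentially the same approach as the paper: rescale to the unnormalized Laplacian, apply \cref{thm:smooth_graph_poisson} and \cref{thm:difference_graph_smoothed}, control the degree via \cref{prop:degree}, and then bound $\sum_{j=0}^{k-1}\ng{p}{\H_j^{\tau(x)}}$ by splitting off the $j=0$ term and invoking \cref{cor:heatkernel_lp} for the rest. One small remark: the hypothesis $k\eps^2\ll 1$ is not needed to control the partial sums (the estimate $\sum_{j=1}^{k-1} j^{-\alpha}\lesssim k^{1-\alpha}$ for $0<\alpha<1$ holds unconditionally); rather, it enters because \cref{cor:heatkernel_lp} requires $[\nabla\rho]_\alpha m\eps^{1+\alpha}\ll 1$, which with $\alpha=1$ and $m=k$ is exactly $k\eps^2\ll 1$.
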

\begin{remark}
This result is largely consistent with the continuum case (cf. \cref{thm:convergence_rate_smoothing,rem:actual_rate}), where both the rate $\Ri^{2-d\frac{p-1}{p}}$ and the validity of the rate for $p<\frac{d}{d-2}$ are the same by identifying $\Ri\sim\eps_k$. There is, however, an extra error term $n^{1-\frac{1}{p}}\eps^2 \eps_k^2$ that arises in the graph setting. 
\end{remark}
\begin{proof}
We recall from \cref{sec:continuum_limits} and the definition of the graph Laplacian $\L_{n,\eps}$ in \labelcref{eq:graph-laplacian-geometric} that $u_{n,\epsilon}$ is the solution of
\[\L_{n,\epsilon} u_{n,\epsilon} = \sum_{x\in \Gamma} a_x \delta_{\tau(x)}\]
satisfying $(u_{n,\epsilon})_{\deg} = 0$, where $\delta_x(y)=n$ for $x=y$ and $\delta_x(y)=0$ otherwise. We recall the definition of the unnormalized Laplacian
\[\L u(x) = \sum_{i=1}^n\eta_\eps(\abs{x_i-x_j})\left(u(x_i)-u(x_j)\right),\]
and note from \labelcref{eq:graph-laplacian-geometric} that 
\begin{equation}\label{eq:laplacian_exchange}
\L_{n,\epsilon} u(x) = \frac{1}{\sigma_\eta(n-1)\epsilon^2}\L u(x).
\end{equation}
Therefore, $u_{n,\epsilon}$ satisfies
\begin{equation}\label{eq:ceq}
\L u_{n,\epsilon} = \sigma_\eta(n-1)\epsilon^2\sum_{x\in \Gamma} a_x \delta_{\tau(x)}.
\end{equation}
By \cref{thm:smooth_graph_poisson} and \labelcref{eq:ceq} we have
\[\L u_{n,\epsilon,k} = \sigma_\eta(n-1)\epsilon^2\sum_{x\in \Gamma} a_x \H^{\tau(x)}_k,\]
which is equivalent, by \labelcref{eq:laplacian_exchange}, to 
\begin{equation}\label{eq:smoothed_graph_eq}
\L_{n,\epsilon} u_{n,\epsilon,k} = \sum_{x\in \Gamma} a_x \H^{\tau(x)}_k.
\end{equation}
By \labelcref{eq:ceq} and \cref{thm:difference_graph_smoothed} we have $(u_{n,\epsilon,k})_{\deg}=0$ and 
\begin{equation}\label{eq:graph_smooth_diff}
u_{n,\epsilon} - u_{n,\epsilon,k} = \deg_{n,\epsilon}^{-1}\sigma_\eta(n-1)\epsilon^2\sum_{x\in \Gamma}a_x\sum_{j=0}^{k-1}\H_j^{\tau(x)}.
\end{equation}
Therefore, $u_{n,\epsilon,k}$ is the solution of \labelcref{eq:smooth_graph_problem}, which establishes (i).

For (ii), we take $\ell^p$ norms on both sides of \labelcref{eq:graph_smooth_diff} and apply a union bound with \cref{prop:degree} for $\lambda=1$ to obtain
\begin{equation}\label{eq:lpbound_1}
\ng{p}{u_{n,\epsilon} - u_{n,\epsilon,k}} \lesssim \epsilon^2\sum_{x\in \Gamma}|a_x|\sum_{j=0}^{k-1}\ng{p}{\H_j^{\tau(x)}}
\end{equation}
with the claimed probability.
When $p=1$ we use $\|\H_j^{\tau(x)}\|_{\ell^1(\X_n)}=1$ to complete the proof of (ii). 
When $p>1$ we first note that $\H_0^{\tau(x)} = \delta_x$ and so $\ng{p}{\H^{\tau(x)}_0} = n^{1-\frac{1}{p}}$. For $j\geq 1$ we  use \labelcref{eq:heat_kernel_lp} from \cref{cor:heatkernel_lp}, which yields
\begin{equation}\label{eq:sum_heat_kernel}
\sum_{j=1}^{k-1}\ng{p}{\H_j^{\tau(x)}} \lesssim \log(\epsilon^{-1})^{\frac{d}{2p}}\left(\sum_{j=1}^{k-1}\epsilon_j^{-d( 1-\frac{1}{p})} + \lambda \epsilon^{-d}\sum_{j=1}^{k-1}\cdj\eps_j^{\frac{d}{p}}\right).
\end{equation}
Now, note that  $0 < \frac{d}{2}( 1-\tfrac{1}{p}) < 1$ holds for all $p>1$ when $d=1,2$, and for $1 < p < \frac{d}{d-2}$ when $d\geq 3$. Hence, the remaining terms in the first sum can be bounded by 
\[\sum_{j=1}^{k-1}\epsilon_j^{-d(1-\frac{1}{p})} = \epsilon^{-d(1-\frac{1}{p})}\sum_{j=1}^{k-1}j^{-\frac{d}{2}(1-\frac{1}{p})} \lesssim \epsilon^{-d(1-\frac{1}{p})}k^{1-\frac{d}{2}(1 - \frac{1}{p})} = k\epsilon_k^{-d(1-\frac{1}{p})} .\]
Since $\cdj \eps_j^{\frac{d}{p}} \leq \cdk \eps_k^{\frac{d}{p}}$, the second sum is bounded by $k\:\cdk \eps_k^{\frac{d}{p}}$. 
Combining these bounds with $\ng{p}{\H^{\tau(x)}_0} = n^{1-\frac{1}{p}}$ yields
\[\sum_{j=0}^{k-1}\ng{p}{\H_j^{\tau(x)}} \lesssim k \log(\epsilon^{-1})^{\frac{d}{2p}}\eps_k^{\frac{d}{p}}\left(k^{-1}n^{1-\frac{1}{p}}\eps_k^{-\frac{d}{p}} + \epsilon_k^{-d} + \lambda \cdk\eps^{-d}\right) + n^{1-\frac{1}{p}}.\]
Substituting this into \labelcref{eq:lpbound_1} completes the proof.
\end{proof}

\subsection{Discrete to continuum convergence}

We now proceed to our main discrete to continuum convergence result (\cref{lem:discrete_to_continuum} below), which is a direct application of \cref{thm:main_smooth} (\cref{thm:main_smooth_simplified} to be more precise).
We define the continuum source term
\begin{equation}\label{eq:fk}
f_{k} = \sum_{x\in \Gamma}a_x 
\begin{cases}
\displaystyle\rho^{-1}\psi_{k,\epsilon}(\cdot - x),& \text{if } \rho \text{ is constant,} \\
\displaystyle\widehat\rho_\eps(x)^{-1}\M^{k-1}_\eps\eta_\eps^{x},& \text{otherwise.} 
\end{cases}
\end{equation}
Note that when $\rho$ is constant, we have $\rho \equiv |\Omega|^{-1}$ since $\rho$ is a probability density. We define $u_k \in H^1_\rho(\Omega)$ by 
\begin{equation}\label{eq:uk_var}
u_k = \argmin_{u\in H^1_\rho(\Omega)}\int_\Omega \frac{\rho^2}{2} |\nabla u|^2 - f_ku\rho \d x.
\end{equation}
The source term $f_k$ and corresponding solution $u_k$ are the continuum objects that we shall show, in  \cref{lem:discrete_to_continuum} below, the regularized graph solution $u_{n,\epsilon,k}$ converges to as $n\to \infty$ and $\epsilon\to 0$, as is verified in the following result.

\begin{lemma}\label{lem:discrete_to_continuum}
Assume that $n^{-\frac{1}{d}} \leq \delta \ll \epsilon \ll 1$, $k\geq 2$, and $k\eps \geq 1$. If $R_k \leq \frac{1}{4}\dist(\Gamma,\partial\Omega)$ and $\eps \lesssim \eps_k^d$ then the probability that
\begin{equation}\label{eq:discrete_to_continuum}
\|u_k - u_{n,\epsilon,k}\|_{H^1(\X_n)}  \lesssim \sum_{x\in \Gamma}|a_x|\log(\eps^{-1})^\frac{d}{2}\epsilon_k^{-\frac{d}{2}}\sqrt{\tfrac{\delta}{\epsilon} + \epsilon}
\end{equation}
holds is at least
\[1-C_1\left(kn^2\exp\left( -C_2\cdk^{-1}  n \epsilon^{3d+2}\epsilon_k^{-2(d + 1)}\right) + \exp(-C_2n\epsilon^{d+2}\epsilon_k^{2d})+\exp(-C_2n\delta^d\eps^2)\right).\]
\end{lemma}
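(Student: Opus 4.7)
The plan is to apply \cref{thm:main_smooth_simplified} to the pair $(f,f_n)=(f_k,f_{n,\epsilon,k})$, with $u=u_k$ and $u_{n,\varepsilon}=u_{n,\epsilon,k}$, and then bound every term on the right hand side of that corollary using the heat kernel asymptotics developed in \cref{sec:heat_kernel}. First I would check the compatibility hypothesis: $\sprod{f_{n,\epsilon,k},\one}{\l 2}=\sum_{x\in\Gamma}a_x\sprod{\H^{\tau(x)}_k,\one}{\l 2}=\sum_{x\in\Gamma}a_x=0$ by \labelcref{eq:unitmass_heatkernel,eq:comp_ax}, and the continuum analogue follows from \cref{prop:Me}~(i) (or simply because $\int_{\R^d}\psi_{k,\eps}=1$). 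Since $R_k\leq \tfrac14\dist(\Gamma,\partial\Omega)$, every kernel $\M^{k-1}_\eps\eta^{x}_\eps$ (respectively $\psi_{k,\eps}(\cdot-x)$) for $x\in\Gamma$ is supported, up to an exponentially small tail, inside a ball of radius $R_k$ compactly contained in $\Omega$; this will be the key observation that makes the boundary contributions negligible.

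Next I would estimate each of the ingredients appearing in \cref{thm:main_smooth_simplified}. The $L^p$ norms of $f_k$ and the $\ell^p$ norms of $f_{n,\epsilon,k}$ (for $p=2$, for $p=\infty$ on boundary strips, and for some $q>d/2$) are controlled by \cref{prop:psi_gaussian_upper}~(iii), \cref{lem:psik_graph_bounds}, \cref{thm:Me} and \cref{cor:heatkernel_lp}, giving
\[\norm{f_k}_{L^p(\Omega)}+\norm{f_{n,\epsilon,k}}_{\l p}\lesssim \sum_{x\in\Gamma}\abs{a_x}\,\eps_k^{-d(1-1/p)}\log(\eps^{-1})^{d/(2p)}\]
with high probability. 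The boundary norms $\norm{f_k}_{L^\infty(\partial_{4\eps}\Omega)}$ and $\norm{f_{n,\epsilon,k}}_{\ell^2(\X_n\cap\partial_{2R}\Omega)}$ are bounded by $\sum\abs{a_x}\cdot\eps$ using the Gaussian tail in \cref{thm:Me} and \labelcref{eq:linf_Me} together with $R_k\leq\tfrac14\dist(\Gamma,\partial\Omega)$. The crucial $\ell^1$ consistency $\norm{f_k-f_{n,\epsilon,k}}_{\l 1}$ is exactly what \cref{cor:heatkernel_l1_asymptotic_rho} (general $\rho$) and \cref{cor:heatkernel_l1_asymptotic} (constant $\rho$) were designed to deliver; these give
\[\norm{f_k-f_{n,\epsilon,k}}_{\l 1}\lesssim \sum_{x\in\Gamma}\abs{a_x}\bigl(\lambda\,\cdk\eps_k^{d}\eps^{-d}\log(\eps^{-1})^{d/2}+\eps\bigr)\]
with probability at least $1-C_1 kn^2\exp(-C_2 n\eps^d\lambda^2)$. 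Finally, \cref{cor:Me_graph_bounds}~(i) (resp.\ \cref{prop:psi_gaussian_upper}~(iv)) provides the oscillation bound $\norm{\osc_{B(\delta,\cdot)}f_k}_{L^1(\Omega)}\lesssim \sum\abs{a_x}(\eps^{-1}\delta\log(\eps^{-1})^{d/2}+\eps^2)$.

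Plugging these into \cref{thm:main_smooth_simplified} one sees that the dominant contribution comes from the term $(\delta/\eps+\eps+\lambda_1^2+\lambda_2)\norm{f_k}_{L^2(\Omega)}^2\lesssim \eps_k^{-d}\log(\eps^{-1})^{d/2}(\delta/\eps+\eps+\lambda_1^2+\lambda_2)$, whose square root is already of the order $\log(\eps^{-1})^{d/2}\eps_k^{-d/2}\sqrt{\delta/\eps+\eps}$ claimed in \labelcref{eq:discrete_to_continuum}. To make all the remaining error terms subdominant I would choose $\lambda_1\sim \cdk^{-1/2}\eps^{d+1}\eps_k^{-(d+1)}$ and $\lambda_2\sim \eps^2$: the first choice makes the consistency error $\norm{f_k-f_{n,\epsilon,k}}_{\l 1}$ of order $\eps$ (times $\ell^q$-norm factors that stay $\lesssim \eps_k^{-d(1-1/q)}\mathrm{polylog}$), and feeds $n\eps^d\lambda_1^2\sim \cdk^{-1}n\eps^{3d+2}\eps_k^{-2(d+1)}$ into the first exponential of the stated probability; the second choice similarly produces $\exp(-C_2 n\delta^d\eps^2)$, while the exponent $\exp(-C_2 n\eps^{d+2}\eps_k^{2d})$ comes from applying \cref{cor:heatkernel_lp} with $\lambda\sim \eps_k^{d+1}\eps^{-(d+1)/2}$ to control $\norm{f_{n,\epsilon,k}}_{\ell^q}$. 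The main obstacle will be the bookkeeping between the three scales $\eps$, $\eps_k$, $\delta$ and the two probability parameters $\lambda_1,\lambda_2$: in particular one has to verify that $\lambda_1\leq \hat\lambda_1$ and that the various hidden smallness conditions of \cref{thm:main_smooth,cor:heatkernel_l1_asymptotic_rho,cor:heatkernel_lp} are compatible, which is precisely where the standing hypotheses $\eps\ll 1$, $k\eps\geq 1$ and $\eps\lesssim \eps_k^d$ enter.
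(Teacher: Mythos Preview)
Your overall strategy is exactly the paper's: apply \cref{thm:main_smooth_simplified} to the pair $(f_k,f_{n,\epsilon,k})$ and bound each term using the heat-kernel asymptotics of \cref{sec:heat_kernel}. The tools you list for each bound (\cref{cor:heatkernel_l1_asymptotic_rho,cor:heatkernel_l1_asymptotic}, \cref{cor:heatkernel_lp}, \cref{prop:psi_gaussian_upper}, \cref{cor:Me_graph_bounds}) are also the ones the paper uses.

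Where your proposal slips is in the parameter bookkeeping, and this does matter for matching the stated probability. There are \emph{three} independent probability parameters in play, not two: the heat-kernel parameter $\lambda$ appearing in \cref{cor:heatkernel_l1_asymptotic_rho,cor:heatkernel_lp} (this controls $\ng{1}{f_k-f_{n,\epsilon,k}}$ and $\ng{q}{f_{n,\epsilon,k}}$), and the $\lambda_1,\lambda_2$ of \cref{thm:main_smooth_simplified}. The paper takes $\lambda=\cdk^{-1}\eps^{d+1}\eps_k^{-(d+1)}$, $\lambda_1=\eps_k^{d}\eps$, and $\lambda_2=\eps$; these produce, in order, the three exponentials in the stated bound. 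Your write-up instead identifies the heat-kernel $\lambda$ with $\lambda_1$ (``the first choice makes the consistency error of order $\eps$'' --- but $\lambda_1$ in \cref{thm:main_smooth_simplified} has no bearing on that error). With that conflation you never choose anything to tame the term $\norm{f_k}_{L^\infty(\Omega)}^2\lambda_1\sim\eps_k^{-2d}\lambda_1$, which is precisely why the paper needs $\lambda_1=\eps_k^{d}\eps$ and why $\exp(-C_2 n\eps^{d+2}\eps_k^{2d})$ appears. Your choice $\lambda_2=\eps^2$ gives $n\delta^d\lambda_2^2=n\delta^d\eps^4$, not the claimed $n\delta^d\eps^2$; the paper takes $\lambda_2=\eps$. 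Finally, your separate $\lambda\sim\eps_k^{d+1}\eps^{-(d+1)/2}$ for \cref{cor:heatkernel_lp} would have $\lambda\geq 1$ whenever $k\eps\geq 1$, violating the hypothesis there, and in any case does not produce the claimed exponent. So the plan is right, but you need to keep the three scales $\lambda,\lambda_1,\lambda_2$ straight and assign them as above.
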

\begin{proof}
We first give the proof in the case that $\rho\equiv |\Omega|^{-1}$ is constant. Note that since $\sum_{x\in \Gamma_n}a_x=0$,  $f_{n,\epsilon,k}\in \l2$ satisfies the compatibility condition $\ipg{f_{n,\epsilon,k},\one} = 0$. Thus, we may apply \cref{thm:main_smooth_simplified} to compare the discrete pair $(u_{n,\epsilon,k},f_{n,\epsilon,k})$ to the continuum counterpart $(u_k,f_k)$. We use the parameters
\begin{equation}\label{eq:Gamma_boundary}
R = \frac{1}{4}\dist(\Gamma,\partial \Omega),
\end{equation}
and we choose $q>\frac{d}{2}$ as $q \defeq   d$. In particular, this means that $R_k \geq R$. This yields 
\begin{align}\label{eq:rate_before}
\|u_k - u_{n,\epsilon,k}\|_{H^1(\X_n)}^2 \lesssim& \left(\ng1{f_{n,\epsilon,k}-f_k} +\|\osc_{\Omega\cap B(\cdot,\delta)}f_k\|_{L^1(\Omega)} \right)\left(\|f_k\|_{L^{q}(\Omega)} + \ng{q}{f_{n,\epsilon,k}}\right)\notag \\
&+\|f_k\|_{L^\infty(\Omega)}^2\lambda_1+\|f_k\|_{L^\infty(\partial_{4\epsilon}\Omega)}^2\eps+ \ng2{f_{n,\epsilon,k}} \|f_{n,\epsilon,k}\|_{\ell^2(\X_n\cap \partial_{2R}\Omega)} \notag \\
&+\left(\ng2{f_{n,\epsilon,k}}^2 + \|f_k\|^2_{L^2(\Omega)}\right)\left(\frac{\delta}{\eps}+\epsilon +\lambda_1^2 + \lambda_2\right),
\end{align}
with probability at least $1-C_1\left(\exp(-C_2n\epsilon^d\lambda_1^2)+\exp(-C_2n\delta^d\lambda_2^2)\right)$.  Since the proof of  \cref{thm:main_smooth_simplified} invokes  \cref{thm:transportation}, we may assume the latter holds for the rest of the proof.

The rest of the proof amounts to estimating the terms above, and choosing the parameters $\lambda,\lambda_1,\lambda_2$ appropriately. For notational simplicity we write $\sa = \sum_{x\in \Gamma}|a_x|$ throughout the rest of the proof.  For each $x\in \Gamma$, we let $\cp x\in \X_n$ denote the closest point projection, so that $|\tau(x) - x|\leq \delta$ and
\[f_{n,\epsilon,k} = \sum_{x\in \Gamma} a_x \H^{\cp{x}}_k.\]
Then on $\X_n$ we have
\[f_{k} - f_{n,\epsilon,k} = \sum_{x\in \Gamma}a_x \left(\rho^{-1} \psi_{k,\epsilon}(\cdot - x) - \H^{\cp{x}}_k\right),\]
and so using \cref{lem:psik_graph_bounds,cor:heatkernel_l1_asymptotic} we have
\begin{align*}
\ng{1}{f_{k} - f_{n,\epsilon,k}} &\leq \sum_{x\in \Gamma}|a_x|\ng{1}{\rho^{-1} \psi_{k,\epsilon}(\cdot - x) - \H^{\cp{x}}_k}\\
&\lesssim \sum_{x\in \Gamma}|a_x| \ng{1}{\psi_{k,\epsilon}(\cdot - x) - \psi_{k,\epsilon}(\cdot - \cp x)}\\
&\hspace{1in}+  \sum_{x\in \Gamma}|a_x|  \ng{1}{\rho\psi_{k,\epsilon}(\cdot - \cp x) - \H^{\cp{x}}_k}\\
&\lesssim \sa\left(\eps_k^{-1}\delta\log(\eps^{-1})^{\frac{d}{2}} +  \lambda \eps^{-d}\cdk \epsilon_k^{d}\log(\eps^{-1})^{\frac{d}{2}}+\eps\right),
\end{align*}
with probability at least $1-C_1kn^2\exp\left( -C_2 n \epsilon^d \lambda^2\right)$.

By \cref{prop:psi_gaussian_upper} (i) we have
\begin{equation}\label{eq:fk_infty}
\|f_k\|_{L^\infty(\Omega)} \lesssim \sa\eps_k^{-d} \ \ \text{and} \ \ \|f_k\|_{L^\infty(\partial_{4\epsilon}\Omega)} \lesssim 1,
\end{equation}
the latter holding due to \labelcref{eq:Rk_prop} and the fact that $\dist(\Gamma,\partial\Omega) \geq 4R_k \geq R_k + 5\eps$ so that $\dist(\Gamma,\partial_{4\eps}\Omega) \geq R_k + \eps$.
By  \cref{prop:psi_gaussian_upper} (iii) and using $q=d$ we have
\begin{equation}\label{eq:fk_Lq}
\|f_k\|_{L^q(\Omega)} \lesssim \sa\eps_k^{-d + 1}\log(\eps^{-1})^{\frac{1}{2}}, \ \ \text{and} \ \   \|f_k\|_{L^2(\Omega)} \lesssim \sa\eps_k^{-\frac{d}{2}}\log(\eps^{-1})^{\frac{d}{4}},
\end{equation}
and by \cref{prop:psi_gaussian_upper} (iv) we have
\[\|\osc_{\Omega\cap B(\cdot,\delta)}f_k\|_{L^1(\Omega)} \leq \sum_{x\in \Gamma}|a_x|\|\osc_{\Omega\cap B(\cdot,\delta)}\psi_{k,\epsilon}(\cdot-x)\|_{L^1(\Omega)} \lesssim \sa\left(\eps_k^{-1}\delta\log(\eps^{-1})^{\frac{d}{2}} + \eps^2\right).\]
Now, using \cref{cor:heatkernel_lp} with $p=q=d$ we have
\begin{equation}\label{eq:Hkpq}
\ng{q}{f_{n,\epsilon,k}} \lesssim \sa\epsilon_k\log(\epsilon^{-1})^{\frac{1}{2}}\left(\epsilon_k^{-d} + \lambda \epsilon^{-d}\cdk\right),
\end{equation}
and with $p=2$ we have
\begin{equation}\label{eq:Hkp2}
\ng{2}{f_{n,\epsilon,k}} \lesssim \sa\epsilon_k^{\frac{d}{2}}\log(\epsilon^{-1})^{\frac{d}{4}}\left(\epsilon_k^{-d} + \lambda \epsilon^{-d}\cdk\right).
\end{equation}
Now, by \labelcref{eq:Gamma_boundary} we have 
\[\dist(\Gamma_n,\partial_{2R}\Omega) \geq \dist(\Gamma,\partial\Omega) - 2R - \delta =2R-\delta \geq R_k.\]
It follows from \cref{cor:heat_kernel_decay} that
\begin{equation}\label{eq:fneboundary}
\|f_{n,\epsilon,k}\|_{\ell^2(\X_n\cap \partial_{2R}\Omega)} \lesssim \epsilon \ng{2}{1} \lesssim \eps.
\end{equation}

Inserting all of these estimates above and simplifying the logarithmic terms we have
\begin{align}\label{eq:simplified_combination}
\|u_k - u_{n,\epsilon,k}\|_{H^1(\X_n)}^2  &\lesssim A^2\log(\eps^{-1})^d\Bigg[ \lambda \cdk \epsilon^{-d}\epsilon_k + \epsilon_k^{-2d}\lambda_1 \notag\\
&\hspace{1in}  + \left( \epsilon_k^{-d} + \lambda^2\cdk^2 \epsilon^{-2d}\epsilon_k^d\right)\left( \frac{\delta}{\epsilon} + \epsilon + \lambda_1^2 + \lambda_2\right) \Bigg],
\end{align}
with probability at least
\[1-C_1\left(kn^2\exp\left( -C_2 n \epsilon^d \lambda^2\right) + \exp(-C_2n\epsilon^d\lambda_1^2)+\exp(-C_2n\delta^d\lambda_2^2)\right),\]
provided that $\lambda \cdk \epsilon^{-d}\eps_k \lesssim 1$.  The largest error term without tunable parameters is $\eps_k^{-d}\epsilon$, so we will choose parameters to match this error term. Hence, we now choose $\lambda$ so that
\[\lambda \cdk \epsilon^{-d}\epsilon_k = \epsilon_k^{-d}\epsilon, \ \ \text{that is} \ \ \lambda =\cdk^{-1} \epsilon^{d+1} \epsilon_k^{-(d + 1)},\]
and we require $\epsilon \lesssim \eps_k^d$.  Since $\eps\leq 1$ and $k\epsilon\geq 1$ we have
\[\lambda^2 \cdk^2 \epsilon^{-2d}\eps_k^d = \epsilon_k^{-d}\epsilon_k^{-2} \eps^2 = \epsilon_k^{-d}k^{-1} \leq \epsilon_k^{-d}.\]
Inserting this above yields
\begin{align*}
\|u_k - u_{n,\epsilon,k}\|_{H^1(\X_n)}^2  &\lesssim A^2\log(\eps^{-1})^d\left[ \epsilon_k^{-2d}\lambda_1  + \epsilon_k^{-d}\left( \frac{\delta}{\epsilon} + \epsilon + \lambda_1^2 + \lambda_2\right) \right].
\end{align*}
The proof when $\rho$ is constant is completed by choosing $\lambda_1 = \epsilon_k^d\epsilon$ and $\lambda_2 = \eps$.

The proof when $\rho$ is not constant is very similar, with differences in only two estimates. First, using \cref{cor:heatkernel_l1_asymptotic_rho,thm:heatkernel_final,cor:Me_graph_bounds,thm:Me} we have
\begin{align*}
\ng{1}{f_{k} - f_{n,\epsilon,k}} &\leq \sum_{x\in \Gamma}|a_x|\ng{1}{\widehat\rho_\eps(x)^{-1}\M^{k-1}_\eps\eta_\eps^{x}- \H^{\cp{x}}_k}\\
&\lesssim \sum_{x\in \Gamma}|a_x| \ng{1}{\widehat\rho_\eps(x)^{-1}\left(\M^{k-1}_\eps\eta_\eps^{x}-\M^{k-1}_\eps\eta_\eps^{\tau(x)}\right)}\\
&\hspace{0.5in}+\sum_{x\in \Gamma}|a_x| \ng{1}{\left(\widehat\rho_\eps(x)^{-1}-\widehat\rho_\eps(\tau(x))^{-1}\right)\M^{k-1}_\eps\eta_\eps^{\tau(x)}}\\
&\hspace{1in}+  \sum_{x\in \Gamma}|a_x|  \ng{1}{\widehat\rho_\eps(\tau(x))^{-1}\M^{k-1}_\eps\eta_\eps^{\tau(x)}- \H^{\cp{x}}_k}\\
&\lesssim \sa\left(\eps^{-1}\delta\log(\eps^{-1})^{\frac{d}{2}} +  \lambda \eps^{-d}\cdk \epsilon_k^{d}\log(\eps^{-1})^{\frac{d}{2}}+\eps\right),
\end{align*}
with probability at least $1-C_1kn^2\exp\left( -C_2 n \epsilon^d \lambda^2\right)$. By \cref{prop:psi_gaussian_upper} (i), (iii) and  \cref{thm:Me} we have that \labelcref{eq:fk_infty} and \labelcref{eq:fk_Lq} hold. By  \cref{cor:Me_graph_bounds} (ii) we have
\[\|\osc_{\Omega\cap B(\cdot,\delta)}f_k\|_{L^1(\Omega)} \leq \sum_{x\in \Gamma}|a_x|\|\widehat\rho_\eps(x)^{-1}\M^{k-1}_\eps\eta_\eps^{x}\|_{L^1(\Omega)} \lesssim \sa\left(\eps^{-1}\delta\log(\eps^{-1})^{\frac{d}{2}} + \eps^2\right).\]
As before we have \labelcref{eq:Hkp2,eq:Hkpq,eq:fneboundary}, since these do not depend on $\rho$ being constant. Inserting all of these estimates into \labelcref{eq:rate_before} we find that \labelcref{eq:simplified_combination} holds as before, and the rest of the proof is the same. 
\end{proof}

\subsection{Regularizing the continuum PDE}

The final step is to perform the smoothing of the continuum PDE. To pull this back to the graph, we require a standard Monte Carlo estimate, which is stated in  \cref{prop:lp_graph_control} below, and whose proof is postponed to  \cref{app:monte}. 
\begin{proposition}\label{prop:lp_graph_control}
Let $r>1$. If $u\in L^r(\Omega)$ is Borel measurable, then for any $p \in [1,r)$ we have
\[\P\left(\ng{p}{u} \leq 2^{\frac{1}{p}}\rho_{\max}^{\frac{1}{r}}\norm{u}_{L^r(\Omega)}\right) \geq 1 - 2^{1+\frac{r}{p}}n^{1-\frac{r}{p}}.\]
\end{proposition}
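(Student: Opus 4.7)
The plan is to apply Markov's inequality to the $(r/p)$-th moment of the centered empirical sum. Writing $Y_i \defeq |u(x_i)|^p$, we have $\ng{p}{u}^p = \frac{1}{n}\sum_{i=1}^n Y_i$, so the $Y_i$ are i.i.d.\ and non-negative. The first step is to control their mean $\mu \defeq \mathbb{E}[Y_1] = \int_\Omega |u|^p \rho\d x$. By Hölder's inequality with exponents $r/p$ and $r/(r-p)$ applied to $|u|^p \rho = |u|^p\rho^{p/r}\cdot \rho^{(r-p)/r}$, together with $\int_\Omega \rho \d x = 1$,
\[
\mu \leq \left(\int_\Omega |u|^r \rho\d x\right)^{p/r} \leq \rho_{\max}^{p/r}\|u\|_{L^r(\Omega)}^p \eqdef \alpha.
\]

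Second, note that the event in question is exactly $\{\ng{p}{u}^p > 2\alpha\}$, and since $\mu \leq \alpha$, this event is contained in $\big\{|\sum_{i=1}^n (Y_i - \mu)| > n\alpha\big\}$. So it suffices to bound the probability of the latter. For this I would apply the von Bahr--Esseen inequality, which states that for i.i.d.\ mean-zero random variables and any exponent $s\in[1,2]$ one has $\mathbb{E}\big[|\sum_i (Y_i - \mu)|^s\big] \leq 2n \,\mathbb{E}[|Y_1-\mu|^s]$. Taking $s = r/p$ (which lies in $[1,2]$ in the regime used in \cref{thm:rate_general_measure}), and using $|Y_1-\mu|^{r/p} \leq 2^{r/p-1}(Y_1^{r/p}+\mu^{r/p})$, Jensen's inequality $\mu^{r/p} \leq \mathbb{E}[Y_1^{r/p}]$, and $\mathbb{E}[Y_1^{r/p}] = \int_\Omega|u|^r\rho\d x \leq \rho_{\max}\|u\|_{L^r(\Omega)}^r$, I would obtain
\[
\mathbb{E}\Big[\big|\textstyle\sum_i (Y_i-\mu)\big|^{r/p}\Big] \leq 2^{r/p+1}\,n\,\rho_{\max}\|u\|_{L^r(\Omega)}^r.
\]

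Third, Markov's inequality at level $n\alpha$ and the identity $\alpha^{r/p} = \rho_{\max}\|u\|_{L^r(\Omega)}^r$ yield
\[
\mathbb{P}\Big(\big|\textstyle\sum_i (Y_i-\mu)\big| > n\alpha\Big) \leq \frac{2^{r/p+1}\,n\,\rho_{\max}\|u\|_{L^r}^r}{(n\alpha)^{r/p}} = 2^{1+r/p}\,n^{1-r/p},
\]
which is exactly the advertised bound.

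The main delicacy is the restriction $r/p\in[1,2]$ needed for the sharp form of von Bahr--Esseen; this covers the range used in the application (where $p=1$ and $r<d/(d-1)\leq 2$ for $d\geq 2$). For $r/p>2$ one would replace von Bahr--Esseen by Rosenthal's inequality, in which case the dominant term changes and the $n$-decay would be governed by $\min(1-r/p,-r/(2p))$ rather than $1-r/p$; within the regime needed in this paper this refinement is unnecessary.
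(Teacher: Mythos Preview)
Your proof is correct and essentially identical to the paper's: both apply Markov's inequality to the $(r/p)$-th moment of the centered empirical sum, invoke the von Bahr--Esseen inequality, and control $\bE[|Y_1-\mu|^{r/p}]$ via the convexity bound $(a+b)^{r/p}\le 2^{r/p-1}(a^{r/p}+b^{r/p})$ together with Jensen's inequality. Your explicit remark that von Bahr--Esseen requires $r/p\in[1,2]$ is a point the paper leaves implicit in its citation; as you note, this range is exactly what is needed in the applications.
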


Our main continuum smoothing result is as follows.
\begin{lemma}\label{lem:continuum_smoothing}
If $\eps \ll 1$ then the following hold. 
\begin{enumerate}[label=(\roman*)]
\item If $\rho\equiv |\Omega|^{-1}$ is constant, then for $1 \leq p < r < \frac{d}{d-2}$ we have that
\begin{equation}\label{eq:continuum_smoothing}
\ng{p}{u - u_k} \lesssim \sum_{x\in \Gamma}|a_x| \epsilon_k^{2-d + \frac{d}{r}} 
\log(\eps^{-1})^{\frac{d}{2r}+1}
\end{equation}
holds with probability at least $1 - 2^{1 + \frac{r}{p}}n^{1-\frac{r}{p}}$.
\item In general, for $1 \leq p < \frac{d}{d-1}$ we have that
\begin{equation}\label{eq:continuum_smoothing_lp}
\ng{p}{u - u_k} \lesssim \sum_{x\in \Gamma}|a_x|\eps_k \log(\eps^{-1})^{\frac{1}{2}}
\end{equation}
holds with probability at least $1 - 2^{2+\gamma}n^{-\gamma}$, where $\gamma > 0$ is given by $\gamma=1$ when $d=1$ and for 
\[\gamma = \frac{d}{2p(d-1)} - \frac{1}{2} \ \ \text{for} \ \ d \geq 2.\]
\end{enumerate}
\end{lemma}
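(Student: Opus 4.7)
The strategy is to apply the $L^p$-convergence estimates for Poisson equations with mollified measure data from \cref{sec:smoothed_poisson}, namely \cref{thm:convergence_rate_smoothing} in the constant-density case and \cref{thm:convergence_rate_varrho} in general, and then transfer the resulting continuum $L^r$-bound to the discrete $\ell^p$-norm via the Monte-Carlo estimate \cref{prop:lp_graph_control}. By the Euler--Lagrange equation associated with \labelcref{eq:uk_var} and the compatibility condition $\int_\Omega \rho f_k \dx = 0$, which follows from \labelcref{eq:comp_ax} together with \cref{prop:Me}\,(i) for general $\rho$ and directly from $\int_{\R^d}\psi_{k,\eps}\,\d z = 1$ when $\rho$ is constant, the function $u_k$ is a distributional solution of $-\div(\rho^2\nabla u_k) = \rho f_k$ and hence precisely the minimizer appearing in \cref{thm:convergence_rate_varrho,thm:convergence_rate_smoothing} with $\varrho = \rho^2$ and source $\rho f_k$. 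Since neither $\psi_{k,\eps}$ nor $\M_\eps^{k-1}\eta_\eps^x$ is compactly supported, I introduce the truncated radial surrogate
\begin{equation*}
\phi_i \defeq \psi_{k,\eps}\,\chi_{B(0,R_k)},\qquad g \defeq \sum_{x\in\Gamma} a_x\,\phi_i(\cdot - x),
\end{equation*}
whose relevant properties---$\xi_i = \int z\phi_i\,\d z = 0$ by radial symmetry, $|b_i-1| = \int_{|z|\geq R_k}\psi_{k,\eps}\,\d z = \O(k^{-1}\eps^{d+2})$, and $\|\phi_i\|_{L^r(\R^d)}\lesssim \eps_k^{-d+d/r}\log(\eps^{-1})^{d/(2r)}$---follow from \cref{lem:Hoeffding} and \cref{prop:psi_gaussian_upper}\,(iii) together with \labelcref{eq:Rk_prop}.

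For part (i), the constancy of $\rho$ makes the source a pure translation-mollification, $\rho f_k = \sum_{x\in\Gamma}a_x\psi_{k,\eps}(\cdot-x)$, which agrees with $g$ on $\bigcup_{x\in\Gamma}B(x,R_k)$ and whose tails outside are controlled by \cref{prop:psi_gaussian_upper}\,(i) and \labelcref{eq:Rk_prop}, yielding $\|\rho f_k - g\|_{L^\infty(\Omega)}\lesssim \sum_{x\in\Gamma}|a_x|\eps^2/k$. Fixing $r\in(p,d/(d-2))$ (any $r\in(p,\infty)$ when $d\in\{1,2\}$) and applying \cref{thm:convergence_rate_smoothing} with source $\rho f_k$, the dominant contribution is
\begin{equation*}
\|\phi_i\|_{L^r(\R^d)}R_k^2 \lesssim \eps_k^{-d+d/r}\log(\eps^{-1})^{d/(2r)}\cdot\eps_k^2\log(\eps^{-1}) = \eps_k^{2-d+d/r}\log(\eps^{-1})^{d/(2r)+1},
\end{equation*}
the remaining terms $b_iR_k^2$, $|b_i-1|$, and $\|\rho f_k - g\|_{L^\infty(\Omega)}$ being strictly smaller. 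Feeding this $L^r(\Omega)$-bound into \cref{prop:lp_graph_control} produces \labelcref{eq:continuum_smoothing} with probability at least $1-2^{1+r/p}n^{1-r/p}$.

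For part (ii), the truncation $\phi_i$ remains radial, but $\rho f_k$ is no longer a translation-mollification because both the prefactor $\rho(y)\hat\rho_\eps(x)^{-1}$ and the averaging operator $\M_\eps^{k-1}$ break translation symmetry. This is the main obstacle, and controlling the $L^1$-discrepancy $\|\rho f_k - g\|_{L^1(\Omega)}$ is the crucial new step. Invoking \cref{thm:Me}, valid thanks to $k\eps^2\ll 1$ and the assumption $B(x,R_k)\subset\Omega$ for every $x\in\Gamma$, one obtains
\begin{equation*}
\M_\eps^{k-1}\eta_\eps^x(y) = (1+\O(k\eps^2))\psi_{k,\eps}(y-x) + \O\!\left(\eps^2\exp\bigl(-R_k^2/(4d\eps_k^2)\bigr)\right)
\end{equation*}
uniformly in $y\in\Omega$, while \cref{rem:rhoe} provides $\hat\rho_\eps(x)=\rho(x)+\O(\eps^2)$ and the Lipschitz regularity of $\rho$ gives $|\rho(y)-\rho(x)|\lesssim R_k$ for $y$ in the effective support $B(x,R_k)$ of $\psi_{k,\eps}(\cdot-x)$, so that $|\rho(y)\hat\rho_\eps(x)^{-1}-1|\lesssim R_k$ there. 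Integrating these pointwise estimates against the unit mass of $\psi_{k,\eps}$ and bounding the tails $|y-x|\geq R_k$ via \labelcref{eq:Rk_prop} yields
\begin{equation*}
\|\rho f_k - g\|_{L^1(\Omega)}\lesssim \sum_{x\in\Gamma}|a_x|(R_k + k\eps^2)\lesssim \sum_{x\in\Gamma}|a_x|\eps_k\log(\eps^{-1})^{1/2}.
\end{equation*}
Picking $\alpha<1$ arbitrarily close to one, \cref{thm:convergence_rate_varrho} applied at exponent $r$ with $\beta=\min\{\alpha,1-d(r-1)/r\}>0$ then yields $\|u-u_k\|_{L^r(\Omega)}\lesssim \sum_{x\in\Gamma}|a_x|\eps_k\log(\eps^{-1})^{1/2}$, since $R_k^{1+\beta}\leq R_k$. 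The concluding step is to choose $r=\tfrac{p}{2}+\tfrac{d}{2(d-1)}$ when $d\geq 2$ (so that $r<d/(d-1)$ and $r/p-1=\gamma$) and $r=2p$ when $d=1$ (so that $\gamma=1$); \cref{prop:lp_graph_control} then converts this continuum estimate into \labelcref{eq:continuum_smoothing_lp} with the probability $1-2^{2+\gamma}n^{-\gamma}$ stated in the lemma.
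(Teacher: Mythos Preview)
Your proposal is correct and follows essentially the same route as the paper's proof: truncate $\psi_{k,\eps}$ at radius $R_k$, apply \cref{thm:convergence_rate_smoothing} for part~(i) and \cref{thm:convergence_rate_varrho} for part~(ii), then transfer the continuum $L^r$-bound to the graph $\ell^p$-norm via \cref{prop:lp_graph_control} with the specific choice $r=\tfrac{1}{2}\bigl(p+\tfrac{d}{d-1}\bigr)$ (resp.\ $r=2p$). Your treatment of the $L^1$-discrepancy in~(ii), using \cref{thm:Me} together with $\hat\rho_\eps(x)=\rho(x)+\O(\eps^2)$ and the Lipschitz bound $|\rho(y)-\rho(x)|\lesssim R_k$ on the effective support, mirrors the paper's splitting into ``prefactor'' and ``operator'' errors and yields the same dominant term~$R_k$.
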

\begin{proof}
We first give the proof of (i) when $\rho$ is constant. The first part of the proof is an application of \cref{thm:convergence_rate_smoothing}. We define $f,g\in L^\infty(\Omega)$ by 
\begin{equation}\label{eq:f_g}
f = \sum_{x\in \Gamma}a_x \psi_{k,\epsilon}(\cdot - x), \ \ \text{and} \ \ g = \sum_{x\in \Gamma} a_x \psi_{k,\epsilon}(\cdot-x)\one_{B(x,R_k)}.
\end{equation}
Then $u_k\in H^1_\rho(\Omega)$ defined as the solution of \labelcref{eq:uk_var} is also the solution of \labelcref{eq:variational_problem}  with $\varrho=\rho^2$ and $f$ given in \labelcref{eq:f_g}.  Thus, \cref{thm:convergence_rate_smoothing} allows us to estimate $\|u_k - u\|_{L^p(\Omega)}$.  By  \cref{lem:Hoeffding} and \labelcref{eq:Rk_prop} we have
\begin{equation}\label{eq:psi_k_eps_bound}
1 - \frac{2d}{k}\eps^{d+2} \leq \int_{B(0,R_k)} \psi_{k,\epsilon}\d x \leq 1.
\end{equation}
Also by \cref{prop:psi_gaussian_upper} (i) and \labelcref{eq:Rk_prop} we have
\[\|f - g\|_{L^\infty(\Omega)} \leq \sum_{x\in \Gamma}|a_x|\|\psi_{k,\epsilon}\|_{L^\infty(\R^d \setminus B(0,R_k))} \leq \sum_{x\in \Gamma}|a_x|\eps^{-d}k^{-1}\eps^{d+2}=\sum_{x\in \Gamma}|a_x| k^{-1} \eps^{2}.\]
Finally, by  \cref{prop:psi_gaussian_upper} (iii)  we have
\[\|\psi_{k,\epsilon}(\cdot-x)\|_{L^r(B(x,R_k))} \lesssim \epsilon_k^{-d + \frac{d}{r}}\log(\epsilon^{-1})^{\frac{d}{2r}}.\]
We now use  \cref{thm:convergence_rate_smoothing} with $r_i=R_k$ and $b_i = \int_{B(0,R_k)} \psi_{k,\epsilon}\, dx$, along with \labelcref{eq:Rk_prop},  to obtain 
\[\|u - u_k\|_{L^r(\Omega)} \lesssim \sum_{x\in \Gamma}|a_x| \epsilon_k^{2-d + \frac{d}{r}} \log(\eps^{-1})^{\frac{d}{2r}+1}\]
for $1 < r < \frac{d}{d-2}$. The last part of the proof, for $\rho$ constant, is an application of \cref{prop:lp_graph_control}.

To prove (ii), where $\rho$ is not constant, we use \cref{thm:convergence_rate_varrho} instead of \cref{thm:convergence_rate_smoothing}. We define 
\[f = \sum_{x\in \Gamma} a_x \widehat \rho_\eps(x)^{-1}\rho\,\M^{k-1}_\eps \eta^x_\eps,\]
and define the $g$ as in \labelcref{eq:f_g}. Then we have
\begin{align*}
\|f-g\|_{L^1(\Omega)} &\leq \sum_{x\in \Gamma}|a_x| \|\widehat \rho_\eps(x)^{-1}\rho\,\M^{k-1}_\eps \eta^x_\eps - \psi_{k,\epsilon}(\cdot-x)\one_{B(x,R_k)}\|_{L^1(\Omega)}\\
&\lesssim \sum_{x\in \Gamma}|a_x|\Big(\|\widehat \rho_\eps(x)^{-1}\rho\,\M^{k-1}_\eps \eta^x_\eps - \psi_{k,\epsilon}(\cdot-x)\|_{L^1(B(x,R_k))} \\
&\hspace{2.5in}+ \|\M^{k-1}_\eps \eta^x_\eps\|_{L^1(\Omega\setminus B(x,R_k))}\Big).
\end{align*}
By \cref{thm:Me} and the first part of the proof we have $\|\M^{k-1}_\eps \eta^x_\eps\|_{L^1(\Omega\setminus B(x,R_k))} \lesssim \eps^2$. Since $\rho$ is $C^{1,1}(\Omega)$ and $B(x,\eps)\subset \Omega$ we have (as in  \cref{rem:rhoe}) that $\widehat \rho_\eps(x) = \rho(x) + \O(\eps^2)$. Thus, for $y\in B(x,R_k)$ and $\eps\ll 1$ we have 
\[\widehat \rho_\eps(x)^{-1}\rho(y) = (\rho(x)^{-1} + \O(\eps^2))(\rho(x) + \O(R_k)) = 1 + \O(R_k).\] 
Therefore, using  \cref{thm:Me} again we have
\begin{align*}
\|\widehat \rho_\eps(x)^{-1}&\rho\,\M^{k-1}_\eps \eta^x_\eps - \psi_{k,\epsilon}(\cdot-x)\|_{L^1(B(x,R_k))} \\
&\lesssim \|\M^{k-1}_\eps \eta^x_\eps - \psi_{k,\epsilon}(\cdot-x)\|_{L^1(B(x,R_k))}  + \|\left(\widehat \rho_\eps(x)^{-1}\rho - 1\right)\M^{k-1}_\eps \eta^x_\eps\|_{L^1(B(x,R_k))}\\
&\lesssim \eps_k^2 + R_k \lesssim R_k.
\end{align*}
It follows that 
\[\|f-g\|_{L^1(\Omega)} \lesssim \sum_{x\in \Gamma}|a_x|R_k.\]
We now apply  \cref{thm:convergence_rate_varrho} with $1 \leq p < r < \frac{d}{d-1}$  and use \labelcref{eq:Rk_prop} and  \cref{prop:lp_graph_control}  to obtain
\[\ng{p}{u - u_k} \lesssim \|u - u_k\|_{L^r(\Omega)} \lesssim \sum_{x\in \Gamma}|a_x| R_k \lesssim \sum_{x\in \Gamma}|a_x|\eps_k \log(\eps^{-1})^{\frac{1}{2}}.\]
holds with probability at least $1 - 2^{1 + \frac{r}{p}}n^{1-\frac{r}{p}}$. The proof is completed by selecting $r = \frac{1}{2}\left( p + \frac{d}{d-1}\right)$ when $d\geq 2$ and $r=2p$ when $d=1$. 
\end{proof}

\phantomsection
\addcontentsline{toc}{section}{References}
\bibliography{ref}

\begin{thebibliography}{10}

\bibitem{ambrosio2000functions}
L.~Ambrosio, N.~Fusco, and D.~Pallara.
\newblock {\em Functions of bounded variation and free discontinuity problems}.
\newblock Oxford university press, 2000.

\bibitem{boccardo1989non}
L.~Boccardo and T.~Gallou{\"e}t.
\newblock Non-linear elliptic and parabolic equations involving measure data.
\newblock {\em Journal of Functional Analysis}, 87(1):149--169, 1989.

\bibitem{boucheronconcentration}
S.~Boucheron, G.~Lugosi, and P.~Massart.
\newblock {\em {Concentration Inequalities: A Nonasymptotic Theory of
  Independence}}.
\newblock Oxford University Press, 02 2013.

\bibitem{bungert2023uniform}
L.~Bungert, J.~Calder, and T.~Roith.
\newblock Uniform convergence rates for lipschitz learning on graphs.
\newblock {\em IMA Journal of Numerical Analysis}, 43(4):2445--2495, 2023.

\bibitem{bungert2024ratio}
L.~Bungert, J.~Calder, and T.~Roith.
\newblock Ratio convergence rates for euclidean first-passage percolation:
  Applications to the graph infinity laplacian.
\newblock {\em Annals of Applied Probability}, 2024.

\bibitem{burago2015graph}
D.~Burago, S.~Ivanov, and Y.~Kurylev.
\newblock A graph discretization of the laplace--beltrami operator.
\newblock {\em Journal of Spectral Theory}, 4(4):675--714, 2015.

\bibitem{calder2018game}
J.~Calder.
\newblock The game theoretic p-laplacian and semi-supervised learning with few
  labels.
\newblock {\em Nonlinearity}, 32(1):301, 2018.

\bibitem{calder2019consistency}
J.~Calder.
\newblock Consistency of lipschitz learning with infinite unlabeled data and
  finite labeled data.
\newblock {\em SIAM Journal on Mathematics of Data Science}, 1(4):780--812,
  2019.

\bibitem{CalculusofVariationsLN}
J.~Calder.
\newblock The calculus of variations.
\newblock Lecture Notes, 2020.

\bibitem{calder2020poisson}
J.~Calder, B.~Cook, M.~Thorpe, and D.~Slepcev.
\newblock Poisson learning: Graph based semi-supervised learning at very low
  label rates.
\newblock In {\em International Conference on Machine Learning}, pages
  1306--1316. PMLR, 2020.

\bibitem{calder2024poisson}
J.~Calder, B.~Cook, M.~Thorpe, D.~Slepcev, Y.~Zhang, and S.~Ke.
\newblock Graph-based semi-supervised learning with poisson equations.
\newblock {\em In preparation}, 2024.

\bibitem{calder2024consistency}
J.~Calder and N.~Drenska.
\newblock Consistency of semi-supervised learning, stochastic tug-of-war games,
  and the p-laplacian.
\newblock {\em To appear in Active Particles, Volume 4, Advances in Theory,
  Models, and Applications}, 2024.

\bibitem{calder2022hamilton}
J.~Calder and M.~Ettehad.
\newblock Hamilton-jacobi equations on graphs with applications to
  semi-supervised learning and data depth.
\newblock {\em Journal of Machine Learning Research}, 23(318):1--62, 2022.

\bibitem{calder2022improved}
J.~Calder and N.~Garc\'ia~Trillos.
\newblock Improved spectral convergence rates for graph {Laplacians} on
  $\varepsilon$-graphs and k-{NN} graphs.
\newblock {\em Applied and Computational Harmonic Analysis}, 60:123--175, 2022.

\bibitem{calder2020properly}
J.~Calder and D.~Slepcev.
\newblock {Properly-weighted graph {L}aplacian for semi-supervised learning}.
\newblock {\em Applied Mathematics and Optimization}, 82:1111--1159, 2020.

\bibitem{calder2023rates}
J.~Calder, D.~Slepcev, and M.~Thorpe.
\newblock {Rates of convergence for {Laplacian} semi-supervised learning with
  low labeling rates}.
\newblock {\em Research in Mathematical Sciences special issue on PDE methods
  for machine learning}, 10(10), 2023.

\bibitem{cueto2023variational}
J.~Cueto, C.~Kreisbeck, and H.~Sch{\"o}nberger.
\newblock A variational theory for integral functionals involving
  finite-horizon fractional gradients.
\newblock {\em Fractional Calculus and Applied Analysis}, 26(5):2001--2056,
  2023.

\bibitem{dal1999renormalized}
G.~Dal~Maso, F.~Murat, L.~Orsina, and A.~Prignet.
\newblock Renormalized solutions of elliptic equations with general measure
  data.
\newblock {\em Annali della Scuola Normale Superiore di Pisa-Classe di
  Scienze}, 28(4):741--808, 1999.

\bibitem{dunbar2023models}
O.~R.~A. Dunbar, C.~M. Elliott, and L.~M. Kreusser.
\newblock Models for information propagation on graphs, 2023.

\bibitem{el2016asymptotic}
A.~El~Alaoui, X.~Cheng, A.~Ramdas, M.~J. Wainwright, and M.~I. Jordan.
\newblock Asymptotic behavior of $\ell_p$-based laplacian regularization in
  semi-supervised learning.
\newblock In {\em Conference on Learning Theory}, pages 879--906. PMLR, 2016.

\bibitem{ern2016mollification}
A.~Ern and J.-L. Guermond.
\newblock Mollification in strongly lipschitz domains with application to
  continuous and discrete de rham complexes.
\newblock {\em Computational Methods in Applied Mathematics}, 16(1):51--75,
  2016.

\bibitem{garcia2020error}
N.~Garc{\'i}a~Trillos, M.~Gerlach, M.~Hein, and D.~Slep{{c}}ev.
\newblock Error estimates for spectral convergence of the graph laplacian on
  random geometric graphs toward the laplace--beltrami operator.
\newblock {\em Foundations of Computational Mathematics}, 20(4):827--887, 2020.

\bibitem{garcia2022graph}
N.~Garc{\'i}a~Trillos, R.~Murray, and M.~Thorpe.
\newblock From graph cuts to isoperimetric inequalities: Convergence rates of
  cheeger cuts on data clouds.
\newblock {\em Archive for Rational Mechanics and Analysis}, 244(3):541--598,
  2022.

\bibitem{trillos2015rate}
N.~Garc{\'i}a~Trillos and D.~Slepcev.
\newblock On the rate of convergence of empirical measures in
  $\infty$-transportation distance.
\newblock {\em Canadian Journal of Mathematics}, 67(6):1358--1383, 2015.

\bibitem{garcia2016continuum}
N.~Garc{\'i}a~Trillos and D.~Slep{c}ev.
\newblock Continuum limit of total variation on point clouds.
\newblock {\em Archive for rational mechanics and analysis}, 220(1):193--241,
  2016.

\bibitem{trillos2018variational}
N.~Garc\'ia~Trillos and D.~Slep{{c}}ev.
\newblock A variational approach to the consistency of spectral clustering.
\newblock {\em Applied and Computational Harmonic Analysis}, 45(2):239--281,
  2018.

\bibitem{Gilbarg:EllipticPartialDifferential2001}
D.~Gilbarg and N.~S. Trudinger.
\newblock {\em Elliptic {{Partial Differential Equations}} of {{Second
  Order}}}, volume 224 of {\em Classics in {{Mathematics}}}.
\newblock {Springer Berlin Heidelberg}, 2001.

\bibitem{goodfellow2016deep}
I.~Goodfellow, Y.~Bengio, and A.~Courville.
\newblock {\em Deep learning}.
\newblock MIT press, 2016.

\bibitem{gruter1982green}
M.~Gr{\"u}ter and K.-O. Widman.
\newblock The green function for uniformly elliptic equations.
\newblock {\em Manuscripta mathematica}, 37(3):303--342, 1982.

\bibitem{hofmann2007green}
S.~Hofmann and S.~Kim.
\newblock The green function estimates for strongly elliptic systems of second
  order.
\newblock {\em manuscripta mathematica}, 124(2):139--172, 2007.

\bibitem{hofmann2007geometric}
S.~Hofmann, M.~Mitrea, and M.~Taylor.
\newblock Geometric and transformational properties of lipschitz domains,
  semmes-kenig-toro domains, and other classes of finite perimeter domains.
\newblock {\em The Journal of Geometric Analysis}, 17(4):593--647, 2007.

\bibitem{katzourakis2022generalised}
N.~Katzourakis.
\newblock Generalised vectorial $\infty$-eigenvalue nonlinear problems for
  $l^\infty$ functionals.
\newblock {\em Nonlinear Analysis}, 219:112806, 2022.

\bibitem{kyng2015algorithms}
R.~Kyng, A.~Rao, S.~Sachdeva, and D.~A. Spielman.
\newblock Algorithms for lipschitz learning on graphs.
\newblock In {\em Conference on Learning Theory}, pages 1190--1223. PMLR, 2015.

\bibitem{lawler2010random}
G.~F. Lawler and V.~Limic.
\newblock {\em Random walk: a modern introduction}, volume 123.
\newblock Cambridge University Press, 2010.

\bibitem{lieberman2003oblique}
G.~M. Lieberman.
\newblock {\em Oblique Derivative Problems for Elliptic Equations}.
\newblock World Scientific, Hackensack, New Jersey, 2003.

\bibitem{miller2023active}
K.~Miller and J.~Calder.
\newblock Poisson reweighted {L}aplacian uncertainty sampling for graph-based
  active learning.
\newblock {\em SIAM Journal on Mathematics of Data Science}, 5, 2023.

\bibitem{mingione2007calderon}
G.~Mingione.
\newblock The calder{\'o}n-zygmund theory for elliptic problems with measure
  data.
\newblock {\em Annali della Scuola Normale Superiore di Pisa-Classe di
  Scienze}, 6(2):195--261, 2007.

\bibitem{nadler2009semi}
B.~Nadler, N.~Srebro, and X.~Zhou.
\newblock Semi-supervised learning with the graph laplacian: The limit of
  infinite unlabelled data.
\newblock {\em Advances in neural information processing systems},
  22:1330--1338, 2009.

\bibitem{nittka2011regularity}
R.~Nittka.
\newblock Regularity of solutions of linear second order elliptic and parabolic
  boundary value problems on lipschitz domains.
\newblock {\em Journal of Differential Equations}, 251(4-5):860--880, 2011.

\bibitem{roith2023continuum}
T.~Roith and L.~Bungert.
\newblock Continuum limit of lipschitz learning on graphs.
\newblock {\em Foundations of Computational Mathematics}, 23(2):393--431, 2023.

\bibitem{shi2017weighted}
Z.~Shi, S.~Osher, and W.~Zhu.
\newblock Weighted nonlocal {L}aplacian on interpolation from sparse data.
\newblock {\em Journal of Scientific Computing}, 73(2-3):1164--1177, 2017.

\bibitem{slepcev2019analysis}
D.~Slepcev and M.~Thorpe.
\newblock Analysis of p-laplacian regularization in semisupervised learning.
\newblock {\em SIAM Journal on Mathematical Analysis}, 51(3):2085--2120, 2019.

\bibitem{song2022graph}
Z.~Song, X.~Yang, Z.~Xu, and I.~King.
\newblock Graph-based semi-supervised learning: A comprehensive review.
\newblock {\em IEEE Transactions on Neural Networks and Learning Systems},
  2022.

\bibitem{stampacchia1965probleme}
G.~Stampacchia.
\newblock Le probl{\`e}me de dirichlet pour les {\'e}quations elliptiques du
  second ordre {\`a} coefficients discontinus.
\newblock {\em Annales de l’institut Fourier}, 15(1):189--257, 1965.

\bibitem{GTMT2022ratespoly}
N.~G. Trillos, R.~Murray, and M.~Thorpe.
\newblock Rates of convergence for regression with the graph poly-laplacian.
\newblock {\em Sampling Theory, Signal Processing, and Data Analysis},
  21(2):35, 2023.

\bibitem{van2020survey}
J.~E. Van~Engelen and H.~H. Hoos.
\newblock A survey on semi-supervised learning.
\newblock {\em Machine learning}, 109(2):373--440, 2020.

\bibitem{von1965inequalities}
B.~von Bahr and C.-G. Esseen.
\newblock Inequalities for the rth absolute moment of a sum of random
  variables, $1\leq r\leq 2$.
\newblock {\em The Annals of Mathematical Statistics}, pages 299--303, 1965.

\bibitem{wang2020generalizing}
Y.~Wang, Q.~Yao, J.~T. Kwok, and L.~M. Ni.
\newblock Generalizing from a few examples: A survey on few-shot learning.
\newblock {\em ACM computing surveys (csur)}, 53(3):1--34, 2020.

\bibitem{yuan2022continuum}
A.~Yuan, J.~Calder, and B.~Osting.
\newblock {A continuum limit for the {PageRank} algorithm}.
\newblock {\em European Journal of Applied Mathematics}, 33:472--504, 2022.

\bibitem{zhou2011semi}
X.~Zhou and M.~Belkin.
\newblock Semi-supervised learning by higher order regularization.
\newblock In {\em Proceedings of the fourteenth international conference on
  artificial intelligence and statistics}, pages 892--900. JMLR Workshop and
  Conference Proceedings, 2011.

\bibitem{zhu2003semi}
X.~Zhu, Z.~Ghahramani, and J.~D. Lafferty.
\newblock Semi-supervised learning using gaussian fields and harmonic
  functions.
\newblock In {\em Proceedings of the 20th International conference on Machine
  learning (ICML-03)}, pages 912--919, 2003.

\bibitem{zhuang2020comprehensive}
F.~Zhuang, Z.~Qi, K.~Duan, D.~Xi, Y.~Zhu, H.~Zhu, H.~Xiong, and Q.~He.
\newblock A comprehensive survey on transfer learning.
\newblock {\em Proceedings of the IEEE}, 109(1):43--76, 2020.

\end{thebibliography}
\bibliographystyle{abbrv}

\appendix

\section{Proofs from \texorpdfstring{\cref{sec:smoothed_poisson}}{Section 2}}\label{app:smoothed_poisson}

\subsection{Existence of distributional solutions}

\begin{proof}[Proof of \cref{prop:existence_solution_continuum_pde}]
Without loss of generality, we assume $d \geq 2$ since for $d=1$ by the Sobolev embedding theorem, all $W^{1,2}$ functions are continuous, and we can apply classical variational techniques. 

Fix $p \in \left[1, d/(d-1)\right)$ for the rest of the proof and let $q$ be the dual exponent $1/p + 1/q = 1$. Note that $q \geq 2$. 
The proof now consists of two steps. First, we show that there exist functions $u_n$ which solve \labelcref{eq:continuum_pde} with right-hand side $f_n$ which are uniformly bounded in $W^{1,p}(\Omega)$. Second, if $u$ is a cluster point of the $u_n$, we show that $u$ is indeed a solution of the original PDE with right-hand side $f$.

\textit{Step 1:} For the first argument, fix $n \in \N$. By standard variational techniques, there exists a unique weak solution $u_n \in W^{1, 2}(\Omega)$ to \labelcref{eq:continuum_pde} with right-hand side $f_n$.
Therefore, for any $v \in W^{1, 2}(\Omega)$ it holds
\begin{equation}\label{eq:weak_solution_un}
    \int_\Omega \varrho \, \nabla u_n \cdot \nabla v \d x = \int_\Omega v f_n \d x.
\end{equation}
For $m \in \N$ we define
\begin{equation*}
    \psi_m(s) := 
    \begin{dcases}
        1 & \text{if } s > m+1,\\
        s - m & \text{if } m\leq s \leq m+1,\\
        0 & \text{if } -m < s < m,\\
        s + m & \text{if } -(m+1) \leq s \leq -m,\\
        -1 & \text{if } s < -(m+1).
    \end{dcases}
\end{equation*}
Since $\psi_m$ is Lipschitz, $v := \psi_m \circ u_n \in W^{1, 2}(\Omega)$ defines a valid test function. Inserting into~\labelcref{eq:weak_solution_un} yields
\begin{equation}\label{eq:bound_un_Dm}
    \int_{D_{m,n}} \abs{\nabla u_n}^2 \varrho \d x = \int_\Omega \left(\psi_m \circ u_n\right) f_n \d x,
\end{equation}
where $D_{m,n} := \left\{x \in \Omega: m \leq \abs{u_n(x)} \leq m+1\right\}$. Since $\abs{\psi_m} \leq 1$ and $\varrho$ is bounded from below by a positive constant, we can use \labelcref{eq:bound_un_Dm} to bound
\begin{equation*}
    \int_{D_{m,n}} \abs{\nabla u_n}^2 \d x \leq \varrho_{\min}^{-1} 
    \norm{f_n}_{L_1(\Omega)}
    \leq C,
\end{equation*}
where $C$ is independent of $n$ or $m$ by assumption. Now, we apply Hölder's inequality to see
\begin{equation*}
    \int_{D_{m,n}} \abs{\nabla u_n}^p \d x 
    \leq \left(\int_{D_{m,n}} \abs{\nabla u_n}^2 \d x\right)^{\frac p2} \abs{D_{m,n}}^{\frac{2-p}{2}} \leq C \abs{D_{m,n}}^{\frac{2-p}{2}}.
\end{equation*}
To bound the size of $D_{m,n}$, we choose some $p^*>1$ to be determined later and estimate
\begin{equation*}
    \abs{D_{m,n}}
    = \int_{D_{m,n}} 1 \d x
    = \frac1{m^{p^*}} \int_{D_{m,n}} m^{p^*} \d x
    \leq \frac1{m^{p^*}} \int_{D_{m,n}} \abs{u_n}^{p^*} \d x.
\end{equation*}
We fix $m_0$ to be determined later and write
\begin{equation}\label{eq:un_Lpnorm_gradient_bounded_by_sum}
\begin{aligned}
    \int_\Omega \abs{\nabla u_n}^p \d x
    &= \sum_{m \in \N} \int_{D_{m,n}} \abs{\nabla u_n}^p \d x \\
    &= \sum_{m=0}^{m_0} \int_{D_{m,n}} \abs{\nabla u_n}^p \d x + \sum_{m > m_0} \int_{D_{m,n}} \abs{\nabla u_n}^p \d x \\
    &\leq C \abs{\Omega}^{\frac{2-p}{2}} (m_0+1) + C \sum_{m > m_0} m^{p^* \left(\frac{p-2}2\right)} \left(\int_{D_{m,n}} \abs{u_n}^{p^*} \right)^{\frac{2-p}{2}} \\
    &\leq C \abs{\Omega}^{\frac{2-p}{2}} (m_0+1) + C \left(\sum_{m > m_0} m^{p^* \left(\frac{p-2}p\right)}\right)^{\frac p2} \left(\sum_{m > m_0} \int_{D_{m,n}} \abs{u_n}^{p^*} \right)^{\frac{2-p}{2}} \\
    &\leq C \abs{\Omega}^{\frac{2-p}{2}} (m_0+1) + C \norm{u_n}_{L^{p^*}(\Omega)}^{p^*\left(\frac{2-p}{2}\right)} \left(\sum_{m > m_0} m^{p^* \left(\frac{p-2}p\right)}\right)^{\frac p2},
\end{aligned}
\end{equation}
In the second to last inequality, we used Hölder's inequality for infinite sums with exponents $2/(2-p)$ and $p/2$. 

Since the $u_n$ satisfy a weighted zero mean condition $\int_\Omega u_n\varrho\d x=0$, using the Poincaré inequality implies that $\norm{u_n}_{L^{p}(\Omega)} \leq C \norm{\nabla u_n}_{L^{p}(\Omega)}$, so it suffices to bound the latter quantity for the goal of Step 1.

We choose $p^*$ as the Sobolev exponent, i.e., such that $\frac1{p^*} = \frac 1p - \frac 1d$, and use the embedding $W^{1,p} \hookrightarrow L^{p^*}$:
\begin{equation}\label{eq:un_Lpnorm_bound_by_sum}
\begin{aligned}
    \norm{u_n}_{L^{p^*}(\Omega)}^p 
    &\leq C \int_\Omega \abs{\nabla u_n}^p \d x \\
    &\leq C (m_0+1) + C \norm{u_n}_{L^{p^*}(\Omega)}^{p^*\left(\frac{2-p}{2}\right)} \left(\sum_{m > m_0} m^{p^* \left(\frac{p-2}p\right)}\right)^{\frac p2}.
\end{aligned}
\end{equation}
Since $1\leq p < d/(d-1) \leq 2 \leq d$, we conclude $p^*\tfrac{2-p}{2} \leq p$ and $p^*\tfrac{2-p}{p}>1$. In particular, the sum in \labelcref{eq:un_Lpnorm_bound_by_sum} is convergent.
So if $p^* \tfrac{2-p}{2}=p$, we can choose $m_0$ large enough and absorb the rightmost term into the left hand side.
Otherwise, an application of Young's inequality allows us to do the same, using that the sum on the right is convergent.
In both cases we obtain
\begin{equation*}
    \norm{u_n}_{L^{p^*}(\Omega)}^p \leq C.
\end{equation*}
In particular, by \labelcref{eq:un_Lpnorm_gradient_bounded_by_sum}, this implies that $\norm{\nabla u_n}_{L^p(\Omega)} \leq C$ where $C$ does not depend on $n$.


To summarize, we showed that there exists a constant 
$$C=C(\varrho, p, d, \Omega, \sup_{n \in \N} \norm{f_n}_{L^1(\Omega)}),$$ 
such that
\begin{equation*}
    \sup_{n \in \N} \norm{u_n}_{W^{1, p}(\Omega)} \leq C,
\end{equation*}
which concludes the first part of the proof.

\textit{Step 2:} The second part of the proof now follows from Step 1 by weak convergence of the $u_n$ up to a subsequence. 
From Step 1 we deduce that $u_{n_k} \rightharpoonup u$ weakly in $W^{1,p}(\Omega)$ for some subsequence which we relabel to be $u_n$. Fix any $\psi \in C^\infty(\overline\Omega)$. Then, since $C^\infty(\overline\Omega) \subseteq W^{1, q}(\Omega)$,
\begin{equation*}
    a_\psi(v) := \int_\Omega \varrho \, \nabla v \cdot \nabla \psi \d x
\end{equation*}
defines a linear functional on $W^{1, p}(\Omega)$, and hence 
\begin{equation}\label{eq:convergence_aun_to_au}
    a_\psi(u_n) \to a_\psi(u).
\end{equation}

On the other hand, since $u_n$ are weak solutions, it holds
\begin{equation*}
    a_\psi(u_n) = \int_\Omega \psi f_n \d x.
\end{equation*}
By assumption, we know
\begin{equation}\label{eq:convergence_fn_f}
    \int_\Omega \psi f_n \d x 
    \to \int_{\overline\Omega} \psi \d f.
\end{equation}
Combing \labelcref{eq:convergence_aun_to_au} and \labelcref{eq:convergence_fn_f} yields that $u$ solves the PDE \labelcref{eq:continuum_pde} with right hand side $f$ in the sense of distributions (cf. \cref{def:distr_sol_Poisson_eq}), and the weak convergence $u_n\rightharpoonup u$ together with boundedness of $\varrho$ also implies that $u$ satisfies the weighted zero mean condition $\int_\Omega u\varrho\d x =0$.
\end{proof}

\subsection{Properties of Green's functions}

The goal of the remainder of this section is to provide the auxiliary results regarding properties of Green's functions for the continuum PDE \labelcref{eq:continuum_pde}. To this end, we start by recalling the construction introduced in \cref{sec:Greensfunctions}: 
 for a fixed $y\in\Omega$, define 
\begin{equation*}
        f_n := \phi_n^y - \varrho,
\end{equation*}
    where $\supp(\phi)=B(0;1)$,   $\phi \in L^\infty(B(0;1))$ with $\phi \geq 0$ such that $\norm \phi _{L^1(B(0;1))}=1$ and $\phi_n^y(x) := n^{d}\phi\left(n\left(x-y\right)\right)$, defined for $n>\frac{1}{\dist(y,\partial\Omega)}$.
    By \cref{prop:existence_solution_continuum_pde}, there exist weak solutions  $G_n^y \in W^{1,2}(\Omega)$ that converge (up to a subsequence) to some function $G^y \in W^{1,p}\left(\Omega\right)$, which is a distributional solution to \labelcref{eq:continuum_pde}. In other words, we define (up to a subsequence),
    \begin{equation}\label{eq:construction_Gy}
        G^y := \lim_{n\to \infty} G_n^y \quad \text{ in }  W^{1,p}(\Omega)\,.
    \end{equation}

We now prove some classical regularity of Green's functions outside their poles.

\begin{lemma}[Interior regularity of Green's functions]\label{thm:C1alpha_approximation_Gy}
    Let $\alpha \in (0,1)$. Let $\Omega$ be a bounded Lipschitz domain and $\varrho \in C^{0,\alpha}(\Omega)$ bounded above and below by positive constants, i.e., $0 < \varrho_{\min} \leq \varrho \leq \varrho_{\max}$. 
    For $y \in \Omega$ let $G^y \in W^{1,p}$, $p \in [1, d/d-1)$ be constructed as in \labelcref{eq:construction_Gy}. 
        
    Further, let $x_0 \in \Omega$ and $R > 0$ such that $\dist \left(x_0, \partial \Omega\right) > 5R$ and $\abs{x_0 - y} > 5R$. Then, there exists a constant $C = C(d, \varrho, R, \Omega, \alpha, \beta)>0$ which does not depend on $x_0$ and $y$, such that
    \begin{equation*}
        \norm{G^y}_{C^{1,\beta}\left(B(x_0;R)\right)} \leq C.
    \end{equation*}
    for all $\beta \in (0, \alpha)$. In particular, $G^y$ is continuous for any $x \in \Omega$, $x\neq y$.
\end{lemma}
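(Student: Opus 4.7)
The strategy is to work with the approximating sequence $G_n^y \in W^{1,2}(\Omega)$ from \labelcref{eq:construction_Gy} and establish uniform interior $C^{1,\beta}$ bounds on a fixed neighborhood of $x_0$, then pass to the limit. The point is that for $n$ sufficiently large the mollifier $\phi_n^y$ is supported in $B(y, 1/n) \subset B(y, R) \subset \Omega \setminus B(x_0, 4R)$, so on the ball $B(x_0, 4R)$ the function $G_n^y$ is a weak solution of the homogeneous-source equation
\begin{equation*}
    -\div(\varrho \nabla G_n^y) = -\frac{\varrho}{\int_\Omega \varrho \d x} \quad \text{in } B(x_0, 4R),
\end{equation*}
whose right-hand side is bounded uniformly in $n$ (and in $y$) by a constant depending only on $\varrho$ and $\Omega$.

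The first step is to obtain a uniform local $L^\infty$ bound. Since $\|G_n^y\|_{W^{1,p}(\Omega)} \leq C$ uniformly in $n$ and $y$ by \cref{prop:existence_solution_continuum_pde} (in particular $\|G_n^y\|_{L^1(\Omega)} \leq C$), I would apply the local $L^\infty$ bound for subsolutions of divergence-form elliptic equations with bounded measurable coefficients (De Giorgi--Nash--Moser, see e.g.~Gilbarg--Trudinger, Thm.~8.17), applied separately to $G_n^y$ and $-G_n^y$, to obtain
\begin{equation*}
    \|G_n^y\|_{L^\infty(B(x_0, 3R))} \leq C\left(\|G_n^y\|_{L^1(B(x_0, 4R))} + 1\right) \leq C,
\end{equation*}
with $C$ depending only on $d$, $\varrho_{\min}, \varrho_{\max}$, $R$ and $\Omega$ (but not on $n$, $y$, or $x_0$).

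Next, because $\varrho \in C^{0,\alpha}(\Omega)$ and the source on $B(x_0,4R)$ is bounded and in fact Hölder continuous, I would invoke interior Schauder estimates for divergence-form elliptic equations with Hölder coefficients (e.g.~Gilbarg--Trudinger, Thm.~8.32, or the results in Han--Lin) to obtain, for any $\beta \in (0,\alpha)$,
\begin{equation*}
    \|G_n^y\|_{C^{1,\beta}(B(x_0, 2R))} \leq C\left(\|G_n^y\|_{L^\infty(B(x_0, 3R))} + \left\|\tfrac{\varrho}{\int_\Omega \varrho}\right\|_{C^{0,\beta}(B(x_0, 3R))}\right) \leq C,
\end{equation*}
again with $C$ independent of $n$, $y$, and $x_0$.

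Finally, I would conclude by passing to the limit. Since $G_n^y \to G^y$ in $W^{1,p}(\Omega)$ along a subsequence, the uniform $C^{1,\beta}(B(x_0,2R))$ bound together with the Arzelà--Ascoli theorem allows me to extract a further subsequence converging in $C^{1,\beta'}(B(x_0, R))$ for every $\beta' < \beta$. The limit must coincide with $G^y$ by uniqueness of the $W^{1,p}$-limit, and the desired bound on $\|G^y\|_{C^{1,\beta}(B(x_0,R))}$ then follows from lower semicontinuity of the Hölder seminorm under uniform convergence (after a relabeling of $\beta$). Continuity of $G^y$ at every point $x \in \Omega \setminus \{y\}$ is then immediate by covering such a point with a small ball satisfying the hypothesis of the lemma.

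The main obstacle I anticipate is the careful verification that the constants in the Moser and Schauder estimates depend only on the quantities listed (and in particular not on $y$ or $x_0$); this ultimately reduces to translation invariance of the estimates and the fact that both the ellipticity constants and the Hölder seminorm of $\varrho$ are controlled globally on $\Omega$. A minor technical point is handling the regime of small $n$ for which $1/n > R$; this is harmless since it concerns only finitely many terms of the approximating sequence and can be absorbed into the constant $C$, or circumvented by restricting attention to $n > 1/R$ from the outset.
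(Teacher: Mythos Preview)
Your proposal is correct and follows essentially the same route as the paper's proof: both arguments restrict to $n$ large enough that the mollifier sits outside $B(x_0,4R)$, apply Moser's local $L^\infty$ estimate (Gilbarg--Trudinger, Thm.~8.17) followed by the interior Schauder estimate (Thm.~8.32) to the approximants $G_n^y$, control the input via the uniform $W^{1,p}$ bound, and then pass to the limit using compact embedding of H\"older spaces and identification with the $L^p$-limit. The only cosmetic difference is that the paper records the Schauder estimate at the full exponent $\alpha$ and then drops to $\beta<\alpha$ via compact embedding, whereas you build the loss of exponent into the Schauder step; the effect is the same.
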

\begin{proof}
We choose $n \in \N$ large enough such that $n>1/R$. If not indicated differently, all balls will be centered in $x_0$, and we will henceforth omit it from the notation.
Note that $G_n^y$ satisfies
\begin{equation}\label{eq:Gnyint}
    \div \varrho \nabla G_n^y = \varrho \qquad \text{in } B_{4R}.
\end{equation}
%
By a classical result on interior regularity (cf. \cite[Theorem 8.32]{Gilbarg:EllipticPartialDifferential2001}), we obtain the following bound on the $C^{1,\alpha}(B_R)$ norm of $G_n^y$:
\begin{equation*}
    \norm{G_n^y}_{C^{1,\alpha}(B_R)} \leq C(R, d, \rho, \alpha) \left(\norm{G_n^y}_{L^\infty(B_{2R})} + \norm{\rho}_{L^\infty(\Omega)}\right).
\end{equation*}
By a theorem due to Moser (cf. \cite[Theorem 8.17]{Gilbarg:EllipticPartialDifferential2001}), we further estimate
\begin{equation}\label{eq:Gny_bound_Linfty}
    \norm{G_n^y}_{L^\infty\left(B_{2R}\right)} \leq C \left(R^{-\frac dp} \norm{G_n^y}_{L^p\left(B_{4R}\right)} + R \abs{\Omega}^{\frac1d} \norm\varrho_{L^\infty(\Omega)}  \right),
\end{equation}
where $C = C(d, \varrho, R)$ is a constant independent of $x_0$ and $y$. By \cref{lemma:Greens_functions_W1p_bound} it follows that for  any choice of $p\in [1,d/(d-1))$ the $L^p(\Omega)$ norm on the right hand side of \labelcref{eq:Gny_bound_Linfty} can be bounded uniformly in $y$ and $n$ and therefore $G_n^y$ is $C^1$-Hölder continuous with exponent $\alpha$ in $B(x_0;R)$ with a uniform bound on the $C^1$-Hölder norms.

Compact embeddings of Hölder spaces with smaller exponents imply that for any $\beta \in (0, \alpha)$, we can choose a subsequence, which we do not relabel, such that 
\begin{equation*}
    G_n^y \to \tilde G^y \qquad \text{in } C^{1, \beta}(B_R)
\end{equation*}
and
\begin{equation*}
    \norm{\tilde G^y}_{C^{1, \beta}(B_R)} \leq (2R)^{\alpha-\beta} \limsup_{n \to \infty} \norm{G_n^y}_{C^{1, \alpha}(B_R)}.
\end{equation*}
However, by $L^p$ convergence of the $G_n^y$, it follows that $G^y \equiv \tilde G^y$ in $B_R$. 
Continuity of $G^y$ at $x\neq y$ follows by taking $x_0 = x$ and $R$ small enough.
%
%
%
\end{proof}
In order to prove regularity of the Greens function close to the boundary, we employ the following technical lemma.
\begin{lemma}\label{lem:boundary_estimate}
    Let $\alpha \in (0,1)$. Let $\Omega$ be a bounded $C^{1,\alpha}$ domain and $\varrho \in C^{0,\alpha}(\Omega)$ bounded above and below by positive constants, i.e., $0 < \varrho_{\min} \leq \varrho \leq \varrho_{\max}$. 

    Let $\Omega''\subset\subset \Omega' \subset\subset \Omega$. Let $f\in L^\infty(\Omega)$ with $\int_\Omega f \d x = 0$, and let $v\in W^{1,2}(\Omega)$ be the weak solution of \labelcref{eq:continuum_pde}. Then there exist $C_1=C_1(\varrho, d, \Omega)$ and $C_2=C(\Omega,\Omega',\Omega'',\varrho)$ such that
    \begin{equation}\label{eq:boundary_estimate}
        \|v\|_{C^{1, \alpha}(\Omega\setminus \Omega')} \leq C_1\left( \|f\|_{L^\infty(\Omega\setminus\Omega'')} + \norm{v}_{L^1(\Omega \setminus \Omega'')} + C_2\|v\|_{C^{1}(\Omega'\setminus\Omega'')}\right).
    \end{equation}
\end{lemma}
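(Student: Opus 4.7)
My plan is to reduce the estimate on $\Omega\setminus\Omega'$ to a global Schauder-type boundary estimate on $\Omega$ by cutting off $v$ away from the pole. Concretely, fix a smooth cutoff $\zeta \in C^\infty(\overline\Omega)$ with $\zeta\equiv 0$ on $\Omega''$, $\zeta\equiv 1$ on $\Omega\setminus\Omega'$, taking values in $[0,1]$, so that $\nabla\zeta$ is supported in $\Omega'\setminus\Omega''$ and $\partial\zeta/\partial\nu = 0$ on $\partial\Omega$. Set $w = \zeta v \in W^{1,2}(\Omega)$. Testing against $\phi \in C^\infty(\overline\Omega)$, a direct computation using the weak formulation for $v$ yields
\begin{equation*}
    \int_\Omega \varrho \nabla w\cdot\nabla\phi\d x \;=\; \int_\Omega g\phi\d x + \int_\Omega F\cdot\nabla\phi\d x,
\end{equation*}
where $g \defeq \zeta f - \varrho\nabla\zeta\cdot\nabla v$ and $F \defeq \varrho v\nabla\zeta$. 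The cutoff ensures $w$ satisfies the homogeneous Neumann boundary condition, and both $g$ and $F$ have the crucial property that their ``trouble'' sits only in $\Omega'\setminus\Omega''$, where $v$ is already controlled in $C^1$ by hypothesis.

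Next I would quantify the data. On $\Omega''$ both $g$ and $F$ vanish, on $\Omega\setminus\Omega'$ we have $g = f$ and $F = 0$, and on $\Omega'\setminus\Omega''$ we can bound
\begin{equation*}
    \|g\|_{L^\infty(\Omega)} \;\lesssim\; \|f\|_{L^\infty(\Omega\setminus\Omega'')} + C_\zeta\|v\|_{C^1(\Omega'\setminus\Omega'')},\qquad \|F\|_{C^{0,\alpha}(\overline\Omega)} \;\lesssim\; C_\zeta\|v\|_{C^1(\Omega'\setminus\Omega'')},
\end{equation*}
with $C_\zeta$ depending on $\zeta$ and hence only on $\Omega,\Omega',\Omega''$ and $\varrho$. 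Because $F$ is $C^{0,\alpha}$ and compactly supported in $\Omega$, the distributional right hand side $g - \div F$ fits into the scope of a standard Schauder estimate for divergence-form problems with $C^{0,\alpha}$ coefficient $\varrho$, $C^{1,\alpha}$ boundary, and oblique (here Neumann) data --- precisely the type of estimate embodied in \cite[Theorem 5.54]{lieberman2003oblique} already used in the paper. Applying this gives
\begin{equation*}
    \|w\|_{C^{1,\alpha}(\overline\Omega)} \;\lesssim\; \|w\|_{L^\infty(\Omega)} + \|g\|_{L^\infty(\Omega)} + \|F\|_{C^{0,\alpha}(\overline\Omega)}.
\end{equation*}

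To close the estimate I still have to convert $\|w\|_{L^\infty}$ into the $L^1$-norm that appears on the right side of \labelcref{eq:boundary_estimate}. For this I would invoke \cref{prop:global_regularity}(i) (or Moser-type iteration / duality against Green's functions) for the Neumann problem with $L^{q^*}$ data to get $\|w\|_{L^\infty} \lesssim \|g\|_{L^{q^*}} + \|F\|_{L^\infty} + |(w)_\varrho|$, together with $|(w)_\varrho| \lesssim \|w\|_{L^1(\Omega)} \leq \|v\|_{L^1(\Omega\setminus\Omega'')}$; an additive mean-zero adjustment of $w$ is harmless since we ultimately care about $\nabla v$ and $v$ differences. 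Since $w \equiv v$ on $\Omega\setminus\Omega'$, this yields exactly \labelcref{eq:boundary_estimate} with $C_1$ depending on the global Schauder/Moser constants (hence only on $\varrho,d,\Omega$) and $C_2$ depending additionally on $\zeta$, i.e.\ on $\Omega,\Omega',\Omega'',\varrho$.

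The main obstacle is the presence of the divergence-form term $\div F$ on the right hand side: a naive application of Schauder theory wants a genuine $L^\infty$ or Morrey-space source, so I have to be careful to invoke a version of the boundary Schauder estimate that allows for a $C^{0,\alpha}$ flux. An alternative if Lieberman's statement is not in exactly that form is to split $w = w_1 + w_2$, where $w_2 \in W^{1,2}(\Omega)$ solves the Neumann problem $-\div(\varrho\nabla w_2) = -\div F$ weakly; since $F$ is compactly supported in $\Omega$, $w_2$ is smooth near $\partial\Omega$ by interior regularity and globally controlled by $\|F\|_{C^{0,\alpha}}$, after which $w_1 = w - w_2$ solves the cleaner problem with purely $L^\infty$ right hand side and can be handled directly by \cite[Theorem~5.54]{lieberman2003oblique}. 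Either route should complete the argument.
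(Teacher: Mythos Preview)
Your proposal is correct and follows essentially the same route as the paper: the same cutoff $w=\zeta v$, the same weak formulation yielding the $L^\infty$ source $g=\zeta f-\varrho\nabla\zeta\cdot\nabla v$ and the $C^{0,\alpha}$ flux $F=\varrho v\nabla\zeta$, and the same invocation of \cite[Theorem~5.54]{lieberman2003oblique} (which does allow the flux term directly, so your splitting alternative is not needed). For the $\|w\|_{L^\infty}$ step the paper chooses precisely the option you list as ``duality against Green's functions'': it adapts \cref{lem:v_as_Greens_convolution} to write $w(x)-\int_\Omega\varrho w = \int_\Omega G^x g + F\cdot\nabla_y G^x\d y$ and then uses the uniform $W^{1,1}$ bound on $G^x$ from \cref{lemma:Greens_functions_W1p_bound}, which is cleaner than going through \cref{prop:global_regularity}(i) since the latter does not directly accommodate the $-\div F$ term.
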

\begin{proof}
    Let $\chi\in C^\infty(\Omega)$ be a smooth cutoff function satisfying $\chi \equiv 1$ on $\Omega\setminus \Omega'$, $\chi \equiv 0$ on $\Omega''$ and $0 \leq \chi \leq 1$ on $\Omega'\setminus \Omega''$. Let $w = \chi v\in H^1(\Omega)$ and $\phi \in H^1(\Omega)$. We use that $u$ is a weak solution of \labelcref{eq:continuum_pde} and $\frac{\partial \chi}{\partial \nu}=0$ on $\partial\Omega$ to compute
    \begin{align*}
        \int_\Omega \varrho \nabla w \cdot \nabla \phi \d x 
        &= \int_\Omega \varrho\chi \nabla v\cdot \nabla \phi + \varrho v \nabla \chi\cdot \nabla \phi \d x\\
        &=\int_\Omega \varrho\nabla v\cdot \nabla (\chi \phi) - \varrho \phi \nabla v\cdot \nabla \chi + \varrho v \nabla \chi\cdot \nabla \phi \d x\\
        &=\int_\Omega \phi \left(\chi f - \varrho \nabla v\cdot \nabla \chi\right) + \varrho v \nabla \chi \cdot \nabla \phi \d x.
    \end{align*}
    This shows that $w \in W^{1,2}(\Omega)$ is a weak solution to 
    \begin{equation}\label{eq:w_rhs}
        \begin{cases}
            -\div \varrho \nabla w = \chi f  - \varrho \nabla \chi \cdot \nabla v - \div(\varrho v \nabla \chi) & \text{in } \Omega, \\
            \frac{\partial w}{\partial \nu} = 0 &\text{on } \partial \Omega.
        \end{cases}
    \end{equation}
    where the divergence on the right hand side of \labelcref{eq:w_rhs} is to be understood in a weak sense.

    We first invoke \cite[Theorem 5.54]{lieberman2003oblique} to see that $v \in C^{1,\alpha}(\Omega)$.
    Second, we also invoke \cite[Theorem 5.54]{lieberman2003oblique} for $w$, which yields
    \begin{equation}\label{eq:wC1alpha_wg}
    \begin{aligned}
        \norm{w}_{C^{1,\alpha}(\Omega)} 
        &\leq C \left(\norm{w}_{L^\infty(\Omega)} + \norm{f \chi - \varrho \nabla v \cdot \nabla \chi}_{L^\infty(\Omega)} + \norm{\varrho v \nabla \chi}_{C^{0,\alpha}(\Omega)}\right) \\
        &\leq C \Big(\norm{w}_{L^\infty(\Omega)} + \norm{f}_{L^\infty(\Omega \setminus \Omega'')} + \norm{\varrho \nabla \chi}_{L^\infty(\Omega)} \norm{\nabla v}_{L^\infty(\Omega' \setminus \Omega'')} \\
        &\qquad
        + \norm{\varrho \nabla \chi}_{C^{0,\alpha}(\Omega)} \norm{v}_{C^{0,1}(\Omega' \setminus \Omega'')}\Big)
    \end{aligned}
    \end{equation}
    Here, $C$ depends only on $d, \Omega$ and $\varrho$.
    One can easily adapt \cref{lem:v_as_Greens_convolution} to show 
    \begin{equation*}
        w(x) - \int_\Omega \varrho w \d y = \int_{\Omega} G^x \left(f \chi - \varrho \nabla v \cdot \nabla \chi\right) + \varrho v \nabla \chi \cdot \nabla_y G^x \d y.
    \end{equation*}
    Hence, using \cref{lemma:Greens_functions_W1p_bound}, we see
    \begin{equation*}
    \begin{aligned}
        \norm{w}_{L^\infty(\Omega)} 
        &\leq \sup_{x\in \Omega}\norm{G^x}_{L^1} \norm{f \chi - \varrho \nabla v \cdot \nabla \chi}_{L^\infty}
        + \sup_{x\in \Omega}\norm{\nabla G^x}_{L^1} \norm{\varrho v \nabla \chi}_{L^\infty} + \norm{\varrho}_{L^\infty}\norm{w}_{L^1} \\
        &\leq C_1 \left(\norm{f}_{L^\infty(\Omega \setminus \Omega'')} + \norm{w}_{L^1(\Omega)} + C_2 \norm{v}_{C^{1}(\Omega' \setminus \Omega'')}\right)
    \end{aligned}
    \end{equation*}
    with $C_1 = C_1(\Omega, \varrho, d) > 0$ and $C_2 = C_2(\Omega, \Omega', \Omega'', \varrho) > 0$.
    Combining these estimates with \labelcref{eq:wC1alpha_wg} and using that $\norm{v}_{C^{0,1}(\Omega'\setminus\Omega'')} \leq C_2 \norm{\nabla v}_{L^\infty(\Omega'\setminus\Omega'')}$, we obtain
    \begin{equation*}
        \norm{w}_{C^{1, \alpha}(\Omega)} 
        \leq C_1 \left(\norm{f}_{L^\infty(\Omega \setminus \Omega'')} + \norm{v}_{L^1(\Omega \setminus \Omega'')} + C_2 \norm{v}_{C^1(\Omega' \setminus \Omega'')}\right).
    \end{equation*}
    Since $\|v\|_{C^{1, \alpha}(\Omega\setminus \Omega')}\leq \|w\|_{C^{1, \alpha}(\Omega)}$, this concludes the proof. 
\end{proof}
\begin{lemma}[Boundary regularity of Green's functions]\label{thm:Gy_boundary_regularity}
    Let $y \in \Omega$ and define $G^y \in W^{1,p}(\Omega)$ as in \labelcref{eq:construction_Gy}. Moreover, assume that $\Omega$ is a $C^{1,\alpha}$ domain, and that $\rho \in C^{0,\alpha}(\Omega)$ for some $\alpha \in (0,1)$. 
    Further, let $0 < R < \frac13 \dist(y, \partial \Omega)$. Then,
    \begin{equation*}
    \norm{G^y}_{C^{1, \beta}\left(\Omega \setminus B(y;R)\right)} \leq C.
    \end{equation*}
    for all $\beta \in (0,\alpha)$, where $C = C(d, \varrho, R, \Omega, \alpha, \beta)>0$ is a constant which does not depend on $y$.
\end{lemma}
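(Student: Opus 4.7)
\emph{Proof plan.} My approach is to transfer a uniform bound on the mollified approximations $G_n^y$ defined by \labelcref{eq:construction_Gy} over to the limit $G^y$ via compact embedding, in the same spirit as the interior case \cref{thm:C1alpha_approximation_Gy}. The novelty is that we must reach the boundary $\partial\Omega$, so I will replace the use of classical interior estimates by the boundary estimate \cref{lem:boundary_estimate}. To shield the mollified source $\phi_n^y$ from the region in which I want regularity, I choose the shrinking pair
\begin{equation*}
\Omega''=B(y,R/2),\qquad \Omega'=B(y,2R/3),
\end{equation*}
which satisfies $\Omega''\subset\subset\Omega'\subset\subset\Omega$ thanks to the hypothesis $R<\tfrac{1}{3}\dist(y,\partial\Omega)$, and which moreover yields $\Omega\setminus B(y,R)\subset \Omega\setminus\Omega'$. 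For every $n>2/R$ the mollifier $\phi_n^y$ is supported in $B(y,1/n)\subset \Omega''$, so the right-hand side of the equation for $G_n^y$ is just $-\varrho/\int_\Omega\varrho\,\d x$ on $\Omega\setminus\Omega''$, in particular bounded in $L^\infty(\Omega\setminus\Omega'')$ by $\varrho_{\max}/\int_\Omega\varrho\,\d x$ uniformly in $n$ and $y$.

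Applying \cref{lem:boundary_estimate} to $v=G_n^y$ now reduces the problem to controlling the three right-hand-side terms uniformly in $n$ and $y$. The $L^1$-term $\|G_n^y\|_{L^1(\Omega\setminus\Omega'')}$ is handled by \cref{lemma:Greens_functions_W1p_bound}, which gives $\|G_n^y\|_{W^{1,p}(\Omega)}\leq C$ uniformly in $n,y$. For the intermediate $C^1$-term on the annulus $\Omega'\setminus\Omega''$, I cover the compact set $\overline{\Omega'\setminus\Omega''}$ by finitely many balls $B(x_i,R/11)$ with centers $x_i$ in that annulus; the number of balls depends only on $d$, and each center satisfies
\begin{equation*}
|x_i-y|\geq R/2>5\cdot R/11,\qquad \dist(x_i,\partial\Omega)\geq 3R-2R/3>5\cdot R/11,
\end{equation*}
so the interior-regularity argument used in the proof of \cref{thm:C1alpha_approximation_Gy} applies directly to $G_n^y$ on each $B(x_i,R/11)$ with constants independent of $n$ and $y$. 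This yields $\|G_n^y\|_{C^1(\Omega'\setminus\Omega'')}\leq C$ and, combined with \cref{lem:boundary_estimate}, the uniform bound $\|G_n^y\|_{C^{1,\alpha}(\Omega\setminus\Omega')}\leq C$.

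To conclude, for any $\beta\in(0,\alpha)$ the compact embedding $C^{1,\alpha}(\Omega\setminus\Omega')\hookrightarrow C^{1,\beta}(\Omega\setminus\Omega')$ provides a subsequence of $\{G_n^y\}$ converging in $C^{1,\beta}(\Omega\setminus\Omega')$ to a limit inheriting the same uniform $C^{1,\beta}$-bound. The $W^{1,p}$-convergence $G_n^y\to G^y$ from \labelcref{eq:construction_Gy} identifies this limit with $G^y$ on $\Omega\setminus\Omega'$, so the bound transfers to $G^y$ and, since $\Omega\setminus B(y,R)\subset \Omega\setminus\Omega'$, gives the claim.

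The main technical point, which I expect to be a careful bookkeeping exercise rather than a genuine obstacle, is the $y$-uniformity of all constants. This follows because the boundary estimate \cref{lem:boundary_estimate}, the $W^{1,p}$-bound \cref{lemma:Greens_functions_W1p_bound}, and the interior estimate \cref{thm:C1alpha_approximation_Gy} all produce constants depending only on $d$, $\varrho$, $\Omega$, $\alpha$, $\beta$ and on the \emph{radii} of the sets involved, while the geometric data of my construction (the radii $R/2$, $2R/3$, $R/11$ and the cardinality of the covering) depend only on $R$ and $d$, never on the specific location of $y$ inside $\Omega$.
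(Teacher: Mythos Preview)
Your proposal is correct and follows essentially the same approach as the paper: apply the boundary estimate \cref{lem:boundary_estimate} to the approximants $G_n^y$, bound the three right-hand-side terms via (i) compact support of $\phi_n^y$, (ii) the uniform $W^{1,p}$ bound \cref{lemma:Greens_functions_W1p_bound}, and (iii) interior regularity from \cref{thm:C1alpha_approximation_Gy} on the annulus, then pass to the limit using compact H\"older embeddings. The paper uses the pair $\Omega'=B(y,R)$, $\Omega''=B(y,R/2)$ rather than your $B(y,2R/3)$, $B(y,R/2)$, and simply cites \cref{thm:C1alpha_approximation_Gy} for the annulus without spelling out the covering by small balls, but the argument is the same.
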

\begin{proof}
    %
    %
    Let $G_n^y \in W^{1,2}(\Omega)$ be the sequence used to construct $G^y$. By \cref{lem:boundary_estimate} we have 
    \begin{multline*}
        \norm{G_n^y}_{C^{1,\alpha}(\Omega \setminus B(y;R))}
        \leq C(\Omega, d, \varrho) \biggl( \norm{\phi_n^y - \varrho}_{L^\infty(\Omega \setminus B(y;R/2))} + \\ + \norm{G_n^y}_{L^1(\Omega \setminus B(y;R/2))} + C(\varrho, R) \norm{G_n^y}_{C^{1}(B(y;R) \setminus B(y;R/2))}\biggr)
    \end{multline*}
    For $n > 3/R$ the right hand side can be estimated independently of $n$. For the first term we use that $\phi_n$ has compact support in $B(y;1/n)$. For the second term the uniform $W^{1,p}(\Omega)$ bound on $G_n^y$ from \cref{lemma:Greens_functions_W1p_bound}. Finally, the third term can be controlled using interior regularity results on $G_n^y$, e.g. by using \cref{thm:C1alpha_approximation_Gy}:
    \begin{equation*}
        \norm{G_n^y}_{C^{1}(B(y;R) \setminus B(y;R/2))} \leq C(R, d, \varrho, \alpha).
    \end{equation*}
    Therefore,
    \begin{equation*}
        \norm{G_n^y}_{C^{1, \alpha}(\Omega \setminus B(y;R))} \leq C(d, \varrho, R, \Omega, \alpha).
    \end{equation*}
    As in the proof of \cref{thm:C1alpha_approximation_Gy} we use the compact embedding of Hölder spaces and the convergence $G_n^y \to G^y$ in $L^p(\Omega)$, to obtain
    \begin{equation*}
        \norm{G^y}_{C^{1, \beta}(\Omega \setminus B(y;R))} \leq C(d, \varrho, R, \Omega, \alpha, \beta),
    \end{equation*}
    concluding the proof.
\end{proof}

Next, we show that Green's function $G^y(x)$ and its gradient are symmetric when exchanging the roles of $x$ and $y$.
\begin{lemma}\label{thm:Gy_symmetric}
Let $x,y \in \Omega$. 
Then, $G^x(y) = G^y(x)$ and $\nabla_y G^x(y) = \nabla_y G^y(x)$ for almost every $x,y\in\Omega$.    
\end{lemma}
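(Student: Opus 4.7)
My plan is to pull symmetry out of the approximating sequences $G_n^y \in W^{1,2}(\Omega)$ used in \cref{sec:Greensfunctions} to construct $G^y$, exploiting the fact that the regularized Green's functions are in the energy space so that the representation formula from \cref{lem:v_as_Greens_convolution} applies to them.

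Fix $x,y \in \Omega$ with $x \neq y$. By construction, $G_n^y \in W^{1,2}(\Omega)$ is the unique weak solution of \labelcref{eq:continuum_pde} with right hand side $f_n = \phi_n^y - \varrho/\int_\Omega \varrho\,dz$, which lies in $L^\infty(\Omega) \subset L^{q^*}(\Omega)$ and has zero mean. Applying \cref{lem:v_as_Greens_convolution} to $v = G_n^y$ yields the identity
\begin{equation*}
    G_n^y(x) = \int_\Omega G^x(z)\!\left(\phi_n^y(z) - \frac{\varrho(z)}{\int_\Omega \varrho\,dw}\right)\!dz = \int_\Omega G^x(z)\,\phi_n^y(z)\,dz,
\end{equation*}
where in the last step I use the weighted zero-mean condition $\int_\Omega G^x \varrho\,dz = 0$ built into \cref{def:green_function}.

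Now I would pass to the limit $n\to\infty$ on both sides. For the right-hand side, since $y\in\Omega$ is strictly interior and $x\neq y$, \cref{prop:Greens-regularity} (i) tells us that $G^x$ is continuous in a neighborhood of $y$. Because $\phi_n^y$ is supported in $B(y,1/n)$, is nonnegative, and has unit mass, the standard convolution-with-mollifier argument gives
\begin{equation*}
    \int_\Omega G^x(z)\,\phi_n^y(z)\,dz \;\longrightarrow\; G^x(y)
    \qquad \text{as } n \to \infty.
\end{equation*}
For the left-hand side I need pointwise convergence $G_n^y(x) \to G^y(x)$, which does not follow directly from the $W^{1,p}$-convergence used to construct $G^y$. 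The key observation is that the interior and boundary regularity arguments in \cref{thm:C1alpha_approximation_Gy,thm:Gy_boundary_regularity} provide uniform $C^{1,\beta}$ bounds for $G_n^y$ on $\Omega \setminus B(y,R)$ with $R = |x-y|/2$, independent of $n$ (once $n$ is large enough that $1/n < R$). Hence by Arzelà--Ascoli there is a subsequence converging in $C^{1,\beta}(\Omega \setminus B(y,R))$, and since the $W^{1,p}$ limit $G^y$ is the unique candidate, we obtain $G_n^y(x) \to G^y(x)$. Combining both limits yields $G^y(x) = G^x(y)$ for all $x\neq y$.

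For the gradient statement, I view the identity $G^y(x) = G^x(y)$ with $x$ held fixed and $y$ varying. The right-hand side, as a function of $y$, is the restriction of $G^x$ to $\Omega \setminus \{x\}$, which is $C^{1,\beta}$ by \cref{prop:Greens-regularity} (ii); its gradient at $y$ is $\nabla G^x(y)$, which I denote $\nabla_y G^x(y)$. Consequently the map $y \mapsto G^y(x)$ inherits this regularity and its gradient equals $\nabla_y G^x(y)$, which is exactly the claim $\nabla_y G^y(x) = \nabla_y G^x(y)$. The diagonal $\{x=y\}$ has measure zero in $\Omega\times\Omega$, so both identities hold for a.e.\ $x,y$. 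The main delicate point throughout is the upgrade of $W^{1,p}$-convergence to pointwise convergence of $G_n^y$ at $x\neq y$, which is why the uniform $C^{1,\beta}$ estimates away from the pole developed earlier in this section are essential.
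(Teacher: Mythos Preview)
Your proof is correct and follows essentially the same route as the paper: both arguments pass through the approximating sequences $G_n^y$, use the weak formulations to relate $G_n^y(x)$ to an integral against $G^x$, and then invoke the uniform $C^{1,\beta}$ bounds away from the pole (from \cref{thm:C1alpha_approximation_Gy,thm:Gy_boundary_regularity}) to upgrade $W^{1,p}$ convergence to pointwise convergence. Your packaging is slightly more economical---you invoke \cref{lem:v_as_Greens_convolution} directly rather than unfolding the double limit $\lim_n\lim_m \int \varrho\,\nabla G_m^y\cdot\nabla G_n^x$ as the paper does, and you derive the gradient identity by direct differentiation of $y\mapsto G^x(y)$ rather than via integration against test functions---but the substance is the same.
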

\begin{proof}
We compute for $x \neq y$
    \begin{align}
        G^y(x) 
            &= \lim_{n \to \infty} \int_\Omega G^y(z) \left(\phi_n^x(z) - \varrho(z)\right) \d z \label{eq:Gy_symmetry_1} \\
            &= \lim_{n \to \infty} \lim_{m \to \infty}\int_\Omega G_m^y(z) \left(\phi_n^x(z) - \varrho(z)\right) \d z \label{eq:Gy_symmetry_2} \\
            &= \lim_{n \to \infty} \lim_{m \to \infty}\int_\Omega \varrho(z) \nabla G_m^y(z)\cdot\nabla G_n^x(z) \d z \label{eq:Gy_symmetry_3} \\
            &= \lim_{n \to \infty} \lim_{m \to \infty}\int_\Omega \left(\phi_m^y(z) - \varrho(z)\right) G_n^x(z) \d z \label{eq:Gy_symmetry_4} \\
            &= \lim_{n \to \infty} G_n^x(y) \label{eq:Gy_symmetry_5} \\
            &= G^x(y). \label{eq:Gy_symmetry_6}
    \end{align}
Equalities \labelcref{eq:Gy_symmetry_1} and \labelcref{eq:Gy_symmetry_5} are due to the continuity of $G^y$ and $G_n^x$ respectively away from their poles (due to \cref{thm:C1alpha_approximation_Gy}) and their zero mean condition. Equality \labelcref{eq:Gy_symmetry_2} follows from the $L^p(\Omega)$ convergence of $G_m^y$, while \labelcref{eq:Gy_symmetry_6} follows from the $C_{\loc}^{0,\alpha}(\Omega)$ convergence of $G_n^x$ established in the proof of \cref{thm:C1alpha_approximation_Gy}. Equalities \labelcref{eq:Gy_symmetry_3} and \labelcref{eq:Gy_symmetry_4} is the weak formulation of the PDEs solved by the two Green's functions.

To prove the symmetry of the gradient we let $\phi\in C^\infty_c(\Omega;\R^d)$ and compute, using the symmetry $G^y(x)=G^x(y)$ that we just proved: for any $x\in \Omega$,
\begin{align*}
    \int_\Omega \nabla_y G^x(y) \cdot \phi(y) \d y
    &=
    -\int_\Omega G^x(y) \div_y\phi(y) \d y
    \\
    &=
    -\int_\Omega G^y(x) \div_y\phi(y) \d y
    \\
    &=
    \int_\Omega \nabla_y G^y(x) \cdot \phi(y) \d y.
\end{align*}
Since $\phi$ was arbitrary, this shows $\nabla_y G^y(x)=\nabla_y G^x(y)$ almost everywhere.
\end{proof}

\section{Proofs from \texorpdfstring{\cref{sec:continuum_limits}}{Section 3}}\label{sec:stability}

\subsection{Transportation maps}\label{app:transportation}

\begin{proposition}\label{prop:partition}
Suppose $\Omega\subset \R^d$ is open and bounded with a Lipschitz boundary. Then there exists a constant $C>0$, depending only on $\Omega$, such that for every $h>0$ there exists a partition $B_1,\dots,B_M$ of $\Omega$ satisfying the following.
\begin{enumerate}[label=(\roman*)]
\item $M \leq C|\Omega|h^{-d}$,
\item $|B_i| \geq C^{-1}h^d$, and
\item Each $B_i$ is contained in a ball of radius $h$. 
\end{enumerate}
\end{proposition}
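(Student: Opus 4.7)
The plan is a standard cubic tiling plus merging argument, with the Lipschitz regularity entering through the uniform cone condition. First I would reduce to the regime $h \le h_0$ for some constant $h_0 = h_0(\Omega)$; when $h \ge \operatorname{diam}\Omega$ the trivial partition $B_1 = \Omega$ already satisfies all three conclusions. For the main case I tile $\R^d$ by a grid of closed cubes $\{Q_j\}_{j \in \Z^d}$ of common side length $\ell := h/c_d$, where $c_d \ge 3\sqrt d$ is a dimension constant (to be pinned down at the end) chosen so that each cube together with its close neighbors still fits inside a ball of radius $h$. Let $\mathcal J = \{j : Q_j \cap \Omega \ne \emptyset\}$ and $P_j := Q_j \cap \Omega$. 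Since $\Omega$ is bounded and Lipschitz, its $\ell$-tubular neighborhood has volume at most $C|\Omega|$ for $\ell$ small, so $\#\mathcal J \le C|\Omega|\,\ell^{-d} \le C'|\Omega|\,h^{-d}$.

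Second, I would classify each cell as \emph{large} if $|P_j| \ge \tfrac12 \ell^d$ and \emph{small} otherwise; every interior cube $Q_j \subset \Omega$ is automatically large. To absorb small cells I invoke the uniform cone condition associated to a Lipschitz boundary: there exist $r_0,\theta>0$, depending only on $\Omega$, such that at every $x\in\partial\Omega$ there is an inward cone of aperture $\theta$ and height $r_0$ contained in $\Omega$. For $\ell$ small relative to $r_0$ this cone contains an entire grid cube $Q_{j'} \subset \Omega$ (hence a large cell) within Euclidean distance $K\ell$ of $Q_j$, for a constant $K = K(\theta,d)$. This supplies, for every small index $j$, a map $\sigma(j)$ with $Q_{\sigma(j)} \subset \Omega$ and $\operatorname{dist}(Q_j, Q_{\sigma(j)}) \le K\ell$.

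Third, I would define the final partition as $\{B_{j'}\}_{j' \in \mathcal J_{\mathrm{large}}}$, where
\[
B_{j'} \;:=\; P_{j'} \;\cup\; \bigcup_{j \in \sigma^{-1}(j')} P_j .
\]
This is a partition of $\Omega$ up to the measure-zero cube boundaries. Each $B_{j'}$ contains the large cell $P_{j'}$, so $|B_{j'}| \ge \tfrac12 \ell^d \ge C^{-1} h^d$, which is (ii). Every cube in $\sigma^{-1}(j') \cup \{j'\}$ lies within distance $K\ell$ of $Q_{j'}$, so $B_{j'}$ sits inside a closed ball of radius $(K+2)\ell$ around the center of $Q_{j'}$; fixing $c_d \ge K+3$ at the start makes this radius $\le h$, establishing (iii). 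Finally $M \le \#\mathcal J \le C|\Omega|\,h^{-d}$ gives (i).

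The main obstacle is step two: ensuring, uniformly in $h$, that every boundary cell has a fully interior cube available within a bounded number of grid spacings. Without a quantitative one-sided interior condition the boundary could pinch and leave a small cell with no large anchor nearby. The Lipschitz hypothesis supplies exactly the needed uniform cone property, and the finiteness of $K(\theta,d)$ is what lets me absorb it into $c_d$, so that the final lower bound on $|B_{j'}|$ and the radius bound are both dimensionless multiples of $h^d$ and $h$ respectively, without any residual dependence on $h$.
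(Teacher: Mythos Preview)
Your proof is correct but takes a genuinely different route from the paper's. The paper's argument is shorter and avoids the merging step entirely: it starts from an $h$-net $\{x_1,\dots,x_M\}\subset\Omega$ (so $\Omega\subset\bigcup_i B(x_i,h)$ while the balls $B(x_i,h/2)$ are pairwise disjoint), and then takes the $B_i$ to be the associated Voronoi cells. The Lipschitz hypothesis enters in a single place, through the lower bound $|\Omega\cap B(x,h/2)|\ge C^{-1}h^d$ for all $x\in\Omega$; this immediately gives both $M\le C|\Omega|h^{-d}$ and $|B_i|\ge C^{-1}h^d$ (since $\Omega\cap B(x_i,h/2)\subset B_i$), while the covering property forces $B_i\subset B(x_i,h)$.

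Your cube-tiling plus absorption via the uniform cone property is correct and is the sort of argument one would use if the construction had to be explicit or grid-aligned, but it requires more bookkeeping (choosing $c_d$ compatible with the cone constant $K(\theta,d)$, handling the intermediate range $h_0<h<\operatorname{diam}\Omega$). The Voronoi construction sidesteps all of this: boundary cells are never ``small'' to begin with, because each Voronoi cell automatically contains the full half-ball $\Omega\cap B(x_i,h/2)$, whose measure is controlled directly by the Lipschitz condition rather than through a cone.
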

\begin{proof}
Let $X=\{x_1,\dots,x_M\}\subset \Omega$ be an $h$-net on $\Omega$, which means that 
\begin{equation}\label{eq:covering}
\Omega \subset \cup_{i=1}^M B(x_i,h),
\end{equation}
and the family of sets $\{B(x_i,\tfrac{h}{2}\}_{i=1}^M$ are disjoint.  Therefore
\[|\Omega| \geq \sum_{i=1}^M\left|\Omega\cap B\left(x_i,\frac{h}{2}\right)\right|.\]
Since the boundary of $\Omega$ is Lipschitz, then there exists a constant $C>0$ depending on $\Omega$ such that 
\[\left|\Omega\cap B\left(x,\frac{h}{2}\right)\right| \geq C_1^{-1}h^d\]
for all $x\in \Omega$. Therefore
\[|\Omega| \geq \sum_{i=1}^MC_1^{-1}h^d = C_1^{-1}h^d M,\]
and so $M \leq C|\Omega|h^{-d}$. We now define the Voronoi cells
\begin{equation}\label{eq:voronoi}
B_i = \{x\in \Omega\, : \, |x-x_i| \leq |x-x_j| \text{ for } j >i \text{ and } |x-x_i| < |x-x_j| \text{ if } j < i \}.
\end{equation}
Then the sets $B_1,\dots,B_M$ form a partition of $\Omega$. Furthermore, since the sets $\{B(x_i,\frac{h}{2})\}_{i=1}^{M}$ are disjoint, we have that $\Omega\cap B(x_i,\frac{h}{2}) \subset B_i$ and so 
\begin{equation}\label{eq:lower_measure}
|B_i| \geq \left|\Omega\cap B\left(x_i,\frac{h}{2}\right)\right| \geq C_1^{-1}h^d.
\end{equation}
This establishes (i) and (ii).

To prove (iii), note that by \labelcref{eq:covering} we have that whenever $x\not\in B(x_i,h)$ we must have $x\in B(x_j,h)$ for some $j\neq i$. Therefore, if $|x-x_i|> h$ then $|x-x_j|<h < |x-x_i|$ for some $j\neq i$. Therefore $x\not\in B_i$. This implies that $B_i\subset B(x_i,h)$, and completes the proof.
\end{proof}

We now give the proof of \cref{thm:transportation}.
\begin{proof}[Proof of \cref{thm:transportation}]
By  \cref{prop:partition}, there exists a constant $C > 0$ depending only on $\Omega$ such that for each $h > 0$, there is a partition $B_{1}, B_{2},\dots, B_{M}$ of $\Omega$ for which $M \leq C|\Omega|h^{-d}$, $|B_i|\geq C^{-1}h^d$, and each $B_{i}$ is contained in a ball of radius $h$. Let $\delta > 0$ and set $h = \frac{\delta}{2}$. Let $\rho_{\delta}$ be the histogram density estimator
\begin{equation}
    \rho_{\delta} = \frac{1}{n}\sum^{M}_{i=1}\frac{n_{i}}{|B_{i}|} \mathds{1}_{B_{i(x)}},
\end{equation}
where $n_{i}$ is the number of points from $x_1,\dots,x_n$ that fall in $B_{i}$. We easily check that
\begin{equation}
    \int_{\Omega}\rho_{\delta}(x) \d x = \frac{1}{n}\sum^{M}_{i=1}n_{i} = 1.
\end{equation}

We now prove property (iv). Note that each $n_{i}$ is a Bernoulli random variable with parameter $p_{i} = \int_{B_{i}}\rho(x)\d x$. By the Chernoff bounds, we have
\begin{equation}
    \mathds{P}(|n_{i}-np_{i}|\geq \lambda n p_{i}) \leq 2 \exp{\left(-\frac{3}{8}n p_{i} \lambda^{2}\right)}
\end{equation}
for $0 < \lambda \leq 1$. We note that 
\begin{equation}
    p_{i} = \int_{B_{i}}\rho(x) \, \d x \geq \rho_{\min}|B_i| \geq C^{-1}\rho_{\min}h^{d} = C^{-1}2^{-d}\rho_{\min}\delta^d,
\end{equation}
and similarly $p_i \leq \rho_{\max}|B_i|$. Union bounding over $i= 1,\dots,M$ and using that $M \leq 2^dC|\Omega|\delta^{-d}\leq 2^dC|\Omega|n$ since $n\delta^d\geq 1$, we have that for any $0 < \lambda \leq 1$ with probability at least 
\begin{equation}\label{eq:prob}
1 - 2^{d+1}C|\Omega|n\exp{\left(-\frac{3}{8}C^{-1}2^{-d}\rho_{\min}n\delta^{d}\lambda\right)}
\end{equation}
it holds that
\begin{equation} \label{eq:inequality-rho}
\bigg|\frac{n_i}{n}-\int_{B_i}\rho(x)\,\d x\bigg| \leq \rho_{\max}|B_{i}|\lambda
\end{equation}
for all $i= 1,\dots,M$. For the rest of the proof we assume this event holds.

Let $x \in \Omega$. Then $x\in B_{i}$ for some $i$ and using \labelcref{eq:inequality-rho} we have
\begin{flalign*}
|\rho(x) - \rho_{\delta}(x)| &= \left| \rho(x) - \frac{n_{i}}{n|B_i|}\right|\\
&= \left| \rho(x) - \frac{1}{|B_i|}\int_{B_i}\rho(y) \d y+ \frac{1}{|B_i|}\int_{\Omega_i}\rho(y) \d y - \frac{n_{i}}{n|B_i|}\right|\\
&\leq \left| \rho(x) - \tfrac{1}{|B_i|}\int_{B_i}\rho(y) \d y \right| + \left|\frac{1}{|B_i|}\int_{B_i}\rho(y) \d y - \frac{n_{i}}{n|B_i|}\right|\\
&\leq \frac{1}{|B_i|}\int_{B_i}|\rho(x) - \rho(y)|\d y+ \rho_{\max}\lambda\\
&\leq \Lip(\rho)\delta + \rho_{\max}\lambda,
\end{flalign*}
which establishes property (iv). We now assume that $\delta \leq \frac{\rho_{\min}}{8\Lip(\rho)}$ and $\lambda \leq\frac{\rho_{\min}}{8\rho_{\max}}$, so that $\rho_{\delta}\geq 3\frac{\rho_{\min}}{4} > 0$. This means, in particular, that each $B_i$ is nonempty, so $n_i \geq 1$ for all $i$. 

We now construct a partition $\Omega_1,\dots,\Omega_n$ of $\Omega$ satisfying
\begin{equation}\label{eq:definition-rho-delta}
\Omega_{i} \subset B(x_i, \delta), \ \ \text{and} \ \  \int_{\Omega_{i}} \rho_{\delta} \, dx = \frac{1}{n} \quad\text{ for all } 1 \leq i \leq n.
\end{equation}
To do this, we construct a partition $B_{i,1},B_{i,2},\dots,B_{i, n_{i}}$ of each $B_{i}$ consisting of sets of equal measure. Let $k_{i,1},k_{i,2},\dots,k_{i,n_{i}},$ denote the indices of
the random variables that fall in $B_{i}$ and set $\Omega_{k_{i},j} = B_{i,j}$. That is, the partition of $B_{i,j}$ is assigned one-to-one with the random samples that fall in $B_{i}$. We can also arrange that $x_{k_{i,j}}\in B_{i,j}$. Then for some $1\leq j \leq M$ and $1 \leq k \leq n_{j}$ we have 
\begin{equation}
    \int_{\Omega_{i}}\rho_{\delta}\d x =\frac{n_{j}}{n|B_{j}|}\int_{B_{j,k}}\d x = \frac{1}{n}.
\end{equation}
Fix $1\leq j\leq n$ and fix $i$ such that $x_j \in B_i$. Since $B_{i}$ is contained in a ball of radius $\frac{\delta}{2}$ we have $B(x_j, \delta) \supset B_i \supset \Omega_j$. This proves \labelcref{eq:definition-rho-delta}.

Finally, we define the map $T_\delta:\Omega\to \X_n$ by setting $T_\delta(x)=x_i$ for all $x\in \Omega_i$. 
Since the sets $\Omega_i$ are measurable, so is the map $T$.
Since $x_i\in \Omega_i$ we have $T_\delta(x_i)=x_i$.  Since $T_\delta^{-1}(\{x_i\}) = \Omega_i$,  \labelcref{eq:definition-rho-delta} implies that ${T_\delta}_\#(\rho_\delta \d x) = \mu_n$. Since $\Omega_{i} \subset B(x_i, \delta)$ we also have $|T_\delta(x) - x|\leq \delta$ for all $x\in \Omega$, which completes the proof.
\end{proof}

\subsection{Mollification}\label{app:mollification}

\begin{proof}[Proof of \cref{lem:sigma_eta_bound}]
The right inequality in \labelcref{eq:lip} is trivial, so we focus on the left inequality. We change to polar coordinates to write
\[\sigma_{\eta,t} = \frac{1}{d}\int_{\R^{d}}\eta(|z| + 2t) |z|^{2}\d z =\omega_d\int_{0}^\infty r^{d+1}\eta(r + 2t) \d r.\]
We now make the change of variables $s=r+2t$ to obtain
\[\sigma_{\eta,t} =\omega_d\int_{2t}^\infty s^{d+1}\left(1-\frac{2t}{s}\right)^{d+1}\eta(s) \d s\geq \omega_d\int_{2t}^\infty s^{d+1}\left(1-\frac{2(d+t)t}{s}\right)\eta(s) \d s,\]
where we used that $(1-x)^{d+1}\geq 1-(d+1)x$ for $x\in [0,1]$ in the last inequality. Therefore we have
\begin{align*}
\sigma_{\eta,t} &\geq \omega_d\int_{2t}^\infty s^{d+1}\eta(s) \d s -2(d+1)\omega_dt\int_{2t}^\infty s^{d}\eta(s) \d s\\
&= \sigma_{\eta,0} - \omega_d\int_{0}^{2t} s^{d+1}\eta(s) \d s -2(d+1)\omega_dt\int_{2t}^\infty s^{d}\eta(s) \d s\\
&\geq \sigma_{\eta,0} - 2\omega_dt\int_{0}^{2t} s^{d}\eta(s) \d s -2(d+1)\omega_dt\int_{2t}^\infty s^{d}\eta(s) \d s\\
&\geq \sigma_{\eta,0} - 2(d+1)t\int_{0}^\infty \omega_ds^{d}\eta(s) \d s\\
&\geq \sigma_{\eta,0} - 4t\int_{0}^\infty d \omega_ds^{d}\eta(s) \d s\\
&= \sigma_{\eta,0} - 4t\int_{\R^d} \eta(|z|)|z| \d z.
\end{align*}
This completes the proof.
\end{proof}

\begin{proof}[Proof of  \cref{lem:psi_moll}]
We first prove (i). Since  $\eta$ is continuous and positive at $0$, there  exists $r_0>0$ such that $\eta(t) \geq \eta(0)/2$ for $0 \leq t \leq r_0$. Assume that $\delta/\epsilon \leq r_0/4$. Then  by \cref{lem:sigma_eta_bound}, for any $|x|\leq \frac{r_0}{4}$ we have
\[\psi_{1,\delta/\eps}(x) = \frac{1}{\sigma_{\eta,\delta/\eps}}\int_{|x|}^\infty \eta\left(s + 2\frac{\delta}{\epsilon}\right) s \d s \geq \frac{\eta(0)}{2\sigma_{\eta}}\int_{|x|}^{r_0-2\frac{\delta}{\epsilon}}  s \d s \geq \frac{\eta(0)}{2\sigma_{\eta}}\int_{\frac{r_0}{4}}^{\frac{r_0}{2}} s \d s = \frac{3\eta(0)r_0^2}{16\sigma_{\eta}}. \]
It follows that
\[\ted(x) = \int_{\Omega} \psi_{\epsilon, \delta}(x-y) \d y=\frac{1}{\epsilon^d}\int_{B(x,\epsilon-2\delta)\cap \Omega}\psi_{1,\delta/\eps}\left( \frac{x-y}{\epsilon}\right)\d y\geq \frac{3\eta(0)r_0^2}{16\sigma_{\eta}\epsilon^d}\left|B\left(x,\frac{\epsilon r_0}{4}\right)\cap \Omega\right|.\]
Since the boundary $\partial\Omega$ is Lipschitz continuous, there exists $C_\Omega>0$ such that  $\left|B\left(x,r\right)\cap \Omega\right| \geq C_\Omega r^d$ for $r>0$ suffciently small (in fact, $r<1$ is sufficient). It follows that
\[\ted(x) \geq \frac{3C_\Omega\eta(0)r_0^{d+2}}{16^2\sigma_{\eta}},\]
which establishes the lower bound in (i).

To prove (ii) (and the upper bound in (i)), note that since $\eta_\eps(s + 2\delta) = 0$ for $s\geq \epsilon-2\delta$ we have $\psi_{\eps,\delta}(x-y) = 0$ if $|x-y| \geq \epsilon-2\delta$. Since $\dist(x,\partial\Omega)\geq \epsilon-2\delta$, the support of $y\mapsto \psi_{\epsilon,\delta}(x-y)$ is contained in $\bar{\Omega}$, and therefore
\begin{align*}
\int_{\Omega} \psi_{\epsilon, \delta}(x-y) \d y &= \int_{B(x,\epsilon-2\delta)}\psi_{\epsilon,\delta}(x-y) \d y\\
&=\int_{B(0,1)} \psi_{1, \delta/\epsilon}(z) \d z \\
&= \int_{0}^{1} d\omega_d \tau^{d-1}\psi_{1, \delta/\epsilon}(\tau e_1) \d \tau \\
&= \frac{1}{\sigma_{\eta, \delta/\eps}}\int_{0}^{1} d\omega_d \tau^{d-1} \int_{\tau}^1 \eta\left(s + \tfrac{2\delta}{\epsilon}\right) s \d s\d \tau\\
&= \frac{1}{\sigma_{\eta, t}}\int_{0}^{1} \omega_d  \eta\left(s + \tfrac{2\delta}{\epsilon}\right) s \left(\int_{0}^{s} d \tau^{d-1}\d \tau\right)\d s\\
&= \frac{1}{\sigma_{\eta, t}}\int_{0}^{1} \omega_d s^{d+1}  \eta\left(s + \tfrac{2\delta}{\epsilon}\right) \d s\\
&= \frac{1}{d\sigma_{\eta, t}}\int_{\R^d} \eta\left(|z| + \tfrac{2\delta}{\epsilon}\right) |z|^{2}\d z
=  1,
\end{align*}
where we used \labelcref{eq:sigma_alt} in the last equality. The upper bound of $\theta_{\eps,\delta}(x)\leq 1$ follows from the argument above, and the observation that
\[\int_{\Omega} \psi_{\epsilon, \delta}(x-y) \d y\leq \int_{B(x,\epsilon-2\delta)}\psi_{\epsilon,\delta}(x-y) \d y\]
holds for all $x\in \Omega$. 
\end{proof}

\begin{proof}[Proof of \cref{prop:smoothing_l1}]
	The case of $p=\infty$ is immediate. Let $1 \leq p< \infty$. By \cref{lem:psi_moll} (i)  Jensen's inequality we have
	\begin{align*}
		\|\Lambda_{\epsilon, \delta}u(x)\|^p_{L^p(\Omega')}&=\int_{\Omega'}|\Lambda_{\epsilon, \delta}u(x)|^p \d x\\
		&=\int_{\Omega'}\left|\frac{1}{\ted(x)}\int_{\Omega}\psi_{\eps,\delta}(x-y)|u(y)|\d y\right|^p \d x\\
		&=\int_{\Omega'}\frac{1}{\ted(x)}\int_{\Omega}\psi_{\eps,\delta}(x-y)|u(y)|^p\d y \d x\\
		&\leq C\int_{\Omega'}\int_{(\Omega'+B_\eps)\cap \Omega}\psi_{\eps,\delta}(x-y)|u(y)|^p\d y \d x\\
		&= C\int_{(\Omega'+B_\eps)\cap \Omega}\int_{\Omega'}\psi_{\eps,\delta}(x-y)|u(y)|^p\d x \d y\\
		&\leq C\int_{(\Omega'+B_\eps)\cap \Omega}|u(y)|^p\d y\\
		&=C\|u\|^p_{L^p((\Omega'+B_\eps)\cap \Omega)},
	\end{align*}
	which completes the proof.
\end{proof}

\section{Proofs from \texorpdfstring{\cref{sec:heat_kernel}}{Section 4}}\label{app:rep_avg}

This section contains many of the proofs from \cref{sec:heat_kernel}. Some of the proofs follow \cite{lawler2010random} rather closely. 

\begin{proof}[Proof of \cref{lem:Hoeffding}]
By \labelcref{eq:psi_rewrite}, we may prove the result for $\epsilon=1$. Fix any $b\in \R^d$ with $|b|=1$. For any two functions $f,g:\R^d\to \R$ with compact support on $\R^d$ we have
\begin{align*}
\int_{\R^d}e^{b\cdot x}(f*g)(x) \d x &=\int_{\R^d} \int_{\R^d}e^{b\cdot x} f(x-y)g(y) \d x \d y\\
&=\int_{\R^d} e^{b\cdot y}g(y)\left(\int_{\R^d}e^{b\cdot (x-y)} f(x-y) \d x \right) \d y\\
&=\int_{\R^d} e^{b\cdot y}g(y)\left(\int_{\R^d}e^{b\cdot z} f(z) \d z \right) \d y\\
&=\left(\int_{\R^d} e^{b\cdot x}g(x)\d x\right)\left(\int_{\R^d}e^{b\cdot x} f(x) \d x \right).
\end{align*}
Therefore
\[\int_{\R^d} e^{b\cdot x}\psi_k(x) \d x= \int_{\R^d} e^{b\cdot x}(\eta * \cdots * \eta)(x) \d x= \left(\int_{\R^d} e^{b\cdot x}\eta(x) \d x\right)^k.\]
Thus for any $s>0$ we have
\begin{align*}
\int_{\{b\cdot x > t\}} \psi_k(x) \d x &= \int_{\{e^{sb\cdot x} > e^{st}\}} \psi_k(x) \d x\\
&\leq e^{-st}\int_{\R^d}e^{sb\cdot x}\psi_k(x)\d x=e^{-st}\left(\int_{B_1} e^{s b\cdot x}\eta(x) \d x\right)^k.
\end{align*}
By convexity, we have
\[e^{s \tau} \leq e^{-s} + \frac{1}{2}\left( e^s - e^{-s}\right)(\tau + 1)\]
for any $\tau \in [-1,1]$. Since $|b \cdot x| \leq |b| |x| \leq 1$ for any $x\in B_1:=B(0,1)$, we have
\begin{align*}
\int_{B_1} e^{s b\cdot x}\eta(x) \d x &\leq \int_{B_1}\left(e^{-s} + \frac{1}{2}\left( e^s - e^{-s}\right)(b\cdot x + 1) \right)\eta(x)\d x\\
&=\frac{1}{2}(e^s + e^{-s})\int_{B_1}\eta(x)\d x = \frac{1}{2}(e^s + e^{-s}) \leq e^{\frac{s^2}{2}}.
\end{align*}
Therefore
\[\int_{\{b\cdot x > t\}} \psi_k(x) \d x \leq \exp\left(-st + \frac{ks^2}{2}\right).\]
Optimizing over $s>0$ we choose $s=\frac{t}{k}$ and find that
\[\int_{\{b\cdot x > t\}} \psi_k(x) \d x \leq \exp\left(-\frac{t^2}{2k}\right).\]
Choosing $b=\pm e_i$ for $i=1,\dots, d$ yields
\[\int_{\{|x|_1 > t\}} \psi_k(x) \d x \leq 2d\exp\left(-\frac{t^2}{2k}\right),\]
where $|x|_1=\sum_{i=1}^d|x_i|$ is the 1-norm. The proof is completed by using that $|x| \leq  \sqrt{d}|x|_1$.
\end{proof}

The rest of the proofs in this section require the Fourier transform of the kernel $\eta$, defined by
\begin{align*}
\hat{\eta}(y) = \int_{\R^d} \eta(|z|) e^{-2\pi i z \cdot y} \d z.
\end{align*}
Since $\eta$ is radially symmetric, the Fourier transform $\hat{\eta}$ is real-valued. Also note that $|\hat{\eta}(y)|\le 1$ for all $y\in\R^d$. Using the Fourier convolution property and inversion formula, we may write
\begin{equation}\label{eq:psik_fourier}
\psi_k(x) = \int_{\R^d}e^{2\pi i y\cdot x} \hat{\eta}(y)^k \d y.
\end{equation}
We also note that if 
\begin{equation}\label{eq:eta_moment}
\int_{\R^d}|y| |\hat{\eta}(y)|^k \d y < \infty,
\end{equation}
then $\psi_k$ is continuously differentiable and 
\begin{equation}\label{eq:psik_grad}
\nabla \psi_k(x) = 2\pi i \int_{\R^d}e^{2\pi i y\cdot x} y\, \hat{\eta}(y)^k \d y.
\end{equation}

Our first lemma bounds $\hat{\eta}$ near the origin.
\begin{lemma}\label{lem:near_origin}
For $|y|\ll 1$ we have
\begin{align*}
\hat{\eta}(y) = \exp\left(-2\pi^2 \sigma_\eta |y|^2 + \O(|y|^4) \right).
\end{align*}
\end{lemma}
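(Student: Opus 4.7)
The plan is to derive the expansion directly from the Fourier-integral definition $\hat\eta(y)=\int_{\R^d}\eta(|z|)e^{-2\pi i z\cdot y}\d z$ by a Taylor expansion of the exponential, using crucially that $\eta$ is compactly supported on $B(0,1)$ (so all moments are uniformly bounded) and radially symmetric (so all odd moments in $z\cdot y$ vanish).

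First I would expand
\begin{equation*}
e^{-2\pi i z\cdot y} = 1 - 2\pi i (z\cdot y) - 2\pi^2 (z\cdot y)^2 + \tfrac{(2\pi i)^3}{6}(z\cdot y)^3 + \O\bigl((z\cdot y)^4\bigr),
\end{equation*}
with the $\O$-constant absolute. Since $|z|\le 1$ on $\supp\eta$, one has $|z\cdot y|\le |y|$, so the remainder integrates to something uniformly $\O(|y|^4)$. Integrating term by term against $\eta(|z|)\d z$, the zeroth-order term gives $1$ by the unit-mass assumption in \cref{ass:eta}; the first- and third-order terms vanish because $z\mapsto (z\cdot y)\eta(|z|)$ and $z\mapsto(z\cdot y)^3\eta(|z|)$ are odd in the direction of $y$ (write the integrand as a function of $z\cdot(y/|y|)$ and use the radial symmetry of $\eta$); the second-order term equals $-2\pi^2|y|^2\sigma_\eta$ by the identity \labelcref{eq:sigma_eta_identity} (applied with $\eps=1$, so $\int_{\R^d}(z\cdot v)^2\eta(|z|)\d z=|v|^2\sigma_\eta$, taken at $v=y$). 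This yields
\begin{equation*}
\hat\eta(y) = 1 - 2\pi^2\sigma_\eta|y|^2 + \O(|y|^4).
\end{equation*}

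Finally, since $\hat\eta$ is continuous with $\hat\eta(0)=1$, for $|y|\ll 1$ we have $\hat\eta(y)>\tfrac12$, so we may take the principal branch of the logarithm. Using $\log(1+u)=u+\O(u^2)$ for $|u|\le \tfrac12$ with $u=-2\pi^2\sigma_\eta|y|^2+\O(|y|^4)=\O(|y|^2)$ gives
\begin{equation*}
\log\hat\eta(y) = -2\pi^2\sigma_\eta|y|^2 + \O(|y|^4),
\end{equation*}
and exponentiating produces the claim.

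There is no real obstacle: the argument is the standard characteristic-function expansion for a symmetric, compactly supported density. The only points that require a brief justification are (i) vanishing of the first and third moments, which relies only on the rotational invariance of $\eta(|\cdot|)$, and (ii) uniform control of the remainder, which uses the compact-support assumption $\supp\eta\subset[0,1]$ to bound all higher moments by constants depending only on $d$ and $\eta$.
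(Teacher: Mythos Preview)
Your proposal is correct and follows essentially the same approach as the paper: Taylor-expand the exponential in the Fourier integral, use radial symmetry to kill the odd-order terms and the identity \labelcref{eq:sigma_eta_identity} to identify the quadratic coefficient as $-2\pi^2\sigma_\eta|y|^2$, then take the logarithm and exponentiate. Your write-up is in fact slightly more explicit than the paper's about why the remainder is uniformly $\O(|y|^4)$ (compact support of $\eta$) and why the odd moments vanish.
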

\begin{proof}
By Taylor expansion, we have
\begin{align*}
\hat{\eta}(y) &= \int_{B_1} \eta(|z|) \left( 1-2\pi i z \cdot y- 2\pi^2 (z \cdot y)^2 + \frac{4}{3} \pi^3 i (z \cdot y)^3 + \O(| z \cdot y|^4)\right) \d z  \\
&= 1 - 2\pi^2 \sigma_\eta  |y|^2  + \O(|y|^4), \notag
\end{align*}
where we note that the first and third order integrals vanish due to symmetry, and where we recall the identity for $\sigma_\eta$ in \labelcref{eq:sigma_eta_identity}.  For $|y|\ll 1$ we have
\[\log \hat{\eta}(y) = \log\left(1-  2\pi^2 \sigma_\eta  |y|^2  + \O(|y|^4)\right) = -2\pi^2 \sigma_\eta |y|^2 + \O(|y|^4).\]
Exponentiating both sides completes the proof.
\end{proof}

We now use \cref{lem:near_origin} to bound $\hat{\eta}$ on larger balls $B_R$. 
\begin{lemma}\label{lem:away_origin} 
For each $R > 0$, there exists a constant $b_R > 0$ such that 
\begin{align*}
|\hat{\eta}(y)| \leq 1 - b_R |y|^2 \ \ \text{for } y\in B_R.
\end{align*}
\end{lemma}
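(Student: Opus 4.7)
The plan is to combine the local quadratic estimate from \cref{lem:near_origin} near the origin with a compactness argument on the annulus $r_0 \leq |y| \leq R$, after first showing that $|\hat\eta|$ is strictly less than $1$ away from the origin.

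\textbf{Step 1 (near the origin).} By \cref{lem:near_origin}, there exists $r_0>0$ (depending only on $\eta$) such that for all $|y|\leq r_0$ we have
\[
\hat\eta(y) = \exp\!\left(-2\pi^2 \sigma_\eta |y|^2 + \O(|y|^4)\right) \geq 0,
\]
and, by shrinking $r_0$ if necessary so that the $\O(|y|^4)$ term in the exponent is dominated by $\pi^2 \sigma_\eta |y|^2$, we obtain
\[
|\hat\eta(y)| \;=\; \hat\eta(y) \;\leq\; 1 - \pi^2 \sigma_\eta |y|^2 \qquad \text{for all } |y| \leq r_0.
\]

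\textbf{Step 2 (strict inequality away from the origin).} I claim $|\hat\eta(y)| < 1$ for every $y \neq 0$. Since $\eta$ is continuous at $0$ with $\eta(0)>0$, there is a ball $B(0,\tau) \subset \R^d$ on which $\eta(|z|) \geq \tfrac{1}{2}\eta(0) > 0$. By \cref{ass:eta} $\eta$ is non-negative with $\int \eta(|z|)\d z = 1$, so
\[
|\hat\eta(y)| \;=\; \left|\int_{\R^d} \eta(|z|)\, e^{-2\pi i z\cdot y}\d z\right| \;\leq\; \int_{\R^d}\eta(|z|)\d z \;=\; 1,
\]
with equality in the triangle inequality only if the complex numbers $e^{-2\pi i z\cdot y}$ have a common argument on the positivity set of $\eta$. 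Since $\eta$ is positive on the open ball $B(0,\tau)$, this forces $z\cdot y$ to be constant (mod $1$) on $B(0,\tau)$, which in turn forces $y=0$. Hence $|\hat\eta(y)|<1$ for all $y\neq 0$.

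\textbf{Step 3 (compactness on the annulus).} Since $\eta \in L^1(\R^d)$, its Fourier transform $\hat\eta$ is continuous on $\R^d$. The annulus $A_{r_0,R} \defeq \{y \in \R^d : r_0 \leq |y| \leq R\}$ is compact, and the continuous function $y\mapsto |\hat\eta(y)|$ attains its maximum on $A_{r_0,R}$ at some point $y_\star$ where $|\hat\eta(y_\star)| < 1$ by Step 2. Thus there exists $c_R > 0$ with
\[
|\hat\eta(y)| \;\leq\; 1 - c_R \;\leq\; 1 - \frac{c_R}{R^2}|y|^2 \qquad \text{for all } y\in A_{r_0,R}.
\]

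\textbf{Step 4 (combine).} Setting $b_R \defeq \min\!\left(\pi^2 \sigma_\eta,\; c_R/R^2\right) > 0$ yields $|\hat\eta(y)| \leq 1 - b_R |y|^2$ on all of $B_R$. The only substantive point is the strict inequality in Step 2 — the rest is bookkeeping — and that step follows cleanly from the continuity and positivity of $\eta$ near the origin together with the triangle-inequality equality condition.
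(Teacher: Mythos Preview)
Your proof is correct and follows essentially the same approach as the paper: both combine the near-origin quadratic bound from \cref{lem:near_origin} with the strict inequality $|\hat\eta(y)|<1$ for $y\neq 0$ (established via the positivity of $\eta$ near the origin) and a compactness argument on the annulus $r_0\leq |y|\leq R$. The only cosmetic differences are the ordering of the steps and that the paper defines $b_R' = \min_{r\leq|y|\leq R}\frac{1-|\hat\eta(y)|}{|y|^2}$ directly rather than passing through your intermediate constant $c_R$.
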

\begin{proof}
We split the proof into two steps.

\textbf{Step 1:} We first show that $|\hat{\eta}(y)| < 1$ for $y\neq 0$. To prove this, we assume, by way of contradiction, that $|\hat{\eta}(y)| = 1$ for some $y\neq 0$. Without loss of generality, we may assume $\hat{\eta}(y)=1$. Then we have
\[1 = \int_{\R^d} \eta(|z|) e^{-2\pi i y\cdot z}\d z = \int_{\R^d}\eta(|z|) \cos(2\pi y \cdot z)\d z,\]
and therefore
\[\int_{\R^d} \eta(|z|) (1-\cos(2\pi y \cdot z)) \d z = 0.\]
Since $\eta$ is non-increasing, non-negative and continuous at the origin, there exists $r,c >0$ such that $\eta(|z|) \geq c$ for all $z\in B_r$. We must therefore have
\begin{align*}
\int_{B_r} 1- \cos(2\pi y\cdot z ) \d z= 0,
\end{align*}
which implies that $\cos(2 \pi y\cdot z) =1$ for all $z \in B_r$, arriving at a contradiction.

\textbf{Step 2:} By \cref{lem:near_origin} there exists $r>0$ and $b>0$ such that
\[\hat{\eta}(y) \leq  1 - b|y|^2 \ \ \text{for all } y\in B_r.\]
We now define 
\[b_R' = \min_{r \leq |y| \leq R} \frac{1- |\hat{\eta}(y)|}{|y|^2}.\]
By Part 1 we have $b_R'>0$, and by definition we have 
\[|\hat{\eta}(y)|\leq 1 - b_R'|y|^2 \ \ \text{for all } y\in B_R\setminus B_r.\]
The proof is completed by setting $b_R = \min\{b,b_R'\}$.
\end{proof}
Before we can establish our Gaussian upper bounds as well as gradient estimates for $\psi_{k,\epsilon}$, we need a quantitative version of the Riemann--Lebesgue lemma (which in its original version says that the Fourier transform of an $L^1$-function decays to zero) in order to control the decay of the Fourier transform $\hat\eta$, cf. \labelcref{eq:psik_grad}. 
\begin{lemma}[Quantitative Riemann--Lebesgue lemma]\label{lem:riemann-lebesgue}
    Let $\eta:[0,\infty)\to [0,\infty)$ be non-increasing and satisfy $\eta(t)=0$ for $t\geq 1$. 
    Then its Fourier transform, defined as $\hat\eta(y):=\int_{\R^d} \eta(|x|)e^{-2\pi i x\cdot y}\d x$ for $y\in\R^d$, satisfies
    \begin{align*}
        |\hat\eta(y)| \leq \min\left\{\frac{\omega_{d-1}}{|y|},\omega_d\right\}\eta(0),\qquad\forall y\in\R^d,
    \end{align*}
    where $\omega_k$ denotes the volume of the unit ball in $\R^k$ for $k\in\N$.
\end{lemma}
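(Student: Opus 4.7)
The plan is to establish the two bounds separately. The first, $|\hat\eta(y)|\leq \omega_d\eta(0)$, is immediate: since $\eta(|\cdot|)\leq \eta(0)\chi_{B(0,1)}$, the triangle inequality gives $|\hat\eta(y)| \leq \int_{B(0,1)}\eta(|x|)\,\mathrm{d}x \leq \omega_d\eta(0)$.

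For the decay bound $|\hat\eta(y)|\leq \omega_{d-1}\eta(0)/|y|$, my strategy will be to reduce to a one-dimensional estimate via Fubini. By rotational symmetry of $\eta(|\cdot|)$, I may assume without loss of generality that $y=|y|e_1$, write $x=(x_1,x')\in \R\times\R^{d-1}$, and express
\begin{equation*}
\hat\eta(y) = \int_{B^{d-1}(0,1)}\left(\int_{-R(x')}^{R(x')}\eta\left(\sqrt{x_1^2+|x'|^2}\right)e^{-2\pi i|y|x_1}\,\mathrm{d}x_1\right)\mathrm{d}x',
\end{equation*}
where $B^{d-1}(0,1)$ denotes the unit ball in $\R^{d-1}$ and $R(x')=\sqrt{1-|x'|^2}$.

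The key structural observation is that for each fixed $x'$, the slice $g_{x'}(x_1)\defeq \eta(\sqrt{x_1^2+|x'|^2})$ is even in $x_1$ and non-increasing in $|x_1|$ (since $\eta$ is non-increasing), with $g_{x'}(0)=\eta(|x'|)$ and support contained in $[-R(x'),R(x')]$. Such a function admits a layer cake decomposition
\begin{equation*}
g_{x'}(x_1) = \int_0^{\eta(|x'|)}\chi_{[-r(s),r(s)]}(x_1)\,\mathrm{d}s, \qquad r(s)\defeq \sup\{t\geq 0 \st g_{x'}(t)>s\}.
\end{equation*}
Interchanging the order of integration and using the explicit computation $\int_{-r}^r e^{-2\pi i|y|x_1}\,\mathrm{d}x_1 = \sin(2\pi|y|r)/(\pi|y|)$, the inner integral is bounded in modulus by $\eta(|x'|)/(\pi|y|)$. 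Integrating over $x'\in B^{d-1}(0,1)$ and bounding $\eta(|x'|)\leq \eta(0)$ will yield $|\hat\eta(y)|\leq \omega_{d-1}\eta(0)/(\pi|y|)$, which is slightly sharper than what is claimed.

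I do not anticipate any serious obstacle: the argument is essentially a one-dimensional Riemann--Lebesgue estimate for even, non-increasing functions lifted to $\R^d$ via slicing. The only substantive ingredient beyond one-dimensional analysis is that the radial monotonicity of $\eta$ in $\R^d$ passes to evenness and monotonicity on every line parallel to $e_1$, which is the geometric content of the hypothesis that $\eta(|\cdot|)$ is radial and non-increasing.
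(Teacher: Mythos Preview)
Your proof is correct and takes a genuinely different route from the paper. The paper uses the half-wavelength shift trick: writing $\hat\eta(y)=-\int_{\R^d}\eta\bigl(\bigl|x+\tfrac{y}{2|y|^2}\bigr|\bigr)e^{-2\pi i x\cdot y}\,\mathrm{d}x$ and averaging with the original definition to obtain $\hat\eta(y)=\tfrac12\int\bigl(\eta(|x|)-\eta(|x+\tfrac{y}{2|y|^2}|)\bigr)e^{-2\pi i x\cdot y}\,\mathrm{d}x$, then splitting into half-spaces where monotonicity controls the sign of the difference. Your argument instead exploits rotational symmetry to slice orthogonally to $y$, reduces to a one-dimensional Fourier integral of an even, non-increasing function on each slice, and bounds it via layer-cake as a superposition of indicators of symmetric intervals whose Fourier transforms are explicit sinc functions. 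Your approach is more direct, avoids the somewhat delicate half-space bookkeeping in the paper, and even yields a constant sharper by a factor of $\pi$. The paper's method has the advantage of not requiring any special structure beyond compact support and monotonicity in a single direction, making it closer in spirit to the classical Riemann--Lebesgue argument; yours leans more heavily on the radial symmetry but extracts more from it.
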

\begin{proof}   
    By definition of the Fourier transform and using the properties of $\eta$ we have 
    \begin{align*}
        |\hat\eta(y)|
        \leq
        \int_{\R^d}\eta(|x|)\d x
        =
        \omega_d\int_0^1\eta(t)dt
        \leq \omega_d\eta(0)    
    \end{align*}
    which establishes the trivial upper bound.
    To show the non-trivial one, we use a change of variables to obtain:
    \begin{align*}
        \hat\eta(y) 
        &= \int_{\R^d} \eta(|x|)e^{-2\pi i x\cdot y}\d x
        =
        \int_{\R^d} \eta\left(\left|x+\frac{y}{2|y^2|}\right|\right)e^{-2\pi i\left(x+\frac{y}{2|y|^2}\right)\cdot y}\d x
        \\
        &=
        \int_{\R^d} \eta\left(\left|x+\frac{y}{2|y|^2}\right|\right)e^{-2\pi i x\cdot y}
        e^{-\pi i}\d x
        =
        -\int_{\R^d}\eta\left(\left|x+\frac{y}{2|y|^2}\right|\right)e^{-2\pi i x\cdot y}\d x.
    \end{align*}
    This is another formula for the Fourier transform $\hat\eta$. 
    Averaging it and the original definition yields    
    \begin{align*}
        \hat\eta(y) 
        &= 
        \frac{1}{2}\int_{\R^d} \left(\eta(|x|)-\eta\left(\left|x+\frac{y}{2|y|^2}\right|\right)\right)e^{-2\pi i x\cdot y}\d x.
    \end{align*}
    Since $\hat\eta$ is by definition a radial function, it suffices to consider the case $y=t e_d$ with $t\in\R$ and without loss of generality we can assume $t\geq 0$.
    By splitting the integral over $\R^d$ into two halfspaces and using that $\eta$ is non-increasing, we obtain
    \begin{align*}
        2|\hat\eta(y)|
        &\leq
        \int_{\R^d} \left|\eta(|x|)-\eta\left(\left|x+\frac{e_d}{2t}\right|\right)\right|\d x
        \\
        &=
        \int_{\{x_d\geq 0\}}
        \left|\eta(|x|)-\eta\left(\left|x+\frac{e_d}{2t}\right|\right)\right|\d x
        +
        \int_{\{x_d< 0\}}
        \left|\eta(|x|)-\eta\left(\left|x+\frac{e_d}{2t}\right|\right)\right|\d x
        \\
        &=
        \int_{\R^{d-1}}
        \int_0^\infty
        \eta(|(\bar x,s)|)-\eta\left(\left|\left(\bar x,s+\frac{1}{2t}\right)\right|\right)
        \d s\d\bar x
        \\
        &\qquad
        +
        \int_{\R^{d-1}}
        \int_{\frac{1}{2t}}^\infty
        \eta\left(\left|\left(\bar x,s-\frac{1}{2t}\right)\right|\right)
        -
        \eta(|(\bar x,s)|)
        \d s\d\bar x
        \\
        &\qquad
        +
        \int_{\R^{d-1}}
        \int_0^{\frac{1}{2t}}
        \left|
        \eta(|(\bar x,s)|)
        -
        \eta\left(\left|\left(\bar x,s-\frac{1}{2t}\right)\right|\right)
        \right|
        \d s\d\bar x
        \\
        &=
        2\int_{\R^{d-1}}
        \int_0^\infty
        \eta(|(\bar x,s)|)-\eta\left(\left|\left(\bar x,s+\frac{1}{2t}\right)\right|\right)
        \d s\d\bar x
        \\
        &\qquad
        +
        \int_{\R^{d-1}}
        \int_0^{\frac{1}{2t}}
        \left|
        \eta(|(\bar x,s)|)
        -
        \eta\left(\left|\left(\bar x,s-\frac{1}{2t}\right)\right|\right)
        \right|
        \d s\d\bar x
\end{align*}
where we used that 
\begin{align*}
    \left|\left(\bar x, s+\frac{1}{2t}\right)\right| &\geq |(\bar x,s)|\qquad\forall\bar x\in\R^{d-1},\; s\geq 0,\\
    \left|\left(\bar x, s-\frac{1}{2t}\right)\right| &\leq |(\bar x,s)|\qquad\forall\bar x\in\R^{d-1},\; s\geq \frac{1}{2t}.
\end{align*}
We begin by estimating the first integral. 
Performing a change of variables and using that $\supp(\eta)\subset[0,1]$ we get
\begin{align*}
    &\phantom{{}={}}
    2\int_{\R^{d-1}}
    \int_0^\infty
    \eta(|(\bar x,s)|)-\eta\left(\left|\left(\bar x,s+\frac{1}{2t}\right)\right|\right)
    \d s\d\bar x
    \\
    &=
    2\int_{B^{d-1}(0,1)}
    \left[
    \int_0^\infty
    \eta(|(\bar x,s)|)
    \d s
    -
    \int_{\frac{1}{2t}}^\infty
    \eta\left(\left|\left(\bar x,s\right)\right|\right)
    \d s
    \right]
    \d\bar x 
    \\
    &=
    2\int_{B^{d-1}(0,1)}
    \int_0^{\frac{1}{2t}}
    \eta(|(\bar x,s)|)\d s\d\bar x
    \leq 
    \frac{\omega_{d-1}\eta(0)}{t}, 
\end{align*}
where $B^{d-1}(0,1)$ denotes the unit ball in $\R^{d-1}$.
The second integral is bounded as follows:
\begin{align*}
    &\phantom{{}={}}
    \int_{\R^{d-1}}
    \int_0^{\frac{1}{2t}}
    \left|
    \eta(|(\bar x,s)|)
    -
    \eta\left(\left|\left(\bar x,s-\frac{1}{2t}\right)\right|\right)
    \right|
    \d s\d\bar x
    \\
    &=
    \int_{B^{d-1}(0,1)}
    \int_0^{\frac{1}{2t}}
    \left|
    \eta(|(\bar x,s)|)
    -
    \eta\left(\left|\left(\bar x,s-\frac{1}{2t}\right)\right|\right)
    \right|
    \d s\d\bar x
    \leq 
    \frac{\omega_{d-1}\eta(0)}{t}.
\end{align*}
Combining these two estimates we obtain
\begin{align*}
    2|\hat\eta(y)|
    \leq 
    \frac{2\omega_{d-1}\eta(0)}{t}
\end{align*}
which concludes the proof.
\end{proof}

We conclude this appendix with the proofs of  \cref{prop:psi_gaussian_upper,prop:Me_grad}. 
\begin{proof}[Proof of  \cref{prop:psi_gaussian_upper}]
To prove (i), we have to prove the two estimates
\begin{equation}\label{eq:psi1}
|\psi_{k,\epsilon}(x)| \leq C_1\eps_k^{-d},
\end{equation}
and
\begin{equation}\label{eq:psi2}
|\psi_{k,\epsilon}(x)| \leq C_1\eps^{-d}\exp\left( -\frac{|x|^2}{8d\eps_k^2}\right),
\end{equation}
for all $\eps>0$, $x\in \R^d$ and $k\geq 1$.  
We first prove \labelcref{eq:psi1}. By \labelcref{eq:psi_rewrite} we can restrict our attention to $\eps=1$. Since $|\psi_1| = |\eta_\eps| \leq \eta(0)\eps^{-d}$, we may also restrict our attention to $k\geq 2$. By \cref{lem:riemann-lebesgue} there exists $R>0$ just depending on $d$ and $\eta$ such that $|\hat{\eta}(y)|\leq \frac{1}{2}$ for all $|y|\geq R$ and since by assumption $k\geq 2$ we have $|\hat{\eta}(y)|^{k-2} \leq \frac{4}{2^k}$ for $|y|\geq R$. 
By \cref{lem:away_origin} there exists $b>0$ such that $|\hat{\eta}(y)| \leq 1 - b |y|^2$ for $y\in B_R$.
Using these two inequalities as well as \labelcref{eq:psik_fourier} we have
\begin{align*}
|\psi_k(x)| &\leq \int_{\R^d}|\hat{\eta}(y)|^k \d y
= \int_{B_R}|\hat{\eta}(y)|^k\d y + \int_{\R^d\setminus B_R}|\hat{\eta}(y)|^{k-2}|\hat{\eta}(y)|^2 \d y\\
&\leq \int_{B_R}(1-b|y|^2)^k\d y + 4\int_{\R^d\setminus B_R}\frac{1}{2^k}|\hat{\eta}(y)|^2 \d y\\
&\leq \int_{\R^d}e^{-b k |y|^2}\d y  + \frac{4}{2^k}\|\hat{\eta}\|^2_{L^2(\R^d)}
=\left( \frac{\pi}{bk}\right)^{\frac{d}{2}} + \frac{4}{2^k}\|\eta\|_{L^2(\R^d)}^2
\leq C k^{-\frac{d}{2}},
\end{align*}
where we used that $\int_{\R^d} e^{-a|x|^2 } \d x = \left(\frac{\pi}{a}\right)^{\frac{d}{2}}$  and $2^{-k} \leq C(d)k^{-d/2}$ in the last line. This establishes \labelcref{eq:psi1}. 

To prove \labelcref{eq:psi2}, we first note that for $|x|\leq 2\epsilon$ we have
\[|\psi_{k,\epsilon}(x)| \leq C\epsilon^{-d} \leq 2d C\epsilon^{-d}\exp\left(-\frac{1}{2dk}\right) \leq 2dC \epsilon^{-d}\exp\left( -\frac{|x|^2}{8d\epsilon_k^2}\right).\]
Assume now that $|x|\geq 2\epsilon$. For $k=1$ we have $\psi_{1,\epsilon}(x)=0$ for $|x|\geq \epsilon$, so the result is immediate. For $k\geq 2$, we use \cref{lem:Hoeffding} and the inclusion $B(x,\epsilon) \subset \{y\, : \, |y|\geq |x|-\epsilon\}$ to obtain that
\begin{align*}
\psi_{k,\epsilon}(x) &= (\eta_\eps * \psi_{k-1,\epsilon})(x)
=\int_{B(x,\epsilon)}\eta_{\eps}(x-y)\psi_{k-1,\epsilon}(y)\d y
\\&
\leq C\epsilon^{-d}\exp\left(-\frac{(|x|-\epsilon)^2}{2d(k-1)\epsilon^2}\right)
\leq C\epsilon^{-d}\exp\left(-\frac{|x|^2}{8dk\epsilon^2}\right)=C\epsilon^{-d}\exp\left(-\frac{|x|^2}{8d\epsilon_k^2}\right),
\end{align*}
for a constant only depending on $d$ and $\eta(0)$, where we used that $k-1\leq k$ and $\epsilon \leq \frac{1}{2}|x|$ in the last line. 
This establishes \labelcref{eq:psi2}. 

We now prove (ii), which largely follows the proof of \labelcref{eq:psi1} in part 1. 
Again, by \labelcref{eq:psi_rewrite} we may restrict our attention to $\eps=1$. 
First, by \cref{lem:riemann-lebesgue} we have for $k\geq 3$ that
\begin{align*}
\int_{\R^d}|y| |\hat{\eta}(y)|^k \d y &=  C + \int_{\R^d\setminus B(0,1)}|y|  |\hat{\eta}(y)|^{k-2}|\hat{\eta}(y)|^2\d y    \\
&\leq C + \left(\eta(0)\omega_{d-1}\right)^{k-2}\int_{\R^d\setminus B(0,1)}|y|^{3-k} |\hat{\eta}(y)|^2 \d y \\
&\leq C + \left(\eta(0)\omega_{d-1}\right)^{k-2}\int_{\R^d\setminus B(0,1)} |\hat{\eta}(y)|^2 \d y  
\\
&\leq 
C + \left(\eta(0)\omega_{d-1}\right)^{k-2}\|\hat\eta\|_{L^2(\R^d)} < \infty,
\end{align*}
since $\eta,\hat\eta\in L^2(\R^d)$. 
Here the constant $C$ just depends on $d$ and $\eta(0)$.
Hence, in light of \labelcref{eq:psik_grad}, $\psi_k$ is continuously differentiable for $k \geq 3$ and---using also that by \cref{lem:riemann-lebesgue} the function $y\mapsto|y||\hat{\eta}(y)|$ is bounded on $\R^d$ and that $|\hat{\eta}(y)|\leq \frac{1}{2}$ for all $|y|\geq R$---we have
\begin{align*}
\frac{1}{2\pi}|\nabla \psi_k(x)| &\leq \int_{\R^d}|y||\hat{\eta}(y)|^k \d y\\
&= \int_{B_R}|y||\hat{\eta}(y)|^k\d y + \int_{\R^d\setminus B_R}|y||\hat{\eta}(y)||\hat{\eta}(y)|^{k-3}|\hat{\eta}(y)|^2 \d y\\
&\leq \int_{B_R}|y|(1-b|y|^2)^k\d y + \frac{C}{2^{k-3}}\int_{\R^d\setminus B_R}|\hat{\eta}(y)|^2 \d y\\
&\leq \int_{\R^d}|y|e^{-b k |y|^2}\d y  + \frac{C}{2^k},
\end{align*}
where $C$ changes from line to line and depends on $d$, $\eta(0)$, and $\|\eta\|_{L^2(\R^d)}$. We now compute
\[\int_{\R^d}|y|e^{-b k |y|^2}\d y = (bk)^{-\frac{d}{2}} \int_{\R^d}(bk)^{-\frac{1}{2}}x e^{-|x|^2}\d x \leq Ck^{-\frac{d+1}{2}},\]
where $C$ depends as well on $b$ now. Inserting this above and using that $2^{-k} \leq C(d)k^{-\frac{d+1}{2}}$ completes the proof.

We now prove (iii).  Let us set 
\[R^2  = 8d^2 \eps_k^2\log(\eps^{-1}) \]
so that
\[\eps^{-d}\exp\left( -\frac{R^2}{8d\eps_k^2}\right) = 1. \]
Since $|T(x)-x| \leq \epsilon_k\leq \frac{R}{8}\leq R$, if $|x-x_0|\geq 2R$ then $|T(x)-x_0|\geq R$. Thus, by part (i) we have $|\psi_{k,\epsilon}(T(x)-x_0)|\leq C_1$ for $x\in \Omega\setminus B(x_0,2R)$. It follows that
\[\int_{\Omega \setminus B(x_0,2R)} |\psi_{k,\epsilon}(T(x)-x_0)|^p \d x \leq C|\Omega| \leq C\eps_k^{-d(p-1)}, \]
since $\eps_k \leq 1$.  For $x\in B(x_0,2R)$ we use the estimate $|\psi_{k,\epsilon}(x-x_0)|\leq C\eps_k^{-d}$ from part (i) to obtain
\[\int_{\Omega \cap B(x_0,2R)} |\psi_{k,\epsilon}(T(x)-x_0)|^p \d x \leq C|B(x_0,2R)| \eps_k^{-pd} \leq CR^d\eps_k^{-pd} = C\eps_k^{-d(p-1)}\log(\eps^{-1})^{\frac{d}{2}}. \]
Combining the two estimates above yields
\[\int_\Omega |\psi_{k,\epsilon}(T(x)-x_0)|^p \d x \leq C\eps_k^{-d(p-1)}\log(\eps^{-1})^{\frac{d}{2}}.\]

To prove (iv), let $R\geq \delta$ and use part (ii) to obtain
\begin{align*}
\|\osc_{\Omega \cap B(\cdot,\delta)} \psi_{k,\epsilon}(\cdot - x_0)\|_{L^1(\Omega)} &=\int_{\Omega}\sup_{y,z\in B(x,\delta)}|\psi_{k,\epsilon}(y-x_0) - \psi_{k,\epsilon}(z-x_0)|\,dx\\
&=\int_{\Omega\cap B(x_0,2R)}\sup_{y,z\in B(x,\delta)}|\psi_{k,\epsilon}(y-x_0) - \psi_{k,\epsilon}(z-x_0)|\,dx \\
&\hspace{0.25in}+\int_{\Omega\setminus B(x_0,2R)}\sup_{y,z\in B(x,\delta)}|\psi_{k,\epsilon}(y-x_0) - \psi_{k,\epsilon}(z-x_0)|\,dx \\
&\leq CR^d \delta\|\nabla \psi_{k,\epsilon}\|_{L^\infty(\R^d)} + 2|\Omega| \|\psi_{k,\epsilon}\|_{L^\infty(\R^d\setminus B(0,R))}\\
&\leq CR^d \delta\eps_k^{-(d+1)} + 2|\Omega| \|\psi_{k,\epsilon}\|_{L^\infty(\R^d\setminus B(0,R))}.
\end{align*}
By part (i) we have
\[\|\psi_{k,\epsilon}\|_{L^\infty(\R^d\setminus B(0,R))} \leq \eps^{-d}\exp\left( -\frac{R^2}{8d\eps_k^2}\right) = \eps^2, \]
provided $R$ is chosen so that
\[R^2 = 8d(d+2)\eps_k^2\log(\eps^{-1}).\]
Since $R \leq C \epsilon_k \log(\eps^{-1})^{\frac{1}{2}}$, we have $R^d \leq C \eps_k^d \log(\eps^{-1})^{\frac{d}{2}}$, which completes the proof.
\end{proof}

\begin{proof}[Proof of \cref{prop:Me_grad}]
We first write
\begin{align}\label{eq:Mk_proof}
    \M^k_\eps \eta^{x_0}_\eps(x) = \epsilon^{-d}\int_{\Omega}\eta\left(\frac{|x-y|}{\epsilon}\right)\hat\rho_\eps(y)^{-1}\rho(y)\M^{k-1}_\eps\eta^{x_0}_\eps(y)\d y
\end{align}
We first show by induction that for $k\geq 2$ the function
\begin{align}\label{eq:continuous_integrand}
    y \mapsto \hat\rho_\eps(y)^{-1}\rho(y)\M^{k-1}_\eps\eta^{x_0}(y)
\end{align}
is continuous.
Note that for $k=1$ this is not necessarily the case since then $\M^{k-1}_\eps\eta^{x_0}(y)=\eta^{x_0}(y)$ can be discontinuous.

For proving the result we note that $\rho$ is continuous by assumption and hence $\hat\rho_\eps$, defined in \labelcref{eq:rhoe}, is a convolution of an integrable and a bounded function and hence also continuous. 
Furthermore, $\hat\rho_\eps$ is bounded from below by the assumption that $\rho$ is bounded from below which makes $\hat\rho_\eps^{-1}$ continuous.
Also $\M_\eps\eta^{x_0}$, defined in \labelcref{eq:Me}, is also a convolution of an integrable and a bounded function and therefore continuous. 
This proves the continuity of \labelcref{eq:continuous_integrand} for $k=2$. 
For the induction step not that if \labelcref{eq:continuous_integrand} is continuous, then \labelcref{eq:Mk_proof} is a convolution of a integrable and a bounded function and therefore continuous. 
This shows the continuity of \labelcref{eq:continuous_integrand} for $k-1\to k$.

For fixed $y\in\Omega$ let us define the Lipschitz map $x\mapsto\varphi_\eps(x;y) := y+\eps x$ with Lipschitz continuous inverse $z\mapsto\varphi_\eps^{-1}(x;y):=\frac{x-y}{\eps}$.
Since $\eta$ is non-increasing, by the coarea formula for $BV$-functions (see, e.g., \cite[Theorem 3.40]{ambrosio2000functions}) the function $\bar{\eta} := \eta(\abs{\cdot})$ lies in $BV(\R^d)$.
As a consequence \cite[Theorem 3.16]{ambrosio2000functions} implies that for every $y\in\Omega$ the function $x\mapsto f_\eps(x;y) := \eta\left(\frac{\abs{x-y}}{\eps}\right) =\bar{\eta}\circ\phi_\eps^{-1}(x;y)$ lies in $BV(\Omega)$ and by symmetry also $y\mapsto f_\eps(x;y)$ lies in $BV(\Omega)$ for every $x\in\Omega$.
By symmetry it also holds that $\d D_x f_\eps(x;y)=-\d D_y f_\eps(x,y)$ in the sense of measures.
Furthermore, \cite[Theorem 3.16]{ambrosio2000functions} provides the following symmetric bounds for the derivative
\begin{subequations}\label{eq:BV_bound_derivative}
    \begin{align}
        \abs{D_x f_\eps(\cdot;y)} 
        &\leq 
        \Lip(\varphi_\eps(\cdot;y))^{d-1} 
        \varphi_\eps(\cdot;y)_\sharp 
        \abs{D\bar\eta}
        \\
        \abs{D_y f_\eps(x;\cdot)} 
        &\leq 
        \Lip(\varphi_\eps(x;\cdot))^{d-1} 
        \varphi_\eps(x;\cdot)_\sharp 
        \abs{D\bar\eta}
\end{align}
\end{subequations}
which are to be understood as inequalities of non-negative Radon measures.
Noting also that both $f_\eps(x;\cdot)$ and therefore $D_x f_\eps(x;\cdot)$ are supported in $\overline{B(x,\eps)}$ and using the symmetry, we get from \labelcref{eq:Mk_proof,eq:BV_bound_derivative} and the continuity of \labelcref{eq:continuous_integrand} that
\begin{align*}
    \nabla \M^k_\eps \eta^{x_0}_\eps(x) 
    &= 
    \epsilon^{-d}
    \int_{\overline{B(x,\eps)}}
    \hat\rho_\eps(y)^{-1}\rho(y)\M^{k-1}_\eps\eta^{x_0}_\eps(y)\d D_x f_\eps(x;y).
    \\
    &= 
    -\epsilon^{-d}
    \int_{\overline{B(x,\eps)}}
    \hat\rho_\eps(y)^{-1}\rho(y)\M^{k-1}_\eps\eta^{x_0}_\eps(y)\d D_y f_\eps(x;y)
\end{align*}
and therefore
\begin{align*}
|\nabla \M^k_\eps \eta^{x_0}_\eps(x)| 
&\leq 
C\epsilon^{-d} 
\abs{D_y f_\eps(x;\cdot)}(\overline{B(x,\eps)})
\,
\|\M_\eps^{k-1}\eta^{x_0}_\epsilon\|_{L^\infty(\Omega\cap B(x,\epsilon))}
\\
&\leq 
C\epsilon^{-d} 
\Lip(\varphi_\eps(x;\cdot))^{d-1}
\varphi_\eps(x;\cdot)_\sharp\abs{D\bar\eta}(\overline{B(x,\eps)})
\,
\|\M_\eps^{k-1}\eta^{x_0}_\epsilon\|_{L^\infty(\Omega\cap B(x,\epsilon))}
\\
&\leq 
C\eps^{-1}
\,
\|\M_\eps^{k-1}\eta^{x_0}_\epsilon\|_{L^\infty(\Omega\cap B(x,\epsilon))},
\end{align*}
where $C$ changed its value between the lines and depends on $\rho$ and $\bar\eta$.
In the last inequality we used that $\Lip(\varphi_\eps(x;\cdot))=\eps$ and 
\begin{align*}
    \varphi_\eps(x;\cdot)_\sharp\abs{D\bar\eta}(\overline{B(x,\eps)})
    =
    \abs{D\bar\eta}(\overline{B(0,1)}).
\end{align*}
The proof is completed by invoking \cref{prop:psi_gaussian_upper} (i) and \cref{thm:Me}.
\end{proof}

\section{Proofs from \texorpdfstring{\cref{sec:combination}}{Section 5}}\label{app:monte}

\begin{proof}[Proof of  \cref{prop:lp_graph_control}]
Since $u$ is Borel measurable, the composition $u(X_i)$ is a random variable.  Note that
\[\ng{p}{u}^p = \frac{1}{n}\sum_{i=1}^n |u(x_i)|^p,\]
and
\[\bE \ng{p}{u}^p = \int_\Omega |u(x)|^p \rho \d x.\]
Let $Z_i = |u(X_i)|^p -\int_\Omega |u(x)|^p \rho \d x$. Then the random variables $Z_1,\dots,Z_n$ are \emph{i.i.d.}~with mean zero. Let $S_n = \sum_{i=1}^n Z_i$. By Markov's inequality we have 
\[\P\left(\ng{p}{u}^p \geq \int_\Omega |u(x)|^p \rho \dx + t\right)  = \P(S_n > n t) \leq \P(|S_n| \geq nt) \leq (nt)^{-q}\bE[|S_n|^q]\]
for any $t>0$ and $1 < q \leq r/p$. By the Bahr--Esseen bounds \cite{von1965inequalities} we have
\begin{align*}
\bE |S_n|^q &\leq 2\sum_{i=1}^n \bE|Z_i|^q = 2n\bE |Z_1|^q \\
&= 2n\int_\Omega\left| |u(z)|^p -\int_\Omega |u(x)|^p \rho(x) \d x\right|^q\rho(z) \d z\\
&\leq 2^{q}n\left(\int_\Omega |u(z)|^{pq}\rho(z)\d z  + \left|\int_\Omega |u(x)|^p \rho(x) \d x\right|^q\right)\\
&\leq 2^{q}n\left(\int_\Omega |u|^{pq} \rho\d z  + \int_\Omega |u|^{pq} \rho \d x\right) \leq 2^{q+1}\rho_{\max} n \|u\|_{L^{pq}(\Omega)}^{pq},
\end{align*}
where we used $(a+b)^q \leq 2^{q-1}(a^q + b^q)$ for $a,b\geq 0$ in the second inequality, and that $\rho$ is a probability density (which makes Jensen's inequality applicable) in the second to last inequality. Therefore
\[\P\left(\ng{p}{u}^p \geq \int_\Omega |u(x)|^p \rho \dx + t\right)  \leq 2^{q+1}\rho_{\max}n^{1-q}t^{-q}\|u\|_{L^{pq}(\Omega)}^{pq}.\]
Setting $t = \rho_{\max}^{\frac{1}{q}} \|u\|_{L^{pq}(\Omega)}^p$ and using Jensen's inequality to bound
\[\int_\Omega |u(x)|^p \rho \dx \leq \left(\int_\Omega |u(x)|^{pq} \rho \dx\right)^{\frac{1}{q}} \leq \rho_{\max}^{\frac{1}{q}}\|u\|_{L^{pq}(\Omega)}^p,\]
we have
\[\P\left(\ng{p}{u}^p \geq 2\rho_{\max}^{\frac{1}{q}}\|u\|_{L^{pq}(\Omega)}^p \right) \leq 2^{q+1}n^{1-q}.\]
The proof is completed by choosing $q=r/p$ and taking the $p^{\rm th}$ root of the inequality.
\end{proof}

\end{document}